\documentclass[a4paper]{article}
\usepackage[utf8]{inputenc}

\usepackage{amsmath}    
\usepackage{amsthm}
\usepackage{graphicx}   
\usepackage{verbatim}   
\usepackage{latexsym}
\usepackage{amsfonts}
\usepackage{amssymb}
\usepackage{stmaryrd}
\usepackage{enumitem}

\usepackage{afterpage}
\usepackage{array}
\usepackage{mathdots}
\usepackage{pgf}
\usepackage{pdflscape}
\usepackage{tikz}
\usetikzlibrary{calc,through,intersections,arrows, trees, positioning}
\usepackage{pgfplots}
\usepackage[english,ngerman]{babel}
\usepackage{dsfont}
\usepackage{makeidx}

\usepackage{relsize}
\usepackage{exscale}
\usepackage{float}
\usepackage{caption}
\usepackage[bookmarks]{hyperref}
\usepackage{hyperref}

\makeatletter
\newcommand\footnoteref[1]{\protected@xdef\@thefnmark{\ref{#1}}\@footnotemark}
\makeatother

\newcommand{\im}{\normalfont\text{im}}

\newcommand{\id}{\normalfont\text{id}}
\newcommand{\N}{\mathbb{N}}
\newcommand{\Z}{\mathbb{Z}}
\newcommand{\Q}{\mathbb{Q}}
\newcommand{\R}{\mathbb{R}}
\renewcommand{\L}{\mathbb{L}}
\renewcommand{\S}{\mathbb{S}}

\newcommand{\aA}{\mathfrak{A}}

\newcommand{\aB}{\mathfrak{B}}
\newcommand{\aC}{\mathfrak{C}}

\newcommand{\aS}{\mathfrak{S}}
\newcommand{\aF}{\mathfrak{F}}
\newcommand{\af}{\mathfrak{f}}
\newcommand{\ag}{\mathfrak{g}}
\newcommand{\aL}{\mathfrak{L}}

\newcommand{\I}{\mathbb{I}}
\newcommand{\cA}{\mathcal{A}}
\newcommand{\cB}{\mathcal{B}}
\newcommand{\cC}{\mathcal{C}}
\newcommand{\cD}{\mathcal{D}}
\newcommand{\cE}{\mathcal{E}}
\newcommand{\cF}{\mathcal{F}}

\newcommand{\cH}{\mathcal{H}}
\newcommand{\cP}{\mathcal{P}}
\newcommand{\cX}{\mathcal{X}}
\newcommand{\cN}{\mathcal{N}}

\newcommand{\cQ}{\mathcal{Q}}
\newcommand{\cU}{\mathcal{U}}
\newcommand{\cO}{\mathcal{O}}
\newcommand{\cV}{\mathcal{V}}
\newcommand{\cI}{\mathcal{I}}
\newcommand{\cJ}{\mathcal{J}}
\newcommand{\cL}{\mathcal{L}}
\newcommand{\cK}{\mathcal{K}}

\newcommand{\cS}{\mathcal{S}}
\newcommand{\cY}{\mathcal{Y}}
\newcommand{\cW}{\mathcal{W}}
\newcommand{\Tau}{\mathcal{T}}
\newcommand{\LP}{\Lambda}
\newcommand{\hPsi}{\hat{\Psi}}

\newcommand{\crit}{\normalfont\text{crit}}
\newcommand{\cl}{\normalfont\text{cl}}

\DeclareMathOperator{\medcup}{\mathsmaller{\bigcup}}

\newcommand{\invLim}{\varprojlim}
\newcommand{\iL}{\langle\mkern-1.4\thinmuskip\langle G \rangle\mkern-1.4\thinmuskip\rangle}
\newcommand{\ECO}{\langle\mkern-0.2\thinmuskip\vert G \vert\mkern-0.2\thinmuskip\rangle}
\newcommand{\Gq}{\tilde{\vartheta} G}
\newcommand{\Gqq}{\hat{\vartheta} G}
\newcommand{\TC}{\vartheta G}
\newcommand{\CL}{/\!\!/}
\newcommand{\HTC}{\mathcal{H}(\vartheta G)}
\newcommand{\GH}{\mathfrak{A}G}

\newcommand{\GG}{\mathfrak{G}}

\newcommand{\FG}{\mathfrak{F}G}
\newcommand{\LG}{\Lambda G}
\newcommand{\CG}{G}
\newcommand{\CTC}{\hat{C}_{\TC}}

\newcommand{\AuxE}{\mathfrak{E}}
\newcommand{\ATST}{$\mathfrak{A}$\textnormal{TST}}
\newcommand{\HDcomp}{Hausdorff compactification}
\newcommand{\ecomp}{$\Omega$-compactification}
\DeclareMathOperator{\bij}{\hookrightarrow\!\!\!\!\!\rightarrow}

\newcommand\A[1]{\mathfrak{#1}}
\mathchardef\arr="017E 
\newcommand\undervec[1]{\setbox0=\hbox{$#1$}\lower2ex\hbox to 0pt{\hbox to \wd0{\hss$\arr\,$\hss}\hss}\box0}

\newcommand{\utranscl}{\between\cdots\between}
\newcommand{\transcl}{\mathrel{\hbox{$\between$}\hskip-.1ex\hbox{$\cdot$}\hskip-.1ex\hbox{$\cdot$}\hskip-.1ex\hbox{$\cdot$}\hskip-.1ex\hbox{$\between$}}}
\newcommand*{\tsim}{%
  \mathrel{\vcenter{\offinterlineskip
  \hbox{$\sim$}\vskip-.35ex\hbox{$\sim$}\vskip-.35ex\hbox{$\sim$}}}}
\newcommand*{\tcsim}{%
  \mathrel{\vcenter{\offinterlineskip
  \hbox{$\sim$}\vskip-.85ex\hbox{$-$}\vskip-.55ex\hbox{$\sim$}}}}
\newcommand*{\tasymp}{%
  \mathrel{\vcenter{\offinterlineskip
  \hbox{$\asymp$}\vskip-1.35ex\hbox{$-$}}}}

\newcommand{\rest}{\!\!\upharpoonright\!}
\renewcommand\vec[1]{\overset{{}_{\;\rightarrow}}{#1}}
\newcommand\cev[1]{\overset{{}_{\,\leftarrow}}{#1}} 

\interfootnotelinepenalty=10000
\hyphenpenalty=10
\widowpenalty=10000
\displaywidowpenalty=10000
\clubpenalty=10000
\hypersetup{
pdftitle = {???},
pdfsubject = {Master Thesis},
pdfauthor = {Jan Kurkofka},
pdfkeywords = {},
colorlinks = false,
draft = false,
bookmarksopen=true}
\newtheorem{theorem}{Theorem}[subsection]
\newtheorem{proposition}[theorem]{Proposition}
\newtheorem{corollary}[theorem]{Corollary}
\newtheorem{lemma}[theorem]{Lemma}
\newtheorem{obs}[theorem]{Observation}
\newtheorem{example}[theorem]{Example}
\newtheorem{conjecture}[theorem]{Conjecture}
\newtheorem{claim_auxArc}{Claim}
\newtheorem{claim_bigConv}{Claim}
\newtheorem{claim_tcInvLim}{Claim}
\newtheorem{claim_subTSTs}{Claim}

\newenvironment{customthm}[1]
  {\innercustomthm}
  {\endinnercustomthm}
\newenvironment{customprop}[1]
  {\innercustomprop}
  {\endinnercustomprop}
\newenvironment{customlem}[1]
  {\innercustomlem}
  {\endinnercustomlem}

\setenumerate{label={\normalfont (\roman*)}}


\makeindex
\usepackage[nottoc]{tocbibind}

\begin{document}
\selectlanguage{english}


  \thispagestyle{empty}

  \begin{titlepage}

 \begin{center} 

    {\huge On the tangle compactification\\of infinite graphs}
    \vspace*{0.7cm}
    
    \normalsize von \LARGE
    \vspace*{0.7cm}

    Jan Kurkofka
    \vspace*{1.5cm}

	\LARGE\textsc{Masterarbeit}
    \vspace*{1.5cm}
    
    \normalsize vorgelegt der
    \vspace*{0.7cm}
    
    \Large Fakultät für Mathematik, Informatik und\\
    Naturwissenschaften der Universität Hamburg
    \vspace*{0.7cm}
    
    \normalsize im Juli 2017
    \vspace*{1.5cm}
    
    Gutachter:\\
    \large Dr. Max Pitz\\
    \large Prof. Dr. Reinhard Diestel
  \end{center}
\end{titlepage}
\newpage
\thispagestyle{empty}
\section*{Abstract of the arXiv version}

In finite graphs, finite-order tangles offer an abstract description of highly connected substructures.
In infinite graphs, infinite-order tangles compactify the graphs in the same way the ends compactify connected locally finite graphs.
Thus, the arising tangle compactification extends the well-known Freudenthal compactification from connected locally finite graphs to arbitrary infinite graphs.
This Master's thesis investigates the tangle compactifcation.

\section*{Preface of the arXiv version}

This is my Master's thesis from 2017.
I have fixed a critical typo in the definition of $\cO_{\GH}(X,\cC,\epsilon)$.
Section~\ref{ECOcompactification} is now a published paper~\cite{EndsTanglesCrit}.

My research on the tangle compactification and infinite-order tangles continues.
In~\cite{StoneCechTangles}, Pitz and I have shown that the tangle compactification is deeply linked to the Stone-\v{C}ech compactification, answering a question of Diestel.
And critical vertex sets, the key concept that drives Section~\ref{ECOcompactification} and~\cite{EndsTanglesCrit}, will be employed in two upcoming papers.

\newpage
\thispagestyle{empty}
\selectlanguage{ngerman}
\section*{Eidesstattliche Erklärung}
Die vorliegende Arbeit habe ich selbständig verfasst und keine anderen als die
angegebenen Hilfsmittel -- insbesondere keine im Quellenverzeichnis nicht benannten
Internet-Quellen -- benutzt. Die Arbeit habe ich vorher nicht in einem
anderen Prüfungsverfahren eingereicht. Die eingereichte schriftliche Fassung
entspricht genau der auf dem elektronischen Speichermedium.\\ \\
Seevetal, den 13. Juli 2017\\ \\ \\ \\ \\
Jan Kurkofka
\selectlanguage{english}

\newpage
\thispagestyle{empty}
\tableofcontents
\thispagestyle{empty}

\newpage
\setcounter{page}{1}

\section{Introduction}

Many theorems about finite graphs involving paths, cycles or spanning trees do not generalise to infinite graphs verbatim.
However, if we consider only infinite graphs which are locally finite,\footnote{A graph is \textit{locally finite} if each of its vertices has only finitely many neighbours.} then an elegant solution is known for generalising most of these theorems. 
To understand this solution, it is important to know that for connected locally finite graphs $G$ adding their ends\footnote{An \textit{end} of a graph is an equivalence class of rays, where a \textit{ray} simply is a 1-way infinite path and two rays are \textit{equivalent} whenever no finite set of vertices separates them in the graph.} yields a natural compactification $|G|$, their \textit{Freudenthal compactification} \cite{RDsBanffSurvey,Bible}.

Formally, to obtain $|G|$ we extend the 1-complex of $G$ (also denoted $G$) to a topological space $G\cup\Omega$ (where $\Omega$ is the set of ends of $G$) by declaring as open, for every finite set $X$ of vertices and each component $C$ of $G-X$ the set $\cO_{|G|}(X,C)$, and taking the topology on $G\cup\Omega$ this generates.
Here, $\cO_{|G|}(X,C)$ is the union of the 1-complex of $C$, the set of all inner edge points of edges between $X$ and the component $C$, and the set of all ends of $G$ all whose rays have tails in $C$.\footnote{For the experts: Here, we introduced $|G|$ using a basis that slightly differs from the usual one, but which generates the same topology.
With our basis it will be easier to see the similarities to the basis of the tangle compactification.}

Now, we explain the elegant solution:
replacing, in the wording of the theorems, paths with homeomorphic images of the unit interval (\textit{arcs} for short) in the Freudenthal compactification $|G|$, cycles with homeomorphic images of the unit circle (\textit{circles} for short) in $|G|$, 
and spanning trees with uniquely arc-connected subspaces of $|G|$ including the vertex set of the graph (\textit{topological spanning trees} for short, see \cite{Bible} for a precise definition), 
does suffice to extend these theorems. 
The arcs, circles and topological spanning trees considered are allowed to contain ends of $G$ (and in a moment we will see that sometimes they have to).

For a nice illustration of how this solution works, consider the so-called `tree-packing' \cite[Theorem 2.4.1]{Bible}, 
proved independently by Nash-Williams and Tutte in 1961: \textit{A finite multigraph contains $k$ edge-disjoint spanning trees if and only if for every partition $P$ of its vertex set it has at least $k(|P|-1)$ cross-edges.}
Aharoni and Thomassen~\cite{aharoniThom} constructed, for every $k$, a locally finite graph $G(k)$ witnessing that the naive extension of tree-packing to locally finite graphs fails in that the graph $G(k)$ has enough edges across every finite partition of its vertex set but no $k$ edge-disjoint spanning trees (see \cite{Bible} for details).
However, as Diestel~\cite{DiestelBook05} has shown in 2005, as soon as we replace `spanning trees' with `topological spanning trees' we do obtain a correct extension (also see \cite[Theorem 8.5.7]{Bible}):
\textit{A locally finite multigraph contains $k$ edge-disjoint topological spanning trees if and only if for every finite partition $P$ of its vertex set it has at least $k(|P|-1)$ cross-edges.}
In particular, at least one of any $k$ edge-disjoint topological spanning trees of $G(k)$ must use an end. 

Another example is a theorem by Fleischner \cite[Theorem 10.3.1]{Bible} from 1974:
\textit{If $G$ is a finite 2-connected graph, then $G^2$ has a Hamilton cycle.}\footnote{For every graph $G$ and each natural number $d>0$ we write $G^d$ for the graph on $V(G)$ in which two vertices are adjacent if and only if they have distance at most $d$ in $G$.}
This theorem was extended by Georgakopoulos~\cite{fleisch} in 2009:
\textit{If $G$ is a locally finite 2-connected graph, then $G^2$ has a Hamilton circle.}

More generally, Diestel and Kühn~\cite{CyclesI,CyclesII} (2004) and Berger and Bruhn~\cite{BergerBruhnDeg} (2009) were able to generalise the full cycle space theory of finite graphs to locally finite ones. We refer the reader to \cite[Theorem 8.5.10]{Bible} for details since these go beyond the scope of this introduction.

But graphs that are not locally finite---in general---cannot be compactified by adding their ends, 
so the elegant solution no longer applies.
For example, in the introduction of Chapter~\ref{AuxArcTP} we will see that a $K_{2,\aleph_0}$ (a complete bipartite graph with one countable bipartition class and the other of size 2) has enough edges across every finite partition of its vertex set for $k=2$. But this graph has no ends, so the naive extension of topological spanning trees defaults to spanning trees, any two of which must share an edge (otherwise, none would be connected).
Thus, it is considered one of the most important problems in infinite graph theory to come up with an extension that allows us to extend the cycle space theory to non-locally finite graphs.
Recently, in 2015, Diestel \cite{EndsAndTangles} proposed a possible solution to this problem and constructed a new compactification which uses tangles instead of ends, the \textit{tangle compactification}. 
Before we present the special characteristics of this compactification, we give a brief introduction to tangles.

A \textit{separation of finite order} of a graph $G$ is a set $\{A,B\}$ with $A\cap B$ finite and $A\cup B=V(G)$ such that $G$ has no edge between $A\setminus B$ and $B\setminus A$.
If $\omega$ is an end of a graph $G$, then it orients each separation $\{A,B\}$ of finite order towards a \textit{big side} $K\in\{A,B\}$ in that every ray contained in $\omega$ has some tail in $K$ (clearly, it cannot both have a tail in $A$ and a tail in $B$ since $A\cap B$ is a finite separator).
Observe that every end of a graph $G$ orients all the finite order separations consistently in that e.g. for every two finite order separations $\{A,B\}$ and $\{C,D\}$ with $A\subseteq C$ and $B\supseteq D$ the end does not choose $A$ and $D$ as big sides.
From a more abstract point of view (but for a different purpose), Robertson and Seymour~\cite{GMX} defined an $\aleph_0$-\textit{tangle} (or \textit{tangle}) in a graph as an orientation of all its finite order separations towards a big side that is consistent in some sense including the above.
Every end induces an $\aleph_0$-tangle, 
so each graph's end space can be considered as a natural subset of its tangle space, and in fact,
if a graph is locally finite and connected, then its $\aleph_0$-tangles turn out to be precisely its ends (and its tangle compactification coincides with its Freudenthal compactification).
However, for graphs that are not locally finite, there may be $\aleph_0$-tangles that are not induced by an end, and adding these on top of the ends suffices to compactify those graphs---whereas adding only the ends does not.

Understanding the tangles that are not induced by an end is important, but for this,
we need some notation first: If $G$ is a graph, then we write $\TC$ for its tangle compactification, and we denote by $\cX$ the collection of all finite sets of vertices of this graph, partially ordered by inclusion. Furthermore, for every $X\in\cX$ we write $\cC_X$ for the collection of all components of $G-X$. Finally, for every subcollection $\cC\subseteq \cC_X$ we denote the vertex set of $\bigcup\cC$ by $V[\cC]$.
Now we may start: Every finite order separation $\{A,B\}$ corresponds to the bipartition $\{\cC,\cC'\}$ of $\cC_X$ with $X=A\cap B$ and
\begin{align*}
\{A,B\}=\{V[\cC]\cup X,X\cup V[\cC']\},
\end{align*}
and this correspondence is bijective for fixed $X\in\cX$.
Hence if $\tau$ is an $\aleph_0$-tangle of the graph, then
for each $X\in\cX$ it also chooses one \textit{big side} from each bipartition $\{\cC,\cC'\}$ of $\cC_X$, namely the $\cK\in\{\cC,\cC'\}$ with $X\cup V[\cK]=K$ where $K\in\{A,B\}$ is the big side of the corresponding finite order separation $\{A,B\}$. 
Since it chooses these sides consistently, for each $X\in\cX$ they form an ultrafilter on $\cC_X$.
Furthermore, these ultrafilters are compatible in that they are limits of a natural inverse system $\{\cU_X,f_{X',X},\cX\}$. 
Here, each $\cU_X$ is the Stone-Čech \HDcomp{} of $\cC_X$ (where $\cC_X$ is endowed with the discrete topology), i.e each $\cU_X$ is the set of all ultrafilters on $\cC_X$, equipped with a natural topology.
The bonding maps $f_{X',X}$ are the unique continuous maps extending the maps $\phi_{X',X}$ which send, for all $X\subseteq X'\in\cX$, every component of $G-X'$ to the unique component of $G-X$ including it.
Strikingly, it turns out that the $\aleph_0$-tangles are precisely the limits of this inverse system, 
and the ends of a graph are precisely those of its $\aleph_0$-tangles which induce for each $X\in\cX$ a principal ultrafilter on $\cC_X$. 
In particular, if a graph $G$ is locally finite and connected, then all $\cC_X$ are finite, and hence all ultrafilters on them are principal.
So by the above correspondence, we see that every $\aleph_0$-tangle of $G$ is induced by an end, and so the tangle compactification coincides with the Freudenthal compactification for connected locally finite graphs.

This inverse limit description of the $\aleph_0$-tangles is the key to a better understanding of the tangles that are not induced by ends: 
every $\aleph_0$-tangle which is not induced by an end does induce a non-principal ultrafilter on some $\cC_X$, and, as shown by Diestel, each of these non-principal ultrafilters alone determines that tangle.
Therefore, we call these tangles \textit{ultrafilter tangles}. 
It turns out that, for every ultrafilter tangle $\tau$ there exists a unique element $X_\tau$ of $\cX$ whose up-closure in $\cX$ consists precisely of those $X$ for which the tangle $\tau$ induces a non-principal ultrafilter on $\cC_X$.

We conclude our general introduction with a brief description of the tangle compactification. More details will be given in Section~\ref{SummaryEndsAndTangles}.
To obtain the tangle compactification $\TC$ of a graph $G$ we extend the 1-complex of $G$ to a topological space $G\cup\cU$ (where $\cU=\invLim\cU_X$ corresponds to the tangle space) by declaring as open, for every $X\in\cX$ and each subcollection $\cC\subseteq\cC_X$, the set $\cO_{\TC}(X,\cC)$, and taking the topology on $G\cup\cU$ this generates.
Here, $\cO_{\TC}(X,\cC)$ is the union of the 1-complex of $\bigcup\cC$, the set of all inner edge points of edges between $X$ and $\bigcup\cC$, and the set of all limits $(U_Y\,|\,Y\in\cX)\in\cU$ with $\cC\in U_X$.
Note that 
$\cC\in U_X$ means that 
the $\aleph_0$-tangle corresponding to the limit $(U_Y\,|\,Y\in\cX)$ orients the finite order separation
\begin{align*}
\{V[\cC_X\setminus\cC]\cup X,X\cup V[\cC]\}
\end{align*}
towards $X\cup V[\cC]$, so $\cO_{\TC}(X,\cC)\cap\cU$ consists precisely of those $\aleph_0$-tangles which orient this finite order separation towards $X\cup V[\cC]$.
Clearly, all singleton subsets of the tangle compactification are closed in it.
Now we know enough to understand the topics and results of this work.
In the remainder of this introduction let me indicate briefly what awaits the reader later.

\paragraph*{Chapter~\ref{DefAndFacts}.}

In this chapter we introduce basic notation, inverse limits and some lemmas from general topology. Furthermore, we provide a summary of Diestel's original paper on the tangle compactification \cite{EndsAndTangles}, and we give an overview for the various topologies used for infinite graphs in this work.

\paragraph*{Chapter~\ref{closerLook}.}
In this chapter we prove some basic results about the tangle compactification needed later, such as a version of the Jumping Arc Lemma from \cite{Bible}.
Also, we find a combinatorial description of the sets $X_\tau$ (for ultrafilter tangles $\tau$):
\begin{customlem}{\ref{UltrafilterCrit}}
Let $G$ be any graph. The following are equivalent for all $X\in\cX$:
\begin{enumerate}
\item There exists an ultrafilter tangle $\tau$ with $X=X_\tau$.
\item Infinitely many components of $G-X$ have neighbourhood equal to $X$.
\end{enumerate}
\end{customlem}
We call the sets $X\in\cX$ satisfying (i) and (ii) of this lemma the \textit{critical} elements of $\cX$.
The following theorem involves these sets:
\begin{customthm}{\ref{InfIndPaths}}
Let $G$ be any graph.
The following are equivalent for all distinct vertices $u$ and $t$ of the graph $G$:
\begin{enumerate}
\item There exist infinitely many independent $u$--$t$ paths in $G$.
\item There exists an end of $G$ dominated\footnote{A vertex $u$ of a graph $G$ \textit{dominates} an end $\omega$ of $G$ if no finite subset of $V(G-u)$ separates $u$ from a ray in $\omega$.} by both vertices $u$ and $t$ or\\
there exists some critical element of $\cX$ containing both vertices $u$ and $t$.
\end{enumerate}
\end{customthm}
For the next result we first need some additional definitions.
An \textit{edge end} of a graph is an equivalence class of rays, where two rays are equivalent whenever no finite set of edges separates them in the graph.
Two vertices of a graph $G$ are said to be \textit{finitely separable} if there exists some finite set $F$ of edges such that the two vertices are contained in distinct components of $G-F$.
Briefly speaking, for connected graphs $G$, the compact Hausdorff topological space $(\cE G,\textsc{ETop})$ is obtained from $G$ and its edge ends by equipping this set with a very coarse topology\footnote{whose basic open sets can be thought of as components of $G-F$ plus certain edge ends and half-open partial edges of $F$ for each finite set $F$ of edges} first, and then identifying every two points which share the same open neighbourhoods (a precise definition is provided in Section~\ref{subsec:topsOverview}).

By generalising the notion of `not finitely separable' to an equivalence relation $\sim$ on $V\cup\cU$ (where $\cU=\invLim\cU_X$) we derive this space from the tangle compactification as a natural quotient:
\begin{customthm}{\ref{ETopInvLimAndGq}}
If $G$ is a connected graph, then $(\cE G,\textsc{ETop})$ is homeomorphic to the quotient $\TC/{\sim}$ of the tangle compactification $\TC$.
\end{customthm}

\paragraph*{Chapter~\ref{tangleInvLim}.}
Since inverse limits have claimed their place in the infinite topological graph theory as useful tools to construct limit objects such as circles, arcs and topological spanning trees from finite minors (see the 5th edition of \cite{Bible} for an inverse limit description of the Freudenthal compactification), 
it makes sense to investigate whether it is possible to describe the tangle compactification via a similar inverse limit.
As our main result in this chapter, we show that this is indeed possible.
For this, we construct an inverse system $\{G_\gamma,f_{\gamma',\gamma},\Gamma\}$ of topological spaces $G_\gamma$ that are based on multigraphs with finite vertex set but possibly infinite edge set. 
These multigraphs are obtained from the graph $G$ by contraction of possibly disconnected vertex sets, but we will see that this is best possible.
The topological spaces $G_\gamma$ are compact and all of their singleton subsets are closed in them.
As promised, we show that the inverse limit 
\begin{align*}
\iL=\invLim (G_\gamma\,|\,\gamma\in\Gamma)
\end{align*}
of our inverse system describes the tangle compactification:
 \begin{customthm}{\ref{G*}}
For every graph $G$ its tangle compactification is homeomorphic to the inverse limit $\iL$.
\end{customthm}

\paragraph*{Chapter~\ref{ECOcompactification}.}

Whenever we consider a compactification of a topological space, three particular questions come to mind: Is it the coarsest compactification? If not, how does a coarsest one look like, and why can we not just take the one-point compactification and be done?
In this chapter, we only consider compactifications of the 1-complex of $G$ extending the end space in a meaningful way, and we call these $\Omega$-\textit{compactifications} (since the end space is denoted $\Omega$).
First, we characterise the graphs admitting a \textit{one-point} \ecomp{} $\alpha G$, one with $|\alpha G-(G\cup\Omega)|=1$ (see Proposition~\ref{OnePointCompVG}), and we give an example of such a graph showing that---in general---the one-point \ecomp{} does not reflect the structure of the graph at all. 
However, there exist simple examples admitting a one-point \ecomp{} reflecting their structure while their tangle compactification adds at least $2^{\A{c}}$ many points on top of the ends. Hence Diestel \cite{EndsAndTangles} asked:
\begin{enumerate}
\item For which graphs is their tangle compactification also their coarsest\\\ecomp{}?
\item If it is not, is there a unique such \ecomp{}, and is there a canonical way to obtain it from the tangle compactification?
\end{enumerate}
To answer these questions, we first construct an inverse system $\{\aF_X,\af_{X',X},\cX\}$ of \HDcomp{}s $\aF_X$ of the $\cC_X$ (where each $\cC_X$ is equipped with the discrete topology) whose inverse limit $\aF=\invLim\aF_X$ we use to obtain an \ecomp{} $\FG$ of the graph $G$ in the way Diestel used $\cU=\invLim \cU_X$ to compactify it.
Here, for each $X\in\cX$ the \HDcomp{} $\aF_X$ of $\cC_X$ adds as many points to $\cC_X$ as $X$ includes critical elements of $\cX$.
We will see that $\aF$ includes the end space as a natural subspace since the bonding maps $\af_{X',X}$ respect the natural maps $\phi_{X',X}$ (recall that $\phi_{X',X}$ sends every component of $G-X'$ to the unique component of $G-X$ including it), and:
\begin{customprop}{\ref{FandY}}
There exists a natural bijection between $\aF\setminus\Omega$ and the collection of all critical elements of $\cX$.
\end{customprop}
Since one-point \ecomp{}s in general do not reflect the structure of the original graph in a meaningful way, we wish to impose further conditions on the \ecomp{}s considered in (ii).
For this, we introduce $\cC$-systems: these are inverse systems of \HDcomp{}s of the $\cC_X$ whose inverse limits generalise the directions\footnote{A map $f$ with domain $\cX$ is a \textit{direction}\index{direction} of $G$ if $f$ maps every $X\in\cX$ to a component of $G-X$ and $f(X)\supseteq f(X')$ whenever $X\subseteq X'\in\cX$ (this condition says that $f$ chooses the components consistently). Diestel and Kühn~\cite{Ends} have shown that the directions of a graph are precisely its ends.}  of the graph $G$, and hence its ends.
Both $\aF$ and $\cU$ come from $\cC$-systems, and:

\begin{customthm}{\ref{alphaGcomp}}
Every $\cC$-system induces an \ecomp{} of its graph $G$. In particular, $\FG$ and $\TC$ are \ecomp{}s of the graph $G$.
\end{customthm}
Moreover, we obtain the following analogue of \cite[Theorem 1]{EndsAndTangles} for $\FG$:
\begin{customthm}{\ref{FGcompHDetc}}
Let $G$ be any graph.
\begin{enumerate}
\item $\FG$ is a compact space in which $G$ is dense and $\FG\setminus G$ is totally disconnected.
\item If $G$ is locally finite and connected, then $\aF=\Omega$ and $\FG$ coincides with the Freudenthal compactification of $G$.
\end{enumerate}
\end{customthm}
Studying the technical $\cC$-systems leads us to the following result comparing our new compactification $\FG$ and the tangle compactification $\TC$:
\begin{customthm}{\ref{CFleCU}}
For every graph $G$ its \ecomp{} $\FG$ is coarser than its tangle compactification $\TC$.
\end{customthm}
Then we are finally in a position to answer the first question and half of the second question of Diestel from above:

\begin{customthm}{\ref{ECOminimal} and \ref{ECOequivalent}}
Let $G$ be any graph.
$\FG$ is the coarsest \ecomp{} of the graph $G$ induced by a $\cC$-system while $\TC$ is the finest one. Furthermore, the following are equivalent:
\begin{enumerate}
\item There exists a homeomorphism between $\FG$ and $\TC$ fixing $G\cup\Omega$.
\item Every $\cC_X$ is finite.
\item $\FG=G\cup\Omega=\TC$.
\end{enumerate}
\end{customthm}
As our third main result of this chapter, we answer the second half of Diestel's second question from above, and show that there is a canonical way to obtain $\FG$ from the tangle compactification.
For this, we define the natural equivalence relation $\asymp$ on the collection of all ultrafilter tangles by letting $\tau\asymp\tau'$ whenever $X_\tau=X_{\tau'}$ holds. 
\begin{customthm}{\ref{FGandQuotient}}
For every graph $G$ the \ecomp{} $\FG$ is homeomorphic to the quotient $\TC/{\asymp}$ of the tangle compactification $\TC$.
\end{customthm}
If $\Upsilon$ denotes the set of all ultrafilter tangles of a graph $G$, then we find the following cardinality bound and comparison:
\begin{customprop}{\ref{FGcardComparison}}
For every graph $G$ the following hold:
\begin{enumerate}
\item $|\aF-\Omega|=|\crit(\cX)|\le |V(G)|$,
\item $|\aF-\Omega|\cdot 2^{\A{c}}\le |\Upsilon|$.
\end{enumerate}
\end{customprop}
Strikingly, we will see an explicit definition of a set $S'$ of finite order separations yielding our fourth main result in this chapter:
\begin{customthm}{\ref{FasTangles}}
For every graph $G$ 
the elements of the inverse limit $\aF$ are precisely the $\aleph_0$-tangles of the graph $G$ with respect to the set $S'$.
\end{customthm}
In particular, $\FG$ actually is another `tangle compactification' for a smaller separation system.
Finally, we find an inverse subsystem of the inverse system $\{G_\gamma,f_{\gamma',\gamma},\Gamma\}$ whose inverse limit $\ECO$ describes the \ecomp{} $\FG$:

\begin{customthm}{\ref{FGinvLim}}
For every graph $G$ the inverse limit $\ECO$ is homeomorphic to the \ecomp{} $\FG$.
\end{customthm}

\paragraph*{Chapter~\ref{AuxArcTP}.}
Earlier, I expressed my hopes for the tangle compactification to generalise the elegant solution from the locally finite case to the general case.
To explain why I think that modifications to the tangle compactification cannot be avoided, 
I show that every possible notion of a topological spanning tree I could think of does not meet my expectations. More precisely, for the case that our graph $G$ is a $K_{2,\aleph_0}$ (see Fig.~\ref{fig:K_2N}) I will a name set of edges which I expect to induce a topological spanning tree for any sensible notion of a topological spanning tree, but none of whose candidates are 
topologically connected.

\begin{figure}[H]
    \centering
    \def\svgwidth{\columnwidth}
    \scalebox{.85}{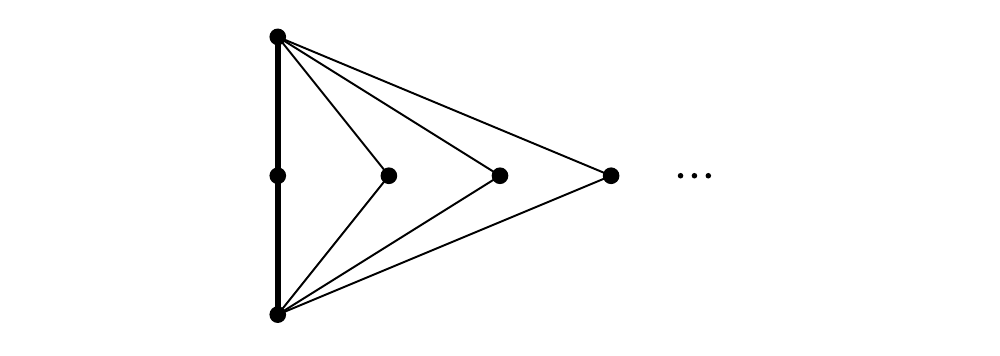}
    \caption{The heavy edge set of this $K_{2,\aleph_0}$ is the sum of all facial cycles.}
    \label{fig:K_2N}
\end{figure}

The heavy edge set from the drawing of our graph $G$ in Fig.~\ref{fig:K_2N} is the thin sum\footnote{A family $(D_i)_{i\in I}$ of subsets of $E(G)$ is \textit{thin} if no edge lies in $D_i$ for infinitely many $i$. 
Then the thin \textit{sum} $\sum_{i\in I}D_i$ is the collection of all edges that lie in $D_i$ for an odd number of indices $i$. See \cite{Bible} for details.} of all facial cycles, and I expect it to be an element of the cycle space for any notion of a cycle space of this graph.
Similarly, I further expect every two edges at a middle vertex together to form an element of any such cycle space.
Consequently, I think that the upper fan in Fig.~\ref{fig:K_2N} should induce a topological spanning tree which contains no edge of the lower fan (since this would create an element of the cycle space).
But if we take as $\Tau$ the upper fan plus the lower vertex $y$ and add any subset of the tangle space, then this turns out to be a (topologically) disconnected subspace of the tangle compactification: Indeed, if we cover $y$ with any basic open neighbourhood $O$ of the 1-complex of $G$, and if we cover $\Tau-\{y\}$ with $\TC-\{y\}$ (which is open since all singleton subsets of the tangle compactification are closed in it), then $O$ and $\TC-\{y\}$ meet only in inner edge points of the lower fan which are no points of $\Tau$.
Thus $\{O,\TC-\{y\}\}$ induces an open bipartition of $\Tau$.
Informally, the problem here is that the tangles are not `sufficiently connected' to certain vertices of the graph, which allows us to separate $y$ from $\Tau-\{y\}$ so easily.
This is why I think that it makes sense to consider modifications to the tangle compactification.

If we modify the tangle compactification to yield a new compactification with the potential of overcoming this and so many other difficulties, then it would be of great advantage if we could see to it that this new compactification also be Hausdorff: then, for connected graphs, the whole field of (non-metric) continuum\footnote{A \textit{continuum} is a compact connected Hausdorff topological space. See Section~\ref{subsec:GeneralTopology} for further details.} theory would open up, providing us with a useful topological tool box. 
This is why I construct two new spaces:
First, in this chapter we study classic hindrances to earlier attempts, and we find inspiration leading to the construction of the \textit{auxiliary space} $\GH$. 
This auxiliary space is Hausdorff, but in general it is not compact.
Thus in the next section we will enhance the idea behind the auxiliary space $\GH$ to obtain a \HDcomp{} $\LG$ from the tangle compactification.

The auxiliary space $\GH$ is obtained from $|G|$ (for the experts: 
here, $|G|$ is endowed with \textsc{MTop}) in two steps:
First, we add \textit{auxiliary edges} (which---formally---are internally disjoint copies of the unit interval) between every end of the graph and each of its dominating vertices, and between any two distinct vertices $u$ and $t$ of the same critical element of $\cX$ (one new auxiliary edge for each critical element of $\cX$ both vertices are contained in, and internally disjoint from the edge $ut$ of $G$ in case it exists).
Second, we generalise the topology of $|G|$ onto this new space in a natural way.

In the first half of this chapter, we study the arc-connected subspaces of $\GH$ induced by the auxiliary edges. 
More precisely, we have a closer look at the auxiliary arc-components of $\GH$, where an \textit{auxiliary arc} is an arc in $\GH$ which is included in the closure of the auxiliary edges. Rather strikingly, our first main result of this chapter holds without imposing any cardinality bounds on the graph considered:

\begin{customthm}{\ref{AuxArcsChar}}
Let $G$ be any graph.
Then between every two distinct points $x$ and $y$ of $V\cup\Omega$ there exists an auxiliary arc if and only if $x$ and $y$ are not finitely separable.
\end{customthm}
This suggests that we might be able to take advantage of the results in Chapter~\ref{closerLook}, namely that $(\cE G,\textsc{ETop})$ is the quotient $\TC/{\sim}$ of the tangle compactification, in order to generalise statements about $\cE G$ to statements about the auxiliary space $\GH$. 
We will see that every normal spanning tree of the graph $G$ induces a topological spanning tree (with respect to the common definition in terms of arcs) of $\GH$, so the auxiliary space $\GH$ overcomes one of the classic hindrances from \cite{TST} (which will be presented in detail in the introduction of Chapter~\ref{AuxArcTP}).

Motivated by these findings, we use the synergy between $\GH$ and $\cE G$ to prove a generalised version of tree-packing\footnote{As mentioned earlier, the so-called `tree-packing' \cite[Theorem 2.4.1]{Bible} was proved independently by Nash-Williams and Tutte in 1961: \textit{A finite multigraph contains $k$ edge-disjoint spanning trees if and only if for every partition $P$ of its vertex set it has at least $k(|P|-1)$ cross-edges.}} for countable graphs:
\begin{customthm}{\ref{auxTreePacking}}
Let $G$ be a countable connected graph. Then the following are equivalent for all $k\in\N$:
\begin{enumerate}
\item $G$ has $k$ topological spanning trees in $\GH$ which are edge-disjoint on $E(G)$.
\item $G$ has at least $k(|P|-1)$ edges across any finite vertex partition $P$.
\end{enumerate}
\end{customthm}
In the outlook of Chapter~\ref{AuxArcTP} we will see an idea on a generalisation of thin sums for circles of $\GH$, but I abandoned further investigation when the idea of $\LG$ came to my mind as a better candidate than $\GH$.

\paragraph*{Chapter~\ref{LGsection}.}

In the previous chapter we have seen that since none of the possible notions of a topological spanning tree of the tangle compactification I could think of met my expectations, I suggested to modify the tangle compactification.
 Furthermore, I claimed that it would be of great advantage if our modifications would yield a \HDcomp{}, since then---for connected graphs---the whole field of (non-metric) continuum theory would open up, providing us with a useful topological tool box.
Then we studied the auxiliary space $\GH$ which---in general---is only Hausdorff but not compact.
In this chapter we enhance the idea behind this auxiliary space to obtain a \HDcomp{} $\LG$ from the tangle compactification.

Starting from the tangle compactification, we add \textit{limit edges} (which---formally---are internally disjoint copies of the unit interval) between every end of the graph and each vertex dominating it, and between every ultrafilter tangle $\tau$ and each vertex in $X_\tau$. 
Treating the inner limit edge points almost like their incident tangles allows us to turn the topology of the tangle compactification into a compact Hausdorff one of the new space, yielding the \HDcomp{} $\LG$.
(As mentioned earlier, we will modify the inverse system $\{G_\gamma,f_{\gamma',\gamma},\Gamma\}$ to construct $\LG$ formally, see Chapter~\ref{LGsection} for details.)

To be precise, the \HDcomp{} $\LG$ only compactifies the 1-complex of $G$ endowed with a slightly coarser topology: at each vertex we only take $\epsilon$-balls instead of stars of arbitrary half-open partial edges (for the experts: $\LG$ is a \HDcomp{} of $(|G|,\textsc{MTop})$).
Since we wish to study graphs of arbitrary big cardinality while the cardinality of
arcs and circles is that of the unit interval (and hence constant), this discrepancy potentially prohibits us from fully understanding these graphs (e.g. sufficiently big graphs would not have a Hamilton circle by definition).
Hence we suggest generalisations of arcs, circles and topological spanning trees for the \HDcomp{} $\LG$ solely in terms of continua (see Chapter~\ref{LGsection} for details).

When $\LG$ came to my mind, this work had already reached critical length, so I only provide sketches. However, the examples I studied so far looked really promising, and I am eager to continue my research on this space.

\paragraph*{Chapter~\ref{sec:GqNatural}.}

Since the tangle compactification in general is not Hausdorff, but the quotient $\TC/{\sim}$ is, and since this quotient is homeomorphic to $(\cE G,\textsc{ETop})$ (cf. Chapter~\ref{closerLook}) for connected graphs $G$, one might ask whether $\cE G$ is the maximal Hausdorff quotient of the tangle compactification. 
Strinkingly, an already known example witnesses that---in general---this is not the case. 
In this chapter we study several graphs, but I did not succeed in finding a combinatorial description of the equivalence relation on the tangle compactification yielding its maximal Hausdorff quotient (it seems like wild auxiliary arcs are the key here, where an arc is \textit{wild} if it induces the ordering of the rationals on some subset of its vertices).

However, as our main contribution we at least present sufficient combinatorial conditions for when $\cE G$ is the maximal Hausdorff quotient of the tangle compactification of a connected graph $G$. 
For these results we need two new pieces of notation: First, if $G$ is a graph and $A$ is a set, then we write $E(A)$ for the collection of all edges of $G$ with both endvertices in $A$. Second, if $T$ is a normal spanning tree of a graph $G$, 
then by \cite[Lemma 8.2.3]{Bible} every end $\omega$ of $G$ contains precisely one \textit{normal ray} of $T$ (a ray in $\omega$ starting at the root of $T$) which we denote $R_\omega^T$.

\newpage
\begin{customprop}{\ref{firstSufCombCond}}
Let $G$ be a connected graph such that for all two distinct vertices $x$ and $y$ of $G$ the following are equivalent:
\begin{enumerate}
\item The vertices $x$ and $y$ are not finitely separable.
\item There exists some $X\in\cX$ containing both $x$ and $y$ such that no $Y\in\cX$ disjoint from $X$ separates $x$ and $y$ in $G-E(X)$.
\end{enumerate}
Then $\cE G$ is the maximal Hausdorff quotient of the tangle compactification $\TC$.
\end{customprop}
If (ii) holds for two distinct vertices $x$ and $y$ of a graph $G$, witnessed by such an $X$, then we can inductively find infinitely many edge-disjoint $x$--$y$ paths in $G-E(X)$, so $x$ and $y$ are not finitely separable. In particular, (ii) implies (i).
Therefore, all connected finitely separable graphs satisfy the premise of this proposition.
The next result involves normal spanning trees and binary trees:
\begin{customprop}{\ref{sufficientAllAuxArcsTame}}
If $G$ is a connected graph such that for every $x\in\TC/{\sim}$ the graph $G$ has a normal spanning tree $T(x)$ whose subtree $\bigcup_{\omega\in x\cap\Omega}R_\omega^{T(x)}$ contains no subdivision of the (infinite) binary tree,
then $\cE G$ is the maximal Hausdorff quotient of the tangle compactification $\TC$.
\end{customprop}
In the outlook of this chapter we point out the difficulties preventing us from giving a characterisation of these graphs, and we state our desired result as a conjecture.\\ \\ \\

Personally, I hope that a (possibly modified) tangle compactification allows us to further generalise the elegant solution from the locally finite case to the general case, and that is why in this work I study the tangle compactification of infinite graphs.

\newpage

\section{Definitions \& general facts}\label{DefAndFacts}
\subsection{Basic Notation}\label{Basics}

Any terms regarding graphs that are not defined in this work can be found in~\cite{Bible}.

A finite partition of a set is said to be \textit{cofinite} if at most one partition class is infinite.\index{cofinite partition/subset}
A set $A$ is \textit{cofinite in} a set $B$ if $B\setminus A$ is finite. If $A\subseteq B$ is cofinite in $B$, then $A$ is called a \textit{cofinite subset} of $B$.
If $A\subseteq B$ are two sets and $R$ is an equivalence relation on $A$ we denote by $B/R$ the set $(B\setminus A)\cup A/R$. 

The set $\N$ contains $0$, and for every $n\in\N$ we denote by $[n]$ the set $\{1,\hdots,n\}$. We denote the unit interval $[0,1]$ by $\I$,\index{$\I$} and for $\lambda\in\R$ and $\epsilon>0$ we write $(\lambda\pm\epsilon)$ for the open interval $(\lambda-\epsilon,\lambda+\epsilon)$ and $[\lambda\pm\epsilon]=[\lambda-\epsilon,\lambda+\epsilon]$.\index{$(\lambda\pm\epsilon)$, $[\lambda\pm\epsilon$]}

A handful of statements are modified or generalised versions of statements from the lecture courses by Diestel (winter 2015--summer 2016); we flagged them with an `$\L$'.\index{$\L$}

If $G$ is a graph,\index{1-complex of $G$}\index{$G$} then we denote by $\CG$ the 1-complex of $G$, i.e. in $\CG$ every edge $e=xy$ is a homeomorphic copy $[x,y]:=\{x\}\cup\mathring{e}\cup\{y\}$ of $[0,1]$ with $\mathring{e}$ corresponding to $(0,1)$ and $\mathring{e}\cap\mathring{e}'=\emptyset$ for every other edge $e'$ of $G$, and $e$ also inherits the euclidean metric from $\I$. 
The points of $\mathring{e}$ are called \textit{inner edge points}, and they inherit their basic open neighbourhoods from $\I$. The space $[x,y]$ is called a \textit{topological edge}, but we refer to it simply as \textit{edge}. Furthermore, for every vertex $u$ of $G$ the set $\bigcup_{e\in E(u)}[u,j_e)$ with each $j_e$ some point of $\mathring{e}$ is basic open.
If every $j_e$ is at distance $\epsilon$ from $u$ with respect to the metric of $e$, then we write $\cO_G(u,\epsilon)=\bigcup_{e\in E(u)}[u,j_e)$.\index{$\cO_G(u,\epsilon)$} For every $F\subseteq E$ we write $\smash{\mathring{F}=\bigcup_{e\in F}\mathring{e}}$.

If $e=[x,y]$ is a topological edge, and $d_e$ is its inherited metric from $\I$, then we denote by $m(e)$ the point of $\mathring{e}$ corresponding to $1/2$, and for all $0\le\epsilon<\delta\le 1$ we write\index{$x(\epsilon,\delta)y$}
\begin{align*}
x(\epsilon,\delta)y=\{i\in e\,|\,d_e(i,x)>\epsilon\text{ and }d_e(i,x)<\delta\}
\end{align*}
for the subset of $e$ corresponding to the open interval $(\epsilon,\delta)$ (with $x$ corresponding to 0 and $y$ corresponding to $1$).

If $X$ is a finite set of vertices of $G$ and $\omega$ is an end of $G$, then $C(X,\omega)$\index{$C(X,\omega)$} is the unique component of $G-X$ such that every ray in $\omega$ has a tail in it. If $C$ is a component of $G-X$, we write $\Omega(X,C)=\{\omega\in\Omega\,|\,C(X,\omega)=C\}$.\index{$\Omega(X,C)$} Furthermore, if $\omega$ is an end of $G$, we write $\Omega(X,\omega)$\index{$\Omega(X,\omega)$} for $\Omega(X,C(X,\omega))$.

For every set $A$ we denote by $E(A)$ the set of all edges of $G$ with both endvertices in $A$.
If $A$ and $B$ are two disjoint sets and $\epsilon\in (0,1]$, then we write $\mathring{E}_\epsilon(A,B)^*$\index{$\mathring{E}_\epsilon(A,B)^*$} for the set of all inner points of $A$--$B$ edges (of $G$) at distance less than $\epsilon$ from their endpoint in $B$ (with respect to the metric of the edge). The `*' on the right side is supposed to help us remember `from where to take our $\epsilon$-balls'.

Two vertices of $G$ are said to be \textit{finitely separable}\index{finitely separable} whenever there exist some finitely many edges separating them.\footnote{i.e. $\exists\;$finite $F\subseteq E(G)$ such that the two vertices are contained in distinct components of $G-F$.} 
If every two distinct vertices of $G$ are finitely separable, then we call $G$ \textit{finitely separable}.

If $T$ is a normal spanning tree (NST) of $G$
and $\omega$ is an end of $G$, we denote by $R_\omega^T$\index{$R_\omega^T$} the normal ray of $T$ in $\omega$ (see ~\cite[Lemma 8.2.3]{Bible}).

If $R$ is a ray and $u$ is a vertex of $R$, then $uR$ denotes the tail of $R$ starting with $u$, and $Ru$ denotes the finite intial segment of $R$ ending with $u$.\index{$uR$, $Ru$}

We denote by $T_2$ the (infinite) binary tree on the set of finite 0--1 sequences (with the empty sequence as the root).\index{$T_2$}

\subsection{Inverse Limits}

Below we give a minimal introduction to inverse limits of inverse systems accumulated from \cite[Chapter 8.7 of the 5th edition]{Bible}, \cite{ProfiniteGroups} and \cite{Field}:\index{inverse system}\index{inverse limit}

A partially ordered set $(I,\le)$ is called \textit{directed} if for every $i,j\in I$ there is some $k\in I$ with $k\ge i,j$. Assume that $(X_i\,|\,i\in I)$ is a family of topological spaces indexed by some directed poset $(I,\le)$. Furthermore suppose that we have a family $(\varphi_{ji}\colon X_j\to X_i)_{i\le j\in I}$ of continuous maps which are \textit{compatible} in that
$\varphi_{ki}=\varphi_{ji}\circ\varphi_{kj}$ for all $i\le j\le k\in I$, and which are the identity on $X_i$ in case of $i=j$.
Then both families together form an \textit{inverse system}, and the maps $\varphi_{ji}$ are called its \textit{bonding maps}. We denote such a system by $\{X_i,\varphi_{ji},I\}$, or $\{X_i,\varphi_{ji}\}$ for short if $I$ is clear from context. The \textit{inverse limit} $\invLim (X_i\,|\,i\in I)$ (or $\invLim X_i$ for short) of this system is the subset
\begin{align*}
\{(x_i)_{i\in I}\,|\,\varphi_{ji}(x_j)=x_i\text{ for all }i\le j\in I\}
\end{align*}
of $\prod_{i\in I}X_i$ whose product topology we pass on to $\invLim X_i$ via the subspace topology. 
Whenever we define an inverse system without specifying a topology for the spaces $X_i$, we tacitly assume them to carry the discrete topology.
We end this introduction by listing some Lemmas which we will put to use later:

\begin{lemma}[{\cite[Lemma 1.1.2]{ProfiniteGroups}}]
If $\{X_i,\varphi_{ji},I\}$ is an inverse system of Hausdorff topological spaces, then $\invLim X_i$ is a closed subspace of $\prod_{i\in I}X_i$.
\end{lemma}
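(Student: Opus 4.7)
The plan is to exhibit $\invLim X_i$ as an intersection of closed subsets of $\prod_{i\in I}X_i$, one for each comparable pair $i\le j$ in $I$. Writing $\pi_k\colon\prod_{\ell\in I}X_\ell\to X_k$ for the canonical projections, I would, for every such pair $i\le j$, define
\[
C_{ji}=\bigl\{(x_k)_{k\in I}\in\prod\nolimits_{k\in I}X_k \;\bigm|\; \varphi_{ji}(x_j)=x_i\bigr\}.
\]
By the very definition of the inverse limit one has
\[
\invLim X_i=\bigcap_{i\le j}C_{ji},
\]
so it suffices to show that each $C_{ji}$ is closed in the product.

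To do so, I would consider the map
\[
\Phi_{ji}\colon\prod\nolimits_{k\in I}X_k\to X_i\times X_i,\quad (x_k)_{k\in I}\mapsto\bigl(x_i,\;\varphi_{ji}(x_j)\bigr),
\]
whose continuity follows from continuity of the projections $\pi_i,\pi_j$ (which is how the product topology is defined) together with continuity of the bonding map $\varphi_{ji}$ and the universal property of the product $X_i\times X_i$. Then $C_{ji}=\Phi_{ji}^{-1}(\Delta_i)$, where $\Delta_i\subseteq X_i\times X_i$ denotes the diagonal.

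The one place where the hypothesis enters is now the standard fact that, since $X_i$ is Hausdorff, the diagonal $\Delta_i$ is closed in $X_i\times X_i$. Consequently each $C_{ji}$ is closed as the preimage of a closed set under a continuous map, and the intersection of the $C_{ji}$ displayed above is therefore closed as well. I do not foresee any real obstacle beyond setting up the map $\Phi_{ji}$ cleanly: the entire argument reduces to the usual characterisation of the Hausdorff property via the closed diagonal, applied factor by factor.
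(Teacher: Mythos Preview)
Your argument is correct and is the standard proof of this fact. Note that the paper does not actually prove this lemma; it merely cites it from \cite{ProfiniteGroups}, so there is no in-paper proof to compare against.
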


A topological space is \textit{totally disconnected} if every point in the space is its own connected component.\index{totally disconnected}
\begin{lemma}[{\cite[Proposition 1.1.3]{ProfiniteGroups}}]\label{InvLimTotDisc}
Let $\{X_i,\varphi_{ji},I\}$ be an inverse system of compact Hausdorff totally disconnected topological spaces. Then $\invLim X_i$ is also a compact Hausdorff totally disconnected topological space.
\end{lemma}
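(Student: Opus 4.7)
The plan is to verify the three asserted properties in turn, leveraging the fact that by the previous lemma the inverse limit $\invLim X_i$ embeds as a closed subspace of the product $\prod_{i\in I} X_i$.

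First, I would handle compactness and Hausdorffness together, as both are essentially automatic. Tychonoff's theorem gives that $\prod_{i\in I} X_i$ is compact since each factor is compact, and products of Hausdorff spaces are Hausdorff. Since $\invLim X_i$ is closed in $\prod_{i\in I} X_i$ by the cited lemma, it is a closed subspace of a compact Hausdorff space, hence compact Hausdorff itself.

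The main content is total disconnectedness, and this is where I would spend the bulk of the argument. The key preliminary fact I would invoke is that a compact Hausdorff totally disconnected space is zero-dimensional, i.e.\ its clopen subsets form a basis; equivalently, distinct points can be separated by clopens. Given this, take two distinct points $x=(x_i)$ and $y=(y_i)$ in $\prod_{i\in I} X_i$. They differ in some coordinate $i_0$, so by zero-dimensionality of $X_{i_0}$ there is a clopen $U\subseteq X_{i_0}$ with $x_{i_0}\in U$ and $y_{i_0}\notin U$. The preimage $\pi_{i_0}^{-1}(U)$ is then clopen in the product, contains $x$, and avoids $y$. Consequently any connected subset of $\prod_{i\in I} X_i$ containing both points would meet and miss this clopen, a contradiction; so connected components in the product are singletons. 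Restricting to the subspace $\invLim X_i$ preserves total disconnectedness, since a connected subset of the inverse limit is a fortiori a connected subset of the ambient product.

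The only real obstacle is the equivalence of total disconnectedness and zero-dimensionality in the compact Hausdorff setting; if this is not assumed background, I would prove it via the standard argument that in a compact Hausdorff space connected components coincide with quasi-components (intersections of clopen neighbourhoods), so singleton components force clopen separation of distinct points. Everything else is a direct application of standard permanence properties of compact, Hausdorff, and totally disconnected spaces under products and closed subspaces.
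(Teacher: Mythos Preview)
Your argument is correct and follows the standard route: Tychonoff plus closedness of the inverse limit in the product handles compactness, Hausdorffness is inherited from the product, and total disconnectedness is obtained by separating distinct points via a clopen in one coordinate (using that compact Hausdorff totally disconnected spaces are zero-dimensional) and pulling it back along a projection. The only nontrivial ingredient you flag --- that in compact Hausdorff spaces connected components coincide with quasi-components --- is exactly the right thing to invoke.

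There is nothing to compare against in the paper itself: the lemma is stated as a citation from \cite{ProfiniteGroups} and is not proved here. Your write-up is more than the paper provides and is entirely sound.
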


\begin{lemma}[{\cite[Lemma 1.1.3]{Field}}]\label{InvLimCptHD}
The inverse limit of an inverse system of non-empty compact Hausdorff spaces is a non-empty compact Hausdorff space.
\end{lemma}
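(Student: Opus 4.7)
The plan is to exhibit the inverse limit as a closed subspace of the Tychonoff product $P := \prod_{i \in I} X_i$, and then verify non-emptiness via the finite intersection property. By Tychonoff's theorem, $P$ is compact; it is Hausdorff because each factor is. The preceding lemma (Lemma 1.1.2) already tells us that $\invLim X_i$ is closed in $P$, so it is automatically compact and Hausdorff. Hence the only real content is showing $\invLim X_i \neq \emptyset$.

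For each pair $i \le j$ in $I$, I would consider the set
$$F_{ji} := \{(x_k)_{k \in I} \in P : \varphi_{ji}(x_j) = x_i\},$$
which is closed in $P$ because it is the preimage of the (closed) diagonal of $X_i \times X_i$ under the continuous map $(x_k)_k \mapsto (\varphi_{ji}(x_j), x_i)$, using that $X_i$ is Hausdorff. By definition, $\invLim X_i = \bigcap_{i \le j} F_{ji}$. Since $P$ is compact, it suffices to prove that every finite subfamily of $\{F_{ji} : i \le j\}$ has non-empty intersection.

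So fix finitely many constraints $i_1 \le j_1,\ \ldots,\ i_n \le j_n$ in $I$. By directedness of $I$, pick $k \in I$ with $k \ge j_\ell$ for every $\ell$. Since $X_k \neq \emptyset$, choose some $x_k \in X_k$. Define $x_m := \varphi_{k,m}(x_k)$ for every $m \le k$, and for every index $m$ which is not below $k$ pick any element of $X_m$ (invoking the axiom of choice via non-emptiness of each $X_m$). The compatibility identity $\varphi_{k,i_\ell} = \varphi_{j_\ell,i_\ell} \circ \varphi_{k,j_\ell}$ yields
$$\varphi_{j_\ell,i_\ell}(x_{j_\ell}) = \varphi_{j_\ell,i_\ell}(\varphi_{k,j_\ell}(x_k)) = \varphi_{k,i_\ell}(x_k) = x_{i_\ell}$$
for every $\ell$, so the tuple $(x_m)_{m \in I}$ lies in $F_{j_1 i_1} \cap \cdots \cap F_{j_n i_n}$, which is therefore non-empty.

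The main (indeed the only) obstacle is this finite intersection step: one has to use directedness to find a common upper bound $k$ for the finitely many constraints at once, and then use compatibility of the bonding maps to propagate a single choice $x_k$ down to a consistent family on the relevant indices. Once that is in place, compactness of $P$ gives a point of the full intersection $\invLim X_i$, and the Hausdorff/compact assertions for $\invLim X_i$ come for free from the ambient product.
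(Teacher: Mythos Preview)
Your argument is correct and is the standard proof of this fact. Note, however, that the paper does not actually prove this lemma itself: it is stated with a citation to \cite[Lemma 1.1.3]{Field} and no proof is given in the paper, so there is nothing in the paper to compare your approach against. Your use of the preceding Lemma~1.1.2 for closedness, Tychonoff for compactness of the product, and the finite intersection property (via directedness and compatibility of the bonding maps) for non-emptiness is exactly the textbook route.
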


\begin{lemma}[Generalized Infinity Lemma, {\cite[Proposition 1.1.4]{ProfiniteGroups}} and {\cite[Corollary 1.1.4]{Field}}]\label{GIL}
The inverse limit of an inverse system of non-empty finite sets is non-empty.
\end{lemma}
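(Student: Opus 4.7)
The plan is to deduce the Generalized Infinity Lemma from the two preceding compactness results (and ultimately Tychonoff's theorem), which is the standard route. Equip each non-empty finite set $X_i$ with the discrete topology, so that each $X_i$ is a compact Hausdorff space. The product $P:=\prod_{i\in I}X_i$ is then a non-empty compact Hausdorff space by Tychonoff, and by definition $\invLim X_i$ sits inside $P$ as a certain subset cut out by the compatibility conditions.

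The core step is to exhibit $\invLim X_i$ as an intersection of closed subsets of $P$ and apply the finite intersection property. For each pair $i\le j$ in $I$, define
\begin{align*}
C_{ji}=\{(x_\ell)_{\ell\in I}\in P\,|\,\varphi_{ji}(x_j)=x_i\}.
\end{align*}
Each $C_{ji}$ is closed in $P$ because it is the equalizer of the two continuous maps $\pi_i$ and $\varphi_{ji}\circ\pi_j$ into the Hausdorff space $X_i$ (discrete finite spaces are Hausdorff). Clearly $\invLim X_i=\bigcap_{i\le j}C_{ji}$.

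To verify the finite intersection property, fix finitely many pairs $(j_1,i_1),\dots,(j_n,i_n)$ with $i_l\le j_l$, and let $S=\{i_1,\dots,i_n,j_1,\dots,j_n\}$. Since $(I,\le)$ is directed, by induction on $|S|$ there exists some $k\in I$ with $k\ge s$ for every $s\in S$. Pick any element $x_k\in X_k$ (possible as $X_k\ne\emptyset$), set $x_s:=\varphi_{ks}(x_s)$ for $s\in S\setminus\{k\}$, and extend to a point $x=(x_\ell)_{\ell\in I}\in P$ by making an arbitrary choice in each remaining coordinate (again using that every $X_\ell$ is non-empty, which requires no choice principle beyond what Tychonoff already uses). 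Compatibility of the bonding maps gives
\begin{align*}
\varphi_{j_l i_l}(x_{j_l})=\varphi_{j_l i_l}\bigl(\varphi_{k j_l}(x_k)\bigr)=\varphi_{k i_l}(x_k)=x_{i_l},
\end{align*}
so $x\in\bigcap_{l=1}^n C_{j_l i_l}$. Hence the family $(C_{ji})_{i\le j}$ has the finite intersection property, and by compactness of $P$ its total intersection, namely $\invLim X_i$, is non-empty.

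The only step requiring care is the verification that each $C_{ji}$ is closed, but once the discrete topology is in place this is immediate; no step here is a genuine obstacle, as the lemma is essentially a packaging of Tychonoff plus the FIP. The main conceptual ingredient is the use of directedness of $I$ to produce a common upper bound $k$ for the finitely many indices involved, which is precisely what allows the compatibility relation $\varphi_{ki}=\varphi_{ji}\circ\varphi_{kj}$ to propagate a single choice at level $k$ down to all finitely many relevant levels.
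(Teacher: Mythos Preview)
Your argument is correct and is the standard Tychonoff/FIP proof of this result; note only the typo where you write $x_s:=\varphi_{ks}(x_s)$ but mean $x_s:=\varphi_{ks}(x_k)$. The paper itself gives no proof of this lemma at all---it is simply quoted from the cited references---so there is nothing to compare against.
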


\begin{lemma}[{\cite[Lemma 1.1.1]{Field}}]\label{invLimBasis}
Let $\{X_i,\varphi_{ji},I\}$ be an inverse system of topological spaces and denote by $\pi_i$ the restriction of the $i$th projection map $pr_i\colon\prod_{j\in I}X_j\to X_i$ to $X$ where $X=\invLim X_i$.
Then the collection of all subsets of $X$ of the form $\pi_i^{-1}(U_i)$ with $U_i$ open in $X_i$ is a basis for the topology of $X$.

Moreover, if for every $i$ the set $\cB_i$ is a basis of the topology of $X_i$, then the collection of all subsets of $X$ of the form $\pi_i^{-1}(U_i)$ with $U_i$ in $\cB_i$ is a basis for the topology of $X$.\footnote{This last sentence is not part of \cite[Lemma 1.1.1]{Field}, but its claim follows immediately from the original statement.}
\end{lemma}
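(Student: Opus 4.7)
The plan is to bootstrap from the definition of the product topology, noting that $X=\invLim X_i$ carries the subspace topology inherited from $\prod_{j\in I}X_j$. Since the product topology has as a sub-basis the preimages $pr_i^{-1}(U_i)$ (for $i\in I$ and $U_i$ open in $X_i$), restricting to $X$ yields that the sets $\pi_i^{-1}(U_i)$ form a sub-basis for the subspace topology on $X$. The content of the first assertion is therefore that this sub-basis is already closed under finite intersections (up to replacement by another set of the same form), hence is in fact a basis.

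To upgrade the sub-basis to a basis I would use the directedness of $I$ together with compatibility of the bonding maps. Given finitely many sub-basic sets $\pi_{i_1}^{-1}(U_{i_1}),\dots,\pi_{i_n}^{-1}(U_{i_n})$, pick $k\in I$ with $k\ge i_1,\dots,i_n$. For every $x=(x_j)_{j\in I}\in X$ and every $m\in\{1,\dots,n\}$ one has $\pi_{i_m}(x)=\varphi_{k\,i_m}(x_k)$, so
\[
\pi_{i_m}^{-1}(U_{i_m})=\pi_k^{-1}\bigl(\varphi_{k\,i_m}^{-1}(U_{i_m})\bigr).
\]
Intersecting over $m$ and using continuity of each $\varphi_{k\,i_m}$ gives
\[
\bigcap_{m=1}^n \pi_{i_m}^{-1}(U_{i_m})=\pi_k^{-1}\Bigl(\bigcap_{m=1}^n \varphi_{k\,i_m}^{-1}(U_{i_m})\Bigr),
\]
and the right-hand set is again of the desired form $\pi_k^{-1}(V_k)$ with $V_k$ open in $X_k$. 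This yields the first part.

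For the moreover clause I would argue pointwise: given any $x\in X$ and any basic open neighbourhood $\pi_i^{-1}(U_i)$ of $x$ from the first part, the point $\pi_i(x)\in U_i$ has, by assumption on $\cB_i$, some $B\in\cB_i$ with $\pi_i(x)\in B\subseteq U_i$. Then $x\in\pi_i^{-1}(B)\subseteq\pi_i^{-1}(U_i)$, and $\pi_i^{-1}(B)$ is of the prescribed form. Hence the restricted family refines the basis from the first part, and is therefore itself a basis.

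I do not anticipate a genuine obstacle here; the only subtle point is to remember that one must invoke the directedness of $I$ precisely once (to select $k$), and that the identities $\pi_{i_m}=\varphi_{k\,i_m}\circ\pi_k$ hold \emph{on} $X$ (not on the ambient product) — this is exactly the defining compatibility of the elements of $\invLim X_i$. Everything else is formal manipulation of subspace and product topologies.
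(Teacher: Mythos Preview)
Your proof is correct and entirely standard. Note, however, that the paper does not give its own proof of this lemma: the first assertion is simply cited from \cite[Lemma 1.1.1]{Field}, and the ``moreover'' clause is dispatched in a footnote as following immediately from the first part --- which is exactly what your pointwise refinement argument shows.
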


A topological space $X$ is T$_1$ if for every pair of distinct points, each has an open neighbourhood avoiding the other. Equivalently, $X$ is T$_1$ if and only if every finite subset of $X$ is closed. A topological space $X$ is T${}_2$ if it is Hausdorff. Note that we use normal font here, whereas the binary tree $T_2$ uses italic.\index{T\textsubscript{1}, T\textsubscript{2}}
\begin{lemma}[{\cite[Corollary 1.1.6]{Field}}]\label{InvLimSurjEmbOnto}
Let $T$ be a compact space, $\{X_i,\varphi_{ji},I\}$ an inverse system of T$_1$ topological spaces, and $\sigma_i\colon T\to X_i$ a compatible system of continuous surjective maps. Let $\sigma\colon T\to \invLim X_i$ map each $x$ to $(\sigma_i(x)\,|\,i\in I)$. Then $\sigma$ is a continuous surjection.\footnote{In \cite{Field} the $X_i$ are required to be Hausdorff (T$_2$), but T$_1$ suffices since the proof only uses that singleton subsets of the $X_i$ are closed. Furthermore, \cite{Field} does not state that $\sigma$ is continuous.}
\end{lemma}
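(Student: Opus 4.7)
The plan is to verify three things in turn: that $\sigma$ lands in $\invLim X_i$, that $\sigma$ is continuous, and that $\sigma$ is surjective. The first two should be routine; the surjectivity will require the compactness of $T$ together with the T$_1$ hypothesis, via a finite-intersection-property argument, and I expect this to be the only real content.

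First I would check well-definedness: for any $x\in T$ and $i\le j$ in $I$, compatibility of the family $(\sigma_i)$ gives $\varphi_{ji}(\sigma_j(x))=\sigma_i(x)$, so $(\sigma_i(x))_{i\in I}$ does satisfy the defining equations of the inverse limit. For continuity I would invoke Lemma~\ref{invLimBasis}: the sets $\pi_i^{-1}(U_i)$ with $U_i$ open in $X_i$ form a basis of $\invLim X_i$, and $\sigma^{-1}(\pi_i^{-1}(U_i))=(\pi_i\circ\sigma)^{-1}(U_i)=\sigma_i^{-1}(U_i)$, which is open in $T$ by continuity of~$\sigma_i$.

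The main step is surjectivity. Fix $(x_i)_{i\in I}\in\invLim X_i$; I want to produce $x\in T$ with $\sigma_i(x)=x_i$ for all $i$, i.e.\ a point in the intersection
\[
\bigcap_{i\in I}\sigma_i^{-1}(x_i).
\]
Each $\sigma_i^{-1}(x_i)$ is closed in $T$ because $\{x_i\}$ is closed in the T$_1$ space $X_i$ and $\sigma_i$ is continuous, and it is non-empty since $\sigma_i$ is surjective. By compactness of $T$, it suffices to verify the finite intersection property. Given finitely many indices $i_1,\dots,i_n$, use directedness of $I$ to pick some $j\ge i_1,\dots,i_n$. For every $t\in\sigma_j^{-1}(x_j)$ and every $k$ one has
\[
\sigma_{i_k}(t)=\varphi_{j,i_k}(\sigma_j(t))=\varphi_{j,i_k}(x_j)=x_{i_k},
\]
where the first equality uses compatibility of the $\sigma$-family and the last uses that $(x_i)$ lies in the inverse limit. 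Hence $\sigma_j^{-1}(x_j)\subseteq\bigcap_{k=1}^n\sigma_{i_k}^{-1}(x_{i_k})$, and the left side is non-empty by surjectivity of $\sigma_j$.

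Any point $x$ in the full intersection then satisfies $\sigma(x)=(x_i)_{i\in I}$, completing surjectivity. The hardest point conceptually is seeing that the T$_1$ hypothesis (as opposed to the stronger Hausdorff assumption in the original source) is exactly what is needed: it is used only to ensure that the singletons $\{x_i\}$ are closed, so their preimages are closed in the compact space $T$ and the finite-intersection argument goes through.
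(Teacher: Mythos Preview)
Your proof is correct. The continuity argument you give is identical to the paper's: both use Lemma~\ref{invLimBasis} and the identity $\sigma^{-1}(\pi_i^{-1}(U_i))=\sigma_i^{-1}(U_i)$. The paper itself does not prove surjectivity at all---it simply cites \cite{Field} for that part and only supplies the continuity (which the cited source omits). Your finite-intersection-property argument for surjectivity is the standard one and is presumably what \cite{Field} does; in particular you correctly isolate that T$_1$ is used solely to make the fibres $\sigma_i^{-1}(x_i)$ closed, which is exactly the observation the paper's footnote alludes to.
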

\begin{proof}
Since \cite{Field} only states that $\sigma$ is surjective, we quickly show that it is also continuous. For this, by Lemma~\ref{invLimBasis} consider any basic open set $\pi_i^{-1}(U_i)$ of $\invLim X_i$ where $U_i$ is open in $X_i$. Then $\pi_i\circ\sigma=\sigma_i$ implies that $\sigma^{-1}(\pi_i^{-1}(U_i))=\sigma_i^{-1}(U_i)$ which is open in $T$ since $\sigma_i$ is continuous.
\end{proof}

\begin{lemma}[{\cite[Corollary 1.1.5]{Field}}]\label{InvLimCompSurjOnto}
Let $\{X_i,\varphi_{ji},I\}$ and $\{X_i',\varphi_{ji}',I\}$ be inverse systems of compact Hausdorff spaces. Let $\sigma_i\colon X_i\to X_i'$ be a compatible system of continuous surjections. Then the map
\begin{align*}
\invLim X_i&\to\invLim X_i'\\
(x_i\,|\,i\in I)&\mapsto (\sigma_i(x_i)\,|\,i\in I)
\end{align*}
is a continuous surjection.
\end{lemma}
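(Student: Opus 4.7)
Denote by $\sigma$ the candidate map $(x_i)_{i\in I}\mapsto(\sigma_i(x_i))_{i\in I}$, which is the restriction to $\invLim X_i$ of the product map $\prod_i\sigma_i\colon\prod_i X_i\to\prod_i X_i'$. The plan is to verify three things in turn: (a) $\sigma$ really lands in $\invLim X_i'$, (b) $\sigma$ is continuous, and (c) $\sigma$ is surjective. Here (a) and (b) are essentially book-keeping; the content of the lemma lies in (c), for which the trick will be to reduce to Lemma~\ref{InvLimCptHD}.

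For (a), the compatibility of the system $(\sigma_i)_{i\in I}$ means $\sigma_i\circ\varphi_{ji}=\varphi_{ji}'\circ\sigma_j$ for all $i\le j$. Hence for $(x_i)\in\invLim X_i$ and $i\le j$ we have $\varphi_{ji}'(\sigma_j(x_j))=\sigma_i(\varphi_{ji}(x_j))=\sigma_i(x_i)$, so $(\sigma_i(x_i))\in\invLim X_i'$. For (b), I would mimic the short argument appended to Lemma~\ref{InvLimSurjEmbOnto}: by Lemma~\ref{invLimBasis}, a basic open set of $\invLim X_i'$ has the form $(\pi_i')^{-1}(U_i')$ with $U_i'$ open in $X_i'$, and its preimage under $\sigma$ is $\pi_i^{-1}(\sigma_i^{-1}(U_i'))$, open since $\sigma_i$ is continuous.

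The main obstacle is (c). Given a target $(x_i')\in\invLim X_i'$, set $Y_i:=\sigma_i^{-1}(x_i')\subseteq X_i$. Each $Y_i$ is non-empty by surjectivity of $\sigma_i$, and closed in the Hausdorff space $X_i$ (as the preimage of the closed singleton $\{x_i'\}$ under a continuous map), hence compact Hausdorff. Compatibility further forces $\varphi_{ji}$ to restrict to a map $Y_j\to Y_i$, since for $y\in Y_j$ we have $\sigma_i(\varphi_{ji}(y))=\varphi_{ji}'(\sigma_j(y))=\varphi_{ji}'(x_j')=x_i'$. Thus $\{Y_i,\varphi_{ji}|_{Y_j},I\}$ is an inverse subsystem of non-empty compact Hausdorff spaces, and Lemma~\ref{InvLimCptHD} supplies some $(x_i)\in\invLim Y_i\subseteq\invLim X_i$. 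By construction $\sigma_i(x_i)=x_i'$ for every $i$, giving the required preimage.

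The genuine difficulty is that independently picking $x_i\in\sigma_i^{-1}(x_i')$ for each $i$ does not yield a coherent point of $\invLim X_i$; the compatibility condition has to hold simultaneously across all $i\le j$. Framing the fibers as an inverse subsystem and appealing to Lemma~\ref{InvLimCptHD} converts this compatibility problem into exactly the kind of statement that lemma is designed to handle, which is why the proof goes through cleanly.
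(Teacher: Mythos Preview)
The paper does not give its own proof of this lemma: it is stated as a citation from \cite[Corollary 1.1.5]{Field} with no argument supplied. Your proof is correct and is in fact the standard one---well-definedness and continuity are routine, and surjectivity via the fiber subsystem $\{Y_i,\varphi_{ji}|_{Y_j},I\}$ together with Lemma~\ref{InvLimCptHD} is exactly how this is usually done.
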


A subset $J$ of $I$ is \textit{cofinal} in $I$ if for every $i\in I$ there is some $j\in J$ with $j\ge i$.\index{cofinal}
If $X$ and $Y$ are two topological spaces, we write $X\simeq Y$ to say that $X$ and $Y$ are homeomorphic.\index{$\simeq$}

\begin{lemma}[{\cite[Lemma 8.7.3 of the 5th edition]{Bible}}]\label{BibleInvLimCofinal}
Let $\{X_i,\varphi_{ji},I\}$ be an inverse system of compact spaces, and let $J\subseteq I$ be cofinal in $I$. Then $\{X_i,\varphi_{ji},J\}$ satisfies $\invLim (X_i\,|\,i\in I)\simeq\invLim (X_i\,|\,i\in J)$ with the homeomorphism that maps every point $(x_i\,|\,i\in I)$ to its restriction $(x_i\,|\,i\in J)$.
\end{lemma}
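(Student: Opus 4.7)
The plan is to verify directly that the restriction map
\[
\pi\colon \invLim(X_i\,|\,i\in I)\to\invLim(X_i\,|\,i\in J),\qquad (x_i)_{i\in I}\mapsto (x_i)_{i\in J},
\]
is a homeomorphism. First I would note that $\pi$ is well-defined, because the defining compatibility relations $\varphi_{ji}(x_j)=x_i$ for a family indexed by $I$ restrict to the same relations on any subfamily indexed by $J\subseteq I$. Continuity of $\pi$ is immediate, since $\pi$ is the restriction to the inverse limit of the canonical continuous projection $\prod_{i\in I}X_i\to\prod_{j\in J}X_j$.

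Next I would handle injectivity using cofinality. Suppose $(x_i)_{i\in I}$ and $(x_i')_{i\in I}$ agree on $J$. For any $i\in I$, pick $j\in J$ with $j\ge i$; then $x_i=\varphi_{ji}(x_j)=\varphi_{ji}(x_j')=x_i'$. For surjectivity, given a compatible family $(y_j)_{j\in J}$, define an extension to $I$ by $x_i:=\varphi_{ji}(y_j)$ for any $j\in J$ with $j\ge i$ (exists by cofinality). The extension is well-defined: given two such $j,j'\in J$, directedness of $I$ produces $k\in I$ with $k\ge j,j'$, and cofinality then produces $\ell\in J$ with $\ell\ge k$; compatibility of the bonding maps and of $(y_j)_{j\in J}$ yield
\[
\varphi_{ji}(y_j)=\varphi_{ji}(\varphi_{\ell j}(y_\ell))=\varphi_{\ell i}(y_\ell)=\varphi_{j'i}(\varphi_{\ell j'}(y_\ell))=\varphi_{j'i}(y_{j'}).
\]
The same calculation shows that $(x_i)_{i\in I}$ is itself compatible, and by construction $\pi$ maps it to $(y_j)_{j\in J}$.

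Finally I would verify that $\pi^{-1}$ is continuous. By Lemma~\ref{invLimBasis} it suffices to check that $\pi^{-1}$ pulls back every basic open set $\pi_i^{-1}(U_i)$ (with $U_i$ open in $X_i$) to an open set. Picking $j\in J$ with $j\ge i$, the characterization of the surjective extension above gives
\[
\pi\bigl(\pi_i^{-1}(U_i)\bigr)=\pi_j^{-1}\bigl(\varphi_{ji}^{-1}(U_i)\bigr)\cap \invLim(X_i\,|\,i\in J),
\]
which is open since $\varphi_{ji}$ is continuous.

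The main obstacle is the well-definedness of the surjective extension, which is the only place where cofinality, directedness of $I$ and the cocycle identity for the bonding maps all have to be combined at once; after that, the bijectivity and the continuity of $\pi^{-1}$ reduce to bookkeeping on basic open sets, and the compactness hypothesis is not actually needed for the homeomorphism conclusion (it is retained only for consistency with the intended applications).
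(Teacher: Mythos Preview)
Your proof is correct. Note that the paper does not supply its own proof of this lemma: it is quoted verbatim from an external reference (Diestel's textbook) and used as a black box. Your direct verification that the restriction map is a well-defined continuous bijection with continuous inverse is the standard argument, and your remark that compactness is not actually used is accurate --- the hypothesis is inherited from the source but plays no role in establishing the homeomorphism.
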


A function $f\colon X\to Y$ is \textit{monotone} if $f^{-1}(y)$ is connected for every $y\in f[X]$.\index{monotone} Assume that $(X_n\,|\,n\in\N)$ is a family of topological spaces, and furthermore suppose that for every $n\in\N-\{0\}$ we have a continuous map $f_n\colon X_n\to X_{n-1}$. Then the family of the $X_n$ together with the family of all $f_n$ forms an \textit{inverse sequence}\index{inverse sequence}, denoted $\{X_n,f_n,\N\}$. Clearly, every such inverse sequence gives rise to an inverse system $\{X_n,f_{m,n},\N\}$ where $f_{m,n}=f_{n+1}\circ\cdots \circ f_m$ for $m>n$ and $f_{n,n}=\id_{X_n}$. Hence, given an inverse sequence $\{X_n,f_n,\N\}$, we write $\invLim X_n$ for the inverse limit of the inverse system it induces.

\begin{theorem}[Capel, {\cite[Theorem 4.11]{Capel}, \cite[Theorem 200]{invLim}}]\label{Capel}
If $\{A_n,f_n,\N\}$ is an inverse sequence such that $A_n$ is an arc and $f_n$ is monotone and surjective for every $n$ then $\invLim A_n$ is an arc.
\end{theorem}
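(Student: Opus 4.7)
The plan is to equip $X:=\invLim A_n$ with a natural total order inherited from the arcs $A_n$, to show that its order topology coincides with the inverse limit topology, and then to invoke the classical characterization of the unit interval as the unique (up to homeomorphism) compact connected metrizable linearly ordered topological space with a smallest and a largest element.

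First I would check that $X$ is a metric continuum. Lemma~\ref{InvLimCptHD} supplies non-emptiness, compactness, and Hausdorffness; metrizability follows because $X$ sits as a closed subspace of the countable product $\prod_n A_n$ of compact metric spaces; and connectedness is the standard fact that an inverse limit of compact connected Hausdorff spaces along surjective bonding maps is connected. Next I would fix compatible orientations. A continuous monotone surjection $f\colon A\to B$ between arcs must send endpoints to endpoints: if it sent an endpoint $a$ of $A$ to an interior point $b_0$ of $B$, then $f^{-1}(b_0)$ would be a subarc of $A$ having $a$ as an endpoint, its complement in $A$ would be a connected half-open subarc, and yet the preimages of the two components of $B\setminus\{b_0\}$ would provide a non-trivial separation of this complement. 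Hence I can label the endpoints $a_n<_n b_n$ of $A_n$ so that $f_n(a_n)=a_{n-1}$ and $f_n(b_n)=b_{n-1}$; since a continuous monotone surjection $\I\to\I$ fixing both endpoints is weakly increasing (a continuous function with connected preimages is monotonic in the real sense), each $f_n$ becomes weakly order-preserving.

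I would then equip $X$ with the coordinate-wise relation $x\leq y$ iff $x_n\leq_n y_n$ for all $n$. Order-preservation of the bonding maps shows that a single strict inequality $x_n<_n y_n$ forces $x_m\leq_m y_m$ at every level $m$ (forward by weak monotonicity, backward via $f_{n,m}$), so $\leq$ is a total order with minimum $\alpha:=(a_n)_n$ and maximum $\beta:=(b_n)_n$. The main technical step will be to show that the order topology and the inverse limit topology on $X$ agree. One direction is immediate: any order-open interval $(u,v)\ni z$ contains the inverse-limit-open set $\pi_N^{-1}((u_N,v_N))$ for $N$ large enough that $u_N<_N z_N<_N v_N$. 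For the converse, given a basic open $\pi_n^{-1}((s,t))\ni z$, I would pick $s',t'\in A_n$ with $s<_n s'<_n z_n<_n t'<_n t$ and use surjectivity of $\pi_n$ (which follows from compactness together with surjectivity of the bonding maps) to lift them to $\hat s,\hat t\in X$ with $\pi_n(\hat s)=s'$ and $\pi_n(\hat t)=t'$; the coordinate-wise monotonicity then yields $\hat s<z<\hat t$ and $(\hat s,\hat t)\subseteq\pi_n^{-1}((s,t))$, with minor adjustments needed only in the edge case where $s$ or $t$ is an endpoint of $A_n$.

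With the agreement of topologies in hand, $(X,\leq)$ is a compact, connected, metrizable linearly ordered topological space with smallest element $\alpha$ and largest element $\beta$, and the classical characterization of the unit interval yields $X\simeq\I$; that is, $\invLim A_n$ is an arc.
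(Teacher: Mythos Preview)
The paper does not prove this theorem; it is stated with citations to Capel and to~\cite{invLim} and used as a black box. So there is no ``paper's proof'' to compare against.

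Your argument is correct and is one of the standard routes to Capel's theorem. The key steps all go through: a continuous monotone surjection between arcs must carry endpoints to endpoints (your separation argument is fine), so the arcs can be oriented compatibly and each $f_n$ becomes weakly order-preserving; the coordinate-wise relation is then total because a strict inequality at one level forces non-strict inequality at every other level (in both directions this uses only that weakly increasing maps cannot reverse a strict inequality); and your two-sided comparison of the order topology with the inverse limit topology is accurate, including the treatment of half-open subbasic sets. Metrizability of the limit (as a closed subspace of a countable product of arcs) gives separability, and the order-theoretic characterisation of $[0,1]$ as the unique separable compact connected linearly ordered space with at least two elements and with endpoints then finishes the job. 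The only point worth tightening in a final write-up is the one-line appeal to connectedness of the inverse limit: this is indeed standard for inverse sequences of continua, but you should cite it rather than leave it implicit, since the paper itself does not record that fact among its inverse-limit lemmas.
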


\subsection{`Ends and tangles' plus further notation}\label{SummaryEndsAndTangles}

This section not only serves as a summary of `Ends and tangles' (\cite{EndsAndTangles}), but also as an introduction of its basic definitions and notation, some of which we modified to meet our needs. At the end of this section, we introduce some additional notation, and we remind of a useful construction from the proof of \cite[Theorem 2.2]{Ends}. But first, we start with the promised summary:

A \textit{separation}\index{separation} of a graph $G$ is a set $\{A,B\}$ with $A$ and $B$ subsets of $V(G)$ such that $V(G)=A\cup B$ and $G$ has no edge from $A\setminus B$ to $B\setminus A$. 
Clearly, every separation $\{A,B\}$ induces a bipartition of the set of components of $G-A\cap B$, and vice versa.
The cardinal $|A\cap B|$ is the \textit{order}\index{order} of the separation $\{A,B\}$.
Every separation $\{A,B\}$ has two \textit{orientations}\index{orientation}, namely
the ordered pairs $(A,B)$ and $(B,A)$, which we also refer to as \textit{oriented separations}\index{oriented separation}.\footnote{Usually we refer to an oriented separation simply as `separation', relying upon context instead.} 
Informally, we think of $A$ and $B$ as the \textit{small side}\index{small side} and the \textit{big side}\index{big side} of $(A,B)$, respectively.
The \textit{order} of an oriented separation $(A,B)$ simply is the order of $\{A,B\}$, namely $|A\cap B|$.
If $S$ is a set of separations of the graph, then we denote by $\vec{S}$ the collection of the orientations of its elements.\index{$\vec{S}$}
We shall call $(B,A)$ the \textit{inverse}\index{inverse} of $(A,B)$ and vice versa. For the sake of readability we use the more intuitive 'arrow notation' known from vector spaces: when referring to an element of $\vec{S}$ as $\vec{s}$\index{$\vec{s}$, $\cev{s}$} (or $\cev{s}$), we denote its inverse by $\cev{s}$ (or $\vec{s}$).
Then the map $\vec{s}\mapsto(\vec{s})^*:=\cev{s}$ is an involution on $\vec{S}$. 
We define a partial ordering $\le$ on $\vec{S}$ by letting
\begin{align*}
(A,B)\le (C,D)\,:\Leftrightarrow\,A\subseteq C\text{ and }B\supseteq D.
\end{align*}
Note that our involution reverses this partial ordering, i.e. for $\vec{r},\vec{s}\in\vec{S}$ we have
\begin{align*}
\vec{r}\le\vec{s}\,\Leftrightarrow\,\cev{r}\ge\cev{s}.
\end{align*}
The triple $(\vec{S},\le,{}^*)$ is known as a \textit{separation system}\index{separation system}.

An \textit{orientation}\index{orientation} $O$ of $S$ is a subset of $\vec{S}$ with $|\{\vec{s},\cev{s}\}\cap O|=1$ for every $\vec{s}\in\vec{S}$. 
If no two distinct $\vec{r},\vec{s}\in O$ satisfy $\cev{r}<\vec{s}$ then we say that $O$ is consistent.
We say that an orientation $O$ of $S$ \textit{avoids} some $\cF\subseteq 2^{\vec{S}}$ if $2^O$ and $\cF$ intersect emptily. 
A non-empty set $\sigma\subseteq\vec{S}$ is a said to be a \textit{star}\index{star} if $\vec{r}\le\cev{s}$ holds for all distinct $\vec{r},\vec{s}\in\sigma$.
The \textit{interior} of a star $\sigma=\{(A_i,B_i)\,|\,i\in I\}$ is the set $\bigcap_{i\in I}B_i$.
In the context of a given graph $G$, the set $[V(G)]^{<\aleph_0}$ will be denoted by $\cX=\cX(G)$, and $S=S(G)$ will denote the set of all separations of $G$ of finite order. \index{$\cX$}\index{$S$}
Furthermore, $\cS=\cS(G)$ will denote the set of all stars in $\vec{S}$.
For the rest of this chapter, we let $G$ be a fixed infinite graph.
By $\Tau_{<\aleph_0}$ we denote the set of all finite stars $\sigma\subseteq\vec{S}$ of finite interior, and by $\Tau$ we denote the set of all stars $\sigma\subseteq\vec{S}$ of finite interior. Outside this section, $\Tau$ will be used to denote topological spanning trees.

For every $\cF\subseteq\cS$ we say that an $\cF$-\textit{tangle}\index{$\cF$-tangle} of $G$ is a consistent orientation of $S$ avoiding $\cF$.
Moreover, an $\aleph_0$-\textit{tangle}\index{$\aleph_0$-tangle} of $G$ is said to be a $\Tau_{<\aleph_0}$-tangle of $G$, and we write $\Theta=\Theta(G)$\index{$\Theta$} for the collection of all $\aleph_0$-tangles of $G$.\footnote{As Diestel showed in \cite{EndsAndTangles}, this definition is equivalent to the definition known from Robertson \& Seymour.}
If $\omega$ is an end of $G$, then by \cite[Corollary 1.7]{EndsAndTangles} letting
\begin{align*}
\tau_\omega:=\{(A,B)\,|\,C(A\cap B,\omega)\subseteq B\}
\end{align*}
defines a bijection $\omega\mapsto\tau_\omega$ from the ends of $G$ to the $\Tau$-tangles of $G$. Therefore, we call these $\aleph_0$-tangles the \textit{end tangles}\index{end tangle} of $G$. By abuse of notation, we will write $\Omega=\Omega(G)$\index{$\Omega$} for the collection of all end tangles of $G$.
The elements of $\Theta\setminus\Omega$ we call the \textit{ultrafilter tangles}\index{ultrafilter tangle} of $G$, and we write $\Upsilon=\Upsilon(G)$\index{$\Upsilon$} for the collection of all these.\footnote{This definition differs from the one given by Diestel, but both turn out to be equivalent due to \cite[Theorem 2]{EndsAndTangles}.}
In particular, we have $\Theta=\Omega\uplus\Upsilon$.

For every $X\in\cX$ we denote by $\cC_X$\index{$\cC_X$} be the set of all components of $G-X$, and $\cU_X$\index{$\cU_X$} is the set of all ultrafilters on $\cC_X$. For every $X\subseteq X'\in\cX$ we define the map $\phi_{X',X}\colon \cC_{X'}\to\cC_X$\index{$\phi_{X',X}$} be letting it send every component of $G-X'$ to the unique component of $G-X$ including it, i.e. such that $\phi_{X',X}(C')\subseteq C$. For subsets $\cC\subseteq\cC_{X'}$ we write $\cC'\rest X:=\phi_{X',X}[\cC']$,\index{$\cC'\rest X$} and for ultrafilters $U'\in\cU_{X'}$ we write
\begin{align*}
U'\rest X&:=\big\langle\,\big\{\cC'\rest X\,\big|\,\cC'\in U'\big\}\,\big\rangle_{\cC_X}\\
&=\{\cC\subseteq\cC_X\,|\,\exists\,\cC'\in U':\cC\supseteq\cC'\rest X\}
\end{align*}
where $\langle \cA\rangle_B$ for two sets $\cA,B$ with $\cA\subseteq 2^B$ denotes  the collection of all supersets $B'\subseteq B$ of elements of $\cA$, the \textit{set-theoretic up-closure of} $\cA$ \textit{in} $2^B$.
Due to \cite[Lemma 2.1]{EndsAndTangles}, letting $f_{X',X}\colon \cU_{X'}\to\cU_X$\index{$f_{X',X}$} send each $U'\in\cU_{X'}$ to $U'\rest X$ for all $X\subseteq X'\in\cX$ yields an inverse system $\{\cU_X,f_{X',X},\cX\}$\index{$\{\cU_X,f_{X',X},\cX\}$} whose inverse limit we denote by $\cU$.\index{$\cU$} 
Hence taking the up-closure in the definition of $U'\rest X$ ensures that $U'\rest X$ is an ultrafilter on $\cC_X$, even if there is some finite component of $G-X$ whose vertex set is included in $X'$.

Next, for every $\tau\in\Theta$ and $X\in\cX$ we let\index{$U(\upsilon,X)=U(\tau,X)$}
\begin{align*}
U(\tau,X):=\{\cC\subseteq\cC_X\,|\,(V\setminus V[\cC],X\cup V[\cC])\in\tau\}
\end{align*}
where $V[\cC]:=\bigcup_{C\in\cC}V(C)$.\index{$V[\cC]$}
Then by \cite[Lemma 2.3]{EndsAndTangles} the map 
\begin{align*}
\tau\mapsto (U(\tau,X)\,|\,X\in\cX)=:\upsilon_\tau
\end{align*}
is a bijection from $\Theta$ to $\cU$. Furthermore, the ends of $G$ are precisely those of its $\aleph_0$-tangles which this map sends to a family of principal ultrafilters. Now let $\cU_X^\ast$\index{$\cU_X^\ast$} be the set of all non-principal elements of $\cU_X$. For every $X\subseteq X'\in\cX$ we define a map $g_{X,X'}\colon \cU_X^\ast\to\cU_{X'}^\ast$\index{$g_{X,X'}$} by letting
\begin{align*}
g_{X,X'}(U):=\{\cC\subseteq\cC_{X'}\,|\,\exists\cD\in U:\cD\subseteq\cC\}
\end{align*}
for every $U\in\cU_X^*$. By \cite[Lemma 3.1]{EndsAndTangles} this map is well-defined, and it sends each $U\in\cU_X^*$ to the unique $U'\in\cU_{X'}^\ast$ with $f_{X',X}(U')=U$. In particular, we have
\begin{align*}
f_{X',X}\circ g_{X,X'}=\id_{\cU_X^\ast}
\end{align*}
by \cite[Lemma 3.2]{EndsAndTangles}. Combined, these Lemmas yield

\begin{lemma}\label{TCfInv}
For all $X\subseteq X'\in\cX$ the map $f_{X',X}$ restricts to a 
bijection between $f_{X',X}^{-1}(\cU_X^\ast)\subseteq\cU_{X'}^\ast$ and $\cU_X^\ast$ with inverse $g_{X,X'}$.\qed
\end{lemma}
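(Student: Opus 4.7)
The plan is to assemble the lemma directly from the two facts the paper has already cited, namely [EndsAndTangles, Lemma~3.1], which gives that $g_{X,X'}$ is the unique inverse of $f_{X',X}$ on the non-principal side, and [EndsAndTangles, Lemma~3.2], which gives $f_{X',X}\circ g_{X,X'}=\id_{\cU_X^\ast}$. Before invoking these, the one small preliminary step is to verify the inclusion $f_{X',X}^{-1}(\cU_X^\ast)\subseteq\cU_{X'}^\ast$ that is tacitly asserted in the statement.

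First, I would check this inclusion by contrapositive. Suppose $U'\in\cU_{X'}$ is principal, say $U'=\langle\{C'\}\rangle_{\cC_{X'}}$ for some $C'\in\cC_{X'}$. Then by the definition of $U'\rest X$ (as the up-closure of $\{\cC'\rest X\,|\,\cC'\in U'\}$), the set $\{\phi_{X',X}(C')\}$ lies in $U'\rest X$, so $f_{X',X}(U')=U'\rest X$ is the principal ultrafilter on $\cC_X$ generated by the single component $\phi_{X',X}(C')$. Hence principals are sent to principals, and the contrapositive yields $f_{X',X}^{-1}(\cU_X^\ast)\subseteq\cU_{X'}^\ast$.

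With this in place I would next fix $U\in\cU_X^\ast$ and check two things. Existence: by [EndsAndTangles, Lemma~3.2], $f_{X',X}(g_{X,X'}(U))=U$, and by [EndsAndTangles, Lemma~3.1] the element $g_{X,X'}(U)$ lies in $\cU_{X'}^\ast$, so it belongs to $f_{X',X}^{-1}(\cU_X^\ast)$ and maps onto $U$. Uniqueness: if $U'\in f_{X',X}^{-1}(\cU_X^\ast)$ also satisfies $f_{X',X}(U')=U$, then by the inclusion established above $U'\in\cU_{X'}^\ast$, and now [EndsAndTangles, Lemma~3.1] says that $g_{X,X'}(U)$ is the \emph{unique} element of $\cU_{X'}^\ast$ mapped to $U$ by $f_{X',X}$, forcing $U'=g_{X,X'}(U)$. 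This gives bijectivity of the restriction of $f_{X',X}$ to $f_{X',X}^{-1}(\cU_X^\ast)\to\cU_X^\ast$, with $g_{X,X'}$ as a two-sided inverse (the identity $f_{X',X}\circ g_{X,X'}=\id_{\cU_X^\ast}$ is Lemma~3.2, while $g_{X,X'}\circ f_{X',X}\upharpoonright f_{X',X}^{-1}(\cU_X^\ast)=\id$ follows from the uniqueness just argued).

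I do not expect any real obstacle: the lemma is essentially a repackaging of [EndsAndTangles, Lemmas~3.1 and~3.2] together with the tiny observation that $f_{X',X}$ cannot send a principal ultrafilter to a non-principal one. The only place where one might stumble is forgetting to verify the inclusion $f_{X',X}^{-1}(\cU_X^\ast)\subseteq\cU_{X'}^\ast$ explicitly, since without it the uniqueness statement in Lemma~3.1 would apply only inside $\cU_{X'}^\ast$ rather than inside all of $\cU_{X'}$.
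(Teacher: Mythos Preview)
Your proposal is correct and follows exactly the approach the paper intends: the lemma is stated with a bare \qed\ immediately after the sentence ``Combined, these Lemmas yield'', so the paper treats it as an immediate consequence of \cite[Lemmas~3.1 and~3.2]{EndsAndTangles}. Your explicit verification of the inclusion $f_{X',X}^{-1}(\cU_X^\ast)\subseteq\cU_{X'}^\ast$ via the contrapositive is more careful than the paper itself, which leaves that step implicit.
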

In particular, we have
\begin{corollary}\label{Uextension}
For every $X\in\cX$ each non-principal $U\in\cU_X$ uniquely extends to an element of $\cU$.\qed
\end{corollary}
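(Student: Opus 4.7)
The plan is to leverage Lemma~\ref{TCfInv} together with the cofinality of $\{Y\in\cX\,|\,Y\supseteq X\}$ in $\cX$. Any element of $\cU$ extending $U$ must, after restriction to this cofinal subset, map coordinatewise through the $f_{Y,X}$ onto $U$, and Lemma~\ref{TCfInv} makes each such preimage unique inside $\cU_Y^*$; so a candidate for the extension essentially forces itself on us via the $g_{X,Y}$, and the remaining task is just to verify compatibility and fill in the incomparable coordinates.

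First I would define, for every $Y\supseteq X$, the ultrafilter $U_Y:=g_{X,Y}(U)\in\cU_Y^*$. I would then check that this family is compatible along the cofinal subposet $I_X:=\{Y\in\cX\,|\,Y\supseteq X\}$: for $X\subseteq Y\subseteq Y'$ one has $f_{Y',X}=f_{Y,X}\circ f_{Y',Y}$, and $f_{Y',X}(U_{Y'})=f_{Y',X}(g_{X,Y'}(U))=U\in\cU_X^*$ by $f_{Y',X}\circ g_{X,Y'}=\id_{\cU_X^*}$. Consequently $f_{Y',Y}(U_{Y'})$ lies in $f_{Y,X}^{-1}(\cU_X^*)\subseteq\cU_Y^*$ by Lemma~\ref{TCfInv}; but Lemma~\ref{TCfInv} also tells us that $f_{Y,X}$ is injective on $\cU_Y^*\cap f_{Y,X}^{-1}(\cU_X^*)$, with inverse $g_{X,Y}$, so $f_{Y',Y}(U_{Y'})$ must equal $g_{X,Y}(U)=U_Y$.

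Next I would invoke Lemma~\ref{BibleInvLimCofinal} (applicable because every $\cU_Y$ is compact as a Stone-\v{C}ech remainder) to identify $\cU=\invLim_{Y\in\cX}\cU_Y$ with $\invLim_{Y\in I_X}\cU_Y$ via restriction. Under this identification, the compatible family $(U_Y\,|\,Y\in I_X)$ built above corresponds to a unique element $\upsilon\in\cU$, and its $X$-coordinate equals $g_{X,X}(U)=U$; explicitly, for an arbitrary $Y\in\cX$ the coordinate is recovered as $\upsilon_Y=f_{Y\cup X,Y}(g_{X,Y\cup X}(U))$. This establishes existence.

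For uniqueness, suppose $\upsilon'=(U'_Z\,|\,Z\in\cX)\in\cU$ satisfies $U'_X=U$. Then for every $Y\in I_X$, the compatibility condition forces $f_{Y,X}(U'_Y)=U\in\cU_X^*$, which by Lemma~\ref{TCfInv} yields $U'_Y\in\cU_Y^*$ and $U'_Y=g_{X,Y}(U)=U_Y$. Since this pins down $\upsilon'$ on the cofinal set $I_X$, Lemma~\ref{BibleInvLimCofinal} yields $\upsilon'=\upsilon$. I do not anticipate a genuine obstacle: the only mild subtlety is confirming that the $f_{Y',Y}$-image of a non-principal ultrafilter extending $U$ is again non-principal, which is immediate because a principal image in $\cC_Y$ would project further to a principal ultrafilter in $\cC_X$, contradicting non-principality of $U$.
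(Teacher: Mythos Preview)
Your proof is correct and follows the same route the paper intends: the corollary is marked with \qed\ immediately after Lemma~\ref{TCfInv}, so the paper regards it as a direct consequence of that bijection, which is exactly what you unpack. Your use of Lemma~\ref{BibleInvLimCofinal} to pass between $\cX$ and the cofinal up-set $\lfloor X\rfloor_\cX$ is a clean way to handle the remaining coordinates, and the compatibility check via the identity $f_{Y',X}=f_{Y,X}\circ f_{Y',Y}$ together with the injectivity half of Lemma~\ref{TCfInv} is precisely the argument one has in mind.
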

For every $\tau\in\Upsilon$ we set\index{$\cX_\tau$, $X_\tau$}
\begin{align*}
\cX_\tau=\{X\in\cX\,|\,U(\tau,X)\in\cU_X^\ast\},
\end{align*}
and every element of $\cX_\tau$ is said to \textit{witness} that $\tau$ is an ultrafilter tangle. By \cite[Lemma 3.3 \& 3.4]{EndsAndTangles}, for every $\tau\in\Upsilon$ and $X\in\cX_\tau$ the non-principal ultrafilter $U(\tau,X)$ uniquely determines $\tau$ in that
\begin{align*}
\tau=\{(A,B)\in\vec{S}\,|\,\exists\,\cC\in U(\tau,X):V[\cC]\subseteq B\}.
\end{align*}
According to \cite[Theorem 3.6]{EndsAndTangles}, for each $\tau\in\Upsilon$ the set $\cX_\tau$ has a unique least element $X_\tau$ with $\cX_\tau=\lfloor X_\tau\rfloor_{\cX}=\{X\in\cX\,|\,X_\tau\subseteq X\}$. Furthermore, \cite[Lemma 3.7]{EndsAndTangles} states that, if $X\in\cX$ and $U\in\cU_X$ is not generated by $\{C\}$ for any finite $C\in\cC_X$, then there is some $\tau\in\Theta$ such that $U=U(\tau,X)$.

Finally, we use $\cU$ to compactify $G$. For this, we equip the $\cU_X$ with the Stone topology, i.e. we equip $\cU_X$ with the topology generated by declaring as basic open for every $\cC\subseteq\cC_X$ the set\footnote{In \cite{EndsAndTangles}, the notion for these sets is simply $\cO(\cC)$. Since in this work we will compare the tangle compactification with other spaces, we slightly modify a lot of the topological notation from \cite{EndsAndTangles}.}\index{$\cO_{\cU_X}(\cC)$, $\cO_{\cU}(X,\cC)$, $\cO_{\TC}(X,\cC)$}
\begin{align*}
\cO_{\cU_X}(\cC):=\{U\in\cU_X\,|\,\cC\in U\}.
\end{align*}
Then by \cite[Lemma 4.1]{EndsAndTangles}, the bonding maps $f_{X',X}$ are continuous, so \cite[Proposition 4.2]{EndsAndTangles} tells us that $\cU$ is compact, Hausdorff and totally disconnected.\footnote{We added `Hausdorff' here.}
Next, for every $X\in\cX$ let $\pi_X$ be the restriction of the $X$th projection map $\text{pr}_X\colon \prod_{Y\in\cX}\cU_Y\to\cU_X$, and for every $X\in\cX$ and $\cC\subseteq\cC_X$ write
\begin{align*}
\cO_{\cU}(X,\cC)=\pi_X^{-1}(\cO_{\cU_X}(\cC)).
\end{align*}
Then by \cite[Lemma 4.4]{EndsAndTangles}, the collection
\begin{align*}
\{\cO_{\cU}(X,\cC)\,|\,X\in\cX,\cC\subseteq\cC_X\}
\end{align*}
is a basis for the topology of $\cU$.
Now, we extend the 1-complex of $G$ to a topological space $\TC:=G\cup\cU$ by declaring as open, for all $X\in\cX$ and $\cC\subseteq\cC_X$, the sets
\begin{align*}
\cO_{\TC}(X,\cC):=\medcup\cC\cup\mathring{E}(X,\medcup\cC)\cup\cO_{\cU}(X,\cC),
\end{align*}
and endowing $\TC$ with the topology this generates. Then we arrive at the main result of \cite{EndsAndTangles}:
\newpage
\begin{theorem}[{\cite[Theorem 1]{EndsAndTangles}}]
Let $G$ be any graph.
\begin{enumerate}
\item $\TC$ is a compact space in which $G$ is dense and $\TC\setminus G$ is totally disconnected.
\item If $G$ is locally finite and connected, then all its $\aleph_0$-tangles are ends, and $\TC$ coincides with the Freudenthal compactification of $G$.
\end{enumerate}
\end{theorem}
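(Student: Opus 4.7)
The plan is to handle part (i) via three subclaims — density, total disconnectedness of $\TC\setminus G$, and compactness — and to deduce part (ii) from the observation that, if $G$ is locally finite and connected, every $\cC_X$ is finite. Density is immediate, because every non-empty basic set $\cO_{\TC}(X,\cC)$ contains the vertices of $\bigcup\cC$, so $G$ meets every non-empty open set of $\TC$. Total disconnectedness of $\TC\setminus G=\cU$ follows directly from Lemma~\ref{InvLimTotDisc}: the Stone spaces $\cU_X$ are compact Hausdorff totally disconnected, and $\cU$ is their inverse limit.

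Compactness is the heart of the argument, and the step I expect to require the most care. Given an open cover $\cV$ of $\TC$, I would first use Lemma~\ref{InvLimCptHD} to see that $\cU$ is compact and then cover $\cU$ by finitely many basic neighbourhoods $\cO_{\TC}(X_i,\cC_i)$, $i\le n$, each contained in some member of $\cV$. Setting $X := X_1\cup\cdots\cup X_n\in\cX$, the residual set $K := \TC\setminus\bigcup_{i\le n}\cO_{\TC}(X_i,\cC_i)$ lies entirely in $G$, and the task is to confine $K$ to a finite subgraph. Let
\begin{align*}
\cC_0 := \{\,C\in\cC_X : \phi_{X,X_i}(C)\notin\cC_i \text{ for every } i\le n\,\}.
\end{align*}
The crucial claim is that $\cC_0$ is finite and consists only of finite components; once this is established, $K$ lies in the 1-complex of the finite subgraph induced by $X\cup\bigcup\cC_0$ together with finitely many partial incident edges, which is compact and so covered by finitely many further members of $\cV$.

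Both finiteness assertions I would prove in parallel by contradiction. If $\cC_0$ were infinite, Zorn's Lemma yields a non-principal ultrafilter $U$ on $\cC_X$ with $\cC_0\in U$; if some $C\in\cC_0$ were infinite, then the principal ultrafilter on $\cC_X$ generated by $C$ is not generated by any \emph{finite} component. Either way, \cite[Lemma 3.7]{EndsAndTangles} supplies a tangle $\tau\in\Theta$ with $U = U(\tau,X)$, so $\upsilon_\tau\in\cU$ is covered by some $\cO_{\cU}(X_i,\cC_i)$ and thus $\cC_i\in U(\tau,X_i) = U\rest X_i$. Unpacking the restriction formula produces $\cD\in U$ with $\phi_{X,X_i}[\cD]\subseteq\cC_i$; in the first case any $D$ in the non-empty set $\cD\cap\cC_0\in U$ satisfies $\phi_{X,X_i}(D)\in\cC_i$, in the second case $D:=C$ does, and in both cases this contradicts the defining property of $\cC_0$.

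For part (ii), local finiteness and connectedness give $|\cC_X|\le\sum_{x\in X}\deg_G(x)<\infty$ for every $X\in\cX$, because each component of $G-X$ has a neighbour in $X$. Hence every ultrafilter on the finite set $\cC_X$ is principal; via the bijection $\tau\mapsto\upsilon_\tau$ this means every $\aleph_0$-tangle corresponds to a consistent family of principal ultrafilters and so, by the remark preceding the theorem, is an end tangle. The basic set $\cO_{\TC}(X,\cC)$ then unfolds to $\bigcup\cC\cup\mathring{E}(X,\bigcup\cC)\cup\{\omega\in\Omega : C(X,\omega)\in\cC\}$, which is exactly the basic open set at $(X,\bigcup\cC)$ in the Freudenthal compactification $|G|$ from the introduction; hence the two topologies coincide.
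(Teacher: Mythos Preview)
Your argument is correct. Note, however, that the thesis does not prove this theorem itself: it is quoted verbatim from Diestel's paper \cite{EndsAndTangles}, and your reconstruction matches that original proof closely---in particular your appeal to \cite[Lemma~3.7]{EndsAndTangles} to produce a tangle with prescribed $U(\tau,X)$ is exactly Diestel's device. The thesis later reproves the compactness part in greater generality (Theorem~\ref{alphaGcomp}, for arbitrary $\cC$-systems), and there it deliberately replaces the use of \cite[Lemmas~2.3 and~3.7]{EndsAndTangles} by a purely topological argument: instead of invoking a tangle, it shows that if $\bigcup\cC'$ were infinite then the closed sets $K_Y=\A{a}_{Y,X'}^{-1}(A)$ form a non-empty inverse system whose limit yields a point of $\cI_\alpha$ missed by the finite subcover. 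Your combinatorial route is shorter for $\TC$ specifically; the thesis's topological route is what makes the proof portable to $\FG$ and every other $\cC$-system compactification.
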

The following is extracted from the proof of \cite[Theorem 1]{EndsAndTangles}, and it will be reproved in a more general context in the proof of Theorem~\ref{alphaGcomp}:
\begin{lemma}[{\cite[Proof of Theorem 1]{EndsAndTangles}}]\label{obviousLemma}
For all $X\subseteq X'\in\cX$ and every $\cC'\subseteq\cC_{X'}$ we have $\cO_{\TC}(X',\cC')\subseteq\cO_{\TC}(X,\cC'\rest X)$.\qed
\end{lemma}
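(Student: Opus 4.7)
The plan is to verify the inclusion separately on each of the three pieces in the defining decomposition
\[
\cO_{\TC}(X',\cC')=\medcup\cC'\cup\mathring{E}(X',\medcup\cC')\cup\cO_{\cU}(X',\cC'),
\]
and check that each piece lands inside $\cO_{\TC}(X,\cC'\rest X)$.

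First, for the 1-complex piece $\medcup\cC'$: any component $C'\in\cC'$ of $G-X'$ has its vertex set disjoint from $X'$, hence disjoint from $X$, and so it is contained in some component $C$ of $G-X$, namely $C=\phi_{X',X}(C')\in\cC'\rest X$. Since an edge of $C'$ also has both endvertices in $V(C)$, the 1-complex of $C'$ is included in that of $C$, giving $\medcup\cC'\subseteq\medcup(\cC'\rest X)$.

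Second, for the edge-interior piece $\mathring{E}(X',\medcup\cC')$: take an edge $e=uv$ with $u\in X'$ and $v\in V[\cC']$, and let $C'\in\cC'$ contain $v$, with $C:=\phi_{X',X}(C')\in\cC'\rest X$. If $u\in X$, then $e$ is an $X$--$V[\cC'\rest X]$ edge, so $\mathring{e}\subseteq\mathring{E}(X,\medcup(\cC'\rest X))$. Otherwise $u\in X'\setminus X$ lies in $V(G)\setminus X$ and is adjacent (via $e$) to the vertex $v\in V(C)$; by the definition of a component of $G-X$ this forces $u\in V(C)$ as well, so $\mathring{e}$ sits in the 1-complex of $C\subseteq\medcup(\cC'\rest X)$. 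Either way, $\mathring{e}\subseteq\cO_{\TC}(X,\cC'\rest X)$.

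Third, for the ultrafilter piece $\cO_{\cU}(X',\cC')$: take $\upsilon=(U_Y\,|\,Y\in\cX)\in\cU$ with $\upsilon\in\cO_{\cU}(X',\cC')$, i.e.\ $\cC'\in U_{X'}$. Since $\upsilon$ lies in the inverse limit, $f_{X',X}(U_{X'})=U_X$, which by definition of $f_{X',X}$ means $U_X=U_{X'}\rest X=\langle\{\cD\rest X\,|\,\cD\in U_{X'}\}\rangle_{\cC_X}$. In particular $\cC'\rest X\in U_X$, so $\upsilon\in\cO_{\cU}(X,\cC'\rest X)$.

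Combining the three inclusions yields $\cO_{\TC}(X',\cC')\subseteq\cO_{\TC}(X,\cC'\rest X)$. I expect no real obstacle; the only mildly subtle point is Case~B of the second piece, where one has to notice that a vertex of $X'\setminus X$ that is adjacent to $\medcup\cC'$ automatically belongs to the same component of $G-X$ as its neighbour, so the corresponding edge is already fully contained in the 1-complex of $\medcup(\cC'\rest X)$.
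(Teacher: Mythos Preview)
Your proof is correct and follows essentially the same approach as the paper: decompose $\cO_{\TC}(X',\cC')$ into its three defining pieces and verify each lands in $\cO_{\TC}(X,\cC'\rest X)$. The paper itself gives no direct proof of this lemma (it is marked \qed\ and attributed to \cite{EndsAndTangles}), but the reproof in the more general setting of Theorem~\ref{alphaGcomp} proceeds exactly along these lines, handling the inverse-limit piece via the commuting diagram and the graph piece via $\phi_{X',X}$; your treatment of the edge-interior piece is slightly more explicit than the paper's, which simply appeals to ``$X\subseteq X'$''.
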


Even though $\TC$ in general is not Hausdorff, Diestel remarks that there exist two workarounds: First, the space $\TC\setminus\mathring{E}$ is a \HDcomp{} of $V(G)$ which still reflects the structure of $G$. Second, we can exchange `compact' for `Hausdorff' by modifying the topology of $\TC$ similarly to the way we would obtain \textsc{MTop} from \textsc{VTop} (see~\ref{subsec:topsOverview} for definitions of \textsc{MTop} and \textsc{VTop}).

From now on, we will write $\Theta=\cU$ and $\tau=\upsilon_\tau$ as well as $\omega=\tau_\omega$ by abuse of notation. 
If $\omega$ is an end of $G$, we write $\CTC(X,\omega)$\index{$\CTC(X,\omega)$} for the set $\cO_{\TC}(X,\{C(X,\omega)\}$, and we write $\Delta(\omega)$\index{$\Delta(\omega)$} for the set of vertices of $G$ dominating $\omega$. 
Furthermore, we write $V(\Omega)=\bigcup_{\omega\in\Omega}\Delta(\omega)$\index{$V(\Omega)$} and $V(\Upsilon)=\bigcup_{\upsilon\in\Upsilon}X_\upsilon$\index{$V(\Upsilon)$}, as well as $V(\cU)$\index{$V(\cU)$} for the union $V(\Omega)\cup V(\Upsilon)$. For every $\upsilon=(U_X\,|\,X\in\cX)\in\Upsilon$ we write $U^\circ(\upsilon)=U_{X_\upsilon}$\index{$U^\circ(\upsilon)$} and $U(\upsilon,X)=U_X$.\index{$U(\upsilon,X)$} 
On $\Upsilon$ we define the equivalence relation $\asymp$\index{$\asymp$} by letting $\upsilon\asymp \upsilon'$ whenever $X_\upsilon=X_{\upsilon'}$ holds. 

A map $f$ with domain $\cX$ is a \textit{direction}\index{direction} of $G$ if $f$ maps every $X\in\cX$ to a component of $G-X$ and $f(X)\supseteq f(X')$ whenever $X\subseteq X'\in\cX$. Clearly, the directions of $G$ are precisely the elements of the inverse limit of $\{\cC_X,\phi_{X',X},\cX\}$, and we have seen above that these are precisely the ends of $G$.
However, the constructive proof of the original Theorem from Diestel and Kühn \cite{Ends} linking ends to directions yields much more:

\begin{lemma}[$\L$]\label{ctblNbhdBase}
Let $G$ be an arbitrary infinite graph and $\omega$ an end of $G$ which is dominated by at most finitely many vertices.
Then there exists a sequence $X_0,X_1,\hdots$ of non-empty finite sets of vertices of $C(\Delta(\omega),\omega)$ such that for all $n\in\N$ the component $C(X_n\cup \Delta(\omega),\omega)$ includes both $X_{n+1}$ and $C(X_{n+1}\cup\Delta(\omega),\omega)$. In particular, the collection of all $\hat{C}_{\TC}(X_n\cup\Delta(\omega),\omega)$ forms a countable neighbourhood basis of $\omega$ in $\TC$.
\end{lemma}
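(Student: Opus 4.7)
The plan is to follow the constructive proof of Diestel and Kühn's ends--directions equivalence \cite[Theorem 2.2]{Ends}. Since $\Delta(\omega)$ is finite, $C:=C(\Delta(\omega),\omega)$ is a well-defined component of $G-\Delta(\omega)$ and every $v\in V(C)$ fails to dominate $\omega$, hence admits a finite separator from $\omega$ in $G$ which, via a standard pushing argument, may be taken inside $V(C)$. I will recursively build the sequence $(X_n)$, writing $C_n:=C(X_n\cup\Delta(\omega),\omega)$: starting with $X_0:=\{v_0\}$ for any $v_0\in V(C)$, at stage $n+1$ I choose $X_{n+1}$ to be a finite non-empty subset of $V(C_n)$ for which $X_{n+1}\cup\Delta(\omega)$ separates $X_n$ from $\omega$ in $G$; this is available because each of the finitely many vertices of $X_n$ has a finite separator from $\omega$ that can be placed inside $V(C_n)$. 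The nesting $C_{n+1}\subseteq C_n$ then follows: any hypothetical $w\in V(C_{n+1})\setminus V(C_n)$ would reach $\omega$ in $G-X_{n+1}-\Delta(\omega)$ along a path that, leaving $V(C_n)$, must traverse $X_n$ or further components of $G-X_n-\Delta(\omega)$ (whose only route to $\omega$ also passes through $X_n$), contradicting the separation.

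For the neighbourhood basis property, I will show that the sufficient combinatorial condition for $\hat{C}_{\TC}(X_n\cup\Delta(\omega),\omega)\subseteq\cO_{\TC}(Y,\cC)$ (for a given basic open $\cO_{\TC}(Y,\cC)\ni\omega$) is $V(C_n)\cap Y=\emptyset$. Granting this, the connected set $V(C_n)$ sits inside a single component of $G-Y$, necessarily $C(Y,\omega)\in\cC$, so vertices and inner edge points of edges inside $C_n$ lie in $\bigcup\cC$; inner points of $(X_n\cup\Delta(\omega))$-to-$V(C_n)$ edges lie either inside $C(Y,\omega)$ or on a $Y$-to-$C(Y,\omega)$ edge; and any tangle $\tau$ with $C_n\in U(\tau,X_n\cup\Delta(\omega))$ must, by compatibility at $X_n\cup\Delta(\omega)\cup Y$ and restriction to $Y$, satisfy $C(Y,\omega)\in U(\tau,Y)$, placing $\tau$ in $\cO_{\cU}(Y,\cC)$. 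Vertices of $Y$ outside $V(C)\cup\Delta(\omega)$ never meet $V(C_n)\subseteq V(C)$ and cause no obstruction.

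The hardest part I anticipate is arranging the choices of $X_n$ so that \emph{every} finite subset of $V(C)$ is eventually disjoint from $V(C_n)$ --- equivalently $\bigcap_n V(C_n)=\emptyset$. This requires a bookkeeping/diagonalisation over the relevant vertex candidates: at stage $n$ one must enlarge $X_{n+1}$ (whilst keeping it finite, inside $V(C_n)$, and carrying the separation property for $X_n$) so as to expel the $n$-th scheduled target vertex from $V(C_{n+1})$. Simultaneously juggling finiteness, the separation requirement, and bookkeeping progress is precisely the technical core of the constructive argument of \cite[Theorem 2.2]{Ends}, and adapting it here is where I expect the main effort.
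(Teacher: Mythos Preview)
Your plan tracks the paper's proof closely: both work inside $H:=C(\Delta(\omega),\omega)$, invoke the construction from \cite[Theorem~2.2]{Ends} to build the sequence $(X_n)$ with $X_{n+1}\subseteq V(C_n)$ and $X_{n+1}\cup\Delta(\omega)$ separating $X_n$ from $\omega$, and derive the neighbourhood-basis property from $\bigcap_n V(C_n)=\emptyset$ via the component-inclusion argument you sketch for vertices, edge-points and tangles.

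The one genuine issue is your diagnosis of ``the hardest part''. You propose to achieve $\bigcap_n V(C_n)=\emptyset$ by a diagonalisation that expels the $n$-th scheduled vertex at stage $n$; this presupposes an enumeration of $V(C)$ and would break down for uncountable $G$. In fact no bookkeeping is needed: your own recursion already forces the intersection to be empty, by the distance argument that is exactly what appears in \cite[Theorem~2.2]{Ends} and in the paper. Since $X_{k+1}\subseteq V(C_k)$ the sets $X_1,X_2,\dots$ are pairwise disjoint, and because $X_{k+1}\cup\Delta(\omega)$ separates $X_k$ from $\omega$ while $C_m\subseteq C_{k+1}$ for all $m\ge k+1$, every path in $H$ from $X_0$ to a vertex of $C_n$ must meet each of $X_1,\dots,X_n$; hence any $v\in C_n$ has distance at least $n$ from $X_0$ in $H$, and no vertex can lie in all $C_n$. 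What you flagged as the technical core thus dissolves into two lines.
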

\begin{proof}
Let an $\omega$ be an arbitrary end of $G$ which is dominated by at most finitely many vertices, and let $H:=C(\Delta(\omega),\omega)$.
We now copy the main part of the proof of \cite[Theorem 2.2]{Ends} for the sake of completeness: 
Denote by $\cY$ the collection of all finite sets of vertices of $H$.
For every $X\in\cY$ we write
\begin{align*}
\hat{X}:=C_{H}(X,\omega)\cup N(C_{H}(X,\omega))
\end{align*}
where $C_{H}(X,\omega):=C(X\cup\Delta(\omega),\omega)$ is the component of $H-X$ in which every ray of $\omega$ has a tail.
Starting with an arbitrary non-empty $X_0\in\cY$ we will construct a sequence $X_1,X_2,\hdots$ of non-empty elements of $\cY$ such that for all $n\in\N$ the component $C_{H}(X_n,\omega)$ includes both $X_{n+1}$ and $C_{H}(X_{n+1},\omega)$. 

Therefore, we proceed inductively, as follows: Suppose that $X_n$ has been constructed. Since $\omega$ is not dominated by a vertex of $H$, we find for every $x\in X_n$ some $X_x\in\cY$ with $x\notin \hat{X}_x$. 
Set $X=\bigcup_{x\in X_n}X_x$ and let $X_{n+1}$ be the neighbourhood of $C_{H}(X,\omega)$ in $H$. Then $X_{n+1}$ is finite due to $X_{n+1}\subseteq X\in\cY$. By the choice of $X_{n+1}$ we have $C_{H}(X_{n+1},\omega)=C_{H}(X,\omega)$.
For all $x\in X_n$, together with $\hat{X}\subseteq\hat{X}_x\not\ni x$ this yields $x\notin\hat{X}=\hat{X}_{n+1}\supseteq X_{n+1}$. Hence $H[\hat{X}_{n+1}]$ is connected and avoids $X_n$, so it is included in $C_{H}(X_n,\omega)$. This completes the construction.

Since all the $X_n$ are disjoint, the descending sequence $\hat{X}_0\supseteq\hat{X}_1\supseteq\cdots$ has empty overall intersection: every vertex in $\hat{X}_n$ has distance at least $n$ from $X_0$ (because every $X_0$-$\hat{X}_n$ path meets all the disjoint sets $X_1,\hdots,X_{n-1}$), so no vertex can lie in $\hat{X}_n$ for every $n$. We now leave the proof of \cite[Theorem 2.2]{Ends}.

Now we for every $n\in\N$ we let $Y_n:=X_n\cup\Delta(\omega)$.
Then the collection of all open sets $\CTC(Y_n,\omega)$ forms a countable neighbourhood base of $\omega$: 
Indeed, let $\cO_{\TC}(Y,\cC)$ be a basic open neighbourhood of $\omega$ in $\TC$. 
Without loss of generality we may suppose that $\Delta(\omega)\subseteq Y$ and write $Y':=Y\setminus\Delta(\omega)$.
Since $\bigcap_{n\in\N}\hat{X}_n$ is empty, there is some $N\in\N$ such that $\hat{X}_N$ avoids $Y'$. 
Hence $H[\hat{X}_N]$ is a connected subgraph of $G-Y$, so in particular it is included in $C(Y,\omega)\in\cC$. 
Therefore $\CTC(Y_N,\omega)\subseteq\cO_{\TC}(Y,\cC)$ holds as desired.
\end{proof}

\subsection{General Topology}\label{subsec:GeneralTopology}

If $X$ is a topological space, then a homeomorphic image of the unit interval in $X$ is an \textit{arc} in $X$.\index{arc} The following Lemma is immediate from the continuity of the quotient map:

\begin{lemma}\label{eqClassesClosed}
If $X$ is a topological space and $R$ is an equivalence relation on $X$ such that $X/R$ is {\normalfont T}$_1$, then every element of $X/R$ is a closed subset of $X$.\qed
\end{lemma}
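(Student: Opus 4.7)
The plan is to exploit the continuity of the quotient map $q\colon X\to X/R$, which is the defining feature of the quotient topology and is already hinted at in the statement preceding the lemma.

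First I would fix an element of $X/R$, that is, an equivalence class $[x]$ of $R$. The key observation is that, as a subset of $X$, the class $[x]$ is exactly the preimage $q^{-1}(\{[x]\})$ under the quotient map. This is a purely set-theoretic unfolding of the definitions.

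Next, I would use the T$_1$ hypothesis on $X/R$ to conclude that the singleton $\{[x]\}$ is closed in $X/R$. Combining this with the continuity of $q$ (which is exactly what the prefatory sentence alludes to), the preimage $q^{-1}(\{[x]\})$ is closed in $X$. By the identification from the previous step, this says that $[x]$ is a closed subset of $X$, which is what we wanted.

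There is no real obstacle here: the lemma is essentially an immediate translation of ``T$_1$ means singletons are closed'' through the continuous quotient map. The only thing to be a touch careful about is the two different roles $[x]$ plays — as a point of $X/R$ on the one hand, and as a subset of $X$ on the other — but this is exactly reconciled by the identity $[x]=q^{-1}(\{[x]\})$.
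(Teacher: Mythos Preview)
Your proposal is correct and matches the paper's approach exactly: the paper states the lemma as ``immediate from the continuity of the quotient map'' and gives no further proof, which is precisely the argument you spell out via $[x]=q^{-1}(\{[x]\})$ together with the T$_1$ hypothesis.
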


\begin{lemma}[{\cite[Theorem 7.2 a)$\to$d)]{Willard}}]\label{ctsPreservesClosure}
If $X$ and $Y$ are topological spaces and $f\colon  X\to Y$ is continuous, then for each $A\subseteq X$ we have $f[\cl_X(A)]\subseteq\cl_Y(f[A])$.
\end{lemma}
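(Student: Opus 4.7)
The plan is to give the standard neighbourhood-based argument for this classical fact, which I would also organize as the quickest possible one-liner using preimages of closed sets. Both routes avoid any assumption beyond continuity of $f$.

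First I would fix an arbitrary point $y \in f[\cl_X(A)]$, so $y = f(x)$ for some $x \in \cl_X(A)$, and aim to show $y \in \cl_Y(f[A])$. To that end, let $V$ be any open neighbourhood of $y$ in $Y$. By continuity of $f$, the preimage $f^{-1}(V)$ is open in $X$ and contains $x$. Since $x \in \cl_X(A)$, every open neighbourhood of $x$ meets $A$; in particular there is some $a \in A \cap f^{-1}(V)$. Then $f(a) \in f[A] \cap V$, showing that $V$ meets $f[A]$. As $V$ was arbitrary, $y \in \cl_Y(f[A])$.

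Alternatively, and perhaps cleaner, I would observe that $\cl_Y(f[A])$ is closed in $Y$, so by continuity of $f$ its preimage $f^{-1}(\cl_Y(f[A]))$ is closed in $X$. Since $f[A] \subseteq \cl_Y(f[A])$, we have $A \subseteq f^{-1}(\cl_Y(f[A]))$, and by the definition of closure this yields $\cl_X(A) \subseteq f^{-1}(\cl_Y(f[A]))$. Applying $f$ on both sides gives $f[\cl_X(A)] \subseteq \cl_Y(f[A])$, as required.

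There is no real obstacle here: the statement is exactly one of the standard equivalent formulations of continuity (this is the direction a)$\to$d) of Willard's Theorem~7.2 that the authors cite), so the proof is essentially a formality. The only minor point to be careful about is that the inclusion goes only one way — equality generally fails (e.g.\ when $f$ is not a closed map) — but for this lemma only the stated inclusion is needed.
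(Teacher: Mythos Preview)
Your proof is correct; both the neighbourhood argument and the preimage-of-closed-set argument are the standard textbook proofs of this fact. The paper itself does not supply a proof at all --- it simply cites Willard's Theorem~7.2 and moves on --- so there is nothing to compare against beyond noting that what you wrote is precisely the argument one finds in Willard.
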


\begin{lemma}[{\cite[Corollary 31.6]{Willard}}]\label{PathHDArc}
A Hausdorff topological space is path-connected if and only if it is arc-connected.
\end{lemma}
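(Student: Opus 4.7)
The backward direction is immediate since an arc is a homeomorphic image of $\I$, hence in particular a path. For the forward direction the plan is to show that whenever $x\neq y$ are two points of a path-connected Hausdorff space $X$ joined by a continuous path $f\colon\I\to X$ with $f(0)=x$ and $f(1)=y$, the image $f[\I]$ contains an arc from $x$ to $y$. This amounts to producing an injective continuous map from $\I$ to $X$ whose endpoints are $x$ and $y$; since compact-to-Hausdorff continuous injections are automatically homeomorphisms onto their images, injectivity is the only nontrivial requirement.

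To arrange injectivity, I would factor $f$ through a suitable quotient $Q$ of $\I$. Define an equivalence relation $\sim$ on $\I$ by first declaring $s\sim t$ (with $s\le t$) whenever there exist $a\le s$ and $b\ge t$ in $\I$ with $f(a)=f(b)$, and then taking the transitive closure. Informally, two parameters are identified precisely when they lie inside a common \emph{loop} of $f$. Hausdorffness of $X$ ensures that each preimage $f^{-1}(p)$ is closed in $\I$, so unions of nested families of loops are again loops, and a short compactness argument then shows that every $\sim$-equivalence class is a closed subinterval of $\I$.

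Because the equivalence classes are intervals, the linear order of $\I$ descends to a linear order on $Q:=\I/{\sim}$, and the quotient topology agrees with the associated order topology. The class $[0]$ is the minimum and $[1]$ the maximum of $Q$; these are distinct because $f(0)=x\neq y=f(1)$. As a continuous image of $\I$ the space $Q$ is compact and connected, and a standard characterisation identifies any compact connected Hausdorff linearly ordered space with exactly two endpoints as homeomorphic to $\I$; so $Q\simeq\I$. The induced map $\bar f\colon Q\to X$ is continuous and, by construction of $\sim$, injective; its image is therefore an arc from $x$ to $y$, as required.

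The main obstacle I expect is Step~2, i.e.\ verifying that the transitive closure of the loop relation still has closed-interval equivalence classes and that $Q$ is Hausdorff. Transitive closures of topologically defined relations are delicate, and the Hausdorffness of $X$ is genuinely needed here: without it (think of the line with doubled origin) the naive quotient can fail to separate nearby equivalence classes, and indeed path-connected non-Hausdorff spaces need not be arc-connected.
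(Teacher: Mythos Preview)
The paper does not prove this lemma; it merely cites \cite[Corollary~31.6]{Willard}, whose argument proceeds via the Hahn--Mazurkiewicz circle of ideas: $f[\I]$ is a Hausdorff continuous image of $\I$, hence a Peano space, and every Peano space is arcwise connected.

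Your proposed argument has a genuine gap, and it is not the one you anticipate. The relation $\sim$ you define identifies $s$ and $t$ whenever $[s,t]\subseteq[a,b]$ for some $a\le b$ with $f(a)=f(b)$; but $f$ is \emph{not constant} on such an interval $[a,b]$, so the ``induced map'' $\bar f\colon Q\to X$ (the unique map with $\bar f\circ q=f$) simply does not exist. Concretely: let $X$ be $[0,2]$ with a circle attached at the point~$1$, and let $f$ run linearly from $0$ to $1$ on $[0,\tfrac14]$, once around the circle on $[\tfrac14,\tfrac34]$, and linearly from $1$ to $2$ on $[\tfrac34,1]$. The only coincidence is $f(\tfrac14)=f(\tfrac34)$, so your $\sim$ collapses exactly $[\tfrac14,\tfrac34]$ to a point; yet $f$ sweeps out the whole circle on that interval, so no map on $Q$ factors $f$. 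What you presumably intend is to set $\bar f(C):=f(\min C)$ and to argue that $f(\min C)=f(\max C)$ for each class~$C$, but that equality is not obvious once you pass to the transitive closure, and with this ad hoc definition both continuity and injectivity of $\bar f$ must be established directly rather than inherited from the universal property of the quotient. Your stated worry about the Hausdorffness of $Q$ is, by comparison, a secondary issue.
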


\begin{lemma}[{\cite[Corollary 13.14]{Willard}}]\label{HDdenseSubsetUnique}
If $f,g\colon X\to Y$ are continuous, $Y$ is Hausdorff, and $f$ and $g$ agree on a dense set $D$ in $X$, then $f=g$.
\end{lemma}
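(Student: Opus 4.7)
The plan is to show that the agreement set $E := \{x \in X : f(x) = g(x)\}$ is closed in $X$; combined with $D \subseteq E$ and the density of $D$, this forces $E = \overline{D} = X$, which is the desired conclusion $f = g$.

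To prove that $E$ is closed, I would take its complement and show it is open. Suppose $x \in X \setminus E$, so that $f(x) \neq g(x)$. Since $Y$ is Hausdorff, pick disjoint open sets $U, V \subseteq Y$ with $f(x) \in U$ and $g(x) \in V$. By continuity of $f$ and $g$, the set $W := f^{-1}(U) \cap g^{-1}(V)$ is an open neighbourhood of $x$ in $X$. For any $w \in W$ we have $f(w) \in U$ and $g(w) \in V$, and because $U \cap V = \emptyset$ this forces $f(w) \neq g(w)$, i.e.\ $w \notin E$. Hence $W \subseteq X \setminus E$, showing that $X \setminus E$ is open and therefore $E$ is closed.

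Now $D \subseteq E$ by hypothesis, and $E$ is closed, so $X = \overline{D} \subseteq \overline{E} = E \subseteq X$. Therefore $E = X$, which means $f(x) = g(x)$ for every $x \in X$, i.e.\ $f = g$.

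The proof is essentially routine; the only conceptual ingredient is the standard fact that in a Hausdorff space the diagonal is closed, used here in its pointwise form via Hausdorff separation of the two image values. There is no real obstacle, since Hausdorffness of $Y$ is exactly what is needed to separate $f(x)$ from $g(x)$ by disjoint open sets whenever they differ, and continuity then pulls this separation back to an open neighbourhood in $X$.
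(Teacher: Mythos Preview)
Your proof is correct and is the standard argument. The paper does not give its own proof of this lemma; it merely cites it from Willard, so there is nothing to compare against beyond noting that your argument is the expected one.
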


\begin{lemma}[The pasting Lemma, {\cite[Theorem 18.3]{munkres}}]\label{PastingLemma}
Let $X$ and $Y$ be topological spaces and $X=A\cup B$, where $A$ and $B$ are closed in $X$. Let $f\colon A\to Y$ and $g\colon B\to Y$ be continuous. If $f(x)=g(x)$ for every $x\in A\cap B$, then $f$ and $g$ combine to give a continuous function $h\colon X\to Y$, defined by setting $h(x)=f(x)$ if $x\in A$ and $h(x)=g(x)$ if $x\in B$.
\end{lemma}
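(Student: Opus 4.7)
The plan is to verify that the proposed function $h$ is well-defined, and then to establish its continuity by showing that the preimage of every closed subset of $Y$ is closed in $X$.

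First I would check well-definedness: for $x \in A \cap B$ the two candidate values $f(x)$ and $g(x)$ agree by hypothesis, so the piecewise definition produces a single value $h(x) \in Y$ for each $x \in X = A \cup B$. This is immediate and carries no real content.

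Next I would prove continuity via the closed-set formulation of continuity (a map is continuous if and only if the preimage of every closed set is closed). Fix a closed set $C \subseteq Y$. The key identity is
\begin{equation*}
h^{-1}(C) = f^{-1}(C) \cup g^{-1}(C),
\end{equation*}
which follows directly from the piecewise definition together with $X = A \cup B$. Now $f$ is continuous as a map $A \to Y$, so $f^{-1}(C)$ is closed in the subspace topology on $A$; that is, $f^{-1}(C) = A \cap F$ for some set $F$ closed in $X$. Since $A$ itself is closed in $X$, it follows that $f^{-1}(C)$ is closed in $X$. The same argument applied to $g$ and $B$ shows that $g^{-1}(C)$ is closed in $X$. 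A finite union of closed sets is closed, so $h^{-1}(C)$ is closed in $X$, as required.

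The main obstacle, such as it is, lies in being careful about the two different ambient topologies: $f^{-1}(C)$ is a priori only closed in $A$, not in $X$, and this is precisely where the hypothesis that $A$ (and likewise $B$) be closed in $X$ enters, turning subspace-closedness into ambient-closedness. Apart from this bookkeeping, the argument is routine, and the conclusion is the desired continuous map $h \colon X \to Y$.
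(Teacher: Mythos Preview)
Your proof is correct and is precisely the standard argument. Note that the paper does not give its own proof of this lemma; it is simply quoted from Munkres with a citation, so there is nothing to compare against beyond the reference itself, whose proof is essentially what you wrote.
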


The following lines on continuum theory are accumulated from \cite[Chapter 28]{Willard}:
A \textit{continuum}\index{continuum} is a compact, connected Hausdorff topological space.
\begin{theorem}[{\cite[Theorem 28.2]{Willard}}]\label{directContinua}
Let $\{K_i\,|\,i\in I\}$ be a collection of continua in a topological space $X$ directed by inclusion. Then $\bigcap_{i\in I} K_i$ is a continuum.
\end{theorem}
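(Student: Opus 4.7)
The plan is to verify compactness and Hausdorffness quickly, and then spend the bulk of the argument on connectedness. Fix any member $K_{i_0}$ of the collection; since the family is directed by inclusion, we may restrict to those $K_i \subseteq K_{i_0}$ without changing $K := \bigcap_{i\in I} K_i$. Each such $K_i$ is closed in $K_{i_0}$ (a continuum is compact and sits inside a Hausdorff space, so it is closed there), and $K$ is the intersection of closed subsets of the compact Hausdorff space $K_{i_0}$. Therefore $K$ is closed in $K_{i_0}$, hence compact, and inherits the Hausdorff property from $X$.

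The core of the proof is connectedness, which I would establish by contradiction. Suppose $K = A \sqcup B$ with $A,B$ non-empty, disjoint, and closed in $K$. Since $K$ is closed in $K_{i_0}$, both $A$ and $B$ are closed in $K_{i_0}$. A compact Hausdorff space is normal, so there exist disjoint open sets $U, V$ of $K_{i_0}$ with $A \subseteq U$ and $B \subseteq V$. I would then show that for every $K_i \subseteq K_{i_0}$, the closed set $F_i := K_i \setminus (U \cup V)$ is non-empty. Indeed, if $F_i = \emptyset$ then $K_i \subseteq U \cup V$, and since $K_i$ is connected and $U \cap K_i$, $V \cap K_i$ are disjoint opens of $K_i$, one of them must be empty; but $A \subseteq K \subseteq K_i$ forces $K_i \cap U \neq \emptyset$, and similarly $K_i \cap V \neq \emptyset$, a contradiction.

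The final step is the finite intersection property. Given finitely many indices $i_1, \ldots, i_n$ with $K_{i_k} \subseteq K_{i_0}$, directedness yields some $K_j \subseteq \bigcap_k K_{i_k}$, whence $\emptyset \neq F_j \subseteq \bigcap_k F_{i_k}$. Since $K_{i_0}$ is compact and $\{F_i\}$ is a family of closed subsets of $K_{i_0}$ with the finite intersection property, $\bigcap_i F_i \neq \emptyset$. But this intersection equals $K \setminus (U \cup V)$, which is empty because $K = A \cup B \subseteq U \cup V$. This contradiction completes the argument.

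The main obstacle I anticipate is simply keeping the direction of the directedness straight and making sure that the compactness argument is applied inside the fixed compact space $K_{i_0}$ rather than the ambient $X$ (which is not assumed compact, nor even Hausdorff in full generality beyond what is needed to separate $A$ from $B$); taking everything relative to $K_{i_0}$ cleanly sidesteps any hypothesis gaps on $X$.
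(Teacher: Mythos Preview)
The paper does not supply its own proof of this theorem; it simply cites \cite[Theorem 28.2]{Willard}. Your argument is correct and is essentially the standard proof one finds in Willard: fix a member $K_{i_0}$, work inside that compact Hausdorff space, use normality to separate a putative disconnection $A,B$ of $K$ by disjoint opens $U,V$, and then use the finite intersection property of the closed sets $K_i\setminus(U\cup V)$ to derive a contradiction.

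One small wording slip: you write that $K$ ``inherits the Hausdorff property from $X$'', but $X$ is not assumed Hausdorff in the paper's statement. As you yourself note at the end, the fix is to inherit Hausdorffness from $K_{i_0}$ instead; this is already what your argument actually uses, so nothing substantive changes. If one also wants $K\neq\emptyset$ (some authors require continua to be non-empty), that follows from the same finite intersection property applied to the $K_i$ themselves.
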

A continuum $K$ in a topological space $X$ is said to be \textit{irreducible}\index{irreducible} about a subset $A$ of $X$ if  $A\subseteq K$ and no proper subcontinuum of $K$ includes $A$. In case of $A=\{a,b\}$ we say that $K$ is \textit{irreducible between} $a$ \textit{and} $b$.
\begin{theorem}[{\cite[Theorem 28.4]{Willard}}]\label{irredSubcontinuumExists}
If $K$ is a continuum, then any subset $A$ of $K$ lies in a subcontinuum irreducible about $A$.
\end{theorem}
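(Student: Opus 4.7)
The plan is to apply Zorn's Lemma to the collection of subcontinua of $K$ containing $A$, ordered by reverse inclusion. Concretely, I would let
\begin{align*}
\mathcal{F}:=\{L\subseteq K\,|\,L\text{ is a continuum with }A\subseteq L\},
\end{align*}
partially ordered by inclusion. Since $K\in\mathcal{F}$, this collection is non-empty. A minimal element of $\mathcal{F}$ with respect to inclusion is, by definition, a subcontinuum of $K$ irreducible about $A$, so it suffices to produce one.

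To apply Zorn's Lemma I need every chain $\mathcal{C}\subseteq\mathcal{F}$ (under $\subseteq$) to admit a lower bound in $\mathcal{F}$, and the obvious candidate is the intersection $L_\mathcal{C}:=\bigcap\mathcal{C}$. First, $\mathcal{C}$ is directed by reverse inclusion, so Theorem~\ref{directContinua} applies and tells me that $L_\mathcal{C}$ is a continuum (this is the only non-trivial ingredient, and it is handed to us by the preceding lemma). Second, since $A\subseteq L$ for every $L\in\mathcal{C}$, clearly $A\subseteq L_\mathcal{C}$. Hence $L_\mathcal{C}\in\mathcal{F}$ and it is a lower bound of $\mathcal{C}$.

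Zorn's Lemma then furnishes a minimal element $L\in\mathcal{F}$. By minimality, no proper subcontinuum of $L$ still contains $A$, so $L$ is irreducible about $A$, proving the claim. The only potential subtlety would be checking non-emptiness of $L_\mathcal{C}$ to ensure it is indeed a continuum in the sense used by Theorem~\ref{directContinua}; but since $A\subseteq L_\mathcal{C}$ we get $L_\mathcal{C}\neq\emptyset$ provided $A\neq\emptyset$, and in the degenerate case $A=\emptyset$ compactness of $K$ guarantees that the intersection of the descending chain of non-empty closed sets $\mathcal{C}$ is non-empty, so the argument goes through uniformly. No further obstacle is expected.
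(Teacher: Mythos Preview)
The paper does not give its own proof of this statement; it is quoted verbatim as \cite[Theorem 28.4]{Willard} and used as a black box. Your Zorn's Lemma argument is correct and is precisely the standard proof one finds in Willard: order the subcontinua containing $A$ by inclusion, use Theorem~\ref{directContinua} to show that chains have lower bounds, and extract a minimal element. There is nothing to compare here beyond noting that your write-up matches the textbook approach.
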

If $X$ is a connected T\textsubscript{1} topological space, then a \textit{cut point}\index{cut point} of $X$ is a point $p\in X$ such that $X-\{p\}$ is not connected. If $p$ is not a cut point of $X$, we call $p$ a \textit{noncut point}\index{noncut point} of $X$.
A \textit{cutting}\index{cutting} of $X$ is a set $\{p,O,O'\}$ where $p$ is a cut point of $X$ and $O$ together with $O'$ disconnects $X-\{p\}$ in that $O$ and $O'$ are disjoint non-empty open subsets of $X$ with $O\uplus O'=X-\{p\}$.
A cut point $p$ \textit{separates}\index{separates} $a$ \textit{from} $b$ if there is a cutting $\{p,O,O'\}$ of $X$ with $a\in O$ and $b\in O'$.
Given $a\neq b\in X$ we write $\aS_X(a,b)$\index{$\aS_X(a,b)$} for the set consisting of $a,b$ and all the points $p\in X$ which separate $a$ from $b$. The \textit{separation ordering}\index{separation ordering} on $\aS_X(a,b)$ is defined by letting $p_1\le p_2$ if and only if $p_1=p_2$ or $p_1$ separates $a$ from $p_2$. This is actually a partial ordering on $\aS_X(a,b)$.
\begin{theorem}[{\cite[Theorem 28.11]{Willard}}]\label{cutPointOrdLin}
The separation ordering on $\aS_X(a,b)$ is a linear ordering.
\end{theorem}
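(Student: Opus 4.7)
The plan is to establish totality of the separation order: for any two distinct $p_1, p_2 \in \aS_X(a,b)$, either $p_1 \le p_2$ or $p_2 \le p_1$. Reflexivity is built into the definition, and the excerpt already asserts antisymmetry and transitivity, so only totality needs to be proved. I would treat the ``boundary'' cases (one of $p_1, p_2$ equal to $a$ or $b$) separately and briefly: any cut point $p$ that separates $a$ from $b$ has a cutting placing $a$ on one open side and $b$ on the other, which directly yields $a \le p \le b$ in whichever formulation of the endpoints Willard adopts.

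The heart of the proof is the case of two distinct cut points $p_1, p_2$ of $X$ each separating $a$ from $b$. I would fix cuttings $\{p_1, O_1, O_1'\}$ and $\{p_2, O_2, O_2'\}$ with $a \in O_i$ and $b \in O_i'$. Since $p_1 \ne p_2$, we have $p_2 \in O_1 \cup O_1'$ and $p_1 \in O_2 \cup O_2'$. If $p_2 \in O_1'$, then $\{p_1, O_1, O_1'\}$ is itself a cutting witnessing that $p_1$ separates $a$ from $p_2$, so $p_1 \le p_2$; symmetrically, if $p_1 \in O_2'$ then $p_2 \le p_1$. Thus the totality statement will follow once the remaining ``parallel'' configuration $p_2 \in O_1$ \emph{and} $p_1 \in O_2$ is excluded.

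The main obstacle is precisely this exclusion, and I would attack it by building an explicit disconnection of $X$ from the hypothetical configuration. Assuming $p_2 \in O_1$ and $p_1 \in O_2$, set
\[
A := (O_1 \cap O_2) \cup (O_1 \cap O_2') \cup (O_1' \cap O_2) \cup \{p_1, p_2\},
\qquad
B := O_1' \cap O_2'.
\]
Using $X = O_1 \sqcup \{p_1\} \sqcup O_1' = O_2 \sqcup \{p_2\} \sqcup O_2'$ together with $p_1 \in O_2$ and $p_2 \in O_1$, one checks by cases that $A \cup B = X$, while $A \cap B = \emptyset$ follows from $O_1 \cap O_1' = \emptyset = O_2 \cap O_2'$ and from $p_1 \notin O_1'$, $p_2 \notin O_2'$. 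The set $B$ is open as the intersection of two open sets; the nontrivial openness check concerns $A$ at $p_1$ and $p_2$, where one uses that under the hypothesis $O_2$ is an open neighbourhood of $p_1$ entirely contained in $A$ (since $O_2 \setminus \{p_1\}$ splits into $(O_2 \cap O_1) \cup (O_2 \cap O_1') \subseteq A$), and symmetrically $O_1$ is an open neighbourhood of $p_2$ contained in $A$. Finally $a \in O_1 \cap O_2 \subseteq A$ and $b \in O_1' \cap O_2' = B$ show both sets are nonempty, contradicting the connectedness of $X$.

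This contradiction rules out the parallel configuration, so totality holds in the two-cut-point case; combined with the easy boundary checks and the partial-order properties already in hand, this completes the proof. The only delicate point in the argument is the openness verification for $A$ at the two cut points $p_1$ and $p_2$, which is exactly where the geometric incompatibility of the two cuttings is encoded.
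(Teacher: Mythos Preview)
The paper does not give its own proof of this statement; it is quoted verbatim as \cite[Theorem 28.11]{Willard} and used as a black box. Your argument is correct and is essentially the standard proof one finds in Willard: reduce totality to excluding the ``parallel'' configuration $p_2\in O_1$, $p_1\in O_2$, and rule that out by exhibiting an explicit disconnection of $X$. The set-theoretic bookkeeping and the openness check of $A$ at $p_1,p_2$ are exactly the delicate points, and you handle them correctly (in particular, $O_2\subseteq A$ because $O_2=(O_2\cap O_1)\cup\{p_1\}\cup(O_2\cap O_1')$ once $p_1\in O_2$ and $p_2\notin O_2$).
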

\begin{theorem}[{\cite[Theorem 28.12]{Willard}}]\label{sepOrderSubSame}
If $K$ is a continuum with exactly two noncut points $a$ and $b$, then $\aS_K(a,b)=K$ and the topology on $K$ is the order topology.
\end{theorem}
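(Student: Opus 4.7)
The plan is to prove the two assertions of the theorem in turn, starting with $\aS_K(a,b) = K$. Fix any $p \in K \setminus \{a, b\}$; since $a$ and $b$ are the only noncut points of $K$, the point $p$ admits a cutting $\{p, O_1, O_2\}$. I would first verify that $\overline{O_1}$ and $\overline{O_2}$ are subcontinua of $K$ with $\overline{O_1} \cap \overline{O_2} = \{p\}$: the inclusion $\overline{O_i} \subseteq O_i \cup \{p\}$ holds because $O_{3-i}$ is open and disjoint from $O_i$, the point $p$ lies in both closures (otherwise $O_i$ would be clopen, contradicting connectedness of $K$), and any hypothetical separation of $\overline{O_i}$ into two closed pieces would combine with $\overline{O_{3-i}}$ to disconnect $K$. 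Now I invoke the standard fact from continuum theory that every nondegenerate continuum has at least two noncut points; each $\overline{O_i}$ therefore has a noncut point $q_i \neq p$, so $q_i \in O_i$. Such a $q_i$ is also a noncut point of $K$, since $K \setminus \{q_i\} = \overline{O_{3-i}} \cup (\overline{O_i} \setminus \{q_i\})$ is the union of two connected sets meeting at $p$. By hypothesis the noncut points of $K$ are exactly $a$ and $b$, so each $O_i$ must contain one of them; disjointness of $O_1$ and $O_2$ then forces one to contain $a$ and the other $b$. Hence $p$ separates $a$ from $b$, and $\aS_K(a,b) = K$.

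For the second assertion, I would show that the original topology on $K$ coincides with the order topology of the separation ordering, which is linear by Theorem~\ref{cutPointOrdLin}. The first step is to identify, for each cut point $p$, the sides of cuttings at $p$ with the open order rays: write $L_p := \{x \in K : x < p\}$ and $R_p := \{x \in K : p < x\}$. Combining Part~1 with linearity, for every cutting $\{p, O_1, O_2\}$ with $a \in O_1$ and $b \in O_2$ one obtains $O_1 = L_p$ and $O_2 = R_p$: any $x \in O_2 \setminus \{p\}$ satisfies $p < x$ because the given cutting witnesses $p$ separating $a$ from $x$, while conversely, if $p < x$ then $x \neq a, p$ and linearity precludes $x \in O_1$, so $x \in O_2$. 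Thus every subbasis element of the order topology is open in $K$, i.e. the order topology is coarser than the given one. For the reverse inclusion one can either argue directly, producing for each $q \in K$ a neighbourhood base of order intervals around $q$ by using compactness to cover $K \setminus U$ with finitely many pieces that are cut off from $q$ by suitable cut points, or invoke the standard fact that a continuous bijection from a compact space onto a Hausdorff space is a homeomorphism, applied to the identity map from $K$ into $K$ endowed with the (necessarily Hausdorff) order topology.

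The step I expect to be the main obstacle is matching the two topologies in Part~2, specifically verifying that the closed sides of an arbitrary cutting at $p$ coincide with the closed order rays $\{x : x \le p\}$ and $\{x : p \le x\}$. The subtlety is that $K \setminus \{p\}$ a priori may have more than two connected components, so one cannot simply refer to \emph{the} two sides of $p$; however, the argument from Part~1 forces every cutting of $K$ at $p$ to place $a$ and $b$ on opposite sides, and linearity of the separation ordering then pushes any other point of $K$ into exactly one of the two order rays. Once this identification is made explicit and the order subbasis has been recognised as open in $K$, the compact-to-Hausdorff argument closes out the proof without further combinatorial work.
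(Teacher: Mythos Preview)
The paper does not prove this theorem at all; it is simply quoted from Willard's textbook \cite[Theorem~28.12]{Willard} in the preliminaries section, with no proof given. Your sketch is essentially the standard argument found in Willard, so there is nothing in the paper itself to compare your approach against.
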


Next, we have a look at compactifications:
A \textit{compactification}\index{compactification} of a topological space $X$ is an ordered pair $(K,h)$ where $K$ is a compact topological space and $h$ is an embedding of $X$ as a dense subset of $K$. Sometimes we also refer to $K$ as a compactification of $X$ if the map $h$ is clearly understood.
In \cite[chapter 19]{Willard} we find the following definitions:
A \textit{\HDcomp{}}\index{\HDcomp{}} of a topological space $X$ is a compactification $(K,h)$ of $X$ with $K$ Hausdorff.
If $(K,h)$ and $(K',h')$ are compactifications of $X$ we write $(K,h)\le (K',h')$ whenever there exists a continuous mapping $f\colon K'\to K$ with $f\circ h'=h$, i.e. such the diagram
\begin{center}
\begin{tikzpicture}[pil/.style={
           ->,
           thin,
           shorten <=2pt,
           shorten >=2pt,}]
\node (1) {$X$};
\node (2) [right=of 1]{$K'$};
\node (3) [below=of 2]{$K$};
\path[-stealth]
(1) edge node [above] {$h'$} (2)
(1) edge node [below left] {$h$} (3)
(2) edge node [right] {$f$} (3);
\end{tikzpicture}
\end{center}
commutes.\footnote{The class of all compactifications of $X$ need not be a set, hence we do not speak of a partial ordering.}
We write $(K,h)<(K',h')$ whenever $(K,h)\le (K',h')$ holds while $(K',h')\le (K,h)$ fails.
If there exists a homeomorphism $f\colon K'\to K$ witnessing $(K,h)\le (K',h')$ we say that $(K,h)$ and $(K',h')$ are \textit{topologically equivalent}\index{topologically equivalent} (clearly, this is symmetric). 
This definition is stated differently in \cite{Willard}, but both turn out to be equivalent for \HDcomp{}s:

\begin{lemma}[{\cite[Lemma 19.7]{Willard}}]\label{TopEqHomeo}
Two \HDcomp{}s $(K,h)$ and $(K',h')$ of $X$ are topologically equivalent if and only if $(K,h)\le (K',h')\le (K,h)$ holds.\footnote{We adapted the statement of the original Lemma to our definition of `topologically equivalent'.}
\end{lemma}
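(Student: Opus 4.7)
The plan is to split the biconditional into its two directions, and use Lemma~\ref{HDdenseSubsetUnique} (agreement of continuous maps on a dense set in a Hausdorff target) to handle the nontrivial direction.

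For the forward direction, assume $(K,h)$ and $(K',h')$ are topologically equivalent, witnessed by a homeomorphism $f\colon K'\to K$ with $f\circ h'=h$. Then $f$ itself witnesses $(K,h)\le (K',h')$. For the reverse inequality, consider the inverse $f^{-1}\colon K\to K'$, which is continuous because $f$ is a homeomorphism; and $f^{-1}\circ h=f^{-1}\circ f\circ h'=h'$, so $f^{-1}$ witnesses $(K',h')\le(K,h)$.

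For the reverse (and main) direction, assume $(K,h)\le (K',h')\le (K,h)$. Then there exist continuous maps $f\colon K'\to K$ with $f\circ h'=h$ and $g\colon K\to K'$ with $g\circ h=h'$. The composition $g\circ f\colon K'\to K'$ is continuous and satisfies
\begin{align*}
(g\circ f)\circ h'=g\circ(f\circ h')=g\circ h=h',
\end{align*}
so $g\circ f$ and $\id_{K'}$ are two continuous maps $K'\to K'$ that agree on the subset $h'[X]$. Since $(K',h')$ is a compactification of $X$, the set $h'[X]$ is dense in $K'$; since $K'$ is Hausdorff, Lemma~\ref{HDdenseSubsetUnique} forces $g\circ f=\id_{K'}$. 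An entirely symmetric argument, using that $h[X]$ is dense in the Hausdorff space $K$, yields $f\circ g=\id_K$. Hence $f$ is a homeomorphism with $f\circ h'=h$, which is exactly the definition of topological equivalence between $(K,h)$ and $(K',h')$.

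The only place where anything could go wrong is the appeal to Lemma~\ref{HDdenseSubsetUnique}, which requires the targets to be Hausdorff; this is precisely why the statement is restricted to Hausdorff compactifications. No hidden obstacle is expected beyond making sure both the density of $h[X]$ in $K$ and of $h'[X]$ in $K'$ are used symmetrically in the two halves of the argument.
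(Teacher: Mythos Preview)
Your proof is correct. Note that the paper does not supply its own proof of this lemma; it is quoted from Willard with only the statement adapted. Your argument is the standard one and matches what one finds in Willard: use the density of $h[X]$ and $h'[X]$ together with Lemma~\ref{HDdenseSubsetUnique} to conclude that the two witnessing maps are mutual inverses.
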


\newpage
\begin{lemma}[{\cite[Lemma 19.8]{Willard}}]\label{CompactificationSur}
Suppose that $(K,h)$ and $(K',h)$ are two \HDcomp{}s
of $X$ with $(K,h)\le (K',h')$ witnessed by a mapping $f\colon K'\to K$. Then the following hold:
\begin{enumerate}
\item $f\rest h'[X]$ is a homeomorphism from $h'[X]$ to $h[X]$.
\item $f[K'-h'[X]]=K-h[X]$.
\end{enumerate}
\end{lemma}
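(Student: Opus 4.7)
For (i), I would exploit the defining identity $f\circ h'=h$ directly. Since $h'\colon X\to h'[X]$ is a homeomorphism, it has an inverse $(h')^{-1}\colon h'[X]\to X$, and restricting $f$ to $h'[X]$ gives
\begin{align*}
f\rest h'[X]=h\circ (h')^{-1}.
\end{align*}
As a composition of homeomorphisms, this is a homeomorphism from $h'[X]$ onto $h[X]$.

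For (ii) the plan is to split the task into two subclaims: (a) $f$ is surjective, and (b) $f^{-1}(h[X])=h'[X]$. For (a), note that $f[K']$ is the continuous image of a compact space, hence compact, and therefore closed in the Hausdorff space $K$; since $f[K']\supseteq f[h'[X]]=h[X]$ and $h[X]$ is dense in $K$, we get $f[K']=K$. For (b), the inclusion $\supseteq$ is immediate from $f\circ h'=h$. The reverse inclusion is the heart of the argument: given $y\in K'$ with $f(y)=h(x_0)$ for some $x_0\in X$, use density of $h'[X]$ in $K'$ to pick a net $(h'(x_\alpha))_\alpha$ with $h'(x_\alpha)\to y$. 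Continuity of $f$ yields $h(x_\alpha)=f(h'(x_\alpha))\to f(y)=h(x_0)$ in $K$. Because $h$ is an embedding and the limit $h(x_0)$ lies in $h[X]$, convergence in $K$ is the same as convergence in the subspace $h[X]$, which transports back via $(h)^{-1}$ to give $x_\alpha\to x_0$ in $X$. Applying $h'$ (continuous) gives $h'(x_\alpha)\to h'(x_0)$ in $K'$, and uniqueness of limits in the Hausdorff space $K'$ forces $y=h'(x_0)\in h'[X]$.

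Once (a) and (b) are in hand, (ii) is bookkeeping: if $y\in K'-h'[X]$ then by (b) $f(y)\notin h[X]$, giving $f[K'-h'[X]]\subseteq K-h[X]$; conversely, any $z\in K-h[X]$ has by (a) a preimage $y\in K'$, and by (b) necessarily $y\notin h'[X]$, so $z\in f[K'-h'[X]]$.

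The step I expect to be the main obstacle is subclaim (b), and specifically the non-trivial inclusion $f^{-1}(h[X])\subseteq h'[X]$. Both hypotheses of the lemma feed into it: the Hausdorff property of $K'$ is needed for uniqueness of the net limit $y$, and the density of $h'[X]$ in $K'$ is needed to produce an approximating net at all. Without either, a point of $K'-h'[X]$ could in principle map into $h[X]$, and (ii) would fail.
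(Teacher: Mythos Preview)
Your proof is correct. Note, however, that the paper does not supply its own proof of this lemma: it is simply quoted from Willard's textbook (as Lemma~19.8 there), so there is no argument in the paper to compare against. Your approach---reducing (ii) to surjectivity of $f$ together with $f^{-1}(h[X])=h'[X]$, and handling the latter via a density-and-net argument exploiting Hausdorffness of $K'$---is the standard one and matches what one finds in Willard.
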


\begin{lemma}\label{CompactificationComparableUnique}
If $(K,h)$ and $(K',h)$ are two \HDcomp{}s of $X$ and $f\colon K'\to K$ witnesses $(K,h)\le (K',h')$ then $f$ is unique.
\end{lemma}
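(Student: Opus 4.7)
The plan is to use the already-cited principle that two continuous maps into a Hausdorff space which agree on a dense subset must agree everywhere. Concretely, suppose $f,g\colon K'\to K$ are both continuous and both witness $(K,h)\le(K',h')$, so that $f\circ h'=h=g\circ h'$. Then $f$ and $g$ agree on the subset $h'[X]\subseteq K'$. Since $(K',h')$ is a compactification of $X$, the set $h'[X]$ is dense in $K'$; and since $(K,h)$ is a \HDcomp{}, the codomain $K$ is Hausdorff.

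Invoking Lemma~\ref{HDdenseSubsetUnique} with $D:=h'[X]$, we conclude $f=g$, which is exactly the claim. I do not foresee any obstacle: the statement is just the standard observation that a Hausdorff codomain makes continuous extensions from a dense set unique, applied to the defining diagram of the compactification ordering.
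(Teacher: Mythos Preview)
Your proof is correct and essentially identical to the paper's own proof: both take a second witness $g$, note that $f$ and $g$ agree on the dense set $h'[X]$, and apply Lemma~\ref{HDdenseSubsetUnique} using that $K$ is Hausdorff.
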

\begin{proof}
Let $g\colon K'\to K$ be any witness of $(K,h)\le (K',h')$. We have to show $f=g$. By choice of $f$ and $g$, we have $f\circ h'=h$ and $g\circ h'=h$. In particular, both $f$ and $g$ agree on $h'[X]$. Since $h'[X]$ is dense in $K'$, Lemma~\ref{HDdenseSubsetUnique} yields $f=g$ as desired.
\end{proof}

Next, we have a look at two particular \HDcomp{}s of discrete topological spaces which are known as the one-point \HDcomp{} and the Stone-Čech \HDcomp{}. The following insights are accumulated from \cite[Chapter 3.5]{EngelkingBook}\footnote{In \cite{EngelkingBook} the definition of `compactification' seems to vary from our definition of `\HDcomp{}' at first sight since it does not mention `Hausdorff'. A closer look at the definition of `compact' in \cite{EngelkingBook} reveals that it is hidden there.}:

If $X$ is a discrete topological space and $\ast$ is a point that is not in $X$, then we can extend $X$ to a topological space $\omega X:=X\uplus\{\ast\}$\index{$\omega X$} by declaring as open, for every finite subset $A$ of $X$, the set $\omega X-A$. The pair of this space and the identity on $X$ is known as the one-point \HDcomp{}\index{one-point \HDcomp{}} of $X$.
\begin{lemma}\label{OnePointCompMin}
If $X$ is a discrete topological space and $(\omega X,\id_X)$ is its one-point \HDcomp{}, then $(\omega X,\id_X)$ is a least \HDcomp{} of $X$ in that every \HDcomp{} $(K,h)$ of $X$ satisfies $(K,h)\ge (\omega X,\id_X)$.
\end{lemma}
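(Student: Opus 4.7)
The plan is to construct explicitly the continuous map $f\colon K\to\omega X$ witnessing $(\omega X,\id_X)\le (K,h)$ by sending $h[X]$ bijectively onto $X$ and collapsing the remainder to the point $\ast$, and then to verify continuity by checking preimages of basic open sets of $\omega X$.

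First, I would define $f\colon K\to\omega X$ by $f(h(x)):=x$ for all $x\in X$ and $f(k):=\ast$ for all $k\in K\setminus h[X]$. The identity $f\circ h=\id_X$ is then immediate, so only continuity remains.

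The basic open sets of $\omega X$ come in two kinds: singletons $\{x\}$ with $x\in X$ (open because $X$ is discrete and embedded into $\omega X$ as an open subspace), and the cofinite sets $\omega X\setminus A$ with $A\subseteq X$ finite. For the second kind, continuity is easy: $f^{-1}(\omega X\setminus A)=K\setminus h[A]$, and since $K$ is Hausdorff and hence T$_1$ the finite set $h[A]$ is closed in $K$, so its complement is open.

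The main obstacle is the first kind: we must show that $\{h(x)\}=f^{-1}(\{x\})$ is open in $K$ for every $x\in X$. Because $h$ is an embedding and $X$ is discrete, $\{h(x)\}$ is open in $h[X]$, so there exists an open $U\subseteq K$ with $U\cap h[X]=\{h(x)\}$. I would then consider the open set
\[
V:=K\setminus\cl_K(h[X]\setminus\{h(x)\}),
\]
which contains $h(x)$ (since $U$ witnesses $h(x)\notin\cl_K(h[X]\setminus\{h(x)\})$) and still satisfies $V\cap h[X]=\{h(x)\}$. If some $k\in V$ were different from $h(x)$, then every open neighbourhood $V'\subseteq V$ of $k$ would satisfy $V'\cap h[X]\subseteq\{h(x)\}$; density of $h[X]$ in $K$ forces $V'\cap h[X]=\{h(x)\}$, so $h(x)\in V'$ for every open neighbourhood $V'$ of $k$, contradicting the Hausdorff property of $K$. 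Hence $V=\{h(x)\}$, and $\{h(x)\}$ is open in $K$ as required.

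Combining both cases yields that $f$ is continuous, and together with $f\circ h=\id_X$ this shows $(\omega X,\id_X)\le (K,h)$. The delicate step is the isolation argument for $h(x)$ in $K$; everything else follows routinely from the T$_1$/Hausdorff property and the definition of the one-point compactification topology.
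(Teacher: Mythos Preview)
Your proof is correct; the isolation argument for $h(x)$ in $K$ via density and the Hausdorff property is exactly what is needed, and the cofinite case is routine. The paper, however, does not supply its own proof of this lemma: it is stated there as a standard fact drawn from Engelking's textbook, so there is no argument in the paper to compare yours against.
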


More generally, if $X$ is a topological space and $(K,h)$ is a \HDcomp{} of $X$ such that $K-X$ is a singleton, then $(K,h)$ is called the \textit{one-point \HDcomp{}} of $X$ (which is unique up to topological equivalence) and we write $\omega X=K$ as before.
In order to state an existential Theorem concerning one-point \HDcomp{}s we need the following definitions from~\cite[§29]{munkres}: A topological space $X$ is said to be \textit{locally compact at} $x$ if there is some compact subspace $C$ of $X$ that contains a neighbourhood of $x$. If $X$ is locally compact at each of its points, then $X$ is said to be \textit{locally compact}.\index{locally compact}
\begin{theorem}[{\cite[Theorem 29.1 and subsequent remarks]{munkres}}]\label{OnePtCptIFF}
A topological space $X$ has a one-point \HDcomp{} if and only if $X$ is locally compact and Hausdorff, but not compact.
\end{theorem}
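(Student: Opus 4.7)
The plan is to prove both implications directly. For the easier direction, assume that $X$ has a one-point \HDcomp{} $(K,h)$; identifying $X$ with $h[X]$, we have $K=X\cup\{\ast\}$ with $\ast\notin X$. Since $K$ is Hausdorff and compact, the singleton $\{\ast\}$ is closed, so $X=K\setminus\{\ast\}$ is open in $K$. I would then invoke the standard fact that an open subspace of a compact Hausdorff space is locally compact Hausdorff to conclude that $X$ is locally compact Hausdorff. Finally, $X$ cannot be compact: if it were, then $X$ would be closed in the Hausdorff space $K$, so $X=\cl_K(X)=K$ by density, contradicting $\ast\notin X$.

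For the converse, assume $X$ is locally compact Hausdorff but not compact. Pick some $\ast\notin X$ and set $\omega X:=X\cup\{\ast\}$. I would declare a subset of $\omega X$ to be open if and only if it is either an open subset of $X$, or of the form $\{\ast\}\cup(X\setminus C)$ for some compact $C\subseteq X$. The first step is routine: check that this collection is closed under finite intersections and arbitrary unions, using that finite unions of compact sets are compact and that compact subsets of a Hausdorff space are closed in $X$ (so their complements in $X$ are open in $X$, matching the two flavours of basic open sets correctly on the overlap).

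Next I would verify the four properties a one-point \HDcomp{} must satisfy. First, the inclusion $X\hookrightarrow\omega X$ is an embedding, which is immediate since every open set of $X$ is by definition open in $\omega X$, and conversely every open set of $\omega X$ meets $X$ in an open subset. Second, $\omega X$ is compact: any open cover contains some member of the form $\{\ast\}\cup(X\setminus C)$ covering $\ast$, and then finitely many of the remaining open sets cover the compact set $C$. Third, density of $X$ in $\omega X$ uses non-compactness of $X$: every open neighbourhood of $\ast$ has the form $\{\ast\}\cup(X\setminus C)$ for compact $C\subsetneq X$, and hence meets $X$.

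The key step, and the only place where local compactness is really used, is showing that $\omega X$ is Hausdorff. Two points of $X$ are separated by open sets in $X$, which remain open in $\omega X$. For a point $x\in X$ and the point $\ast$, I would use local compactness to choose a compact subspace $C\subseteq X$ whose interior $U$ (in $X$) contains $x$; then $U$ and $\{\ast\}\cup(X\setminus C)$ are disjoint open neighbourhoods of $x$ and $\ast$ in $\omega X$. This is the only nontrivial verification and the main obstacle, since without local compactness such a $C$ need not exist and the construction fails to be Hausdorff.
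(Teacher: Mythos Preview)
The paper does not prove this theorem; it merely cites it from Munkres. Your argument is correct and is essentially the standard textbook proof (indeed, the one found in Munkres): the forward direction uses that an open subspace of a compact Hausdorff space is locally compact Hausdorff, and the converse is the explicit one-point compactification construction with the key Hausdorff verification using local compactness.
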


If $X$ is a discrete topological space we let $\beta X$\index{$\beta X$} be the set of all ultrafilters on $X$ equipped with the topology whose basic open sets are those of the form $\{U\in\beta X\,|\,A\in U\}$, one for each $A\subseteq X$. Furthermore, we let $\iota\colon X\to\beta X$ map each $x$ to the non-principal ultrafilter on $X$ generated by $\{x\}$. Then $(\beta X,\iota)$ is known as the Stone-Čech \HDcomp{}\index{Stone-Čech \HDcomp{}} of $X$.
\begin{lemma}\label{StoneCech}
If $X$ is a discrete topological space and $(\beta X,\iota)$ is its Stone-Čech \HDcomp{}, then $(\beta X,\iota)$ is a greatest \HDcomp{} of $X$ in that every \HDcomp{} $(K,h)$ of $X$ satisfies $(K,h)\le (\beta X,\iota)$.
\end{lemma}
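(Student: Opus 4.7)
The plan is to construct a continuous map $f\colon \beta X \to K$ satisfying $f \circ \iota = h$; by the definition of $\le$ this yields $(K,h) \le (\beta X, \iota)$. The natural candidate is the standard ultrafilter-limit extension: for each $U \in \beta X$ I would set $h_*(U) := \{B \subseteq K \mid h^{-1}(B) \in U\}$, which is routinely seen to be an ultrafilter on $K$, and define $f(U)$ to be the unique point of $K$ to which $h_*(U)$ converges. Existence of this limit uses that $K$ is compact, and uniqueness uses that $K$ is Hausdorff. Note that the discreteness of $X$ makes $h$ automatically continuous, and the compactification assumption ensures $h[X]$ is dense in $K$.

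Verifying $f \circ \iota = h$ is immediate: for $x \in X$ the pushforward $h_*(\iota(x))$ is the principal ultrafilter at $h(x)$, which trivially converges to $h(x)$.

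The main obstacle will be continuity of $f$. Given open $V \subseteq K$ and $U \in f^{-1}(V)$, I would exploit that compact Hausdorff spaces are regular to pick an open $W \subseteq K$ with $f(U) \in W$ and $\overline{W} \subseteq V$. Since $h_*(U)$ converges to $f(U) \in W$, we have $W \in h_*(U)$, equivalently $A := h^{-1}(W) \in U$. The basic open set $\{U' \in \beta X \mid A \in U'\}$ is then an open neighbourhood of $U$ in $\beta X$; for any $U'$ in it, $h[A] \subseteq W$ yields $W \in h_*(U')$, so the limit $f(U')$ must lie in $\overline{W} \subseteq V$, for otherwise $K \setminus \overline{W}$ would be a neighbourhood of $f(U')$ belonging to the ultrafilter $h_*(U')$ and disjoint from $W \in h_*(U')$, contradicting the filter property. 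Hence $f^{-1}(V)$ is open.

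The hard part is really just this continuity step, and the crucial move is the use of regularity of $K$ to shrink $V$ to an open $W$ with $\overline{W} \subseteq V$, so that a \emph{single} subset $A \subseteq X$ can witness an open neighbourhood of $U$ in $\beta X$ mapped by $f$ into $V$. Uniqueness of $f$, if desired, follows at once from Lemma~\ref{CompactificationComparableUnique}.
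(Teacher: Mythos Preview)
Your argument is correct and is the standard ultrafilter-limit construction of the Stone--\v{C}ech extension for discrete spaces; the use of regularity of the compact Hausdorff target to pass from $V$ to an open $W$ with $\overline{W}\subseteq V$ is exactly the right move for continuity.

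Note, however, that the paper does not supply its own proof of this lemma: it is recorded in the general-topology preliminaries as a fact drawn from \cite[Chapter~3.5]{EngelkingBook}, alongside the companion statement about the one-point compactification being least. So there is no ``paper's approach'' to compare against; you have simply filled in a proof where the paper defers to the literature. Your write-up could serve as a self-contained replacement for that citation if one were desired.
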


\begin{theorem}[\cite{Munster}]\label{minimalHDrelation}
If $X$ is a topological space, then the relation $R_X$ on $X$ given by
\begin{align*}
R_X=\bigcap\big\{R\subseteq X^2\,\big|\,R\text{ is an eq.-rel. on }X\text{ and }X/R\text{ is \normalfont{T}}{}_2\big\}
\end{align*}
is an equivalence relation and $X/R_X$ is the maximal Hausdorff quotient of $X$.\index{maximal Hausdorff quotient}
\end{theorem}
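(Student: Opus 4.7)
I would break the statement into three parts: (a) $R_X$ is an equivalence relation, (b) $X/R_X$ is Hausdorff, and (c) $X/R_X$ is maximal among Hausdorff quotients in the universal sense that every continuous map from $X$ to a Hausdorff space factors uniquely through the quotient map $\pi\colon X\to X/R_X$. Part (a) is routine: reflexivity, symmetry and transitivity are each preserved under arbitrary intersection, so intersecting equivalence relations yields an equivalence relation (the family is non-empty since the total relation $X^2$ belongs to it and its quotient is a singleton, hence Hausdorff).

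For part (b), which I expect to be the main obstacle, the idea is to reduce Hausdorffness of $X/R_X$ to Hausdorffness of the individual quotients $X/R$ for $R$ in the defining family. Take distinct points $[x]_{R_X}\neq[y]_{R_X}$ in $X/R_X$; then $(x,y)\notin R_X$, so by definition of $R_X$ there exists some $R$ from the defining family with $(x,y)\notin R$. Since $R_X\subseteq R$, the identity on $X$ descends to a well-defined surjection $p\colon X/R_X\to X/R$ satisfying $p\circ\pi=\pi_R$, where $\pi_R\colon X\to X/R$ is the quotient map. By the universal property of the quotient topology on $X/R_X$ (applied to the continuous map $\pi_R$), the map $p$ is continuous. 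Because $(x,y)\notin R$, we have $p([x]_{R_X})\neq p([y]_{R_X})$, and since $X/R$ is Hausdorff we can find disjoint open sets $U,V\subseteq X/R$ separating these two images. Their preimages $p^{-1}(U)$ and $p^{-1}(V)$ are then disjoint open sets in $X/R_X$ separating $[x]_{R_X}$ and $[y]_{R_X}$.

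For part (c), I would argue as follows. Let $q\colon X\to Y$ be any continuous map with $Y$ Hausdorff, and define the kernel relation $R_q:=\{(a,b)\in X^2\mid q(a)=q(b)\}$. This is plainly an equivalence relation, and the universal property of the quotient yields a continuous injection $\tilde q\colon X/R_q\to Y$ with $\tilde q\circ\pi_{R_q}=q$. A continuous injection into a Hausdorff space pulls any pair of separating open sets back to a pair of separating open sets, so $X/R_q$ is itself Hausdorff. Hence $R_q$ belongs to the family intersected to form $R_X$, which gives $R_X\subseteq R_q$. Consequently $\pi$ factors $q$: the assignment $[x]_{R_X}\mapsto q(x)$ is well-defined, and another application of the universal property of the quotient $\pi$ shows that the resulting map $X/R_X\to Y$ is continuous. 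Uniqueness of the factorisation follows from surjectivity of $\pi$. Together, parts (b) and (c) say precisely that $X/R_X$ is the maximal Hausdorff quotient of $X$.

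The only delicate point in the whole argument is part (b), where one must resist the temptation to try to separate $[x]_{R_X}$ and $[y]_{R_X}$ inside $X/R_X$ directly: the construction instead exploits that $R_X$ is an intersection to borrow separating open sets from a better-behaved quotient $X/R$ and transport them back via a continuous map.
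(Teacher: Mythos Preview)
Your proof is correct. Note, however, that the paper does not give its own proof of this theorem: it is stated with a citation to \cite{Munster} and the reader is explicitly redirected there for details, so there is no in-paper argument to compare against. Your argument is the standard one and would serve perfectly well as a self-contained proof.
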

If $X$ is a topological space and $R_X\subseteq X^2$ is given by Theorem~\ref{minimalHDrelation}, then we write $\cH(X)$\index{$\cH(X)$} for the quotient space $X/R_X$. For more details on this topic, e.g. regarding uniqueness of $\cH(X)$ and an intuitive construction of $R_X$, we redirect the reader to \cite{Munster}.

If $X$ is a topological space, we write $\between_X$ for the relation on $X^2$ defined by letting $a\between_X b$ whenever there exist no disjoint open neighbourhoods of $a$ and $b$ in $X$.
Now fix a topological space $X$.
For every ordinal $\alpha$ we define an equivalence relation $r_X^\alpha$ on $X$ and a quotient space $h^\alpha(X)$ of $X$, as follows: Set $r_X^0=\text{diag}(X)$ and $h^0(X)=X/r_X^0$. For successors $\alpha+1$ we let $q_X^\alpha\colon X\to h^\alpha(X)$ be the quotient map and put
\begin{align*}
r_X^{\alpha+1}=\big\{(a,b)\in X^2\,\big|\,(q_X^\alpha(a),q_X^\alpha(b))\in\text{trcl}\big(\between_{h^\alpha(X)}\big)\big\}
\end{align*}
as well as $h^{\alpha+1}(X)=X/r_X^{\alpha+1}$. For limits $\lambda$ we take $r_X^\lambda=\bigcup_{\alpha<\lambda}r_X^\alpha$ and $h^\lambda(X)=X/r_X^\lambda$. Then
\begin{theorem}[{\cite[Lemma 4.11 \& Construction 4.12]{Munster}}]\label{minimalHDconstruction}
If $X$ is a topological space, then there exists a minimal ordinal $\alpha$ with $r_X^\alpha=r_X^{\alpha+1}$. Furthermore, $r_X^\alpha=R_X$ holds (where $R_X$ is as in Theorem~\ref{minimalHDrelation}), i.e. $h^\alpha(X)=\cH(X)$.
\end{theorem}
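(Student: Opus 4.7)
The plan is to establish three facts about the transfinite sequence $(r_X^\alpha)$: it is non-decreasing, it must stabilise at some least ordinal $\alpha$ by a cardinality argument, and at this stable value it coincides with $R_X$ (which yields both claims of the theorem simultaneously).

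First I would verify by transfinite induction that each $r_X^\beta$ is an equivalence relation. This uses that $\between_{h^\beta(X)}$ is reflexive and symmetric, so $\text{trcl}(\between_{h^\beta(X)})$ is an equivalence relation on $h^\beta(X)$, and the pullback of an equivalence relation along $a\mapsto q_X^\beta(a)$ is an equivalence relation on $X$. Monotonicity $r_X^\beta\subseteq r_X^{\beta+1}$ is then immediate: if $(a,b)\in r_X^\beta$ then $q_X^\beta(a)=q_X^\beta(b)$ sits on the diagonal, hence in $\text{trcl}(\between_{h^\beta(X)})$; and at limits the construction already takes unions. Since the sequence lives inside $\mathcal{P}(X^2)$, it cannot strictly increase through the entire class of ordinals, so there is a least ordinal $\alpha$ with $r_X^\alpha=r_X^{\alpha+1}$.

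For the inclusion $r_X^\alpha\subseteq R_X$ I would fix an arbitrary equivalence relation $R$ with $X/R$ Hausdorff and show $r_X^\beta\subseteq R$ for every $\beta$, by transfinite induction. The base case ($R$ reflexive) and the limit case are immediate from the inductive hypothesis. For the successor step, the hypothesis $r_X^\beta\subseteq R$ allows the quotient projection $p\colon X\to X/R$ to factor as $p=\pi\circ q_X^\beta$ for a map $\pi\colon h^\beta(X)\to X/R$; this $\pi$ is continuous because $q_X^\beta$ is a quotient map and $\pi\circ q_X^\beta=p$ is continuous. Continuity of $\pi$ into a Hausdorff space forces $\pi(u)=\pi(v)$ whenever $u\between_{h^\beta(X)}v$, since otherwise disjoint open neighbourhoods of $\pi(u)$ and $\pi(v)$ would pull back to disjoint open neighbourhoods of $u$ and $v$. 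Chaining through the transitive closure via transitivity of equality, the same conclusion extends to all pairs in $\text{trcl}(\between_{h^\beta(X)})$; hence $(a,b)\in r_X^{\beta+1}$ yields $p(a)=p(b)$, i.e.\ $(a,b)\in R$.

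The reverse inclusion $R_X\subseteq r_X^\alpha$ falls out of the stabilisation. The equation $r_X^\alpha=r_X^{\alpha+1}$ says precisely that $\text{trcl}(\between_{h^\alpha(X)})$ is the identity on $h^\alpha(X)$; in particular distinct points of $h^\alpha(X)=X/r_X^\alpha$ admit disjoint open neighbourhoods, so $h^\alpha(X)$ is Hausdorff. Thus $r_X^\alpha$ belongs to the family whose intersection defines $R_X$ in Theorem~\ref{minimalHDrelation}, giving $R_X\subseteq r_X^\alpha$. The only delicate point in the whole argument is the successor step just described, namely producing the continuous factorisation $\pi$ via the universal property of the quotient and observing that continuous maps into Hausdorff spaces collapse $\between$-related pairs; everything else is routine transfinite bookkeeping.
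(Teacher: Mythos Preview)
The paper does not supply its own proof of this statement: it is quoted verbatim as a result from \cite{Munster} and used as a black box, so there is no in-paper argument to compare against. Your proposal is a correct self-contained proof along the standard lines one would expect for this construction (monotonicity, cardinality-based stabilisation, then the two inclusions via the universal property of quotients and the observation that stabilisation forces $h^\alpha(X)$ to be Hausdorff).
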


\begin{lemma}\label{UnitIntervalBlowup}
Let $J$ be a countable non-empty subset of $\I$ and let $\ell\colon J\to\R_{>0}$ be such that $\sum_{j\in J}\ell(j)=1$. Then the map
\begin{align*}
\phi\colon \I\to [0,2],\; i\mapsto i+\sum_{\substack{j\in J\\j<i}}\ell(j)
\end{align*}
has the following property:
For every $\lambda\in [0,2]\setminus\phi[\I\setminus J]$ there is some $j_\lambda\in J$ with $j_\lambda<\lambda$ and 
\begin{align*}
\lambda\in [\phi(j_\lambda),\phi(j_\lambda)+\ell(j_\lambda)].
\end{align*}
\end{lemma}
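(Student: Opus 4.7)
The plan is to view $\phi$ as a strictly monotone but discontinuous blow-up of the unit interval, and to pick $j_\lambda$ canonically as an appropriate supremum.

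First I would check that $\phi$ is strictly increasing. For any $i_1<i_2$ in $\I$,
\[
\phi(i_2)-\phi(i_1)=(i_2-i_1)+\sum_{j\in J,\,i_1\le j<i_2}\ell(j)\ge i_2-i_1>0.
\]
The same formula also governs the one-sided continuity behaviour: because the positive finite measure $j\mapsto\ell(j)$ on $J$ is countably additive, the sum over a nested family of shrinking intervals tends to the measure of the intersection. Consequently $\phi$ is left-continuous everywhere on $\I$, while at each $j\in J$ it jumps from the right by exactly $\ell(j)$, and at each $i\in\I\setminus J$ it is continuous.

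Next, fix $\lambda\in [0,2]\setminus\phi[\I\setminus J]$ and define
\[
i_\lambda:=\sup\{i\in\I\mid\phi(i)\le\lambda\},
\]
which is well defined because $\phi(0)=0\le\lambda$. The main claim is that $i_\lambda\in J$; were this false, $\phi$ would be continuous at $i_\lambda$, and combining this with the sup definition and monotonicity would force $\phi(i_\lambda)=\lambda$, placing $\lambda$ into $\phi[\I\setminus J]$ and contradicting the hypothesis. Once $i_\lambda\in J$ is secured, left-continuity gives $\phi(i_\lambda)\le\lambda$, while monotonicity plus the right-hand jump of size $\ell(i_\lambda)$ yields
\[
\phi(i_\lambda)+\ell(i_\lambda)=\lim_{i\searrow i_\lambda}\phi(i)\ge\lambda,
\]
since $\phi(i)>\lambda$ for every $i>i_\lambda$ by the definition of the supremum. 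Setting $j_\lambda:=i_\lambda$ therefore produces an element of $J$ with $\lambda\in[\phi(j_\lambda),\phi(j_\lambda)+\ell(j_\lambda)]$. The ordering $j_\lambda<\lambda$ follows from $j_\lambda\le\phi(j_\lambda)\le\lambda$ together with the observation that simultaneous equality throughout would force the boundary configuration $\lambda=j_\lambda=\min J$, which is handled separately.

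I expect the main obstacle to be the dichotomy $i_\lambda\in J$ versus $i_\lambda\notin J$: the argument rests on the left-continuity of $\phi$ at arbitrary $i\in\I$, which itself rests on a careful countable-additivity estimate for the sums appearing in the definition of $\phi$ (particularly when $i$ is an accumulation point of $J$). Once this technical point is in place, the interval containment and the strict inequality drop out from the one-sided limit computations sketched above.
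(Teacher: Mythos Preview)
Your approach is correct and genuinely different from the paper's. The paper argues by contradiction: assuming no $j_\lambda$ works, it partitions $J$ into $J^-=\{j\in J:\phi(j)<\lambda\}$ and $J^+=\{j\in J:\phi(j)>\lambda\}$, sets $i^-=\sup J^-$ and $i^+=\inf J^+$, computes $\phi(i^-)$ and $\phi(i^+)$ explicitly via the defining sums, and then exploits that $\phi$ is affine on $[i^-,i^+]$ (or $(i^-,i^+]$) to locate a preimage of $\lambda$ in $\I$, contradicting $\lambda\notin\phi[\I]$. Your route is more direct: you identify the one-sided continuity behaviour of $\phi$ once and for all (left-continuous everywhere, right jump $\ell(j)$ at $j\in J$), take $i_\lambda=\sup\{i:\phi(i)\le\lambda\}$, and read off both $i_\lambda\in J$ and the interval containment from those limits. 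This is cleaner and avoids the explicit sum manipulations; the paper's argument, by contrast, never names the continuity properties and instead recomputes them inside the case analysis.

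Two small points. First, your right-limit argument tacitly assumes $i_\lambda<1$; the case $i_\lambda=1$ needs a one-line check (if $1\in J$ then $\phi(1)+\ell(1)=2\ge\lambda$, and if $1\notin J$ then $\phi(1)=2$ forces $\lambda=2\in\phi[\I\setminus J]$). Second, the strict inequality $j_\lambda<\lambda$ genuinely fails when $j_\lambda$ is the least element of $J$ and $\lambda=j_\lambda=\phi(j_\lambda)$; your ``handled separately'' does not resolve this, but the paper's ``for $\lambda\in\phi[J]$ this is clear'' has the same gap, and the application in the paper only uses the interval containment, not the strict inequality.
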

\begin{proof}
For $\lambda\in\phi[J]$ this is clear, so suppose that $\lambda\in [0,2]\setminus\phi[\I]$.
Assume for a contradiction that there is no such $j_\lambda$ and put 
\begin{align*}
J^-&:=\{j\in J\,|\,\phi(j)<\lambda\}, &&\lambda^-:=\sup\,\{\phi(j)\,|\,j\in J^-\}, &&i^-:=\sup J^-\\
J^+&:=\{j\in J\,|\,\phi(j)>\lambda\}, &&\lambda^+:=\inf\,\{\phi(j)\,|\,j\in J^+\}, &&i^+:=\inf J^+
\end{align*}
Clearly we have $\lambda^-\le\lambda\le\lambda^+$. 

First, we show that $\phi(i^-)=\lambda^-$. If $i^-=\max J^-$ then this is clear, so suppose that $i^-\notin J^-$.
Hence
\begin{align*}
\phi(i^-)&=i^-+\sum_{\substack{j\in J\\j<i^-}}\ell(j)\\
&=\sup J^-+\sum_{j\in J^-}\ell(j)\\
&\overset{(\ast)}{=}\sup\Big\{j+\sum_{\substack{j'\in J\\j'< j}}\ell(j')\,\Big|\,j\in J^-\Big\}\\
&=\sup\,\{\phi(j)\,|\,j\in J^-\}\\
&=\lambda^-
\end{align*}
(at $(\ast)$ we used $i^-\notin J^-$).
This completes the proof of $\phi(i^-)=\lambda^-$.

Second, we show that $\phi(i^+)=\lambda^+$.
If $i^+=\min J^+$ then this is clear, so suppose that $i^+\notin J^+$. Hence
\begin{align*}
\phi(i^+)&=i^++\sum_{\substack{j\in J\\j<i^+}}\ell(j)\\
&=\inf J^++\sum_{\substack{j\in J\\j<i^+}}\ell(j)\\
&=\inf\Big\{j+\sum_{\substack{j'\in J\\j'<j}}\ell(j')\,\Big|\,j\in J^+\Big\}\\
&=\inf\,\{\phi(j)\,|\,j\in J^+\}\\
&=\lambda^+
\end{align*}
This completes the proof of $\phi(i^+)=\lambda^+$.

Therefore $\lambda\notin\phi[\I]$ together with $\lambda^-\le\lambda\le\lambda^+$ implies $\lambda^-<\lambda<\lambda^+$, and hence $i^-<i^+$. 
If $i^-\notin J^-$ then $\phi\rest [i^-,i^+]$ is linear with 
\begin{align*}
\phi(i^-)=\lambda^-<\lambda<\lambda^+=\phi(i^+)
\end{align*}
so in particular we find some $i\in (i^-,i^+)$ with $\phi(i)=\lambda$, contradicting our assumption that $\lambda\notin\phi[\I]$.
Otherwise if $i^-=\max J^-$, then $\hat{\phi}:=\phi\rest (i^-,i^+]$ is linear with 
\begin{align*}
\lim_{t\searrow i^-}\hat{\phi}(t)=\lambda^-+\ell(i^-)=\phi(i^-)+\ell(i^-)<\lambda
\end{align*}
(recall that we have $\phi(i^-)+\ell(i^-)<\lambda$ due to $\lambda\notin [\phi(i^-),\phi(i^-)+\ell(i^-)]$ and $\phi(i^-)=\lambda^-<\lambda$).
But then as before we find some $i\in (i^-,i^+)$ with $\phi(i)=\lambda$, contradicting our assumption that $\lambda\notin\phi[\I]$.
\end{proof}

\newpage
\subsection{Topologies on graphs: an overview}\label{subsec:topsOverview}

\paragraph*{The 1-complex of $G$.}
 Recall Section~\ref{Basics}.\index{1-complex of $G$}

\begin{enumerate}[leftmargin=*,labelindent=0pt,label=\textit{common restr.}:]
\item[\textit{common restr.}:] $G$ connected
\item[\textit{compact}:] if and only if $G$ is finite
\item[\textit{Hausdorff}:] always
\item[\textit{reference}:] \cite{EndsAndTangles}
\end{enumerate}

\paragraph*{$|G|$ aka \textsc{MTop}.}
The topological space $|G|$ is obtained by taking $\CG\cup\Omega$ as ground set and taking the topology generated by the following\index{$\vert G\vert$} basis:\index{\textsc{MTop}}
Inner edge points inhereit their basic open neighbourhoods from $(0,1)$. For every $u\in V$ and $\epsilon\in (0,1]$ we declare as open the set $\cO_G(u,\epsilon)$.
For every end $\omega$ of $G$, each $X\in\cX$ and all $\epsilon\in (0,1]$
we declare as open the set\index{$\hat{C}_\epsilon(X,\omega)$}
\begin{align*}
\hat{C}_\epsilon(X,\omega):=C(X,\omega)\cup\Omega(X,\omega)\cup\mathring{E}_\epsilon (X,C(X,\omega))^*,
\end{align*}
completing the definition of our basis.

For locally finite $G$ the space $|G|$ coincides with the Freudenthal compactification of $\CG$ (see \cite{Ends}), and it turned out to be the `right space' in that it allowed many fundamental theorems from finite graph theory to be generalised to locally finite graphs.
\begin{enumerate}[leftmargin=*,labelindent=0pt,label=\textit{common restr.}:]
\item[\textit{common restr.}:] $G$ connected and locally finite
\item[\textit{compact}:] if and only if $G$ is locally finite
\item[\textit{Hausdorff}:] always
\item[\textit{reference}:] \cite{Bible}, \cite{VTopComp}, \cite{Ends}
\end{enumerate}

\paragraph*{\textsc{VTop}.} The topology \textsc{VTop} is defined on $\CG\cup\Omega$ similarly to $|G|$, with one difference: If $\omega$ is an end of $G$ and $X\in\cX$, then we declare as basic open only the set $\hat{C}_1(X,\omega)$.\index{\textsc{VTop}}
\begin{enumerate}[leftmargin=*,labelindent=0pt,label=\textit{common restr.}:]
\item[\textit{common restr.}:] $G$ connected and locally finite
\item[\textit{compact}:] if and only if every $\cC_X$ is finite
\item[\textit{Hausdorff}:] if and only if every end of $G$ is undominated
\item[\textit{reference}:] \cite{Bible}, \cite{VTopComp}
\end{enumerate}

\paragraph*{\textsc{Top}.} The topology \textsc{Top} is defined on $\CG\cup\Omega$ as follows:\index{\textsc{Top}} For every end $\omega$ of $G$, every $X\in\cX$, and 
every choice of precisely one $j_e\in\mathring{e}$ for each edge $e\in E(X,C(X,\omega))$, we declare as open the set
\begin{align*}
C(X,\omega)\cup\Omega(X,\omega)\cup\bigcup\Big\{[y,j_e)\,\big|\,e=xy\in E(X,C(X,\omega)),y\in C(X,\omega)\Big\},
\end{align*}
and we let \textsc{Top} be the topology on $\CG\cup\Omega$ generated by these sets together with the open sets of (the 1-complex of) $G$. In particular, \textsc{Top} induces the 1-complex topology on $\CG$, which \textsc{MTop} and \textsc{VTop} do not as soon as $G$ is not locally finite.
\begin{enumerate}[leftmargin=*,labelindent=0pt,label=\textit{common restr.}:]
\item[\textit{common restr.}:] $G$ connected and locally finite
\item[\textit{compact}:] if and only if $G$ is locally finite
\item[\textit{Hausdorff}:] always
\item[\textit{reference}:] \cite{Bible}, \cite{VTopComp}
\end{enumerate}

\paragraph*{\textsc{ITop}.}
Suppose that no vertex of $G$ dominates two ends. Then \textsc{ITop} is the topology of the quotient space $\tilde{G}$\index{$\tilde{G}$} obtained from $\CG\cup\Omega$ equipped with \textsc{VTop} by identifying every end of $G$ with all of the vertices dominating it.\index{\textsc{ITop}}
\begin{enumerate}[leftmargin=*,labelindent=0pt,label=\textit{common restr.}:]
\item[\textit{common restr.}:] $G$ connected and finitely separable
\item[\textit{compact}:] if $G$ is 2-connected and finitely separable
\item[\textit{Hausdorff}:] always
\item[\textit{reference}:] \cite{duality}, \cite{TST}, \cite{DualTrees}
\end{enumerate}

\paragraph*{\textsc{ETop}.}
The topological space $\cE G$\index{$\cE G$} is constructed in two steps, as follows: First, let $\Omega'(G)$ denote the set of all edge-ends of $G$.\index{\textsc{ETop}} Let $\cE'G:=G\cup\Omega'$ (with $G$ viewed as 1-complex) be endowed with the topology generated by the following basis: Every inner\footnote{Recall that...this is meant wrt top of edge} edge point inherits its open neighbourhoods from $(0,1)$. Furthermore, for every finite set $F$ of edges of $G$, every component $C$ of $G-F$, and every choice of $\lambda_e\in\mathring{e}$ (one for each $e\in F$) we declare as open the set
\begin{align*}
C\cup\Omega'(F,C)\cup\bigcup_{\substack{e=xy\in F\\x\in C}}[x,\lambda_e),
\end{align*}
where $\Omega'(F,C)$ denotes the set of all edge-ends of $G$ living in $C$.
Then let $\cE G$ be obtained from $\cE'G$ by identifying every two points which have the same open neighbourhoods.

\begin{enumerate}[leftmargin=*,labelindent=0pt,label=\textit{common restr.}:]
\item[\textit{common restr.}:] $G$ connected
\item[\textit{compact}:] if $G$ is connected
\item[\textit{Hausdorff}:] always
\item[\textit{reference}:] \cite{ETop}, \cite{Schulz}
\end{enumerate}

\paragraph*{$\ell$-\textsc{Top}.}
The definition of $\ell$-\textsc{Top} takes a dozen lines and we will not need it, hence we redirect the reader to \cite{ETop} for details on this interesting space.\index{$\ell$-\textsc{Top}}
\begin{enumerate}[leftmargin=*,labelindent=0pt,label=\textit{common restr.}:]
\item[\textit{common restr.}:] $G$ connected and countable
\item[\textit{compact}:] if and only if $G$ is locally finite
\item[\textit{Hausdorff}:] always
\end{enumerate}

\paragraph*{The tangle compactification $\TC$ of $\CG$.} See Section~\ref{SummaryEndsAndTangles}.\index{$\TC$}
\begin{enumerate}[leftmargin=*,labelindent=0pt,label=\textit{common restr.}:]
\item[\textit{common restr.}:] ---
\item[\textit{compact}:] always
\item[\textit{Hausdorff}:] if and only if $G$ is locally finite (Corollary~\ref{TCHausdorffIff})
\item[\textit{reference}:] \cite{Bible}
\end{enumerate}

\paragraph*{A least tangle compactification $\FG$ of $\CG$.} See Section~\ref{FGconstruction}.\index{$\FG$}
\begin{enumerate}[leftmargin=*,labelindent=0pt,label=\textit{common restr.}:]
\item[\textit{common restr.}:] ---
\item[\textit{compact}:] always
\item[\textit{Hausdorff}:] if and only if $G$ is locally finite (Corollary~\ref{FGHausdorffIff})
\end{enumerate}

\paragraph*{$\cV G$, i.e. \textsc{VTop} for the 1-complex of $G$.}
We extend (the 1-complex of) $G$ to a topological space $\cV G=G\cup\Omega$ by declaring as open for every end $\omega$ of $G$ and every $X\in\cX$ the set $\hat{C}_1(X,\omega)$, and taking the topology on $\cV G$ that this generates.\index{$\cV G$} Clearly, $\cV G$ coincides with $\TC\setminus\Upsilon$.
\begin{obs}\label{VGcpt}
$\cV G$ is compact if and only if every $\cC_X$ is finite.
\end{obs}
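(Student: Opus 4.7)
The observation splits into two implications; the non-trivial one is that compactness of $\cV G$ forces every $\cC_X$ to be finite, and I would tackle this by exhibiting an explicit open cover with no finite subcover whenever some $\cC_X$ is infinite.

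First I would dispatch the easy direction. Suppose every $\cC_X$ is finite. Every ultrafilter on a finite set is principal, hence $\cU_X^\ast=\emptyset$ for every $X\in\cX$, and the bijection $\tau\mapsto\upsilon_\tau$ from $\Theta$ to $\cU$ recalled in Section~\ref{SummaryEndsAndTangles} forces $\Upsilon=\emptyset$. Consequently $\cV G=\TC\setminus\Upsilon=\TC$ as topological spaces, and the claim follows from the compactness of the tangle compactification stated in \cite[Theorem~1]{EndsAndTangles}.

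For the harder direction I proceed by contrapositive: fix $X\in\cX$ with $\cC_X$ infinite. The building blocks of my cover are the vertex stars $\cO_G(v,3/4)$ for $v\in X$ (the factor $3/4$ is chosen so that the two half-stars at the endpoints of any edge inside $X$ overlap) together with the sets
$$\hat{O}_C:=C\cup\Omega(X,C)\cup\mathring{E}_1(X,C)^*$$
for each $C\in\cC_X$. Openness of $\hat{O}_C$ in $\cV G$ is immediate whenever $\Omega(X,C)\neq\emptyset$, since then $\hat{O}_C=\hat{C}_1(X,\omega)$ for any $\omega\in\Omega(X,C)$; if instead no end lives in $C$, it is a short check that at every vertex $v\in V(C)$ the basic star $\cO_G(v,1/2)$ lies inside $\hat{O}_C$ (its half-edges either stay inside $C$ or land inside $\mathring{E}_1(X,C)^*$), and that small intervals around inner edge points do as well.

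Finally I would verify that the family
$$\cU:=\{\cO_G(v,3/4):v\in X\}\cup\{\hat{O}_C:C\in\cC_X\}$$
covers $\cV G$ by a routine case analysis on vertices of $X$, vertices of components, ends, and the various classes of inner edge points. No finite subfamily $\cU_0\subseteq\cU$ can cover $\cV G$: since the $\hat{O}_C$ are pairwise disjoint on $V(G)\setminus X$ and each star $\cO_G(v,3/4)$ with $v\in X$ contains only the single vertex $v$ from $V(G)$, omitting $\hat{O}_{C^\ast}$ for some $C^\ast\in\cC_X$ leaves every vertex of $C^\ast$ uncovered. I expect the only real obstacle to be the openness verification for $\hat{O}_C$ in the case where no end lives in $C$, but that is just a matter of unwinding the generating sets of the topology of $\cV G$.
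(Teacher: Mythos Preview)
Your proposal is correct and follows essentially the same approach as the paper: both directions match, with the forward direction reducing to $\Upsilon=\emptyset$ and hence $\cV G=\TC$, and the backward direction producing an open cover consisting of finitely many basic opens for $G[X]$ together with one open set $C\cup\Omega(X,C)\cup\mathring{E}(X,C)$ per component $C\in\cC_X$. Your extra care about the openness of $\hat{O}_C$ when $\Omega(X,C)=\emptyset$ is unnecessary, since the paper records that $\cV G$ coincides with $\TC\setminus\Upsilon$, so $\hat{O}_C=\cO_{\TC}(X,\{C\})\setminus\Upsilon$ is automatically open in the subspace topology; but it does no harm.
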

\begin{proof}
If every $\cC_X$ is finite, then $G$ has not ultrafilter tangle since each ultrafilter tangle $\tau$ induces a non-principal on $\cC_{X_\tau}$ which is impossible.

If $\cV G$ is compact, then every $\cC_X$ must be finite: Otherwise there is some $Y\in\cX$ with $\cC_Y$ infinite. Covering the 1-complex of $G[Y]$ with basic open sets, and extending this cover by adding for each $C\in\cC_Y$ the open set
\begin{align*}
\mathring{E}(Y,C)\cup C\cup\Omega(Y,C)
\end{align*}
clearly yields a cover of $\cV G$ which has no finite subcover, which is impossible.
\end{proof}
\begin{enumerate}[leftmargin=*,labelindent=0pt,label=\textit{common restr.}:]
\item[\textit{common restr.}:] $G$ connected
\item[\textit{compact}:] if and only if every $\cC_X$ is finite (Obs.~\ref{VGcpt})
\item[\textit{Hausdorff}:] if and only if no end is dominated
\end{enumerate}

\paragraph*{The auxiliary space $\GH$.} See Section~\ref{AGsubsection}.\index{$\GH$}
\begin{enumerate}[leftmargin=*,labelindent=0pt,label=\textit{common restr.}:]
\item[\textit{common restr.}:] $G$ connected
\item[\textit{compact}:] if and only if $G$ is locally finite
\item[\textit{Hausdorff}:] always
\end{enumerate}

\paragraph*{The Hausdorff compactification $\LG$ of $\CG$.} See Section~\ref{LGconstruction}.\index{$\LG$}
\begin{enumerate}[leftmargin=*,labelindent=0pt,label=\textit{common restr.}:]
\item[\textit{common restr.}:] ---
\item[\textit{compact}:] always
\item[\textit{Hausdorff}:] always
\end{enumerate}

\newpage
\part{Main results}

\section{A closer look at the tangle compactification}\label{closerLook}

\subsection{First steps}

Among the most frequently used lemmas in the field of topological infinite graph theory is the so-called Jumping Arc Lemma: 
\begin{lemma}[{\cite[Lemma 8.5.3]{Bible}}]
Let $G$ be connected and locally finite, and let $F\subseteq E(G)$ be a cut with sides $V_1,V_2$.
\begin{enumerate}
\item If $F$ is finite, then $\overline{V_1}\cap\overline{V_2}=\emptyset$ (with the closures taken in $|G|$), and there is no arc in $|G|\setminus\mathring{F}$ with one endpoint in $V_1$ and the other in $V_2$.
\item If $F$ is infinite, then $\overline{V_1}\cap\overline{V_2}\neq\emptyset$, and there may be such an arc (for example, there exists such an arc if both graphs $G[V_1]$ and $G[V_2]$ are connected).
\end{enumerate}
\end{lemma}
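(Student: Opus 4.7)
The plan is to treat (i) and (ii) separately, in both cases exploiting the compactness of $|G|$ together with the explicit neighborhood bases defining \textsc{MTop}.

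For (i), assume $F$ is finite and let $X$ be the set of endvertices of edges in $F$, which is finite by local finiteness of $G$. Since $F$ comprises all $V_1$--$V_2$ edges, every component of $G-X$ lies entirely in $V_1$ or in $V_2$. Consequently every end has its rays eventually in one side, which partitions $\Omega$ into sets $\Omega_1,\Omega_2$. I would set
\begin{align*}
Y_i := V_i \cup \Omega_i \cup \bigcup\{\mathring{e} \mid e \text{ is a } V_i\text{--}V_i \text{ edge}\}
\end{align*}
for $i\in\{1,2\}$ and verify that $\{Y_1,Y_2\}$ is an open partition of $|G|\setminus\mathring{F}$. Openness is checked pointwise: vertices $v\in V_i$ have the neighborhood $\cO_G(v,\epsilon)$, which intersected with $|G|\setminus\mathring{F}$ keeps only $v$ and open stubs $[v,j_e)$ of $V_i$--$V_i$-edges; each end $\omega\in\Omega_i$ has the neighborhood $\hat{C}_\epsilon(X,\omega)$, whose intersection with $|G|\setminus\mathring{F}$ contains only vertices of $V_i$, ends of $\Omega_i$, and inner points of $V_i$--$V_i$-edges because $F$-edges have been punctured; inner edge points of $V_i$--$V_i$-edges have intrinsic basic neighborhoods inside $\mathring{e}\subseteq Y_i$. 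Both claims of (i) follow immediately: $\overline{V_i}\subseteq Y_i$ forces $\overline{V_1}\cap\overline{V_2}=\emptyset$, and a hypothetical $V_1$--$V_2$ arc $A$ in $|G|\setminus\mathring{F}$ would be disconnected by the relatively open partition $\{A\cap Y_1,A\cap Y_2\}$, contradicting connectedness of an arc.

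For (ii), assume $F$ is infinite. Local finiteness forces the set of $V_1$-endpoints of $F$ to be infinite, so one extracts a sequence of pairwise distinct edges $e_n=u_nv_n\in F$ with $u_n\in V_1$ and $v_n\in V_2$. By compactness of $|G|$, after passing to a subsequence we may assume $u_n\to\omega_1$ and $v_n\to\omega_2$ in $|G|$; both limits must be ends since the $u_n$ (resp.\ $v_n$) are pairwise distinct and hence cannot accumulate at any vertex of $|G|$. The decisive step is to show $\omega_1=\omega_2$: otherwise pick $X\in\cX$ with $C(X,\omega_1)\neq C(X,\omega_2)$, observe that for large $n$ neither $u_n$ nor $v_n$ lies in the finite set $X$, so $u_n\in C(X,\omega_1)$ and $v_n\in C(X,\omega_2)$; but $u_nv_n\in E(G)$ forces them into the same component of $G-X$, a contradiction. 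Hence $\omega_1=\omega_2\in\overline{V_1}\cap\overline{V_2}$. For the `may be' clause, when $G[V_1]$ and $G[V_2]$ are both connected, a standard König-type construction produces rays $R_i\subseteq G[V_i]$ converging in $|G|$ to the common end $\omega_1$, and $R_1\cup\{\omega_1\}\cup R_2$ is then an arc in $|G|\setminus\mathring{F}$ with endpoints in $V_1$ and $V_2$.

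The hardest part is not any single calculation but the bookkeeping around inner edge points of $F$-edges: these lie in neither $\overline{V_1}$ nor $\overline{V_2}$, which is precisely why removing $\mathring{F}$ before looking for a crossing arc is the right condition, and why $\{Y_1,Y_2\}$ genuinely partitions $|G|\setminus\mathring{F}$ rather than merely covering it. Once this is in place, (i) reduces to routine openness checks with the basic neighborhoods of \textsc{MTop}, while (ii) reduces to a short compactness-plus-separation argument.
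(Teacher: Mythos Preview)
The paper does not give its own proof of this lemma: it is quoted verbatim from \cite[Lemma 8.5.3]{Bible} and immediately used as a known result, so there is nothing in the paper to compare against. Your argument is correct and is essentially the standard proof. Two small remarks: in (ii) your appeal to sequential compactness is justified because a connected locally finite graph is countable, so $|G|$ is metrizable; and the ``K\"onig-type construction'' you invoke is exactly the Star--Comb Lemma applied inside each connected $G[V_i]$ to the infinite set $\{u_n\}$ (resp.\ $\{v_n\}$), which yields a ray whose end in $G$ must be $\omega_1$ since only finitely many of the comb's tooth-paths can meet any given finite separator.
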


The first statement of this lemma admits a straightforward generalisation to the tangle compactification:
\begin{lemma}\label{easyJAL}
Let $G$ be any graph, and let $F\subseteq E(G)$ be a finite cut with sides $V_1$ and $V_2$. Then 
\begin{align*}
\overline{G[V_1]}\uplus\overline{G[V_2]}=\TC\setminus\mathring{F}
\end{align*}
with the closures taken in the tangle compactification (in particular $\overline{V_1}\cap\overline{V_2}=\emptyset$), and no connected subset of $\TC\setminus\mathring{F}$ meets both $\overline{G[V_1]}$ and $\overline{G[V_2]}$.
\end{lemma}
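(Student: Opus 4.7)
The plan is to fix the finite vertex set $X := V(F)$ together with the finite-order separation $\{A,B\} := \{V_1\cup X_2,\;V_2\cup X_1\}$ of separator $X$, where $X_i := V(F)\cap V_i$. Writing $\cC^i$ for the components of $G-X$ contained in $V_i\setminus X_i$, we have $\cC_X = \cC^1\uplus\cC^2$, and every $\tau\in\cU$ lies in exactly one of $\Theta_1 := \{\tau : \cC^1\in U(\tau,X)\}$ and $\Theta_2 := \{\tau : \cC^2\in U(\tau,X)\}$. My strategy is to establish $\overline{G[V_1]} = G[V_1]\cup\Theta_1$ and, symmetrically, $\overline{G[V_2]} = G[V_2]\cup\Theta_2$, from which both assertions of the lemma fall out.

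First I would verify the contribution of vertices and inner edge points. Any vertex $v\in V_2$ admits the 1-complex star $\cO_G(v,\epsilon)$ as a basic neighbourhood in $\TC$; for small $\epsilon$ this set contains only $v$ and inner points of edges incident to $v$, and by the definition of $F$ every such edge lies either in $G[V_2]$ or in $F$, hence misses $G[V_1]$. Inner points of $F$-edges themselves have open neighbourhoods inside $\mathring F$, also missing $G[V_1]$. This yields $\overline{G[V_1]}\cap(V\cup\mathring E) = G[V_1]$.

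Next I would turn to tangles. The easy direction is $\Theta_2\cap\overline{G[V_1]} = \emptyset$: for $\tau\in\Theta_2$ the basic open neighbourhood $\cO_{\TC}(X,\cC^2) = \bigcup\cC^2\cup\mathring E(X,\bigcup\cC^2)\cup\cO_\cU(X,\cC^2)$ lies in $G[V_2]\cup\Theta_2$, since any edge from $X$ to $\bigcup\cC^2\subseteq V_2\setminus X_2$ must have both endpoints in $V_2$ (an $F$-edge would put its non-$X$ endpoint in $X_2$, not in $V_2\setminus X_2$). The main obstacle is the converse $\Theta_1\subseteq\overline{G[V_1]}$. Given $\tau\in\Theta_1$ and a basic open neighbourhood $\cO$ of $\tau$, I would first reduce $\cO$ to the form $\bigcap_{i=1}^n\cO_{\TC}(X_i,\cC_i)$ with $\cC_i\in U(\tau,X_i)$, using that $\tau$ belongs to no 1-complex basic open. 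Setting $X' := X\cup X_1\cup\cdots\cup X_n$, each pullback $\phi_{X',X_i}^{-1}(\cC_i)$ and $\phi_{X',X}^{-1}(\cC^1)$ lies in the ultrafilter $U(\tau,X')$; by the finite intersection property of ultrafilters, their common intersection contains some component $C$ of $G-X'$ with $C\subseteq V_1\setminus X_1$ and $C\subseteq\bigcup\cC_i$ for every $i$, so any vertex of $C$ is a point of $G[V_1]\cap\cO$.

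Combining these steps gives $\overline{G[V_1]}\uplus\overline{G[V_2]} = G[V_1]\cup G[V_2]\cup\Theta_1\cup\Theta_2 = \TC\setminus\mathring F$, with disjointness coming from $V_1\cap V_2 = \emptyset$ and $\Theta_1\cap\Theta_2 = \emptyset$. Since both pieces are closed in $\TC$ and partition $\TC\setminus\mathring F$, they are each clopen in the subspace $\TC\setminus\mathring F$; hence any connected subset of $\TC\setminus\mathring F$ is entirely contained in one of them, ruling out a connected set meeting both closures.
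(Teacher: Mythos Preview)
Your proof is correct and follows essentially the same route as the paper: fix $X=V[F]$, bipartition $\cC_X$ according to the two sides of the cut, and show that each tangle lies in exactly one of the two closures via the basic open sets $\cO_{\TC}(X,\cC^i)$. The only real difference is in the step $\Theta_1\subseteq\overline{G[V_1]}$: the paper intersects an arbitrary open neighbourhood of $\tau$ with $\cO_{\TC}(X,\cC^1)$ and then simply invokes that $G$ is dense in $\TC$ to conclude the intersection meets $G[V_1]$, whereas you re-derive this by hand, passing to a common refinement $X'$ and using the finite intersection property of $U(\tau,X')$ to locate an explicit component. Your version is more self-contained, the paper's is shorter; structurally they are the same argument.
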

\begin{proof}
Consider $X=V[F]\in\cX$ and pick a bipartition $\{\cC,\cC'\}$ of $\cC_X$ respecting $F$ in that $V[\cC]\subseteq V_1$ and $V[\cC']\subseteq V_2$ hold. 
If $\upsilon$ is a tangle of $G$, then $\upsilon$ is in one of $\cO_{\TC}(X,\cC)$ and $\cO_{\TC}(X,\cC')$, say in $\cO_{\TC}(X,\cC)$, and this neighbourhood witnesses $\upsilon\notin \overline{G[V_2]}$. Furthermore, we have $\upsilon\in\overline{G[V_1]}$: Let $O$ be any open neighbourhood of $\upsilon$ in $\TC$. Then $O\cap\cO_{\TC}(X,\cC)$ avoids $\overline{G[V_2]}\cup\mathring{F}$ and hence must meet $G[V_1]$, since otherwise $G$ is not dense in $\TC$ which is impossible.
\end{proof}

\begin{figure}[H]
\centering
\includegraphics[scale=1]{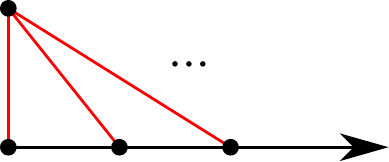}
\caption{A dominated ray whose set of red edges forms an infinite cut.}
    \label{fig:JALiiExample}
\end{figure}

But the second statement extends only partially: 
Indeed, consider the graph $G$ from Fig.~\ref{fig:JALiiExample}, and let $F\subseteq E(G)$ consist precisely of the red edges. Furthermore, let $V_1$ be the singleton of the vertex of infinite degree, and let $V_2$ be the vertex set of the black ray.
Then $F$ is an infinite cut with sides $V_1$ and $V_2$, but $\overline{V_1}$ and $\overline{V_2}$ intersect emptily since we have $\overline{V_1}=V_1$ and $\overline{V_2}=V_2\cup\{\omega\}$ where $\omega$ is the sole end of the graph $G$. 
In particular, $G[V_1]$ and $G[V_2]$ are connected graphs while there is no arc in $\TC\setminus\mathring{F}$ with one endpoint in $V_1$ and the other in $V_2$ since we also have
\begin{align*}
\overline{G[V_1]}\uplus\overline{G[V_2]}=\TC\setminus\mathring{F}.
\end{align*}
However, since the tangle compactification of connected locally finite graphs coincides with their Freudenthal compactification, the following partial generalisation of the second statement of the Jumping Arc Lemma is immediate:
\begin{lemma}
Let $G$ be any graph, and let $F\subseteq E(G)$ be an infinite cut with sides $V_1$ and $V_2$. 
If the graph $G$ is connected and locally finite, then the tangle compactification coincides with the Freudenthal compactification, and we have $\overline{V_1}\cap\overline{V_2}\neq\emptyset$.
Otherwise, $\overline{V_1}\cap\overline{V_2}=\emptyset$ is possible.
However, in case of $\overline{V_1}\cap\overline{V_2}\neq\emptyset$ there may be an arc in $\TC\setminus\mathring{F}$ with one endpoint in $V_1$ and the other in $V_2$.\qed
\end{lemma}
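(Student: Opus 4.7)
The plan is to establish the three sentences separately; each reduces to material already in hand.

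For the first sentence, I would invoke Theorem~1(ii) of \cite{EndsAndTangles}, summarised at the close of Section~\ref{SummaryEndsAndTangles}, which asserts that whenever $G$ is connected and locally finite, the tangle compactification $\TC$ coincides with the Freudenthal compactification of $G$. The conclusion $\overline{V_1}\cap\overline{V_2}\neq\emptyset$ is then immediate from part (ii) of the classical Jumping Arc Lemma quoted at the start of this subsection.

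For the second sentence, I would point to the example already laid out in the paragraph preceding the statement: the graph $G$ from Fig.~\ref{fig:JALiiExample} with $V_1$ the singleton of the dominating vertex and $V_2$ the vertex set of the dominated ray. It is enough to verify the two claimed closures. The set $V_1$ is a single vertex, hence closed in $\TC$ (all singletons of vertices are closed in $\TC$). For $\overline{V_2}=V_2\cup\{\omega\}$, I would observe that the sole end $\omega$ of $G$ lies in $\overline{V_2}$ because any basic open neighbourhood of $\omega$ meets the ray, while the vertex of $V_1$ is separated from $\omega$ by the basic open neighbourhood $\cO_{\TC}(V_1,\{C\})$, where $C$ is the unique component of $G-V_1$; this yields $\overline{V_1}\cap\overline{V_2}=\emptyset$.

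For the third sentence, I would exhibit any connected locally finite example in which part (ii) of the classical Jumping Arc Lemma already produces such an arc, the standard choice being the one-way infinite ladder $G$ with rails $V_1$ and $V_2$ and the infinite cut $F$ consisting of all rungs. Since both rails are connected, the classical part (ii) supplies an arc in $|G|\setminus\mathring{F}$ with one endpoint in $V_1$ and the other in $V_2$; and since $G$ is locally finite, $\TC=|G|$ by the Theorem cited above, so this arc lives in $\TC\setminus\mathring{F}$.

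There is no substantive obstacle: all the heavy lifting is contained in the classical Jumping Arc Lemma and in the identification $\TC=|G|$ for connected locally finite graphs. The only mild point of care is the verification of the closures in the counterexample for the second sentence, which is a direct unfolding of the basis $\cO_{\TC}(X,\cC)$ of $\TC$ recalled in Section~\ref{SummaryEndsAndTangles}.
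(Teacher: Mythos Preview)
Your proposal is correct and follows the same approach as the paper. The paper gives no proof at all (the lemma carries a bare \qed), relying entirely on the preceding paragraph, which already exhibits the Fig.~\ref{fig:JALiiExample} counterexample and notes that for connected locally finite graphs the tangle compactification coincides with the Freudenthal compactification; your write-up simply spells out these same ingredients and adds a concrete instance (the ladder) for the third clause.
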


The following lemma shows that arcs in the tangle compactification do not take advantage of the ultrafilter tangles:

\begin{lemma}\label{TCarcsAvoidUltras}
If $G$ is any graph, then every arc in its tangle compactification avoids all ultrafilter tangles.
\end{lemma}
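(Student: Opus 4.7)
The plan is to argue by contradiction. Suppose $\alpha\colon \I \to \TC$ is an arc with $\alpha(t_0) = \tau$ for some ultrafilter tangle $\tau$; write $X_\tau$ for its critical element of $\cX$ and $U^\circ(\tau) = U(\tau, X_\tau)$ for the non-principal ultrafilter on $\cC_{X_\tau}$ witnessing this.

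The key observation is a clopen-like decomposition of the neighbourhood base at $\tau$: whenever $\cC \in U^\circ(\tau)$ and $\cC' \subseteq \cC$ with $\cC' \in U^\circ(\tau)$, the basic open set $\cO_{\TC}(X_\tau,\cC)$ splits as the disjoint union $\cO_{\TC}(X_\tau,\cC') \uplus \cO_{\TC}(X_\tau, \cC \setminus \cC')$ of two open subsets of $\TC$. Disjointness on the 1-complex parts is immediate, and on the tangle part it follows because no ultrafilter can contain the disjoint sets $\cC'$ and $\cC \setminus \cC'$ simultaneously; the reverse inclusion uses that ultrafilters are prime. Let $J(\cC)$ be the connected component of $t_0$ in the open preimage $\alpha^{-1}(\cO_{\TC}(X_\tau,\cC))$, a (possibly half-open) non-degenerate subinterval of $\I$. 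Then $\alpha(J(\cC))$ is a connected subset of the disjoint open partition and meets $\cO_{\TC}(X_\tau,\cC')$ at $\tau$, so $\alpha(J(\cC)) \subseteq \cO_{\TC}(X_\tau,\cC')$, which forces $J(\cC) \subseteq J(\cC')$; the reverse inclusion is immediate. Since $U^\circ(\tau)$ is closed under finite intersections, $J := J(\cC)$ is in fact independent of the choice of $\cC \in U^\circ(\tau)$.

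It follows that $\alpha(J) \subseteq \bigcap_{\cC \in U^\circ(\tau)} \cO_{\TC}(X_\tau,\cC)$, and the plan is now to show that this intersection equals $\{\tau\}$. A vertex $v$ in the intersection must lie outside $X_\tau$ and thus inside a unique component $C \in \cC_{X_\tau}$, but by non-principality $\cC_{X_\tau} \setminus \{C\} \in U^\circ(\tau)$ while $v \notin \cO_{\TC}(X_\tau, \cC_{X_\tau} \setminus \{C\})$; an analogous argument rules out inner edge points. A tangle $\upsilon$ in the intersection satisfies $U^\circ(\tau) \subseteq U(\upsilon, X_\tau)$, and since both sides are ultrafilters on $\cC_{X_\tau}$ they coincide, whence Corollary~\ref{Uextension} gives $\upsilon = \tau$. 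Hence $\alpha(J) = \{\tau\}$, contradicting the injectivity of $\alpha$ on the non-degenerate interval $J$.

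The main technical point is verifying the disjoint-union decomposition of $\cO_{\TC}(X_\tau,\cC)$ across its three constituent parts and identifying the intersection with $\{\tau\}$; once these are in hand, the contradiction is essentially forced by the connectedness of the component $J$ together with the prime/maximality properties of the non-principal ultrafilter $U^\circ(\tau)$.
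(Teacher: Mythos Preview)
Your proof is correct and takes a genuinely different route from the paper's argument.

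The paper argues as follows: assuming an arc $A$ hits an ultrafilter tangle $\upsilon$, it first shows $A$ must meet $G$ (via a clopen bipartition separating two tangles), so one may take the other endpoint to be a vertex $u$. Setting $Z = X_\upsilon \cup \{u\}$, it then repeatedly uses clopen bipartitions of the form $\{\cO_{\TC}(Z,\cC),\, \cO_{\TC}(Z,\cC_Z\setminus\cC)\cup\bigcup_{z\in Z}\cO_G(z,1)\}$ to force $A$ to traverse infinitely many distinct edges of $E(Z,G-Z)$; by pigeonhole some $t\in Z$ lies on infinitely many of them, so $t\in A$ and $A$ has combinatorial degree at least $3$ at $t$, a contradiction.

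Your approach is cleaner and more purely topological: you never leave the single separator $X_\tau$ and instead exploit that the basic open sets $\cO_{\TC}(X_\tau,\cC)$ with $\cC\in U^\circ(\tau)$ form a ``directed clopen system'' whose common intersection is $\{\tau\}$ (the latter via Corollary~\ref{Uextension}). The connectedness argument showing $J(\cC)$ is independent of $\cC$ is the heart of it, and it neatly replaces the paper's inductive edge-finding. What the paper's approach buys is a more geometric picture of \emph{why} the arc fails---it would have to oscillate through $X_\upsilon$ infinitely often---whereas your argument buys brevity and makes transparent that the obstruction is precisely the non-principality of $U^\circ(\tau)$ together with the uniqueness of its extension.
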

\begin{proof}
Let $A$ be any are in the tangle compactification of the graph $G$.
Pick a homeomorphism $\sigma\colon \I\bij A$ and without loss of generality suppose for a contradiction that $\sigma(1)=:\upsilon$ is in $\Upsilon$. 

First we show that $A$ meets $G$: If not, then in particular $\sigma(0)=:\upsilon'$ is in $\cU$. 
Choose $Y\in\cX$ with $U(\upsilon,Y)\neq U(\upsilon', Y)$ and pick $\cC\subseteq\cC_Y$ witnessing this, i.e. with $\cC\in U(\upsilon,Y)$ and $\cC_Y\setminus\cC\in U(\upsilon',Y)$. Then 
\begin{align*}
\big\{\cO_{\TC}(Y,\cC),\;\cO_{\TC}(Y,\cC_Y\setminus\cC)\big\}
\end{align*}
induces an open bipartition on $A$ which is impossible, so without loss of generality we may assume that $\sigma(0)$ is a point of $G$. Since $A$ is an arc, we may even assume that $\sigma(0)$ is a vertex $u$ of $G$.

Now let $Z:=X_\upsilon\cup\{u\}$. 
Then $A$ meets $\mathring{E}(Z,G-Z)$, since otherwise 
\begin{align*}
\Big\{\cO_{\TC}(Z,\cC_Z),\;\bigcup_{z\in Z}\cO_G(z,1)\Big\}
\end{align*}
induces an open bipartition on $A$ which is impossible.
Hence we may let $e_0$ be an edge in $E(Z,G-Z)$ that $A$ traverses and write $u_0$ for the endvertex of $e_0$ in $G-Z$. Pick $C_0\in\cC_Z$ with $u_0\in C_0$. Since $U(\upsilon,Z)$ is non-principal by choice of $Z$, we know that $\cC_Z\setminus\{C_0\}$ is an infinite element of $U(\upsilon,Z)$. Then $A$ meets $\mathring{E}(Z,G-Z)- \mathring{e}_0$ in some $\mathring{e}_1$, since otherwise 
\begin{align*}
\Big\{\cO_{\TC}\big(Z,\cC_Z\setminus\{C_0\}\big),\;\cO_{\TC}\big(Z,\{C_0\}\big)\cup\bigcup_{z\in Z}\cO_G(z,1)\Big\}
\end{align*}
induces an open bipartition on $A$ which is impossible. Proceeding inductively, we find infinitely many edges $e_0,e_1,\hdots$ in $E(Z,G-Z)$ which $A$ traverses. Since $Z$ is finite, by pigeon-hole principle we find some $t\in Z$ which is incident with infinitely many of the $e_n$. In particular, $t$ is a point of $A$ since $t$ lies in the closure of those $\mathring{e_n}$. But then $A\cap\cO_G(t,1)$ has degree at least 3 at $t$, a contradiction.
\end{proof}

To see that the tangle compactification in general is not sequentially compact, we consider the leaves of a $K_{1,\aleph_0}$ and apply the following lemma:
\begin{lemma}\label{VertexSeqToEnd}
Let $G$ be any graph, and let $\upsilon$ be an ultrafilter tangle. Then no sequence of vertices of $G$ converges to $\upsilon$ in $\TC$.
\end{lemma}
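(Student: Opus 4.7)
The plan is to argue by contradiction: suppose $(v_n)_{n\in\N}$ is a sequence of vertices of $G$ with $v_n\to\upsilon$ in $\TC$, and then exhibit an explicit basic open neighbourhood of $\upsilon$ of the form $\cO_{\TC}(X_\upsilon,\cC)$ (with $\cC\in U(\upsilon,X_\upsilon)$) which misses infinitely many of the $v_n$. The non-principality of $U^\circ(\upsilon)=U(\upsilon,X_\upsilon)$ on the infinite set $\cC_{X_\upsilon}$ is the engine that will provide such a $\cC$.

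First, as a preprocessing step, I would apply the neighbourhood $\cO_{\TC}(X_\upsilon,\cC_{X_\upsilon})$ itself: it lies in $U(\upsilon,X_\upsilon)$ since it is the whole underlying set, and by definition $\cO_{\TC}(X_\upsilon,\cC_{X_\upsilon})$ does not contain any vertex of $X_\upsilon$. Hence by convergence only finitely many $v_n$ can lie in $X_\upsilon$, so after passing to a tail we may assume $v_n\in V(G)\setminus X_\upsilon$ for every $n$, and denote by $C_n\in\cC_{X_\upsilon}$ the component of $G-X_\upsilon$ containing $v_n$.

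Now I would distinguish two cases. In the \emph{first} case some single component $C\in\cC_{X_\upsilon}$ equals $C_n$ for infinitely many indices $n$. Since $U(\upsilon,X_\upsilon)$ is non-principal, $\cC_{X_\upsilon}\setminus\{C\}\in U(\upsilon,X_\upsilon)$, so $\cO_{\TC}(X_\upsilon,\cC_{X_\upsilon}\setminus\{C\})$ is an open neighbourhood of $\upsilon$ in $\TC$. But this neighbourhood contains no vertex in $C$, so it excludes infinitely many of the $v_n$, contradicting $v_n\to\upsilon$. In the \emph{second} case every component of $G-X_\upsilon$ contains only finitely many $v_n$; then the set $\cD=\{C_n\mid n\in\N\}$ is infinite, and I split it into two disjoint infinite subcollections $\cD_1,\cD_2$. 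Since $\cD_1\cap\cD_2=\emptyset$, the ultrafilter $U(\upsilon,X_\upsilon)$ contains at most one of them, say $\cD_2\notin U(\upsilon,X_\upsilon)$. Then $\cC_{X_\upsilon}\setminus\cD_2\in U(\upsilon,X_\upsilon)$, and $\cO_{\TC}(X_\upsilon,\cC_{X_\upsilon}\setminus\cD_2)$ is an open neighbourhood of $\upsilon$ missing every $v_n$ with $C_n\in\cD_2$, which again is infinite, the desired contradiction.

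I do not anticipate a serious obstacle: the argument is entirely driven by the ultrafilter/non-principal dichotomy at the level $X=X_\upsilon$, and I do not need to invoke the full inverse-limit description or Lemma~\ref{obviousLemma}. The only place where one has to be slightly careful is in the preprocessing step, making sure to recall that the basic open set $\cO_{\TC}(X,\cC)$ really does exclude the vertices of $X$ (otherwise one could not justify discarding the finitely many $v_n$ that might lie in $X_\upsilon$); this is immediate from the defining formula $\cO_{\TC}(X,\cC)=\bigcup\cC\cup\mathring{E}(X,\bigcup\cC)\cup\cO_{\cU}(X,\cC)$ recalled in Section~\ref{SummaryEndsAndTangles}.
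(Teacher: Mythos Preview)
Your proof is correct and follows essentially the same approach as the paper's: both argue by contradiction, reduce to vertices outside $X_\upsilon$, and then exploit the non-principality of $U^\circ(\upsilon)$ to split the relevant components into two pieces so that the neighbourhood coming from one piece misses infinitely many terms of the sequence. The paper compresses your Case~1 into a single sentence (``the sequence meets infinitely many components, since otherwise $t_n\not\to\upsilon$'') and then uses an even/odd split where you use an arbitrary bipartition of $\cD$ into two infinite sets, but these are purely cosmetic differences.
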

\begin{proof}
Assume for a contradiction that there is some sequence $(t_n)_{n\in\N}$ of vertices $t_n$ of $G$ with $t_n\to\upsilon$ in $\TC$ for $n\to\infty$. Without loss of generality we may assume that no $t_n$ is in $X_\upsilon$. 
Our sequence $(t_n)_{n\in\N}$ meets infinitely many components of $G-X_\upsilon$, since otherwise $t_n\not\to\upsilon$ is a contradiction.
Pick a subsequence (without loss of generality the whole sequence) such that $t_n$ and $t_m$ live in different components of $G-X_\upsilon$ for all $n\neq m$.
Furthermore, let $\cC$ be the set of all components $C$ of $G-X_\upsilon$ for which there exists some even $n\in\N$ with $t_n\in C$, and set $\cC'=\cC_{X_\upsilon}\setminus\cC$.

If $\cC\in U^\circ(\upsilon)$ holds, then there is no $N\in\N$ with $t_n\in\cO_{\TC}(X,\cC)$ for all $n\ge N$, so $t_n\not\to\upsilon$ is a contradiction. Otherwise $\cC'\in U^\circ(\upsilon)$ similarly results in $t_n\not\to\upsilon$ as desired.
\end{proof}

\subsection{Obstructions to Hausdorffness}

We define the relation $\between$ on $\TC$ by letting $x\between y$ whenever there are no disjoint open neighbourhoods of $x$ and $y$ in $\TC$, i.e. whenever $x$ and $y$ are not `Hausdorff topologically distinguishable'. Clearly, $\between$ in general is not transitive, hence we denote by $\transcl$ the transitive closure of $\between$, which is an equivalence relation.\index{$\between$, $\transcl$}
The relations $\between$ and $\transcl$ on $\TC\setminus\mathring{E}$ are actually included in the set $(V\times\cU)\cup (\cU\times V)$.

\begin{lemma}\label{Xtau}
Let $G$ be a any graph and $\upsilon\in\Upsilon$. Then $X_\upsilon$ contains precisely those vertices $u$ of $G$ with $u\between\upsilon$.
\end{lemma}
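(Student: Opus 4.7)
The plan is to prove both implications of the claimed equivalence; the reverse is routine and the forward reduces to a combinatorial sub-claim which is the real obstacle.

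For the contrapositive $u\notin X_\upsilon\Rightarrow u\not\between\upsilon$, let $C_u\in\cC_{X_\upsilon}$ be the component of $G-X_\upsilon$ containing $u$. Since $U^\circ(\upsilon)$ is non-principal, $\cC_{X_\upsilon}\setminus\{C_u\}\in U^\circ(\upsilon)$, so $\cO_{\TC}(X_\upsilon,\cC_{X_\upsilon}\setminus\{C_u\})$ is an open neighbourhood of $\upsilon$. A direct check shows it is disjoint from $\cO_G(u,1)$: any edge $e=uv$ incident to $u$ lives entirely in the 1-complex of $C_u$ when $v\notin X_\upsilon$ (since $u,v$ must then lie in the same component of $G-X_\upsilon$) and in $\mathring{E}(X_\upsilon,C_u)$ when $v\in X_\upsilon$, so in neither case does $\mathring{e}$ meet the tangle neighbourhood; and $u\in C_u$ is not in it either.

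For the forward direction $u\in X_\upsilon\Rightarrow u\between\upsilon$, I take arbitrary basic neighbourhoods $\cO_G(u,\epsilon)\ni u$ and $\cO_{\TC}(X,\cC)\ni \upsilon$ (with $\cC\in U(\upsilon,X)$) and aim to exhibit a common point. Replacing $X$ by $X\cup X_\upsilon$ and $\cC$ by $\phi_{X\cup X_\upsilon,X}^{-1}(\cC)$, which still lies in $U(\upsilon,X\cup X_\upsilon)$ by the definition of the bonding map, Lemma~\ref{obviousLemma} only shrinks the tangle neighbourhood, so without loss of generality $X\supseteq X_\upsilon$; in particular $U(\upsilon,X)$ is non-principal. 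The forward direction then reduces to the following sub-claim: \emph{whenever $u\in X_\upsilon$ and $X\supseteq X_\upsilon$, the set $\cC^u:=\{C\in\cC_X\,|\,u\in N(C)\}$ of components of $G-X$ having $u$ as a neighbour belongs to $U(\upsilon,X)$.} Given this, $\cC^u\cap\cC\in U(\upsilon,X)$ is nonempty, so any edge $uv$ into a component in this intersection contributes inner points sitting in both $\cO_G(u,\epsilon)$ and $\mathring{E}(X,\bigcup\cC)\subseteq\cO_{\TC}(X,\cC)$.

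The sub-claim is the main obstacle; my plan is to prove it by contradiction, exploiting the minimality of $X_\upsilon$ via $X':=X\setminus\{u\}$. If $\cC^u\notin U(\upsilon,X)$ then $\cC_X\setminus\cC^u\in U(\upsilon,X)$. Since $u\in X_\upsilon$, we have $X'\not\supseteq X_\upsilon$, so $X'\notin\cX_\upsilon$ and $U(\upsilon,X')=f_{X,X'}(U(\upsilon,X))$ is principal, generated by some $C^*\in\cC_{X'}$. Describe $\cC_{X'}$ explicitly: it consists of $D^*:=\{u\}\cup\bigcup\cC^u$ together with the members of $\cC_X\setminus\cC^u$ (each of these remains a component of $G-X'$ because all of its $G$-neighbours already lie in $X\setminus\{u\}=X'$). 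Unfolding $\{C^*\}\in\langle\phi_{X,X'}[U(\upsilon,X)]\rangle$ yields some $\cD\in U(\upsilon,X)$ with $\phi_{X,X'}[\cD]\subseteq\{C^*\}$. If $C^*=D^*$, then $\cD\subseteq\phi_{X,X'}^{-1}(D^*)=\cC^u$, contradicting $\cC_X\setminus\cC^u\in U(\upsilon,X)$; if instead $C^*\neq D^*$, then $\phi_{X,X'}^{-1}(C^*)=\{C^*\}$, forcing $\cD=\{C^*\}$ and making $U(\upsilon,X)$ principal, contradicting $X\in\cX_\upsilon$. Either case yields a contradiction, establishing the sub-claim and hence the lemma.
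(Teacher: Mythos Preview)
Your proof is correct and uses the same core ideas as the paper: non-principality of $U^\circ(\upsilon)$ for the backward direction, and minimality of $X_\upsilon$ (via deleting $u$) for the forward direction. The organization differs slightly: for $u\notin X_\upsilon\Rightarrow u\not\between\upsilon$ you work directly at level $X_\upsilon$ with the component $C_u$, whereas the paper passes to $X_\upsilon\cup\{u\}$ and argues that the restriction to $X_\upsilon$ would be a singleton; for $u\in X_\upsilon\Rightarrow u\between\upsilon$ you delete $u$ from the \emph{general} $X\supseteq X_\upsilon$ and run the principality dichotomy there, while the paper first deletes $u$ from $X_\upsilon$ itself, establishes $\cC^u\in U^\circ(\upsilon)$ at that level, and then transfers to the given neighbourhood via the map $g_{X_\upsilon,X'}$. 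Your route avoids invoking the $g$-machinery and is a touch more direct; the paper's route makes the role of the distinguished level $X_\upsilon$ more explicit. Both are equally valid.
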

\begin{proof}
For the backward inclusion consider any vertex $u$ of $G$ with $u\between\upsilon$ and assume for a contradiction that $u$ is not in $X_{\upsilon}$. Put $X=X_\upsilon\uplus\{u\}$ and let $\cC$ be the set of those components of $G-X$ which contain neighbours of $u$.
Since $u\between\upsilon$ holds, we know that $\cC_X\setminus\cC$ is not in $U(\upsilon,X)$, and hence $\cC$ must be $U(\upsilon,X)$. But then $\cC\rest X_\upsilon\in U^\circ(\upsilon)$ is a singleton by the choice of $\cC$ and $u\notin X_\upsilon$, contradicting the fact that $U^\circ(\upsilon)$ is non-principal.

For the forward inclusion let any $u\in X_\upsilon$ be given and put $X=X_\upsilon\setminus\{u\}$. By minimality of $X_\upsilon$, the ultrafilter $U(\upsilon,X)$ is generated by $\{D\}$ for some component $D$ of $G-X$. 
In particular, $u$ is in $D$ since otherwise $U^\circ(\upsilon)$ would be principal which is impossible. Put $\cC=\phi_{X_\upsilon,X}^{-1}(\{D\})$, i.e. $\cC$ is the set of all components of $D-u$, and note that $\cC$ is in $U^\circ(\upsilon)$, since otherwise
\begin{align*}
\cC_X\setminus\{D\}=(\cC_{X_\upsilon}\setminus\cC)\rest X\in U(\upsilon,X)
\end{align*}
contradicts $\{D\}\in U(\upsilon,X)$. 
Since $U^\circ(\upsilon)$ is non-principal, we know that $\cC$ must be an infinite subset of $\cC_{X_\upsilon}$.
Now assume for a contradiction that $u\between\upsilon$ fails, witnessed by some basic open neighbourhoods $O$ of $u$ and $\cO(X',\cD)$ of $\upsilon$ in $\TC$, without loss of generality with $X_\upsilon\subseteq X'$. 
Let $\cC'$ consist of those $C\in\cC$ avoiding $X'$, and note that $\cC'$ is cofinite in $\cC$, so $\cC'\in U^\circ(\upsilon)$ holds as well as $\cC'\subseteq\cC_{X_\upsilon}\cap\cC_{X'}$.
Together with 
\begin{align*}
g_{X_\upsilon,X'}(U^\circ(\upsilon))=U(\upsilon,X')
\end{align*}
this yields $\cC'\in U(\upsilon,X')$. Now set $\cD'=\cC'\cap\cD$ which is in $U(\upsilon,X')$ and hence must be infinite. Since $\cD'$ is a subset of $\cC$, we know that $u$ sends at least one edge to each of the infinitely many $C\in\cD'\subseteq\cD$. In particular, $u$ does send at least one edge to $\bigcup\cD$, so $O$ and $\cO(X',\cD)$ must meet, a contradiction.
\end{proof}

\begin{corollary}\label{NbhdCompsInUX}
Let $\upsilon\in\Upsilon$ and $Y\subseteq X_\upsilon$ be given. Then for every $X\in\lfloor Y\rfloor_{\cX}$ the set of all components of $G-X$ which send an edge to every vertex in $Y$ is contained in $U(\upsilon,X)$.
\end{corollary}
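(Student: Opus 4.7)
My plan is to reduce the statement to Lemma~\ref{Xtau} via the filter property of $U(\upsilon,X)$. Since $X_\upsilon \in \cX$ is finite, the subset $Y \subseteq X_\upsilon$ is finite. For every $y \in Y$, denote by $\cD_y \subseteq \cC_X$ the set of components of $G-X$ that send at least one edge to $y$, and let $\cD = \bigcap_{y \in Y} \cD_y$ be the collection in the statement. Because $U(\upsilon,X)$ is an ultrafilter and hence closed under finite intersections, it suffices to show $\cD_y \in U(\upsilon, X)$ for each $y \in Y$.

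So fix $y \in Y \subseteq X_\upsilon$. By Lemma~\ref{Xtau} we have $y \between \upsilon$, meaning $y$ and $\upsilon$ share no disjoint basic open neighbourhoods in $\TC$. Assume for a contradiction that $\cD_y \notin U(\upsilon,X)$. Then $\cC_X \setminus \cD_y \in U(\upsilon,X)$, so
\[
\cO_{\TC}(X, \cC_X \setminus \cD_y) = \medcup (\cC_X \setminus \cD_y) \cup \mathring{E}(X, \medcup (\cC_X \setminus \cD_y)) \cup \cO_{\cU}(X, \cC_X \setminus \cD_y)
\]
is an open neighbourhood of $\upsilon$ in $\TC$. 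The plan is to pit this against the basic open neighbourhood $\cO_G(y,1)$ of $y$.

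The key step is then to verify that these two open sets are disjoint, which contradicts $y \between \upsilon$. The three summands of $\cO_{\TC}(X, \cC_X \setminus \cD_y)$ are handled separately: the vertex/inner-edge-point portion $\medcup(\cC_X\setminus\cD_y)$ lives in $V(G) - X$ and therefore avoids $y \in X$ and all edge points incident to $y$; the tangle portion $\cO_{\cU}(X,\cC_X\setminus\cD_y)$ is disjoint from $\cO_G(y,1)$ because the latter contains no tangles; and the inner edge points $\mathring{E}(X,\medcup(\cC_X\setminus\cD_y))$ belong only to edges joining $X$ to components in $\cC_X\setminus\cD_y$, whereas every edge incident to $y$ either lies in $G[X]$ (contributing no inner point to $\cO_{\TC}(X,\cC_X\setminus\cD_y)$) or runs from $y$ to a vertex of a component in $\cD_y$ (by the very definition of $\cD_y$), and hence not to any component in $\cC_X \setminus \cD_y$.

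I do not expect any substantial obstacle here; the only delicate part is the bookkeeping in the last paragraph, making sure that every edge incident to $y$ either lies entirely over $X$ or reaches a component counted in $\cD_y$, so that none of its inner points contributes to $\mathring{E}(X, \medcup(\cC_X \setminus \cD_y))$. Once that is checked, the contradiction with Lemma~\ref{Xtau} closes the argument and proves $\cD_y \in U(\upsilon,X)$, hence $\cD \in U(\upsilon,X)$.
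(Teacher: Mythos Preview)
Your proposal is correct and follows essentially the same approach as the paper's proof: both reduce to showing $\cD_y\in U(\upsilon,X)$ for each $y\in Y$ by assuming otherwise and exhibiting the disjoint pair $\cO_{\TC}(X,\cC_X\setminus\cD_y)$ and a basic neighbourhood of $y$, contradicting $y\between\upsilon$ from Lemma~\ref{Xtau}. The paper additionally singles out the case $Y=\emptyset$ explicitly, but your framing via finite intersections covers it implicitly.
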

\begin{proof}
If $Y$ is empty, then $\cC_X$ is the set of all components of $G-X$ which send an edge to every vertex in $Y=\emptyset$, and $\cC_X\in U(\upsilon,X)$ holds since $U(\upsilon, X)$ is an ultrafilter on $\cC_X$. Hence we may suppose that $Y$ is non-empty.

For every vertex $u$ of $Y$ we denote by $\cC_u$ the set of all components of $G-X$ which send an edge to $u$.
Then every set $\cC_u$ is contained in $U(\upsilon,X)$: Otherwise, for some $u\in Y$ the set $\cC_X\setminus\cC_u$ of all components of $G-X$ which do not send an edge to $u$ is contained in $U(\upsilon,X)$. 
But then $\cO_{\TC}(X,\cC_X\setminus\cC_u)$ is an open neighbourhood of $\upsilon$ in $\TC$ which avoids every basic open neighbourhood of $u$, contradicting Lemma~\ref{Xtau}. Hence $\cC_u$ is contained in $U(\upsilon,X)$ for every $u$ in $Y$. Since $Y\subseteq X_\upsilon$ is finite, the set $\bigcap_{u\in Y}\cC_u$ is also in $U(\upsilon,X)$.
\end{proof}

\begin{obs}
A graph $G$ is not planar as soon as there is some $\upsilon\in\Upsilon$ or $\omega\in\Omega$ with $|X_\upsilon|\ge 3$ or $|\Delta(\omega)|\ge 3$, respectively.
\end{obs}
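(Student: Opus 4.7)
The plan is to exhibit $K_{3,3}$ as a minor sitting inside a finite subgraph of $G$ in each of the two cases; since that finite subgraph is already non-planar, so is $G$.

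The ultrafilter case is immediate. Given $\upsilon\in\Upsilon$ with $u_1,u_2,u_3$ distinct in $X_\upsilon$, Lemma~\ref{UltrafilterCrit} says $X_\upsilon$ is critical, so infinitely many components of $G-X_\upsilon$ have neighbourhood exactly $X_\upsilon$. Pick three of them, $C_1,C_2,C_3$. Each $C_k$ is connected and sees every $u_i$ as a neighbour, so contracting each $C_k$ to a single vertex turns the finite subgraph $G[\{u_1,u_2,u_3\}\cup V(C_1)\cup V(C_2)\cup V(C_3)]$ into a graph containing $K_{3,3}$.

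For the end case I would fix $u_1,u_2,u_3\in\Delta(\omega)$ and a ray $R$ of $\omega$ avoiding them, and inductively build three pairwise disjoint connected subgraphs $B_1,B_2,B_3$ of $G-\{u_1,u_2,u_3\}$ each sending an edge to every $u_i$. Suppose $B_1,\dots,B_{n-1}$ are already constructed. Let $S:=\{u_1,u_2,u_3\}\cup V(B_1\cup\cdots\cup B_{n-1})$ and pick a tail $R'$ of $R$ disjoint from $S$. Since $u_i$ dominates $\omega$ and $S-u_i$ is a finite subset of $V(G-u_i)$, in $G-(S-u_i)$ the vertex $u_i$ lies in the same component as some tail of $R$, and hence as a tail of $R'$; thus for each $i$ there is a $u_i$--$R'$ path $P_i^n$ in $G-(S-u_i)$. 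I would choose these sequentially, enlarging the forbidden set by $V(P_{i'}^n)$ before picking $P_i^n$ for $i'<i$, so that the three paths are pairwise internally disjoint. Writing $r_i^n$ for the endpoint of $P_i^n$ on $R'$ and $R^n\subseteq R'$ for the finite segment of $R$ spanning these three vertices, I set
\[B_n\;:=\;(P_1^n\cup P_2^n\cup P_3^n\cup R^n)\setminus\{u_1,u_2,u_3\}.\]
Then $B_n$ is connected, sends an edge to each $u_i$ via the first edge of $P_i^n$, and by construction is disjoint from $B_1\cup\cdots\cup B_{n-1}$. Contracting $B_1,B_2,B_3$ to three vertices realises $K_{3,3}$ as a minor of a finite subgraph.

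The main obstacle is the bookkeeping in Case~1: each newly chosen path must avoid the previously built $B_k$ as well as the two "wrong" branch vertices, while the three new paths of the current step must be pairwise internally disjoint. This is exactly what the sequential choice achieves, invoking at each step the defining property of domination, namely that $u_i$ cannot be separated from $\omega$ by any finite set $T\subseteq V(G-u_i)$.
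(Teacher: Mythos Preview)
Your argument is correct and follows the paper's approach: exploit criticality of $X_\upsilon$ (respectively, domination of $\omega$) to exhibit a $K_{3,3}$ minor and invoke Kuratowski. Two minor remarks: the subgraph $G[\{u_1,u_2,u_3\}\cup V(C_1)\cup V(C_2)\cup V(C_3)]$ need not be \emph{finite} since the $C_k$ may be infinite, but this is harmless for the minor argument (and you can always pass to finite connected pieces of the $C_k$); and in the end case the paper instead builds three pairwise disjoint \emph{infinite} fans from the $u_i$ to a common ray and extracts a $TK_{3,\aleph_0}$, whereas your direct construction of three finite branch sets for a $K_{3,3}$ minor is a bit leaner.
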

\begin{proof}
If $\upsilon\in\Upsilon$ satisfies $|X_\upsilon|\ge 3$, then we let $\cC$ be the set of all components of $G-X_\upsilon$ whose neighbourhood is precisely $X_\upsilon$.
By Corollary~\ref{NbhdCompsInUX}, we have $\cC\in U^\circ(\upsilon)$, so $\cC$ must be infinite.
Consider the subgraph $H:=G[X_\upsilon\cup\bigcup\cC]-E(X_\upsilon)$ and obtain $H'$ from $H$ by contracting every element of $\cC$ to a singleton, deleting loops and reducing parallel edges. Then $H'$ is a $K_{|X_\upsilon|,|\cC|}$ and a minor of $G$,
so the statement follows from Kuratowski's Theorem (\cite[Theorem 4.4.6]{Bible}).

If $\omega\in\Omega$ is dominated by three distinct vertices, pick a ray in $\omega$ which avoids all three vertices, and find for each of the three vertices an infinite fan to that ray such that no two fans meet (this can be achieved by inductively constructing all three fans simultaneously, adding paths in turn).
Then it is easy to find a $TK_{3,\aleph_0}$ in the union of the ray and the three infinite fans. Again, the statement follows from Kuratowski's Theorem (\cite[Theorem 4.4.6]{Bible}).
\end{proof}

\begin{lemma}\label{PathSysXtau}
Let $x$ and $y$ be two distinct vertices of $G$. Then the following are equivalent:
\begin{enumerate}
\item There exist infinitely many independent $x$--$y$ paths in $G$.
\item There is some $\upsilon\in\cU$ satisfying $x\between\upsilon\between y$.
\end{enumerate}
\end{lemma}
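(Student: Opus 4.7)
My plan is to handle the two directions separately; (ii) $\Rightarrow$ (i) is a short case analysis that falls out of the structural results already proved, while (i) $\Rightarrow$ (ii) builds a tangle directly as a compatible family of ultrafilters governed by a non-principal ultrafilter on $\N$.

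For (ii) $\Rightarrow$ (i), I split on whether $\upsilon\in\Upsilon$ or $\upsilon=\omega\in\Omega$. If $\upsilon$ is an ultrafilter tangle, Lemma~\ref{Xtau} gives $x,y\in X_\upsilon$, and Corollary~\ref{NbhdCompsInUX} applied with $Y=\{x,y\}$ and $X=X_\upsilon$ shows that the set $\cD$ of components of $G-X_\upsilon$ sending an edge to both $x$ and $y$ belongs to the non-principal ultrafilter $U^\circ(\upsilon)$, and is therefore infinite. Picking, for each $C\in\cD$, neighbours $c_x^C\in N(x)\cap C$ and $c_y^C\in N(y)\cap C$ together with a $c_x^C$--$c_y^C$ path inside the connected subgraph $C$ produces the desired infinite family of independent $x$--$y$ paths. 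If instead $\upsilon=\omega$ is an end tangle, unfolding basic open neighbourhoods of $\omega$ in $\TC$ shows that $x\between\omega$ forces $x$ to have a neighbour in $C(X,\omega)$ for every finite $X\ni x$, which is equivalent to $x$ dominating $\omega$; likewise $y$ dominates $\omega$. But then no finite vertex set separates $x$ from $y$ in $G$: for any finite $S\subseteq V-\{x,y\}$, an $x$-fan and a $y$-fan into a common ray in $\omega$ each avoid $S$ and can be combined through the ray. The classical infinite Menger theorem now yields infinitely many independent $x$--$y$ paths.

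For (i) $\Rightarrow$ (ii), fix an infinite family $(P_i)_{i\in\N}$ of independent $x$--$y$ paths; after discarding at most one path we may assume every $P_i$ has length at least two. For each $X\in\cX$ let $\cI(X)=\{i\in\N:V(P_i)\cap X\subseteq\{x,y\}\}$; by internal disjointness at most $|X|$ of the paths have an internal vertex in $X$, so $\cI(X)$ is cofinite in $\N$, and for $i\in\cI(X)$ the internal vertices of $P_i$ form a connected subgraph of $G-X$ and hence lie in a unique component $C(X,i)\in\cC_X$. Fix a non-principal ultrafilter $\cG$ on $\N$ and define
\[
U_X:=\bigl\{\cC\subseteq\cC_X:\{i\in\cI(X):C(X,i)\in\cC\}\in\cG\bigr\}.
\]
The ultrafilter axioms on $\cC_X$ follow routinely from those on $\cG$, and because $X\subseteq X'$ implies $\cI(X)\supseteq\cI(X')$ together with $\phi_{X',X}(C(X',i))=C(X,i)$ for $i\in\cI(X')$, one checks $f_{X',X}(U_{X'})\subseteq U_X$; as both are ultrafilters on $\cC_X$, equality holds. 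Thus $(U_X)_{X\in\cX}\in\cU=\Theta$ corresponds to some $\upsilon\in\cU$.

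To verify $x\between\upsilon$, consider any basic neighbourhoods $\cO_G(x,\epsilon)$ of $x$ and $\cO_{\TC}(X,\cC)$ of $\upsilon$. If $x\in X$, then $\cC\in U_X$ forces $\{i\in\cI(X):C(X,i)\in\cC\}$ to be infinite, and for any such $i$ the initial edge of $P_i$ leaves $x$ towards a vertex of $C(X,i)\subseteq\bigcup\cC$, whose inner edge points close to $x$ provide the intersection. If $x\notin X$ then the first internal vertex of every $P_i$ with $i\in\cI(X)$ is a neighbour of $x$ avoiding $X$, forcing $C(X,i)$ to equal the component of $G-X$ containing $x$; this component therefore lies in $U_X$, and $x$ itself lies in $\bigcup\cC\subseteq\cO_{\TC}(X,\cC)$. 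The symmetric argument gives $y\between\upsilon$. The main technical hurdle is the end-tangle subcase of (ii) $\Rightarrow$ (i), where one must convert joint domination of $\omega$ into infinitely many genuinely internally disjoint $x$--$y$ paths; the remaining bookkeeping is a straightforward combination of Lemma~\ref{Xtau}, Corollary~\ref{NbhdCompsInUX}, and the standard ultrafilter-limit construction on $\N$.
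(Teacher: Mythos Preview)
Your proof is correct and follows essentially the same approach as the paper's: both directions use the same case split and the same ultrafilter-on-the-index-set construction for (i)$\Rightarrow$(ii). The only noticeable variation is in the end-tangle subcase of (ii)$\Rightarrow$(i), where the paper builds the independent paths directly by induction inside the shrinking neighbourhoods $C(X_n,\omega)$, while you first deduce that $x$ and $y$ dominate $\omega$ (which the paper mentions as an alternative) and then appeal to the elementary fact that no finite separator implies an infinite independent family; both routes are standard and equally short.
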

\begin{proof}
(i)$\to$(ii). Let $\cP$ be some set of infinitely many independent $x$--$y$ paths in $G$ and discard from it the trivial $x$--$y$ path.
Write $\cP=\{P_i\,|\,i\in I\}$ and choose $\cF$ to be some non-principal ultrafilter on $I$. Furthermore, put $X=\{x,y\}$.
Given $Y\in\cX$ with $X\subseteq Y$, we define an ultrafilter $U_Y$ on $\cC_Y$, as follows: 

Let $I_Y$ denote the set of those $i\in I$ for which $\mathring{P}_i$ avoids $Y$. For every $\cC\subseteq\cC_Y$ we write 
\begin{align*}
I_Y(\cC)=\{i\in I_Y\,|\,\mathring{P}_i\text{ meets }\medcup\cC\}.
\end{align*}
Then for each bipartition $\{\cC,\cC'\}$ of $\cC_Y$, the set $\{I_Y(\cC),I_Y(\cC')\}$ is a bipartition of $I_Y$: indeed, for $i\in I_Y$ we have $i\in I_Y(\cC)$ if and only if there is some $C\in\cC$ containing $\mathring{P}_i$ (since $i\in I_Y$ implies that $\mathring{P}_i$ is connected in $G-Y$), so $\{I_Y(\cC),I_Y(\cC')\}$ is a bipartition of $I_Y$ as claimed. 
In order to define $U_Y$, we have to choose for every bipartition $\{\cC,\cC'\}$ of $\cC_Y$ precisely one of $\cC$ and $\cC'$, which we do now:
Since $Y$ meets only finitely many of the $\mathring{P}_i$ while $\cF$ is non-principal and $I$ can be written as
\begin{align*}
I=I_Y(\cC)\uplus I_Y(\cC')\uplus\{\{i\}\,|\,i\in I\setminus I_Y\},
\end{align*}
we know that $\cF$ picks exactly one of $I_Y(\cC)$ and $I_Y(\cC')$, say $I_Y(\cC)$. Then we let $U_Y$ choose $\cC$, and $\cC'$ otherwise. This completes the definition of $U_Y$, and clearly $U_Y$ inherits the filter properties from $\cF$. 

Our next aim is to show that the $U_Y$ are compatible with respect to the bonding maps of the inverse system of ultrafilters. For this, let $Y\subseteq Y'\in\cX$ both be supersets of $X$ and write $U$ for the ultrafilter $f_{Y',Y}(U_{Y'})$ on $\cC_Y$.
Assume for a contradiction that $U$ and $U_Y$ are distinct, and consider a bipartition $\{\cC,\cC_Y\setminus\cC\}$ of $\cC_Y$ witnessing this with $\cC_Y\setminus\cC\in U_Y$ and $\cC\in U$, say. By definition of $f_{Y',Y}$ we find some $\cC'\in U_{Y'}$ with $\cC'\rest Y\subseteq\cC$.
Therefore, $I_Y(\cC)\in \cF$ holds due to $I_{Y'}(\cC')\in\cF$ and
\begin{align*}
I_{Y'}(\cC')\subseteq I_Y(\cC'\rest Y)\subseteq I_Y(\cC),
\end{align*}
so $\cC$ is in $U_Y$ as well as $\cC_Y\setminus\cC$, a contradiction. 
Hence the $U_Y$ are compatible as desired.

Finally we extend the family of the $U_Y$ to an element $\upsilon$ of $\cU$ by letting $U_Z:=f_{X\cup Z,Z}(U_{X\cup Z})$ for every $Z\in\cX$ with $X\not\subseteq Z$. Then $x\between\upsilon$ holds: Otherwise there is some basic open neighbourhood $O$ of $x$ and some basic open neighbourhood $\cO(Y,\cC)$ of $\upsilon$ with $O\cap\cO(Y,\cC)=\emptyset$. Without loss of generality we may assume that $X$ is included in $Y$. Thus $\cC\in U_Y$ implies $I_Y(\cC)\in \cF$ by choice of $U_Y$, so in particular $I_Y(\cC)$ is infinite since $\cF$ is non-principal. Hence infinitely many of the $P_i$ meet $\bigcup\cC$, therefore witnessing that $x$ sends infinitely many edges to $\bigcup\cC$. In particular, $x$ does send an edge to $\bigcup\cC$, so $O$ and $\cO(Y,\cC)$ must meet, a contradiction. Similarly, $y\between \upsilon$ holds.

(ii)$\to$(i).
If $\upsilon$ is an end tangle, then put $X_0=\{x,y\}$ and pick some $N(x)$--$N(y)$ path $P_0$ in the sole component $C_0$ generating $U(\upsilon,X_0)$. Next, put $X_1=X_0\cup V(P_0)$ and pick some $N(x)$--$N(y)$ path $P_1$ in the sole component $C_1$ generating $U(\upsilon,X_1)$. Proceeding inductively, we find some disjoint $N(x)$--$N(y)$ paths $P_n$ for every $n\in\N$. Then $\{xP_ny\,|\,n\in\N\}$ is a set of infinitely many independent $x$--$y$ paths. Alternatively, it suffices to note that $\upsilon$ must be dominated by $x$ and $y$.

If $\upsilon$ is an ultrafilter tangle, then $\{x,y\}\subseteq X_\upsilon$ holds by Lemma~\ref{Xtau}. Let $\cC_x$ be the set of all components of $G-X_\upsilon$ sending an edge to $x$, and define $\cC_y$ analogously. 
Then $x\between\upsilon\between y$ implies that both $\cC_x$ and $\cC_y$ are in $U^\circ(\upsilon)$, and hence $\cC:=\cC_x\cap\cC_y\in U^\circ(\upsilon)$ is an infinite subset of $\cC_{X_\upsilon}$. Next we choose some infinite subset $\{C_n\,|\,n\in\N\}$ of $\cC$, and for every $n\in\N$ we pick some $N(x)$--$N(y)$ path $P_n$ in $C_n$. Again, $\{xP_ny\,|\,n\in\N\}$ is a set of infinitely many independent $x$--$y$ paths.
\end{proof}

\newpage
\begin{corollary}\label{ApproxOnV}
Let $x$ and $y$ be two distinct vertices of $G$. Then the following are equivalent:
\begin{enumerate}
\item $x\transcl y$.
\item There is some $X\in\cX$ containing $x$ and $y$ such that no $Y\in\cX$ disjoint from $X$ separates $x$ and $y$ in $G-E(X)$.
\end{enumerate}
\end{corollary}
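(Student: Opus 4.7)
The plan is to use Lemma~\ref{PathSysXtau} as the bridge between the combinatorial condition~(ii) and the topological condition~(i). The key structural input is the remark that on $\TC\setminus\mathring{E}$ the relation $\between$ pairs only vertices with tangles; so any $\transcl$-chain from $x$ to $y$ must alternate through $\cU$, taking the form
\[
x=v_0\between\upsilon_1\between v_1\between\upsilon_2\between\cdots\between\upsilon_n\between v_n=y
\]
with $v_i\in V(G)$ and $\upsilon_i\in\cU$. By Lemma~\ref{PathSysXtau}, each link $v_{i-1}\between\upsilon_i\between v_i$ is equivalent to the existence of infinitely many independent $v_{i-1}$--$v_i$ paths in $G$. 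So both directions reduce to translating between "infinitely many independent paths along each link of a backbone of $X$-vertices" and the combinatorial non-separation property of~(ii).

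For (ii)$\Rightarrow$(i), I would take $X$ witnessing (ii) and inductively construct $x$--$y$ paths $P_1,P_2,\ldots$ in $G-E(X)$ so that each new path avoids the non-$X$ internal vertices of its predecessors: applying (ii) to the finite sets $Y_i:=(V(P_1)\cup\cdots\cup V(P_i))\setminus X$, which are disjoint from $X$, provides $P_{i+1}\subseteq G-E(X)-Y_i$. By construction, $V(P_i)\cap V(P_{i'})\subseteq X$ for $i\neq i'$. Since $X$ is finite, pigeonhole yields an infinite subfamily of the $P_i$ visiting the vertices of $X$ in the same linear order $x=u_0,u_1,\ldots,u_k=y$; and for each $j$, the subpaths $u_{j-1}P_iu_j$ across this subfamily are pairwise internally vertex-disjoint, because their internal vertices lie in the pairwise disjoint sets $V(P_i)\setminus X$. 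Thus each consecutive pair $(u_{j-1},u_j)$ admits infinitely many independent paths, and Lemma~\ref{PathSysXtau} produces $\upsilon_j\in\cU$ with $u_{j-1}\between\upsilon_j\between u_j$. Chaining gives $x\transcl y$.

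For (i)$\Rightarrow$(ii), given the alternating chain above I would simply set $X:=\{v_0,\ldots,v_n\}$. To verify (ii), let any finite $Y\in\cX$ disjoint from $X$ be given. Lemma~\ref{PathSysXtau} provides, for each $i\in[n]$, infinitely many independent $v_{i-1}$--$v_i$ paths in $G$; since $Y$ and $X\setminus\{v_{i-1},v_i\}$ are finite and (by independence) each fixed vertex is used internally by at most one path, all but finitely many of these paths avoid $Y\cup(X\setminus\{v_{i-1},v_i\})$ internally, and at most one has length $1$. Picking such a path $Q_i$ of length $\geq 2$, every edge of $Q_i$ has at least one endpoint outside $X$, so $Q_i\subseteq G-E(X)-Y$. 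Concatenating $Q_1\cdots Q_n$ yields an $x$--$y$ walk in $G-E(X)-Y$, from which an $x$--$y$ path can be extracted; hence $Y$ does not separate $x$ from $y$ in $G-E(X)$.

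The main obstacle I expect is the bookkeeping around $E(X)$: in the forward direction, one must ensure that pigeonholing on $X$-backbones really delivers \emph{independent} (not merely edge-disjoint) subpaths, which is why the inductive construction avoids non-$X$ vertices of earlier paths rather than just their edges; in the backward direction, verifying that internally $X$-disjoint paths of length $\geq 2$ use no edge of $E(X)$ is the small combinatorial check that forces us to discard the at most one direct edge $v_{i-1}v_i$ from our infinite family.
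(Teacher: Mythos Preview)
Your proposal is correct and follows essentially the same approach as the paper: both directions hinge on Lemma~\ref{PathSysXtau} and the observation that a $\transcl$-chain between vertices alternates through $\cU$. In (ii)$\Rightarrow$(i) you and the paper alike build infinitely many $x$--$y$ paths in $G-E(X)$ meeting only in $X$, pigeonhole on their $X$-trace, and apply Lemma~\ref{PathSysXtau} to each consecutive pair; in (i)$\Rightarrow$(ii) both set $X=\{v_0,\dots,v_n\}$ and thread paths past a given $Y$. Your treatment of (i)$\Rightarrow$(ii) is in fact slightly more careful than the paper's: you explicitly ensure the chosen $Q_i$ avoid $X\setminus\{v_{i-1},v_i\}$ internally and have length $\ge 2$, so that no edge of $E(X)$ is used---a detail the paper's proof leaves implicit.
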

\begin{proof}
(ii)$\to$(i). 
Let $X\in\cX$ be as in (ii).
Hence, we inductively find infinitely many $x$--$y$ paths in $G-E(X)$ which meet only in $X$. By pigeon-hole principle, infinitely many of them meet exactly the same vertices of $X$ and traverse these in the same order (starting in $x$), say $x=u_0,u_1,\hdots,u_n=y$ for some $n\in\N$.
We denote the set of these paths by $\cP=\{P_i\,|\,i\in I\}$ and write $P^k=u_{k-1}Pu_k$ for each $P\in\cP$ and $k\in [n]$. 
Applying, for every $k\in [n]$, Lemma~\ref{PathSysXtau} to $\{P_i^k\,|\,i\in I\}$ and the vertices $u_{k-1}$ and $u_k$ yields some $\upsilon_k\in\cU$ with $u_{k-1}\between\upsilon_k\between u_k$. Hence 
\begin{align}\label{xwy}
x=u_0\between\upsilon_1\between u_1\between\cdots\between\upsilon_n\between u_n=y
\end{align}
holds as desired.

(i)$\to$(ii).
Suppose that $x\transcl y$ holds, witnessed by some vertices $u_0,\hdots, u_n$ of $G$ and some $\upsilon_1,\hdots,\upsilon_n\in\cU$ satisfying~(\ref{xwy}). Applying, for every $k\in [n]$, Lemma~\ref{PathSysXtau} to $u_{k-1}\between\upsilon_k\between u_k$ yields some collection $\{P_m^k\,|\,m\in\N\}$ of infinitely many independent paths from $u_{k-1}$ to $u_k$. Set $X=\{u_k\,|\,k\le n\}$. Then for every $Y\in\cX$ disjoint from $X$ we find for every $k\in [n]$ some $m_k\in\N$ such that $P^k_{m_k}$ avoids $Y$. Then $\bigcup_{k\in [n]}P_{m_k}^k$ admits a path from $x$ to $y$ avoiding $Y$.
\end{proof}

\subsection{Critical vertex sets}\label{subsec:criticalVsets}

In this section we yield a combinatorial description of the sets $X_\tau$ of ultrafilter tangles.\\

For every $X\in\cX$ and $Y\subseteq X$ we write $\cC_X(Y)$ for the set of components of $G-X$ whose neighbourhood is precisely $Y$.\index{$\cC_X(Y)$} Furthermore, we let
\begin{align*}
\crit(X):=\{Y\subseteq X\,|\,\cC_X(Y)\text{ is infinite}\}.
\end{align*}
\begin{obs}\label{OneTimeUse0}
Suppose that $X\in\cX$ and $Y\in \crit(X)$ are given. Then every $X'\in\lfloor X\rfloor_{\cX}$ meets only finitely many elements of $\cC_X(Y)$, so the set $\cC_{X'}(Y)=\cC_X(Y)\cap\cC_{X'}$ is cofinite in $\cC_X(Y)$, and in particular $Y\in \crit(X')$ holds. On the other hand, $Y\in\crit(X)$ implies $Y\in\crit(Y)$: since every component in $C_X(Y)$ has neighbourhood precisely $Y$, we have $C_X(Y)\subseteq C_Y(Y)$.
\end{obs}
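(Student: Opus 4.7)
The plan is to verify the three parts of the observation directly from the definitions, as they are essentially routine bookkeeping about how components of $G-X$ behave when we enlarge $X$.

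First I would establish the finiteness claim. Since $X' \supseteq X$ is finite, $X' \setminus X$ is finite. Every vertex in $X' \setminus X$ lies in exactly one component of $G-X$, so the number of components of $G-X$ that meet $X'$ is at most $|X' \setminus X|$, hence finite. In particular, only finitely many members of $\cC_X(Y)$ meet $X'$.

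Next I would prove the equality $\cC_{X'}(Y) = \cC_X(Y) \cap \cC_{X'}$. For the forward inclusion, take $C \in \cC_X(Y) \cap \cC_{X'}$. Then $C$ is a component of $G-X'$, and its neighbourhood in $G$ is $Y \subseteq X \subseteq X'$, so $C \in \cC_{X'}(Y)$. For the converse, let $C \in \cC_{X'}(Y)$, so $C$ is a component of $G-X'$ with $N_G(C) = Y \subseteq X$. Then $C$ avoids $X$ and is connected in $G-X$; moreover its neighbourhood $Y$ lies in $X$, so $C$ has no edges in $G-X$ leading out of $C$, meaning that $C$ is already a component of $G-X$. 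Thus $C \in \cC_X(Y)$, and by assumption $C \in \cC_{X'}$. Combining with the first step, $\cC_{X'}(Y)$ is cofinite in the infinite set $\cC_X(Y)$, so it is infinite and $Y \in \crit(X')$.

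Finally, for the inclusion $\cC_X(Y) \subseteq \cC_Y(Y)$, let $C \in \cC_X(Y)$. Since $Y \subseteq X$, $C$ avoids $Y$ and is a connected subgraph of $G-Y$. By the definition of $\cC_X(Y)$ the only neighbours of $C$ in $G$ lie in $Y$, so $C$ has no edges in $G-Y$ leaving $C$, whence $C$ is a component of $G-Y$ with $N_G(C) = Y$, i.e.\ $C \in \cC_Y(Y)$. Hence $\cC_X(Y) \subseteq \cC_Y(Y)$ and $Y \in \crit(Y)$ follows since the left-hand side is infinite.

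No serious obstacle is anticipated: the statement is a definitional unpacking, and the only subtlety is recognising that once $N_G(C) \subseteq X \subseteq X'$, the set $C$ is simultaneously a component of $G-X$ and of $G-X'$, which is precisely what makes the two descriptions $\cC_{X'}(Y)$ and $\cC_X(Y) \cap \cC_{X'}$ coincide.
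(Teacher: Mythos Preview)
Your proposal is correct and follows exactly the definitional unpacking that the paper intends; in fact the paper gives no separate proof for this observation, so your argument simply makes explicit the routine verifications the paper leaves to the reader. There is nothing to add.
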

We call $X\in\cX$ \textit{critical} if $X\in\crit(X)$ holds, i.e. if $\cC_X(X)$ is infinite.\index{critical}
Moreover, we write $\crit(\cX)$ for the set of all critical $X\in\cX$.\index{$\crit(X)$, $\crit(\cX)$}
Then Observation~\ref{OneTimeUse0} implies
$\crit(\cX)=\bigcup_{X\in\cX}\crit(X)$. 

\begin{lemma}\label{New36}
For every $\upsilon\in\Upsilon$ and every $X\in\lfloor X_\upsilon\rfloor_{\cX}$ we have $X_\upsilon\in \crit(X)$ and $\cC_X(X_\upsilon)\in U(\upsilon,X)$.
\end{lemma}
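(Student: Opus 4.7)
The plan is to first establish $\cC_X(X_\upsilon)\in U(\upsilon,X)$; the critical property $X_\upsilon\in\crit(X)$ then follows for free, since $X\in\cX_\upsilon$ means $U(\upsilon,X)$ is non-principal and therefore contains no finite set. To reach the membership, I decompose
\[
\cC_X(X_\upsilon)=\cA\cap\bigcap_{u\in X\setminus X_\upsilon}\cB_u^c,
\]
where $\cA\subseteq\cC_X$ consists of the components whose $G$-neighbourhood contains $X_\upsilon$, and $\cB_u\subseteq\cC_X$ consists of those components adjacent to $u$. Corollary~\ref{NbhdCompsInUX} applied with $Y=X_\upsilon$ puts $\cA$ into $U(\upsilon,X)$ straight away, so what remains is to show $\cB_u\notin U(\upsilon,X)$ for each $u\in X\setminus X_\upsilon$; a finite intersection of members of the ultrafilter then completes the argument.

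Next I reduce this to the smaller witness $X':=X_\upsilon\cup\{u\}$. Writing $\cB_u'\subseteq\cC_{X'}$ for the components of $G-X'$ adjacent to $u$, one checks $\phi_{X,X'}[\cB_u]\subseteq\cB_u'$: if some $v\in V(C)\subseteq V(\phi_{X,X'}(C))$ has $u$ as a neighbour then so does $\phi_{X,X'}(C)$. Since the definition of $f_{X,X'}$ builds in an up-closure and $f_{X,X'}(U(\upsilon,X))=U(\upsilon,X')$ by compatibility of the family $\upsilon=(U(\upsilon,Y))_{Y\in\cX}$, this inclusion yields the implication $\cB_u\in U(\upsilon,X)\Rightarrow\cB_u'\in U(\upsilon,X')$. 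Hence it is enough to rule out $\cB_u'\in U(\upsilon,X')$.

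Let $C^*\in\cC_{X_\upsilon}$ denote the unique component containing $u$. The components of $G-X'$ split into two kinds: (i) the components $C_0\in\cC_{X_\upsilon}\setminus\{C^*\}$, which survive unchanged after deleting $u$, and (ii) the components of $C^*-u$. Type~(i) components have $G$-neighbourhood contained in $X_\upsilon$, so they are not adjacent to $u\notin X_\upsilon$; consequently $\cB_u'$ is contained in the set of type~(ii) components, that is, in $\cC_{X'}\setminus(\cC_{X_\upsilon}\setminus\{C^*\})$. Since $U^\circ(\upsilon)$ is non-principal we have $\cC_{X_\upsilon}\setminus\{C^*\}\in U^\circ(\upsilon)$, and $f_{X',X_\upsilon}(U(\upsilon,X'))=U^\circ(\upsilon)$ supplies some $\cE\in U(\upsilon,X')$ with $\phi_{X',X_\upsilon}[\cE]\subseteq\cC_{X_\upsilon}\setminus\{C^*\}$; this means $\cE$ omits all type~(ii) components, so $\cE\subseteq\cC_{X_\upsilon}\setminus\{C^*\}$ inside $\cC_{X'}$. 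Up-closure of the ultrafilter then places $\cC_{X_\upsilon}\setminus\{C^*\}$ in $U(\upsilon,X')$, so its complement -- and hence the subset $\cB_u'$ -- is not, as required.

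The only real obstacle is the bookkeeping across the three ground sets $\cC_X,\cC_{X'},\cC_{X_\upsilon}$ while juggling the two directions of compatibility along $X\supseteq X'\supseteq X_\upsilon$ in the inverse system $\{\cU_X,f_{X',X},\cX\}$; once one spots that non-principality of $U^\circ(\upsilon)$ lets one delete $\{C^*\}$ at the base level $X_\upsilon$, the remainder is a formal push through the bonding maps $f_{X,X'}$ and $f_{X',X_\upsilon}$.
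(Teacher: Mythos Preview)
Your proof is correct, but it takes a noticeably longer route than the paper's. You work at the target level $X$: you decompose $\cC_X(X_\upsilon)=\cA\cap\bigcap_{u\in X\setminus X_\upsilon}\cB_u^c$, invoke Corollary~\ref{NbhdCompsInUX} at level $X$ for the part $\cA$, and then for each $u\in X\setminus X_\upsilon$ separately descend through $X'=X_\upsilon\cup\{u\}$ down to $X_\upsilon$ to exclude $\cB_u$. The paper instead works at the base level $X_\upsilon$ from the outset: it applies Corollary~\ref{NbhdCompsInUX} once with $Y=X=X_\upsilon$ to obtain $\cC_{X_\upsilon}(X_\upsilon)\in U^\circ(\upsilon)$, observes that $\cC_X(X_\upsilon)=\cC_{X_\upsilon}(X_\upsilon)\cap\cC_X$ is a cofinite subset and hence still lies in the non-principal ultrafilter $U^\circ(\upsilon)$, and then pushes this single set up to $U(\upsilon,X)$ via $g_{X_\upsilon,X}$. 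The paper's key simplification is the identity $\cC_X(X_\upsilon)=\cC_{X_\upsilon}(X_\upsilon)\cap\cC_X$ (a component of $G-X$ with neighbourhood exactly $X_\upsilon$ is automatically a component of $G-X_\upsilon$ with the same neighbourhood), which lets it avoid your per-vertex decomposition entirely. Your approach does have the virtue of making the role of each $u\in X\setminus X_\upsilon$ explicit, but at the cost of $|X\setminus X_\upsilon|$ many separate descents where the paper needs just one lift.
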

\begin{proof}
Put $\cC=\cC_{X_\upsilon}(X_\upsilon)$. By Corollary~\ref{NbhdCompsInUX} (applied to $X_\upsilon=X=Y$) we know that $\cC$ is in $U^\circ(\upsilon)$. In particular, $\cC$ is infinite, witnessing $X_\upsilon\in \crit(X_\upsilon)$.
Hence $X_\upsilon\in \crit(X)$ holds by Observation~\ref{OneTimeUse0}.
Let $\cD$ be the set obtained from $\cC$ by discarding the finitely many components meeting $X$ from it, i.e. set $\cD=\cC\cap\cC_X$. Then $\cD=\cC_X(X_\upsilon)$ holds.
Since $\cD$ is cofinite in $\cC$ we have $\cD\in U^\circ(\upsilon)$. Recall that by definition of $g_{X_\upsilon,X}$ we know that
\begin{align*}
U(\upsilon,X)=\{\cC'\subseteq\cC_X\,|\,\exists\cD'\in U^\circ(\upsilon)\,:\,\cD'\subseteq\cC'\}.
\end{align*}
Since $\cD\in U^\circ(\upsilon)$ by definition is a subset of $\cC_X$, the equation above yields $\cD\in U(\upsilon,X)$ as claimed.
\end{proof}

\begin{lemma}\label{ECOextentUltra}
For all $X\in\cX$, every $Y\in \crit(X)$ and each infinite $\cC\subseteq\cC_X(Y)$ there is some $\upsilon\in\Upsilon$ with $\upsilon\in\cO_{\TC}(X,\cC)$ and $X_\upsilon=Y$.
\end{lemma}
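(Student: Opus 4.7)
The plan is to build the desired ultrafilter tangle $\upsilon$ by choosing a suitable non-principal ultrafilter $U$ on $\cC_Y$ with $\cC\in U$ and lifting it to a tangle via \cite[Lemma 3.7]{EndsAndTangles}, as summarised at the end of Section~\ref{SummaryEndsAndTangles}. The two checks that then matter are that $X_\upsilon$ equals $Y$ (not just some subset of $Y$) and that $\cC$ already belongs to $U(\upsilon,X)$ rather than merely to $U(\upsilon,Y)$.

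First I would observe that every $C\in\cC\subseteq\cC_X(Y)$ has $N_G(C)=Y\subseteq X$, so $C$ sends no edges to $X\setminus Y$ and is therefore also a component of $G-Y$. Hence $\cC\subseteq\cC_Y$, and $\cC$ remains infinite in $\cC_Y$. The family $\{\cC\}\cup\{\cC_Y\setminus F\mid F\subseteq\cC_Y\text{ finite}\}$ has the finite intersection property, since $\cC\setminus F$ is infinite for every finite $F\subseteq\cC_Y$, so Zorn's lemma extends it to an ultrafilter $U$ on $\cC_Y$ that contains $\cC$ and no singleton. By \cite[Lemma 3.7]{EndsAndTangles} there is some $\upsilon\in\Theta$ with $U(\upsilon,Y)=U$; non-principality of $U$ forces $\upsilon\in\Upsilon$ and $Y\in\cX_\upsilon$, hence $X_\upsilon\subseteq Y$.

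The central step is the reverse inclusion. For each $y\in Y$, every $C\in\cC$ sends an edge to $y$ because $N_G(C)=Y$, so $C\cup\{y\}$ lies in the single component $D_y$ of $G-(Y\setminus\{y\})$ containing $y$. Thus $\cC\rest(Y\setminus\{y\})=\{D_y\}$, and since $\cC\in U$ we obtain $\{D_y\}\in f_{Y,Y\setminus\{y\}}(U)=U(\upsilon,Y\setminus\{y\})$, making $U(\upsilon,Y\setminus\{y\})$ principal. If $X_\upsilon\subsetneq Y$ were possible, pick any $y\in Y\setminus X_\upsilon$; then $Y\setminus\{y\}\in\lfloor X_\upsilon\rfloor_\cX=\cX_\upsilon$ would force $U(\upsilon,Y\setminus\{y\})\in\cU_{Y\setminus\{y\}}^\ast$, contradicting what we just showed. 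Hence $X_\upsilon=Y$.

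To finish, I would verify $\cC\in U(\upsilon,X)$, so that $\upsilon\in\cO_{\TC}(X,\cC)$ by the definition recalled in Section~\ref{SummaryEndsAndTangles}. Since $X_\upsilon=Y\subseteq X$, the ultrafilter $U(\upsilon,X)$ is non-principal and $f_{X,Y}(U(\upsilon,X))=U(\upsilon,Y)=U$, so Lemma~\ref{TCfInv} identifies $U(\upsilon,X)=g_{Y,X}(U)$. The definition of $g_{Y,X}$ then immediately yields $\cC\in g_{Y,X}(U)$ via $\cC\subseteq\cC_X$ and $\cC\in U$. I expect the main obstacle to be the equality $X_\upsilon=Y$, because the easy bound $X_\upsilon\subseteq Y$ only uses $Y\in\cX_\upsilon$, and ruling out strictly smaller candidates for $X_\upsilon$ relies crucially on $\cX_\upsilon$ being the up-closure of its unique minimum so that a single witness $y\in Y\setminus X_\upsilon$ suffices.
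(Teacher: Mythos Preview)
Your proof is correct and follows essentially the same strategy as the paper: pick a non-principal ultrafilter containing $\cC$, extend it to an ultrafilter tangle, and verify $X_\upsilon=Y$ by showing that every proper subset of $Y$ yields a principal ultrafilter. The only difference is cosmetic: the paper places the ultrafilter on $\cC_X$ (so $\upsilon\in\cO_{\TC}(X,\cC)$ is immediate and the claim $Y\in\cX_\upsilon$ needs a moment's thought), whereas you place it on $\cC_Y$ (so $Y\in\cX_\upsilon$ is immediate but you need the extra $g_{Y,X}$ step at the end to recover $\cC\in U(\upsilon,X)$).
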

\begin{proof}
Choose some non-principal ultrafilter $U$ on $\cC_X$ which contains $\cC$.
By Corollary~\ref{Uextension}, $U$ uniquely extends to an element $\upsilon$ of $\Upsilon$.
In particular we have $Y\in\cX_{\upsilon}$.
 For every $Y^-\subsetneq Y$ the set $\cC\rest Y^-$ is a singleton contained in $U(\upsilon,Y^-)$, therefore witnessing $Y^-\notin\cX_\upsilon$, so $Y=X_\upsilon$ follows. 
\end{proof}

\begin{lemma}\label{UltrafilterCrit}
The map $[\upsilon]_\asymp\mapsto X_\upsilon$ is a bijection between $\Upsilon/{\asymp}$ and $\crit(\cX)$.
\end{lemma}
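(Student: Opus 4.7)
The proof will be a direct synthesis of the preceding lemmas in this subsection, so I expect no real obstacle. The map is well-defined on equivalence classes by the very definition of $\asymp$ (two ultrafilter tangles are $\asymp$-equivalent precisely when they share the same $X_\tau$), and injectivity on $\Upsilon/{\asymp}$ is immediate from this same definition. The only work is to verify that the map takes values in $\crit(\cX)$ and is surjective onto it.

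For \emph{well-definedness of the codomain}, I would note that Lemma~\ref{New36}, applied with $X := X_\upsilon$ (which lies in $\lfloor X_\upsilon\rfloor_\cX$), immediately gives $X_\upsilon \in \crit(X_\upsilon)$, so $X_\upsilon \in \crit(\cX)$ by the defining union $\crit(\cX)=\bigcup_{X\in\cX}\crit(X)$.

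For \emph{surjectivity}, given any $X \in \crit(\cX)$, I want to produce an ultrafilter tangle $\upsilon$ with $X_\upsilon = X$. By Observation~\ref{OneTimeUse0}, being in $\crit(\cX)$ and being critical (i.e.\ $X \in \crit(X)$) are equivalent, so $\cC_X(X)$ is infinite. Now invoke Lemma~\ref{ECOextentUltra} with the triple $(X, Y, \cC) := (X, X, \cC_X(X))$: this yields some $\upsilon \in \Upsilon$ with $X_\upsilon = X$, giving surjectivity.

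For \emph{injectivity}, if $\upsilon, \upsilon' \in \Upsilon$ have $X_\upsilon = X_{\upsilon'}$, then $\upsilon \asymp \upsilon'$ by definition, so $[\upsilon]_\asymp = [\upsilon']_\asymp$. Thus the map is a bijection between $\Upsilon/{\asymp}$ and $\crit(\cX)$, as claimed. The whole proof is essentially a two-line bookkeeping argument once Lemma~\ref{New36} (``every $X_\upsilon$ is critical'') and Lemma~\ref{ECOextentUltra} (``every critical set arises as some $X_\upsilon$'') are in place.
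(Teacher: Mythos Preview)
Your proof is correct and follows essentially the same approach as the paper's: well-definedness via Lemma~\ref{New36}, injectivity by the definition of $\asymp$, and surjectivity via Lemma~\ref{ECOextentUltra} applied to $(X,X,\cC_X(X))$. The only minor quibble is that $\crit(\cX)$ is \emph{defined} as the set of critical $X$ (those with $X\in\crit(X)$), so you do not need Observation~\ref{OneTimeUse0} for the surjectivity direction; the paper likewise just writes ``$X\in\crit(X)$ holds'' directly.
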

\begin{proof}
The map is well defined by Lemma~\ref{New36}. By definition of $\asymp$ it is injective. It remains to verify surjectivity. If $X$ is in $\crit(\cX)$, then $X\in\crit(X)$ holds, and Lemma~\ref{ECOextentUltra} yields some $\upsilon\in\Upsilon$ with $\upsilon\in\cO_{\TC}(X,\cC_X(X))$ and $X_\upsilon=X$, so $[\upsilon]_\asymp$ gets mapped to $X$ as desired.
\end{proof}

\begin{theorem}\label{InfIndPaths}
For every two distinct vertices $u$ and $t$ of $G$ the following are equivalent:
\begin{enumerate}
\item There exist infinitely many independent $u$--$t$ paths in $G$.
\item There exists some $\upsilon\in\cU$ with $u\between\upsilon\between t$.
\item There exists some end of $G$ dominated by both $u$ and $t$ or\\
there exists some $\upsilon\in\Upsilon$ with $\{u,t\}\subseteq X_\upsilon$.
\item There exists an end of $G$ dominated by both $u$ and $t$ or\\
there exists some $X\in\crit(\cX)$ containing both $u$ and $t$.
\end{enumerate}
\end{theorem}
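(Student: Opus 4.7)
My strategy is to use Lemma~\ref{PathSysXtau} to establish (i)$\Leftrightarrow$(ii), then split the $\upsilon\in\cU$ from (ii) into the two cases $\upsilon\in\Omega$ and $\upsilon\in\Upsilon$ to get (ii)$\Leftrightarrow$(iii), and finally use Lemma~\ref{UltrafilterCrit} to replace the existential quantifier over ultrafilter tangles in (iii) by the critical set condition in (iv).

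The equivalence (i)$\Leftrightarrow$(ii) is literally the statement of Lemma~\ref{PathSysXtau}, so no further work is required. For (iii)$\Rightarrow$(iv), just observe that if $\upsilon\in\Upsilon$ contains $\{u,t\}$ in $X_\upsilon$, then $X=X_\upsilon$ lies in $\crit(\cX)$ by Lemma~\ref{UltrafilterCrit} (or directly by Lemma~\ref{New36}, which gives $X_\upsilon\in\crit(X_\upsilon)$). Conversely, for (iv)$\Rightarrow$(iii), a critical $X$ containing $\{u,t\}$ yields some $\upsilon\in\Upsilon$ with $X_\upsilon=X$ via Lemma~\ref{ECOextentUltra} (applied with $Y=X$ and any infinite $\cC\subseteq\cC_X(X)$); this $\upsilon$ contains $\{u,t\}\subseteq X_\upsilon$ as required.

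The main piece of actual work is the equivalence (ii)$\Leftrightarrow$(iii). Given $\upsilon\in\cU$ with $u\between\upsilon\between t$, I split on whether $\upsilon$ is an end tangle or an ultrafilter tangle. If $\upsilon\in\Upsilon$, then by Lemma~\ref{Xtau} each of $u\between\upsilon$ and $\upsilon\between t$ is equivalent to membership in $X_\upsilon$, giving $\{u,t\}\subseteq X_\upsilon$. If $\upsilon=\omega$ is an end tangle, I need the auxiliary fact that for a vertex $u$ and an end $\omega$, the relation $u\between\omega$ in $\TC$ holds if and only if $u\in\Delta(\omega)$. The forward direction goes by contrapositive: if $u$ does not dominate $\omega$, pick a finite $X\not\ni u$ separating $u$ from every ray in $\omega$; then $u$ lies in some component $C'\neq C(X,\omega)$ of $G-X$, so a sufficiently small $\cO_G(u,\epsilon)$ is contained in $C'$ together with partial edges to $X$ only and is therefore disjoint from $\CTC(X,\omega)$. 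The backward direction is straightforward: if $u\in\Delta(\omega)$, then for every $X\in\cX$ either $u\in C(X,\omega)$ or $u\in X$ sends an edge into $C(X,\omega)$, so every $\cO_G(u,\epsilon)$ meets $\CTC(X,\omega)$. Applying this equivalence to both $u$ and $t$ gives the result in the end-tangle case.

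The only subtle point I expect is verifying the claim $u\between\omega\Leftrightarrow u\in\Delta(\omega)$, since this is not explicitly recorded in the text, whereas the analogous statement for ultrafilter tangles is available as Lemma~\ref{Xtau}. The rest is a direct assembly of the three lemmas \ref{PathSysXtau}, \ref{Xtau} and \ref{UltrafilterCrit}.
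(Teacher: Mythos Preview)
Your proof is correct and follows the same assembly as the paper's: (i)$\Leftrightarrow$(ii) via Lemma~\ref{PathSysXtau}, (ii)$\Leftrightarrow$(iii) via Lemma~\ref{Xtau}, and (iii)$\Leftrightarrow$(iv) via Lemma~\ref{UltrafilterCrit}. The only difference is that the paper's one-line citation of Lemma~\ref{Xtau} for (ii)$\Leftrightarrow$(iii) tacitly treats the end case $u\between\omega\Leftrightarrow u\in\Delta(\omega)$ as obvious, whereas you spell it out; your explicit argument for this equivalence is correct and a welcome clarification rather than a deviation.
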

\begin{proof}
(i)$\leftrightarrow$(ii) is Lemma~\ref{PathSysXtau}.

(ii)$\leftrightarrow$(iii) is due to Lemma~\ref{Xtau}.

(iii)$\leftrightarrow$(iv) is due to Lemma~\ref{UltrafilterCrit}.
\end{proof}

\begin{corollary}\label{GGwellDef}
For every $X\in\cX$ the following hold:
\begin{enumerate}
\item For every $\omega\in\Omega$ the set $\Delta(\omega)$ meets at most one component of $G-X$.
\item Every $Y\in\crit(\cX)$ meets at most one component of $G-X$.
\item For every $\upsilon\in\Upsilon$ the set $X_\upsilon$ meets at most one component of $G-X$.
\end{enumerate}\qed
\end{corollary}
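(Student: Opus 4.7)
The plan is to derive all three parts uniformly from Theorem~\ref{InfIndPaths}, using the key observation that an infinite family of independent $u$--$t$ paths cannot be destroyed by a finite vertex set.

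The common core is the following: if $S$ is any set of vertices of $G$ and $u, t \in S$ lie in distinct components of $G - X$ for some $X \in \cX$, then in particular every $u$--$t$ path in $G$ meets $X$. If moreover we can produce \emph{infinitely many independent} $u$--$t$ paths in $G$, then since $|X| < \aleph_0$ and the paths are internally disjoint, all but finitely many of them must have their interiors disjoint from $X$, yielding a $u$--$t$ path in $G - X$, contradicting that $u$ and $t$ lie in distinct components of $G - X$. Hence for each of the three sets $S$ to be considered, it suffices to show that any two distinct $u, t \in S$ admit infinitely many independent $u$--$t$ paths in $G$.

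For part (i), take $S = \Delta(\omega)$ and $u, t \in \Delta(\omega)$ distinct. Both vertices dominate the common end $\omega$, so condition (iv) of Theorem~\ref{InfIndPaths} is satisfied by the first alternative ($\omega$ is an end dominated by both $u$ and $t$), which yields the desired infinite family via implication (iv)$\to$(i). For part (iii), take $S = X_\upsilon$ with $\upsilon \in \Upsilon$ and $u, t \in X_\upsilon$ distinct; then $\{u,t\}\subseteq X_\upsilon$ satisfies condition (iii) of Theorem~\ref{InfIndPaths} via its second alternative, again giving infinitely many independent $u$--$t$ paths. Part (ii) then reduces immediately to part (iii): by Lemma~\ref{UltrafilterCrit}, every $Y \in \crit(\cX)$ equals $X_\upsilon$ for some $\upsilon \in \Upsilon$, so (iii) applied to $\upsilon$ yields (ii).

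No real obstacle arises; the statement is essentially a direct corollary packaging together Theorem~\ref{InfIndPaths} and Lemma~\ref{UltrafilterCrit} with the trivial observation about finite vertex sets. The only point to be careful about is the edge case where $S$ contains at most one vertex (e.g.\ $\Delta(\omega) = \emptyset$ or $|X_\upsilon| \le 1$), in which the statement is vacuously true and no argument is needed.
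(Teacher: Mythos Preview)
Your proof is correct and follows exactly the intended route: the paper marks this corollary with a bare \qed immediately after Theorem~\ref{InfIndPaths}, treating it as an immediate consequence, and your argument spells out precisely that derivation. The reduction of (ii) to (iii) via Lemma~\ref{UltrafilterCrit} and the pigeonhole observation that a finite $X$ cannot meet all of infinitely many internally disjoint paths are just what the author has in mind.
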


\begin{lemma}
There exists a connected infinite graph $G$ such that $\crit(\cX)$ is an infinite chain with respect to inclusion.
\end{lemma}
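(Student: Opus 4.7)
The plan is to exhibit $G$ explicitly. Take a spine of vertices $(x_n)_{n\in\N}$ connected by edges $x_n x_{n+1}$, and for each $n\in\N$ attach infinitely many pendant vertices $y_{n,k}$ ($k\in\N$), where each $y_{n,k}$ has neighbourhood in $G$ precisely $\{x_0,\ldots,x_n\}$. The spine makes $G$ connected.

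I will then show that the critical sets of $G$ are exactly $X_n:=\{x_0,\ldots,x_n\}$ for $n\in\N$, which visibly form an infinite ascending chain under inclusion. The easy direction is that each $X_n$ is critical: the pendants $y_{n,0},y_{n,1},\ldots$ are singleton components of $G-X_n$ each with neighbourhood exactly $X_n$, witnessing $|\cC_{X_n}(X_n)|\ge\aleph_0$.

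The substantial direction is that no other $Y\in\cX$ is critical. Fix such a $Y$ and let $M$ be the largest index $n$ appearing in $Y$, meaning $x_n\in Y$ or $y_{n,k}\in Y$ for some $k$. The key observation is that any pendant $y_{M+1,k}$ with $k$ large enough to avoid $Y$ is adjacent to every $x_i$ with $i\le M+1$, so it simultaneously reaches each $x_i\notin Y$ with $i\le M+1$ and each $x_j\notin Y$ with $j>M$ (via the spine through $x_{M+1}$). This glues every $x_i\notin Y$, and every pendant $y_{n,k}\notin Y$ with $\{x_0,\ldots,x_n\}\not\subseteq Y$, into a single ``big'' component $B$ of $G-Y$. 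The only remaining components are singletons $\{y_{n,k}\}$ with $y_{n,k}\notin Y$ and $\{x_0,\ldots,x_n\}\subseteq Y$, whose neighbourhood in $G$ is $\{x_0,\ldots,x_n\}$. For such a singleton to contribute to $\cC_Y(Y)$ we need $Y=\{x_0,\ldots,x_n\}$, forcing $Y=X_n$. Since $B$ alone is just one component, no further $Y$ can have $|\cC_Y(Y)|$ infinite.

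The main obstacle is the ``gluing'' step: carefully verifying that for a generic finite $Y$ not of the form $X_n$, the high-level pendants $y_{M+1,k}\notin Y$ really do merge every non-trapped vertex into one component, ruling out any infinite family of medium-sized components with neighbourhood exactly $Y$. This reduces to a short case distinction on whether a given vertex of $G-Y$ is a spine vertex or a pendant, and whether its neighbourhood is fully trapped in $Y$; once this is in hand, the classification of $\crit(\cX)$ as the chain $\{X_n:n\in\N\}$ is immediate.
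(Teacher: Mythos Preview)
Your construction is essentially the paper's: the paper takes the same spine $A=\{a_n\}$ and pendant sets $B_n$ with $N(b)=\{a_0,\ldots,a_n\}$ for $b\in B_n$, and shows $\crit(\cX)=\{X_n:n\in\N\}$ by the same component analysis. The only difference is that you add explicit spine edges $x_nx_{n+1}$ for connectivity, whereas the paper gets connectivity for free from the bipartite structure (each $b\in B_{n+1}$ already links $a_n$ to $a_{n+1}$); this is cosmetic and does not affect the argument.
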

\begin{proof}
Put $A=\{a_n\,|\,n\in\N\}$, and let $\{B_n\,|\,n\in\N\}$ be a collection of pairwise disjoint countably infinite sets $B_n$ avoiding $A$.
For every $n\in\N$ write $X_n=\{a_k\,|\,k\le n\}$.
Let $G$ be the graph on $A\cup\bigcup_{n\in\N}B_n$ in which, for every $n\in\N$, each $b\in B_n$ is joined precisely to all $x\in X_n$. 
Then $A$ consists precisely of the vertices of $G$ of infinite degree (every $b\in B_n$ has degree $n+1$), and for every $n\in\N$ we have $X_n\in\crit(\cX)$ witnessed by $\cC_{X_n}(X_n)=\{\{b\}\,|\,b\in B_n\}\cup \{C\}$ where $C$ is the component of $G-X_n$ including all $B_k$ and $a_k$ with $k>n$. 
Every finite $X\subseteq A$ which is distinct from all $X_n$ misses some $a_k$ for some $k<\max\{n\in\N\,|\,a_n\in X\}$. Thus $\cC_X(X)$ is empty, so $X$ is not in $\crit(\cX)$. In particular, we have $\crit(\cX)=\{X_n\,|\,n\in\N\}$.
\end{proof}

\begin{lemma}\label{neverFinished}
Let $X\in\cX$ and $\cC\subseteq\cC_X$ be given, and let $u\in X$ send infinitely many edges to $\bigcup\cC$. Then there is some $\xi\in\cO_{\cU}(X,\cC)$ with $u\between\xi$.
\end{lemma}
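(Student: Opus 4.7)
The idea is to build $\xi$ as an inverse limit of ultrafilters whose chosen sets at each stage are forced to be adjacent to $u$ along edges of $\bigcup\cC$. The subfamily $\cX_0:=\{Y\in\cX\mid X\subseteq Y\}$ is cofinal in $\cX$, so by Lemma~\ref{BibleInvLimCofinal} it suffices to construct a compatible family $(\xi_Y)_{Y\in\cX_0}$. For each $Y\in\cX_0$ define
\[
\cD_Y:=\bigl\{D\in\cC_Y\,\big|\,D\subseteq\medcup\cC\text{ and }u\text{ has a neighbour in }D\bigr\}.
\]
Since $u$ has infinitely many neighbours in $\medcup\cC$ while $Y$ is finite, at least one such neighbour $v$ lies outside $Y$. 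The component $D$ of $G-Y$ containing $v$ is a connected subgraph of $G-X$ meeting the component $C\in\cC$ of $G-X$ through $v$, so $D\subseteq C\subseteq\medcup\cC$. Hence $D\in\cD_Y$, and in particular $\cD_Y$ is non-empty.

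Next I would set $\cA_Y:=\cO_{\cU_Y}(\cD_Y)$, a non-empty clopen (hence compact Hausdorff) subspace of $\cU_Y$, and check that the bonding maps $f_{Y',Y}$ restrict to maps $\cA_{Y'}\to\cA_Y$ for $Y\subseteq Y'$ in $\cX_0$. For $D'\in\cD_{Y'}$ the component $\phi_{Y',Y}(D')$ of $G-Y$ contains $D'$ (so it still has $u$ as a neighbour) and, being a connected subset of $V\setminus X$ meeting $C$, it is contained in $C\subseteq\medcup\cC$. Therefore $\cD_{Y'}\rest Y\subseteq\cD_Y$, which places $\cD_Y$ in the up-closure defining $f_{Y',Y}(U')$ whenever $\cD_{Y'}\in U'$. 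Lemma~\ref{InvLimCptHD} then yields a non-empty inverse limit, from which any element extends cofinally to an element $\xi\in\cU$ satisfying $\cD_Y\in\xi_Y$ for every $Y\in\cX_0$.

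It remains to verify the two required properties. First, $X\in\cX_0$ and every $D\in\cD_X\in\xi_X$ is itself a component of $G-X$ contained in $\medcup\cC$, hence $D\in\cC$; so $\cD_X\subseteq\cC$ and the ultrafilter $\xi_X$ contains $\cC$, witnessing $\xi\in\cO_{\cU}(X,\cC)$. Second, for $u\between\xi$ I must show that an arbitrary basic neighbourhood $\cO_G(u,\epsilon)$ of $u$ meets an arbitrary basic neighbourhood $\cO_{\TC}(Y,\cD)$ of $\xi$. Enlarging $Y$ to $Y\cup X\in\cX_0$ and passing to a witness of $\cD\in\xi_Y$ via Lemma~\ref{obviousLemma} combined with the monotonicity of $\cO_{\TC}$ in its second argument, I may assume $Y\in\cX_0$ and $\cD\in\xi_Y$. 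Then $\cD\cap\cD_Y\in\xi_Y$ is non-empty; any $D\in\cD\cap\cD_Y$ contains a neighbour $v$ of $u$, and since $u\in Y$ the edge $e=uv$ is a $Y$--$\medcup\cD$ edge, so its inner points within distance $\epsilon$ of $u$ lie in $\cO_G(u,\epsilon)\cap\mathring E(Y,\medcup\cD)\subseteq\cO_G(u,\epsilon)\cap\cO_{\TC}(Y,\cD)$. The main obstacle is the bookkeeping in this final reduction to $Y\in\cX_0$; everything else is a routine application of the compact-Hausdorff inverse-limit machinery already set up in Section~\ref{SummaryEndsAndTangles}.
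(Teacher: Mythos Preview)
Your argument is correct and takes a genuinely different route from the paper's proof. The paper argues by contradiction: assuming no such $\xi$ exists, it covers $\TC$ by open sets chosen so that each midpoint $m(e)$ of an edge $e\in E(u,\bigcup\cC)$ is covered only by $\mathring e$; since there are infinitely many such edges, no finite subcover exists, contradicting compactness of $\TC$. Your proof is instead a direct construction: you identify, for each $Y\supseteq X$, the subset $\cD_Y$ of components adjacent to $u$ inside $\bigcup\cC$, note that the clopen sets $\cO_{\cU_Y}(\cD_Y)$ form a non-empty compact Hausdorff inverse system, and extract $\xi$ from its limit. The paper's approach is shorter and piggybacks on the (already non-trivial) compactness of $\TC$; your approach is more explicit, uses only compactness of the individual $\cU_Y$, and pins down what $\xi$ actually looks like (its $Y$-coordinate always contains the components neighbouring $u$).

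One minor imprecision: the basic open neighbourhoods of $u$ in the $1$-complex topology are the sets $\bigcup_{e\in E(u)}[u,j_e)$ with arbitrary $j_e\in\mathring e$, not just the uniform balls $\cO_G(u,\epsilon)$. This does not affect your argument, since once you have produced a single edge $e=uv$ with $v\in D\in\cD\cap\cD_Y$, any open neighbourhood of $u$ contains an initial segment of $e$, which meets $\mathring E(Y,\bigcup\cD)\subseteq\cO_{\TC}(Y,\cD)$. But you should phrase the final step for an arbitrary open neighbourhood of $u$ rather than for $\cO_G(u,\epsilon)$.
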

\begin{proof}
Assume for a contradiction that no such $\xi$ exists. We will construct a cover of $\TC$ consisting of open sets which does not admit a finite subcover. For this, we cover $\TC$ as follows: 

For $t\in V(G)$ we pick $\cO_G(t,1/2)$.

For every edge $e$ we chose $\mathring{e}$.

For each $\upsilon\in \cO_{\cU}(X,\cC_X\setminus\cC)$ we pick $\cO_{\TC}(X,\cC_X\setminus\cC)$.

For each $\omega\in\Omega\cap\cO_{\cU}(X,\cC)$ we have $u\notin\Delta(\omega)$ since $u\between\omega$ fails. Thus we find some $X(\omega)\in\cX$ with $u$ and $\omega$ living in distinct components of $G-X(\omega)$, and we choose $\hat{C}_{\TC}(X(\omega),\omega)$ which avoids $\mathring{E}(u)$.

For every $\upsilon\in\Upsilon\cap\cO_{\cU}(X,\cC)$ we have $u\notin X_\upsilon$ by Lemma~\ref{Xtau} since $u\between\upsilon$ fails.
Then we choose $\cO_{\TC}(X\cup X_\upsilon,\cC_{X\cup X_\upsilon}(X_\upsilon))$ (which contains $\upsilon$ by Lemma~\ref{New36} and avoids $\mathring{E}(u)$ due to $u\in X\setminus X_\upsilon$).
This completes the construction of the cover.

Since every $m(e)$ with $e\in E(u,\bigcup\cC)$ is covered only by $\mathring{e}$, our cover of $\TC$ has no finite subcover, contradicting the compactness of $\TC$.
\end{proof}

\begin{lemma}\label{InfDegree} 
The vertices in $V(\cU)=V(\Omega)\cup\bigcup\crit(\cX)$ are precisely the vertices of infinite degree.
\end{lemma}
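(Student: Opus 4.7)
The plan is to prove both inclusions separately. The easy direction $V(\cU)\subseteq\{v\in V(G):\deg(v)=\infty\}$ I handle by cases on the two summands in the decomposition $V(\cU)=V(\Omega)\cup V(\Upsilon)=V(\Omega)\cup\bigcup\crit(\cX)$ (the second equality by Lemma~\ref{UltrafilterCrit}). If $u\in\Delta(\omega)$ for some end $\omega$ had finite degree, then $X:=N_G(u)$ would be a finite subset of $V(G-u)$ leaving $u$ isolated in $G-X$; hence $X$ would separate $u$ from every ray of $G$ and in particular from each ray in $\omega$, contradicting the definition of domination. If instead $u\in X$ for some $X\in\crit(\cX)$, then $\cC_X(X)$ is infinite by definition, and since every $C\in\cC_X(X)$ satisfies $N(C)=X\ni u$, the vertex $u$ sends at least one edge to each such $C$ and therefore has infinite degree.

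For the reverse inclusion, let $u\in V(G)$ have infinite degree. I apply Lemma~\ref{neverFinished} with $X:=\{u\}$ and $\cC:=\cC_{\{u\}}$: then $u\in X$ and all of $u$'s infinitely many edges go into $\bigcup\cC=V(G)\setminus\{u\}$, so there exists some $\xi\in\cO_{\cU}(\{u\},\cC_{\{u\}})\subseteq\cU$ with $u\between\xi$. If $\xi\in\Upsilon$, then Lemma~\ref{Xtau} yields $u\in X_\xi$, and Lemma~\ref{New36} (applied with $X=X_\xi$) gives $X_\xi\in\crit(\cX)$; in particular $u\in V(\Upsilon)\subseteq V(\cU)$.

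If instead $\xi=\omega\in\Omega$, I claim $u\in\Delta(\omega)$. Suppose otherwise; then some finite $X\subseteq V(G-u)$ separates $u$ from every ray in $\omega$, so $u\notin X$ and the component $C_u$ of $G-X$ containing $u$ is distinct from $C(X,\omega)$. Then every edge incident to $u$ has its other endvertex in $X\cup V(C_u)$, so none of them is an $X$--$C(X,\omega)$ edge or lies inside $C(X,\omega)$. Consequently the basic open neighborhoods $\cO_G(u,1)$ of $u$ and $\cO_{\TC}(X,\{C(X,\omega)\})$ of $\omega$ are disjoint, contradicting $u\between\omega$. Thus $u\in\Delta(\omega)\subseteq V(\Omega)\subseteq V(\cU)$, completing the proof.

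The main obstacle is this last step: the preceding subsection characterizes $\between$-neighbors of a tangle combinatorially only for ultrafilter tangles (Lemma~\ref{Xtau}), so for end tangles I must exhibit the disjoint basic open neighborhoods by hand. Everything else reduces to direct applications of Lemmas~\ref{neverFinished}, \ref{Xtau}, \ref{New36} and \ref{UltrafilterCrit} together with the definition of domination.
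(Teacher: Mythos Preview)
Your proof is correct and follows essentially the same route as the paper's: both invoke Lemma~\ref{neverFinished} to obtain some $\xi\in\cU$ with $u\between\xi$ and then split into the cases $\xi\in\Upsilon$ (handled via Lemma~\ref{Xtau}) and $\xi\in\Omega$. The only difference is that you spell out the easy direction and the implication ``$u\between\omega\Rightarrow u\in\Delta(\omega)$'' explicitly, whereas the paper records both as immediate.
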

\begin{proof}
By Lemma~\ref{UltrafilterCrit} we have $V(\cU)=V(\Omega)\cup\bigcup\crit(\cX)$. Clearly, every vertex in $V(\cU)$ has infinite degree. Conversely, for every vertex $u$ of infinite degree there is some $\upsilon\in\cU$ with $u\between\upsilon$ by Lemma~\ref{neverFinished}. If $\upsilon$ is an end, then $u\in\Delta(\upsilon)\subseteq V(\cU)$ follows. Otherwise $\upsilon$ is an ultrafilter tangle, and Lemma~\ref{Xtau} yields $u\in X_\upsilon\subseteq V(\cU)$.
\end{proof}

\begin{corollary}\label{TCHausdorffIff}
The tangle compactification of $G$ is Hausdorff if and only if $G$ is locally finite.
\end{corollary}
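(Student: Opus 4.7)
The plan is to handle the two implications separately, the forward one by contrapositive via the two preceding lemmas, and the backward one by a routine case analysis in which only one case uses local finiteness.

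For the forward implication, I argue by contrapositive. Suppose $G$ has a vertex $u$ of infinite degree. Lemma~\ref{InfDegree} then gives $u\in V(\Omega)\cup\bigcup\crit(\cX)$. If $u$ dominates some end $\omega$, then for every basic open $\cO_{\TC}(X,\cC)$ containing $\omega$ either $u\in X$ and $u$ sends infinitely many edges to $C(X,\omega)\in\cC$---so inner edge points of these edges accumulate at $u$ inside every $\cO_G(u,\epsilon)$---or $u$ lies in $C(X,\omega)\subseteq\cO_{\TC}(X,\cC)$ itself; either way $\cO_G(u,\epsilon)$ meets $\cO_{\TC}(X,\cC)$, so $u\between\omega$. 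If instead $u\in\bigcup\crit(\cX)$, then by Lemma~\ref{UltrafilterCrit} some critical set equals $X_\upsilon$ for an ultrafilter tangle $\upsilon$ containing $u$, and Lemma~\ref{Xtau} directly gives $u\between\upsilon$. In both subcases $\TC$ is not Hausdorff.

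For the backward implication, assume $G$ is locally finite and let $x\neq y$ in $\TC$. I would split into the obvious cases. Two inner edge points are separated by disjoint open subintervals of their edges; a vertex and an inner edge point are separated by $\cO_G(v,\epsilon)$ and a short subinterval; two distinct vertices $v,w$ are separated by $\cO_G(v,1/3)$ and $\cO_G(w,1/3)$, which are disjoint because any shared edge is exhausted from both ends only beyond its midpoint; two distinct tangles $\tau,\tau'$ differ on some $\cU_X$, so picking $\cC\in U(\tau,X)$ with $\cC_X\setminus\cC\in U(\tau',X)$ yields disjoint basic opens $\cO_{\TC}(X,\cC)$ and $\cO_{\TC}(X,\cC_X\setminus\cC)$. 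The decisive case is a vertex $u$ versus a tangle $\tau$: here local finiteness makes $X:=\{u\}\cup N(u)$ finite, so $X\in\cX$; picking any $\cC\in U(\tau,X)$ gives $\tau\in\cO_{\TC}(X,\cC)$, while all edges incident to $u$ have both endvertices in $X$, so they contribute no inner points to $\mathring{E}(X,\bigcup\cC)$ and $u\notin\bigcup\cC$. Hence $\cO_G(u,1)$ and $\cO_{\TC}(X,\cC)$ are the required disjoint neighbourhoods.

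The only real obstacle is the vertex-versus-tangle case, and it is precisely there that local finiteness is indispensable: without the assumption on $u$, the set $\{u\}\cup N(u)$ need not be in $\cX$, and by the forward direction a vertex of infinite degree is in fact topologically inseparable from some tangle. Everything else is elementary unfolding of the definitions of $\cO_G(\cdot,\epsilon)$ and $\cO_{\TC}(X,\cC)$.
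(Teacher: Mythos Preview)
Your proof is correct. The forward direction is essentially the paper's argument in contrapositive form: the paper observes that Hausdorffness forces $V(\Omega)=\emptyset$ and, via Lemma~\ref{Xtau}, each $X_\upsilon=\emptyset$, whence Lemma~\ref{InfDegree} gives local finiteness; you start from a vertex of infinite degree, invoke Lemma~\ref{InfDegree} to place it in $V(\Omega)\cup\bigcup\crit(\cX)$, and then produce the witnessing $\between$-pair. Same ingredients, same logic.

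The backward direction is where the approaches diverge. The paper dispatches it in one line by quoting that for locally finite $G$ the tangle compactification coincides with $|G|$, which is Hausdorff. You instead verify Hausdorffness directly by a case split, with the only nontrivial case---vertex versus tangle---handled by the observation that $X=\{u\}\cup N(u)\in\cX$ isolates $u$ from every $\cO_{\TC}(X,\cC)$. Your route is more elementary and self-contained: it does not appeal to the identification with the Freudenthal compactification (which in the cited form assumes connectedness), and it makes transparent exactly where local finiteness enters. The paper's route is shorter but leans on an external result. One minor omission: you do not list the inner-edge-point-versus-tangle case, but it follows by the same device with $X=\{x,y\}$ for the edge $e=xy$, and does not require local finiteness.
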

\begin{proof}
If $G$ is locally finite, then the tangle compactification coincides with $|G|$ which is Hausdorff. Conversely, if the tangle compactification is Hausdorff, then clearly $V(\Omega)$ is empty, and by Lemma~\ref{Xtau} we know that for each ultrafilter tangle its set $X_\upsilon$ must be empty.
Corollary~\ref{InfDegree} then yields that $G$ is locally finite.
\end{proof}

\newpage
\subsection{ETop as a quotient}\label{ETopAsQuotient}

The coarsest topology for arbitrary infinite graphs known to be of any use is \textsc{ETop}. In general, it is impossible to obtain \textsc{ETop} from \textsc{VTop} as a quotient. Indeed, suppose that $G$ is a $K_{1,\aleph_0}$. Since this graph has no ends and every two vertices are finitely separable, no two points of $G$ may be identified. 
But for this $G$, the topology \textsc{VTop} does not coincide with \textsc{ETop}, since the set of all leaves of $G$ is closed in \textsc{VTop} but not in \textsc{ETop}.
Now if we consider the tangle compactification of $G\simeq K_{1,\aleph_0}$ instead of \textsc{VTop} and identify all of the (ultrafilter) tangles of $G$ with the center vertex, the resulting space is homeomorphic to \textsc{ETop}.\footnote{Indeed, an open neighbourhood of the center vertex and all ultrafilter tangles may miss at most finitely many leaves, since otherwise we could construct an ultrafilter tangle outside of that neighbourhood.}

In this chapter we generalise the common notion of `finitely separable' from vertices to $\aleph_0$-tangles in order to define an equivalence relation on the tangle compactification whose quotient we show to be homeomorphic to $\cE G$.\\

If $x$ and $y$ are two points of $V\cup\cU$ and $F$ is a finite cut of $G$ with sides $V_1$ and $V_2$ such that $x\in\overline{G[V_1]}$ and $y\in\overline{G[V_2]}$, then we call $x$ and $y$ \textit{finitely separable}.
If $x$ and $y$ are both vertices of $G$, then by Lemma~\ref{easyJAL} this definition coincides with the standard definition of `finitely separable' for vertices.
Letting $x\sim y$ whenever $x$ and $y$ are not finitely separable defines an equivalence relation on $V\cup\cU$:\index{finitely separable, ${\sim}$}
\begin{lemma}
The relation $\sim$ is transitive.
\end{lemma}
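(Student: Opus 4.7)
The plan is to deduce transitivity directly from Lemma~\ref{easyJAL}, which tells us that for any finite cut $F$ with sides $V_1$ and $V_2$ we have the clean decomposition
\begin{align*}
\TC\setminus\mathring F=\overline{G[V_1]}\uplus\overline{G[V_2]}
\end{align*}
with the closures taken in the tangle compactification. The point is that a point of $V\cup\cU$ is not an inner edge point, so it cannot lie in $\mathring F$, hence it must lie in exactly one of the two sides.

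Suppose $x\sim y$ and $y\sim z$, and assume for a contradiction that $x$ and $z$ are finitely separable. Then there is a finite cut $F\subseteq E(G)$ with sides $V_1,V_2$ such that $x\in\overline{G[V_1]}$ and $z\in\overline{G[V_2]}$. Since $y\in V\cup\cU$ and the cut $F$ is finite, $y\notin\mathring F$, so by Lemma~\ref{easyJAL} the point $y$ lies in exactly one of $\overline{G[V_1]}$ and $\overline{G[V_2]}$. If $y\in\overline{G[V_1]}$, then the same cut $F$ (with the roles of the sides read as $y\in\overline{G[V_1]}$ and $z\in\overline{G[V_2]}$) witnesses that $y$ and $z$ are finitely separable, contradicting $y\sim z$. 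If instead $y\in\overline{G[V_2]}$, then $F$ witnesses that $x$ and $y$ are finitely separable, contradicting $x\sim y$. Either way we have a contradiction, so $x\sim z$.

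I do not expect any genuine obstacle here: the whole argument is essentially a one-line consequence of Lemma~\ref{easyJAL}, and the only thing one must remember is that points of $V\cup\cU$ avoid inner edge points, which is why the disjoint decomposition forces $y$ onto one definite side of every finite cut.
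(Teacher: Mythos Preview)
Your proof is correct and follows essentially the same approach as the paper's own proof: assume $x\not\sim z$, take a witnessing finite cut, use Lemma~\ref{easyJAL} to place $y$ on exactly one side, and derive a contradiction with either $x\sim y$ or $y\sim z$. The paper's version is terser but the argument is identical.
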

\begin{proof}
Given $x,y,z\in V\cup\cU$ with $x\sim y$ and $y\sim z$ we have to show $x\sim z$.
Assume for a contradiction that $x\not\sim z$ holds, witnessed by some finite cut $F$ of $G$ with sides $V_1$ and $V_2$. By Lemma~\ref{easyJAL} we have precisely one of $y\in\overline{G[V_1]}$ and $y\in\overline{G[V_2]}$, and each case contradicts one of $x\sim y$ and $y\sim z$.
\end{proof}

Hence $\Gq:=\TC/{\sim}$ is well defined, and it is a compact space since $\TC$ is. Furthermore, we know that\index{$\Gq$}

\begin{obs}
If $G$ is locally finite and connected, then $|G|=\TC=\Gq$.
\end{obs}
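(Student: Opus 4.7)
The plan is to establish the two equalities separately. The first, $|G|=\TC$, is immediate from part~(ii) of the main result of \cite{EndsAndTangles} quoted at the end of Section~\ref{SummaryEndsAndTangles}: for locally finite connected graphs the tangle compactification coincides with the Freudenthal compactification. So the entire work lies in verifying $\TC=\Gq$.

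For this, it suffices to prove that $\sim$ is the identity relation on $V\cup\cU$; the convention for $B/R$ recorded in Section~\ref{Basics} then gives $\Gq=\TC$ as sets, and the quotient topology by an identity relation returns the original topology, so the equality holds also as topological spaces. Since the same theorem forces $\Upsilon=\emptyset$ when $G$ is locally finite and connected, we have $\cU=\Omega$, and it remains to show that any two distinct points of $V\cup\Omega$ are finitely separable. In each of the three cases below, Lemma~\ref{easyJAL} provides $\overline{G[V_1]}\uplus\overline{G[V_2]}=\TC\setminus\mathring{F}$ as soon as the chosen cut $F$ is finite, so the witnessing separation $x\in\overline{G[V_1]}$, $y\in\overline{G[V_2]}$ follows immediately from placing $x$ and $y$ on opposite sides.

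For two distinct vertices $u\neq v$ I take $F:=E(u)$, which is finite by local finiteness; the sides are $V_1=\{u\}$ and $V_2=V\setminus\{u\}$, and trivially $u\in\overline{G[V_1]}$ and $v\in\overline{G[V_2]}$. For a vertex $u$ and an end $\omega$ the same $F:=E(u)$ works, because some ray of $\omega$ contains a tail in $V\setminus\{u\}$, putting $\omega\in\overline{G[V_2]}$. For two distinct ends $\omega\neq\omega'$ I pick a finite $X\in\cX$ with $C:=C(X,\omega)\neq C(X,\omega')$ and take $F$ to be the set of edges between $V(C)$ and $V\setminus V(C)$; every edge of $F$ has one endvertex in $V(C)$ and the other in $X$, so $F$ is finite because $X$ is finite and $G$ is locally finite. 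The sides are then $V_1=V(C)$ and $V_2=V\setminus V(C)\supseteq V(C(X,\omega'))$, giving $\omega\in\overline{G[V_1]}$ and $\omega'\in\overline{G[V_2]}$.

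I do not anticipate any real obstacle. The proof rests on two easy observations: in a locally finite graph every finite vertex separator induces a finite edge cut, and distinct ends of any graph are separated by a finite vertex set. The one piece of bookkeeping to be careful with is checking that the two sides of each chosen cut genuinely contain $x$ and $y$ in their closures, for which Lemma~\ref{easyJAL} does essentially all of the work.
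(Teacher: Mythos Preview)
Your proof is correct. The paper states this observation without proof, so there is no argument to compare against; your approach---invoking \cite[Theorem~1(ii)]{EndsAndTangles} for $|G|=\TC$ and then showing $\sim$ is the identity on $V\cup\Omega$ via explicit finite cuts---is a clean and complete justification of what the paper leaves implicit.
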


The next few technical Lemmas already give a hint that $\Gq$ and $\cE G$ might be related:

\begin{lemma}\label{Tripartition}
Let $F$ be a finite cut of $G$ with sides $V_1$ and $V_2$, and let $\Lambda$ consist of precisely one inner edge point $\lambda_e$ from each $e\in F$.
If this Lemma is applied outside the context of any $\Lambda$, then we tacitly assume $\Lambda=\{m(e)\,|\,e\in F\}$.
Put 
\begin{align*}
O_i=\overline{G[V_i]}\cup\bigcup_{\substack{e=xy\in F\\x\in V_i}}[x,\lambda_e)
\end{align*}
for both $i=1,2$ (with the closures taken in $\TC$). Then both $O_i$ are $\sim$-closed open sets and we have $\TC=O_1\uplus\Lambda\uplus O_2$ as well as $\Gq=(O_1/{\sim})\uplus\Lambda\uplus (O_2/{\sim})$.
\end{lemma}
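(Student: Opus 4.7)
The plan is to verify the four assertions in order: the partition $\TC = O_1 \uplus \Lambda \uplus O_2$, the openness of each $O_i$, their $\sim$-closedness, and the quotient decomposition as a formal consequence.

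For the partition, Lemma~\ref{easyJAL} supplies $\overline{G[V_1]} \uplus \overline{G[V_2]} = \TC \setminus \mathring{F}$, and each cut edge $e = xy \in F$ with $x \in V_1$, $y \in V_2$ splits its interior into the $V_1$-side piece $[x, \lambda_e) \cap \mathring{e}$ contained in $O_1$, the midpoint $\lambda_e \in \Lambda$, and the $V_2$-side piece $[y, \lambda_e) \cap \mathring{e}$ contained in $O_2$. Collecting these pieces with the respective $\overline{G[V_i]}$ yields the claimed partition.

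For openness, the plan is to set $X := V[F]$ and, for $i \in \{1, 2\}$, define $\cC_X^i := \{C \in \cC_X : C \subseteq V_i\}$. The key observation is that no edge runs from $X \cap V_j$ to $V_i \setminus X$ for $j \neq i$: any such edge would be a cross-edge, hence lie in $F$, hence have both endpoints in $V[F] = X$, a contradiction. Therefore $\mathring{E}(X, \bigcup \cC_X^i)$ consists only of inner points of edges internal to $V_i$, and the proof of Lemma~\ref{easyJAL} identifies the tangles in $\cO_{\cU}(X, \cC_X^i)$ as precisely those in $\overline{G[V_i]}$; hence $\cO_{\TC}(X, \cC_X^i) \subseteq O_i$. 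This supplies a basic neighborhood inside $O_i$ for every tangle in $O_i$. For a vertex $v \in V_i$, pick $\epsilon > 0$ smaller than $d_e(v, \lambda_e)$ for each of the finitely many cut edges $e$ incident with $v$; then $\cO_G(v, \epsilon) \subseteq O_i$. Inner edge points of $G[V_i]$ and those in $[x, \lambda_e) \cap \mathring{e}$ admit small open subintervals inherited from $(0,1)$.

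For $\sim$-closedness and the quotient, note that $O_i \cap (V \cup \cU) = \overline{G[V_i]} \cap (V \cup \cU)$, since inner edge points lie outside the domain of $\sim$. If $x \in O_i \cap (V \cup \cU)$ and $y \sim x$ with $y \in V \cup \cU$, then by Lemma~\ref{easyJAL} $y$ lies in $\overline{G[V_1]}$ or $\overline{G[V_2]}$; the case $y \in \overline{G[V_j]}$ would exhibit $F$ as a finite cut witnessing $x \not\sim y$, a contradiction. Thus $y \in O_i$. Finally, $\Lambda$ consists of inner edge points with trivial $\sim$-classes, so $\Lambda/{\sim} = \Lambda$, and the $\sim$-closedness of each $O_i$ lets the partition of $\TC$ descend to $\Gq = (O_1/{\sim}) \uplus \Lambda \uplus (O_2/{\sim})$. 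The only real obstacle is the choice $X = V[F]$ in the openness step: it automatically prevents any boundary edge of $\cC_X^i$ from being a cut edge, avoiding any need to trim `wrong-side' edge portions from the basic neighborhoods.
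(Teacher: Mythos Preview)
Your proof is correct and more detailed than the paper's, which only addresses openness and leaves the partition, $\sim$-closedness, and quotient decomposition as evident consequences of Lemma~\ref{easyJAL}. The one genuine methodological difference is in the openness step: the paper argues by complement, observing that
\[
\TC\setminus O_2 \;=\; O_1\cup\Lambda \;=\; \overline{G[V_1]}\;\cup\;\bigcup_{\substack{e=xy\in F\\x\in V_1}}[x,\lambda_e]
\]
is a finite union of closed sets (the closure $\overline{G[V_1]}$ together with finitely many closed edge-intervals), hence closed, so $O_2$ is open; symmetrically for $O_1$. This is a one-line argument once the tripartition is in hand. Your approach instead exhibits explicit basic neighbourhoods for every point type in $O_i$, using $\cO_{\TC}(V[F],\cC_{V[F]}^i)$ for tangles. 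Both are valid; the paper's route is quicker, while yours makes the local structure of $O_i$ transparent and could be reused if one later needs those explicit neighbourhoods.
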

\begin{proof}
Since $\overline{G[V_1]}$ and the union of the $[x,\lambda_e]$ with $e=xy\in F$ and $x\in V_1$ are closed, by Lemma~\ref{easyJAL} we know that $O_2$ is open. Similarly, $O_1$ is open.
\end{proof}

\begin{corollary}\label{HD}
$\Gq$ is Hausdorff.\qed
\end{corollary}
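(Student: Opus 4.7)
The plan is to verify the Hausdorff property case by case, with Lemma~\ref{Tripartition} doing the main work. Fix distinct classes $[x]\neq [y]$ in $\Gq$ and write $q\colon\TC\to\Gq$ for the quotient map. Since the relation $\sim$ is defined only on $V\cup\cU$, every inner edge point is its own class in $\Gq$, which simplifies bookkeeping: a subset of $\TC$ is $\sim$-saturated iff its intersection with $V\cup\cU$ is a union of $\sim$-classes.

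First I would handle the principal case in which both $[x],[y]\subseteq V\cup\cU$. Picking representatives $x,y$, the assumption $x\not\sim y$ yields, by definition of $\sim$, a finite cut $F$ of $G$ with sides $V_1,V_2$ such that $x\in\overline{G[V_1]}$ and $y\in\overline{G[V_2]}$. Feeding this $F$ into Lemma~\ref{Tripartition} produces $\sim$-saturated open sets $O_1,O_2$ with $\overline{G[V_i]}\subseteq O_i$ and $\TC=O_1\uplus\Lambda\uplus O_2$. Saturation upgrades $x\in O_1$ and $y\in O_2$ to $[x]\subseteq O_1$ and $[y]\subseteq O_2$, and because $q^{-1}(O_i/{\sim})=O_i$ is open in $\TC$, the images $O_1/{\sim}$ and $O_2/{\sim}$ are disjoint open neighbourhoods of $[x]$ and $[y]$ in $\Gq$.

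Second, I would dispatch the residual case in which at least one of the two classes, say $[y]=\{y\}$, is a single inner edge point of some edge $e$. Choose an open interval $I\subseteq\mathring{e}$ around $y$ whose closure $\bar{I}$ is contained in $\mathring{e}$ and, if $x\in\mathring{E}$, also avoids $x$; this is possible since each edge carries the standard topology of $\I$. Since $\bar{I}$ consists entirely of inner edge points it is a union of singleton $\sim$-classes, hence saturated; consequently $I$ and $\TC\setminus\bar{I}$ are saturated open subsets of $\TC$ and descend to disjoint open neighbourhoods of $[y]$ and $[x]$ in $\Gq$ (the class $[x]$ avoids $\bar{I}$ either because $[x]\subseteq V\cup\cU$ is disjoint from $\mathring{E}\supseteq\bar{I}$, or by the explicit choice of $I$ when $x$ itself is an inner edge point).

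I do not expect a genuine obstacle: the heavy lifting is already packaged inside Lemma~\ref{Tripartition}, and the only delicate point is to check that the separating sets used are $\sim$-saturated, so that they descend under $q$ to open subsets of $\Gq$. Both cases satisfy this automatically.
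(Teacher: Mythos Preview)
Your argument is correct and matches the paper's intent: the corollary is stated with a bare \qed{} immediately after Lemma~\ref{Tripartition}, so the paper regards it as an immediate consequence of that lemma, exactly as you use it for the main case. Your explicit handling of inner edge points is a routine detail the paper leaves implicit.
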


\begin{corollary}\label{ArcEdgesDense}
Let $A\subseteq\Gq$ be an arc. Then $A\cap\mathring{E}$ is dense in $A$.\qed
\end{corollary}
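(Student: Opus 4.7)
The plan is to argue by contradiction: assuming the conclusion fails, I will produce a nontrivial subarc $A'$ of $A$ disjoint from $\mathring{E}$, then use Lemma~\ref{Tripartition} to disconnect it, violating the fact that arcs are connected.

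First I would suppose for contradiction that $A\cap\mathring{E}$ is not dense in $A$. Fixing a homeomorphism $\sigma\colon\I\to A$ and a point of $A$ outside $\overline{A\cap\mathring E}$, I can find a closed interval $[a,b]\subseteq\I$ with $a<b$ such that $\sigma([a,b])\cap\mathring{E}=\emptyset$. Setting $A':=\sigma([a,b])$ yields a subarc of $A$ with two distinct endpoints $\bar x,\bar y\in\Gq$ satisfying $A'\cap\mathring{E}=\emptyset$.

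Next I would identify $\bar x$ and $\bar y$ with finitely separable points of $V\cup\cU$. Since $\sim$ is defined only on $V\cup\cU$, every inner edge point is its own $\sim$-class, so $\bar x,\bar y\notin\mathring{E}$ must come from classes $[x],[y]$ with $x,y\in V\cup\cU$. The distinctness $\bar x\neq\bar y$ forces $x\not\sim y$, i.e.\ $x$ and $y$ are finitely separable, so there is a finite cut $F\subseteq E(G)$ with sides $V_1,V_2$ such that $x\in\overline{G[V_1]}$ and $y\in\overline{G[V_2]}$ (closures taken in $\TC$).

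Finally I would apply Lemma~\ref{Tripartition} to $F$ (with, say, $\Lambda=\{m(e)\mid e\in F\}$) to obtain the decomposition
\[
\Gq=(O_1/{\sim})\uplus\Lambda\uplus (O_2/{\sim})
\]
with both $O_i/{\sim}$ open in $\Gq$, $\bar x\in O_1/{\sim}$ and $\bar y\in O_2/{\sim}$. Because $\Lambda\subseteq\mathring{E}$ and $A'\cap\mathring{E}=\emptyset$, the pair $\{A'\cap(O_1/{\sim}),\,A'\cap(O_2/{\sim})\}$ is an open bipartition of $A'$ separating its two endpoints, contradicting the connectedness of the arc $A'$. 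The only subtlety—really the only step that needs care—is checking that $\sim$ acts trivially on $\mathring{E}$ so that the quotient tripartition still places $\Lambda$ as a separating middle piece; once that is noted, the rest is immediate from the previous lemma.
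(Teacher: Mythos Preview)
Your proof is correct and is exactly the argument the paper has in mind: the corollary is marked with \qed\ because it follows immediately from Lemma~\ref{Tripartition}, and you have simply spelled out that deduction in full. The one ``subtlety'' you flag is already handled by the lemma itself (it states $\Gq=(O_1/{\sim})\uplus\Lambda\uplus(O_2/{\sim})$ with the $O_i$ $\sim$-closed and open), so nothing further is needed.
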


\begin{lemma}\label{Forms}
Let $G$ be any graph with only finitely many components. Then every point of $\Gq$ is exactly of one of the following forms:
\begin{enumerate}
\item an inner edge point;
\item $[u]_\sim$ for some vertex $u$ of $G$;
\item $\{\omega\}$ for some undominated end $\omega$ of $G$.
\end{enumerate}
If $\omega$ is some undominated end of $G$, then $[\omega]_\sim$ is not necessarily of the form \textnormal{(iii)}, e.g. consider the example from Fig.~\ref{fig:NonHD} where it is of the form \textnormal{(ii)}.
\end{lemma}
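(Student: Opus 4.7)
Since the equivalence relation $\sim$ is defined only on $V\cup\cU$, every inner edge point of $\TC$ forms its own $\sim$-class and hence realises form (i). The rest of the proof reduces to showing that every $\sim$-class inside $V\cup\cU$ either meets $V$ (form (ii)) or is a singleton of an undominated end (form (iii)).

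My first step will be to establish the auxiliary implication \emph{$u\between\xi\Rightarrow u\sim\xi$} for any vertex $u\in V$ and tangle $\xi\in\cU$. For this I would invoke Lemma~\ref{Tripartition}: given any finite cut $F$ with sides $V_1,V_2$, the open sets $O_1,O_2$ produced there are disjoint, so if $u$ and $\xi$ lay on opposite sides of $F$ then $\{O_1,O_2\}$ would witness $u\not\between\xi$, a contradiction.

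Using this, I would eliminate ultrafilter tangles and dominated ends from any vertex-free class. For an ultrafilter tangle $\upsilon$, the finite-components hypothesis forces $\cC_\emptyset$ to be finite, making every ultrafilter on $\cC_\emptyset$ principal; hence $\emptyset\notin\cX_\upsilon$ and $X_\upsilon\neq\emptyset$, so Lemma~\ref{Xtau} furnishes some $u\in X_\upsilon$ with $u\between\upsilon$, whence $u\sim\upsilon$. For a dominated end $\omega$ with dominating vertex $u$, the infinite $u$-comb to a ray of $\omega$ shows $u$ sends infinitely many edges into each $C(Y,\omega)$ with $u\in Y$, giving $u\between\omega$ and hence $u\sim\omega$. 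Therefore any vertex-free $\sim$-class consists only of undominated ends.

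It remains to show such a class contains exactly one undominated end, and this is the main obstacle. Suppose for contradiction that $\omega_1\neq\omega_2$ are both in a vertex-free class; I would choose $X\in\cX$ with $C_i:=C(X,\omega_i)$ distinct. The cut $F:=E(X,C_2)$ between $C_2$ and its complement is either finite---directly contradicting $\omega_1\sim\omega_2$---or infinite. In the latter case some $x\in X$ has infinitely many neighbours in $C_2$, and Lemma~\ref{neverFinished} produces a tangle $\eta\in\cO_\cU(X,\{C_2\})$ with $x\between\eta$, hence $x\sim\eta$. The plan is then to deploy a compactness/K\"onig-fan argument inside $C_2$ to upgrade this to $\eta\sim\omega_2$, at which point transitivity would place $x$ into $[\omega_1]_\sim$, the sought contradiction. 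The delicate part of this last step is that Lemma~\ref{neverFinished} guarantees only \emph{some} tangle $\eta$ on the $C_2$-side, and pinning $\eta$ down to be $\sim$-equivalent to $\omega_2$ seems to require a careful accumulation argument exploiting the infinite degree of $x$ into $C_2$ together with the hypothesis $\omega_1\sim\omega_2$.
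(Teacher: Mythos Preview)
Your reduction to the case of a vertex-free $\sim$-class $\xi\subseteq\Omega$ is fine and matches the paper. The gap is in the final step, and it is genuine: your route via Lemma~\ref{neverFinished} and an unspecified ``compactness/K\"onig-fan argument'' to force $\eta\sim\omega_2$ does not close. There is no reason the particular tangle $\eta$ produced by Lemma~\ref{neverFinished} should be $\sim$-equivalent to $\omega_2$; the lemma gives you \emph{some} tangle living in $\cO_\cU(X,\{C_2\})$, but that set contains many tangles in many different $\sim$-classes, and nothing in your argument pins $\eta$ down.

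You are overlooking the direct use of the hypothesis that $\xi$ contains no vertex. The paper exploits this as follows: for each $x\in X$ we have $x\notin\xi$, so $x\not\sim\omega$, which by definition means there is a finite cut $F_x$ of $G$ with sides $A_x\ni x$ and $B_x$ such that $\omega$ lives in $B_x$. Since $X$ is finite, $F:=E\big(\bigcup_x A_x,\bigcap_x B_x\big)\subseteq\bigcup_x F_x$ is again a finite cut. Now $\omega\sim\omega'$ forces both ends to live in the same component $C$ of $G-F$; but $\omega$ lives on the $B$-side, so $V(C)\subseteq\bigcap_x B_x$ avoids $X$, whence $C$ is a connected subgraph of $G-X$ and $C(X,\omega)=C(X,\omega')$, contradicting the choice of $X$. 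This replaces your entire infinite-cut case analysis with a single union of finitely many finite cuts.
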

\begin{proof}
Consider any point $\xi$ of $\Gq$. 

If $\xi$ contains an inner edge point, then $\xi$ is of the form (i). 

If $\xi$ contains a vertex, then $\xi$ is of the form (ii).

If $\xi$ contains some $\upsilon\in\Upsilon$ ,then $\xi$ is of the form (ii) for each $u\in X_\upsilon$ (which is non-empty since $G$ has only finitely many components). 

Finally suppose that $\xi\subseteq \Omega$, and suppose for a contradiction that $\xi$ is not of the form (iii), i.e. that $\xi$ is not a singleton. Pick distinct elements $\omega,\omega'$ of $\xi$ and let $X\in\cX$ witness $\omega\neq\omega'$. 
Then since $X$ avoids $\xi$ there exists for every $x\in X$ some finite cut $F_x$ with sides $A_x,B_x$ such that $x\in A_x$ and $\omega$ lives in $B_x$.
Let $A:=\bigcup_{x\in X}A_x$ and $B:=\bigcap_{x\in X}B_x$ as well as $F:=E(A,B)$ which is a finite cut due to $F\subseteq\bigcup_{x\in X}F_x$.
Now $\omega\sim\omega'$ implies that there is some component $C$ of $G-F$ in which both $\omega$ and $\omega'$ live. Since $\omega$ lives in $B_x$ for every $x\in X$ and $X$ is finite, we know that $V(C)\subseteq B$. In particular, $C$ avoids $X$, and hence is a connected subgraph of $G-X$. But then $C(X,\omega)=C(X,\omega')$ follows, a contradiction.
\end{proof}

\begin{corollary}\label{FinSepClassesMeetX}
Let $G$ be any graph with only finitely many components. Furthermore let $X\in\cX$ be given together with some bipartition $\{\cC,\cC'\}$ of $\cC_X$ and two distinct points $x$ and $y$ of $\TC$ such that $x\in\cO_{\TC}(X,\cC)$ and $y\in\cO_{\TC}(X,\cC')$. If $x\sim y$ then $[x]_\sim$ meets $X$.
\end{corollary}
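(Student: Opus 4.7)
The plan is to derive a contradiction from the assumption $[x]_\sim\cap X=\emptyset$ by combining, one finite cut per $u\in X$, a single finite cut of $G$ that finitely separates $x$ from $y$. For each $u\in X$, $x\not\sim u$ provides a finite cut $F_u$ of $G$ with sides $A_u\ni u$ and $B_u$ such that $x\in\overline{G[B_u]}$. Setting
\[
V_2:=\bigcap_{u\in X}B_u,\qquad V_1:=V(G)\setminus V_2=\bigcup_{u\in X}A_u,\qquad F:=E(V_1,V_2),
\]
one has $X\subseteq V_1$ (so $V_2\cap X=\emptyset$) and $F\subseteq\bigcup_u F_u$ is finite.

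First I verify $x\in\overline{G[V_2]}$ by cases. If $x$ is a vertex, then $x\in B_u$ for every $u$ gives $x\in V_2$; if $x$ lies in an inner edge $\mathring{e}$, then $x\in\overline{G[B_u]}$ forces both endpoints of $e$ into $B_u$ for each $u$, so $e$ is an edge of $G[V_2]$. The tangle case is the key step: any edge of $G[V_1]$ either has both endpoints in a common $A_u$ or lies in $F_u\cap F_{u'}$ for some $u,u'$, whence $G[V_1]\subseteq\bigcup_u G[A_u]\cup\bigcup_u\mathring{F}_u$. Taking closures (using that $\overline{\mathring{e}}=e$, so finite unions of edges are closed in $\TC$) yields $\overline{G[V_1]}\subseteq\bigcup_u\overline{G[A_u]}\cup\bigcup_u F_u$. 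A tangle is not in $\bigcup_u F_u$, and also not in $\mathring{F}\subseteq\bigcup_u\mathring{F}_u$, so by Lemma~\ref{easyJAL} a tangle $x\in\overline{G[V_1]}$ would have to lie in some $\overline{G[A_u]}$, contradicting $x\in\overline{G[B_u]}$. Hence $x\in\overline{G[V_2]}$.

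Next I refine to $V_2':=V_2\cap V[\cC]$, $V_1':=V(G)\setminus V_2'$, and $F':=E(V_1',V_2')$. Since no edge of $G$ joins $V[\cC]$ to $V[\cC']$, $F'\subseteq F$ is a finite cut. To upgrade $x\in\overline{G[V_2]}$ to $x\in\overline{G[V_2']}$, I split $G[V_2]=G[V_2']\uplus G[V_2\cap V[\cC']]$ (no common vertices, no edges between), so $\overline{G[V_2]}=\overline{G[V_2']}\cup\overline{G[V_2\cap V[\cC']]}$; the second summand lies in $\overline{\cO_{\TC}(X,\cC')}$, which is disjoint from the open set $\cO_{\TC}(X,\cC)\ni x$, forcing $x\in\overline{G[V_2']}$.

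Finally, $G[V_2']$ and $\mathring{F'}$ consist only of vertices and edge points with all endpoints in $V[\cC]\cup X$, so both are contained in $\cO_{\TC}(X,\cC)$. Since the latter is an open set disjoint from $\cO_{\TC}(X,\cC')\ni y$, both $\overline{G[V_2']}$ and $\mathring{F'}$ avoid $y$. Lemma~\ref{easyJAL} applied to $F'$ therefore places $y\in\overline{G[V_1']}$, and the finite cut $F'$ finitely separates $x$ from $y$, contradicting $x\sim y$. The main obstacle in this plan is the tangle case in the verification that $x\in\overline{G[V_2]}$, which calls for a topological dissection of $G[V_1]$ rather than a direct set-theoretic argument.
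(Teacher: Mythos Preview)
Your proof is correct. The overall architecture---one finite cut $F_u$ per $u\in X$, combined into a single cut, then refined by the bipartition $\{\cC,\cC'\}$---matches the paper's. The difference lies in what object you separate from $X$. The paper first invokes Lemma~\ref{Forms} to pick a \emph{vertex} $u\in[x]_\sim$; it then builds the cuts around $u$ rather than $x$, and takes the component $D$ of $G-F$ containing $u$ (automatically inside a single $C\in\cC$) in place of your $V_2'=V_2\cap V[\cC]$. Working with a vertex makes membership in sides of cuts trivial and removes the need for your case split and the topological dissection of $\overline{G[V_1]}$ in the tangle case.

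Your route, by contrast, works directly with $x$ and so has to establish $x\in\overline{G[V_2]}$ when $x$ is a tangle via the inclusion $G[V_1]\subseteq\bigcup_u G[A_u]\cup\bigcup_u\mathring{F}_u$ and Lemma~\ref{easyJAL}. That argument is sound. One cosmetic point: the inner-edge-point case in your analysis is vacuous, since the hypothesis $x\sim y$ with $x\neq y$ already forces $x,y\in V\cup\cU$ (the relation $\sim$ is only nontrivial there); you could drop that case and note this at the outset. Either way, both proofs reach the same finite cut witnessing $x\not\sim y$, the paper's just with less work by leaning on Lemma~\ref{Forms}.
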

\begin{proof}
Assume for a contradiction that $\xi:=[x]_\sim$ avoids $X$.
By Lemma~\ref{Forms} there is some vertex $u$ of $G$ in $\xi$, and since $\xi$ avoids $X$ there is some $C\in\cC_X$ with $u\in C$, without loss of generality with $C\in\cC$.
Now $X\cap\xi=\emptyset$ implies that
there exists for every $t\in X$ some finite cut $F_t$ of $G$ with sides $A_t$ and $B_t$ such that $t\in A_t$ and $u\in B_t$. 
Put $A=\bigcup_{t\in X}A_t$ and $B=\bigcap_{t\in X}B_t$, and let $F$ be the finite cut $E(A,B)$.
Let $D$ be the component of $G-F$ containing $u$.
Then $D\subseteq G[B]$ is a connected subgraph of $G$ avoiding $X\subseteq A$ and $D$ contains $u$, so we have $u\in D\subseteq C$.
 Next consider the finite bond $B:=E(V\setminus D,D)\subseteq F$.
Then $\bigcup\cC'\subseteq V\setminus D$ (due to $D\subseteq C\in\cC$) together with $y\in\cO_G(X,\cC')$ implies $y\notin\overline{G[D]}$, and hence Lemma~\ref{easyJAL} yields $y\in\overline{G[V\setminus D]}$. Thus $B$ witnesses $u\not\sim y$, a contradiction.
\end{proof}

\begin{center}
\textit{For the rest of this chapter, $G$ is assumed to be connected.}
\end{center}

Next we define an inverse system of multigraphs whose inverse limit is homeomorphic to $\cE G$. For this, we begin by defining the multigraphs:
For every $F\subseteq E(G)$ we denote by $G\boldsymbol{.}F$ the (multi-)graph which is obtained from $G$ by contracting the vertex sets of the components of $G-F$ (therefore turning edges of these components into loops). 
For the rest of this chapter let $\cE:=[E(G)]^{<\aleph_0}$ be ordered by inclusion.
We equip $G\boldsymbol{.}F$ with the topology generated by the following basis: For inner edge points we declare as open the usual neighbourhoods. 
For every vertex $d$ of $G\boldsymbol{.}F$ we declare as open all neighbourhoods of the form
\begin{align}\label{GpBOS}
\{d\}\cup\bigcup_{e\in E_{G\boldsymbol{.}F}(d,G\boldsymbol{.}F-d)}(j_e,d]\cup\bigcup_{e\in L}(e\setminus I_e)\cup \bigcup (\ell(d)\setminus L)
\end{align}
where $\ell(d)$ denotes the set of all loops at $d$, $j_e\in\mathring{e}$ for every $e\in E_{G\boldsymbol{.}F}(d,G\boldsymbol{.}F-d)$, $L\in [\ell(d)]^{<\aleph_0}$ and $I_e$ is some closed interval included in $\mathring{e}$ for every $e\in L$.

For every $F\subseteq F'\in \cE$ we define bonding maps $f_{F',F}\colon G\boldsymbol{.}F'\to G\boldsymbol{.}F$ which are the identity on $\mathring{E}(G\boldsymbol{.}F')=\mathring{E}(G)$ and which send every vertex $d$ of $G.F'$ to the vertex of $G\boldsymbol{.}F$ whose corresponding component of $G-F$ includes the component of $G\boldsymbol{.}F'$ corresponding to $d$.
The following Lemmas are case checking:
\begin{lemma}
The maps $f_{F',F}$ are continuous.\qed
\end{lemma}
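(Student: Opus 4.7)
The plan is to verify continuity directly from the basis given in the definition of the topology on $G\boldsymbol{.}F$. Since preimages commute with unions, it suffices to show that $f_{F',F}^{-1}(U)$ is open in $G\boldsymbol{.}F'$ for every basic open set $U$ in $G\boldsymbol{.}F$. Basic open sets come in two flavours: standard intervals around inner edge points, and neighbourhoods of the form~(\ref{GpBOS}) around vertices.

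For the first flavour, there is essentially nothing to do: every edge of $G$ occurs in both $G\boldsymbol{.}F$ and $G\boldsymbol{.}F'$ (as either a proper edge or a loop), the two multigraphs share the same inner edge point sets, and $f_{F',F}$ acts as the identity on $\mathring{E}(G)$. Hence the preimage of an inner-edge-point neighbourhood is the same interval, still basic open in $G\boldsymbol{.}F'$.

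For the second flavour, let $U$ be a basic open neighbourhood of a vertex $d$ of $G\boldsymbol{.}F$ given by data $(j_e,L,I_e)$, and let $C$ denote the component of $G-F$ corresponding to $d$. Then $f_{F',F}^{-1}(\{d\})$ is precisely the set $D'$ of those vertices of $G\boldsymbol{.}F'$ whose corresponding components of $G-F'$ are contained in $C$, while the preimage of the inner-edge-point content of $U$ is identical to that content. I would fix $d'\in D'$ corresponding to a component $C'\subseteq C$ of $G-F'$ and exhibit a basic open neighbourhood of $d'$ inside $f_{F',F}^{-1}(U)$. First set $L':=L\cap\ell(d')$, which is finite because $L$ is, and take $I'_e:=I_e$ for every $e\in L'$; then each such $e\setminus I'_e$ coincides with $e\setminus I_e\subseteq U$, while any loop in $\ell(d')\setminus L'$ is either a loop in $\ell(d)\setminus L$ (fully contained in $U$) or a loop in $L$ that fails to be a loop at $d'$, so this case does not occur. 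Next, for each edge $e=xy$ with $x\in C'$, $y\notin C'$ (the proper edges at $d'$), I distinguish the subcases $y\in C$ and $y\notin C$: in the first, $e$ is a loop at $d$, so $U$ contains either $e\setminus I_e$ or all of $e$, and I choose $j'_e\in\mathring{e}$ close enough to $x$ to lie past $I_e$; in the second, $e\in F$ is a proper edge at $d$ in $G\boldsymbol{.}F$ with $d$-end at $x$, and I pick $j'_e$ strictly between $j_e$ and $x$. The resulting basic open neighbourhood of $d'$ then lies in $f_{F',F}^{-1}(U)$.

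The only real subtlety I anticipate is precisely the bookkeeping of the last paragraph, namely the possibility that an edge proper at $d'$ in $G\boldsymbol{.}F'$ may have become a loop at $d$ in $G\boldsymbol{.}F$ (when both endpoints lie in $C$ but in distinct sub-components of $G-F'$). Once one observes that this case forces the edge into $\ell(d)$ so that $U$ restricts to either $e\setminus I_e$ or all of $e$ on such edges, one sees that a choice of $j'_e$ near the $x$-end always works, and the construction goes through without further incident.
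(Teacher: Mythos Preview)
Your proposal is correct. The paper itself does not supply a proof: it simply announces ``The following Lemmas are case checking'' and closes the statement with a \qed, so you have in fact written out the routine verification that the paper leaves to the reader, and your case distinction (inner edge points; then for a vertex $d'$ above $d$, splitting the incident edges according to whether they are loops or proper edges at $d'$ and, in the latter case, whether the other end lies in $C$ or not) is exactly the expected one.
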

\begin{lemma}
The maps $f_{F',F}$ are compatible.\qed
\end{lemma}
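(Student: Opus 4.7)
The plan is to unpack the definition of compatibility and verify it directly on the two natural components of the spaces $G\boldsymbol{.}F$, namely the inner edge points and the vertices. Concretely, compatibility asks that $f_{F,F}=\id_{G\boldsymbol{.}F}$ for all $F\in\cE$, and that $f_{F'',F}=f_{F',F}\circ f_{F'',F'}$ whenever $F\subseteq F'\subseteq F''\in\cE$. The identity on $\mathring{E}(G\boldsymbol{.}F)=\mathring{E}(G)$ is immediate since every $f_{\bullet,\bullet}$ is declared to be the identity there, so all maps in play agree on inner edge points. Therefore the only real work is on vertices, and there I expect the argument to reduce to the elementary fact that the components of $G-F$ partition $V(G)\setminus V[F]$.

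First I would prove $f_{F,F}=\id_{G\boldsymbol{.}F}$. On $\mathring{E}(G)$ this is by definition. On a vertex $d$ of $G\boldsymbol{.}F$, corresponding to a component $C$ of $G-F$, the map sends $d$ to the vertex of $G\boldsymbol{.}F$ whose associated component of $G-F$ includes $C$; since $C$ is itself a component of $G-F$, that vertex is $d$ again.

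Next I would verify $f_{F'',F}=f_{F',F}\circ f_{F'',F'}$ on vertices. Let $d''$ be a vertex of $G\boldsymbol{.}F''$ corresponding to the component $C''$ of $G-F''$. Unfolding the definitions, $f_{F'',F'}(d'')$ is the vertex $d'$ of $G\boldsymbol{.}F'$ whose component $C'$ of $G-F'$ contains $C''$, and $f_{F',F}(d')$ is the vertex $d$ of $G\boldsymbol{.}F$ whose component $C$ of $G-F$ contains $C'$. In particular $C''\subseteq C'\subseteq C$, so $C$ is a component of $G-F$ containing $C''$. On the other hand $f_{F'',F}(d'')$ is by definition the unique vertex $\tilde d$ of $G\boldsymbol{.}F$ whose component of $G-F$ contains $C''$. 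Since components of $G-F$ are pairwise disjoint, there can be only one such component, forcing $C=\tilde C$ and hence $d=\tilde d$.

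The main obstacle, if any, is purely notational: one must be careful to distinguish between a vertex of a contracted multigraph and the component of the original graph it represents, and to remember that loop edges (arising from edges inside a component of $G-F$) still live in $\mathring{E}(G)$ under the identification $\mathring{E}(G\boldsymbol{.}F)=\mathring{E}(G)$, so they are handled uniformly with the non-loop edges in the inner-edge-point case. No analytic input is needed; the verification is combinatorial and purely a matter of tracking which component of which graph each vertex represents.
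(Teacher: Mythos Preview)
Your proposal is correct and follows the natural approach; the paper itself gives no proof at all (the lemma is stated with an immediate \qed), so you are simply spelling out the routine verification the paper leaves to the reader. One small slip: the components of $G-F$ partition all of $V(G)$, not $V(G)\setminus V[F]$, since $F$ is a set of edges; but you do not actually use the erroneous version in your argument, only the correct fact that components are pairwise disjoint.
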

\begin{obs}\label{GpHD}
The $G\boldsymbol{.}F$ are compact and Hausdorff.
\end{obs}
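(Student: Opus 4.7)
Both claims reduce to a careful reading of the basis described in (\ref{GpBOS}). I would start by recording the key finiteness: since $G$ is connected and $F$ is finite, every component of $G-F$ other than $G$ itself meets an edge of $F$, so there are at most $2|F|$ such components, and hence $G\boldsymbol{.}F$ has only finitely many vertices. Likewise there are at most $|F|$ non-loop edges between two distinct vertices of $G\boldsymbol{.}F$.

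For Hausdorffness I would do a straightforward case analysis on pairs of distinct points. Two inner edge points are separated by disjoint open sub-intervals of their respective edges. A vertex $d$ and an inner edge point $y$ are separated by tailoring a basic open set at $d$ to stay away from $y$: if $y$ lies on a non-loop edge $e$ incident to $d$, choose $j_e$ strictly between $y$ and $d$; if $y$ lies on a loop $e\in\ell(d)$, put $e\in L$ and take $I_e$ to contain an open neighbourhood of $y$; and if $y$ is on an edge not touching $d$, any sufficiently small basic neighbourhood of $d$ avoids $y$. Two distinct vertices $d\neq d'$ are separated by choosing, on every non-loop edge $e$ between them, the parameters $j_e^d$ and $j_e^{d'}$ on opposite sides of $m(e)$, and taking $L_d=L_{d'}=\emptyset$.

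The substantive work is compactness. Given any open cover of $G\boldsymbol{.}F$, for each of the finitely many vertices $d$ I would pick one element of the cover containing $d$ and shrink it to a basic open neighbourhood $B_d$ of the form (\ref{GpBOS}) with some parameters $j_e^d$, a finite $L_d\subseteq\ell(d)$, and a closed interval $I_e^d\subseteq\mathring e$ for each $e\in L_d$. The plan is then to show that $R:=G\boldsymbol{.}F\setminus\bigcup_d B_d$ is a finite disjoint union of closed intervals: along each of the at most $|F|$ non-loop edges of $G\boldsymbol{.}F$ what remains after removing $(j_e^d,d]$ and $(j_e^{d'},d']$ is a closed sub-interval; along each loop $e\in\bigcup_d L_d$ what remains is the closed interval $I_e^d$; and along every loop $e\in\ell(d)\setminus L_d$ nothing remains, because the summand $\bigcup(\ell(d)\setminus L)$ in (\ref{GpBOS}) ensures that $B_d$ swallows $e$ entirely. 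Hence $R$ is compact, can be covered by finitely many elements of the original cover, and together with the $B_d$ this yields a finite subcover.

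The one place where one could slip is in that last observation: that loops outside each chosen $L_d$ disappear from $R$ completely. This is precisely the reason the summand $\bigcup(\ell(d)\setminus L)$ appears in (\ref{GpBOS}); without it, $R$ could harbour infinitely many whole loops, on which one could cook up a cover with no finite subcover, and compactness would fail. Once this observation is made explicit, both parts are essentially bookkeeping.
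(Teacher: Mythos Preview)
Your argument is correct. The paper states this as an observation without proof, so there is no ``paper's own proof'' to compare against; you have simply supplied the details that the paper leaves implicit. Your key observations---that $G\boldsymbol{.}F$ has only finitely many vertices and finitely many non-loop edges because $G$ is connected and $F$ is finite, and that the summand $\bigcup(\ell(d)\setminus L)$ in~(\ref{GpBOS}) absorbs all but finitely many loops at each vertex---are exactly the facts needed, and your compactness argument via covering the finitely many vertices first and then the residual finite union of closed intervals is the natural one.
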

Since $\{G\boldsymbol{.}F,f_{F',F},\cE\}$ is an inverse system of non-empty compact Hausdorff spaces\index{$\{G\boldsymbol{.}F,f_{F',F},\cE\}$}, its inverse limit\index{$\CL G\CL$}
\begin{align*}
\CL G\CL:=\invLim(G\boldsymbol{.}F\,|\,F\in \cE)
\end{align*}
is again non-empty, compact and Hausdorff by Lemma~\ref{InvLimCptHD}.
Furthermore, Miraftab~\cite{Babak} showed that

\begin{lemma}\label{ETopInvLim}
If $G$ is a connected graph, then $\CL G\CL$ is homeomorphic to $\cE G$.
\end{lemma}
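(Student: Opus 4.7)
The plan is to construct, for each $F \in \cE$, a continuous surjection $\sigma_F\colon \cE G \to G\boldsymbol{.}F$ such that the family $(\sigma_F)_{F \in \cE}$ is compatible with the bonding maps $f_{F',F}$. Lemma~\ref{InvLimSurjEmbOnto} will then yield a continuous surjection $\sigma\colon \cE G \to \CL G\CL$, and it remains to show that $\sigma$ is injective: since $\cE G$ is compact (stated in the overview) and $\CL G\CL$ is Hausdorff by Lemma~\ref{InvLimCptHD} and Observation~\ref{GpHD}, the resulting continuous bijection is automatically a homeomorphism.

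First I would define an auxiliary map $\sigma_F'\colon \cE'G \to G\boldsymbol{.}F$ as follows. Each inner edge point of $G$ is also an inner edge point of $G\boldsymbol{.}F$ (of a loop if its underlying edge lies in $G-F$, and of a loop or a non-loop edge if it lies in $F$), and I send it to itself. Each vertex of $G$ I send to the vertex of $G\boldsymbol{.}F$ corresponding to the component of $G-F$ containing it. Each edge-end $\eta$ of $G$ I send to the vertex of $G\boldsymbol{.}F$ corresponding to the unique component of $G-F$ in which every ray of $\eta$ has a tail; this is well defined because $F$ is finite and no finite edge set separates two rays of $\eta$. Surjectivity is clear, since each vertex of $G\boldsymbol{.}F$ corresponds to a non-empty component of $G-F$. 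Continuity is a case analysis: the $\sigma_F'$-preimage of a basic open neighbourhood of a vertex $d$ of $G\boldsymbol{.}F$ of the form~(\ref{GpBOS}) can be covered, at each of its points, by a basic open set of $\cE'G$ obtained by enlarging $F$ to $F \cup L$ (where $L$ is the set of loops at $d$ that are cut in~(\ref{GpBOS})) and choosing each $\lambda_e$ close enough to its endpoint in the component of $G-F$ corresponding to $d$.

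To descend $\sigma_F'$ to a continuous map $\sigma_F\colon \cE G \to G\boldsymbol{.}F$, I use that $G\boldsymbol{.}F$ is Hausdorff (Observation~\ref{GpHD}): if two points of $\cE'G$ had distinct $\sigma_F'$-images, then the $\sigma_F'$-preimages of disjoint neighbourhoods of these images would be disjoint open neighbourhoods of the two points in $\cE'G$, so the points would not be identified in the quotient $\cE G$. Compatibility $f_{F',F} \circ \sigma_{F'} = \sigma_F$ for $F \subseteq F' \in \cE$ is immediate, because each component of $G-F'$ is contained in a unique component of $G-F$ and the three cases (inner edge point, vertex, edge-end) are treated uniformly.

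The main obstacle is injectivity of $\sigma$. Suppose $\sigma(x) = \sigma(y)$ with representatives $x', y' \in \cE'G$. If $x' \in \mathring{e}$ for some edge $e$ of $G$, then $\sigma_{\{e\}}(x') = x'$ forces $y' = x'$, so I may assume $x', y' \in V(G) \cup \Omega'$. For each $F \in \cE$ and each component $C$ of $G-F$, a point of $V(G) \cup \Omega'$ lies in the basic open set
\[
C \cup \Omega'(F,C) \cup \bigcup_{\substack{e = xy \in F\\ x \in C}}[x, \lambda_e)
\]
of $\cE'G$ if and only if its $\sigma_F'$-image is the vertex of $G\boldsymbol{.}F$ corresponding to $C$. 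Hence $\sigma_F(x') = \sigma_F(y')$ for every $F \in \cE$ forces every basic open set of $\cE'G$ to contain both or neither of $x', y'$; so $x'$ and $y'$ share the same open neighbourhoods in $\cE'G$ and are identified in the quotient $\cE G$, giving $x = y$ as desired.
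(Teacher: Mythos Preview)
Your proof is correct. The paper does not give its own proof of this lemma but attributes it to Miraftab~\cite{Babak}; however, your argument follows exactly the template the paper uses immediately afterwards in the first proof of Theorem~\ref{ETopInvLimAndGq} (there for $\Gq$ in place of $\cE G$): define compatible continuous surjections into the $G\boldsymbol{.}F$, invoke Lemma~\ref{InvLimSurjEmbOnto}, check injectivity by hand, and conclude via the compact-to-Hausdorff trick. So your approach is in full agreement with the paper's methodology.
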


Given $F\in\cE$ we define $\sigma_F\colon \Gq\to G\boldsymbol{.}F$ as follows: Let $x\in\Gq$ be given. By Lemma~\ref{Forms} there are three cases to distinguish:
If $x$ is an inner edge point of some edge $e$, then we let $\sigma_F$ map $x$ to $x$.
For $x=[u]_\sim$ with $u$ some vertex of $G$ we let $\sigma_F$ map $x$ to the vertex of $G.F$ whose corresponding component of $G-F$ contains $u$. This is well-defined since the finite cuts of $G\boldsymbol{.}F$ are finite cuts of $G$.
Finally, if $x=\{\omega\}$ we let $\sigma_F$ map $x$ to the vertex of $G\boldsymbol{.}F$ whose corresponding component includes a tail of every ray in $\omega$.\footnote{If $G$ has infinitely many components, then there is some $\upsilon\in\Upsilon$ with $X_\upsilon=\emptyset$ and there is no natural choice for $\sigma_F([\upsilon]_\sim)=\sigma_F(\{\upsilon\})$ in $G\boldsymbol{.}F$. Furthermore, no $G\boldsymbol{.}F$ would be compact. Hence we require $G$ to be connected.}

\begin{lemma}\label{sigmaCOMP}
The maps $\sigma_F$ are compatible.
\end{lemma}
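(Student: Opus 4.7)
The plan is to verify, directly from the definitions, that for every $F\subseteq F'\in\cE$ and every $x\in\Gq$ the identity
\[
f_{F',F}(\sigma_{F'}(x))=\sigma_F(x)
\]
holds. Since $G$ is assumed connected, Lemma~\ref{Forms} applies and gives a trichotomy for the points of $\Gq$: inner edge points, classes $[u]_\sim$ of a vertex $u$ of $G$, and singletons $\{\omega\}$ of undominated ends $\omega$. I will handle the three cases separately; the underlying geometric fact that makes everything go through is the obvious monotonicity $F\subseteq F'\Rightarrow$ every component of $G-F'$ is included in a unique component of $G-F$.

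First I would treat the easiest case: if $x$ is an inner edge point of some $e\in E(G)$, then $\sigma_{F'}(x)=x=\sigma_F(x)$ by definition, while $f_{F',F}$ is the identity on $\mathring{E}(G)=\mathring{E}(G\boldsymbol{.}F')$; hence the claim is immediate. Next, for $x=[u]_\sim$ with $u\in V(G)$, let $C'$ be the component of $G-F'$ containing $u$ and $C$ the component of $G-F$ containing $u$. By $F\subseteq F'$ we have $C'\subseteq C$, so by construction $f_{F',F}$ sends the vertex of $G\boldsymbol{.}F'$ corresponding to $C'$ to the vertex of $G\boldsymbol{.}F$ corresponding to $C$. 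But that vertex of $G\boldsymbol{.}F'$ is precisely $\sigma_{F'}([u]_\sim)$ and the vertex of $G\boldsymbol{.}F$ is precisely $\sigma_F([u]_\sim)$, yielding the desired equality.

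Finally, for $x=\{\omega\}$ with $\omega$ an undominated end, write $d':=\sigma_{F'}(\{\omega\})$ for the vertex of $G\boldsymbol{.}F'$ whose corresponding component $C'$ of $G-F'$ includes a tail of every ray in $\omega$, and similarly let $d:=\sigma_F(\{\omega\})$ with corresponding component $C$ of $G-F$. Again $F\subseteq F'$ forces $C'\subseteq C$, so $f_{F',F}(d')$ is by definition the vertex of $G\boldsymbol{.}F$ associated with $C$, i.e.\ $d$. Thus $f_{F',F}\circ\sigma_{F'}=\sigma_F$ in all three cases, completing the check.

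Honestly, there is no serious obstacle here: the bonding maps and the $\sigma_F$ have been set up precisely so that the diagrams commute, and the only substantive ingredient is the trivial monotonicity of components under enlarging the cut $F\subseteq F'$. The only thing that requires a moment of care is ensuring the definition of $\sigma_F$ on a class $[u]_\sim$ is independent of the representative $u$, but this is already handled in the definition (finite cuts of $G\boldsymbol{.}F$ are finite cuts of $G$, and $\sim$-equivalent vertices are not finitely separable), so it plays no role in the compatibility argument itself.
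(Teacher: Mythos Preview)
Your proof is correct and takes the same approach as the paper: direct verification of $f_{F',F}\circ\sigma_{F'}=\sigma_F$ from the definitions. The paper's own proof simply states that the identity ``is clear from the definition of $\sigma_F,\sigma_{F'}$ and $f_{F',F}$'' without spelling out the case distinction; you have supplied the details the paper leaves implicit.
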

\begin{proof}
Let $F\subseteq F'\in\cE$ and $x\in\Gq$ be given. We have to show that 
\begin{align*}
(f_{F',F}\circ\sigma_{F'})(x)=\sigma_F(x)
\end{align*}
holds, but this is clear from the definition of $\sigma_F,\sigma_{F'}$ and $f_{F',F}$.
\end{proof}

\begin{lemma}\label{sigmaCTS}
The maps $\sigma_F$ are continuous surjections.
\end{lemma}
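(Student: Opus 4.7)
The plan is to reduce continuity to a statement about $\TC$ and then verify it pointwise. Let $q\colon\TC\to\Gq$ denote the quotient map and set $\tilde\sigma_F:=\sigma_F\circ q$. By the universal property of the quotient topology, $\sigma_F$ is continuous iff $\tilde\sigma_F\colon\TC\to G\boldsymbol{.}F$ is, so it suffices to show $\tilde\sigma_F$ is continuous. Surjectivity of $\sigma_F$ is immediate: every inner edge point of $G\boldsymbol{.}F$ is fixed by $\tilde\sigma_F$, and each vertex $d$ of $G\boldsymbol{.}F$ corresponds to some component $C$ of $G-F$, which is nonempty since $G$ is connected, so $\sigma_F([u]_\sim)=d$ for any $u\in V(C)$.

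Given $p\in\TC$ and a basic open neighbourhood $U$ of $\tilde\sigma_F(p)$ in $G\boldsymbol{.}F$, I exhibit a basic open $V\ni p$ in $\TC$ with $\tilde\sigma_F(V)\subseteq U$. For $p\in\mathring{E}$ this is trivial. For $p=u\in V(G)$, writing $d:=d_{C_u}$, a sufficiently small $V:=\cO_G(u,\epsilon)$ works: $u$ is incident with only finitely many edges of $F$ (so the half-edges at $u$ on these can be forced into the partial edges $(j_e,d]$ of $U$ as in~(\ref{GpBOS})) and only finitely many edges of $L$ (so the $u$-side of $\mathring{e}$ can be forced to avoid the closed interval $I_e\subseteq\mathring{e}$), while every loop of $\ell(d)\setminus L$ is fully contained in $U$.

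The essential cases are $p=\omega\in\Omega$ and $p=\upsilon\in\Upsilon$. Set $d:=d_{C_\omega}$ resp.\ $d:=d_{C_\upsilon}$, where $C_\omega$ is the component of $G-F$ containing a tail of $\omega$ and $C_\upsilon$ is the unique component of $G-F$ containing $X_\upsilon$ (which exists because any two vertices of $X_\upsilon$ admit infinitely many independent paths by Theorem~\ref{InfIndPaths} and hence lie in a common component of $G-F$). Writing $V[L]$ for the set of endpoints in $G$ of the edges in $L$, take $X:=V[F]\cup V[L]$ (resp.\ $X:=V[F]\cup V[L]\cup X_\upsilon$), $\cC:=\{D\in\cC_X\,:\,D\subseteq V(C)\}$ with $C=C_\omega$ (resp.\ $C_\upsilon$), and propose $V:=\cO_{\TC}(X,\cC)$. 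The key virtue of including $V[L]$ in $X$ is that every edge in $\mathring{E}(X,\bigcup\cC)$ is then forced to be a loop at $d$ outside $L$: any $F$- or $L$-edge has both endpoints in $X$ and so is excluded, while any edge $xy$ with $x\in X$ and $y\in\bigcup\cC\subseteq V(C)$ forces $x\in V(C)$ (else $e\in F$), contributing a loop in $\ell(d)\setminus L$. Hence $\bigcup\cC\cup\mathring{E}(X,\bigcup\cC)\subseteq\{d\}\cup\bigcup(\ell(d)\setminus L)\subseteq U$. For the ultrafilter case one further verifies $\cC\in U(\upsilon,X)$: Lemma~\ref{New36} gives $\cC_X(X_\upsilon)\in U(\upsilon,X)$, and only finitely many elements of $\cC_X(X_\upsilon)$ can lie outside $V(C_\upsilon)$ (each would account for a distinct $F$-edge to $X_\upsilon$).

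The main obstacle is verifying that the tangle part $\cO_{\cU}(X,\cC)$ is also mapped into $\{d\}$. For any tangle $\tau'\in\cO_{\cU}(X,\cC)$ we have $\cC\in U(\tau',X)$; pushing forward through the bonding map $f_{X,V[F]}$, I argue that $\cC\rest V[F]$ is contained in $\cD_C:=\{D'\in\cC_{V[F]}\,:\,D'\subseteq V(C)\}$, since any component of $G-V[F]$ that contains some $D\in\cC$ remains in the single $F$-component $C$ (connectivity inside $G-V[F]$ respects the $F$-partition). Upward closure then yields $\cD_C\in U(\tau',V[F])$, so $\tau'$ orients the finite-order separation associated to $F$ toward the $C$-side, giving $\tilde\sigma_F(\tau')=d$ as required.
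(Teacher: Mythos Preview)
Your proof is correct, but it takes a rather different route from the paper's. The paper works directly in $\Gq$: for a non-edge point $x$ it takes the finite cut $\bar F=E_{G\boldsymbol{.}F}(d,G\boldsymbol{.}F-d)$ of non-loop edges at the target vertex $d$, applies Lemma~\ref{Tripartition} to obtain a $\sim$-closed open side $O_1$ containing $x$, and observes that $(O_1/{\sim})\setminus\bigcup_{e\in L}I_e$ is an open neighbourhood mapping into $W$. This handles the cases $x=[u]_\sim$ and $x=\{\omega\}$ uniformly in two lines.

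You instead lift along the quotient map to $\TC$ and construct explicit basic open sets $\cO_{\TC}(X,\cC)$, treating vertices, ends, and ultrafilter tangles separately. The key device of enlarging $X$ to include $V[L]$ so that only loops outside $L$ survive in $\mathring{E}(X,\bigcup\cC)$ is neat, and your push-forward argument $\cC\!\rest\! V[F]\subseteq\cD_C$ is correct. The one step you gloss over is the final inference ``$\cD_C\in U(\tau',V[F])$ gives $\tilde\sigma_F(\tau')=d$'': since $\tilde\sigma_F(\tau')=\sigma_F([\tau']_\sim)$ depends on the $\sim$-class rather than on $\tau'$ directly, you are implicitly using that $\sim$-classes respect the finite cut $E(V(C),V\setminus V(C))$, which is exactly the content of Lemma~\ref{easyJAL} (or Lemma~\ref{Tripartition}). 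So your approach ends up re-deriving what the paper invokes as a black box; the paper's proof is shorter precisely because it packages this once in Lemma~\ref{Tripartition} and works entirely on the quotient side.
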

\begin{proof}
Clearly, the $\sigma_F$ are surjective by construction.
Let $x\in\Gq$ be given together with $W$ some basic open neighbourhood of $y:=\sigma_F(x)$ in $G\boldsymbol{.}F$. By Lemma~\ref{Forms} we distinguish three cases:

If $x$ is an inner edge point we are done, so assume that $x$ is of the form $[u]_\sim$ for some vertex $u$ of $G$. Hence $y$ is a vertex of $G\boldsymbol{.}F$ and $W$ is of the form~(\ref{GpBOS}). Let $C$ be the component of $G-F$ corresponding to $y$.
Then the non-loop edges at $y$ in $G\boldsymbol{.}F$ form a finite cut $\bar{F}=E_{G\boldsymbol{.}F}(y,G\boldsymbol{.}F-y)$ of $G\boldsymbol{.}F$ and thus also of $G$. 
Put $\Lambda=\{j_e\,|\,e\in \bar{F}\}$ and use Lemma \ref{Tripartition} to yield a tripartition $\Gq=(O_1/{\sim})\uplus\Lambda\uplus (O_2/{\sim})$. 
Since $C$ is one side of $\bar{F}$ in $G$ and $u\in C$, we without loss of generality have $[u]_\sim\in O_1/{\sim}$, so $\sigma_F$ maps $(O_1/{\sim})-\bigcup_{e\in L}I_e$ into $W$.

Finally assume $x=\{\omega\}$ where $\omega$ is an undominated end of $G$. Then $y$ is a vertex of $G\boldsymbol{.}F$ and we are done by the previous case.
\end{proof}

\begin{theorem}\label{ETopInvLimAndGq}
If $G$ is a connected graph, then $\CL G\CL$ is homeomorphic to the quotient space $\Gq$ of $\TC$. In particular, $\Gq$ is homeomorphic to $\cE G$.
\end{theorem}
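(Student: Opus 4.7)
The setup has already done most of the work: Lemmas~\ref{sigmaCOMP} and \ref{sigmaCTS} provide a compatible family of continuous surjections $\sigma_F\colon\Gq\to G\boldsymbol{.}F$, and the target spaces $G\boldsymbol{.}F$ are compact Hausdorff (hence T$_1$) by Observation~\ref{GpHD}. Since $\TC$ is compact, so is its quotient $\Gq$. I would therefore apply Lemma~\ref{InvLimSurjEmbOnto} directly to obtain a continuous surjection
\begin{align*}
\sigma\colon \Gq\to\CL G\CL,\qquad x\mapsto (\sigma_F(x)\,|\,F\in\cE).
\end{align*}
Since $\Gq$ is compact and $\CL G\CL$ is Hausdorff (Lemma~\ref{InvLimCptHD}), it will suffice to show that $\sigma$ is injective; a continuous bijection between such spaces is automatically a homeomorphism. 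Combined with Lemma~\ref{ETopInvLim}, this will give both claims of the theorem.

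\textbf{Injectivity via Lemma~\ref{Forms}.} Given distinct $x,y\in\Gq$, I want to produce some $F\in\cE$ with $\sigma_F(x)\neq\sigma_F(y)$. By Lemma~\ref{Forms} each of $x,y$ is either an inner edge point, or of the form $[u]_\sim$ for some vertex, or of the form $\{\omega\}$ for some undominated end, so a short case analysis should suffice. If at least one of $x,y$ is an inner edge point of some edge $e$, then $F=\{e\}$ works: $\sigma_F$ is the identity on $\mathring{e}$, while any vertex class or end class gets sent to a vertex of $G\boldsymbol{.}F$, never an inner edge point of $e$. In the remaining cases $x,y\in (V\cup\Omega)/{\sim}$ with $x\neq y$, which by definition of $\sim$ means that representatives of $x$ and $y$ are finitely separable: there is a finite cut $F\subseteq E(G)$ with sides $V_1,V_2$ such that $x\in\overline{G[V_1]}$ and $y\in\overline{G[V_2]}$ (closures in $\TC$).

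\textbf{From a separating cut to a separating $F$.} The key observation is that $\sigma_F$ respects exactly this structure: every component of $G-F$ is contained in $V_1$ or in $V_2$, so becomes a single vertex of $G\boldsymbol{.}F$ lying on the corresponding side. For a vertex class $[u]_\sim$ with $u\in V_i$ we get $\sigma_F([u]_\sim)$ equal to that vertex; for an end class $\{\omega\}$ with $\omega\in\overline{G[V_i]}$, inspection of the basic neighbourhoods $\hat{C}_{\TC}(V[F],\omega)$ shows that the rays in $\omega$ have tails in the unique component $C(V[F],\omega)$, which lies in $V_i$, so $\sigma_F(\{\omega\})$ is again a vertex on the $V_i$-side of $G\boldsymbol{.}F$. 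Hence in all three subcases the images $\sigma_F(x)$ and $\sigma_F(y)$ lie on opposite sides of the cut $F$ inside $G\boldsymbol{.}F$ and are therefore distinct.

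\textbf{Expected obstacle.} The only part that is not formal is this last verification that ``$\omega\in\overline{G[V_i]}$ forces the component of $G-F$ accommodating a tail of every ray in $\omega$ to lie in $V_i$'', i.e.\ matching the two notions of side---the topological one hidden in the definition of $\sim$ and the combinatorial one used to define $\sigma_F$ on ends. This is essentially the content of Lemma~\ref{easyJAL} for finite cuts; once it is invoked, injectivity follows and the theorem is immediate.
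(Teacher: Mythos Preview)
Your proposal is correct and follows essentially the same route as the paper's first proof: define $\sigma(x)=(\sigma_F(x)\mid F\in\cE)$, invoke Lemma~\ref{InvLimSurjEmbOnto} for continuity and surjectivity, establish injectivity via a separating finite cut, and conclude by the compact-to-Hausdorff principle together with Lemma~\ref{ETopInvLim}. Your injectivity argument is slightly more explicit than the paper's (which simply says ``without loss of generality neither point is an inner edge point'' and then picks a witnessing cut), but the substance is identical.
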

\begin{proof}[First proof]
We start by letting $\Psi\colon \Gq\to\CL G\CL$ map $x$ to $(\sigma_F(x)\,|\,F\in \cE)$.
Then $\Psi(x)$ is an element of $\CL G\CL$ by Lemma \ref{sigmaCOMP} and $\Psi$ is injective: Consider distinct elements $x$ and $y$ of $\Gq$.
Without loss of generality none of $x$ and $y$ is an inner edge point. 
Let $F$ be some finite cut of $G$ witnessing $x\neq y$, then
$\sigma_F(x)\neq\sigma_F(y)$ follows as desired. 
But $\Psi$ is also a continuous surjection by Lemmas~\ref{sigmaCOMP}, \ref{sigmaCTS} and \ref{InvLimSurjEmbOnto}. 
Therefore, $\Psi$ is a continuous bijection from a compact space onto a Hausdorff space. By general topology, $\Psi$ is a homeomorphism, so $\CL G\CL$ is homeomorphic to $\Gq$ as claimed. Due to Lemma~\ref{ETopInvLim}, we also know that $\Gq$ is homeomorphic to $\cE G$.
\end{proof}
\begin{proof}[Second proof (sketch)]
Define $\Psi\colon \Gq\to\cE G$ using Lemma~\ref{Forms} as follows: Let $x\in\Gq$ be given. If $x$ is an inner edge point put $\Psi(x)=x$. Else if $x$ is of the form $[u]_\sim$ for some vertex $u$ of $G$ let $\Psi$ send $u$ to the point of $\cE G$ containing $u$. Otherwise $x$ is of the form $\{\omega\}$ for some end $\omega$ of $G$ and we put $\Psi(x)=\{\omega\}$. It is straightforward to check that $\Psi$ is a well defined bijection. Since $\Gq$ is compact and $\cE G$ is Hausdorff, it suffices to check that $\Psi$ is continuous, which easily follows from Lemma~\ref{Tripartition}.
\end{proof}

\begin{corollary}
If $G$ is a countable connected graph and $\ell\colon E(G)\to\R_{>0}$ satisfies $\sum_{e\in E(G)}\ell(e)<\infty$, then $|G|_\ell$ is homeomorphic to $\Gq$.
\end{corollary}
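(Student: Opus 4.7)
The plan is to reduce the claim to comparing $|G|_\ell$ with $\cE G$: by Theorem~\ref{ETopInvLimAndGq} we have a homeomorphism $\Gq\simeq\cE G$, so it suffices to exhibit a homeomorphism $|G|_\ell\simeq\cE G$. Both spaces are compact Hausdorff: $\cE G$ by Lemma~\ref{ETopInvLim} together with Lemma~\ref{InvLimCptHD} (and Observation~\ref{GpHD}), and $|G|_\ell$ because the countability of $G$ together with $\sum_e\ell(e)<\infty$ makes $(G,d_\ell)$ a totally bounded metric space whose completion is compact. Hence, by the standard fact that a continuous bijection from a compact space onto a Hausdorff space is a homeomorphism, it is enough to construct a continuous bijection $\Phi\colon|G|_\ell\to\cE G$.

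I would define $\Phi$ as follows. On the 1-complex $G\subseteq|G|_\ell$, let $\Phi$ act as the canonical map $G\to\cE G$: the identity on inner edge points, and sending each vertex $u$ to its $\cE G$-class. For a point $p\in|G|_\ell\setminus G$, pick any Cauchy sequence $(x_n)\subseteq G$ in $d_\ell$ with $x_n\to p$. Using summability of $\ell$, for every $F\in\cE$ the sequence $(x_n)$ is eventually contained in a single component $C_F$ of $G-F$ (since only finitely many edges have $\ell$-weight exceeding any prescribed threshold, the diameter of each component in $G-F$ controls the tails). The assignment $F\mapsto C_F$ is compatible with the bonding maps $f_{F',F}$ of the inverse system $\{G\boldsymbol{.}F,f_{F',F},\cE\}$, so it yields an element of $\CL G\CL\simeq\cE G$. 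This element depends only on $p$ (not on the chosen sequence), and we set $\Phi(p)$ to be it.

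Next I would verify that $\Phi$ is bijective and continuous. Injectivity: distinct points of $|G|_\ell$ are at positive $d_\ell$-distance, and summability lets me choose a finite edge cut $F$ of arbitrarily small total weight separating them; by Lemma~\ref{easyJAL} the two points then lie on different sides of $F$, so they have different $\Phi$-images. Surjectivity: $G$ is dense in both spaces, and every point of $\cE G\setminus G$ is realised by a Cauchy sequence along some ray in the corresponding edge-end. Continuity: by Lemma~\ref{invLimBasis} the basic open sets of $\cE G\simeq\CL G\CL$ are preimages under $\sigma_F$ of the basic open sets~(\ref{GpBOS}) in $G\boldsymbol{.}F$; given such a neighbourhood around some $\Phi(p)$, summability lets me pick $F$ large enough that $F$-edges not touching the relevant component have total $\ell$-weight less than any chosen $\delta$, so that some $d_\ell$-ball around $p$ maps into the neighbourhood. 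The compact-Hausdorff argument then finishes the proof.

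The main obstacle is making the map well-defined on $|G|_\ell\setminus G$: one has to show that any two equivalent Cauchy sequences for $p$ pick the same component of $G-F$ for every $F\in\cE$, and that the chosen components are indeed consistent along the inverse system. Both points rest on the same observation — that $d_\ell$-close pairs of vertices can be joined by a path of small $\ell$-length, which for $\ell$ summable forces them to lie in the same component of $G-F$ once the path-length drops below the minimum $\ell$-weight of an edge in $F$. Once this is in hand, the continuity check reduces to matching basic open sets, which is technical but routine.
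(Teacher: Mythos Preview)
Your reduction via Theorem~\ref{ETopInvLimAndGq} to showing $|G|_\ell\simeq\cE G$ is exactly the paper's approach. The difference is that the paper does not construct this homeomorphism: it simply cites \cite[Theorem 3.1]{ETop}, which already states that $|G|_\ell$ and $\cE G$ are homeomorphic for countable connected $G$ with $\sum_e\ell(e)<\infty$. Your sketch essentially re-derives that theorem by hand; the strategy (continuous bijection from compact to Hausdorff, components of $G-F$ as coordinates in $\CL G\CL$) is the right one and would work, but it is unnecessary here. One small inaccuracy: your map on inner edge points cannot literally be the identity, since edges in $|G|_\ell$ have length $\ell(e)$ while in $\cE G$ they are unit intervals; a rescaling is needed.
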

\begin{proof}
By \cite[Theorem 3.1]{ETop} we know that $|G|_\ell$ and $\cE G$ are homeomorphic. Hence Theorem~\ref{ETopInvLimAndGq} implies that $|G|_\ell$ and $\Gq$ are also homeomorphic.
\end{proof}

\begin{obs}
In Section~\ref{FGconstruction} we learn about a new compactification $\FG$ which is quite similar to $\TC$. For this space, Lemmas~\ref{easyJAL},~\ref{Tripartition} and~\ref{Forms} admit straightforward analogues, so $\cE G$ is also a quotient of $\FG$.
\end{obs}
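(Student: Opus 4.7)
The plan is to mirror the proof of Theorem~\ref{ETopInvLimAndGq} with $\FG$ in place of $\TC$. First I would define the relation $\sim$ on $V\cup\aF$ exactly as before: $x\sim y$ iff there is no finite cut $F$ of $G$ with sides $V_1,V_2$ such that $x\in\overline{G[V_1]}$ and $y\in\overline{G[V_2]}$ (closures taken in $\FG$). Transitivity works word for word once the $\FG$-analogue of Lemma~\ref{easyJAL} is in hand, so $\FG/{\sim}$ is a well-defined compact space. The goal is then to build a continuous bijection $\Psi\colon\FG/{\sim}\to\CL G\CL$ via maps $\sigma_F\colon\FG/{\sim}\to G\boldsymbol{.}F$ and invoke compactness/Hausdorffness to upgrade it to a homeomorphism; composing with Lemma~\ref{ETopInvLim} then identifies it with $\cE G$.

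The three analogues are verified as follows. For the analogue of Lemma~\ref{easyJAL}, given a finite cut $F$ with sides $V_1,V_2$, one applies the basic open sets $\cO_{\FG}(X,\cC)$ built from the inverse system $\{\aF_X,\af_{X',X},\cX\}$ with $X=V[F]$ and the bipartition $\{\cC,\cC'\}$ of $\cC_X$ respecting $F$; by Theorem~\ref{FasTangles}, every point of $\aF$ is an $\aleph_0$-tangle for the smaller separation system and orients $\{A,B\}$ consistently, giving $\overline{G[V_1]}\uplus\overline{G[V_2]}=\FG\setminus\mathring{F}$ by the same dense-image argument. The Tripartition lemma then carries over verbatim, and Corollary~\ref{HD}'s analogue (Hausdorffness of $\FG/{\sim}$) follows. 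For the analogue of Lemma~\ref{Forms}, given a $\sim$-class $\xi$ meeting neither $\mathring{E}$ nor $V$, one picks two ends $\omega,\omega'\in\xi$ witnessed by $X\in\cX$, surrounds each $x\in X$ by a finite cut separating it from $\omega$, takes the finite union, and derives a contradiction exactly as in the proof of Lemma~\ref{Forms}; points corresponding to elements of $\aF\setminus\Omega$ lie in $\sim$-classes meeting the associated critical $X\in\cX$ by the analogue of Corollary~\ref{FinSepClassesMeetX}.

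Equipped with these three analogues, the maps $\sigma_F$ are defined case by case on inner edge points, vertex classes, and ends, just as before. Compatibility is immediate from the definitions of the bonding maps $f_{F',F}$, and continuity reduces to the Tripartition argument applied to the finite cut separating a vertex of $G\boldsymbol{.}F$ from the rest. Thus $\Psi$ lands in $\CL G\CL$ by Lemma~\ref{sigmaCOMP}'s analogue, is a continuous surjection by Lemma~\ref{InvLimSurjEmbOnto}, and is injective because distinct non-edge points of $\FG/{\sim}$ are finitely separated by some $F\in\cE$, whence $\sigma_F$ distinguishes them. Since $\FG/{\sim}$ is compact and $\CL G\CL$ is Hausdorff, $\Psi$ is a homeomorphism, and Lemma~\ref{ETopInvLim} finishes the argument.

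The main obstacle is the analogue of Lemma~\ref{easyJAL} for $\FG$: the original proof used the explicit ultrafilter description of $\cU$, and for $\FG$ one must instead rely on the orientation of finite order separations induced by the $\cC$-system $\{\aF_X,\af_{X',X},\cX\}$. Once one observes via Theorem~\ref{FasTangles} that each point of $\aF$ uniquely picks a side of every finite cut (and the basic open sets $\cO_{\FG}(X,\cC)$ are compatible with this choice), the remainder of the translation is mechanical.
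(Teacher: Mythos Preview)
Your proposal is correct and follows exactly the route the paper indicates; the paper itself gives no proof for this Observation, merely asserting that the three lemmas transfer ``straightforwardly'' and that the argument of Theorem~\ref{ETopInvLimAndGq} then goes through verbatim. Your write-up is a faithful unpacking of that assertion.

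One remark on your handling of the analogue of Lemma~\ref{easyJAL}: invoking Theorem~\ref{FasTangles} is a clean way to see that every point of $\aF$ orients the separation coming from a finite cut, but it is a heavier tool than necessary. A direct check suffices: with $X=V[F]$ and $\{\cC,\cC'\}$ the bipartition of $\cC_X$ induced by the sides $V_1,V_2$, one only needs that each non-principal $\cF_X(Y)\in\aF_X$ contains exactly one of $\cC,\cC'$. Since every $C\in\cC_X(Y)$ sends an edge to every vertex of $Y$, if $Y$ met both $V_1$ and $V_2$ then $\cC_X(Y)$ would be finite (each component contributes a cut edge), contradicting $Y\in\crit(X)$; and once $Y\subseteq V_i$, all but finitely many $C\in\cC_X(Y)$ lie in $V_i$ for the same reason, so $\cC_X(Y)$ is cofinitely on one side. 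This elementary argument avoids forward-referencing Section~\ref{FGandTangles} and makes the ``straightforward'' in the Observation more transparent.
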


\newpage
\section{The tangle compactification as inverse limit}\label{tangleInvLim}

\subsection{Introduction}

In recent years, inverse limits emerged as useful tools to obtain `limit objects' of graphs, such as circles and TSTs, from compatible choices of `finite objects' of some carefully chosen `finite minors'. 
Such inverse systems are known for the Freudenthal compactification of locally finite graphs (\cite[Theorem 8.7.2, 5th edition]{Bible}) and for $\cE G$ with \textsc{ETop} (\cite{Babak}, or see $\CL G\CL$ from section~\ref{ETopAsQuotient}).
In this chapter, we will set up such an inverse system whose inverse limit describes the tangle compactification. 
The `finite minors' we are going to use for this actually are multigraphs with finite vertex set and possibly infinite edge set, and they are obtained from $G$ by contraction of possibly non-connected sets of vertices.
Hence the question comes to mind whether 
there is a better inverse system based on finite minors of $G$? 
\begin{figure}[H]
\includegraphics[clip,page=9,trim=40 490 40 35,width=\textwidth]{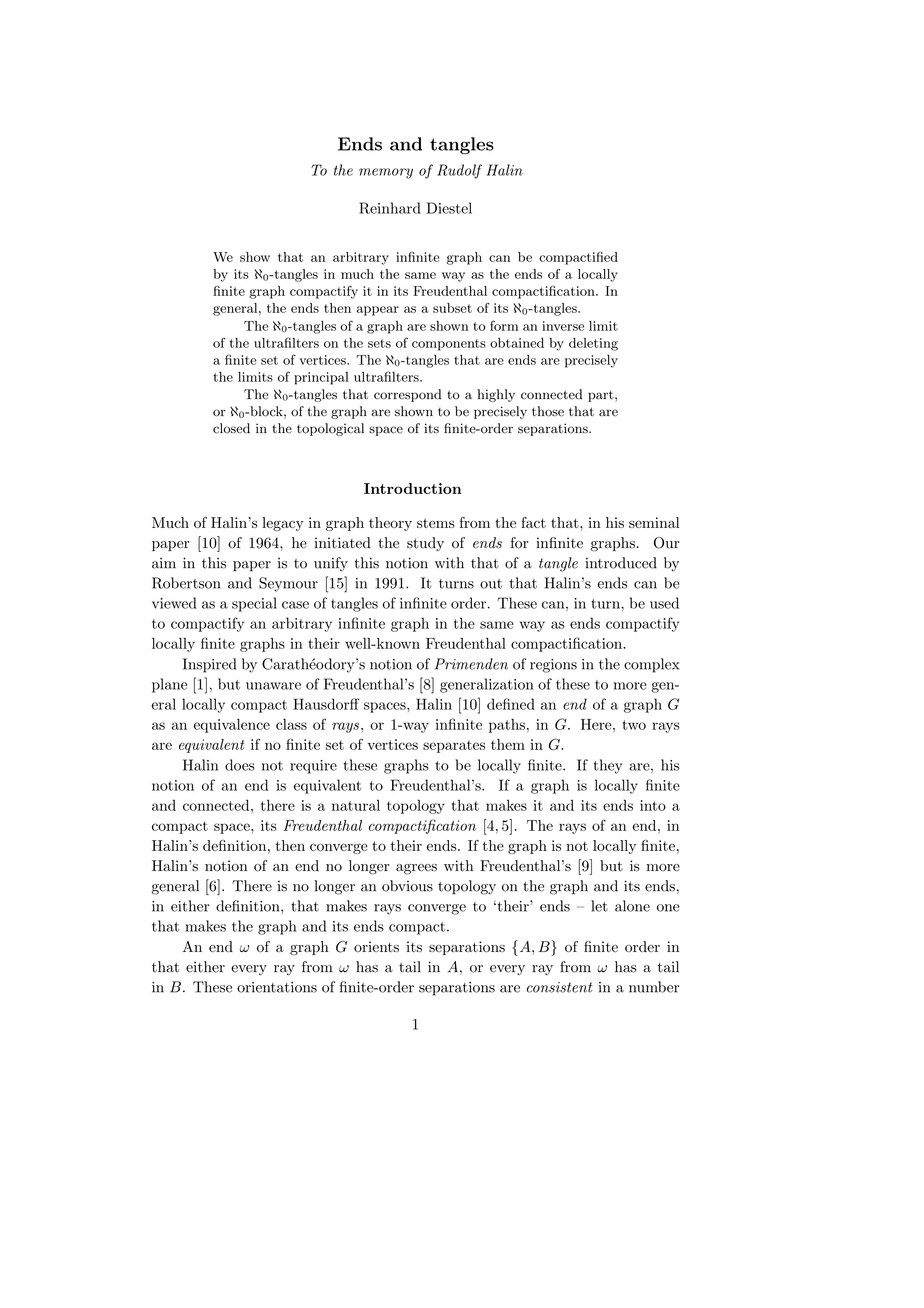}
\caption{An example graph from \cite[Fig. 3]{EndsAndTangles}.}
\label{fig:tangleExample}
\end{figure}
First, we consider an example graph whose tangle compactification cannot be described in a meaningful way by an inverse system of minors.
Let $G$ be the graph pictured in Fig.~\ref{fig:tangleExample}, and for every $n$ denote by $R_n$ the ray in $\omega_n$ starting at the centre vertex of the infinite star. As Diestel showed in \cite[Example 1.9]{EndsAndTangles}, every ultrafilter on the naturals induces an $\aleph_0$-tangle of this graph. Indeed, since the sides of a finite order separation of this graph induce a bipartition of its end space, and hence of the naturals via the enumeration $\omega_1,\omega_2,\hdots$ of $\Omega$, we can use the `big' elements of the ultrafilter to declare the corresponding sides of the finite order separations as `big' (i.e. orient the finite order separations such that their big sides induce elements of the ultrafilter).
Then the principal ultrafilters on the naturals induce precisely the end-tangles of this graph.
Interestingly, it is not possible to describe the tangle compactification of this graph as an inverse limit of minors of $G$ (not even infinite ones), as long as we deem the following premise meaningful: 

\textit{If $\{H_i,\varphi_{ji},I\}$ is an inverse system of minors $H_i$ of $G$ and for every $i$ there is a mapping $\sigma_i\colon \TC\to H_i$ such that $\invLim\sigma_i\colon \TC\to\invLim H_i$ is a homeomorphism, then for every $i$ the map $\sigma_i$ sends every ultrafilter tangle of $G$ to an infinite branch set of $H_i$ which is not included in some $R_n$.}

Now if $U$ and $U'$ are distinct non-principal ultrafilters on the naturals, they induce distinct ultrafilter tangles $\tau$ and $\tau'$ of $G$. Since $\invLim\sigma_i$ is injective, there is some $i$ in $I$ such that $\sigma_i$ sends the two ultrafilter tangles to different branch sets of $H_i$. Since $H_i$ is a minor of $G$, its branch sets are connected, so by the premise they both meet in the centre vertex of the infinite star, which contradicts the fact that distinct branch sets are disjoint. Hence contracting non-connected sets of vertices in general cannot be avoided.

But perhaps we can use finite multigraphs instead? Indeed, this is possible, and as it will turn out there is an easy way to obtain such multigraphs from ours. 
But making their edge sets finite comes at the cost of quality of life, which is why we forgo this option.

\subsection{Setup}

Given a graph $G$, for every $X\in\cX$ we denote by $\cP_X$\index{$\cP_X$} the set of all finite partitions of $\cC_X$. Then we let\index{$\Gamma$}
\begin{align*}
\Gamma:=\{(X,P)\,|\,X\in\cX\text{ and }P\in\cP_X\}.
\end{align*}
If $(X,P)$ is in $\Gamma$, we denote by $p(X,P)$\index{$p(X,P)$} the finite partition of $V(G)$ induced by $P$ and the singleton subsets of $X$. 
Letting $(X,P)\le (Y,Q)$ whenever $X\subseteq Y$ and $p(Y,Q)$ refines $p(X,P)$ defines a directed partial ordering on $\Gamma$:

\begin{lemma}
\label{directed}
$(\Gamma,\le)$ is a directed poset.
\end{lemma}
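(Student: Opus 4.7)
The plan is to check the three partial order axioms and then verify directedness, with the main effort going into the latter.

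First I would dispatch the partial order axioms. Reflexivity is immediate since $X\subseteq X$ and any partition refines itself. Antisymmetry follows because $(X,P)\le(Y,Q)\le(X,P)$ forces $X=Y$, and then $p(X,P)$ and $p(X,Q)$ mutually refine each other, which forces $P=Q$ (the singletons $\{v\}$ with $v\in X$ are classes of both of these partitions on $V(G)$, so they impose no constraint, and what remains are the sets $V[P']$ for $P'\in P$ versus $V[Q']$ for $Q'\in Q$, and mutual refinement of these implies $P=Q$). Transitivity is just transitivity of set inclusion together with transitivity of refinement.

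For directedness, given $(X_1,P_1),(X_2,P_2)\in\Gamma$, I would take $Y:=X_1\cup X_2\in\cX$ and construct a common upper bound $(Y,Q)$ as follows. For each $C\in\cC_Y$, let $P_i(C)\in P_i$ denote the class containing $\phi_{Y,X_i}(C)$ for $i=1,2$. Define an equivalence relation on $\cC_Y$ by
\begin{align*}
C\sim C'\quad :\Longleftrightarrow\quad P_1(C)=P_1(C')\text{ and }P_2(C)=P_2(C'),
\end{align*}
and let $Q$ be its partition, which is finite since it has at most $|P_1|\cdot|P_2|$ classes.

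It then remains to verify that $p(Y,Q)$ refines $p(X_i,P_i)$ for $i=1,2$; I only argue for $i=1$ as the other case is symmetric. A class of $p(Y,Q)$ is either a singleton $\{v\}$ with $v\in Y$ or a set $V[\cQ]$ with $\cQ\in Q$. In the first case, if $v\in X_1$ then $\{v\}$ is already a class of $p(X_1,P_1)$; if $v\in Y\setminus X_1=X_2\setminus X_1$, then $v$ lies in some $C^{(1)}\in\cC_{X_1}$, and $C^{(1)}\in P_1'$ for a unique $P_1'\in P_1$, whence $\{v\}\subseteq V[P_1']$. In the second case, all components $C\in\cQ$ satisfy $P_1(C)=P_1'$ for the same $P_1'\in P_1$, so $V[\cQ]\subseteq V[P_1']$. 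This establishes that $(X_1,P_1)\le(Y,Q)$ and $(X_2,P_2)\le(Y,Q)$, completing the argument.

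The only mildly subtle step is the bookkeeping in the refinement check, since one must remember that $p(X,P)$ has two kinds of classes (the singletons from $X$ and the component-unions from $P$) and correctly handle the vertices of $Y\setminus X_i$, which become singletons of $p(Y,Q)$ but are absorbed into component-unions of $p(X_i,P_i)$; once this is kept straight, the verification is entirely routine.
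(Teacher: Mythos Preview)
Your proof is correct and takes essentially the same approach as the paper. The paper first pulls back each $P_i$ along $\phi_{Y,X_i}$ to a partition $P_i'$ of $\cC_Y$ and then takes the coarsest common refinement $R$ of $P_1'$ and $P_2'$; your equivalence relation on $\cC_Y$ defines precisely this same partition $R$ in one step, and your refinement check spells out the details the paper leaves implicit.
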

\begin{proof}
The poset properties are inherited from the subset and the refinement relation; it remains to verify that $(\Gamma,\le)$ is directed: For this, let any two elements $(X,P)$ and $(Y,Q)$ of $\Gamma$ be given.
First, we will define finite partitions $P'$ and $Q'$ of $\cC_{X\cup Y}$ such that $(X,P)\le (X\cup Y,P')$ and $(Y,Q)\le (X\cup Y,Q')$ hold; but this is easy: Recall that the map $\phi_{X\cup Y,X}$ sends every component of $G-(X\cup Y)$ to the unique component of $G-X$ containing it, so the preimage of every partition class $\cC$ of $P$ under $\phi_{X\cup Y,X}$ consists precisely of those components of $G-(X\cup Y)$ which are included in some component of $G-X$ that is also an element of $\cC$.
Hence letting 
\begin{align*}
P'&:=\{\phi_{X\cup Y,X}^{-1}(\cC)\,|\,\cC\in P\}\setminus\{\emptyset\}\\
Q'&:=\{\phi_{X\cup Y,Y}^{-1}(\cC)\,|\,\cC\in Q\}\setminus\{\emptyset\}
\end{align*}
will do.
Finally, choosing $R$ to be the coarsest refinement of $P'$ and $Q'$ results in a finite partition of $\cC_{X\cup Y}$ which satisfies the two inequalities
\begin{align*}
(X,P)&\le (X\cup Y,P')\le (X\cup Y,R)\\
(Y,Q)&\le (X\cup Y,Q')\le (X\cup Y,R),
\end{align*}
so we are done by transitivity.
\end{proof}

Next we define the topological spaces of our inverse limit. For ever $(X,P)\in\Gamma$, we let $G/p(X,P)$ be the multigraph on $p(X,P)$ whose edges are precisely the cross-edges of $p(X,P)$.\index{$G/p(X,P)$}
The vertices of $G/p(X,P)$ that are singleton subsets $\{x\}$ of $X$ we consider to be vertices of $G$ and refer to them as $x$; the other vertices of $G/p(X,P)$ are its \textit{dummy vertices}\index{dummy vertex}\footnote{This definition does not coincide with that of dummy vertices of the $G/p$ from the construction of $\|G\|$ in \cite[5th edition]{Bible}: e.g. if some $p\in p(X,P)$ is a singleton, but not a subset of $X$, then $p$ is a dummy vertex here but a non-dummy vertex in \cite{Bible}.}.
Now we let $G_{(X,P)}$\index{$G_\gamma$} be the topological space obtained from the ground set of the 1-complex of $G/p(X,P)$ by endowing it with the topology generated by the following basis:

For inner edge points we choose the usual open neighbourhoods, and for vertices $x\in X$ we choose as open neighbourhoods the sets of the form $\bigcup_{e\in E(x)} [x,j_e)$ where the inner edge points $j_e\in\mathring{e}$ may be chosen individually for each $e$.
Finally, for dummy vertices $p$ of $G/p(X,P)$ we declare as open the sets of the form
\begin{align*}
\Big(\mathring{E}(X,p)\cup\{p\}\Big)\,\setminus\,\bigcup_{e\in F}(x_e,j_e]
\end{align*}
where $F$ is some finite subset of $E(X,p)$, 
and for every $e\in F$ the point $j_e$ is an arbitrary inner edge point of $e$ while $x_e$ is the endvertex of $e$ in $X$.\footnote{The purpose of $F$ is to make our topological spaces T$_1$. Only allowing $F=\emptyset$ would also work, but then our topological spaces are not T$_1$ while the tangle compactification is.} This completes the definition of the basis.

The spaces $G_{(X,P)}$ are easily seen to be compact. To complete the setup of our inverse system, for all $(X',P')\ge (X,P)$ we choose the bonding map
\begin{align*}
f_{(X',P'),(X,P)}\colon G_{(X',P')}\to G_{(X,P)}
\end{align*}
which sends the vertices of $G/p(X',P')$ to the vertices of $G/p(X,P)$ including them; which is the identity on the interior of the edges of $G/p(X',P')$ that are also edges of $G/p(X,P)$; and which sends any other edge of $G/p(X',P')$ to the dummy vertex of $G/p(X,P)$ that includes both its endvertices in $G/p(X',P')$. 
An easy proof by cases shows that these bonding maps are continuous.\footnote{This is basically the definition of Diestel.}
Therefore, we arrived at an inverse system $\{G_\gamma,f_{\gamma',\gamma},\Gamma\}$\index{$\{G_\gamma,f_{\gamma',\gamma},\Gamma\}$}, and we set\index{$\iL$}
\begin{align*}
\iL=\invLim (G_\gamma\,|\,\gamma\in\Gamma).
\end{align*}
Before we verify that $\iL$ describes the tangle compactification, we introduce some notation concerning $\iL$:
If $W$ is an open set in $G_\eta$ for some $\eta\in\Gamma$, then we denote by $\cO_{\iL}(W,\eta)$\index{$\cO_{\iL}(W,\eta)$} the open subset $\iL\cap\prod_\gamma W_\gamma$ of $\iL$ where $W_\gamma$ is the whole space $G_\gamma$ for all $\gamma$ except for $W_\eta$, which we choose to be $W$. Given $\eta\in\Gamma$ we define $f_\eta$ to be the continuous projection map\index{$f_\eta$}
\begin{align*}
f_\eta\colon \iL\to G_\eta,\;(x_\gamma\,|\,\gamma\in\Gamma)\mapsto x_\eta.
\end{align*}
Finally, whenever $x$ is an element of $\iL$ we denote $f_\eta(x)$ by $x_\eta$.

Before we verify that $\iL$ suits its purpose, we have a last look at their possibly infinite edge sets.
Obviously, the multigraphs $G/p(X,P)$ in general are not finite, but their vertex set is finite. It is possible to turn these multigraphs into finite graphs, simply by deleting the inner edge points of edges incident with dummy vertices. If we then think of these inner edge points as `hidden' in the dummy vertices incident with the edges containing them, we can adjust the bonding maps so as to yield another inverse system whose inverse limit would still describe the tangle compactification. But obviously, the deletion of these points renders this inverse system useless, since it contradicts our aim to improve everyone's quality of life. Therefore, we decided to keep these points, and instead rely on the compactness of the spaces $G_{(X,P)}$.

\subsection{The homeomorphism}

\begin{theorem}\label{G*}
If $G$ is an arbitrary infinite graph, then the tangle compactification $\TC$ of $G$ is homeomorphic to $\iL$.
\end{theorem}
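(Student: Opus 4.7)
The plan is to construct a continuous bijection $\sigma\colon \TC\to\iL$ from a compatible family $(\sigma_\gamma)_{\gamma\in\Gamma}$ via Lemma~\ref{InvLimSurjEmbOnto}, and then upgrade $\sigma$ to a homeomorphism by checking directly that it sends each basic open set of $\TC$ to a basic open set of $\iL$; this openness check is what lets us sidestep any concerns about Hausdorffness of $\iL$. For each $\gamma=(X,P)\in\Gamma$ I would define $\sigma_\gamma\colon \TC\to G_\gamma$ as follows: a vertex $v\in X$ is sent to $v$; every other vertex of $G$ lies in some part $\cC\in P$ and is sent to the dummy vertex $\cC$; a tangle $\tau$ is sent to the unique $\cC\in P$ with $\cC\in U(\tau,X)$; an inner edge point of a cross-edge of $p(X,P)$ is sent to itself; and every remaining inner edge point lies inside some part $\cC\in P$ and is sent to the dummy vertex $\cC$.

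The critical technical step is continuity of $\sigma_\gamma$; the only delicate case is the pre-image of a basic open neighbourhood $W=(\mathring{E}(X,\cC)\cup\{\cC\})\setminus\bigcup_{e\in F}(x_e,j_e]$ of a dummy vertex $\cC$, which a direct computation shows equals $\cO_{\TC}(X,\cC)\setminus\bigcup_{e\in F}(x_e,j_e]$. At vertices of $V[\cC]$ and at inner edge points, the openness of the pre-image is routine using that $F$ is finite. At a tangle $\tau\in\cO_{\cU}(X,\cC)$ I would refine to $Y:=X\cup V[F]$ and consider $\cD:=\{C\in\cC_Y\,|\,C\subseteq V[\cC]\}=\{C\in\cC_Y\,|\,\phi_{Y,X}(C)\in\cC\}$; the second description shows $\cD\in U(\tau,Y)$ via the restriction identity $f_{Y,X}(U(\tau,Y))=U(\tau,X)$, so $\cO_{\TC}(Y,\cD)$ is a basic open neighbourhood of $\tau$. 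By Lemma~\ref{obviousLemma} it is contained in $\cO_{\TC}(X,\cC)$, and each excluded interval $(x_e,j_e]$ lies on an edge between two vertices of $Y$ and is therefore disjoint from each of $\bigcup\cD$, $\mathring{E}(Y,\bigcup\cD)$ and $\cO_{\cU}(Y,\cD)$, so $\cO_{\TC}(Y,\cD)\subseteq\sigma_\gamma^{-1}(W)$ as required.

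The compatibility of the $\sigma_\gamma$ with the bonding maps $f_{\gamma',\gamma}$ is a case check on the four types of points, and each $\sigma_\gamma$ is manifestly surjective, so Lemma~\ref{InvLimSurjEmbOnto} yields a continuous surjection $\sigma\colon \TC\to\iL$. Injectivity is a short case analysis: two distinct points of the 1-complex of $G$ are separated by any $\sigma_\gamma$ with $\gamma=(X,P)$ and $X$ containing all involved endvertices; a vertex $v$ and a tangle $\tau$ are separated at $\gamma=(\{v\},P)$, since the former maps to $v$ and the latter to a dummy vertex; and two distinct tangles $\tau\neq\tau'$ are separated by picking $X$ with $U(\tau,X)\neq U(\tau',X)$ together with the bipartition $P=\{\cC,\cC_X\setminus\cC\}$ of a witnessing $\cC$. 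To conclude that $\sigma$ is a homeomorphism I would verify that it is open on the canonical basis: for $\cO_{\TC}(X,\cC)$, the pre-image computation above gives $\sigma_\eta^{-1}(\mathring{E}(X,\cC)\cup\{\cC\})=\cO_{\TC}(X,\cC)$ with $\eta=(X,\{\cC,\cC_X\setminus\cC\})$, so by bijectivity $\sigma[\cO_{\TC}(X,\cC)]=\cO_{\iL}(\mathring{E}(X,\cC)\cup\{\cC\},\eta)$; the 1-complex basic sets $\cO_G(v,\epsilon)$ and inner edge point intervals are handled analogously using $\gamma$'s of the form $(\{v\},\{\cC_{\{v\}}\})$ or $(V[e],\{\cC_{V[e]}\})$. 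The main obstacle I foresee is precisely the continuity at tangles just described, where tangle-neighbourhoods of $\TC$ inevitably drag along inner edge points of $X$-$\cC$ edges while the dummy-vertex neighbourhoods of $G_\gamma$ deliberately cut finitely many such intervals away; the refinement $Y=X\cup V[F]$ is the technical device that relocates the excluded intervals onto harmless $Y$-$Y$ edges.
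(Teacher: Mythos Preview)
Your proof is correct, and it takes a genuinely different route from the paper's. The paper constructs the homeomorphism in the opposite direction, $\Psi\colon\iL\to\TC$, by reading off from each limit point $(x_\gamma)_\gamma$ either a point of $G$ (if some coordinate is a non-dummy point) or a family of ultrafilters assembled from the dummy-vertex coordinates; it then works through six claims to verify that this really produces an element of $\cU$, that $\Psi$ is a bijection, and that it is bicontinuous. You instead build $\sigma\colon\TC\to\iL$ from compatible projections $\sigma_\gamma$ and invoke Lemma~\ref{InvLimSurjEmbOnto}, which hands you continuity and surjectivity for free and leaves only injectivity and openness on basic sets to check. Your approach is the more economical one, and it is in fact precisely the strategy the paper itself adopts later for the analogous Theorem~\ref{FGinvLim} on $\FG$; what the paper's direct construction buys is an explicit description of how limit points correspond to compatible families of ultrafilters (its Claims~1--2), which your argument suppresses into the single observation that $U(\tau,X)$ picks a unique class from each finite partition.

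Two minor remarks. First, the dummy vertex associated with $\cC\in P$ is formally $V[\cC]$ rather than $\cC$ itself. Second, the openness of $\cO_{\TC}(X,\cC)\setminus\bigcup_{e\in F}(x_e,j_e]$ can be seen in one line, as the paper does in its final claim: each $[x_e,j_e]$ is closed in $\TC$ and $x_e\notin\cO_{\TC}(X,\cC)$, so the set equals $\cO_{\TC}(X,\cC)\setminus\bigcup_{e\in F}[x_e,j_e]$, which is open minus closed. Your refinement to $Y=X\cup V[F]$ is correct but not needed here.
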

\begin{proof}
The first task is to construct a bijection $\Psi\colon \iL\bij \TC$. 
For this, let an arbitrary $x=(x_\gamma\,|\,\gamma\in\Gamma)\in\iL$ be given. We distinguish two cases:

First assume that there is some $\gamma=(X,P)\in\Gamma$ such that $x_\gamma$ is not a dummy vertex of $G/p(X,P)$, so $x_\gamma$ is an element of $X$ or an inner edge point of $G_\gamma$. In this case we let $\Psi(x):=x_\gamma$, which is a well defined choice: If $x_\gamma$ and $x_{\gamma'}$ are two such points we pick some $\eta\ge\gamma,\gamma'$. Then the point $x_\eta$ is not a dummy vertex either, so the definition of $f_{\eta,\gamma}$ and $f_{\eta,\gamma'}$ implies $x_\eta=x_\gamma=x_{\gamma'}$.

In the second case, every $x_\gamma$ is a dummy vertex. Then for every $X\in\cX$ we let
$U_X$ be the collection of all the sets $\cC\in 2^{\cC_X}$ for which there is some finite partition $P$ of $\cC_X$ such that $x_{(X,P)}=V[\cC]$,
and put $\Psi(x):=(U_X\,|\,X\in\cX)$. To complete the definition of $\Psi$, it remains to show that $(U_X\,|\,X\in\cX)$ is an element of $\cU$. 
We start by verifying that

\begin{claim_tcInvLim}\label{UXareUltra}
The $U_X$ are ultrafilters on $\cC_X$.
\end{claim_tcInvLim}
\begin{proof}[Proof of the Claim]\renewcommand{\qedsymbol}{$\diamond$}
Since dummy vertices formally are non-empty sets of vertices, the empty set is not in $U_X$.

Next we show that, for every two elements $\cC$ and $\cC'$ of $U_X$, the intersection $\cC\cap \cC'$ is again an element of $U_X$. Given two elements $\cC$ and $\cC'$ of $U_X$ we choose witnesses $(X,P)$ and $(X,P')$ of $\cC\in U_X$ and $\cC'\in U_X$, respectively.
Let $Q$ be the coarsest refinement of $P$ and $P'$, and let $d\in Q$ be the partition class satisfying $d=\cC\cap \cC'$. Since $(X,Q)\ge (X,P),(X,P')$ and by compatibility, the only choice for $x_{(X,Q)}$ is the dummy vertex $V[d]$. Hence $x_{(X,Q)}$ witnesses that $d=\cC\cap\cC'$ is contained in $U_X$.

Now we show that $U_X$ is upwards closed. For this, let any $\cC\in U_X$ be given together with a superset $\cC'\in 2^{\cC_X}$; we have to show $\cC'\in U_X$.
Pick some witness $(X,P)$ of $\cC\in U_X$ and let $Q$ be the coarsest refinement of $P$ and the bipartition $P'=\{\cC',\cC_X\setminus\cC'\}$ of $\cC_X$. Assume for a contradiction that $x_{(X,P')}$ is not the dummy vertex $V[\cC']$ of $G/p(X,P')$. Then $x_{(X,P')}$ must be the dummy vertex $V[\cC_X\setminus\cC']$ since it is the only other dummy vertex of $G/p(X,P')$. Now let $d$ be the partition class of $Q$ satisfying $x_{(X,Q)}=V[d]$. By compatibility
we know that $x_{(X,Q)}$ is a subset of both $x_{(X,P)}$ and $x_{(X,P')}$. But then, since $\cC$ is a subset of $\cC'$, we also know that
\begin{align*}
V[d]\subseteq V[\cC_X\setminus\cC']\cap V[\cC]=\emptyset
\end{align*}
holds, which
is a contradiction since $d$ is non-empty.

So far we have seen that $U_X$ is a filter, so finally we verify that $U_X$ is a maximal one. For this, consider any bipartition $\{\cC,\cC'\}$ of $\cC_X$; we show that exactly one of $\cC$ and $\cC'$ is contained in $U_X$. Since $x_{(X,\{\cC,\cC'\})}$ is a dummy vertex, we know that at least one of $\cC$ and $\cC'$ is in $U_X$, say $\cC$. Now assume for a contradiction that $\cC'$ is contained in $U_X$, too, witnessed by some $(X,P)$. Then $P$ refines the bipartition $\{\cC,\cC'\}$ since $\cC'$ is a partition class of $P$. In particular $(X,P)\ge (X,\{\cC,\cC'\})$ holds, and $x_{(X,P)}$ is the dummy vertex $V[\cC']$ of $G/p(X,P)$. But then, by definition of $f_{(X,P),(X,\{\cC,\cC'\})}$, the only possibility for $x_{(X,\{\cC,\cC'\})}$ is the dummy vertex $V[\cC']$ of $G/p(X,\{\cC,\cC'\})$, which is a contradiction.
\end{proof}

\begin{claim_tcInvLim}
For all $X\subseteq X'\in\cX$ we have $f_{X',X}(U_{X'})=U_X$.
\end{claim_tcInvLim}
\begin{proof}[Proof of the Claim]\renewcommand{\qedsymbol}{$\diamond$}
Let $X\subseteq X'\in\cX$ be given. 
By Claim~\ref{UXareUltra} both $U_X$ and $U_{X'}$ are ultrafilters on $\cC_X$ and $\cC_{X'}$, respectively, so $U:=f_{X',X}(U_{X'})$ is again an ultrafilter on $\cC_X$.
Assume for a contradiction that $U$ and $U_X$ are distinct,
and pick some bipartition $\{\cC,\cC_X\setminus\cC\}$ of $\cC_X$ with $U$ containing $\cC$ and $U_X$ containing $\cC_X\setminus\cC$.
By definition of $f_{X',X}$ there is some $\cC'$ in $U_{X'}$ witnessing $\cC\in U$, i.e. $\cC'$ satisfies $\cC'\rest X\subseteq\cC$.
Now choose witnesses $(X,P)=\gamma$ and $(X',P')=\gamma'$ of $\cC_X\setminus\cC\in U_X$ and $\cC'\in U_{X'}$, respectively, and pick some $\eta=(X',Q)$ in $\Gamma$ with $\eta\ge (X',P'),(X,P)$. 
Since $\cC'\rest X=\phi_{X',X}[\cC']$ holds by definition and $\cC$ includes $\cC'\rest X$, the set $V[\cC']$ is included in $V[\cC]$.
But $x_\gamma$ is the dummy vertex $V[\cC_X\setminus\cC]$ of $G/p(X,P)$, and $x_{\gamma'}$ is the dummy vertex $V[\cC']$ of $G/p(X',P')$, 
so $V[\cC']\subseteq V[\cC]$ implies $x_\gamma\cap x_{\gamma'}=\emptyset$, and
hence $x_\eta\subseteq x_\gamma\cap x_{\gamma'}=\emptyset$ (by definition of $f_{\eta,\gamma}$ and $f_{\eta,\gamma'}$) yields the desired contradiction.
\end{proof}

Therefore, $\Psi(x)$ is indeed an element of $\cU$ and our construction of $\Psi$ is complete. It remains to show that $\Psi$ is bijective and bicontinuous; we begin with injectivity:

\begin{claim_tcInvLim}
$\Psi$ is injective.
\end{claim_tcInvLim}
\begin{proof}[Proof of the Claim]\renewcommand{\qedsymbol}{$\diamond$}
Let $x=(x_\gamma\,|\,\gamma\in\Gamma)$ and $y=(y_\gamma\,|\,\gamma\in\Gamma)$ be two distinct elements of $\iL$ with $x_\eta\neq y_\eta$ for some $\eta\in\Gamma$. 
If $\Psi(x)\in G$ and $\Psi(y)\in\cU$ or vice versa, we are done, so suppose that $\Psi(x)$ and $\Psi(y)$ either are both in $G$ or both in $\cU$.

For the first case suppose that both $\Psi(x)$ and $\Psi(y)$ are in $G$, witnessed by $(X,P)$ and $(Y,Q)$, respectively, so neither of $x_{(X,P)}$ and $y_{(Y,Q)}$ is a dummy vertex. 
Pick some $\xi\ge\eta,(X,P),(Y,Q)$ and note that, by definition of the bonding maps and well-definition of $\Psi$, we have $x_\xi=x_{(X,P)}=\Psi(x)$ as well as $y_\xi=y_{(Y,Q)}=\Psi(y)$.
In particular, $x_\eta\neq y_\eta$ implies $x_\xi\neq y_\xi$ (again by definition of the bonding maps) and hence $\Psi(x)\neq\Psi(y)$.

For the second case suppose that both $\Psi(x)=(U_X\,|\,X\in\cX)$ and $\Psi(y)=(U_X'\,|\,X\in\cX)$ are elements of $\cU$, and write $\eta=(Y,P)$. 
Let $\cC$ and $\cC'$ be the partition classes of $P$ with $V[\cC]=x_\eta$ and $V[\cC']=y_\eta$. Then in particular $\cC$ is contained in $U_Y$ and $\cC'$ is contained in $U_Y'$. Since $\cC$ and $\cC'$ are disjoint, the ultrafilters $U_Y$ and $U_Y'$ are distinct, so $\Psi(x)\neq \Psi(y)$ holds.
\end{proof}

\begin{claim_tcInvLim}
$\Psi$ is surjective.
\end{claim_tcInvLim}
\begin{proof}[Proof of the Claim]\renewcommand{\qedsymbol}{$\diamond$}
Let any $y\in\TC$ be given.
If $y$ is not an element of $\cU$ we choose some $X$ in $\cX$ such that $y$ is contained in the 1-complex of $G[X]$, and let $\xi:=(X,\{\cC_X\})$. Then for every $\eta\ge\xi$ in $\Gamma$ we set $x_\eta=y$, and for every $\gamma'<\eta$ for some such $\eta$ we put $x_{\gamma'}=f_{\eta,\gamma'}(y)$. Then $(x_\gamma\,|\,\gamma\in\Gamma)$ is a well defined element of $\iL$ which $\Psi$ sends to $y$.

Otherwise $y$ is in $\cU$ and of the form $(U_X\,|\,X\in\cX)$. 
For every $\gamma=(X,P)\in\Gamma$ the ultrafilter $U_X$ chooses excatly one partition class $\cC(\gamma)$ of $P$. Then 
\begin{align*}
x:=\big(V[\cC(\gamma)]\,\big\vert\,\gamma\in\Gamma\big)
\end{align*}
is a well-defined point in $\iL$: Assume for a contradiction that there are some $\eta\le\eta'$ in $\Gamma$ with $x_\eta,x_{\eta'}$ incompatible in $\iL$ in that $f_{\eta',\eta}(x_{\eta'})\neq x_\eta$. Write $\eta=(Y,Q)$ and $\eta=(Y',Q')$. Then the dummy vertex $x_{\eta'}$ satisfies
\begin{align*}
x_{\eta'}\subseteq f_{\eta',\eta}(x_{\eta'})\in p(Y,Q)-\{x_\eta\}
\end{align*}
which implies $x_{\eta'}\cap x_\eta=\emptyset$. Furthermore, using the definition of $x$ yields
\begin{align}\label{tcInvLimEq}
V[\cC(\eta)]\cap V[\cC(\eta')]=\emptyset
\end{align}
Since $\cC(\eta')\rest Y$ and $\cC(\eta)$ are both in $U_Y$ it suffices to show that these sets have empty intersection in order to yield a contradiction. For this, let any $C\in\cC(\eta')\rest Y$ be given. By definition of $\cC(\eta')\rest Y$
there is some $C'\in\cC(\eta')$ with $C'\subseteq C$. Hence the component $C$ is not in $\cC(\eta)$, since otherwise the vertices of $C'$ would be contained in the left hand side of~(\ref{tcInvLimEq}) which is empty, a contradiction. Thus $\cC(\eta')\rest Y$ and $\cC(\eta)$ intersect emptily as desired.
\end{proof}

By now we know that $\Psi$ is a bijection. Before we verify that $\Psi$ is also bicontinuous, let us quickly recall Lemma~\ref{invLimBasis}: According to this Lemma, the collection of all open sets of the form $\cO_{\iL}(W,\eta)$ with $\eta$ some element of $\Gamma$ and $W$ some basic open set in $G_\eta$ is a basis of the topology of $\iL$. Now that we know this basis, we are almost done:
\begin{claim_tcInvLim}
$\Psi^{-1}$ maps basic open sets of $\TC$ to basic open sets of $\iL$.
\end{claim_tcInvLim}
\begin{proof}[Proof of the Claim]\renewcommand{\qedsymbol}{$\diamond$}
Let any point $x$ of $\TC$ be given together with some basic open neighbourhood $W$.

If $x$ is not in $\cU$ we choose some $X$ in $\cX$ such that $x$ is contained in the 1-complex of $G[X]$, and set $\eta:=(X,\{\cC_X\})$.
Then $W$ is also a basic open neighbourhood of $x$ in $G_\eta$, so $\Psi^{-1}[W]=\cO_{\iL}(W,\eta)$ holds.

Otherwise $x$ is an element of $\cU$, so $W$ is of the form $\cO_{\TC}(X,\cC)$ for some $X\in\cX$ and some non-empty $\cC\subseteq\cC_X$. Let $\eta:=(X,\{\cC,\cC_X\setminus\cC\})$ and consider the basic open neighbourhood $W':=\mathring{E}(X,\bigcup\cC)\cup \{V[\cC]\}$ of the dummy vertex $V[\cC]$ in $G_\eta$. Then one easily checks that $\Psi^{-1}[W]=\cO_{\iL}(W',\eta)$ holds.
\end{proof}

\begin{claim_tcInvLim}
$\Psi$ maps basic open sets of $\iL$ to open sets of $\TC$.
\end{claim_tcInvLim}
\begin{proof}[Proof of the Claim]\renewcommand{\qedsymbol}{$\diamond$}
Let any basic open set $\cO_{\iL}(W,\eta)$ of $\iL$ be given and write $(X,P)=\eta$.

If $W$ is a basic open neighbourhood of some non-dummy point of $G_\eta$,
then $\Psi[\cO_{\iL}(W,\eta)]=W$ is basic open in $\TC$.

Otherwise $W$ is of the form
\begin{align*}
\left(\mathring{E}(X,d)\cup\{d\}\right)\,\setminus\,\bigcup_{e\in F}(x_e,j_e]
\end{align*}
where $d$ is some dummy vertex of $G_\eta$, the set $F$ is some finite subset of $E(X,d)$, 
and for every $e\in F$ the point $j_e$ is an arbitrary inner edge point of $e$ while $x_e$ is the endvertex of $e$ in $X$.
Let $\cC$ be the partition class of $P$ with $V[\cC]=d$. Then
\begin{align*}
\Psi[\cO_{\iL}(W,\eta)]=\cO_{\TC}(X,\cC)\setminus \bigcup_{e\in F}[x_e,j_e]
\end{align*}
which is open since the finitely many intervals $[x_e,j_e]$ are closed in $\TC$.
\end{proof}
This completes the proof that $\Psi$ is a homeomorphism.
\end{proof}

From now on we treat $\iL\simeq\TC$ as $\iL=\TC$.




\newpage
\section{A least tangle compactification}\label{ECOcompactification}

We call a compactification $\alpha G$ of (the 1-complex of) $G$ an \textit{\ecomp{}}\index{\ecomp{}} of $G$ if $\alpha G$ is also a compactification of $\cV G$ and $\alpha G\setminus\mathring{E}$ is Hausdorff.
Even though we speak of an \ecomp{} `of' $G$, we formally treat it as a compacitification of $\cV G$, e.g. if $\alpha G\le\delta G$ holds for another \ecomp{} $\delta G$ of $G$, then any witness $f\colon \delta G\to\alpha G$ of this is required to fix $\Omega$ as well as $G$.
Whenever $\alpha G$ is an \ecomp{} of $G$ and $\alpha G\setminus\cV G$ is a singleton, we call $\alpha G$ a \textit{one-point} \ecomp{}\index{one-point \ecomp{}} of $G$.
For every $X\in\cX$ we denote by $c_X$\index{$c_X$} the map sending each $C\in\cC_X$ to the principal ultrafilter on $\cC_X$ generated by $\{C\}$. Furthermore, in this chapter every $\cC_X$ carries the discrete topology.

\subsection{Introduction}

As the graph depicted in Fig.~\ref{fig:tangleExample} (p.~\pageref{fig:tangleExample}) shows, there exist graphs $G$ for which $\TC$ is not the coarsest \ecomp{}: 
Indeed, if $G$ is the graph from Fig.~\ref{fig:tangleExample}, then $G$ has a one-point \ecomp{} reflecting the structure of $G$ while $\TC$ adds $2^{2^{\aleph_0}}=2^{\A{c}}\ge\aleph_2$ many ultrafilter tangles to $\CG\cup\Omega$.
But as a $T_{\aleph_0}$ shows, it is not always possible to just take the one-point \ecomp{} of $G$ in order to obtain a strictly coarser\footnote{
By `strictly coarser' we mean `coarser but not topologically equivalent'.
} \ecomp{} than $\TC$, since $\cV (T_{\aleph_0})$ does not even have a one-point compactification (see Proposition~\ref{OnePointCompVG} for a characterisation of the graphs $G$ admitting a one-point \ecomp{}). 
Hence Diestel \cite{EndsAndTangles} asked:
\begin{enumerate}
\item For which [graphs] $G$ is $\TC$ the coarsest [$\Omega$-]compactification [...]?\footnote{In his paper, Diestel asked `For which $G$ is $\TC$ the coarsest compactification in which its ends appear as distinct points?'.
In order to obviate coarser versions of the topology of $\cV G$ for some special $G$ from leading this question ad absurdum, we interpreted it as stated in (i).}
\item If it is not, is there a unique such topology [i.e. \ecomp{}],\\and is there a canonical way to obtain it from $\TC$?
\end{enumerate}

In order to tackle these questions, we proceed as follows:
In Section~\ref{FGconstruction} we construct for arbitrary $G$ an inverse system $\{\aF_X,\af_{X',X},\cX\}$ of T\textsubscript{2}-compactifica-\\tions $(\aF_X,c_X)$ of the $\cC_X$ whose inverse limit $\aF=\invLim\aF_X$ we use to obtain an \ecomp{} $\FG$ of $G$ in a similar way Diestel used $\cU$ to compactify $G$. The purpose of $\FG$ on the one hand is to serve as a witness when we characterise those graphs $G$ for which $\TC$ is not the coarsest \ecomp{}, and on the other hand it is our candidate for a positive answer regarding the first part of the second question. But we should have a closer look at the second question itself before we consider possible solutions.

Obviously, the second question tacitly assumes that any answer imposes some further meaningful conditions on the \ecomp{}s considered, as the example graph $G$ from Fig.~\ref{fig:FG3} shows.
\begin{figure}[t]
    \centering
    \def\svgwidth{\columnwidth}
    \scalebox{.75}{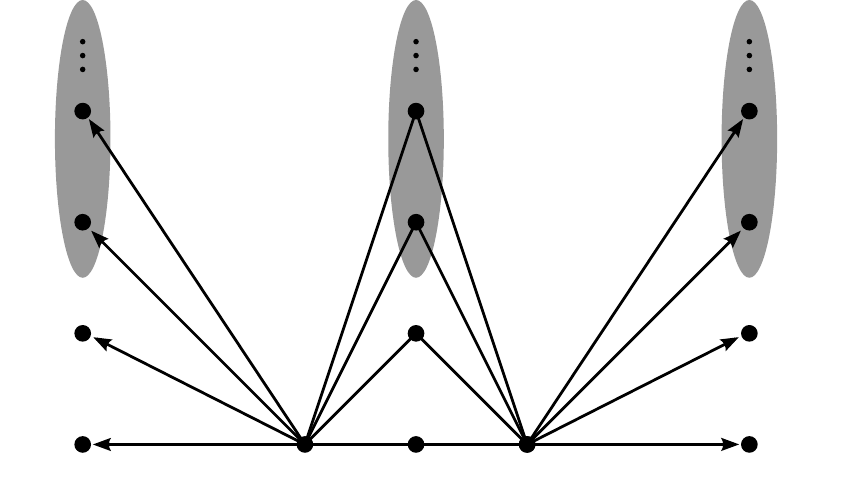}
    \caption{A graph $G$ whose one-point \ecomp{} does not reflect its structure.}
    \label{fig:FG3}
\end{figure}
By Proposition~\ref{OnePointCompVG} we know that this graph has a one-point \ecomp{}, so $\TC$ clearly is not the coarsest \ecomp{} of $G$.
But from an intuitive point of view, the one-point \ecomp{} of $G$ does not reflect the structure of the graph at all. Instead of one point, three points as indicated by the grey ovals in the drawing feel far more intuitive.
Indeed, if $x$ is one of $u$ and $t$, then $x$ splits up the drawing of the graph into a left hand side and a right hand side, which together induce a separation $s_x$ of the graph whose separator consists precisely of $x$.

The tangle compactification of $G$ respects $s_u$ and $s_t$ in that every $\aleph_0$-tangle lives on precisely one side of each. Since the two separations are nested, say $\vec{s_u}\le\vec{s_t}$ as in the drawing, they partition $\Theta$ into three classes $\Theta^-$, $\Theta^\circ$ and $\Theta^+$.
Here, the tangles in $\Theta^-$ are those living on the small side of $\vec{s_u}$, the tangles in $\Theta^+$ are those living on the big side of $\vec{s_t}$, and the tangles in $\Theta^\circ$ are those living in on the big side of $\vec{s_u}$ and on the small side of $\vec{s_t}$.
But $s_u$ and $s_t$ are not the only finite order separations of $G$ with two infinite sides, so $G$ has more than three $\aleph_0$-tangles. 
Hence our intuition suggests that some finite order separations reflect the structure of $G$ better than others. So what makes our two separations so special? The answer is rather simple: both separations respect the $\asymp$-classes\footnote{Recall that the equivalence relation $\asymp$ was defined on $\Upsilon$ by letting $\upsilon\asymp\upsilon'$ whenever $X_\upsilon=X_{\upsilon'}$ holds.}
which turn out to be $\Theta^-$, $\Theta^\circ$ and $\Theta^+$. Furthermore, we have $\crit(\cX)=\{\{u\},\{t\},\{u,t\}\}$, and $|\crit(\cX)|=|\Theta/{\asymp}|=3$ gives an indication that these might be related; but this was to be expected somewhat (in fact, we will see that the map $[\upsilon]_\asymp\mapsto X_\upsilon$ defines a bijection between $\Upsilon/{\asymp}$ and $\crit(\cX)$, see Corollary~\ref{UltrafilterCrit}).

Here, my intuition tells me that it might be worth to have a closer look at the quotient $\TC/{\asymp}$. 
Surprisingly, I did not find graph theoretical indications substantiating my intuition, let alone graph theoretical reasons to consider certain \ecomp{}s of $\CG$ revealing $\TC/{\asymp}$ as the coarsest one.
That is why I try a more technical approach for a change.
Remember that, on the one hand $\cU$ is a useful technical description of $\Theta$, but on the other hand $\cU$ can be understood as a generalisation of $\Omega$ (with the ends viewed as directions).
Therefore, if we manage to abstract the essential ideas behind the inverse system $\{\cU_X,f_{X',X},\cX\}$ generalising $\{\cC_X,\phi_{X',X},\cX\}$, then this might lead to meaningful conditions regarding question (ii), and maybe $\TC/{\asymp}$ turns out to be the coarsest \ecomp{} satisfying these.

For this, let us take a closer look at how $\cU$ manages to induce an \ecomp{} of $G$. 
As is well known, $\Omega$ is the inverse limit of the inverse system $\{\cC_X,\phi_{X',X}\}$. Hence, if every $\cC_X$ is finite, then $\Omega$ is compact Hausdorff by Lemma~\ref{InvLimCptHD}, and so is $\cV G$ (Observation~\ref{VGcpt}). But as soon as some $\cC_X$ is infinite, the space $\cV G$ is no longer compact. 
Let us have a closer look at the tangle compactification overcoming this hindrance. 
For every $X\in\cX$, the pair $(\cU_X,c_X)$ is the Stone-Čech \HDcomp{} of $\cC_X$, so $\cU=\invLim\cU_X$ is compact Hausdorff by Lemma~\ref{InvLimCptHD}.
For $\cV G$, the space $\Omega$ being compact in general does not imply that $\cV G$ is compact, e.g. consider a $K_{1,\aleph_0}$. This is because every infinite $\cC_X$ gives rise to a `bad' cover of $\cV G$ consisting of open sets (one that has no finite subcover).
Since every $\cU_X$ is compact, this cannot possibly happen in $\TC$. 
Next, we investigate $\Omega\subseteq\cU$. The bonding maps $f_{X',X}$ of the inverse system $\{\cU_X,f_{X',X}\}$ by definition respect the bonding maps $\phi_{X',X}$ of the inverse system $\{\cC_X,\phi_{X',X}\}$ in that for all $X\subseteq X'\in\cX$ the diagram
\begin{center}
\begin{tikzpicture}[pil/.style={
           ->,
           thin,
           shorten <=2pt,
           shorten >=2pt,}]
\node (1) {$\cC_X$};
\node (2) [right=of 1]{$\cC_{X'}$};
\node (3) [below=of 2]{$\cU_{X'}$};
\node (4) [below=of 1]{$\cU_X$};
\path[-stealth]
(2) edge node [above] {$\phi_{X',X}$} (1)
(3) edge node [below] {$f_{X',X}$} (4)
(2) edge node [right] {$c_{X'}$} (3)
(1) edge node [left] {$c_X$} (4);
\end{tikzpicture}
\end{center}
commutes, so as a result $\cU$ includes a homeomorphic copy of $\Omega$. Capturing these properties formally leads us to the notion of a $\cC$-system: A $\cC$-system is an inverse system of \HDcomp{}s of the $\cC_X$ whose bonding maps respect the maps $\phi_{X',X}$, and on the class of all $\cC$-systems (for fixed $G$) we define a natural order-relation $\le_{\cC}$. 
We prove, making use of the fact that $(\cU_X,c_X)$ is the Stone-Čech \HDcomp{} of $\cC_X$ for every $X\in\cX$, that $\cC_{\cU}=\{(\cU_X,c_X),f_{X',X}\}$ is the greatest $\cC$-system with respect to $\le_{\cC}$ and up to $\cC$-equivalence (this will be introduced formally in Section~\ref{Minimality}). Furthermore, every $\cC$-system gives rise to an \ecomp{} of $G$ in the same way $\cC_{\cU}$ yields $\TC$.
Hence the following two questions arise:
\begin{enumerate}
\item[(ii$'$)] Is there a coarsest \ecomp{} induced by a least $\cC$-system?
\item[(iii)] If there is, can it be used as a witness regarding question (i)?
\end{enumerate}

As our first main result of this chapter, we will show in Theorem~\ref{ECOminimal} that $\{(\aF_X,c_X),\af_{X',X}\}$ is indeed the least $\cC$-system (up to $\cC$-equivalence), and $\FG$ is the coarsest \ecomp{} of $G$ induced by a $\cC$-system while $\TC$ is the finest \ecomp{} of $G$ induced by a $\cC$-system. This settles question (ii$'$).

Furthermore, as our second main result of this chapter, we will show in Theorem~\ref{ECOequivalent} that $\FG$ and $\TC$ are topologically equivalent \ecomp{}s if and only if every $\cC_X$ is finite. In particular, $\TC$ is the coarsest \ecomp{} of the graph $G$ if and only if every $\cC_X$ is finite (i.e. $\TC$ coincides with $\cV G$). This settles question (i) and (iii).

In Section~\ref{AnaturalQuotient} we show that $\FG$ is homeomorphic to $\TC/{\asymp}$ (the quotient we deemed worth a second look earlier) by a homeomorphism fixing $\Omega$, and we give a description of $\asymp$ solely in terms of tangles. Together with the answer of (ii$'$) this settles question (ii).

As it turns out, it is even possible to describe $\aF$ using $\aleph_0$-tangles, simply by adjusting the underlying separation system (see Section~\ref{FGandTangles}).

Finally, in Section~\ref{FGinvLimsubsec} we show that there is an inverse subsystem of the inverse system $\{G_\gamma,f_{\gamma',\gamma},\Gamma\}$ (from Chapter~\ref{tangleInvLim}) whose inverse limit describes the space $\FG$.

\newpage
\subsection{The one-point $\Omega$-compactification of $G$}

If $G$ is a graph, then $\cV G$ is compact if and only if every $\cC_X$ is finite (Observation~\ref{VGcpt}). Hence $G$ has a trivial \ecomp{} if and only if every $\cC_X$ is finite.
Now if $G$ is a graph with not all $\cC_X$ finite, then $G$ in general does not have a one-point \ecomp{}. In this section we characterise the graphs $G$ admitting a one-point \ecomp{}.\\

Recall that by Theorem~\ref{OnePtCptIFF} a topological space $T$ has a one-point \HDcomp{} if and only if $T$ is locally compact and Hausdorff, but not compact.
We start with a characterisation of the graphs admitting a locally compact end space:

\begin{lemma}
For every graph $G$ the following are equivalent:
\begin{enumerate}
\item $\Omega$ is a locally compact subspace of $\cV G$.
\item For every end $\omega$ of $G$ there is some $X\in\cX$ such that for all $X'\in\lfloor X\rfloor_{\cX}$ only finitely many components $C$ of $G-X'$ with $C\subseteq C(X,\omega)$ contain ends of $G$.
\end{enumerate}
\end{lemma}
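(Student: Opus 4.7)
The plan is to exploit the fact that the subspace topology on $\Omega\subseteq\cV G$ has as basic opens the sets $\Omega(X,\omega)=\Omega\cap\hat{C}_1(X,\omega)$, and that for fixed $X\in\cX$ these sets partition $\Omega$ according to the component of $G-X$ in which an end lives; in particular each $\Omega(X,\omega)$ is clopen. Since any two distinct ends are separated by some $X\in\cX$, the subspace $\Omega$ is Hausdorff, so local compactness at $\omega$ amounts to exhibiting a compact $K\subseteq\Omega$ containing some $\Omega(X,\omega)$.

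For (i)$\to$(ii), I would fix $\omega$, invoke local compactness to get compact $K\subseteq\Omega$ with $\Omega(X,\omega)\subseteq K$, and then show that this very $X$ satisfies (ii). Assuming the contrary, one obtains some $X'\in\lfloor X\rfloor_{\cX}$ and infinitely many distinct components $D_n$ of $G-X'$ with $D_n\subseteq C(X,\omega)$, together with ends $\omega_n\in\Omega(X',D_n)\subseteq\Omega(X,\omega)\subseteq K$. The plan is to derive a contradiction by showing that $S=\{\omega_n\mid n\in\N\}$ is an infinite closed discrete subspace of $K$: discreteness is witnessed by the pairwise disjoint neighbourhoods $\Omega(X',D_n)$, and for any $\omega^{*}\in K\setminus S$ either the component $D^{*}$ of $G-X'$ containing $\omega^{*}$ is distinct from every $D_n$ (so $\Omega(X',D^{*})$ avoids $S$) or $D^{*}=D_m$ for a unique $m$, in which case choosing $Y\supseteq X'$ separating $\omega^{*}$ from $\omega_m$ yields $\Omega(Y,\omega^{*})\subseteq\Omega(X',D_m)$ avoiding every $\omega_n$.

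For (ii)$\to$(i), I would show that any $X$ as in (ii) makes $\Omega(X,\omega)$ itself a compact (and clopen) neighbourhood of $\omega$. The idea is to identify $\Omega(X,\omega)$ with the inverse limit of the system $\{\cC_{X'}^{\omega,X},\phi_{X'',X'}\rest\cdot,\lfloor X\rfloor_{\cX}\}$, where $\cC_{X'}^{\omega,X}$ is the collection of components of $G-X'$ contained in $C(X,\omega)$ which contain an end of $G$. By (ii) each $\cC_{X'}^{\omega,X}$ is finite, and the restricted bonding maps are well defined because an end in $D''$ lies in $\phi_{X'',X'}(D'')$ and the latter is contained in $C(X,\omega)$ as soon as $D''$ is. The map $\omega'\mapsto (C(X',\omega')\mid X'\in\lfloor X\rfloor_{\cX})$ is then easily seen to be a bijection from $\Omega(X,\omega)$ to this inverse limit, which is compact by Lemma~\ref{InvLimCptHD} applied to the finite discrete spaces $\cC_{X'}^{\omega,X}$.

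The main obstacle I anticipate is in (ii)$\to$(i), where I need to verify that the inverse limit topology matches the subspace topology on $\Omega(X,\omega)$; by Lemma~\ref{invLimBasis} this reduces to matching the basic opens $\pi_{X'}^{-1}(\{D\})$ with $\Omega(X',D)$ for $D\in\cC_{X'}^{\omega,X}$, which should be immediate once one has checked that every basic neighbourhood $\Omega(Y,\omega')$ of an end $\omega'\in\Omega(X,\omega)$ can, by enlarging $Y$ to include $X$, be written in this form.
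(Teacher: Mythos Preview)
Your proposal is correct and follows essentially the same approach as the paper's proof: for (i)$\to$(ii) both arguments pick a compact neighbourhood $K\supseteq\Omega(X,\omega)$ and use the failure of (ii) at this particular $X$ to contradict compactness of $K$ (the paper via an open cover $\{\Omega(X',C)\mid C\in\cC_{X'}\}$ with no finite subcover, you via an infinite closed discrete subset, which are equivalent); for (ii)$\to$(i) both arguments identify $\Omega(X,\omega)$ with the inverse limit of the finite discrete spaces of end-containing components of $G-X'$ inside $C(X,\omega)$, indexed by $X'\in\lfloor X\rfloor_{\cX}$, and invoke Lemma~\ref{InvLimCptHD}. Your treatment of the homeomorphism in (ii)$\to$(i) is in fact more explicit than the paper's, which simply asserts it is ``easy to see''.
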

\begin{proof}
(i)$\to$(ii). 
Assume for a contradiction that there is some end $\omega$ of $G$ for which (ii) fails. Since $\Omega$ is locally compact, we find some compact $A\subseteq\Omega$ together with an open neighbourhood $O$ of $\omega$ in $\Omega$ such that $O\subseteq A$. Without loss of generality $O$ is induced by a basic open neighbourhood $\hat{C}(X,\omega)$ for some $X\in\cX$. Since (ii) fails for $\omega$ we find some $X'\in\lfloor X\rfloor_{\cX}$ such that infinitely many components $C$ of $G-X'$ with $C\subseteq C(X,\omega)$ contain at least one end of $G$. Then the collection $\{\Omega(X',C)\,|\,C\in\cC_{X'}\}$ is an open cover of $\Omega$ which has no finite subcover of $\Omega\cap\hat{C}(X,\omega)$. In particular, it is an open cover of $A$ that has no finite subcover, contradicting the compactness of $A$.

(ii)$\to$(i).
In order to show that $\Omega(G)$ is locally compact, let $\omega$ be an end of $G$.
Our task is to find a compact neighbourhood of $\omega$ in $\Omega$.
Let $A:=\Omega\cap\hat{C}(X,\omega)$ for some $X\in\cX$ as in (ii), and note that $A$ is a closed neighbourhood of $\omega$ in $\Omega$.
For every $X'\in\lfloor X\rfloor_{\cX}$ let
\begin{align*}
\cD_{X'}:=\big\{C\in\phi_{X',X}^{-1}(C(X,\omega))\,\big|\,\Omega(X',C)\neq\emptyset\big\}
\end{align*}
carry the discrete topology and note that $\cD_{X'}$ is finite and non-empty by choice of $X$. Then by Lemma~\ref{InvLimCptHD} the inverse system 
$\{\cD_Y,\phi_{Y',Y}\rest\cD_{Y'},\lfloor X'\rfloor_{\cX}\}$ has a compact inverse limit $\cI$ whose elements extend precisely to the ends of $G$ in $\hat{C}(X,\omega)$. 
In particular, it is easy to see that $\cI$ and $A$ are even homeomorphic, so $A$ is compact as desired.
\end{proof}

As the following example shows, a locally compact end space in general does not suffice to ensure the existence of a one-point \ecomp{} of $\cV G$: indeed, if $G$ is the graph depicted in Fig.~\ref{fig:OmLocCompVGnot}, then $\cV G$ obviously is not compact, and every open neighbourhood of the sole end of the graph includes a copy of $\cV G$. Hence $\cV G$ and $\cV G\setminus\mathring{E}$ are not locally compact at the sole end of the graph, so $\cV G\setminus\mathring{E}$ has no one-point \HDcomp{} by Theorem~\ref{OnePtCptIFF}.

\begin{figure}[H]
    \centering
    \def\svgwidth{\columnwidth}
    \scalebox{.75}{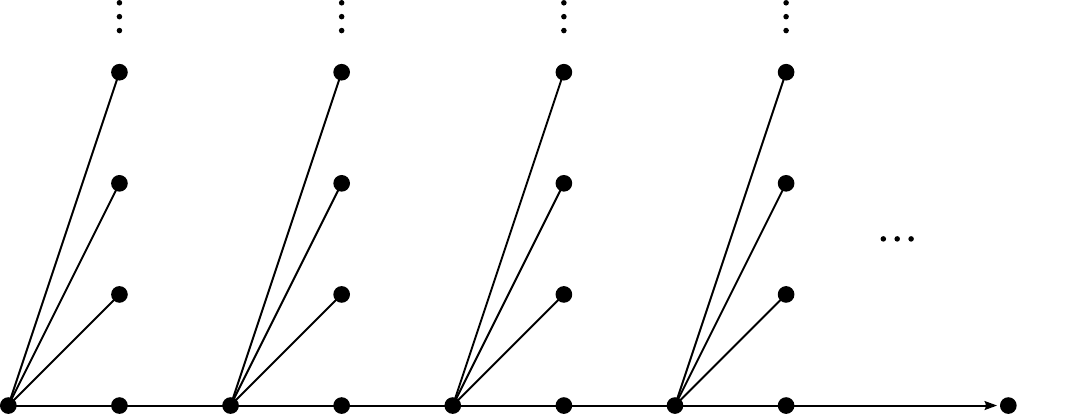}
    \caption[]{A graph $G$ with $\Omega(G)$ locally compact, but $\cV G$ not.\footnotemark}
    \label{fig:OmLocCompVGnot}
\end{figure}
\footnotetext{Interestingly, this graph looks precisely like the topological space from \cite[Ex. 27.15 and Fig. 27.3]{Willard}.}

\begin{proposition}\label{OnePointCompVG}
For every graph $G$ the following are equivalent:
\begin{enumerate}
\item $G$ has a one-point \ecomp{}.
\item There is some $Y\in\cX$ with $\cC_Y$ infinite and for every end $\omega$ of $G$ there is some $X\in\cX$ such that for all $X'\in\lfloor X\rfloor_{\cX}$ the set $\phi_{X',X}^{-1}(C(X,\omega))$ is finite. 
\end{enumerate}
\end{proposition}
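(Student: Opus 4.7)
The plan is to treat Proposition~\ref{OnePointCompVG} as the $\Omega$-compactification analogue of Theorem~\ref{OnePtCptIFF}: condition (ii) packages exactly the non-compactness and local-compactness data that lets us adjoin a single point at infinity while keeping $\alpha G\setminus\mathring{E}$ Hausdorff.

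For the direction (i)$\to$(ii), fix a one-point $\Omega$-compactification $\alpha G=\cV G\uplus\{\ast\}$. The first clause of (ii)---some $\cC_Y$ is infinite---comes from the fact that $\cV G$ cannot be compact: otherwise by Observation~\ref{VGcpt} the embedded image of $\cV G\setminus\mathring{E}$ in the Hausdorff space $\alpha G\setminus\mathring{E}$ is closed and dense, forcing $\{\ast\}=\emptyset$. For the second clause, fix an end $\omega$ and use Hausdorffness of $\alpha G\setminus\mathring{E}$ to separate $\ast$ from $\omega$ by disjoint opens, yielding a compact closed neighbourhood $K\ni\omega$ in $\alpha G\setminus\mathring{E}$ that avoids $\ast$. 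Since $\{\hat{C}(X,\omega)\cap(\cV G\setminus\mathring{E})\mid X\in\cX\}$ is a local base at $\omega$, some such set lies inside $K$. Now if (ii) failed for this $\omega$, then for any candidate $X$ we could pick $X'\in\lfloor X\rfloor_{\cX}$ with $\phi_{X',X}^{-1}(C(X,\omega))$ infinite, and the cover of $K$ by the corresponding sets $\hat{C}(X',C)\cap(\cV G\setminus\mathring{E})$ would admit no finite subcover---a contradiction.

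For the direction (ii)$\to$(i), I would construct $\alpha G$ directly. Set $\alpha G:=\cV G\uplus\{\ast\}$ with the topology generated by the open sets of $\cV G$ together with all sets of the form
\[
O(X,\cC'):=\{\ast\}\cup\bigcup_{C\in\cC'}\bigl(\mathring{E}(X,C)\cup C\cup\Omega(X,C)\bigr),
\]
where $X\in\cX$ and $\cC'$ is a cofinite subset of $\cC_X$. Density of $\cV G$ and the subspace property of the embedding are immediate, and Hausdorffness of $\alpha G\setminus\mathring{E}$ at $\ast$ follows from (ii): given an end $\omega$, pick $X$ as in (ii), so that only finitely many components of $G-X$ are ``dangerous'', and separate $\ast$ from $\omega$ by choosing $\cC'$ to exclude the one dangerous component in $\cC_X$ containing $\omega$.

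The substantial part is compactness of $\alpha G$. Given an open cover $\mathcal O$, pick $W_\ast\in\mathcal O$ covering $\ast$; without loss of generality $W_\ast\supseteq O(X_0,\cC')$ for some $X_0$ and cofinite $\cC'\subseteq\cC_{X_0}$. What remains uncovered sits inside the finite subgraph $G[X_0]$ (easily handled) together with the finitely many components $C\in\cC_{X_0}\setminus\cC'$. For each such $C$ and each end $\omega$ living in $C$, condition (ii) supplies $X_\omega\supseteq X_0$ whose superset images are all finite; the inverse system of those images is then finitely branching, and by the Generalised Infinity Lemma (Lemma~\ref{GIL}) the end space inside $C(X_\omega,\omega)$ is compact, yielding a finite subcover locally. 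Piecing these together gives a finite subcover of $\mathcal O$. The main obstacle is exactly this finite-subcover extraction: the condition on \emph{all} $X'\in\lfloor X\rfloor_{\cX}$ (not merely some) is what makes the recursion terminate, because only then can we propagate local compactness downward through the end structure without accidentally spawning new infinite families of components.
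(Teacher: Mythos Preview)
Your direction (i)$\to$(ii) is essentially the paper's argument and is fine.

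The direction (ii)$\to$(i) has a genuine gap: the topology you define on $\alpha G$ is too fine to be compact. You declare $O(X,\cC')$ open for \emph{every} cofinite $\cC'\subseteq\cC_X$, whereas the paper additionally requires that $\cC'$ contain the set
\[
\cK_X:=\bigl\{C\in\cC_X\ \big|\ \exists\,X'\in\lfloor X\rfloor_{\cX}\colon \phi_{X',X}^{-1}(C)\text{ is infinite}\bigr\}.
\]
This extra condition is not cosmetic. Consider two disjoint copies of $K_{1,\aleph_0}$ with centres $c_1,c_2$ joined by an edge $c_1c_2$. This graph has no ends, so condition (ii) holds trivially. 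In your topology, $O(\{c_1\},\cC_{\{c_1\}}\setminus\{C\})$ is open, where $C$ is the component of $G-c_1$ containing $c_2$. Covering $\alpha G$ by this set together with $\cO_G(c_1,1)$, $\cO_G(c_2,1)$ and $\cO_G(\ell,1)$ for each leaf $\ell$ of the second star gives an open cover with no finite subcover: each such leaf $\ell$ lies only in its own $\cO_G(\ell,1)$.

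Your compactness sketch does not catch this because it only treats the ends living in the leftover components $C\in\cC_{X_0}\setminus\cC'$; the obstruction here comes from vertices, not ends. The paper's restriction $\cK_X\subseteq\cC$ guarantees that every component excluded from $\cC$ has finite preimage under \emph{every} $\phi_{X',X}$, so the complement of a basic neighbourhood of $\ast$ is (up to a finite graph) homeomorphic to some $\cV H$ with all $\cC_Y(H)$ finite, hence compact by Observation~\ref{VGcpt}. Condition (ii) is used in the paper not for compactness but to verify that, for each end $\omega$, the set $\cC_X\setminus\{C(X,\omega)\}$ really does contain $\cK_X$, so that $\ast$ and $\omega$ can be separated in $\alpha G\setminus\mathring{E}$.
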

\begin{proof}
(i)$\to$(ii). 
If $\alpha G$ is a one-point \ecomp{} of $G$, then $\alpha G\setminus\mathring{E}$ is a one-point \HDcomp{} of $\cV G\setminus\mathring{E}$.
It follows from Theorem~\ref{OnePtCptIFF} that $\cV G\setminus\mathring{E}$ is locally compact but not compact, so Observation~\ref{VGcpt} implies that there is some $Y\in\cX$ with $\cC_Y$ infinite. 
Now assume for a contradiction that there is some $\omega\in\Omega$ not satisfying the condition from (ii). 
Since $\cV G\setminus\mathring{E}$ is locally compact, we find a compact neighbourhood $A$ of $\omega$ in $\cV G\setminus\mathring{E}$. 
Pick a basic open neighbourhood $O=\hat{C}(X,\omega)$ of $\omega$ in $\cV G$ with $O\setminus\mathring{E}\subseteq A$. Since (ii) fails, there is some $X'\in\lfloor X\rfloor_{\cX}$ with $\phi_{X',X}^{-1}(C(X,\omega))$ infinite. Then the collection 
\begin{align*}
\big\{\overline{C}\setminus\mathring{E}\,\big|\,C\in\cC_{X'}\big\}\cup\big\{\{x\}\,\big|\,x\in X'\big\}
\end{align*}
is a cover of $\cV G\setminus\mathring{E}$ of open sets which has no finite subcover of $A$, contradicting the compactness of $A$.

(ii)$\to$(i). Since there is some $Y\in\cX$ with $\cC_Y$ infinite, we know that $\cV G$ is not compact.
Let $\ast$ be a point that is not in $G\cup\Omega$, and extend $\cV G$ to a topological space $\alpha G=\cV G\uplus\{\ast\}$ by declaring as open for every $X\in\cX$ and $\cC\subseteq\cC_X$ the set
\begin{align*}
O^\ast(X,\cC):=\medcup\cC\cup\mathring{E}(X,\medcup\cC)\cup\Omega(X,\cC)\cup\{\ast\}
\end{align*}
whenever $\cC$ is cofinite in $\cC_X$ and the set
\begin{align*}
\cK_X:=\big\{C\in\cC_X\,\big|\,\exists X'\in\lfloor X\rfloor_{\cX}:\phi_{X',X}^{-1}(C)\text{ is infinite}\big\}
\end{align*}
is included in $\cC$, and taking on $\alpha G$ the topology this generates. To see that this yields a topology, it suffices to show that for every two open neighbourhoods $O^\ast (X,\cC)$ and $O^\ast(Y,\cD)$ of $\ast$ there is some open neighbourhood of $\ast$ included in their intersection.
Due to $\cK_X\subseteq\cC$ we know that $\cC':=\phi_{X\cup Y,X}^{-1}(\cC)$ is cofinite in $\cC_{X\cup Y}$. Furthermore, $\cK_{X\cup Y}\subseteq\cC'$ follows from $\cK_{X\cup Y}\subseteq\phi_{X\cup Y,X}^{-1}(\cK_X)$ and the choice of $\cC'$. Similarly, $\cD':=\phi_{X\cup Y,Y}^{-1}(\cD)$ is cofinite in $\cC_{X\cup Y}$ and $\cK_{X\cup Y}\subseteq\cD$, so
\begin{align*}
\ast\in O^\ast(X\cup Y,\cC'\cap\cD')\subseteq O^\ast(X,\cC)\cap O^\ast(Y,\cD)
\end{align*}
holds as desired. Clearly, $\cV G$ is a dense subspace of $\alpha G$. Furthermore, $\alpha G\setminus\mathring{E}$ is Hausdorff: For this, let $\omega$ be an end of $G$, and pick $X\in\cX$ as in (ii). Then $\hat{C}(X,\omega)$ and $O^\ast(X,\cC_X\setminus\{C(X,\omega)\})$ are disjoint open neighbourhoods of $\omega$ and $\ast$, respectively. For other points this is clear, so $\alpha G\setminus\mathring{E}$ is Hausdorff as claimed. It remains to show that $\alpha G$ is compact.
For this, let $O=O^\ast(X,\cC)$ be an arbitrary open neighbourhood of $\ast$. It suffices to show that $\alpha G\setminus O$ is compact. Let $H:=G-\bigcup\cC$. Clearly, $\cV H$ is homeomorphic to $\alpha G\setminus O$, so it suffices to show that $\cV H$ is compact. Since $\cK_X$ is included in $\cC$ and $\cC_X\setminus\cC$ is finite, for every $X'\in\lfloor X\rfloor_{\cX}$ the set $\phi_{X',X}^{-1}(\cC_X\setminus\cC)$ is also finite. Hence $\cV H$ is compact by Observation~\ref{VGcpt} as desired.
\end{proof}

\subsection{An inverse limit of Hausdorff compactifications}\label{FGconstruction}

In order to create a coarser \ecomp{} than $\TC$, we create an inverse system $\{\aF_X,\af_{X',X},\cX\}$\index{$\{\aF_X,\af_{X',X},\cX\}$} of \HDcomp{}s $(\aF_X,c_X)$ of the $\cC_X$, which we then use to obtain an \ecomp{} $\FG=G\cup\aF$ of $G$.\\

If $M$ is a set, then we denote by $\text{cof}(M)$ the cofinite filter on $M$. 
Recall that for two sets $\cA,B$ with $\cA\subseteq 2^B$ we denote by $\langle \cA\rangle_B$  the collection of all supersets $B'\subseteq B$ of elements of $\cA$, and we call $\langle \cA\rangle_B$ the \textit{set-theoretic up-closure of} $\cA$ \textit{in} $2^B$.
Now we start the construction: For every $X\in\cX$ and every $Y\in \crit(X)$ let $\cF_X(Y)$ be the set-theoretic up-closure of $\text{cof}(\cC_X(Y))$ in $2^{\cC_X}$, i.e.\index{$\cF_X(Y)$}
\begin{align*}
\cF_X(Y)=\langle\,\text{cof}(\cC_X(Y))\,\rangle_{\cC_X}.
\end{align*}
In particular, $\cF_X(Y)$ is again a filter on $\cC_X$.
Recall that, for every $X\in\cX$, we chose $c_X$ to be the function with domain $\cC_X$ which sends every $C\in\cC_X$ to the principal ultrafilter on $\cC_X$ generated by $\{C\}$.
Write\index{$\aF_X$, $\aF_X^{\text{p}}$, $\aF_X^\ast$}
\begin{align*}
\A{F}_X^\text{p}&=c_X[\cC_X],\\
\A{F}_X^\ast&=\{\cF_X(Y)\,|\,Y\in \crit(X)\},\\
\aF_X&=\A{F}_X^\text{p}\uplus\A{F}_X^\ast.
\end{align*}
We endow the spaces $\aF_X$ with the standard topology whose basic open sets are of the form\index{$\cO_{\aF_X}(\cC)$}
\begin{align*}
\cO_{\aF_X}(\cC):=\{\cF\in\aF_X\,|\,\cC\in\cF\},
\end{align*}
one for each $\cC\subseteq\cC_X$. 
This topology has a basis which suits our needs far better:
For every $X\in\cX$ let $\aC_X$\index{$\aC_X$} be the collection of all singleton subsets of $\cC_X$ and all sets $\cC$ which are cofinite in $\cC_X(Y)$ for some $Y\in \crit(X)$. Let $\aB_X$ be the collection of all sets $\cO_{\aF_X}(\cC)$ with $\cC\in\aC_X$. Then\index{$\aB_X$}

\begin{lemma}\label{FXbasis}
$\aB_X$ is a basis for the topology of $\aF_X$.
\end{lemma}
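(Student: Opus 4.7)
The topology of $\aF_X$ is generated by the subbasis $\{\cO_{\aF_X}(\cC)\,|\,\cC\subseteq\cC_X\}$, which is already closed under finite intersections because every element of $\aF_X$ is a filter and hence $\cO_{\aF_X}(\cC)\cap \cO_{\aF_X}(\cD)=\cO_{\aF_X}(\cC\cap\cD)$. Since $\aB_X\subseteq\{\cO_{\aF_X}(\cC)\,|\,\cC\subseteq\cC_X\}$, every element of $\aB_X$ is open, and it remains to verify the usual basis condition: for every $\cC\subseteq\cC_X$ and every $\cF\in\cO_{\aF_X}(\cC)$, one must produce some $\cC'\in\aC_X$ with $\cF\in\cO_{\aF_X}(\cC')\subseteq\cO_{\aF_X}(\cC)$.

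The inclusion part is actually automatic from the fact that every element of $\aF_X$ is upwards closed as a filter on $\cC_X$: whenever $\cC'\subseteq\cC$ we get $\cO_{\aF_X}(\cC')\subseteq\cO_{\aF_X}(\cC)$. Hence the task reduces to finding, for each $\cF\in\cO_{\aF_X}(\cC)$, some $\cC'\in\aC_X$ satisfying $\cC'\subseteq\cC$ and $\cC'\in\cF$. Since $\aF_X=\aF_X^{\mathrm{p}}\uplus \aF_X^\ast$, I split into two cases according to the type of $\cF$.

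If $\cF=c_X(C)$ is principal, then $\cC\in\cF$ forces $C\in\cC$, so $\cC':=\{C\}\subseteq\cC$ is a singleton in $\aC_X$ with $\{C\}\in c_X(C)=\cF$. If $\cF=\cF_X(Y)$ for some $Y\in\crit(X)$, then by the very definition of $\cF_X(Y)$ as the set-theoretic up-closure of $\mathrm{cof}(\cC_X(Y))$ in $2^{\cC_X}$, the membership $\cC\in\cF$ delivers some $\cC'\subseteq\cC$ that is cofinite in $\cC_X(Y)$; by definition of $\aC_X$ this $\cC'$ lies in $\aC_X$, and it is in $\cF_X(Y)$ by construction.

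I expect no real obstacle here: once the two cases are set out, everything reduces to the definitions of $\aC_X$, $\cF_X(Y)$ and of an upward-closed filter. The only subtle point worth a quick remark is that for the singleton case $\cC'=\{C\}$ one may additionally note that $\cO_{\aF_X}(\{C\})$ equals $\{c_X(C)\}$, because every $\cF_X(Y)\in\aF_X^\ast$ contains only subsets of $\cC_X$ that include a cofinite subset of the infinite set $\cC_X(Y)$, hence no singleton; this confirms that the points of $\aF_X^{\mathrm{p}}$ are isolated in $\aF_X$.
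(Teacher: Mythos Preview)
Your proof is correct and follows essentially the same approach as the paper's: both reduce to the two cases $\cF=c_X(C)$ and $\cF=\cF_X(Y)$, taking $\{C\}$ in the first and a cofinite-in-$\cC_X(Y)$ subset of $\cC$ in the second. The only cosmetic difference is that the paper names the explicit choice $\cD=\cC\cap\cC_X(Y)$, whereas you invoke the definition of the up-closure to obtain some such $\cC'$; these amount to the same thing.
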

\begin{proof}
Given $\cF\in\aF_X$ and $\cC\subseteq\cC_X$ with $\cF\in\cO_{\aF_X}(\cC)$ we have to find some $\cD\in\aC_X$ with $\cF\in\cO_{\aF_X}(\cD)\subseteq\cO_{\aF_X}(\cC)$. If $\cF=c_X(C)$ holds for some $C\in\cC_X$, then we may choose $\{C\}$ as $\cD$. Otherwise $\cF$ is of the form $\cF_X(Y)$ for some $Y\in \crit(X)$. Then $\cC\in\cF$ implies that $\cC\cap\cC_X(Y)$ is cofinite in $\cC_X(Y)$, so we may pick $\cD=\cC\cap\cC_X(Y)$.
\end{proof}

Whenever we speak of a basic open neighbourhood of some element of $\aF_X$, we mean `basic' with respect to $\aB_X$.

The $\aF_X$ compactify the $\cC_X$ as promised:

\begin{lemma}\label{aCXcomp}
$(\A{F}_X,c_X)$ is a \HDcomp{} of $\cC_X$, for every $X\in\cX$.
\end{lemma}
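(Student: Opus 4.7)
The plan is to verify the three defining properties of a Hausdorff compactification separately: that $c_X$ embeds $\cC_X$ as a dense subspace, that $\aF_X$ is Hausdorff, and that $\aF_X$ is compact. Throughout, I will work with the basis $\aB_X$ provided by Lemma~\ref{FXbasis}. The key combinatorial observation powering everything is that $\crit(X)\subseteq 2^X$ is finite (since $X\in\cX$ is finite), while each $\cC_X(Y)$ with $Y\notin\crit(X)$ is finite by definition.

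First I would check that $c_X\colon\cC_X\to\aF_X^{\mathrm p}$ is a homeomorphism onto its image, where $\cC_X$ carries the discrete topology. Injectivity is immediate from the definition of the principal ultrafilters. For each $C\in\cC_X$ I claim $\{c_X(C)\}=\cO_{\aF_X}(\{C\})$: the principal ultrafilter $c_X(C')$ contains $\{C\}$ iff $C'=C$, and no $\cF_X(Y)$ contains the singleton $\{C\}$, because every element of $\cF_X(Y)$ must include a cofinite subset of the infinite set $\cC_X(Y)$. Hence $\aF_X^{\mathrm p}$ is discrete as a subspace of $\aF_X$, so $c_X$ is an embedding. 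For density, note that every element of $\aC_X$ is non-empty, and if $\cC\in\aC_X$ and $C\in\cC$ then $c_X(C)\in\cO_{\aF_X}(\cC)$; so every non-empty basic open set of $\aF_X$ meets $\aF_X^{\mathrm p}$, which implies $\aF_X^{\mathrm p}$ is dense.

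Next I would verify Hausdorffness by a short case analysis on pairs of distinct points of $\aF_X$. Two distinct principal ultrafilters $c_X(C)\ne c_X(C')$ are separated by $\cO_{\aF_X}(\{C\})$ and $\cO_{\aF_X}(\{C'\})$. For $c_X(C)$ versus $\cF_X(Y)$, if $C\notin\cC_X(Y)$ one separates via $\{C\}$ and $\cC_X(Y)$ (both in $\aC_X$), and otherwise via $\{C\}$ and $\cC_X(Y)\setminus\{C\}$, which is still cofinite in $\cC_X(Y)$. For $\cF_X(Y)\ne\cF_X(Y')$ with $Y,Y'\in\crit(X)$ distinct, the sets $\cC_X(Y)$ and $\cC_X(Y')$ are disjoint (a component has a unique neighbourhood in $X$), so the corresponding basic open sets separate them.

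The main obstacle will be compactness, but the finiteness observations above make it short. Given an open cover of $\aF_X$, for each of the finitely many $Y\in\crit(X)$ pick a member of the cover containing $\cF_X(Y)$, and shrink it to a basic open set $\cO_{\aF_X}(\cC_Y)$ with $\cC_Y$ cofinite in $\cC_X(Y)$. The remaining set $\aF_X\setminus\bigcup_{Y\in\crit(X)}\cO_{\aF_X}(\cC_Y)$ consists only of principal ultrafilters $c_X(C)$ for which either $C\in\cC_X(Y)\setminus\cC_Y$ for some $Y\in\crit(X)$ (finitely many such $C$), or $C\in\cC_X(Y)$ for some $Y\in 2^X\setminus\crit(X)$ (finitely many $Y$, each contributing finitely many components). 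Hence the remainder is finite and can be covered by finitely many further members of the cover, producing a finite subcover. This establishes compactness and completes the proof that $(\aF_X,c_X)$ is a Hausdorff compactification of $\cC_X$.
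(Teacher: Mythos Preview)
Your proof is correct and follows essentially the same approach as the paper: verify the embedding is onto a discrete dense subspace, check Hausdorffness via the pairwise disjointness of the sets $\cC_X(Y)$, and prove compactness by first covering the finitely many points of $\aF_X^\ast$ and then observing that only finitely many principal points remain. Your version is more explicit in places (the full case analysis for Hausdorffness, the singleton computation $\{c_X(C)\}=\cO_{\aF_X}(\{C\})$), but the structure and key ideas are the same.
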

\begin{proof}
Since we equipped $\cC_X$ with the discrete topology and $\A{F}_X$ also induces the discrete topology on $\aF_X^{\text{p}}=c_X[\cC_X]$, the map $c_X$ clearly is an embedding. Clearly, $c_X[\cC_X]$ is dense in $\aF_X$, and $\aF_X$ is Hausdorff since for every two distinct $Y$ and $Y'$ in $\crit(X)$ we have $\cC_X(Y)\cap\cC_X(Y')=\emptyset$.
Finally we show that $\A{F}_X$ is compact: Let $\{O_i\,|\,i\in I\}$ be any cover of $\A{F}_X$ by open sets $O_i=\cO_{\A{F}_X}(\cC_i)$. For every $Y\in \crit(X)$ pick some $i_Y\in I$ such that $\cF_X(Y)\in\cO_{i_Y}$ and note that $\cC_X(Y)\setminus\cC_{i_Y}$ is finite. Hence $\cC_X\setminus\bigcup_{Y\in \crit(X)}\cC_{i_Y}$ is finite, and we are done.
\end{proof}

\begin{lemma}\label{FXtotDisc}
The spaces $\aF_X$ are totally disconnected.
\end{lemma}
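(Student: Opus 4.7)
My plan is to exhibit a basis of clopen sets for $\aF_X$, which together with the Hausdorffness of $\aF_X$ (Lemma~\ref{aCXcomp}) yields total disconnectedness. By Lemma~\ref{FXbasis}, the collection $\aB_X = \{\cO_{\aF_X}(\cC) \mid \cC \in \aC_X\}$ is already a basis, so it suffices to show that every such $\cO_{\aF_X}(\cC)$ is closed. I will handle the two types of $\cC \in \aC_X$ (singletons and cofinite subsets of some $\cC_X(Y)$) in turn, in each case identifying the complement as a basic open set.

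First, for a singleton $\cC = \{C\}$ with $C \in \cC_X$, I would show that
\[
\aF_X \setminus \cO_{\aF_X}(\{C\}) \;=\; \cO_{\aF_X}(\cC_X \setminus \{C\}).
\]
For principal ultrafilters $c_X(C')$ this is immediate: $\{C\} \notin c_X(C')$ iff $C' \neq C$ iff $\cC_X \setminus \{C\} \in c_X(C')$. For any $\cF_X(Y) \in \aF_X^\ast$, the set $\{C\}$ is never cofinite in the infinite $\cC_X(Y)$, so $\cF_X(Y) \notin \cO_{\aF_X}(\{C\})$; conversely $\cC_X(Y) \setminus \{C\}$ is cofinite in $\cC_X(Y)$, so $\cC_X \setminus \{C\} \in \cF_X(Y)$. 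Hence the complement is open.

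Second, for $\cC$ cofinite in some $\cC_X(Y)$ with $Y \in \crit(X)$, I would show analogously that
\[
\aF_X \setminus \cO_{\aF_X}(\cC) \;=\; \cO_{\aF_X}(\cC_X \setminus \cC).
\]
The key observations for the case analysis are: (a) for distinct $Y, Y' \in \crit(X)$, the sets $\cC_X(Y)$ and $\cC_X(Y')$ are disjoint, because each component of $G-X$ has a unique neighborhood; hence $\cC \subseteq \cC_X(Y)$ gives $\cC \cap \cC_X(Y') = \emptyset$, which is not cofinite in the infinite $\cC_X(Y')$, so $\cF_X(Y') \notin \cO_{\aF_X}(\cC)$, while $\cC_X(Y') \cap (\cC_X \setminus \cC) = \cC_X(Y')$ is trivially cofinite in itself; and (b) for $\cF_X(Y)$ itself, cofiniteness of $\cC$ in $\cC_X(Y)$ places $\cF_X(Y) \in \cO_{\aF_X}(\cC)$ and simultaneously makes $\cC_X(Y) \setminus \cC$ finite, so $\cF_X(Y) \notin \cO_{\aF_X}(\cC_X \setminus \cC)$. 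Principal ultrafilters are straightforward.

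Once both basic open sets are shown clopen, $\aF_X$ is zero-dimensional, and since the clopen basis separates points of the Hausdorff space $\aF_X$, every connected component is a singleton. The argument contains no real obstacle -- the disjointness property of the family $\{\cC_X(Y) \mid Y \in \crit(X)\}$ and the infinitude of each $\cC_X(Y)$ make all the required cofiniteness assertions fall out of the definitions; the only care needed is in the routine bookkeeping of the case analysis.
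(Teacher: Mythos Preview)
Your proof is correct and essentially the same as the paper's. Both arguments rest on the observation that for suitable $\cC\subseteq\cC_X$ the sets $\cO_{\aF_X}(\cC)$ and $\cO_{\aF_X}(\cC_X\setminus\cC)$ partition $\aF_X$; the paper applies this directly to separate any two given points (using $\cC=\{C\}$ or $\cC=\cC_X(Y)$), while you phrase it as showing that the full basis $\aB_X$ consists of clopen sets, which is a mildly stronger packaging of the same computation.
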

\begin{proof}
Let $\cA$ be a subset of $\aF_X$ with $|\cA|>1$.
Pick two distinct elements $\cF$ and $\cF'$ of $\cA$.
If one of $\cF$ and $\cF'$ is principal, say $\cF=c_X(C)$ for some $C\in\cC_X$, the open sets $\cO_{\aF_X}(\{C\})$ and $\cO_{\aF_X}(\cC_X\setminus\{C\})$ induce an open bipartition of $\cA$.
Otherwise we find distinct elements $Y$ and $Y'$ of $\crit(X)$ with $\cF=\cF_X(Y)$ and $\cF'=\cF_X(Y')$. 
Then the open sets $\cO_{\aF_X}(\cC_X(Y))$ and $\cO_{\aF_X}(\cC_X\setminus\cC_X(Y))$ induce an open bipartition of $\cA$.
\end{proof}

In order to obtain an inverse system $\{\aF_X,\af_{X',X},\cX\}$ from the spaces $\aF_X$ we define compatible bonding maps $\af_{X',X}\colon \aF_{X'}\to\aF_X$ for all $X\subseteq X'\in\cX$ as follows: Let $\cF'\in\aF_{X'}$ be given.\index{$\af_{X',X}$}

Suppose first that $\cF'$ is of the form $c_{X'}(C')$ for some $C'\in\cC_{X'}$. Then we put $\af_{X',X}(\cF')=c_X(\phi_{X',X}(C'))$, i.e. $\af_{X',X}(\cF')=c_X(C)$ where $C$ is the unique component of $G-X$ including $C'$.

For the second case suppose that $\cF'$ is of the form $\cF_{X'}(Y)$ for some $Y\in \crit(X')$. If $Y\subseteq X$ holds, then we also have $Y\in \crit(X)$ witnessed by $\cC_{X'}(Y)\subseteq\cC_X(Y)$,
 so we may put $\af_{X',X}(\cF')=\cF_X(Y)$. Otherwise $Y\not\subseteq X$ implies that there is some unique component $C$ of $G-X$ including $\bigcup\cC_{X'}(Y)$ since every vertex in $Y\setminus X$ has a neighbour in each element of $\cC_{X'}(Y)$, so we put $\af_{X',X}(\cF')=c_X(C)$. This completes the definition of the bonding maps. It remains to show that they are continuous:

\begin{lemma}\label{Afcts}
The bonding maps $\A{f}_{X',X}\colon \A{F}_{X'}\to\A{F}_X$ are continuous.
\end{lemma}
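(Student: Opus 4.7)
The plan is to verify continuity of $\af_{X',X}$ pointwise, using the basis $\aB_{X'}$ from Lemma~\ref{FXbasis}. The principal points $c_{X'}(C')$ are isolated in $\aF_{X'}$ (since $\cO_{\aF_{X'}}(\{C'\})=\{c_{X'}(C')\}$), so continuity there is immediate. The real work is at the free points $\cF'=\cF_{X'}(Y')$ with $Y'\in\crit(X')$, and the argument naturally splits along the two-case definition of $\af_{X',X}$.

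\emph{Case 1: $Y'\subseteq X$.} Then $\af_{X',X}(\cF')=\cF_X(Y')$, and by Observation~\ref{OneTimeUse0} we have $Y'\in\crit(X)$ with $\cC_{X'}(Y')$ cofinite in $\cC_X(Y')$. Given a basic open neighbourhood $O=\cO_{\aF_X}(\cD)$ of $\cF_X(Y')$ (so $\cD\in\aC_X$), I would first argue that $\cD$ is necessarily cofinite in $\cC_X(Y')$: the singleton option is ruled out because $\cC_X(Y')$ is infinite, and a cofinite subset of $\cC_X(Y'')$ with $Y''\ne Y'$ cannot lie in $\cF_X(Y')$ since $\cC_X(Y')$ and $\cC_X(Y'')$ are disjoint. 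Setting $\cD':=\cD\cap\cC_{X'}(Y')$, this is cofinite in $\cC_{X'}(Y')$, so $\cO_{\aF_{X'}}(\cD')\in\aB_{X'}$ is a basic open neighbourhood of $\cF'$. The inclusion $\af_{X',X}(\cO_{\aF_{X'}}(\cD'))\subseteq O$ reduces to two short verifications: the only free point in $\cO_{\aF_{X'}}(\cD')$ is $\cF_{X'}(Y')$ itself (which maps to $\cF_X(Y')\in O$), and every principal point $c_{X'}(C')$ with $C'\in\cD'$ satisfies $C'\in\cC_X$ (because $Y'\subseteq X$ forces every $C'\in\cC_{X'}(Y')$ to already be a component of $G-X$ with $N_G(C')=Y'$), so $\phi_{X',X}(C')=C'\in\cD$ and its image $c_X(C')$ lies in $O$.

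\emph{Case 2: $Y'\not\subseteq X$.} Fix any $y\in Y'\setminus X$. Since $y$ lies in $V(G)\setminus X$ and is adjacent in $G$ to every $C'\in\cC_{X'}(Y')$, all these components lie in the single component $C$ of $G-X$ containing $y$, so $\af_{X',X}(\cF')=c_X(C)$. Given a basic neighbourhood $O=\cO_{\aF_X}(\cD)$ of $c_X(C)$ (so $C\in\cD$), I would take $\cD':=\cC_{X'}(Y')$, which is trivially cofinite in itself and hence lies in $\aC_{X'}$. Then $\cO_{\aF_{X'}}(\cD')$ is a basic open neighbourhood of $\cF'$, and $\af_{X',X}$ maps it into $\{c_X(C)\}\subseteq O$: the only free point there is $\cF_{X'}(Y')$, mapping to $c_X(C)$ by definition, and every principal $c_{X'}(C')$ with $C'\in\cC_{X'}(Y')$ has $\phi_{X',X}(C')=C$ by the argument with $y$.

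The main obstacle is nothing more than the case split in the definition of $\af_{X',X}$; once that split is made, each side reduces to a brief bookkeeping check built on Observation~\ref{OneTimeUse0} and the pairwise disjointness of the sets $\cC_X(Y)$ for distinct $Y\in\crit(X)$. The only mild subtlety is in Case~2, where verifying that $\bigcup\cC_{X'}(Y')$ really lies in a single component of $G-X$ requires the existence of the witness $y\in Y'\setminus X$ guaranteed by $Y'\not\subseteq X$.
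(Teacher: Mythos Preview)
Your proof is correct and follows essentially the same approach as the paper's: both handle the principal points trivially (as isolated points), and both treat the free points $\cF_{X'}(Y')$ by the same case split on whether $Y'\subseteq X$, using the preimage $\cC_{X'}(Y')$ in the case $Y'\not\subseteq X$ and the intersection $\cD\cap\cC_{X'}(Y')$ in the case $Y'\subseteq X$. Your write-up is slightly more explicit in verifying that the image of the chosen neighbourhood lands in $O$ (checking free and principal points separately), but the underlying argument is identical.
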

\begin{proof}
Let $\cF'\in\A{F}_{X'}$ be given together with some basic open neighbourhood $\cO_{\A{F}_X}(\cC)$ of $\cF:=\A{f}_{X',X}(\cF')$.

Suppose first that $\cF'$ is of the form $c_{X'}(C')$ for some $C'\in\cC_{X'}$, and hence $\cF$ is of the form $c_X(C)$ for $C=\phi_{X',X}(C')$. Then $\cO_{\A{F}_{X'}}(\{C'\})$ is an open neighbourhood of $\cF'$ which $\A{f}_{X',X}$ maps to $\cO_{\A{F}_X}(\{C\})\subseteq \cO_{\A{F}_X}(\cC)$.

For the second case suppose that $\cF'=\cF_{X'}(Y)$ for some $Y\in \crit(X')$. 
If $Y\not\subseteq X$ then there exists a unique component $C$ of $G-X$ including $\bigcup\cC_{X'}(Y)$, so $\cF$ is of the form $c_X(C)$. In particular, $\cO_{\A{F}_{X'}}(\cC_{X'}(Y))$ is an open neighbourhood of $\cF'$ which $\A{f}_{X',X}$ maps to $\cO_{\A{F}_X}(\{C\})\subseteq \cO_{\A{F}_X}(\cC)$. 
Otherwise $Y\subseteq X$ holds, then we have $\cF=\cF_X(Y)$ and moreover $\cC$ is a cofinite subset of $\cC_X(Y)$ since $\cO_{\aF_X}(\cC)$ is basic open.
In particular, $\cC':=\cC\cap\cC_{X'}(Y)$ is a cofinite subset of $\cC_{X'}(Y)$ with $\phi_{X',X}[\cC']\subseteq\cC$. Hence $\cO_{\A{F}_{X'}}(\cC')$ is an open neighbourhood of $\cF'$ which $\A{f}_{X',X}$ maps into $\cO_{\A{F}_X}(\cC)$.
\end{proof}

Therefore, $\{\aF_X,\af_{X',X},\cX\}$ is an inverse system, and we write\index{$\aF$}
\begin{align*}
\aF=\invLim(\aF_X\,|\,X\in\cX).
\end{align*}
\begin{proposition}\label{FGtotDisc}
The topological space $\aF$ is compact, Hausdorff and totally disconnected.
\end{proposition}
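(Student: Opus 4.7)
The proof should be essentially immediate from the pieces already assembled. The plan is to invoke Lemma~\ref{InvLimTotDisc}, which states that the inverse limit of an inverse system of compact Hausdorff totally disconnected topological spaces is again compact, Hausdorff and totally disconnected. Thus it suffices to check that $\{\aF_X, \af_{X',X}, \cX\}$ satisfies the hypotheses of that lemma.

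First I would note that $(\cX, \subseteq)$ is a directed poset, so it is a legitimate index set for an inverse system. Next, Lemma~\ref{aCXcomp} shows that each $\aF_X$ is a Hausdorff compactification of $\cC_X$, so in particular each $\aF_X$ is compact and Hausdorff. Lemma~\ref{FXtotDisc} gives that each $\aF_X$ is totally disconnected. Continuity of the bonding maps $\af_{X',X}$ is Lemma~\ref{Afcts}, and compatibility ($\af_{X'',X} = \af_{X',X} \circ \af_{X'',X'}$ for $X \subseteq X' \subseteq X''$, and $\af_{X,X} = \id_{\aF_X}$) is a routine case check on the two kinds of points of $\aF_{X''}$ (principal ultrafilters $c_{X''}(C'')$ and critical filters $\cF_{X''}(Y)$), using the analogous compatibility of the underlying maps $\phi_{X',X}$ on $\cC_X$ together with the fact that for $Y \in \crit(X'')$ one has $Y \in \crit(X')$ as soon as $Y \subseteq X'$ (Observation~\ref{OneTimeUse0}).

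With $\{\aF_X, \af_{X',X}, \cX\}$ verified to be an inverse system of compact Hausdorff totally disconnected spaces, Lemma~\ref{InvLimTotDisc} immediately yields that $\aF = \invLim \aF_X$ is compact, Hausdorff and totally disconnected. The only potentially non-obvious step is the compatibility check on the bonding maps, but since this was tacitly used when $\{\aF_X, \af_{X',X}, \cX\}$ was called an inverse system in the paragraph after Lemma~\ref{Afcts}, it would likely be omitted or absorbed into that remark, making the present proof a one-line application of Lemma~\ref{InvLimTotDisc}.
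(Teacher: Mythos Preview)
Your proposal is correct and matches the paper's proof essentially verbatim: the paper simply cites Lemmas~\ref{aCXcomp} and~\ref{FXtotDisc} to conclude that each $\aF_X$ is compact, Hausdorff and totally disconnected, and then applies Lemma~\ref{InvLimTotDisc}. Your prediction that the compatibility of the bonding maps is absorbed into the earlier remark that $\{\aF_X,\af_{X',X},\cX\}$ is an inverse system is also accurate.
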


\begin{proof}
According to Lemmas~\ref{aCXcomp}~and~\ref{FXtotDisc} the topological spaces $\aF_X$ are compact, Hausdorff and totally disconnected. By Lemma~\ref{InvLimTotDisc}, so is $\aF$.
\end{proof}

Analogously to \cite[Lemma 3.2]{EndsAndTangles}, we note a very useful fact:
\begin{lemma}\label{ECObondingInverse}
For all $X\subseteq X'\in\cX$ the map $\af_{X',X}$ restricts to a 
bijection between $\af_{X',X}^{-1}(\aF_X^\ast)\subseteq\aF_{X'}^\ast$ and $\aF_X^\ast$ with inverse $\ag_{X,X'}$.\qed
\end{lemma}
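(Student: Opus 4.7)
The plan is to mimic the analogue for ultrafilter tangles (Lemma~\ref{TCfInv}), but with the proof made much easier by the fact that the elements of $\aF_X^\ast$ are in explicit bijective correspondence with $\crit(X)$ via $Y\mapsto\cF_X(Y)$. First I would make explicit the map $\ag_{X,X'}\colon \aF_X^\ast\to\aF_{X'}^\ast$ (implicitly referred to in the statement) by setting $\ag_{X,X'}(\cF_X(Y)) := \cF_{X'}(Y)$ for every $Y\in\crit(X)$. This is well-defined because $Y\subseteq X\subseteq X'$ together with $Y\in\crit(X)$ yields $Y\in\crit(X')$ by Observation~\ref{OneTimeUse0}, so $\cF_{X'}(Y)$ indeed lies in $\aF_{X'}^\ast$.

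Next I would check that $\af_{X',X}\circ\ag_{X,X'}=\id_{\aF_X^\ast}$. This is immediate from the second case of the definition of $\af_{X',X}$: for $\cF_{X'}(Y)$ with $Y\in\crit(X')$ and $Y\subseteq X$ we have $\af_{X',X}(\cF_{X'}(Y))=\cF_X(Y)$. Applied to $Y\in\crit(X)$ this gives $\af_{X',X}(\ag_{X,X'}(\cF_X(Y)))=\cF_X(Y)$ as required, and establishes surjectivity of $\af_{X',X}\rest\af_{X',X}^{-1}(\aF_X^\ast)$ onto $\aF_X^\ast$.

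The inclusion $\af_{X',X}^{-1}(\aF_X^\ast)\subseteq\aF_{X'}^\ast$ follows from a glance at the definition of $\af_{X',X}$: the first case sends principal elements $c_{X'}(C')$ to principal elements $c_X(\phi_{X',X}(C'))\in\aF_X^{\text{p}}$, so any $\cF'\in\aF_{X'}$ with $\af_{X',X}(\cF')\in\aF_X^\ast$ must already lie in $\aF_{X'}^\ast$. It then remains to verify injectivity, which I would do by showing $\ag_{X,X'}\circ\af_{X',X}=\id$ on $\af_{X',X}^{-1}(\aF_X^\ast)$: any such $\cF'$ equals $\cF_{X'}(Z)$ for some $Z\in\crit(X')$; the subcase $Z\not\subseteq X$ in the definition of $\af_{X',X}$ would produce a principal element $c_X(C)\notin\aF_X^\ast$, contradicting $\cF'\in\af_{X',X}^{-1}(\aF_X^\ast)$, so $Z\subseteq X$ and $\af_{X',X}(\cF')=\cF_X(Z)$, whence $\ag_{X,X'}(\af_{X',X}(\cF'))=\cF_{X'}(Z)=\cF'$.

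The main (small) obstacle is simply keeping straight the two-case definition of $\af_{X',X}$ on $\aF_{X'}^\ast$ and recognising that only the $Y\subseteq X$ branch can land in $\aF_X^\ast$; beyond this nothing deeper is involved, so the proof reduces to bookkeeping rather than to any genuinely new combinatorial input.
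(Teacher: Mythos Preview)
Your proof is correct and is precisely the routine verification the paper has in mind: the lemma is stated with a \qed and the preceding sentence ``Analogously to \cite[Lemma 3.2]{EndsAndTangles}'' signals that no proof is given because the argument is immediate from the case-split definition of $\af_{X',X}$ together with Observation~\ref{OneTimeUse0}. Your explicit definition of $\ag_{X,X'}(\cF_X(Y)):=\cF_{X'}(Y)$ and the two identity checks are exactly the expected bookkeeping.
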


\begin{corollary}\label{Fextension}
For every $X\in\cX$, each $\cF\in\aF_X^\ast$ uniquely extends to an element of $\aF$.\qed
\end{corollary}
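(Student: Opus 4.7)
The plan is to mirror the proof of Corollary~\ref{Uextension}, using Lemma~\ref{ECObondingInverse} in place of Lemma~\ref{TCfInv}. Given $\cF\in\aF_X^\ast$ we must produce a unique family $(\cF_Y\,|\,Y\in\cX)\in\aF$ with $\cF_X=\cF$, i.e.\ a family satisfying $\af_{Y',Y}(\cF_{Y'})=\cF_Y$ for all $Y\subseteq Y'\in\cX$.

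First I would handle the upward direction. For every $Y\in\lfloor X\rfloor_{\cX}$ set $\cF_Y:=\ag_{X,Y}(\cF)$, which lies in $\aF_Y^\ast$ and, by Lemma~\ref{ECObondingInverse}, satisfies $\af_{Y,X}(\cF_Y)=\cF$. More generally, if $X\subseteq Y\subseteq Y'$, then both $\af_{Y',Y}(\cF_{Y'})$ and $\cF_Y$ lie in $\aF_Y^\ast$ and are mapped to $\cF$ by $\af_{Y,X}$; since $\af_{Y,X}\rest\aF_Y^\ast$ is injective (again by Lemma~\ref{ECObondingInverse}), they coincide.

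Next I would extend the family to arbitrary $Y\in\cX$ by setting
\[
\cF_Y:=\af_{X\cup Y,Y}(\cF_{X\cup Y}).
\]
The verification of compatibility now splits into cases depending on which of $Y$ and $Y'$ contain $X$. The only non-routine case is when neither does: then one passes to $Z:=X\cup Y\cup Y'$ and uses compatibility of the bonding maps of the inverse system together with the previously established identity $\af_{Z,X\cup Y}(\cF_Z)=\cF_{X\cup Y}$ (and similarly for $Y'$), which follows from the upward case applied inside $\lfloor X\rfloor_{\cX}$.

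For uniqueness, suppose $(\cF_Y')_{Y\in\cX}\in\aF$ also restricts to $\cF$ at $X$. On $\lfloor X\rfloor_{\cX}$, compatibility forces $\af_{Y,X}(\cF_Y')=\cF\in\aF_X^\ast$, so $\cF_Y'\in\af_{Y,X}^{-1}(\aF_X^\ast)$, and the injectivity half of Lemma~\ref{ECObondingInverse} yields $\cF_Y'=\ag_{X,Y}(\cF)=\cF_Y$. For arbitrary $Y$, applying $\af_{X\cup Y,Y}$ to $\cF_{X\cup Y}'=\cF_{X\cup Y}$ yields $\cF_Y'=\cF_Y$. The only potential obstacle is making sure the case split for compatibility of $(\cF_Y)$ is exhaustive and that one never tries to invoke $\ag$ outside its domain of definition, but this is handled cleanly by always routing through $X\cup Y\cup Y'$ and invoking Lemma~\ref{ECObondingInverse} only for supersets of $X$.
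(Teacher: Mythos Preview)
Your proposal is correct and follows precisely the approach the paper has in mind: the paper states the corollary with a bare \qed, treating it as immediate from Lemma~\ref{ECObondingInverse} in the same way Corollary~\ref{Uextension} follows from Lemma~\ref{TCfInv}. Your write-up simply spells out those details; the one minor imprecision is saying ``$\af_{Y,X}\rest\aF_Y^\ast$ is injective'' when Lemma~\ref{ECObondingInverse} only guarantees injectivity on $\af_{Y,X}^{-1}(\aF_X^\ast)$, but since both elements you compare lie in that preimage this does not affect the argument.
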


For every $\A{v}=(\cF_X\,|\,X\in\cX)\in\aF$ let\index{$\cX_{\A{v}}$, $X_{\A{v}}$}
\begin{align*}
\cX_{\A{v}}:=\{X\in\cX\,|\,\cF_X\notin\aF_X^{\text{p}}\},
\end{align*}
i.e. $X\in\cX$ is in $\cX_{\A{v}}$ if and only if there is some $Y\in\crit(X)$ with $\cF_X=\cF_X(Y)$. The following is immediate from Lemma~\ref{ECObondingInverse} and Corollary~\ref{Fextension}:

\begin{proposition}\label{FandY}
If $\A{v}$ is not in $\Omega$, then the set $\cX_{\A{v}}$ has a least element $X_{\A{v}}\in\crit(\cX)$ such that $\cX_{\A{v}}=\lfloor X_{\A{v}}\rfloor_{\cX}$. The set $X_{\A{v}}$ determines $\A{v}$, and $\cF_X=\cF_X(X_{\A{v}})$ holds for every $X\in\cX_{\A{v}}$. Furthermore, for every $Y\in\crit(\cX)$ there is a unique $\A{v}\in\aF$ with $X_{\A{v}}=Y$. This defines a bijection\index{$\chi\colon \crit(\cX)\bij\aF\setminus\Omega$} $\chi\colon \crit(\cX)\bij\aF\setminus\Omega$.\qed
\end{proposition}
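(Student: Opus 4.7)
The plan is to identify a single ``witnessing'' critical set at one coordinate and then propagate it everywhere using Lemma~\ref{ECObondingInverse} and the explicit formula for the bonding maps $\af_{X',X}$.

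Fix $\A{v}=(\cF_X\mid X\in\cX)\in\aF\setminus\Omega$, choose any $X_0\in\cX_{\A{v}}$, and write $\cF_{X_0}=\cF_{X_0}(Y_0)$ for some $Y_0\in\crit(X_0)$. Since $Y_0\subseteq X_0$, the very definition of the bonding map gives $\af_{X_0,Y_0}(\cF_{X_0}(Y_0))=\cF_{Y_0}(Y_0)$, so by compatibility $\cF_{Y_0}=\cF_{Y_0}(Y_0)$. In particular $Y_0\in\cX_{\A{v}}$ and $Y_0$ is itself critical (Observation~\ref{OneTimeUse0} also gives this directly).

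Next I would show $\lfloor Y_0\rfloor_{\cX}\subseteq\cX_{\A{v}}$, with $\cF_X=\cF_X(Y_0)$ there. Fix $X\supseteq Y_0$ and set $Z:=X\cup X_0$. Applying Lemma~\ref{ECObondingInverse} to $\cF_{X_0}\in\aF_{X_0}^{\ast}$ and $X_0\subseteq Z$ yields $\cF_Z=\ag_{X_0,Z}(\cF_{X_0})\in\aF_Z^{\ast}$, so $\cF_Z=\cF_Z(Y')$ for some $Y'\in\crit(Z)$. Now $\af_{Z,X_0}(\cF_Z(Y'))=\cF_{X_0}(Y_0)$ combined with the case distinction in the definition of $\af_{Z,X_0}$ forces $Y'\subseteq X_0$ (otherwise the image would be principal) and then $Y'=Y_0$. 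Feeding this back through $\af_{Z,X}$, using $Y_0\subseteq X$, gives $\cF_X=\af_{Z,X}(\cF_Z(Y_0))=\cF_X(Y_0)$, as required.

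For the reverse inclusion I take arbitrary $X\in\cX_{\A{v}}$ with $\cF_X=\cF_X(Y)$, $Y\in\crit(X)$, and run the same propagation from $Y$ instead of $Y_0$ to deduce $\cF_Z=\cF_Z(Y)$ for $Z:=Y\cup Y_0$. But the previous paragraph also gives $\cF_Z=\cF_Z(Y_0)$, and these two filters are distinct unless $Y=Y_0$ (since $\cC_Z(Y)$ and $\cC_Z(Y_0)$ are disjoint whenever $Y\neq Y_0$, each lying in only one of the two filters). Hence $Y=Y_0\subseteq X$, so $\cX_{\A{v}}=\lfloor Y_0\rfloor_{\cX}$ and $X_{\A{v}}:=Y_0$ is its least element. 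The fact that $X_{\A{v}}$ determines $\A{v}$ is now automatic: for $X\in\lfloor X_{\A{v}}\rfloor_{\cX}$ we have $\cF_X=\cF_X(X_{\A{v}})$; for $X\not\supseteq X_{\A{v}}$, compatibility forces $\cF_X=\af_{X\cup X_{\A{v}},X}(\cF_{X\cup X_{\A{v}}}(X_{\A{v}}))$, which is the principal ultrafilter generated by the unique component of $G-X$ containing $\bigcup\cC_{X\cup X_{\A{v}}}(X_{\A{v}})$, and this depends only on $X_{\A{v}}$.

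Finally, for the bijection $\chi$: given $Y\in\crit(\cX)$, Observation~\ref{OneTimeUse0} yields $Y\in\crit(Y)$, so $\cF_Y(Y)\in\aF_Y^{\ast}$, and Corollary~\ref{Fextension} produces a unique $\A{v}\in\aF$ with $Y$-th coordinate $\cF_Y(Y)$; the first three clauses already proved then force $X_{\A{v}}=Y$. Uniqueness of $\A{v}$ with $X_{\A{v}}=Y$ is again Corollary~\ref{Fextension}, so $\chi\colon Y\mapsto\A{v}$ is a well-defined bijection $\crit(\cX)\bij\aF\setminus\Omega$. The main technical obstacle is the propagation step in the second paragraph, where one must combine Lemma~\ref{ECObondingInverse} with the two-case definition of $\af_{Z,X_0}$ to rule out a principal image and pin down the witness $Y'=Y_0$; everything else is bookkeeping on top of this.
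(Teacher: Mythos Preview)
Your argument is correct and follows precisely the route the paper indicates: the paper's entire proof is the sentence ``immediate from Lemma~\ref{ECObondingInverse} and Corollary~\ref{Fextension}'', and what you have written is a faithful unpacking of that claim, propagating the witnessing critical set through the bonding maps and using the unique-extension corollary for the bijection.
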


Finally, we use $\aF$ to compactifiy $G$ in the same way Diestel used $\cU$ to compactify $G$. 
For this, let $\A{f}_X\colon \A{F}\to\A{F}_X$\index{$\af_X$} send each $(\cF_Y\,|\,Y\in\cX)$ to $\cF_X$, i.e. $\af_X$ is the restriction of the $X$th projection map $\text{pr}_X\colon \prod_{Y\in\cX}\aF_Y\to\aF_X$ to $\aF$.
By Lemma~\ref{invLimBasis} and Lemma~\ref{FXbasis}, the collection of all sets of the form\index{$\cO_{\aF}(X,\cC)$, $\cO_{\FG}(X,\cC)$}
\begin{align*}
\cO_{\aF}(X,\cC):=\af_X^{-1}(\cO_{\aF_X}(\cC))=\{(\cF_Y\,|\,Y\in\cX)\in\aF\,|\,\cC\in\cF_X\}
\end{align*}
with $X\in\cX$ and $\cC\in\aC_X$ forms a basis for the topology of $\aF$. Now consider the 1-complex of $G$ and extend its topology to the topological space\index{$\FG$}
\begin{align*}
\FG=\CG\cup\aF
\end{align*}
by declaring as open, for every $X\in\cX$ and $\cC\in\aC_X$, the set
\begin{align*}
\cO_{\FG}(X,\cC):=\medcup\cC\cup\mathring{E}(X,\medcup\cC)\cup\cO_{\aF}(X,\cC)
\end{align*}
and taking the topology on $\FG$ this generates. Here, the graph $\bigcup\cC$ carries the 1-complex topology. Note that the subspace topology on $\aF\subseteq\FG$ is our original topology on $\aF$.
Let $\aB$ denote the basis we used for $\FG$.\index{$\aB$}
That $\FG$ is indeed an \ecomp{} of $G$ will follow from the more general Theorems~\ref{alphaGcomp}~and~\ref{FGcompHDetc}.

\subsection{$\Omega$-compactifications induced by $\cC$-systems}\label{Minimality}

In this section we study $\cC$-systems which generalise the known concepts of directions and the inverse systems $\{\cU_X,f_{X',X},\cX\}$ and $\{\aF_X,\af_{X',X},\cX\}$ from the previous section. Furthermore, we study \ecomp{}s induced by $\cC$-systems in order to answer questions (i) and (ii$'$) from the introduction of this chapter.\\

Fix any infinite graph $G$.
For every $X\in\cX$ denote by $\cC_X^-$\index{$\cC_X^-$} the set of all components of $G-X$ whose neighbourhood is not in $\crit(X)$, i.e. 
\begin{align*}
\cC_X^-=\cC_X-\bigcup_{Y\in \crit(X)}\cC_X(Y),
\end{align*}
and note that this set must be finite due to pigeonhole principle.
If $(\alpha(\cC_X),\alpha_X)$ is a \HDcomp{} of $\cC_X$ for every $X\in\cX$ and $\{\alpha(\cC_X),\A{a}_{X',X}\}$ is an inverse system whose bonding maps respect the maps $\phi_{X',X}$ in that
\begin{align}\label{CompactificationBondingMapsRest}
\A{a}_{X',X}\circ\alpha_{X'}=\alpha_X\circ\phi_{X',X}
\end{align}
holds for all $X\subseteq X'\in\cX$, i.e. the diagram
\begin{center}
\begin{tikzpicture}[pil/.style={
           ->,
           thin,
           shorten <=2pt,
           shorten >=2pt,}]
\node (1) {$\;\;\cC_X\;\;$};
\node (2) [right=of 1]{$\;\;\cC_{X'}\;\;$};
\node (3) [below=of 1]{$\alpha(\cC_X)$};
\node (4) [below=of 2]{$\alpha(\cC_{X'})$};
\path[-stealth]
(2) edge node [above] {$\phi_{X',X}$} (1)
(4) edge node [below] {$\A{a}_{X',X}$} (3)
(1) edge node [left] {$\alpha_X$} (3)
(2) edge node [right] {$\alpha_{X'}$} (4);
\end{tikzpicture}
\end{center}
commutes, then we call $\{(\alpha(\cC_X),\alpha_X),\A{a}_{X',X}\}$ a $\cC$-\textit{system} (of $G$).\index{$\cC$-system}
Note that\index{$\cC_{\aF}$}
\begin{align*}
\cC_{\aF}:=\{(\A{F}_X,c_X),\A{f}_{X',X}\}
\end{align*}
is a $\cC$-system
by Lemmas~\ref{aCXcomp}~and~\ref{Afcts}, and\index{$\cC_{\cU}$}
\begin{align*}
\cC_{\cU}:=\{(\cU_X,c_X),f_{X',X}\}
\end{align*}
is a $\cC$-system, too.
If every $\cC_X$ is finite, then $\{(\cC_X,\id_{\cC_X}),\phi_{X',X}\}$ is a $\cC$-system, too.
As our first main result of this section, we generalise Diestel's construction of the tangle compactification and show that every $\cC$-system gives rise to an \ecomp{} of the graph $G$:

If $\{(\alpha(\cC_X),\alpha_X),\A{a}_{X',X}\}$ is a $\cC$-system, put $\cI_\alpha:=\invLim\alpha(\cC_X)$\index{$\cI_\alpha$} and for every $X\in\cX$ let $\pi_X^\alpha\colon \cI_\alpha\to\alpha(\cC_X)$ be the restriction of the $X$th projection map $\text{pr}_X\colon \prod_{Y\in\cX}\alpha(\cC_Y)\to\alpha(\cC_X)$. Clearly, $\pi_X^\alpha$ is continuous. If clear from context, we write $\pi_X$ instead of $\pi_X^\alpha$.\index{$\pi_X$, $\pi_X^\alpha$}
Now we extend $G$ (viewed as 1-complex) to a topological space $\alpha G=\CG\cup\cI_\alpha$\index{$\alpha G$} by declaring as open, for all $X\in\cX$ and every open set $O$ of $\alpha(\cC_X)$, the set\index{$\cO_{\alpha G}(X,O)$}
\begin{align*}
\cO_{\alpha G}(X,O):=\medcup\cC\cup\mathring{E}(X,\medcup\cC)\cup(\pi_X^\alpha)^{-1}(O)
\end{align*}
where $\cC=\alpha_X^{-1}(O)$, and taking the topology on $\alpha G$ this generates.

\begin{theorem}\label{alphaGcomp}
If $G$ is any infinite graph, then $\alpha G$ is an \ecomp{} of $G$. In particular, so are $\FG$ and $\TC$.
\end{theorem}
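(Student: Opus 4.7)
The plan is to mirror Diestel's proof of~\cite[Theorem 1]{EndsAndTangles} for $\TC$, but carried out at the general level of a $\cC$-system. First I would verify that the proposed sub-basis really generates a topology in which the 1-complex topology on $G$ and the inverse limit topology on $\cI_\alpha$ appear as subspace topologies. The crucial observation is the analogue of Lemma~\ref{obviousLemma}: for all $X \subseteq X' \in \cX$ and every open $O \subseteq \alpha(\cC_X)$, continuity of $\A{a}_{X',X}$ together with the commutative square~(\ref{CompactificationBondingMapsRest}) yields
\[\cO_{\alpha G}(X', \A{a}_{X',X}^{-1}(O)) \subseteq \cO_{\alpha G}(X, O),\]
which reduces the comparison of two basic open sets to a single common refinement index $X \cup X'$, and simultaneously explains why the subspace topologies agree with the intended ones.

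Next I would embed $\Omega$ into $\cI_\alpha$ via $\omega \mapsto (\alpha_X(C(X,\omega)))_{X\in\cX}$. This family lies in $\cI_\alpha$ precisely because the $\cC$-system diagram commutes, and the map is injective since ends are already separated by the $\cC_X$. Continuity of the embedding and of its inverse onto its image is a routine translation between basic open sets in $\alpha(\cC_X)$ and the sets $\hat{C}_1(X,\omega)$. The density of $G$ in $\alpha G$ is easy: every basic open $\cO_{\alpha G}(X, O)$ which meets $\cI_\alpha$ must contain at least one whole component $C \in \cC_X$, because $\alpha_X[\cC_X]$ is dense in $\alpha(\cC_X)$ and hence meets $O$.

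The main work is compactness. Given an open cover $\cW$ of $\alpha G$ by basic open sets, I plan to first use compactness of $\cI_\alpha$ (a closed subspace of the compact product $\prod_X \alpha(\cC_X)$ by Lemma~\ref{InvLimCptHD}) to select finitely many members of $\cW$ whose union covers $\cI_\alpha$; after replacing indices by a common refinement $X^\ast \in \cX$ via the lemma from the first paragraph, these finitely many sets can be assumed to be $\cO_{\alpha G}(X^\ast, O_1), \dots, \cO_{\alpha G}(X^\ast, O_n)$ with $\bigcup_i \alpha_{X^\ast}^{-1}(O_i)$ cofinite in $\cC_{X^\ast}$. The remainder of $\alpha G$ then lies inside the 1-complex of $G[X^\ast]$ together with finitely many components of $G - X^\ast$ and the incident half-edges, which is compact in the 1-complex topology of $G$ and hence covered by finitely many further members of $\cW$. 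The main obstacle here is precisely the possible presence of components outside $\bigcup_i \alpha_{X^\ast}^{-1}(O_i)$ coming from $\cC_{X^\ast}^-$ or from infinite $\cC_{X^\ast}(Y)$ with $Y \in \crit(X^\ast)$; these are absorbed by passing to a suitable larger $X' \supseteq X^\ast$ before the finiteness argument, as in~\cite{EndsAndTangles}.

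Finally, for the Hausdorffness of $\alpha G \setminus \mathring{E}$ the only non-trivial case is separating two distinct points $\xi \neq \xi'$ of $\cI_\alpha$: choose $X$ with $\pi_X(\xi) \neq \pi_X(\xi')$, use Hausdorffness of $\alpha(\cC_X)$ to find disjoint open neighbourhoods $O, O'$, and observe that $\cO_{\alpha G}(X, O)$ and $\cO_{\alpha G}(X, O')$ are then disjoint, since their graph parts are unions of components indexed by the disjoint sets $\alpha_X^{-1}(O), \alpha_X^{-1}(O') \subseteq \cC_X$ and their edge parts are determined by these. Separating a point of $\cI_\alpha$ from a vertex is handled by enlarging $X$ to contain that vertex. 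The conclusion for $\FG$ and $\TC$ is then an immediate specialisation, since $\cC_{\aF}$ and $\cC_{\cU}$ are $\cC$-systems.
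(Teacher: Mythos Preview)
Your outline tracks the paper's proof closely—density, the analogue of Lemma~\ref{obviousLemma}, the embedding of $\Omega$, and Hausdorffness of $\alpha G\setminus\mathring{E}$ are all handled just as the paper does them. The gap is in the compactness paragraph, which is the heart of the theorem.

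You assert that after refining to a common index $X^\ast$ the set $\bigcup_i \alpha_{X^\ast}^{-1}(O_i)$ is cofinite in $\cC_{X^\ast}$, but this is exactly what has to be proved—and in fact the paper needs and proves the \emph{stronger} statement that the leftover $\cC':=\cC_{X^\ast}\setminus\bigcup_i\alpha_{X^\ast}^{-1}(O_i)$ satisfies that $\bigcup\cC'$ is finite as a vertex set. Mere cofiniteness would not do: finitely many leftover components may each be infinite, and then ``the 1-complex of $G[X^\ast]$ together with finitely many components'' is \emph{not} compact, so your next sentence would fail. Your proposed remedy is also off target. Passing to ``a suitable larger $X'$'' does not help (no single enlargement works), and the mention of $\cC_{X^\ast}^-$ and $\crit(X^\ast)$ is a red herring: that decomposition is specific to the $\aF_X$ construction and plays no role for a general $\cC$-system. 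Finally, you cannot simply defer to~\cite{EndsAndTangles}: the argument there uses ultrafilter-specific lemmas, which the paper explicitly replaces by a topological argument in this general setting. That argument runs by contradiction: if $\bigcup\cC'$ is infinite, then for every $Y\supseteq X^\ast$ the closed set $K_Y:=\A{a}_{Y,X^\ast}^{-1}\bigl(\alpha(\cC_{X^\ast})\setminus\bigcup_i O_i\bigr)$ is nonempty (it contains $\alpha_Y(C)$ for any $C\in\phi_{Y,X^\ast}^{-1}(\cC')$, and such $C$ exist since $\bigcup\cC'$ has vertices outside the finite set $Y$), and Lemma~\ref{InvLimCptHD} applied to the inverse system $\{K_Y\}_{Y\supseteq X^\ast}$ produces a point of $\cI_\alpha$ that no $O_i$ covers. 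This inverse-limit step over \emph{all} $Y\supseteq X^\ast$ is precisely the content your outline is missing.
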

\begin{proof}
First, we show that $G$ is dense in $\alpha G$.
Every open neighbourhood of an element of $\cI_\alpha$ is of the form $\cO_{\alpha G}(X,O)$. Since $\alpha_X[\cC_X]$ is a dense subset of $\alpha(\cC_X)$, we know that $O$ meets $\alpha_X[\cC_X]$, and hence $\bigcup\alpha_X^{-1}(O)$ meets $G$. In particular, $\cO_{\alpha G}(X,O)$ meets $G$, so $G$ is dense in $\alpha G$.

Next, we show that $\alpha G$ is compact.
For this, we mimic the proof of \cite[Theorem 1 (i)]{EndsAndTangles}, replacing \cite[Lemmas 2.3 and 3.7]{EndsAndTangles} by topological arguments. Hence consider any cover $\cO$ of $\alpha G-G$ by open sets $\cO_{\alpha G}(X,O)$. Since the subspace topology of $\cI_\alpha\subseteq\alpha G$ is the original topology of $\cI_\alpha$ and $\cI_\alpha$ is compact by Lemma~\ref{InvLimCptHD}, the cover $\cO$ has a finite subset of the form
\begin{align*}
\cO'=\{\cO_{\alpha G}(X,O_X)\,|\,X\in\cX'\}
\end{align*}
(with $\cX'\subseteq\cX$ finite) that covers $\cI_\alpha$. 
Our aim is to show that $G\setminus\bigcup\cO'$ is the 1-complex of a finite graph: then $G\setminus\bigcup\cO'$ will be compact as desired. For this, put $X'=\bigcup\cX'$, and
for each $X\in\cX'$ let $O_X'$ be the open set $\A{a}_{X',X}^{-1}(O_X)$ of $\alpha(\cC_{X'})$. 
We claim that for each $X\in\cX'$ the inclusion 
\begin{align}\label{alphaCompSecTimeUse}
\cO_{\alpha G}(X,O_X)\supseteq\cO_{\alpha G}(X',O_X')
\end{align}
holds:
Indeed, 
\begin{align}\label{alphaCompOneTimeUse}
\pi_X^{-1}(O_X)=\pi_{X'}^{-1}(O_X')
\end{align}
holds by choice of $O_X'$ and since the diagram
\begin{center}
\begin{tikzpicture}[pil/.style={
           ->,
           thin,
           shorten <=2pt,
           shorten >=2pt,}]
\node (1) {$\cI_\alpha$};
\node (2) [right=of 1]{$\alpha(\cC_{X'})$};
\node (3) [below=of 2]{$\alpha(\cC_X)$};
\path[-stealth]
(1) edge node [above] {$\pi_{X'}$} (2)
(1) edge node [below left] {$\pi_X$} (3)
(2) edge node [right] {$\A{a}_{X',X}$} (3);
\end{tikzpicture}
\end{center}
commutes.
According to the definitions of $\cO_{\alpha G}(X,O_X)$ and $\cO_{\alpha G}(X',O_X')$, and due to $X\subseteq X'$, it remains to show that $\bigcup\alpha_X^{-1}(O_X)\supseteq\bigcup\alpha_{X'}^{-1}(O_X')$ holds. Using~(\ref{CompactificationBondingMapsRest}), this is easily calculated:
\begin{align*}
&\medcup\alpha_{X'}^{-1}(O_X')\\
=&\medcup\alpha_{X'}^{-1}(\A{a}_{X',X}^{-1}(O_X))\\
\overset{(\ref{CompactificationBondingMapsRest})}{=}&\medcup\phi_{X',X}^{-1}(\alpha_X^{-1}(O_X))\\
\subseteq&\medcup\alpha_X^{-1}(O_X)
\end{align*}
Hence (\ref{alphaCompSecTimeUse}) holds as claimed, and the sets $\cO_{\alpha G}(X',O_X')$ still cover $\cI_\alpha$ by~(\ref{alphaCompOneTimeUse}).
Now consider the set
\begin{align*}
\cC':=\cC_{X'}\setminus\bigcup_{X\in\cX'}\alpha_{X'}^{-1}(O_X').
\end{align*}
If $\bigcup\cC'$ is finite, then so is $G[X']\cup\bigcup\cC'=G\setminus\bigcup\cO'$, and we are done. 
Hence we assume for a contradiction that $\bigcup\cC'$ is infinite. Consider the closed set
\begin{align*}
A:=\alpha(\cC_{X'})\setminus\bigcup_{X\in\cX'}O_X'
\end{align*}
and for every $Y\in\lfloor X'\rfloor_{\cX}$ set $K_Y=\A{a}_{Y,X'}^{-1}(A)$. In particular, every $K_Y$ is closed in $\alpha(\cC_Y)$, and hence compact. Furthermore, since $\bigcup\cC'$ is infinite, each $\phi_{Y,X'}^{-1}(\cC')$ is non-empty. Together with $\alpha_{X'}[\cC']\subseteq A$ and~(\ref{CompactificationBondingMapsRest}), this implies that every $K_Y$ is non-empty. By Lemma~\ref{InvLimCptHD} we find an element in the inverse limit of the inverse system $\{K_Y,\A{a}_{Y',Y}\rest K_{Y'},\lfloor X'\rfloor_{\cX}\}$, which is easily extended to an element $x$ of $\cI_\alpha$ since $\lfloor X'\rfloor_{\cX}$ is cofinal in $\cX$. In particular, $x\in \cI_\alpha\cap A=\cI_\alpha\setminus\bigcup\cO'$ is a contradiction. Thus $\alpha G$ is compact, so in particular $\alpha G$ is a compactification of (the 1-complex of) $G$.

Since $\cI_\alpha$ is also Hausdorff by Lemma~\ref{InvLimCptHD}, it is easy to see that so is $\alpha G\setminus\mathring{E}$.
It remains to show that $\alpha G$ is a compactification of $\cV G$. We have already seen that $\alpha G$ is compact and $G$ is dense in $\alpha G$.
By (\ref{CompactificationBondingMapsRest}) and abuse of notation we may assume $\Omega\subseteq\cI_\alpha\subseteq\alpha G$. 
Hence it suffices to show that $\alpha G$ induces the right subspace topology on $G\cup\Omega$.
For every $X\in\cX$ and $\omega\in\Omega$ the open neighbourhood $\cO_{\alpha G}(X,\alpha_X(C(X,\omega)))$ of $\omega$ in $\alpha G$ induces on $G\cup\Omega$ the basic open neighbourhood $\hat{C}(X,\omega)$ of $\omega$. Conversely, it is easy to see that every open set $\cO_{\alpha G}(X,O)$ of $\alpha G$ induces on $G\cup\Omega$ a set that is also open in $\cV G$. Therefore, $\cV G$ is a subspace of $\alpha G$.
\end{proof}

In particular, we obtain the following analogue of \cite[Theorem 1]{EndsAndTangles} for $\FG$:

\begin{theorem}\label{FGcompHDetc}
Let $G$ be any graph.
\begin{enumerate}
\item $\FG$ is an \ecomp{} of $G$ and $\FG\setminus G$ is totally disconnected.
\item If $G$ is locally finite and connected, then $\aF=\Omega$ holds and $\FG$ coincides with the Freudenthal compactification of $G$.
\end{enumerate}
\end{theorem}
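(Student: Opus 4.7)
The plan for (i) is to observe that $\FG$ is precisely the compactification $\alpha G$ produced by Theorem~\ref{alphaGcomp} when applied to the $\cC$-system $\cC_{\aF}=\{(\aF_X,c_X),\af_{X',X}\}$. That $\cC_{\aF}$ really is a $\cC$-system has already been established: Lemma~\ref{aCXcomp} provides the \HDcomp{}s of the $\cC_X$, Lemma~\ref{Afcts} the continuity of the bonding maps, and the commutation relation~(\ref{CompactificationBondingMapsRest}) for $\cC_{\aF}$ is immediate from the definition of $\af_{X',X}$ on the principal part of $\aF_{X'}$. Hence Theorem~\ref{alphaGcomp} at once yields that $\FG$ is an \ecomp{} of $G$, and Proposition~\ref{FGtotDisc} already tells us that $\FG\setminus G=\aF$ is totally disconnected.

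For (ii), the first step will be to argue that connected and locally finite implies every $\cC_X$ is finite: each component of $G-X$ sends at least one edge to $X$ by connectedness, while $X$ sends only finitely many edges in total by local finiteness. Consequently no $\cC_X(Y)$ can be infinite, so $\crit(X)=\emptyset$ and hence $\aF_X^\ast=\emptyset$. We therefore have $\aF_X=c_X[\cC_X]$, which under the identification $c_X(C)\leftrightarrow C$ is just the discrete space $\cC_X$, and under this same identification $\af_{X',X}$ reduces to $\phi_{X',X}$. It follows that $\aF=\invLim(\cC_X\mid X\in\cX)$; but this inverse limit is the space of directions of $G$, which by the theorem of Diestel and Kühn mentioned in the introduction coincides with $\Omega$.

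To finish, it remains to verify that the topology of $\FG$ on $G\cup\Omega$ agrees with that of the Freudenthal compactification. In the case at hand every element of $\aC_X$ is a singleton $\{c_X(C)\}$, so the basic opens of $\FG$ take the form $\cO_{\FG}(X,\{c_X(C)\})=C\cup\mathring{E}(X,C)\cup\{\omega\in\Omega\mid C(X,\omega)=C\}$, which matches exactly the description of $\cO_{|G|}(X,C)$ recalled in the introduction; this gives a homeomorphism $\FG\simeq|G|$ fixing $G\cup\Omega$. The only (mild) obstacle I foresee is being scrupulous about the identification between principal ultrafilters on $\cC_X$ and the components themselves, so that bonding maps and basic opens really match on the nose; everything else is either already proved or purely bookkeeping.
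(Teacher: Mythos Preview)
Your proposal is correct and follows essentially the same approach as the paper: invoke Theorem~\ref{alphaGcomp} and Proposition~\ref{FGtotDisc} for (i), and for (ii) observe that local finiteness forces $\crit(X)=\emptyset$ so that $\aF_X^\ast=\emptyset$ and $\aF=\Omega$. The paper's own proof is in fact much terser than yours (it dispatches (ii) in a single line without spelling out the topology comparison), so your version is a more careful elaboration of the same argument; one notational slip is that the basic opens should be written $\cO_{\FG}(X,\{C\})$ with $\{C\}\in\aC_X\subseteq 2^{\cC_X}$ rather than $\cO_{\FG}(X,\{c_X(C)\})$.
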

\begin{proof}
(i) By Theorem~\ref{alphaGcomp} we know that $\FG$ is an \ecomp{} of $G$, and $\FG\setminus G=\aF$ is totally disconnected by Proposition~\ref{FGtotDisc}.

(ii) If $G$ is locally finite, then $\crit(\cX)$ is empty, so $\aF=\Omega$ holds.
\end{proof}

In order to compare $\cC$-systems, we introduce the following notion:\\
If $\cC_\alpha=\{(\alpha(\cC_X),\alpha_X),\A{a}_{X',X}\}$ and $\cC_\delta=\{(\delta(\cC_X),\delta_X),\A{d}_{X',X}\}$ are two $\cC$-systems we write $\cC_\alpha\le_\cC\cC_{\delta}$\index{$\le_{\cC}$}
whenever $(\alpha(\cC_X),\alpha_X)\le (\delta(\cC_X),\delta_X)$ holds for every $X\in\cX$ (witnessed by unique $f_X\colon \delta(\cC_X)\to\alpha(\cC_X)$, see Lemma~\ref{CompactificationComparableUnique}) and
\begin{align}\label{Commute}
f_X\circ\A{d}_{X',X}=\A{a}_{X',X}\circ f_{X'}
\end{align}
holds for all $X\subseteq X'\in\cX$. 
Condition (\ref{Commute}) together with condition (\ref{CompactificationBondingMapsRest}) ensures that the diagram
\begin{center}
\begin{tikzpicture}[pil/.style={
           ->,
           thin,
           shorten <=2pt,
           shorten >=2pt,}]
\node (1) {$\delta(\cC_X)$};
\node (2) [above left=of 1]{$\cC_X$};
\node (3) [right=of 1]{$\delta(\cC_{X'})$};
\node (4) [above right=of 3]{$\cC_{X'}$};
\node (5) [below=of 1]{$\alpha(\cC_X)$};
\node (6) [below=of 3]{$\alpha(\cC_{X'})$};
\path[-stealth]
(2) edge node [below left] {$\delta_X$} (1)
(2) edge[bend right] node [below left] {$\alpha_X$} (5)
(3) edge[double] node [above] {$\A{d}_{X',X}$} (1)
(1) edge[dashed] node [left] {$f_X$} (5)
(3) edge[dashed] node [right] {$f_{X'}$} (6)
(6) edge[double] node [below] {$\A{a}_{X',X}$} (5)
(4) edge[double] node [above] {$\phi_{X',X}$} (2)
(4) edge node [below right] {$\delta_{X'}$} (3)
(4) edge[bend left] node [below right] {$\alpha_{X'}$} (6);
\end{tikzpicture}
\end{center}
commutes for all $X\subseteq X'\in\cX$, so that the mapping\index{$\psi_{\delta\alpha}$}
\begin{equation}\label{CsystemSur}
\begin{alignedat}{2}
\psi_{\delta\alpha}\colon &&\invLim \delta(\cC_X)&\to\invLim \alpha(\cC_X)\\
&&(p_X\,|\,X\in\cX)&\mapsto (f_X(p_X)\,|\,X\in\cX)
\end{alignedat}
\end{equation}
is a well-defined continuous surjection by Lemma~\ref{InvLimCompSurjOnto}.
Furthermore, this yields
\begin{lemma}\label{ECOcompsComparable}
If $\cC_\alpha$ and $\cC_\delta$ are two $\cC$-systems with $\cC_\alpha\le_\cC\cC_\delta$ and $\cC_\alpha$ induces the \ecomp{} $\alpha G$ of $\CG$ whereas $\cC_\delta$ induces the \ecomp{} $\delta G$ of $\CG$, then $\alpha G\le\delta G$ is witnessed by $\psi_{\delta\alpha}\cup\id_{\CG}$.
\end{lemma}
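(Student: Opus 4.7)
Let $h:=\psi_{\delta\alpha}\cup\id_{\CG}\colon \delta G\to\alpha G$. The plan is to verify that $h$ is a well-defined continuous map satisfying $h\circ\id_{\CG}=\id_{\CG}$, which by the definition of $\le$ on \HDcomp{}s will yield $\alpha G\le\delta G$.

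First I would check well-definedness and the identity-fixing property: the sets $\CG$ and $\cI_\delta$ form a disjoint decomposition of $\delta G$, while $\psi_{\delta\alpha}$ maps $\cI_\delta$ into $\cI_\alpha\subseteq\alpha G$ (by its definition~(\ref{CsystemSur})) and $\id_{\CG}$ maps $\CG$ into $\CG\subseteq\alpha G$, so $h$ is a well-defined function. Obviously $h\rest\CG=\id_{\CG}$, giving the required commutativity with the embeddings.

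The main task is continuity, and the key step is the identity
\[
h^{-1}\bigl(\cO_{\alpha G}(X,O)\bigr)=\cO_{\delta G}\bigl(X,f_X^{-1}(O)\bigr)
\]
for every basic open set $\cO_{\alpha G}(X,O)$ of $\alpha G$, where $f_X\colon \delta(\cC_X)\to\alpha(\cC_X)$ is the unique map witnessing $(\alpha(\cC_X),\alpha_X)\le(\delta(\cC_X),\delta_X)$. Since $f_X$ is continuous, $f_X^{-1}(O)$ is open in $\delta(\cC_X)$, so the right-hand side is basic open in $\delta G$ and the claim then implies continuity of $h$ at every point not in $\CG$. The computation naturally splits into two parts corresponding to the decomposition $\alpha G=\CG\cup\cI_\alpha$. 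On the $\CG$-part, the relation $f_X\circ\delta_X=\alpha_X$ (the defining property of $f_X$ as the witness of $(\alpha(\cC_X),\alpha_X)\le(\delta(\cC_X),\delta_X)$) gives
\[
\delta_X^{-1}\bigl(f_X^{-1}(O)\bigr)=(f_X\circ\delta_X)^{-1}(O)=\alpha_X^{-1}(O),
\]
so the graph-theoretic parts $\bigcup\delta_X^{-1}(f_X^{-1}(O))\cup\mathring{E}(X,\bigcup\delta_X^{-1}(f_X^{-1}(O)))$ and $\bigcup\alpha_X^{-1}(O)\cup\mathring{E}(X,\bigcup\alpha_X^{-1}(O))$ coincide. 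On the inverse-limit part, the defining formula $\psi_{\delta\alpha}((p_Y)_{Y\in\cX})=(f_Y(p_Y))_{Y\in\cX}$ yields $\pi_X^\alpha\circ\psi_{\delta\alpha}=f_X\circ\pi_X^\delta$, and hence
\[
\psi_{\delta\alpha}^{-1}\bigl((\pi_X^\alpha)^{-1}(O)\bigr)=(\pi_X^\delta)^{-1}\bigl(f_X^{-1}(O)\bigr),
\]
which matches the inverse-limit part of $\cO_{\delta G}(X,f_X^{-1}(O))$. Combining both parts gives the displayed identity.

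Finally, for basic open neighbourhoods in $\alpha G$ that come from the 1-complex topology of $\CG$ (around vertices or inner edge points), the preimage under $h$ equals the set itself, which is open in $\delta G$ by the analogous construction of its topology as an extension of the 1-complex. The hardest and most error-prone step is simply making sure the two defining identities $f_X\circ\delta_X=\alpha_X$ and $\pi_X^\alpha\circ\psi_{\delta\alpha}=f_X\circ\pi_X^\delta$ are applied on the correct halves of the basic open set; once this bookkeeping is in place, continuity of $h$ and therefore the lemma follow directly.
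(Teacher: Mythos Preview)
Your argument is correct and follows essentially the same route as the paper: both proofs reduce to the two identities $f_X\circ\delta_X=\alpha_X$ and $\pi_X^\alpha\circ\psi_{\delta\alpha}=f_X\circ\pi_X^\delta$, with your version being slightly cleaner in that you compute the exact preimage $h^{-1}(\cO_{\alpha G}(X,O))=\cO_{\delta G}(X,f_X^{-1}(O))$ rather than picking an auxiliary $O'\subseteq f_X^{-1}(O)$ as the paper does. One small omission: since $\alpha G\le\delta G$ is meant as $\Omega$-compactifications (formally compactifications of $\cV G$), you should also note that $\psi_{\delta\alpha}$ fixes $\Omega$, which follows from condition~(\ref{CompactificationBondingMapsRest}); the paper records this in one line.
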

\begin{proof}
Put $\psi=\psi_{\delta\alpha}\cup\id_{\CG}$.
By (\ref{CompactificationBondingMapsRest}) we know that $\psi$ fixes $\Omega$ (by abuse of notation).
Clearly, it suffices to show that $\psi$ is continuous at every $p=(p_X\,|\,X\in\cX)\in\cI_\delta$ (recall that $\cI_\delta=\invLim\delta(\cC_X)$).
For this, consider any basic open neighbourhood $\cO_{\alpha G}(Y,O)$ of $\psi(p)$ in $\alpha G$, and recall that $O$ is open in $\alpha(\cC_Y)$. 
By definition of $\psi$ we have $f_Y(p_Y)\in O$.
Since $f_Y$ is continuous, we find some open neighbourhood $O'$ of $p_Y$ in $\delta(\cC_Y)$ with $f_Y[O']\subseteq O$. 
Then $\cO_{\delta G}(Y,O')$ is an open neighbourhood of $p$ in $\delta G$.
Hence it suffices to show that $\psi$ maps it to $\cO_{\alpha G}(Y,O)$. 
By choice of $O'$ and definition of $\psi_{\delta\alpha}$ we know that $\psi_{\delta\alpha}$ sends the open subset $(\pi_Y^\delta)^{-1}(O')$ of $\cI_\delta$ to the open subset $(\pi_Y^\alpha)^{-1}(O)$ of $\cI_\alpha$, so by definition of $\cO_{\alpha G}(Y,O)$ and $\cO_{\delta G}(Y,O')$ is suffices to show $\delta_Y^{-1}(O')\subseteq\alpha_Y^{-1}(O)$. 
But this is clear from $f_Y[O']\subseteq O$ combined with the commuting diagram
\begin{center}
\begin{tikzpicture}[pil/.style={
           ->,
           thin,
           shorten <=2pt,
           shorten >=2pt,}]
\node (1) {$\cC_Y$};
\node (2) [right=of 1]{$\delta(\cC_Y)$};
\node (3) [below=of 2]{$\alpha(\cC_Y)$};
\path[-stealth]
(1) edge node [above] {$\delta_Y$} (2)
(1) edge node [below left] {$\alpha_Y$} (3)
(2) edge node [right] {$f_Y$} (3);
\end{tikzpicture}
\end{center}
(recall that it commutes since $f_Y$ witnesses $(\alpha(\cC_Y),\alpha_Y)\le(\delta(\cC_Y),\delta_Y)$).
\end{proof}

If both $\cC_\alpha\le_\cC \cC_{\delta}$ and $\cC_{\delta}\le_\cC \cC_\alpha$ hold, we say that $\cC_\alpha$ and $\cC_{\delta}$ are $\cC$-\textit{equivalent}.\index{$\cC$-equivalent}
The following lemma and corollary indicate that this definition is meaningful:

\begin{lemma}\label{CsystemPartialWell}
Let $\cC_\alpha$ and $\cC_\delta$ be two $\cC$-systems with $\cC_\alpha\le_\cC\cC_\delta$ and $\cC_\delta\le_{\cC}\cC_\alpha$. For every $X\in\cX$ suppose that
\begin{enumerate}
\item $f_X\colon \delta(\cC_X)\to\alpha(\cC_X)$ witnesses $(\alpha(\cC_X),\alpha_X)\le (\delta(\cC_X),\delta_X)$, and
\item $g_X\colon \alpha(\cC_X)\to\delta(\cC_X)$ witnesses $(\delta(\cC_X),\delta_X)\le (\alpha(\cC_X),\alpha_X)$.
\end{enumerate}
Let $\psi_{\delta\alpha}$ be as in (\ref{CsystemSur}), and let $\psi_{\alpha\delta}$ be defined analogously using the $g_X$ instead of the $f_X$.
Then $\psi_{\delta\alpha}$ is a homeomorphism with inverse $\psi_{\alpha\delta}$.
\end{lemma}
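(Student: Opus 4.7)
The plan is to show that the compositions $\psi_{\alpha\delta}\circ\psi_{\delta\alpha}$ and $\psi_{\delta\alpha}\circ\psi_{\alpha\delta}$ are the identity; continuity of both $\psi_{\delta\alpha}$ and $\psi_{\alpha\delta}$ already follows from~(\ref{CsystemSur}) together with Lemma~\ref{InvLimCompSurjOnto}, so once the two compositions are identities, $\psi_{\delta\alpha}$ is automatically a homeomorphism with inverse $\psi_{\alpha\delta}$.

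By the componentwise definition of the $\psi$-maps, it suffices to verify that $g_X\circ f_X=\id_{\delta(\cC_X)}$ and $f_X\circ g_X=\id_{\alpha(\cC_X)}$ for every $X\in\cX$. I would argue this by uniqueness: by the witness property of $f_X$ and $g_X$ we have $f_X\circ\delta_X=\alpha_X$ and $g_X\circ\alpha_X=\delta_X$, hence
\begin{align*}
(g_X\circ f_X)\circ\delta_X=g_X\circ\alpha_X=\delta_X=\id_{\delta(\cC_X)}\circ\,\delta_X.
\end{align*}
Thus both $g_X\circ f_X$ and $\id_{\delta(\cC_X)}$ are continuous maps $\delta(\cC_X)\to\delta(\cC_X)$ witnessing $(\delta(\cC_X),\delta_X)\le(\delta(\cC_X),\delta_X)$, and Lemma~\ref{CompactificationComparableUnique} forces them to coincide. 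The symmetric calculation with the roles of $\alpha$ and $\delta$ swapped yields $f_X\circ g_X=\id_{\alpha(\cC_X)}$.

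With these pointwise identities at hand, for every $p=(p_X\,|\,X\in\cX)\in\invLim\delta(\cC_X)$ we compute
\begin{align*}
\psi_{\alpha\delta}(\psi_{\delta\alpha}(p))=\big(g_X(f_X(p_X))\,\big|\,X\in\cX\big)=(p_X\,|\,X\in\cX)=p,
\end{align*}
and analogously $\psi_{\delta\alpha}\circ\psi_{\alpha\delta}=\id_{\invLim\alpha(\cC_X)}$. Since $\psi_{\alpha\delta}$ is continuous (by the same appeal to Lemma~\ref{InvLimCompSurjOnto} that was used for $\psi_{\delta\alpha}$ in the discussion preceding Lemma~\ref{ECOcompsComparable}), $\psi_{\delta\alpha}$ is a homeomorphism with inverse $\psi_{\alpha\delta}$, as required.

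The only potentially subtle point is the appeal to Lemma~\ref{CompactificationComparableUnique}; everything else is pure diagram chase. Because that uniqueness lemma was stated precisely for continuous maps between Hausdorff compactifications, and $\alpha(\cC_X)$, $\delta(\cC_X)$ are Hausdorff by assumption, this application is legitimate and the proof goes through without further topological input.
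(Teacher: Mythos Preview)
Your proof is correct and follows essentially the same approach as the paper: both hinge on Lemma~\ref{CompactificationComparableUnique} (uniqueness of witnesses) to establish that $f_X$ and $g_X$ are mutual inverses, and then pass this coordinatewise information to the inverse limit. The only cosmetic difference is that the paper first invokes Lemma~\ref{TopEqHomeo} to say $f_X$ is a homeomorphism and then uses uniqueness to identify $g_X$ as its inverse, whereas you bypass Lemma~\ref{TopEqHomeo} and apply uniqueness directly to the compositions $g_X\circ f_X$ and $f_X\circ g_X$; your route is marginally more direct but not substantively different.
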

\begin{proof}
For every $X\in\cX$ Lemma~\ref{TopEqHomeo} yields that $f_X$ is a homeomorphism, and by Lemma~\ref{CompactificationComparableUnique} we know that $g_X$ must be its inverse. In particular, every $g_X$ is injective, and so must be $\psi_{\alpha\delta}$. Furthermore, we already know that $\psi_{\alpha\delta}$ is also a continuous surjection. Similarly, $\psi_{\delta\alpha}$ is a continuous bijection, and $\psi_{\alpha\delta}$ is its continuous inverse.
\end{proof}

\begin{corollary}\label{CFleCUextension}
Let $\cC_\alpha$ and $\cC_\delta$ be two $\cC$-equivalent $\cC$-systems. Let $\alpha G$ and $\delta G$ be the two \ecomp{}s of $G$ induced by $\cC_\alpha$ and $\cC_\delta$, respectively. Then $\alpha G$ and $\delta G$ are topologically equivalent, witnessed by the homeomorphism $\psi_{\delta\alpha}\cup\id_G$ with inverse $\psi_{\alpha\delta}\cup\id_G$.
\end{corollary}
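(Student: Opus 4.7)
The plan is to assemble the statement as a direct corollary of Lemmas~\ref{ECOcompsComparable} and~\ref{CsystemPartialWell}, both of which have already done the substantive work. Since the hypothesis of $\cC$-equivalence says $\cC_\alpha\le_\cC\cC_\delta$ and $\cC_\delta\le_\cC\cC_\alpha$, I would first apply Lemma~\ref{ECOcompsComparable} to each inequality separately. This immediately yields that $\alpha G\le\delta G$ is witnessed by the continuous map $\psi_{\delta\alpha}\cup\id_{\CG}\colon\delta G\to\alpha G$, and symmetrically that $\delta G\le\alpha G$ is witnessed by the continuous map $\psi_{\alpha\delta}\cup\id_{\CG}\colon\alpha G\to\delta G$; both maps fix $\CG\cup\Omega$ by construction.

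Next I would invoke Lemma~\ref{CsystemPartialWell}, which tells us that $\psi_{\delta\alpha}$ and $\psi_{\alpha\delta}$ are homeomorphisms between $\cI_\delta$ and $\cI_\alpha$ that are inverse to one another. Since $\id_{\CG}$ is its own inverse and the two maps restrict to $\psi_{\delta\alpha}$ and $\psi_{\alpha\delta}$ on the inverse-limit parts respectively, the extensions $\psi_{\delta\alpha}\cup\id_{\CG}$ and $\psi_{\alpha\delta}\cup\id_{\CG}$ are set-theoretic inverses of each other on all of $\delta G$ and $\alpha G$. Combined with the continuity already established, each is a homeomorphism, which by definition makes $\alpha G$ and $\delta G$ topologically equivalent with the stated witnesses.

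The only mild obstacle is a bookkeeping one: one must verify that the witness supplied by Lemma~\ref{ECOcompsComparable} in one direction really is the set-theoretic inverse of the witness supplied by the same lemma in the other direction. But this is forced, because the maps $f_X$ and $g_X$ appearing in the definitions of $\psi_{\delta\alpha}$ and $\psi_{\alpha\delta}$ are mutually inverse by Lemma~\ref{CompactificationComparableUnique} (uniqueness of the comparison map between two \HDcomp{}s), and the extensions agree with $\id_{\CG}$ on the dense subspace $\CG$. An alternative route is to appeal to Lemma~\ref{TopEqHomeo} to conclude topological equivalence abstractly from $\alpha G\le\delta G\le\alpha G$, and then identify the equivalence with $\psi_{\delta\alpha}\cup\id_{\CG}$ via Lemma~\ref{CompactificationComparableUnique}; but the direct route above produces both the homeomorphism and its explicit inverse in one step.
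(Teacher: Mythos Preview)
Your main argument is correct and is exactly what the paper does: its proof reads ``Combine Lemmas~\ref{CsystemPartialWell} and~\ref{ECOcompsComparable}'', and you have spelled out precisely that combination. One small caution on your alternative route: Lemma~\ref{TopEqHomeo} is stated for \emph{Hausdorff} compactifications, whereas $\alpha G$ and $\delta G$ need not be Hausdorff (only $\alpha G\setminus\mathring{E}$ is), so that shortcut is not directly available; but your direct argument avoids this issue entirely.
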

\begin{proof}
Combine Lemmas~\ref{CsystemPartialWell}~and~\ref{ECOcompsComparable}.
\end{proof}

At first glance condition (\ref{Commute}) complicates the definition of $\le_\cC$ for $\cC$-systems a lot, giving us a hard time working with it. Surprisingly, the following Lemma takes this extra work load off our shoulders:

\begin{lemma}\label{CsystemComparableNotHard}
Suppose that $\{(\alpha(\cC_X),\alpha_X),\A{a}_{X',X}\}$ and $\{(\delta(\cC_X),\delta_X),\A{d}_{X',X}\}$ are two $\cC$-systems with
$(\alpha(\cC_X),\alpha_X)\le(\delta(\cC_X),\delta_X)$ (witnessed by $f_X$) for all $X\in\cX$. Then (\ref{Commute}) holds for all $X\subseteq X'\in\cX$.
\end{lemma}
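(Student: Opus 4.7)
The plan is to observe that both sides of~(\ref{Commute}) are continuous maps from $\delta(\cC_{X'})$ to the Hausdorff space $\alpha(\cC_X)$, and then invoke Lemma~\ref{HDdenseSubsetUnique}: it suffices to show that $f_X\circ\A{d}_{X',X}$ and $\A{a}_{X',X}\circ f_{X'}$ agree on a dense subset of $\delta(\cC_{X'})$. The natural candidate is $\delta_{X'}[\cC_{X'}]$, which is dense in $\delta(\cC_{X'})$ because $(\delta(\cC_{X'}),\delta_{X'})$ is a compactification of $\cC_{X'}$.

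First I would check continuity of both sides: each is a composition of two bonding/witness maps, all of which are continuous by hypothesis (bonding maps of inverse systems are continuous, and the witness $f_X$ of $(\alpha(\cC_X),\alpha_X)\le(\delta(\cC_X),\delta_X)$ is continuous by definition of $\le$ for Hausdorff compactifications). The space $\alpha(\cC_X)$ is Hausdorff since $(\alpha(\cC_X),\alpha_X)$ is a \HDcomp{} of $\cC_X$.

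Next I would evaluate both sides on an arbitrary $\delta_{X'}(C')$ with $C'\in\cC_{X'}$. For the left-hand side, applying the $\cC$-system identity~(\ref{CompactificationBondingMapsRest}) for $\cC_\delta$ gives $\A{d}_{X',X}(\delta_{X'}(C'))=\delta_X(\phi_{X',X}(C'))$, and then using $f_X\circ\delta_X=\alpha_X$ (which holds since $f_X$ witnesses the compactification inequality) yields $\alpha_X(\phi_{X',X}(C'))$. For the right-hand side, $f_{X'}\circ\delta_{X'}=\alpha_{X'}$ gives $f_{X'}(\delta_{X'}(C'))=\alpha_{X'}(C')$, and then~(\ref{CompactificationBondingMapsRest}) for $\cC_\alpha$ yields $\A{a}_{X',X}(\alpha_{X'}(C'))=\alpha_X(\phi_{X',X}(C'))$. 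So both sides agree on $\delta_{X'}[\cC_{X'}]$.

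Finally, by Lemma~\ref{HDdenseSubsetUnique} applied to the two continuous maps $f_X\circ\A{d}_{X',X},\,\A{a}_{X',X}\circ f_{X'}\colon\delta(\cC_{X'})\to\alpha(\cC_X)$ agreeing on the dense subset $\delta_{X'}[\cC_{X'}]$, they coincide on all of $\delta(\cC_{X'})$, which is exactly~(\ref{Commute}). There is no real obstacle here; the only subtlety is to notice that~(\ref{Commute}) is automatic once the two pointwise inequalities hold, because it is forced by behavior on the dense subspace $\cC_{X'}$, where both sides are governed by the single map $\phi_{X',X}$.
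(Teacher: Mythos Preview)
Your proof is correct and follows essentially the same approach as the paper: both verify that $f_X\circ\A{d}_{X',X}$ and $\A{a}_{X',X}\circ f_{X'}$ agree on the dense subset $\delta_{X'}[\cC_{X'}]$ by applying~(\ref{CompactificationBondingMapsRest}) for each $\cC$-system together with $f_X\circ\delta_X=\alpha_X$, and then invoke Lemma~\ref{HDdenseSubsetUnique}. The paper just writes the computation as a single chain of equalities composed with $\delta_{X'}$ rather than evaluating pointwise at $\delta_{X'}(C')$, but the argument is the same.
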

\begin{proof}
Recall that for every $X\in\cX$ the witness $f_X$ satisfies $f_X\circ\delta_X=\alpha_X$. Hence for every $X\subseteq X'\in\cX$ we compute:
\begin{align*}
f_X\circ\A{d}_{X',X}\circ\delta_{X'}\overset{(\ref{CompactificationBondingMapsRest})}{=}&\,f_X\circ\delta_X\circ\phi_{X',X}\\
=&\;\alpha_X\circ\phi_{X',X}\\
\overset{(\ref{CompactificationBondingMapsRest})}{=}&\;\A{a}_{X',X}\circ \alpha_{X'}\\
=&\;\A{a}_{X',X}\circ f_{X'}\circ\delta_{X'}
\end{align*}
Thus both sides of (\ref{Commute}) agree on $\delta_{X'}[\cC_{X'}]$, so by Lemma~\ref{HDdenseSubsetUnique} they agree on all of $\delta(\cC_{X'})$ as desired.
\end{proof}

\begin{theorem}\label{CFleCU}
If $G$ is an infinite graph, then the following hold:
\begin{enumerate}
\item For every $X\in\cX$ we have $(\aF_X,c_X)\le (\cU_X,c_X)$.
\item $\cC_{\A{F}}\le_{\cC}\cC_{\cU}$.
\item $\FG\le\TC$.
\end{enumerate}
\end{theorem}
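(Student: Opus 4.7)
The plan is to derive all three parts in succession, with each part reducing to the previous plus a single appeal to a general lemma from Section~\ref{Minimality}. For (i), the crucial observation is that $\cU_X$ is by definition the Stone-\v{C}ech \HDcomp{} of the discrete space $\cC_X$, while $(\aF_X,c_X)$ is a \HDcomp{} of $\cC_X$ by Lemma~\ref{aCXcomp}. Moreover, the embedding $c_X\colon\cC_X\to\aF_X$ is literally the same map as the canonical embedding of $\cC_X$ into its Stone-\v{C}ech \HDcomp{}, since both send each $C\in\cC_X$ to the principal ultrafilter generated by $\{C\}$. Thus (i) follows immediately from the universal property of the Stone-\v{C}ech compactification recorded in Lemma~\ref{StoneCech}: every \HDcomp{} of a discrete space is dominated by its Stone-\v{C}ech \HDcomp{}. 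Let $f_X\colon\cU_X\to\aF_X$ denote the unique continuous witness of $(\aF_X,c_X)\le(\cU_X,c_X)$ furnished by Lemma~\ref{CompactificationComparableUnique}.

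For (ii), I would first remark that both $\cC_{\aF}=\{(\aF_X,c_X),\af_{X',X}\}$ and $\cC_{\cU}=\{(\cU_X,c_X),f_{X',X}\}$ are $\cC$-systems (this was noted already after the definition of $\cC$-system), and that part~(i) supplies the pointwise inequalities $(\aF_X,c_X)\le(\cU_X,c_X)$ for every $X\in\cX$, witnessed by the maps $f_X$. The key convenience is Lemma~\ref{CsystemComparableNotHard}, which tells us that the required commuting condition~(\ref{Commute}) between the $f_X$ and the respective bonding maps $f_{X',X}$ and $\af_{X',X}$ comes for free from the $\cC$-system axiom~(\ref{CompactificationBondingMapsRest}) applied to each of $\cC_{\aF}$ and $\cC_{\cU}$. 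Invoking this lemma directly yields $\cC_{\aF}\le_\cC\cC_{\cU}$.

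For (iii), I would apply Lemma~\ref{ECOcompsComparable} to the relation $\cC_{\aF}\le_\cC\cC_{\cU}$ from (ii). By construction of the \ecomp{} associated to a $\cC$-system (see the discussion preceding Theorem~\ref{alphaGcomp}), $\cC_{\aF}$ induces the \ecomp{} $\FG$ and $\cC_{\cU}$ induces the \ecomp{} $\TC$; hence Lemma~\ref{ECOcompsComparable} yields $\FG\le\TC$, witnessed by the continuous surjection $\psi_{\cU\aF}\cup\id_{\CG}$ defined as in~(\ref{CsystemSur}). There is essentially no obstacle in this argument: the heavy lifting was done in setting up $\cC$-systems and the induced \ecomp{}s in Section~\ref{Minimality}, and in identifying $\cU_X$ with the Stone-\v{C}ech \HDcomp{} of $\cC_X$. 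The only thing to verify carefully is the notational alignment of the two embeddings both named $c_X$, which is immediate from their definitions.
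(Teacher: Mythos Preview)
Your proposal is correct and follows essentially the same approach as the paper: part~(i) via Lemma~\ref{StoneCech} (the Stone--\v{C}ech universality), part~(ii) via Lemma~\ref{CsystemComparableNotHard}, and part~(iii) via Lemma~\ref{ECOcompsComparable}. The paper additionally supplies, for further insight, an explicit construction of the witness $f\colon\cU_X\to\aF_X$ in~(i), but this is supplementary rather than a different argument.
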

\begin{proof}
(i). This is immediate from Lemma~\ref{StoneCech}. For further insight, a constructive proof follows:

Let any $X\in\cX$ be given and define $f\colon \cU_X\to\A{F}_X$ as follows: 
If $U\in\cU_X$ is principal we set $f(U)=U$. Otherwise $U$ is non-principal, so $\cC_X(Y)\in U$ holds for exactly one $Y\in \crit(X)$ due to
\begin{align*}
\cC_X=\cC_X^-\uplus\biguplus_{Y\in \crit(X)}\cC_X(Y)
\end{align*}
and since $\cC_X^-$ is finite. Hence, we set $f(U)=\cF_X(Y)$.

It remains to verify that $f$ is continuous: For this, consider any $U\in\cU_X$ together with some basic open neighbourhood $\cO_{\A{F}_X}(\cC)$ of $f(U)$ in $\aF_X$. 
If $U$ is generated by some $\{C\}$ with $C\in\cC_X$, then $\cO_{\cU_X}(\{C\})$ is a basic open neighbourhood of $U$ in $\cU_X$ which $f$ maps to $\cO_{\A{F}_X}(\cC)$. 
Otherwise $U$ is non-principal there is some $Y\in \crit(X)$ with $f(U)=\cF_X(Y)$ and $\cC$ is a cofinite subset of $\cC_X(Y)$, so $f$ maps $\cO_{\cU_X}(\cC)$ to $\cO_{\A{F}_X}(\cC)$ as desired. Thus $f$ is continuous as desired.

(ii) is immediate from (i) and Lemma~\ref{CsystemComparableNotHard}.

(iii) is immediate from (ii) and Lemma~\ref{ECOcompsComparable}.
\end{proof}

Next, we prove two technical Lemmas, which we then combine in Theorem~\ref{ECOminimal} in order to show that $\cC_{\aF}$ is the least $\cC$-system (with respect to $\le_{\cC}$).

\begin{lemma}\label{CsystemInjFail}
Let $X\in\cX$ be given together with a compactification $(\alpha(\cC_X),\alpha_X)$ of $\cC_X$ and suppose that
\begin{align}\label{assumption0}
\overline{\alpha_X[\cC_X(Y)]}\cap\overline{\alpha_X[\cC_X(Y')]}=\emptyset
\end{align}
holds for all $Y\neq Y'\in \crit(X)$. Then $(\A{F}_X,c_X)\le (\alpha(\cC_X),\alpha_X)$.
\end{lemma}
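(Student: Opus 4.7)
My plan is to construct a continuous map $f\colon \alpha(\cC_X)\to\aF_X$ with $f\circ\alpha_X=c_X$ by specifying it separately on the two regions of $\alpha(\cC_X)$. On the dense subset $\alpha_X[\cC_X]$ the compatibility requirement forces $f(\alpha_X(C))=c_X(C)$. For a point $p\in\alpha(\cC_X)\setminus\alpha_X[\cC_X]$, I want to set $f(p)=\cF_X(Y)$, where $Y\in\crit(X)$ is chosen so that $p\in\overline{\alpha_X[\cC_X(Y)]}$.

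To see that this is well-defined, I would first observe that $\crit(X)\subseteq 2^X$ is finite (since $X$ is), and that $\cC_X^-$ is finite by the pigeonhole argument recalled at the start of Section~\ref{Minimality}. Hence $\alpha_X[\cC_X^-]$ is finite and therefore closed in the Hausdorff space $\alpha(\cC_X)$, and the decomposition $\cC_X=\cC_X^-\uplus\biguplus_{Y\in\crit(X)}\cC_X(Y)$ together with the density of $\alpha_X[\cC_X]$ shows that every $p\notin\alpha_X[\cC_X]$ lies in the closure of some $\alpha_X[\cC_X(Y)]$ (otherwise the finite intersection of complements would give a neighbourhood of $p$ missing the dense set $\alpha_X[\cC_X]$). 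Uniqueness of this $Y$ is precisely the hypothesis~(\ref{assumption0}).

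Next, I would verify continuity against the basis $\aB_X$ supplied by Lemma~\ref{FXbasis}. For a singleton basic set $\cO_{\aF_X}(\{C\})=\{c_X(C)\}$, I would invoke the general topology fact that a dense locally compact Hausdorff subspace of a Hausdorff space is open: applied here, $\alpha_X[\cC_X]$ is open in $\alpha(\cC_X)$, and consequently each $\alpha_X(C)$ is isolated, so $f^{-1}(\{c_X(C)\})=\{\alpha_X(C)\}$ is open. For a basic set $\cO_{\aF_X}(\cC)$ with $\cC$ cofinite in some $\cC_X(Y)$, I would show that the complement of $f^{-1}(\cO_{\aF_X}(\cC))$ equals
\[
\alpha_X[\cC_X^-]\;\cup\;\alpha_X[\cC_X(Y)\setminus\cC]\;\cup\!\bigcup_{Y'\in\crit(X)\setminus\{Y\}}\!\overline{\alpha_X[\cC_X(Y')]},
\]
which is a finite union of closed sets (the first two terms are finite sets of isolated points, and the last uses the finiteness of $\crit(X)$); hence $f^{-1}(\cO_{\aF_X}(\cC))$ is open.

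The only subtle steps are (a) invoking the locally-compact-dense-implies-open principle to force isolation of the points $\alpha_X(C)$ in $\alpha(\cC_X)$, without which $f$ could fail to be continuous at the principal points of $\aF_X$, and (b) using the finiteness of $\crit(X)$ together with~(\ref{assumption0}) to turn what would otherwise be an arbitrary union of closures into a well-behaved finite one. Once these two are in hand, everything else is routine book-keeping.
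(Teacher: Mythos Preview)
Your proof is correct and defines the same map $f$ as the paper, but the route to continuity is different. The paper partitions $\alpha(\cC_X)$ into the finite closed set $L^-:=\alpha_X[\cC_X^-]$ and the closures $L_Y:=\overline{\alpha_X[\cC_X(Y)]}$ for $Y\in\crit(X)$, observes that this is a finite partition into clopen pieces, and then builds $f$ piecewise: on $L^-$ it is trivially continuous (finite discrete domain), while on each $L_Y$ it is the comparison map to the one-point \HDcomp{} $K_Y=c_X[\cC_X(Y)]\cup\{\cF_X(Y)\}$ supplied by Lemma~\ref{OnePointCompMin}; the Pasting Lemma (Lemma~\ref{PastingLemma}) then glues these together. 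Your approach instead defines $f$ globally and verifies continuity directly against the basis $\aB_X$ of Lemma~\ref{FXbasis} by computing complements of preimages. Your argument is more self-contained (no Pasting Lemma, no appeal to one-point minimality), at the cost of making explicit the isolation of each $\alpha_X(C)$ via the ``dense locally compact subspace is open'' principle---a fact the paper also relies on implicitly when claiming that $L^-$ is disjoint from each $L_Y$, but never states. Either way works; the paper's decomposition is more modular, while yours is a cleaner single pass.
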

\begin{proof}
To construct a witness $f\colon \alpha(\cC_X)\to\aF_X$ of $(\aF_X,c_X)\le (\alpha(\cC_X),\alpha_X)$, we proceed as follows:

First, we write $K^-=c_X[\cC_X^-]$ (recall that $\cC_X^-$ is the set of all components of $G-X$ whose neighbourhood is not in $\crit(X)$), and for every $Y\in \crit(X)$ we put
\begin{align*}
K_Y=c_X[\cC_X(Y)]\cup\{\cF_X(Y)\}.
\end{align*}
Then 
\begin{align*}
\{K^-\}\cup\{K_Y\,|\,Y\in\crit(X)\}
\end{align*}
is a finite partition of $\aF_X$ into closed---and hence clopen---subsets.
Similarly, we write $L^-:=\alpha_X[\cC_X^-]$, and for every $Y\in\crit(X)$ we set
\begin{align*}
L_Y=\overline{\alpha_X[\cC_X(Y)]}
\end{align*}
with the closure taken in $\alpha(\cC_X)$.
Since $\cC_X^-$ is finite and $\alpha(\cC_X)$ is Hausdorff, we know that $L^-$ is closed in $\alpha(\cC_X)$. Hence (\ref{assumption0}) yields that 
\begin{align*}
\{L^-\}\cup\{L_Y\,|\,Y\in\crit(X)\}
\end{align*}
is a finite partition of $\alpha(\cC_X)$ into closed---and hence clopen---subsets.

Next, we define mappings whose union we will take as $f$ in the end. Let $f^-\colon L^-\to \aF_X$ map $\alpha_X(C)$ to $c_X(C)$ for each $C\in\cC_X^-$, and note that $f^-$ is continuous since $L^-$ carries the discrete subspace topology.
Since for every $Y\in\crit(X)$ the pair $(K_Y,c_X\rest\cC_X(Y))$ is the one-point \HDcomp{} of $\cC_X(Y)$ (equipped with the discrete subspace topology inherited from $\cC_X$) and $(L_Y,\alpha_X\rest\cC_X(Y))$ is a \HDcomp{} of $\cC_X(Y)$, by Lemma~\ref{OnePointCompMin} we have
\begin{align}\label{1ptComp}
(K_Y,c_X\rest\cC_X(Y))\le (L_Y,\alpha_X\rest\cC_X(Y)),
\end{align}
witnessed by some $f_Y\colon L_Y\to K_Y$. Since $K_Y$ carries the subspace topology of $\aF_X$, we may widen the codomain of $f_Y$ to $\aF_Y$ without losing continuity.
Finally, we take $f\colon \alpha(\cC_X)\to\aF_X$ to be the union of $f^-$ and the all of the $f_Y$. By Lemma~\ref{PastingLemma} we know that $f$ is continuous.
\end{proof}

\begin{lemma}\label{weirdLemma}
Let $\{(\alpha(\cC_X),\alpha_X),\A{a}_{X',X}\}$ be a $\cC$-system and let $\Xi\in\cX$ be such that $(\A{F}_X,c_X)\le (\alpha(\cC_X),\alpha_X)$ holds for all $X\subsetneq\Xi$. Then
\begin{align*}
\overline{\alpha_\Xi[\cC_\Xi(Y)]}\cap\overline{\alpha_\Xi[\cC_\Xi(Y')]}=\emptyset
\end{align*}
holds for all $Y\neq Y'\in\crit(\Xi)$.
\end{lemma}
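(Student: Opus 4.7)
The plan is to reduce the statement to an already-controlled smaller set by removing a single vertex from $\Xi$. Given distinct $Y,Y' \in \crit(\Xi)$, we may (by swapping if necessary) assume there exists $u \in Y \setminus Y'$, and set $X := \Xi \setminus \{u\}$. Since $X \subsetneq \Xi$, the hypothesis yields a continuous witness $f \colon \alpha(\cC_X) \to \aF_X$ with $f \circ \alpha_X = c_X$. The rest of the argument will be carried out by pushing a hypothetical point in the intersection of the two closures through $\A{a}_{\Xi,X}$ and then through $f$, and deriving a contradiction inside the very explicit space $\aF_X$.

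The key combinatorial observation is the contrast in how $\phi_{\Xi,X}$ behaves on $\cC_\Xi(Y)$ and $\cC_\Xi(Y')$. Every $C \in \cC_\Xi(Y)$ has $u$ as a neighbour, so $C \cup \{u\}$ is connected in $G-X$; hence all of $\cC_\Xi(Y)$ is collapsed to the single component $D$ of $G-X$ containing $u$. In contrast, for each $C' \in \cC_\Xi(Y')$ we have $N_G(C') = Y' \subseteq X$, so $C'$ sends no edges to $u$ or to any other component of $G-\Xi$; therefore $C'$ itself remains a component of $G-X$ with neighbourhood $Y'$. Consequently $\phi_{\Xi,X}[\cC_\Xi(Y)] = \{D\}$, while $\phi_{\Xi,X}[\cC_\Xi(Y')]$ is an infinite subset of $\cC_X(Y')$ not containing $D$ (since $u \in D$ but $u \notin C'$ for any such $C'$). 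In particular $Y' \in \crit(X)$, so $\cF_X(Y') \in \aF_X$ is well-defined.

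Now suppose for contradiction that some $p$ lies in both closures. Applying $\A{a}_{\Xi,X}$ and using (\ref{CompactificationBondingMapsRest}) together with Lemma~\ref{ctsPreservesClosure} gives
\[
\A{a}_{\Xi,X}(p)\in\overline{\alpha_X[\phi_{\Xi,X}[\cC_\Xi(Y)]]}=\overline{\{\alpha_X(D)\}}=\{\alpha_X(D)\},
\]
so $\A{a}_{\Xi,X}(p)=\alpha_X(D)$, and the same applied to the other closure yields $\alpha_X(D)\in\overline{\alpha_X[\phi_{\Xi,X}[\cC_\Xi(Y')]]}$. Pushing this forward through $f$ (continuity plus $f\circ\alpha_X=c_X$) gives $c_X(D)\in\overline{c_X[\phi_{\Xi,X}[\cC_\Xi(Y')]]}$. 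It remains to compute this last closure inside $\aF_X$ using the basis $\aB_X$ of Lemma~\ref{FXbasis}: the only non-principal point of $\aF_X$ that any basic neighbourhood of $c_X[\phi_{\Xi,X}[\cC_\Xi(Y')]]$ can meet is $\cF_X(Y')$, because a basic neighbourhood of any other $\cF_X(Y'')$ is of the form $\cO_{\aF_X}(\cC)$ with $\cC$ cofinite in $\cC_X(Y'')$, and $\cC_X(Y')$ is disjoint from $\cC_X(Y'')$. Hence the closure equals $c_X[\phi_{\Xi,X}[\cC_\Xi(Y')]]\cup\{\cF_X(Y')\}$. Since $c_X(D)$ is principal it is not $\cF_X(Y')$, so $D \in \phi_{\Xi,X}[\cC_\Xi(Y')]$, contradicting $u \in D$. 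The only step requiring care is the combinatorial analysis of $\phi_{\Xi,X}$ on the two classes; everything else is a routine application of continuity and of the explicit description of $\aF_X$.
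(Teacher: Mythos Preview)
Your proof is correct and follows essentially the same approach as the paper: remove a vertex lying in one critical set but not the other, observe that the bonding map collapses one class to a single component while leaving the other class intact, and push a hypothetical common closure point down to derive a contradiction. The only cosmetic differences are that you pick $u\in Y\setminus Y'$ where the paper picks $\xi\in Y'\setminus Y$ (a symmetric choice), and that you push all the way forward into $\aF_X$ and compute the closure there, whereas the paper instead pulls the isolation of $c_{\Xi^-}(C)$ back to $\alpha(\cC_{\Xi^-})$ via Lemma~\ref{CompactificationSur} and finishes the contradiction inside $\alpha(\cC_{\Xi^-})$; these are dual ways of expressing the same argument.
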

\begin{proof}
Assume for a contradiction that there is a pair $Y\neq Y'\in\crit(\Xi)$ with
\begin{align*}
\overline{\alpha_\Xi[\cC_\Xi(Y)]}\cap\overline{\alpha_\Xi[\cC_\Xi(Y')]}\neq\emptyset.
\end{align*}
Then one of $Y\setminus Y'$ and $Y'\setminus Y$ is non-empty, so without loss of generality we find some $\xi\in Y'\setminus Y$. 
Set $\Xi^-=\Xi-\{\xi\}$. Then $(\A{F}_{\Xi^-},c_{\Xi^-})\le(\alpha(\cC_{\Xi^-}),\alpha_{\Xi^-})$ holds by assumption, witnessed by some $f\colon \alpha(\cC_{\Xi^-})\to\A{F}_{\Xi^-}$.
By choice of $Y$ and $Y'$ we may pick some $z\in \overline{\alpha_{\Xi}[\cC_\Xi(Y)]}\cap \overline{\alpha_{\Xi}[\cC_\Xi(Y')]}$ (with the closure taken in $\alpha(\cC_\Xi)$). 
Now put $z^-=\A{a}_{\Xi,\Xi^-}(z)$.
Since $\A{a}_{\Xi,\Xi^-}$ is continuous, Lemma~\ref{ctsPreservesClosure} implies
\begin{align*}
z^-\in\overline{\A{a}_{\Xi,\Xi^-}[\alpha_{\Xi}[\cC_\Xi(Y)]]}\cap \overline{\A{a}_{\Xi,\Xi^-}[\alpha_{\Xi}[\cC_\Xi(Y')]]}
\end{align*}
(with the closures taken in $\alpha(\cC_{\Xi^-})$). Using (\ref{CompactificationBondingMapsRest}) yields
\begin{align}\label{zMinusOneTimeUse}
z^-\in\overline{\alpha_{\Xi^-}[\phi_{\Xi,\Xi^-}[\cC_\Xi(Y)]]}\cap\overline{\alpha_{\Xi^-}[\phi_{\Xi,\Xi^-}[\cC_\Xi(Y')]]}.
\end{align}
By choice of $\xi$ and $\Xi^-$ there exists a unique component $C$ of $G-\Xi^-$ including $\bigcup\cC_\Xi(Y')$, i.e. with $\{C\}=\phi_{\Xi,\Xi^-}[\cC_\Xi(Y')]$. 
Furthermore, $\xi\notin Y$ implies $\phi_{\Xi,\Xi^-}[\cC_\Xi(Y)]=\cC_\Xi(Y)$.
Hence, combining (\ref{zMinusOneTimeUse}) with $\xi\notin Y$ yields 
\begin{align*}
z^-\in\overline{\alpha_{\Xi^-}[\cC_\Xi(Y)]}\cap\overline{\{\alpha_{\Xi^-}(C)\}}.
\end{align*}
 But then $z^-\in \overline{\{\alpha_{\Xi^-}(C)\}}$ implies $z^-=\alpha_{\Xi^-}(C)$ since $\alpha(\cC_{\Xi^-})$ is Hausdorff.
Recall that $c_{\Xi^-}(C)=\langle C\rangle_{\Xi^-}$ is an isolated point of $\A{F}_{\Xi^-}$, witnessed by the open neighbourhood $\cO_{\A{F}_{\Xi^-}}(\{C\})=\{c_{\Xi^-}(C)\}$. 
Put $O=f^{-1}(\{c_{\Xi^-}(C)\})$ and note that this is open in $\alpha(\cC_{\Xi^-})$ since $f$ is continuous.
Moreover, $f\circ\alpha_{\Xi^-}=c_{\Xi^-}$ together with Lemma~\ref{CompactificationSur}~(i)~and~(ii) yields $O=\{z^-\}$.
In particular, $z^-$ is also isolated in $\alpha(\cC_{\Xi^-})$, witnessed by $O$.
Due to $Y\neq Y'$ we have $\cC_\Xi(Y)\cap\cC_\Xi(Y')=\emptyset$, and furthermore $\xi\in Y'\setminus Y$ yields $C\notin\cC_\Xi(Y)$ (recall $\{C\}=\phi_{\Xi,\Xi^-}[\cC_\Xi(Y')]$). 
Hence $z^-\notin\alpha_{\Xi^-}[\cC_\Xi(Y)]$ follows. Moreover, since $z^-$ is isolated in $\alpha(\cC_{\Xi^-})$, we even have $z^-\notin\overline{\alpha_{\Xi^-}[\cC_\Xi(Y)]}$, witnessed by $O$, contradicting our choice of $z^-$ as desired.
\end{proof}

\begin{theorem}\label{ECOminimal}
For every infinite $G$ the following hold up to $\cC$-/topological equivalence:
\begin{enumerate}
\item $\cC_{\A{F}}$ is the least $\cC$-system (with respect to $\le_{\cC}$).
\item $\cC_{\cU}$ is the greatest $\cC$-system (with respect to $\le_{\cC}$).
\item $\FG$ is the coarsest \ecomp{} of $\CG$ induced by a $\cC$-system.
\item $\TC$ is the finest \ecomp{} of $\CG$ induced by a $\cC$-system.
\end{enumerate}
\end{theorem}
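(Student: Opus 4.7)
The plan is to prove (ii) first, then (i) by induction on $|X|$, and finally deduce (iii) and (iv) as immediate corollaries by invoking Lemma~\ref{ECOcompsComparable}.

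For (ii), I would fix an arbitrary $\cC$-system $\cC_\alpha = \{(\alpha(\cC_X),\alpha_X),\A{a}_{X',X}\}$. Since each $(\cU_X,c_X)$ is the Stone-\v{C}ech \HDcomp{} of $\cC_X$ (equipped with the discrete topology), Lemma~\ref{StoneCech} gives $(\alpha(\cC_X),\alpha_X)\le (\cU_X,c_X)$ for every $X\in\cX$, witnessed by a unique continuous map $f_X\colon \cU_X\to \alpha(\cC_X)$ with $f_X\circ c_X = \alpha_X$. The compatibility condition~(\ref{Commute}) is then automatic by Lemma~\ref{CsystemComparableNotHard}, so $\cC_\alpha\le_\cC\cC_{\cU}$.

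For (i), I would show that every $\cC$-system $\cC_\alpha$ satisfies $\cC_{\aF}\le_\cC\cC_\alpha$. By Lemma~\ref{CsystemComparableNotHard} it suffices to verify $(\aF_X,c_X)\le (\alpha(\cC_X),\alpha_X)$ for every $X\in\cX$, which I would establish by induction on $|X|$. For the base case $X=\emptyset$, either $\cC_\emptyset$ is finite (and there is nothing to do, since $\aF_\emptyset = \aF_\emptyset^{\mathrm{p}}$ carries the discrete topology, so $(\aF_\emptyset,c_\emptyset)$ is the coarsest \HDcomp{} of $\cC_\emptyset$ -- in fact $\crit(\emptyset)\subseteq\{\emptyset\}$ so only one $\cF$-point can occur and the same argument as the induction step trivialises), or else the empty set is the only candidate in $\crit(\emptyset)$ and Lemma~\ref{CsystemInjFail} applies vacuously. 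For the induction step, assume the claim for every $X'\subsetneq X$. Then Lemma~\ref{weirdLemma} applied with $\Xi=X$ yields
\[\overline{\alpha_X[\cC_X(Y)]}\cap\overline{\alpha_X[\cC_X(Y')]}=\emptyset\]
for all distinct $Y,Y'\in\crit(X)$. Hypothesis~(\ref{assumption0}) of Lemma~\ref{CsystemInjFail} is thereby met, and that lemma delivers the desired $(\aF_X,c_X)\le(\alpha(\cC_X),\alpha_X)$, closing the induction.

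For (iii) and (iv), I would combine (i) and (ii) with Lemma~\ref{ECOcompsComparable}: if $\alpha G$ is an \ecomp{} induced by a $\cC$-system $\cC_\alpha$, then $\cC_{\aF}\le_\cC\cC_\alpha\le_\cC\cC_{\cU}$, so $\FG\le\alpha G\le\TC$. Uniqueness up to $\cC$-equivalence (respectively topological equivalence) is then inherited from Corollary~\ref{CFleCUextension}: any other least $\cC$-system would be $\cC$-equivalent to $\cC_{\aF}$, and any other coarsest \ecomp{} induced by a $\cC$-system would be topologically equivalent to $\FG$; symmetric statements hold for $\cC_{\cU}$ and $\TC$.

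The main obstacle is the induction step in (i): the whole argument hinges on having Lemma~\ref{weirdLemma} available, and that lemma in turn needs the inductive assumption on all \emph{strict} subsets of $X$ to feed into the continuity argument showing that the preimage of an isolated point is a singleton. Verifying the base case carefully (so that $\crit(\emptyset)$ cannot produce two distinct points whose $\alpha_\emptyset$-images have overlapping closures) is the only delicate part, but it is straightforward since the only possible element of $\crit(\emptyset)$ is $\emptyset$ itself.
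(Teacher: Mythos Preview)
Your proposal is correct and follows essentially the same route as the paper. The only cosmetic difference is in (i): the paper argues by taking a minimal counterexample $\Xi\in\cX$ with $(\aF_\Xi,c_\Xi)\not\le(\alpha(\cC_\Xi),\alpha_\Xi)$ and deriving a contradiction from Lemmas~\ref{CsystemInjFail} and~\ref{weirdLemma}, whereas you phrase the same argument as an explicit induction on $|X|$; your treatment of (ii), (iii), (iv) matches the paper verbatim.
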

\begin{proof}
(i). Assume for a contradiction that there is another $\cC$-system $\cC_\alpha$ with $\cC_{\aF}\not\le_{\cC}\cC_\alpha$. Write $\{(\alpha(\cC_X),\alpha_X),\A{a}_{X',X}\}=\cC_\alpha$.
Due to Lemma~\ref{CsystemComparableNotHard} we find some minimal $\Xi\in\cX$ with $(\A{F}_\Xi,c_\Xi)\not\le (\alpha(\cC_\Xi),\alpha_\Xi)$. By Lemma~\ref{CsystemInjFail} there exist $Y\neq Y'$ in $\crit(\Xi)$ with
\begin{align*}
\overline{\alpha_\Xi[\cC_\Xi(Y)]}\cap\overline{\alpha_\Xi[\cC_\Xi(Y')]}\neq\emptyset,
\end{align*}
contradicting Lemma~\ref{weirdLemma} by choice of $\Xi$.

(ii). 
Let $\cC_\alpha=\{(\alpha(\cC_X),\alpha_X),\A{a}_{X',X}\}$ be any $\cC$-system. Since $(\cU_X,c_X)$ is the Stone-Čech \HDcomp{} of $\cC_X$ for every $X\in\cX$, Lemma~\ref{StoneCech} yields $(\alpha(\cC_X),\alpha_X)\le (\cU_X,c_X)$ for every $X\in\cX$. Thus Lemma~\ref{CsystemComparableNotHard} implies $\cC_\alpha\le_{\cC}\cC_{\cU}$.

(iii) and (iv) are immediate from Lemma~\ref{ECOcompsComparable} combined with (i) and (ii), respectively.
\end{proof}

Since we now know that $\FG$ is the coarsest \ecomp{} of $G$ induced by a $\cC$-system while $\TC$ is the finest one, the question for a characterisation of those graphs with $\FG$ and $\TC$ being topologically equivalent immediately comes to mind. 

\begin{theorem}\label{ECOequivalent}
For every infinite $G$ the following are equivalent:
\begin{enumerate}
\item The \ecomp{}s $\FG$ and $\TC$ of $G$ are topologically equivalent.
\item $\cC_{\aF}$ and $\cC_{\cU}$ are $\cC$-equivalent.
\item Every $\cC_X$ is finite.
\item $\Omega=\aF=\cU=\Theta$.
\end{enumerate}
\end{theorem}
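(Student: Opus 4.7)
The plan is to run the cycle (iii)$\Leftrightarrow$(iv), (iii)$\Rightarrow$(ii)$\Rightarrow$(i), and finally the hard direction (i)$\Rightarrow$(iii). For (iii)$\Leftrightarrow$(iv): if every $\cC_X$ is finite, then every ultrafilter on $\cC_X$ is principal (so $\cU_X = c_X[\cC_X]$), no $\cC_X(Y)$ is infinite (so $\crit(X) = \emptyset$ and $\aF_X = c_X[\cC_X]$), and $\Upsilon = \emptyset$ since any ultrafilter tangle would induce a non-principal ultrafilter on some $\cC_X$; Proposition~\ref{FandY} then gives $\aF \setminus \Omega = \emptyset$. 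Conversely, $\Omega = \cU$ combined with Corollary~\ref{Uextension} (each non-principal $U \in \cU_X$ extends to an element of $\cU$) forces every $\cU_X$ to consist only of principal ultrafilters, which means each $\cC_X$ is finite. For (iii)$\Rightarrow$(ii), under (iii) both $\aF_X$ and $\cU_X$ equal the finite discrete space $c_X[\cC_X]$ for every $X$, and both bonding maps agree with $\phi_{X',X}$ on principal ultrafilters, so the two $\cC$-systems are literally equal. The step (ii)$\Rightarrow$(i) is Corollary~\ref{CFleCUextension}.

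The hard direction (i)$\Rightarrow$(iii) will be proved by contraposition: assuming $\crit(\cX) \neq \emptyset$, I will show that no homeomorphism $f: \TC \to \FG$ can fix $G \cup \Omega$. The key is that $X_\upsilon$ is topologically recoverable from $\upsilon$: Lemma~\ref{Xtau} identifies $X_\upsilon = \{y \in V : y \between_{\TC} \upsilon\}$, and an entirely analogous argument for $\FG$ yields $Z = \{y \in V : y \between_{\FG} \chi(Z)\}$ for every $\chi(Z) \in \aF \setminus \Omega$. For this analogue, if $y \in Z$ and $Z \subseteq X$, then every component in $\cC_X(Z)$ has neighbourhood exactly $Z$ and thus sends an edge to $y$, so no $\epsilon$-ball at $y$ is disjoint from a basic neighbourhood $\cO_{\FG}(X,\cC)$ of $\chi(Z)$ with $\cC$ cofinite in $\cC_X(Z)$; while if $y \notin Z$, the basic neighbourhood $\cO_{\FG}(Z \cup \{y\}, \cC_{Z \cup \{y\}}(Z))$ of $\chi(Z)$ contains no inner edge point of any edge at $y$, so a small $\epsilon$-ball at $y$ is disjoint from it. Since $f$ preserves the $\between$-relation between vertices and non-vertex points, $f$ must act on $\Upsilon$ via $\upsilon \mapsto \chi(X_\upsilon)$.

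To conclude, pick $Y \in \crit(\cX)$; since $\cC_Y(Y)$ is infinite, at least two distinct non-principal ultrafilters on $\cC_Y$ contain $\cC_Y(Y)$, and Lemma~\ref{ECOextentUltra} together with Corollary~\ref{Uextension} produces distinct $\upsilon \neq \upsilon' \in \Upsilon$ with $X_\upsilon = X_{\upsilon'} = Y$. Then $f(\upsilon) = \chi(Y) = f(\upsilon')$ contradicts injectivity. Hence $\crit(\cX) = \emptyset$; since $\cC_X = \cC_X^- \uplus \biguplus_{Y \subseteq X} \cC_X(Y)$ with $\cC_X^-$ always finite, $X$ having only finitely many subsets, and each $\cC_X(Y)$ finite when $Y \notin \crit(X)$, this forces every $\cC_X$ to be finite. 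The main obstacle will be verifying the $\FG$-analogue of Lemma~\ref{Xtau} cleanly, since the basic open sets of $\FG$ are indexed by the restricted collection $\aC_X$ rather than all of $2^{\cC_X}$ and one must pick the basis element carefully; once this is settled the rest is bookkeeping.
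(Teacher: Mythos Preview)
Your proof is correct, and for the hard direction (i)$\Rightarrow$(iii) you take a genuinely different route from the paper. The paper argues as follows: it removes inner edge points and notes that $\TC\setminus\mathring{E}$ and $\FG\setminus\mathring{E}$ are \emph{Hausdorff} compactifications of $V$, so by Lemma~\ref{CompactificationComparableUnique} the restriction of any witnessing homeomorphism $g$ must coincide with the canonical map $\bar f=\invLim f_X$ coming from Theorem~\ref{CFleCU}. Bijectivity of $\bar f$ is then pushed down via the unique-extension Corollaries~\ref{Uextension} and~\ref{Fextension} to force each $f_Y\colon\cU_Y\to\aF_Y$ to be a bijection (indeed a homeomorphism), which is impossible once $\cC_Y$ is infinite since $\cU_Y^\ast$ is infinite while $\aF_Y^\ast$ is finite.

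Your argument bypasses all of this machinery: you recover $X_\upsilon$ purely topologically on both sides via the relation $\between$ (Lemma~\ref{Xtau} for $\TC$, and your $\FG$-analogue, whose verification is sound), so \emph{any} homeomorphism fixing $G\cup\Omega$ is forced to act on $\Upsilon$ by $\upsilon\mapsto\chi(X_\upsilon)$. This immediately fails injectivity because $\asymp$-classes have size $\ge 2$ whenever $\crit(\cX)\neq\emptyset$. What you gain is transparency---your proof makes explicit that the $\asymp$-collapse is precisely the obstruction to equivalence, foreshadowing Theorem~\ref{FGandQuotient}---and you avoid the dense-subset uniqueness lemma entirely. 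What the paper's route gains is that it identifies the homeomorphism with the canonical comparison map $\psi_{\cU\aF}$, a slightly stronger intermediate conclusion.
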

\begin{proof}
Both (iv)$\leftrightarrow$(iii) and (iii)$\to$(ii) are clear, whereas (ii)$\to$(i) holds by Corollary~\ref{CFleCUextension}. Hence it suffices to show (i)$\to$(iii).

(i)$\to$(iii). 
Assume for a contradicting that (iii) fails, witnessed by some $Y\in\cX$ with $\cC_Y$ infinite. Then $\aF_Y^\ast$ is finite while $\cU_Y^\ast$ is infinite, so $(\cU_Y,c_Y)\le(\aF_Y,c_Y)$ is impossible. 
Let $g\colon \TC\to\FG$ be a homeomorphism witnessing (i).
To yield a contradiction, we will use $g$ to find a witness of $(\cU_Y,c_Y)\le(\aF_Y,c_Y)$.

For every $X\in\cX$ let $f_X\colon \cU_X\to\aF_X$ be a witness of $(\aF_X,c_X)\le (\cU_X,c_X)$ (which exists by Theorem~\ref{CFleCU}). 
Furthermore, let $\bar{f}\colon \cU\to\aF$ be obtained from the $f_X$ as in (\ref{CsystemSur}), i.e. by setting
\begin{align*}
\bar{f}((U_X\,|\,X\in\cX))=(f_X(U_X)\,|\,X\in\cX)
\end{align*}
for every $(U_X\,|\,X\in\cX)\in\cU$, and recall that this is a well-defined continuous surjection 
by Lemma~\ref{CsystemComparableNotHard}. 
By Lemma~\ref{ECOcompsComparable}, the map $f:=\bar{f}\cup\id_{\CG}$ witnesses $\FG\le\TC$. In particular, $\hat{f}:=\bar{f}\cup\id_V$ witnesses $(\FG\setminus\mathring{E},\id_V)\le (\TC\setminus\mathring{E},\id_V)$. 
Since so does $\hat{g}:=g\rest (\TC\setminus\mathring{E})$, Lemma~\ref{CompactificationComparableUnique} yields $\hat{f}=\hat{g}$.
In particular, $\bar{f}\colon \cU\to\aF$ is bijective. 

Put $h_Y=f_Y\rest\cU_Y^\ast$ and note $\im(h_Y)=\aF_Y^\ast$ by Lemma~\ref{CompactificationSur}. Next, we use the injectivity of $\bar{f}$ to yield that $h_Y$ is injective. Assume for a contradiction that $h_Y$ is not injective, so there exist $U_Y\neq U'_Y$ in $\cU_Y^\ast$ with $f_Y(U_Y)=f_Y(U'_Y)=\cF_Y$ for some $\cF_Y\in\aF_Y^\ast$.
By Corollary~\ref{Uextension}, both $U_Y$ and $U'_Y$ uniquely extend to elements $(U_X\,|\,X\in\cX)=\upsilon$ and $(U_X'\,|\,X\in\cX)=\upsilon'$ of $\cU$, respectively. By Corollary~\ref{Fextension}, $\cF_Y$ uniquely extends to an element $(\cF_X\,|\,X\in\cX)=\A{v}$ of $\aF$. Since $\A{v}$ is unique, every other $(\cF_X'\,|\,X\in\cX)\in\aF$ satisfies $\cF_Y'\neq\cF_Y$. Thus $f_Y(U_Y)=f_Y(U'_Y)=\cF_Y$ combined with the definition of $\bar{f}$ forces $\bar{f}(\upsilon)=\A{v}$ and $\bar{f}(\upsilon')=\A{v}$, contradicting the injectivity of $\bar{f}$. Hence $h_Y=f_Y\rest\cU_Y^\ast$ is injective as claimed.

Together with $f_Y\circ c_Y=c_Y$ this yields that $f_Y$ is injective.
Since $f_Y$ is also a continuous surjection by Lemma~\ref{CompactificationSur}, and both $\aF_Y$ and $\cU_Y$ are compact Hausdorff, it follows from general topology that $f_Y$ is a homeomorphism. In particular, the inverse of $f_Y$ witnesses $(\cU_Y,c_Y)\le (\aF_Y,c_Y)$, the desired contradiction.
\end{proof}

\newpage
\subsection{A natural quotient of the tangle compactification}\label{AnaturalQuotient}

In this section, we show that $\FG$ is homeomorphic to the quotient space $\TC/{\asymp}$, but first we describe $\asymp$ in terms of tangles.\\

Recall that we defined the equivalence relation $\asymp$ on $\Upsilon$ by letting $\upsilon\asymp \upsilon'$ whenever $X_\upsilon=X_{\upsilon'}$ holds. 
Via the natural correspondence between $\Upsilon$ and $\Theta-\Omega$, this induces an equivalence relation $\tasymp$\index{$\tasymp$} 
on $\Theta-\Omega$ (here we distinguish between elements of $\cU$ and elements of $\Theta$).
Since $\cU$ originally only served as a technical tool to help us better understand the ultrafilter tangles, we wish to be able to describe the $\tasymp$-classes in terms of tangles. For this,
we investigate the following natural class of stars: For every $X\in\cX$ and $C\in\cC_X$ define\index{$s_{C\to X}$, $s_{X\to C}$}
\begin{align*}
s_{C\to X}:=(C\cup X,V\setminus C),\quad s_{X\to C}:=(s_{C\to X})^\ast=(V\setminus C,X\cup C)
\end{align*}
and put\index{$\sigma_X$}
\begin{align*}
\sigma_X=\{s_{C\to X}\,|\,C\in\cC_X\}.
\end{align*}
Next, we will use these stars to describe $X_\upsilon$ (for $\upsilon\in\Upsilon$) in terms of tangles:

\begin{lemma}
For every $\tau\in\Theta-\Omega$ we have
\begin{align*}
\{X\in\cX\,|\,\sigma_X\subseteq\tau\}=\cX_\tau=\lfloor X_\tau\rfloor_{\cX}.
\end{align*}
In particular, $X_\tau$ is the unique minimal $X\in\cX$ with $\sigma_X\subseteq\tau$.
\end{lemma}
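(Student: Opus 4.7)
The equality $\cX_\tau=\lfloor X_\tau\rfloor_\cX$ has already been recorded as \cite[Theorem~3.6]{EndsAndTangles} in the summary, so the only content is the first equality
\[
\{X\in\cX\,|\,\sigma_X\subseteq\tau\}=\cX_\tau.
\]
My plan is to reduce both sides to the single condition ``$U(\tau,X)$ is non-principal'' by unwinding the definitions involved.

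First I would specialise the defining equation
\[
U(\tau,X)=\{\cC\subseteq\cC_X\,|\,(V\setminus V[\cC],X\cup V[\cC])\in\tau\}
\]
to singletons $\cC=\{C\}$. This yields $\{C\}\in U(\tau,X)$ iff $(V\setminus C,X\cup C)=s_{X\to C}\in\tau$. Since $\tau$ is an orientation of $S$ and $s_{X\to C}=(s_{C\to X})^\ast$, exactly one of $s_{C\to X}$ and $s_{X\to C}$ lies in $\tau$. Hence for each $C\in\cC_X$ we obtain the key equivalence
\[
s_{C\to X}\in\tau\;\Longleftrightarrow\;\{C\}\notin U(\tau,X).
\]

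With this, the forward direction is immediate: if $\sigma_X\subseteq\tau$ then the equivalence tells us that $\{C\}\notin U(\tau,X)$ for every $C\in\cC_X$, so the ultrafilter $U(\tau,X)$ contains no singleton and is therefore non-principal, i.e. $X\in\cX_\tau$. Conversely, if $X\in\cX_\tau$ then $U(\tau,X)$ is non-principal, so $\{C\}\notin U(\tau,X)$ for every $C\in\cC_X$, and the equivalence gives $s_{C\to X}\in\tau$ for every such $C$; that is $\sigma_X\subseteq\tau$.

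The final minimality clause then falls out: the equality $\{X\in\cX\,|\,\sigma_X\subseteq\tau\}=\lfloor X_\tau\rfloor_\cX$ already produced is precisely the statement that $X_\tau$ is the unique minimum of this set. I do not expect any genuine obstacle here; the proof is just a careful translation between the separation-language ($\sigma_X\subseteq\tau$) and the ultrafilter-language ($U(\tau,X)$ non-principal), and the equivalence above is what makes the translation go through.
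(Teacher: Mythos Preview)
Your proof is correct and follows essentially the same route as the paper's. The only cosmetic difference is that you isolate the equivalence $s_{C\to X}\in\tau\Leftrightarrow\{C\}\notin U(\tau,X)$ explicitly upfront, whereas the paper derives $s_{C\to X}\in\tau$ in the backward direction directly from $\cC_X\setminus\{C\}\in U(\tau,X)$; both amount to the same translation between the separation and ultrafilter languages.
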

\begin{proof}
It suffices to show $\{X\in\cX\,|\,\sigma_X\subseteq\tau\}=\cX_\tau$.

For the forward inclusion let any $X\in\cX$ with $\sigma_X\subseteq\tau$ be given.
Then $U(\tau,X)$ is non-principal: Otherwise 
$U(\tau,X)$ is generated by some $\{C\}$ with $C\in\cC_X$,
so $s_{X\to C}\in\tau$ implies $s_{C\to X}\notin\tau$, and hence
$\sigma_X\not\subseteq\tau$ is a contradiction. Therefore, $U(\tau,X)\in\cU_X^\ast$ implies $X\in\cX_{\tau}$.

For the backward inclusion let any $X\in\cX_{\tau}$ be given. Since $U(\tau,X)$ is non-principal, we know that $\cC_X\setminus\{C\}\in U(\tau,X)$ holds for every $C\in\cC_X$. In particular we have $s_{C\to X}\in\tau$ for every $C\in\cC_X$, and therefore $\sigma_X\subseteq\tau$ holds as desired.
\end{proof}

Thus $\tasymp$ can be described solely in terms of tangles, as desired. Next, we compute loads of technical Lemmas leading to an explicit description of a basis of the quotient topology of $\TC/{\asymp}$.

Let $G$ be any graph. Define $\Psi\colon \cU/{\asymp}\to\aF$\index{$\Psi\colon \cU/{\asymp}\bij\aF$} by setting $\Psi\rest\Omega=\id_\Omega$ and letting $\Psi\rest (\Upsilon/{\asymp})$ map each $[\upsilon]_\asymp\in\Upsilon/{\asymp}$ to $\chi(X_\upsilon)$ (where $\chi\colon \crit(\cX)\bij\aF\setminus\Omega$ is as in Proposition~\ref{FandY})

\begin{proposition}\label{UltrafiltersEconomic}
The map $\Psi$ is a well defined bijection between $\cU/{\asymp}$ and $\aF$.
\end{proposition}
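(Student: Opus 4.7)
\medskip

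The plan is to verify well-definedness, injectivity, and surjectivity separately, splitting into the obvious cases along $\cU = \Omega \uplus \Upsilon$ and $\aF = \Omega \uplus (\aF \setminus \Omega)$. The whole statement is essentially a bookkeeping consequence of Proposition~\ref{FandY} together with Lemma~\ref{ECOextentUltra}, so no serious obstacle is expected; the task is to record the correspondences carefully.

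First I would handle well-definedness. On $\Omega$ (where $\asymp$ is the identity) the map $\Psi = \id_\Omega$ is trivially well defined. On $\Upsilon/{\asymp}$ the value $X_\upsilon$ depends only on the $\asymp$-class of $\upsilon$ by the very definition of $\asymp$, and $X_\upsilon \in \crit(\cX)$ by Lemma~\ref{New36}, so $\chi(X_\upsilon)$ is defined.

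Next, I would address injectivity. Since $\chi$ has codomain $\aF \setminus \Omega$, the images $\Psi[\Omega] = \Omega$ and $\Psi[\Upsilon/{\asymp}] \subseteq \aF \setminus \Omega$ are disjoint, ruling out collisions across the two pieces. On $\Omega$ injectivity is immediate. On $\Upsilon/{\asymp}$, if $\Psi([\upsilon]_\asymp) = \Psi([\upsilon']_\asymp)$, then $\chi(X_\upsilon) = \chi(X_{\upsilon'})$, and the bijectivity of $\chi$ from Proposition~\ref{FandY} forces $X_\upsilon = X_{\upsilon'}$, i.e. $\upsilon \asymp \upsilon'$.

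Finally, surjectivity. Every $\omega \in \Omega \subseteq \aF$ is attained by $\Psi\rest\Omega = \id_\Omega$. Given $\A{v} \in \aF \setminus \Omega$, Proposition~\ref{FandY} yields a unique $Y = X_\A{v} \in \crit(\cX)$ with $\chi(Y) = \A{v}$. Since $Y \in \crit(\cX)$, Lemma~\ref{ECOextentUltra} (applied with $X = Y$ and any infinite $\cC \subseteq \cC_Y(Y)$, which exists because $Y$ is critical) produces some $\upsilon \in \Upsilon$ with $X_\upsilon = Y$. Then $\Psi([\upsilon]_\asymp) = \chi(X_\upsilon) = \chi(Y) = \A{v}$, completing the proof.
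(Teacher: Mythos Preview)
Your proof is correct and follows essentially the same approach as the paper. The only difference is that the paper compresses your injectivity and surjectivity arguments on $\Upsilon/{\asymp}$ into a single citation of Lemma~\ref{UltrafilterCrit} (which states that $[\upsilon]_\asymp\mapsto X_\upsilon$ is a bijection onto $\crit(\cX)$), whereas you effectively reprove the surjectivity half of that lemma by invoking Lemma~\ref{ECOextentUltra} directly.
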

\begin{proof}
The map $\Psi$ is well defined by Lemma~\ref{New36}. Clearly, it suffices to show that $\Psi\rest (\cU/{\asymp})\colon \cU/{\asymp}\to\aF\setminus\Omega$ is bijective, which is the case by Proposition~\ref{FandY} and Corollary~\ref{UltrafilterCrit}.
\end{proof}

\begin{lemma}\label{UpsilonLivesInComp}
For every $\upsilon\in\Upsilon$ and $X\in\cX$ with $X_\upsilon\not\subseteq X$ there is a unique component $C$ of $G-X$ with $X_\upsilon\subseteq X\cup C$ and $\upsilon\in\cO_{\TC}(X,\{C\})$.
\end{lemma}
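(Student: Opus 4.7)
The plan is to use the minimality of $X_\upsilon$ together with the compatibility of the ultrafilters $U(\upsilon,\cdot)$ along the bonding maps, and deduce both existence and uniqueness from Corollary~\ref{GGwellDef} and Lemma~\ref{New36}.

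First I would establish existence of $C$. Since $X_\upsilon \not\subseteq X$, there is some vertex $v\in X_\upsilon\setminus X$, so $v$ lies in some component $C$ of $G-X$. By Corollary~\ref{GGwellDef}(iii), $X_\upsilon$ meets at most one component of $G-X$, so in fact $X_\upsilon\setminus X\subseteq C$, which gives $X_\upsilon\subseteq X\cup C$.

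Next I would show $\upsilon\in\cO_{\TC}(X,\{C\})$, equivalently $\{C\}\in U(\upsilon,X)$. Set $Z:=X\cup X_\upsilon\in\lfloor X_\upsilon\rfloor_\cX$. By Lemma~\ref{New36}, $\cC_Z(X_\upsilon)\in U(\upsilon,Z)$. The key observation is that every $D\in\cC_Z(X_\upsilon)$ satisfies $D\subseteq C$: indeed, $D$ is a connected subgraph of $G-Z\subseteq G-X$, and its neighbourhood in $G$ is exactly $X_\upsilon$, which by the previous paragraph has non-empty intersection $X_\upsilon\setminus X$ with $C$; hence $D$ together with any vertex of $X_\upsilon\setminus X$ forms a connected subgraph of $G-X$ meeting $C$, so $D\subseteq C$. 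Therefore $\cC_Z(X_\upsilon)\rest X=\{C\}$, and by the definition of $f_{Z,X}$ (taking the set-theoretic up-closure) combined with $f_{Z,X}(U(\upsilon,Z))=U(\upsilon,X)$, we obtain $\{C\}\in U(\upsilon,X)$, as required.

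For uniqueness, the hypothesis $X_\upsilon\subseteq X\cup C$ forces $X_\upsilon\setminus X\subseteq C$, and since $X_\upsilon\setminus X$ is non-empty, the component $C$ is uniquely determined as the one containing $X_\upsilon\setminus X$ (alternatively: $\{C\}\in U(\upsilon,X)$ forces uniqueness by the ultrafilter property). There is no real obstacle here; the only step that requires a moment's care is verifying that the components $D$ of $G-Z$ with neighbourhood exactly $X_\upsilon$ all sit inside the same component $C$ of $G-X$, and this follows directly from the non-emptiness of $X_\upsilon\setminus X$ and the definition of $\cC_Z(X_\upsilon)$.
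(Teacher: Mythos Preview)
Your proof is correct and follows essentially the same approach as the paper: both pick the unique component $C$ of $G-X$ meeting $X_\upsilon$ via Corollary~\ref{GGwellDef}, set $Z=X\cup X_\upsilon$ (the paper writes $X'$), invoke Lemma~\ref{New36} to get $\cC_Z(X_\upsilon)\in U(\upsilon,Z)$, and then observe $\cC_Z(X_\upsilon)\rest X=\{C\}$ to conclude $\{C\}\in U(\upsilon,X)$. Your version is slightly more explicit in justifying this last restriction and in spelling out uniqueness, but the argument is the same.
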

\begin{proof}
By Lemma~\ref{GGwellDef} the set $X_\upsilon$ meets at most one component of $G-X$. Since $X_\upsilon\not\subseteq X$ holds, $X_\upsilon$ meets exactly one component $C$ of $G-X$. Set $X'=X\cup X_\upsilon$. 
Then by Lemma~\ref{New36}, $X_\upsilon$ is in $\crit(X')$ and $\cC:=\cC_{X'}(X_\upsilon)$ is in $U(\upsilon,X')$. Hence $\cC\rest X=\{C\}$ is in $U(\upsilon,X)$, so $\upsilon\in\cO_{\TC}(X,\{C\})$ holds.
\end{proof}

\begin{lemma}\label{ECOasympclosed}
For every $X\in\cX$ and $C\in\cC_X$ the set $\cO_{\TC}(X,\{C\})$ is closed under the equivalence relation $\asymp$.
\end{lemma}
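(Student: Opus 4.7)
The plan is to chase the definitions and reduce the claim to Lemma~\ref{UpsilonLivesInComp}. Since $\asymp$ is defined only on $\Upsilon$, it suffices to show that for every $\upsilon\in\cO_{\TC}(X,\{C\})\cap\Upsilon$ and every $\upsilon'\in\Upsilon$ with $\upsilon'\asymp\upsilon$, we have $\upsilon'\in\cO_{\TC}(X,\{C\})$.

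First I would observe that $\upsilon\in\cO_{\TC}(X,\{C\})$ precisely means $\{C\}\in U(\upsilon,X)$, i.e.\ $U(\upsilon,X)$ is the principal ultrafilter on $\cC_X$ generated by $C$. In particular $U(\upsilon,X)\notin\cU_X^\ast$, so $X\notin\cX_\upsilon=\lfloor X_\upsilon\rfloor_{\cX}$, which is to say $X_\upsilon\not\subseteq X$. Next I would apply Lemma~\ref{UpsilonLivesInComp} to $\upsilon$ and $X$ to conclude that $C$ is the unique component of $G-X$ with $X_\upsilon\subseteq X\cup C$.

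Now I would use the hypothesis $\upsilon'\asymp\upsilon$, which by the very definition of $\asymp$ means $X_{\upsilon'}=X_\upsilon$. In particular $X_{\upsilon'}\not\subseteq X$, so Lemma~\ref{UpsilonLivesInComp} applies also to $\upsilon'$ and $X$, giving a unique component $C'$ of $G-X$ with $X_{\upsilon'}\subseteq X\cup C'$ and $\upsilon'\in\cO_{\TC}(X,\{C'\})$. But from $X_{\upsilon'}=X_\upsilon$ and uniqueness we obtain $C'=C$, so $\upsilon'\in\cO_{\TC}(X,\{C\})$ as required.

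There is no real obstacle here; the whole point is that Lemma~\ref{UpsilonLivesInComp} tells us the membership of $\upsilon$ in $\cO_{\TC}(X,\{C\})$ is determined solely by $X_\upsilon$ (whenever $X_\upsilon\not\subseteq X$), and the case $X_\upsilon\subseteq X$ does not occur since it forces $U(\upsilon,X)$ to be non-principal and therefore to contain no singleton $\{C\}$.
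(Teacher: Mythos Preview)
Your proof is correct and follows essentially the same approach as the paper: both reduce to Lemma~\ref{UpsilonLivesInComp} after noting that $\upsilon\in\cO_{\TC}(X,\{C\})$ forces $X_\upsilon\not\subseteq X$, and then use uniqueness together with $X_{\upsilon'}=X_\upsilon$ to conclude $\upsilon'\in\cO_{\TC}(X,\{C\})$. The only cosmetic difference is that the paper names the component produced by the lemma $D$ and then argues $D=C$ explicitly (via disjoint singletons in an ultrafilter), whereas you identify it with $C$ immediately using that $U(\upsilon,X)$ is principal.
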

\begin{proof}
Let any $\upsilon\in\Upsilon\cap\cO_{\TC}(X,\{C\})$ be given together with some $\upsilon'\in [\upsilon]_\asymp$. 
In particular $X_\upsilon=X_{\upsilon'}\not\subseteq X$ holds, 
so by Lemma~\ref{UpsilonLivesInComp} there exist for both $\upsilon$ and $\upsilon'$ unique components $D$ and $D'$ of $G-X$, respectively, with $X_\upsilon\subseteq X\cup D$ and $\upsilon\in\cO_{\TC}(X,\{D\})$ as well as $X_{\upsilon'}\subseteq X\cup D'$ and $\upsilon'\in\cO_{\TC}(X,\{D'\})$. 
Since $X_\upsilon=X_{\upsilon'}$ meets both $D$ and $D'$, the uniqueness of $D$ implies $D=D'$. 
Furthermore $D=D'=C$ follows since otherwise $\{D\}$ and $\{C\}$ would be disjoint elements of $U(\upsilon,X)$, which is impossible.
Hence $\upsilon'\in\cO_{\TC}(X,\{C\})$ holds.
\end{proof}

\begin{lemma}\label{notherLemma}
Let $\upsilon\in\Upsilon$, $X\in\cX_\upsilon$ and $Y\in\crit(X)$ be given.
Furthermore, suppose that $\cC\subseteq\cC_X$ is cofinite in $\cC_X(Y)$ with $\cC\in U(\upsilon,X)$. Then we have $X_\upsilon=Y$.
\end{lemma}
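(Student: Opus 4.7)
The plan is to compare $\cC$ with the distinguished element $\cC_X(X_\upsilon)\in U(\upsilon,X)$ supplied by Lemma~\ref{New36}, and to extract a single component witnessing $Y=X_\upsilon$ from a component of $G-X$ that lies in both.

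More concretely: since $X\in\cX_\upsilon=\lfloor X_\upsilon\rfloor_\cX$, Lemma~\ref{New36} yields $X_\upsilon\in\crit(X)$ and $\cC_X(X_\upsilon)\in U(\upsilon,X)$. By hypothesis we also have $\cC\in U(\upsilon,X)$, so since $U(\upsilon,X)$ is a filter the intersection $\cC\cap\cC_X(X_\upsilon)$ lies in $U(\upsilon,X)$, and in particular is non-empty. Pick any component $C$ in this intersection. Because $\cC$ is cofinite in $\cC_X(Y)$ (and, as in the basis $\aC_X$ of $\aF_X$, a subset of $\cC_X(Y)$), we have $N(C)=Y$ from $C\in\cC\subseteq\cC_X(Y)$, while $N(C)=X_\upsilon$ from $C\in\cC_X(X_\upsilon)$. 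Since a component of $G-X$ has a unique neighbourhood, this forces $X_\upsilon=Y$.

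There is no real obstacle here; the only subtle point is to recall that for distinct $Y,Y'\in\crit(X)$ the classes $\cC_X(Y)$ and $\cC_X(Y')$ are automatically disjoint, which is what lets a single common component $C$ collapse the two neighbourhoods. Everything else is immediate from Lemma~\ref{New36} and the filter properties of $U(\upsilon,X)$.
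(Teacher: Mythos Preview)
Your proof is correct and follows essentially the same approach as the paper: both use Lemma~\ref{New36} to place $\cC_X(X_\upsilon)$ in $U(\upsilon,X)$ alongside $\cC$, then exploit that $\cC_X(Y)$ and $\cC_X(X_\upsilon)$ are disjoint for $Y\neq X_\upsilon$. The paper phrases this as a contradiction (two disjoint sets cannot both lie in an ultrafilter), whereas you phrase it directly by picking a common component and reading off its neighbourhood, but the content is identical.
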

\begin{proof}
Assume for a contradiction that $X_\upsilon$ is distinct from $Y$.
By Lemma~\ref{New36} we have $\cC_X(X_\upsilon)\in U(\upsilon,X)$.
Now $X_\upsilon\neq Y$ implies that $\cC_X(X_\upsilon)$ is disjoint from $\cC_X(Y)$, and hence from $\cC$, a contradiction.
\end{proof}

\begin{lemma}\label{ECOfinDiffClosedAsymp}
For all $X\in\cX$, every $Y\in \crit(X)$ and each cofinite subset $\cC$ of $\cC_X(Y)$ the set $\cO_{\TC}(X,\cC)$ is closed under $\asymp$.
\end{lemma}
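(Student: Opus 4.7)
The plan is to reduce to the case of ultrafilter tangles (since $\asymp$ is only defined on $\Upsilon$) and then to run a short ultrafilter calculation, leveraging Lemma~\ref{notherLemma} to identify $X_\upsilon$.

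First I would observe that since $Y\in\crit(X)$ means $Y\subseteq X$ with $\cC_X(Y)$ infinite, and $\cC$ is cofinite in $\cC_X(Y)$, the set $\cC$ itself is infinite. Fix any $\upsilon\in\Upsilon\cap\cO_{\TC}(X,\cC)$, so $\cC\in U(\upsilon,X)$. Because $\cC$ contains no finite $\cC_X$-component individually and $U(\upsilon,X)$ has an infinite element, $U(\upsilon,X)$ is non-principal, whence $X\in\cX_\upsilon$. Lemma~\ref{notherLemma} applied to $\upsilon$, $X$, $Y$ and $\cC$ then forces $X_\upsilon=Y$.

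Next, take any $\upsilon'\in\Upsilon$ with $\upsilon'\asymp\upsilon$. By definition of $\asymp$ we get $X_{\upsilon'}=X_\upsilon=Y\subseteq X$, so $X\in\cX_{\upsilon'}=\lfloor X_{\upsilon'}\rfloor_\cX$. Lemma~\ref{New36} gives $\cC_X(Y)=\cC_X(X_{\upsilon'})\in U(\upsilon',X)$, and $U(\upsilon',X)$ is non-principal since $X\in\cX_{\upsilon'}$.

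To conclude that $\cC\in U(\upsilon',X)$: the finite set $\cC_X(Y)\setminus\cC$ cannot lie in the non-principal ultrafilter $U(\upsilon',X)$, so its complement $\cC_X\setminus(\cC_X(Y)\setminus\cC)$ does, and intersecting with $\cC_X(Y)\in U(\upsilon',X)$ yields $\cC\in U(\upsilon',X)$. Hence $\upsilon'\in\cO_{\TC}(X,\cC)$, proving $\asymp$-closedness.

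There is no real obstacle here; the only subtlety is the initial step of pinning down $X_\upsilon=Y$ from just the membership $\cC\in U(\upsilon,X)$, which is exactly what Lemma~\ref{notherLemma} was designed for. After that the argument is a routine ultrafilter manipulation.
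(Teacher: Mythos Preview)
There is a genuine gap. Your claim that $U(\upsilon,X)$ is non-principal is unjustified and in fact false in general: if $C\in\cC$ is any single component, the principal ultrafilter on $\cC_X$ generated by $\{C\}$ contains $\cC$ as well, since $\cC\supseteq\{C\}$. So from $\cC\in U(\upsilon,X)$ alone you cannot conclude $X\in\cX_\upsilon$, and hence you cannot invoke Lemma~\ref{notherLemma}. Concretely, it is perfectly possible that $X_\upsilon\not\subseteq X$: then $U(\upsilon,X)$ is principal, generated by $\{C\}$ for the unique $C\in\cC_X$ meeting $X_\upsilon$ (Lemma~\ref{UpsilonLivesInComp}), and $\upsilon\in\cO_{\TC}(X,\cC)$ just says $C\in\cC$. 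Your argument says nothing about why $\upsilon'\asymp\upsilon$ should land in $\cO_{\TC}(X,\cC)$ in this situation.

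The paper's proof handles exactly this by a case split on whether $X_\varkappa\subseteq X$. In the principal case it reduces to $\cO_{\TC}(X,\{C\})$ and appeals to Lemma~\ref{ECOasympclosed}, which establishes $\asymp$-closedness for singleton $\cC$'s; only in the non-principal case does it use Lemma~\ref{notherLemma} as you do. (The paper then also writes $\cO_{\TC}(X,\cC)$ as $\cO_{\TC}(X,\cC_X(Y))$ minus finitely many $\asymp$-closed sets $\cO_{\TC}(X,\{D\})$, but your direct ultrafilter computation in the non-principal case would work equally well once the case split is in place.) To repair your argument, insert that case distinction and treat the $X_\upsilon\not\subseteq X$ branch via Lemma~\ref{ECOasympclosed}.
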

\begin{proof}
Put $\cD=\cC_X(Y)$.
First, we show that $\cO_{\TC}(X,\cD)$ is closed under $\asymp$. For this, let any $\varkappa\in \Upsilon\cap\cO_{\TC}(X,\cD)$ be given together with any $\varkappa'\in [\varkappa]_\asymp$. 

If $X_{\varkappa}\not\subseteq X$ holds, then
$U(\varkappa,X)$ is generated by some $\{C\}$ with $C$ a component of $G-X$, and $\cD\in U(\varkappa,X)$ implies $C\in\cD$.
In particular we have $\varkappa\in\cO_{\TC}(X,\{C\})$, so Lemma~\ref{ECOasympclosed} implies $\varkappa'\in \cO_{\TC}(X,\{C\})$ and we are done.
 
Otherwise $X_{\varkappa}\subseteq X$ holds, so $\cD\in U(\varkappa,X)$ together with Lemma~\ref{notherLemma} implies $X_\varkappa=Y$. 
Hence $\cD\in U(\varkappa',X)$ follows from Lemma~\ref{New36} and $X_{\varkappa'}=Y$.
Therefore, $\cO_{\TC}(X,\cD)$ is closed under $\asymp$. 

Since $\cC$ is cofinite in $\cD$ we have
\begin{align*}
\cO_{\TC}(X,\cC)=\cO_{\TC}(X,\cD)-\bigcup_{D\in\cD\setminus\cC}\cO_{\TC}(X,\{D\})
\end{align*}
which is closed under $\asymp$ since $\cO_{\TC}(X,\cD)$ is closed under $\asymp$ and the $\cO_{\TC}(X,\{D\}$ are also closed under $\asymp$ by Lemma~\ref{ECOasympclosed}.
\end{proof}

\begin{lemma}\label{ECOcompact}
For every $\upsilon\in\Upsilon$ the set $[\upsilon]_\asymp$ is closed in $\TC$. In particular, $[\upsilon]_\asymp$ is a compact subset of $\TC$.
\end{lemma}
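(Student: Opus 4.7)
The plan is to show that $[\upsilon]_\asymp$ is closed in $\TC$; compactness follows immediately because closed subsets of the compact space $\TC$ are compact. Set $Y := X_\upsilon$ and $\cD := \cC_Y(Y)$, which is infinite since $Y$ is critical. Lemma~\ref{New36} is the key input: every $\upsilon' \in [\upsilon]_\asymp$ satisfies $\cC_X(Y) \in U(\upsilon',X)$ for each $X \in \lfloor Y\rfloor_\cX$. I will show that every $x \in \TC\setminus [\upsilon]_\asymp$ has an open neighbourhood disjoint from $[\upsilon]_\asymp$, proceeding by cases on the nature of $x$.

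If $x$ is an inner edge point or a vertex, any $1$-complex neighbourhood of $x$ already avoids all of $\cU$ and hence $[\upsilon]_\asymp$. So assume $x\in\Theta$. Suppose first that $x\in\Upsilon$. If $Y\not\subseteq X_x$, pick $v\in Y\setminus X_x$; by Lemma~\ref{Xtau}, $v\not\between x$, so there exist disjoint open neighbourhoods $O_v$ of $v$ and $O_x$ of $x$. For any $\upsilon'\in [\upsilon]_\asymp$, however, $v\in Y=X_{\upsilon'}$ forces $v\between\upsilon'$ by Lemma~\ref{Xtau}, so every open neighbourhood of $\upsilon'$ meets $O_v$; hence $\upsilon'\in O_x$ would contradict $O_v\cap O_x=\emptyset$. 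If instead $Y\subseteq X_x$ (with $X_x\neq Y$), take $O_x:=\cO_{\TC}(X_x,\cC_{X_x}(X_x))$, which contains $x$ by Lemma~\ref{New36}. For any $\upsilon'\in [\upsilon]_\asymp$, Lemma~\ref{New36} yields $\cC_{X_x}(Y)\in U(\upsilon',X_x)$; since $X_x\neq Y$ the sets $\cC_{X_x}(X_x)$ and $\cC_{X_x}(Y)$ are disjoint, so $\cC_{X_x}(X_x)\notin U(\upsilon',X_x)$ and $\upsilon'\notin O_x$.

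Finally suppose $x=\omega\in\Omega$, and set $C_0:=C(Y,\omega)$. If $C_0\notin\cD$, the neighbourhood $\hat{C}_{\TC}(Y,\omega)=\cO_{\TC}(Y,\{C_0\})$ misses $[\upsilon]_\asymp$ because every $\upsilon'\in [\upsilon]_\asymp$ has $\cD\in U(\upsilon',Y)$ disjoint from $\{C_0\}$. The one delicate sub-case is $C_0\in\cD$, in which $\omega$ lives in a component whose $G$-neighbourhood is exactly $Y$. Here I would fix any ray $R\subseteq C_0$ in $\omega$, let $v$ be its first vertex, and set $X:=Y\cup\{v\}$; the tail of $R$ beyond $v$ lies in $C(X,\omega)$ and is adjacent to $v$, so $v$ belongs to the $G$-neighbourhood of $C(X,\omega)$. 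Consequently $C(X,\omega)\notin\cC_X(Y)$, and the open neighbourhood $\hat{C}_{\TC}(X,\omega)$ of $\omega$ is again disjoint from $[\upsilon]_\asymp$ by the same disjointness argument applied to $\cC_X(Y)\in U(\upsilon',X)$. This last sub-case is the main technical step: one has to enlarge $Y$ by exactly one well-chosen vertex so as to push the component containing $\omega$ out of the $\cC_X(Y)$-class without losing the separation provided by Lemma~\ref{New36}.
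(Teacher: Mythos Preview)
Your proof is correct and follows the same overall strategy as the paper---show the complement is open by a case analysis on $x$---but you work harder than necessary in two places.

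For $x\in\Upsilon\setminus[\upsilon]_\asymp$ the paper avoids your case split by taking $X:=X_\upsilon\cup X_x$ once and for all: then $\cO_{\TC}(X,\cC_X(X_x))$ contains $x$ by Lemma~\ref{New36}, and it misses every $\upsilon'\in[\upsilon]_\asymp$ because $\cC_X(X_\upsilon)\in U(\upsilon',X)$ while $\cC_X(X_\upsilon)\cap\cC_X(X_x)=\emptyset$ (as $X_\upsilon\neq X_x$).

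More notably, your ``delicate sub-case'' for $\omega\in\Omega$ is not needed at all. The paper simply takes $\hat C_{\TC}(X_\upsilon,\omega)=\cO_{\TC}(X_\upsilon,\{C(X_\upsilon,\omega)\})$ and observes that for every $\upsilon'\in[\upsilon]_\asymp$ the ultrafilter $U(\upsilon',X_\upsilon)=U^\circ(\upsilon')$ is \emph{non-principal}, so no singleton---in particular not $\{C(X_\upsilon,\omega)\}$---lies in it. This covers both of your sub-cases uniformly; the enlargement $X=Y\cup\{v\}$ is correct but superfluous.
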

\begin{proof}
Clearly, the closure of $[\upsilon]_\asymp$ avoids $G$. 
It remains to show that no $\varkappa\in\cU\setminus [\upsilon]_\asymp$ lies in the closure of $[\upsilon]_\asymp$.
If we have $\varkappa\in\Omega$, then $\CTC(X_\upsilon,\varkappa)$ avoids $[\upsilon]_\asymp$ since for every $\upsilon'\in [\upsilon]_\asymp$ the ultrafilter $U^\circ(\upsilon')$ is non-principal. 
Otherwise we have $\varkappa\in\Upsilon\setminus [\upsilon]_\asymp$, so $\cO_{\TC}(X,\cC_X(X_\varkappa))$ with $X=X_\upsilon\cup X_\varkappa$ avoids $[\upsilon]_\asymp$ since $\cC_X(X_\upsilon)$ and $\cC_X(X_\varkappa)$ are disjoint due to $X_\upsilon\neq X_{\varkappa}$. 
Thus $[\upsilon]_\asymp$ is a closed subset of the compact space $\TC$ and hence also compact.
\end{proof}

Let $\cB$ be the collection of all basic open sets of the 1-complex topology of $G$, all sets $\cO_{\TC}(X,\{C\})/{\asymp}$ with $X\in\cX$ and $C\in\cC_X$, and all sets $\cO_{\TC}(X,\cC)/{\asymp}$ with $X\in\cX$ and $\cC$ cofinite in $\cC_X(Y)$ for some $Y\in \crit(X)$.\index{$\cB$}
\begin{proposition}\label{TCquotientBasis}
The collection $\cB$ is a basis for the topology of $\TC/{\asymp}$.
\end{proposition}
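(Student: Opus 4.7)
The plan is to verify the two standard conditions for a basis: (1) every $B\in\cB$ is open in $\TC/{\asymp}$, and (2) every open set of $\TC/{\asymp}$ is a union of elements of $\cB$.

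Part (1) will be immediate from the saturation lemmas. Basic open sets of the 1-complex are subsets of $G$ and are trivially $\asymp$-saturated, since $\asymp$ restricts to the identity outside $\Upsilon$; Lemmas~\ref{ECOasympclosed} and~\ref{ECOfinDiffClosedAsymp} handle the other two kinds of sets. In each case $q^{-1}(B/{\asymp})=B$ is open in $\TC$, so $B/{\asymp}$ is open in the quotient by definition of the quotient topology.

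For part (2), I fix an open $V\subseteq\TC/{\asymp}$, write $W:=q^{-1}(V)$ (open and saturated in $\TC$), and pick any $x\in W$. If $x\in G$, a basic 1-complex neighbourhood $B_x$ of $x$ inside $W$ suffices: for any $\cO_{\TC}(X,\cC)$ containing $x$ one has $x\notin X$ and $x\in\bigcup\cC$, so any small enough $B_x$ lies in $\cO_{\TC}(X,\cC)$, hence 1-complex basic sets already form a neighbourhood base of $x$ in $\TC$. If $x=\omega\in\Omega$, take a basic $\cO_{\TC}(X,\cC)\subseteq W$ containing $\omega$; since $U(\omega,X)$ is principal, $C(X,\omega)\in\cC$, and the saturated set $\cO_{\TC}(X,\{C(X,\omega)\})/{\asymp}\in\cB$ will do.

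The substantive case is $x=\upsilon\in\Upsilon$. Here $q^{-1}(q(\upsilon))=[\upsilon]_\asymp$ is a whole equivalence class contained in $W$ and compact by Lemma~\ref{ECOcompact}. The plan is: for each $\upsilon'\in[\upsilon]_\asymp$ pick a basic $\cO_{\TC}(X_{\upsilon'},\cC_{\upsilon'})\subseteq W$ containing $\upsilon'$, enlarging each $X_{\upsilon'}$ via Lemma~\ref{obviousLemma} so that $X_{\upsilon'}\supseteq X_\upsilon$; by compactness, finitely many, indexed by $i\in[n]$, cover $[\upsilon]_\asymp$. With $X^*:=\bigcup_{i\in[n]}X_i$ (so $X_\upsilon\in\crit(X^*)$ by Observation~\ref{OneTimeUse0}) I will set
\[
A_i:=\cC_{X^*}(X_\upsilon)\cap\phi_{X^*,X_i}^{-1}(\cC_i),\qquad \cD:=\medcup_{i\in[n]}A_i,
\]
and take $\cO_{\TC}(X^*,\cD)/{\asymp}$ as the candidate basic neighbourhood of $q(\upsilon)$. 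Additivity of $\cO_{\TC}(X^*,\cdot)$ in its second argument (an ultrafilter is closed under finite unions dichotomously) together with Lemma~\ref{obviousLemma} then gives $\cO_{\TC}(X^*,\cD)=\bigcup_i\cO_{\TC}(X^*,A_i)\subseteq\bigcup_i\cO_{\TC}(X_i,\cC_i)\subseteq W$; and for the covering index $i_0$ of $\upsilon$ itself, $\phi_{X^*,X_{i_0}}^{-1}(\cC_{i_0})\in U(\upsilon,X^*)$ intersected with $\cC_{X^*}(X_\upsilon)\in U(\upsilon,X^*)$ (Lemma~\ref{New36}) yields $A_{i_0}\in U(\upsilon,X^*)$, so $\upsilon\in\cO_{\TC}(X^*,\cD)$.

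The main obstacle is to verify that $\cD$ is cofinite in $\cC_{X^*}(X_\upsilon)$, which is exactly what places $\cO_{\TC}(X^*,\cD)/{\asymp}$ in $\cB$. I will prove this by contradiction: if $B:=\cC_{X^*}(X_\upsilon)\setminus\cD$ were infinite, Lemma~\ref{ECOextentUltra} would produce some $\upsilon'\in\Upsilon$ with $X_{\upsilon'}=X_\upsilon$ and $B\in U(\upsilon',X^*)$. Then $\upsilon'\asymp\upsilon$, so by saturation $\upsilon'\in W$, and the finite cover gives some $i$ with $\upsilon'\in\cO_{\TC}(X_i,\cC_i)$, whence $\phi_{X^*,X_i}^{-1}(\cC_i)\in U(\upsilon',X^*)$; intersecting with $\cC_{X^*}(X_\upsilon)\in U(\upsilon',X^*)$ gives $A_i\in U(\upsilon',X^*)$, but $A_i\cap B=\emptyset$ by construction, forcing $\emptyset\in U(\upsilon',X^*)$---absurd. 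This is the crucial step: it is the one place where one genuinely uses both the extension lemma~\ref{ECOextentUltra} and the $\asymp$-saturation of $W$, converting the mere inclusion $[\upsilon]_\asymp\subseteq W$ into the combinatorial cofiniteness that the basis $\cB$ demands.
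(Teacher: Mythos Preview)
Your proof is correct and follows essentially the same route as the paper's: openness of the proposed basis elements via Lemmas~\ref{ECOasympclosed} and~\ref{ECOfinDiffClosedAsymp}, then for a point of $\Upsilon/{\asymp}$ a compactness argument on $[\upsilon]_\asymp$ (Lemma~\ref{ECOcompact}) followed by the cofiniteness contradiction using Lemma~\ref{ECOextentUltra}. Your treatment is in fact slightly more streamlined than the paper's: by enlarging every chosen $X_{\upsilon'}$ at the outset so that $X_\upsilon\subseteq X_{\upsilon'}$ (which is indeed justified via Lemma~\ref{obviousLemma} with $\cC^{\mathrm{new}}=\phi^{-1}_{X_\upsilon\cup X_{\upsilon'},X_{\upsilon'}}(\cC_{\upsilon'})$), you avoid the paper's separate case ``$X_\upsilon\not\subseteq X(\varkappa)$'', which the paper handles directly via Lemma~\ref{UpsilonLivesInComp}; and your use of $A_i=\cC_{X^*}(X_\upsilon)\cap\phi_{X^*,X_i}^{-1}(\cC_i)$ is a clean substitute for the paper's $\cD_k=\cC(\upsilon_k)\cap\cD$, which relies on the implicit identification of components of $G-X$ whose neighbourhood is $X_\upsilon$ with components of $G-X(\upsilon_k)$.
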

\begin{proof}
By Lemmas~\ref{ECOasympclosed}~and~\ref{ECOfinDiffClosedAsymp}, the elements of $\cB$ are open in $\TC/{\asymp}$.

Let $x$ be any point of $\TC/{\asymp}$ together with some arbitrary open neighbourhood $O$. Our task is to find an element $B$ of $\cB$ with $x\in B\subseteq O$.

If $x$ is a vertex or an inner edge point, we are done. Else if $x$ is an end, we write $\omega=x$ and pick a basic open neighbourhood $\CTC(X,\omega)$ of $\omega$ in $\TC$ which is included in $\bigcup O$. Then we are done by taking $B=\CTC(X,\omega)/{\asymp}$.

Finally suppose that there is some $\upsilon\in\Upsilon$ with $x=[\upsilon]_\asymp$.
For every $\upsilon'\in [\upsilon]_\asymp$ we pick some basic open neighbourhood $O(\upsilon')=\cO_{\TC}(X(\upsilon'),\cC(\upsilon'))$ in $\TC$ which is included in $\bigcup O$. 

If $X_\upsilon\not\subseteq X(\varkappa)$ holds for some $\varkappa\in [\upsilon]_\asymp$ we put $X=X(\varkappa)$.
Then by Lemma~\ref{UpsilonLivesInComp} there is some unique component $C$ of $G-X$ with $X_\upsilon=X_\varkappa\subseteq X\cup C$ and $U(\varkappa,X)$ is generated by $\{C\}$. 
Then 
\begin{align*}
\cO_{\TC}(X,\{C\})\subseteq\cO_{\TC}(X,\cC(\varkappa))\subseteq \medcup O
\end{align*}
is a basic open neighbourhood of $\varkappa$ in $\TC$ which is closed under $\asymp$ by Lemma~\ref{ECOasympclosed} and included in $\bigcup O$. 
In particular, $\cO_{\TC}(X,\{C\})/{\asymp}$ is an open neighbourhood of $[\upsilon]_\asymp$ in $\TC/{\asymp}$ which is an element of $\cB$ and a subset of $O$, so we are done.

Hence we may assume that $X_\upsilon\subseteq X(\upsilon')$ holds for all $\upsilon'\in[\upsilon]_\asymp$.
Since the $O(\upsilon')$ form an open cover in $\TC$ of the set $[\upsilon]_\asymp$ which is compact by Lemma~\ref{ECOcompact}, we find some finite subset $\{\upsilon_k\,|\,k<n\}$ of $[\upsilon]_\asymp$ such that the $O(\upsilon_k)$ cover $[\upsilon]_\asymp$. 
Next we put $X=\bigcup_{k<n}X(\upsilon_k)$ and note that $X_\upsilon\subseteq X\in\cX$ holds. Furthermore, we set $\cD=\cC_X(X_\upsilon)$, and for all $k<n$ we put $\cD_k=\cC(\upsilon_k)\cap\cD$. 
Letting $\cD^+:=\bigcup_{k<n}\cD_k\subseteq\cD$, our hope is that $\cO_{\TC}(X,\cD^+)/{\asymp}$ is a suitable candidate for $B$.

We claim that $\cD^+$ is a cofinite subset of $\cD$.
Assume not for a contradiction, so $\cD^-:=\cD\setminus\cD^+$ is infinite
and by Lemma~\ref{ECOextentUltra} we find some $\varkappa\in\Upsilon\cap\cO_{\TC}(X,\cD^-)$ with $X_\varkappa=X_\upsilon$.
But then there is some $\ell<n$ with $\varkappa\in O(\upsilon_\ell)$, so in particular $\cC(\upsilon_\ell)\in U(\varkappa,X(\upsilon_\ell))$ holds. 
Since $X$ meets only finitely many elements of $\cC(\upsilon_\ell)$, 
the set $\cD_\ell$ is a cofinite subset of $\cC(\upsilon_\ell)$,
so in particular $\cD_\ell$ is in $U(\varkappa,X(\upsilon_\ell))$.
Furthermore, $\cD_\ell\in U(\varkappa,X(\upsilon_\ell))\cap 2^{\cC_X}$ implies $\cD_\ell\in U(\varkappa,X)$ by definition of $g_{X(\upsilon_\ell),X}$. This contradicts $\cD^-\in U(\varkappa,X)$ since $\cD^-$ and $\cD_\ell\subseteq\cD^+$ are disjoint. Therefore $\cD^-$ is finite as claimed, so by Lemma~\ref{ECOfinDiffClosedAsymp} the set $\cO_{\TC}(X,\cD^+)$ is $\asymp$-closed.

Furthermore, every $\upsilon'\in [\upsilon]_\asymp$ is contained in $\cO_{\TC}(X,\cD^+)$ since $\cD^+$ is cofinite in $\cD=\cC_X(X_\upsilon)=\cC_X(X_{\upsilon'})$ and $\cC_X(X_{\upsilon'})$ is in $U(\upsilon',X)$ by Lemma~\ref{New36}.

It remains to show that $\cO_{\TC}(X,\cD^+)$ is included in $\bigcup O$.
Since $\cD^+$ is the finite union of the sets $\cD_k$, it follows that $\cO_{\TC}(X,\cD^+)$ is the finite union of the sets $\cO_{\TC}(X,\cD_k)$. Hence it suffices to verify that $\cO_{\TC}(X,\cD_k)\subseteq\bigcup O$ holds for all $k<n$.
For this, we observe that
\begin{align*}
\cD_k\rest X(\upsilon_k)=\cD_k=\cC(\upsilon_k)\cap\cD\subseteq\cC(\upsilon_k)
\end{align*}
holds for all $k<n$. Then Lemma~\ref{obviousLemma} together with $\cD_k\rest X(\upsilon_k)\subseteq\cC(\upsilon_k)$ implies
\begin{align*}
\cO_{\TC}(X,\cD_k)&\subseteq\cO_{\TC}(X(\upsilon_k),\cD_k\rest X(\upsilon_k))\\
&\subseteq\cO_{\TC}(X(\upsilon_k),\cC(\upsilon_k))\\
&\subseteq\medcup O
\end{align*}
as desired.
\end{proof}


Now that we know a basis of the topology of $\TC/{\asymp}$, 
only two technical Lemmas stand in between of us and our desired result,
namely that $\Psi\colon \cU/{\asymp}\bij\aF$ induces a homeomorphism $\Psi\cup\id_G$ between $\FG$ and $\TC/{\asymp}$.

\begin{lemma}\label{PsiSimpleNbhd}
For every $X\in\cX$ and $C\in\cC_X$ we have
\begin{align*}
\Psi[\cO_{\cU}(X,\{C\})/{\asymp}]=\cO_{\aF}(X,\{C\}).
\end{align*}
\end{lemma}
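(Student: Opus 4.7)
The plan is to verify the equality by comparing both sides point-by-point, distinguishing the ends from the ultrafilter tangles. I would first unpack what membership in each set means and then match the resulting conditions.

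First, I would observe that, by the definition of the basis of $\aF_X$, the condition $\{C\} \in \cF_X$ forces $\cF_X$ to lie in $\aF_X^{\mathrm p}$ and in fact to equal $c_X(C)$; the set $\{C\}$ can never belong to some $\cF_X(Y) \in \aF_X^\ast$ because $\cC_X(Y)$ is infinite and $\cF_X(Y)$ is the up-closure of its cofinite filter. Hence a point $\A{v} = (\cF_Y \mid Y \in \cX) \in \aF$ lies in $\cO_{\aF}(X, \{C\})$ iff $\cF_X = c_X(C)$. Symmetrically, $\upsilon \in \cU$ lies in $\cO_{\cU}(X, \{C\})$ iff $U(\upsilon, X) = c_X(C)$. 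Under $\Psi \rest \Omega = \id_\Omega$, both conditions restrict on ends to $C(X, \omega) = C$, so the end part is immediate.

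For the ultrafilter tangle part, on the left-hand side I would apply Lemma~\ref{UpsilonLivesInComp}: $\upsilon \in \cO_{\cU}(X, \{C\}) \cap \Upsilon$ forces $X_\upsilon \not\subseteq X$ (else $U(\upsilon, X)$ would be non-principal by Lemma~\ref{New36}) and, given $X_\upsilon \not\subseteq X$, $U(\upsilon, X) = c_X(C)$ iff $C$ is the unique component of $G-X$ meeting $X_\upsilon$, i.e.\ $X_\upsilon \subseteq X \cup C$. So the $\asymp$-class $[\upsilon]_\asymp$ contributes to the left-hand side exactly when $X_\upsilon \not\subseteq X$ and $X_\upsilon \subseteq X \cup C$; by Lemma~\ref{ECOasympclosed} this depends only on $[\upsilon]_\asymp$, which is reassuring.

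On the right-hand side, I need to compute the $X$-coordinate of $\chi(Y)$ for $Y \in \crit(\cX)$. Using Proposition~\ref{FandY}, if $Y \subseteq X$ this coordinate is $\cF_X(Y) \in \aF_X^\ast$, so membership in $\cO_{\aF}(X, \{C\})$ fails. Otherwise $Y \not\subseteq X$, and then unpacking the definition of the bonding map $\af_{X \cup Y, X}$ (applied to $\cF_{X \cup Y}(Y) \in \aF_{X \cup Y}^\ast$ under $\chi(Y)$) shows that the $X$-coordinate is $c_X(C')$, where $C'$ is the unique component of $G-X$ containing $\bigcup \cC_{X \cup Y}(Y)$—equivalently, the unique component of $G-X$ meeting $Y$. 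Hence $\chi(Y) \in \cO_{\aF}(X, \{C\})$ iff $Y \not\subseteq X$ and $Y \subseteq X \cup C$.

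Combining, since $\Psi([\upsilon]_\asymp) = \chi(X_\upsilon)$ and both sides yield the identical condition ``$X_\upsilon \not\subseteq X$ and $X_\upsilon \subseteq X \cup C$'', together with the matching end condition $C(X, \omega) = C$, the two sets are equal. I do not anticipate any substantial obstacle here; the one spot that requires care is the coordinate computation for $\chi(Y)$ when $Y \not\subseteq X$, which one has to trace through the case distinction in the definition of $\af_{X',X}$.
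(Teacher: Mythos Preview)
Your proof is correct and follows essentially the same approach as the paper: both handle ends trivially via $\Psi\rest\Omega=\id_\Omega$, and for ultrafilter tangles both reduce membership on each side to the combinatorial condition ``$X_\upsilon\not\subseteq X$ and $X_\upsilon\subseteq X\cup C$'' via Lemma~\ref{UpsilonLivesInComp} and the definition of the bonding maps $\af_{X',X}$. The only cosmetic difference is that you characterise both sides symmetrically and match the conditions directly, whereas the paper proves the forward inclusion first and then invokes it to obtain the backward inclusion.
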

\begin{proof}
By Lemma~\ref{ECOasympclosed} the set $\cO_{\TC}(X,\{C\})$ is $\asymp$-closed, and so is $\cO_{\cU}(X,\{C\})$.

We start with the forward inclusion.
If $\omega$ is any element of $\Omega\cap\cO_{\cU}(X,\{C\})$, then clearly $\Psi(\omega)=\omega$ is contained in $\cO_{\aF}(X,\{C\})$.
Else if $\upsilon$ is any element of $\Upsilon\cap\cO_{\cU}(X,\{C\})$, then the ultrafilter $U(\upsilon,X)$ is generated by $\{C\}$ and $X_\upsilon\not\subseteq X$ must hold. 
Furthermore, Lemma~\ref{UpsilonLivesInComp} implies $X_\upsilon\subseteq X\cup C$. Hence for $\cC:=\cC_{X\cup X_\upsilon}(X_\upsilon)$ we have $\bigcup\cC\subseteq C$.
Let $\A{v}=(\cF_Y\,|\,Y\in\cX)$ be the element of $\aF$ to which $\Psi$ sends $[\upsilon]_\asymp$. In particular, $X_\upsilon=X_{\A{v}}$ holds and furthermore $\cF_X$ is generated by $\{D\}$ where $D$ is the unique component of $G-X$ including $\bigcup\cC$. In particular, $C=D$ must hold,
so $\A{v}$ is an element of $\cO_{\aF}(X,\{C\})$ as desired.

Finally, we show the backward inclusion. For this, let any element $\A{v}=(\cF_Y\,|\,Y\in\cX)$ of $\cO_{\aF}(X,\{C\})$ be given, and let $x$ be the element of $\cU/{\asymp}$ which $\Psi$ sends to $\A{v}$.
If $x$ is an end we are done, so suppose that $x$ is of the form $[\upsilon]_\asymp$ for some $\upsilon\in\Upsilon$.
 Since $\cF_X$ is generated by $\{C\}$ we have $X_\upsilon\not\subseteq X$ by definition of $\Psi$. By Lemma~\ref{UpsilonLivesInComp} there exists a unique component $D$ of $G-X$ with $X_\upsilon\subseteq X\cup D$ and $\upsilon\in\cO_{\cU}(X,\{D\})$. 
By the forward inclusion we have
\begin{align*}
\Psi[\cO_{\cU}(X,\{D\})/{\asymp}]\subseteq\cO_{\aF}(X,\{D\}),
\end{align*}
so $\A{v}\in\cO_{\aF}(X,\{D\})$ implies $C=D$. Hence $\upsilon\in\cO_{\cU}(X,\{C\})$ holds as desired.
\end{proof}

\begin{lemma}\label{PsiCofiniteNbhd}
For all $X\in\cX$, every $Y\in \crit(X)$ and each cofinite subset $\cC$ of $\cC_X(Y)$ we have
\begin{align*}
\Psi[\cO_{\cU}(X,\cC)/{\asymp}]=\cO_{\aF}(X,\cC).
\end{align*}
\end{lemma}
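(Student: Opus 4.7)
The plan is to prove both inclusions by case analysis on whether we are dealing with ends or ultrafilter tangles, and in the latter case whether $X_\upsilon$ (resp.~$X_{\A{v}}$) is a subset of $X$ or not. A key preliminary observation is that since $\cC$ is cofinite in $\cC_X(Y)$, we have $\cC \in \cF_X(Y)$ directly from the definition of $\cF_X(Y)$.

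For the forward inclusion, take $\upsilon \in \cO_{\cU}(X,\cC)$, so $\cC \in U(\upsilon,X)$. If $\upsilon \in \Omega$ then $\Psi(\upsilon)=\upsilon$ and the image at coordinate $X$ is the principal ultrafilter $c_X(C(X,\upsilon))$, which contains $\cC$ because $\cC \in U(\upsilon,X)$ forces $C(X,\upsilon)\in\cC$. If $\upsilon \in \Upsilon$ and $X_\upsilon \subseteq X$, then Lemma~\ref{notherLemma} (applied to the cofinite subset $\cC$ of $\cC_X(Y)$) forces $X_\upsilon = Y$, so $\Psi([\upsilon]_\asymp) = \chi(Y)$ has $X$-coordinate $\cF_X(Y) \ni \cC$. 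Finally, if $X_\upsilon \not\subseteq X$, then Lemma~\ref{UpsilonLivesInComp} gives a unique $C\in\cC_X$ with $X_\upsilon \subseteq X\cup C$ and $U(\upsilon,X) = c_X(C)$, so $\cC \in U(\upsilon,X)$ forces $C\in\cC$; on the other side, the definition of $\af_{X\cup X_\upsilon, X}$ shows that $\Psi([\upsilon]_\asymp)_X = c_X(D)$ for the unique component $D$ of $G-X$ containing $\bigcup \cC_{X\cup X_\upsilon}(X_\upsilon)$, and uniqueness in Lemma~\ref{UpsilonLivesInComp} yields $D=C$, giving $\cC \in \Psi([\upsilon]_\asymp)_X$.

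For the backward inclusion, take $\A{v} = (\cF_Z)_{Z\in\cX} \in \cO_{\aF}(X,\cC)$. If $\A{v}\in\Omega$, take $\upsilon=\A{v}$ and argue as in the forward case. Otherwise let $Y' = X_{\A{v}} \in \crit(\cX)$. When $Y'\subseteq X$, Proposition~\ref{FandY} gives $\cF_X = \cF_X(Y')$, so $\cC\cap\cC_X(Y')$ is cofinite in $\cC_X(Y')$ and in particular infinite. Apply Lemma~\ref{ECOextentUltra} to this infinite subset of $\cC_X(Y')$ to produce $\upsilon\in\Upsilon$ with $X_\upsilon = Y'$ and $\upsilon \in \cO_{\TC}(X,\cC\cap\cC_X(Y')) \subseteq \cO_{\cU}(X,\cC)$; then $\Psi([\upsilon]_\asymp) = \chi(Y') = \A{v}$. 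When $Y'\not\subseteq X$, the definition of $\af_{X\cup Y', X}$ again gives $\cF_X = c_X(D)$ for the unique component $D$ containing $\bigcup\cC_{X\cup Y'}(Y')$, so $\cC\in\cF_X$ means $D\in\cC$. Any $\upsilon\in\Upsilon$ with $X_\upsilon=Y'$ (which exists by Lemma~\ref{ECOextentUltra}) satisfies $\Psi([\upsilon]_\asymp)=\A{v}$; moreover Lemma~\ref{UpsilonLivesInComp} combined with uniqueness identifies the generating component of $U(\upsilon,X)$ with $D$, so $D\in\cC$ gives $\upsilon\in\cO_{\cU}(X,\cC)$.

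The main obstacle I anticipate is not conceptual but bookkeeping: keeping the two descriptions of the $X$-coordinate (via $g_{X_\upsilon,X}$ on the $\cU$-side, via $\af_{X\cup X_\upsilon,X}$ on the $\aF$-side) consistently aligned in the case $X_\upsilon \not\subseteq X$. The uniqueness clause of Lemma~\ref{UpsilonLivesInComp} is exactly what synchronises these two descriptions, so once that matching is articulated carefully the rest of the argument is straightforward case analysis.
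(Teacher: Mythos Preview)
Your proof is correct and follows essentially the same case structure as the paper's. The only organisational differences are: (1) where the paper invokes Lemma~\ref{PsiSimpleNbhd} for the case $X_\upsilon\not\subseteq X$ (forward) and for $\cF_X$ principal (backward), you instead unroll that argument directly via Lemma~\ref{UpsilonLivesInComp}; and (2) in the backward inclusion the paper takes the unique $\Psi$-preimage of $\A{v}$ (using Proposition~\ref{UltrafiltersEconomic}) and checks it lies in $\cO_{\cU}(X,\cC)$, whereas you construct a preimage explicitly via Lemma~\ref{ECOextentUltra}---equivalent because $\Psi$ is a bijection. One small omission: the paper opens by noting that $\cO_{\TC}(X,\cC)$ is $\asymp$-closed (Lemma~\ref{ECOfinDiffClosedAsymp}), which you should also record so that $\cO_{\cU}(X,\cC)/{\asymp}$ is unambiguously a subset of $\cU/{\asymp}$.
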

\begin{proof}
By Lemma~\ref{ECOfinDiffClosedAsymp} the set $\cO_{\TC}(X,\cC)$ is $\asymp$-closed.

We start with the forward inclusion.
If $\omega$ is any element of $\Omega\cap\cO_{\cU}(X,\cC)$, then clearly $\Psi(\omega)=\omega$ is contained in $\cO_{\aF}(X,\cC)$.
If $\upsilon$ is any element of $\Upsilon\cap\cO_{\TC}(X,\cC)$ we consider two cases. First suppose that $X_\upsilon\not\subseteq X$ holds. Then $U(\upsilon,X)$ is generated by $\{C\}$ for some $C\in \cC$. In particular, $\upsilon$ is contained in $\cO_{\cU}(X,\{C\})$, so $\Psi$ maps $[\upsilon]_\asymp$ to an element of $\cO_{\aF}(X,\{C\})\subseteq\cO_{\aF}(X,\cC)$ by Lemma~\ref{PsiSimpleNbhd} as desired. Second suppose that $X_\upsilon\subseteq X$ holds. Then $\cC\in U(\upsilon,X)$ and $\cC$ being a cofinite subset of $\cC_X(Y)$ implies $X_\upsilon=Y$ by Lemma~\ref{notherLemma}. Let $\A{v}=(\cF_Z\,|\,Z\in\cX)$ be the element of $\aF$ to which $\Psi$ sends $[\upsilon]_\asymp$. Then $\cF_X=\cF_X(Y)$ holds by definition of $\Psi$. In particular, $\cC$ being a cofinite subset of $\cC_X(Y)$ implies $\cC\in\cF_X(Y)$, resulting in $\A{v}\in\cO_{\aF}(X,\cC)$ as desired.

Finally, we show the backward inclusion. For this, let any element $\A{v}=(\cF_Z\,|\,Z\in\cX)$ of $\cO_{\aF}(X,\cC)$ be given, and let $x$ be the element of $\cU/{\asymp}$ which $\Psi$ sends to $\A{v}$.
If $x$ is an end we are done, so suppose that $x$ is of the form $[\upsilon]_\asymp$ for some $\upsilon\in\Upsilon$.
If $\cF_X$ is generated by $\{C\}$ for some $C\in\cC$ then $\A{v}$ is in $\cO_{\aF}(X,\{C\})$ and by Lemma~\ref{PsiSimpleNbhd} we have
\begin{align*}
[\upsilon]_\asymp\in\cO_{\cU}(X,\{C\})/{\asymp}\subseteq\cO_{\cU}(X,\cC)/{\asymp}
\end{align*}
as desired.
Otherwise $\cF_X=\cF_X(Y)$ is the only possibility for $\cF_X$, so by definition of $\Psi$ we have $X_\upsilon=Y$. 
Furthermore, Lemma~\ref{New36} implies $\cC_X(Y)=\cC_X(X_\upsilon)\in U(\upsilon,X)$. 
Since $U(\upsilon,X)$ is non-principal due to $X_\upsilon=Y\subseteq X$, the fact that $\cC$ is a cofinite subset of $\cC_X(Y)$ implies $\cC\in U(\upsilon,X)$. Hence $\upsilon$ is contained in $\cO_{\cU}(X,\cC)$, so $[\upsilon]_\asymp$ is in $\cO_{\cU}(X,\cC)/{\asymp}$ as desired.
\end{proof}

\begin{theorem}\label{FGandQuotient}
For every graph $G$ combining the bijection $\Psi\colon \cU/{\asymp}\bij\aF$ with the identity on $G\cup\Omega$ yields a homeomorphism between $\TC/{\asymp}$ and $\FG$.
\end{theorem}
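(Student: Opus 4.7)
The plan is to define $\hat\Psi := \Psi\cup\id_G\colon\TC/{\asymp}\to\FG$ (recalling that $\Psi$ already fixes $\Omega$ by construction, so this is just saying what to do on vertices and inner edge points) and to show that $\hat\Psi$ is a bijection that sends the basis $\cB$ of $\TC/{\asymp}$ supplied by Proposition~\ref{TCquotientBasis} bijectively onto the basis $\aB$ of $\FG$. From this it follows that $\hat\Psi$ is both continuous and open, hence a homeomorphism.

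Bijectivity of $\hat\Psi$ is immediate: by Proposition~\ref{UltrafiltersEconomic} the map $\Psi$ is a bijection between $\cU/{\asymp}$ and $\aF$, while on $G$ the map is the identity, so it is a set-theoretic bijection between $\TC/{\asymp} = G\cup(\cU/{\asymp})$ and $\FG = G\cup\aF$.

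Next I would compare $\cB$ with $\aB$. The basis $\aB$ of $\FG$ consists of the basic open sets of the $1$-complex of $G$ together with the sets $\cO_{\FG}(X,\cC)$ for $\cC\in\aC_X$; by definition of $\aC_X$ these are precisely the singleton subsets of $\cC_X$ and the cofinite subsets of $\cC_X(Y)$ for $Y\in\crit(X)$. Thus three types of basis elements need to be matched. Basic open sets of the $1$-complex of $G$ are common to both ambient spaces and are fixed pointwise by $\hat\Psi$. For each $X\in\cX$ and each $C\in\cC_X$, the set $\cO_{\TC}(X,\{C\})/{\asymp}$ is mapped by $\hat\Psi$ onto $\cO_{\FG}(X,\{C\})$: on the points lying in $\cU/{\asymp}$ this is exactly Lemma~\ref{PsiSimpleNbhd}, and on the points lying in $C\cup\mathring{E}(X,C)$ this is the identity. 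For each $X\in\cX$, each $Y\in\crit(X)$ and each cofinite $\cC\subseteq\cC_X(Y)$, the set $\cO_{\TC}(X,\cC)/{\asymp}$ is mapped onto $\cO_{\FG}(X,\cC)$ in the same manner, using Lemma~\ref{PsiCofiniteNbhd} on the ultrafilter side. Hence the bijection on points restricts to a bijection between the two bases.

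Consequently every element of $\cB$ has open image and every element of $\aB$ has open preimage, so $\hat\Psi$ is a homeomorphism. All of the combinatorial-topological heavy lifting has already been done in the lemmas leading up to Proposition~\ref{TCquotientBasis} and in Lemmas~\ref{PsiSimpleNbhd} and~\ref{PsiCofiniteNbhd}; the only point still requiring care here is the matching of the families $\aC_X$ with the indexing data of the second and third types of basis elements in $\cB$, which is however precisely how those families were defined. This is the main bookkeeping obstacle but poses no genuine difficulty.
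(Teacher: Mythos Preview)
Your proposal is correct and follows essentially the same approach as the paper: define $\hat\Psi=\Psi\cup\id_G$, invoke Proposition~\ref{UltrafiltersEconomic} for bijectivity, and then use Proposition~\ref{TCquotientBasis} together with Lemmas~\ref{PsiSimpleNbhd} and~\ref{PsiCofiniteNbhd} to match the bases $\cB$ and $\aB$ and conclude bicontinuity. You spell out the matching of the three types of basis elements in slightly more detail than the paper does, but the argument is the same.
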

\begin{proof}
Set $\hPsi:=\Psi\cup\id_G\colon \TC/{\asymp}\bij\FG$ which is a bijection by Proposition~\ref{UltrafiltersEconomic}. 
By Proposition~\ref{TCquotientBasis} we may consider the basis $\cB$ for the quotient topology of $\TC/{\asymp}$, and we consider the basis $\aB$ for the topology of $\FG$ (recall that $\aB$ simply is the basis we defined to generate the topology of $\FG$).
Since $\hPsi$ extends the identity on $G$, Lemmas~\ref{PsiSimpleNbhd}~and~\ref{PsiCofiniteNbhd} yield that $\hPsi$ is bicontinuous.
\end{proof}
\begin{corollary}
$\Psi$ is a homeomorphism between $\cU/{\asymp}$ and $\aF$.\qed
\end{corollary}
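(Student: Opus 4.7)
The plan is to derive this corollary as an immediate restriction of the homeomorphism $\hPsi\colon\TC/{\asymp}\to\FG$ established in Theorem~\ref{FGandQuotient}. By construction, $\hPsi$ extends $\Psi$, so on the subset $\cU/{\asymp}\subseteq\TC/{\asymp}$ it agrees with $\Psi$, and its image on this subset is precisely $\aF\subseteq\FG$ (since $\Psi$ is the bijection $\cU/{\asymp}\bij\aF$ from Proposition~\ref{UltrafiltersEconomic}, and $\hPsi$ is the identity on $G$).

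Since the restriction of a homeomorphism to a subspace (with codomain the image, equipped with the subspace topology) is again a homeomorphism, it suffices to observe that the subspace topology induced on $\cU/{\asymp}\subseteq\TC/{\asymp}$ coincides with the original topology on $\cU/{\asymp}$ (i.e.\ the quotient of the inverse-limit topology of $\cU$ by $\asymp$), and analogously that the subspace topology induced on $\aF\subseteq\FG$ coincides with the original topology of $\aF$. The second of these two facts was already noted during the construction of $\FG$ (the subspace topology on $\aF\subseteq\FG$ is the topology of $\aF$ as inverse limit). The first follows from the definition of the basic open sets $\cO_{\TC}(X,\cC)$, whose restrictions to $\cU$ are precisely the basic sets $\cO_{\cU}(X,\cC)$ generating the topology of $\cU$; passing to the quotient by $\asymp$ preserves this, because $\cU$ is a saturated subset of $\TC$ with respect to $\asymp$ (recall that $\asymp$ is defined on $\cU$ and equal to equality on $G$).

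The only step requiring some minimal care is checking that $\cU/{\asymp}$ sits inside $\TC/{\asymp}$ with the correct quotient-subspace topology; this is immediate from the fact that $\cU$ is a closed (and open) saturated subset of $\TC\setminus G$ under $\asymp$, so quotient and subspace commute here. With these observations in place, restricting $\hPsi$ and its inverse to $\cU/{\asymp}$ and $\aF$ respectively yields mutually inverse continuous maps, so $\Psi$ is a homeomorphism.

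I do not expect any serious obstacle: the corollary is genuinely a routine restriction argument, and the main work has already been done in Theorem~\ref{FGandQuotient}.
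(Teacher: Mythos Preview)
Your proposal is correct and follows the same route the paper intends: the corollary is marked \qed\ precisely because it is the restriction of the homeomorphism $\hPsi$ from Theorem~\ref{FGandQuotient} to the subspace $\cU/{\asymp}$. Your additional care in checking that the quotient and subspace topologies on $\cU/{\asymp}$ agree (and that $\aF$ carries its original topology as a subspace of $\FG$) is warranted and correctly handled; one clean way to close that step is to note that the identity from the quotient topology to the subspace topology is a continuous bijection from a compact space to a Hausdorff one (since $\cU/{\asymp}\subseteq\TC/{\asymp}\simeq\FG$ corresponds to $\aF$, which is Hausdorff by Proposition~\ref{FGtotDisc}).
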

\begin{corollary}\label{FGHausdorffIff}
$\FG$ is Hausdorff if and only if $G$ is locally finite.
\end{corollary}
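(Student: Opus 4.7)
The plan is to identify $\FG$ with $\TC/{\asymp}$ via Theorem~\ref{FGandQuotient} and to combine this with Lemma~\ref{InfDegree}, which equates the vertices in $V(\cU)$ with the vertices of infinite degree. Let $\pi\colon\TC\to\FG$ be the composition of the quotient map $\TC\to\TC/{\asymp}$ with the homeomorphism of Theorem~\ref{FGandQuotient}; then $\pi$ is a continuous surjection that fixes $G\cup\Omega$ pointwise and sends each $\upsilon\in\Upsilon$ to the point $\A{v}\in\aF\setminus\Omega$ with $X_{\A{v}}=X_\upsilon$ (cf.\ Proposition~\ref{FandY}).

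For the backward direction I argue contrapositively. Suppose some vertex $u$ has infinite degree. By Lemma~\ref{InfDegree}, either $u$ dominates some $\omega\in\Omega$ or $u\in X_\upsilon$ for some $\upsilon\in\Upsilon$. Directly from the definition of domination, respectively by Lemma~\ref{Xtau}, this yields $u\between\omega$, respectively $u\between\upsilon$, in $\TC$. Since $\pi$ is continuous with $\pi(u)=u$ and $\pi(\omega)=\omega$ (respectively $\pi(\upsilon)=\A{v}$ for the corresponding $\A{v}$), any disjoint open neighbourhoods of $u$ and $\omega$ (respectively of $u$ and $\A{v}$) in $\FG$ would pull back under $\pi$ to disjoint open neighbourhoods of $u$ and $\omega$ (respectively of $u$ and $\upsilon$) in $\TC$, contradicting the $\between$-relation witnessed above. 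Hence $\FG$ is not Hausdorff.

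For the forward direction, assume $G$ is locally finite, and verify Hausdorffness of $\FG$ case-by-case. Two points of $G$ are separable since the $1$-complex of $G$ is Hausdorff, and two points of $\aF$ are separable by Proposition~\ref{FGtotDisc}. The only interesting case is to separate a vertex $u$ from a point $\A{v}\in\aF$. If $\A{v}=\omega\in\Omega$, then $V(\Omega)=\emptyset$ by Lemma~\ref{InfDegree}, so $u$ does not dominate $\omega$; a finite $X\in\cX$ with $u\notin X$ witnessing this gives that $\cO_G(u,\tfrac12)$ and $\cO_\FG(X,\{C(X,\omega)\})$ are disjoint. If $\A{v}\in\aF\setminus\Omega$, write $Y:=X_{\A{v}}\in\crit(\cX)$. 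Each vertex in $Y$ has infinite degree, receiving an edge from every one of the infinitely many components in $\cC_Y(Y)$; hence local finiteness forces $u\notin Y$. Taking $X:=Y\cup\{u\}$, Observation~\ref{OneTimeUse0} yields $Y\in\crit(X)$, so $\cC:=\cC_X(Y)$ lies in $\aC_X$ as a cofinite subset of itself. Since every component in $\cC$ has neighbourhood exactly $Y\not\ni u$ and hence is not adjacent to $u$, the set $\cO_\FG(X,\cC)$ is a basic open neighbourhood of $\A{v}$ disjoint from $\cO_G(u,\tfrac12)$.

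I do not foresee a major obstacle; the only delicate point is the degenerate case $Y=\emptyset$ of the forward direction---which arises exactly when $G$ is locally finite with infinitely many components---and there $X=\{u\}$ still does the job, with $\cC_X(\emptyset)$ being the set of components of $G$ not containing $u$.
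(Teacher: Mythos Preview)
Your proof is correct. The backward direction is exactly the paper's reasoning spelled out: the paper's one-line proof ``Combine Theorem~\ref{FGandQuotient} and Corollary~\ref{TCHausdorffIff}'' relies on the same pull-back through the quotient map $\pi$ that you make explicit.

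For the forward direction you diverge from the paper. The paper's argument is that local finiteness makes $\TC$ Hausdorff (Corollary~\ref{TCHausdorffIff}) and then $\FG\simeq\TC/{\asymp}$ inherits Hausdorffness; this last step is left implicit but works because under local finiteness every $X_\upsilon$ is empty, so $\asymp$ has at most the single nontrivial class $\Upsilon$, which is closed in the compact Hausdorff space $\TC$. You instead verify Hausdorffness directly in $\FG$ via its basis $\aB$, which is more self-contained but longer. Two remarks on your version: first, once you note that every vertex of $Y=X_{\A{v}}$ has infinite degree, local finiteness actually forces $Y=\emptyset$ outright (not merely $u\notin Y$), so the body of that case is vacuous and only your ``degenerate case $Y=\emptyset$'' carries content. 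Second, Proposition~\ref{FGtotDisc} separates two points of $\aF$ only within the subspace $\aF$; to lift this to $\FG$, pick $X$ with $\af_X(\A{v})\neq\af_X(\A{v}')$ and disjoint $\cC,\cC'\subseteq\cC_X$ with $\cC\in\af_X(\A{v})$ and $\cC'\in\af_X(\A{v}')$---then $\cO_{\FG}(X,\cC)$ and $\cO_{\FG}(X,\cC')$ are disjoint already in $\FG$.
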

\begin{proof}
Combine Theorem~\ref{FGandQuotient} and Corollary~\ref{TCHausdorffIff}.
\end{proof}

We close this chapter with a comparison of the cardinalities of $\aF-\Omega$ and $\Upsilon$:

\begin{lemma}\label{ExtendCrit}
For all $X\in\cX$, every $Y\in\crit(X)$ and each non-principal ultrafilter $U$ on $\cC_X(Y)$ there is some unique $\upsilon\in\Upsilon$ with $X_\upsilon=Y$ and $U\subseteq U^\circ(\upsilon)$.
\end{lemma}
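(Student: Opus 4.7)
The plan is to build the desired tangle by extending $U$ to a non-principal ultrafilter on all of $\cC_Y$ and then invoking Corollary~\ref{Uextension}. First I would observe that $\cC_X(Y)\subseteq\cC_Y$: each $C\in\cC_X(Y)$ has $N_G(C)=Y$, so in $G-Y$ the set $C$ has no neighbour outside itself, hence $C$ is a component of $G-Y$. This makes it sensible to define
\begin{align*}
\tilde{U}:=\{\cD\subseteq\cC_Y\,|\,\cD\cap\cC_X(Y)\in U\},
\end{align*}
and a routine check shows $\tilde{U}$ inherits the filter axioms from $U$ and is ultra on $\cC_Y$ by splitting an arbitrary $\cD\subseteq\cC_Y$ via its intersection with $\cC_X(Y)$. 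Since $U$ is non-principal and every singleton $\{D\}$ with $D\in\cC_Y\setminus\cC_X(Y)$ forces $\emptyset\in U$, the ultrafilter $\tilde{U}$ is non-principal.

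By Corollary~\ref{Uextension}, $\tilde{U}\in\cU_Y^\ast$ uniquely extends to some $\upsilon\in\cU$, and $\upsilon\in\Upsilon$ since $U(\upsilon,Y)=\tilde{U}$ is non-principal. The inclusion $X_\upsilon\subseteq Y$ is immediate from $Y\in\cX_\upsilon$ combined with $\cX_\upsilon=\lfloor X_\upsilon\rfloor_\cX$ (as stated in Section~\ref{SummaryEndsAndTangles}). To get equality I would show that no proper subset $Y'\subsetneq Y$ lies in $\cX_\upsilon$: pick any $y\in Y\setminus Y'$; since every $C\in\cC_X(Y)$ has $y$ as a neighbour, the set $\{y\}\cup\bigcup\cC_X(Y)$ is connected in $G-Y'$, so $\phi_{Y,Y'}[\cC_X(Y)]=\{C^\ast\}$ where $C^\ast$ is the component of $G-Y'$ containing $y$. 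Because $\cC_X(Y)\in\tilde{U}$, the ultrafilter $U(\upsilon,Y')=f_{Y,Y'}(\tilde{U})$ has $\{C^\ast\}$ as an element, hence is principal, so $Y'\notin\cX_\upsilon$.

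The containment $U\subseteq U^\circ(\upsilon)=\tilde{U}$ is immediate because every $\cC\in U$ is a subset of $\cC_X(Y)$, so $\cC\cap\cC_X(Y)=\cC\in U$. For uniqueness, suppose $\upsilon'\in\Upsilon$ also satisfies $X_{\upsilon'}=Y$ and $U\subseteq U^\circ(\upsilon')$. Since $\cC_X(Y)\in U\subseteq U^\circ(\upsilon')$, the ultrafilter $U^\circ(\upsilon')$ concentrates on $\cC_X(Y)$; for $\cD\subseteq\cC_Y$, upward closure and intersection with $\cC_X(Y)$ give $\cD\in U^\circ(\upsilon')$ iff $\cD\cap\cC_X(Y)\in U^\circ(\upsilon')$, and a direct argument (using that two disjoint subsets of $\cC_X(Y)$ cannot both lie in $U^\circ(\upsilon')$) shows $U^\circ(\upsilon')\cap 2^{\cC_X(Y)}=U$. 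Consequently $U^\circ(\upsilon')=\tilde{U}=U^\circ(\upsilon)$, and since an ultrafilter tangle is determined by its $U^\circ$ (cf.\ the summary of \cite[Lemma 3.3]{EndsAndTangles}), we conclude $\upsilon'=\upsilon$. The main obstacle will be the minimality step, where one must handle the bonding map $f_{Y,Y'}$ correctly and exploit that every vertex of $Y$ is a neighbour of \emph{every} component in $\cC_X(Y)$.
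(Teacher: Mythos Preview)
Your proof is correct and follows essentially the same strategy as the paper's: take the set-theoretic up-closure of $U$ to an ultrafilter on a larger component set, invoke Corollary~\ref{Uextension} to extend uniquely to an element of $\cU$, and verify $X_\upsilon=Y$ via the observation that $\phi$ collapses $\cC_X(Y)$ to a singleton over any proper subset $Y'\subsetneq Y$. The only difference is cosmetic: the paper extends $U$ on $\cC_X$ (putting $U'=\langle U\rangle_{\cC_X}$) and then restricts down to $Y$, whereas you extend directly on $\cC_Y$; since $\cC_X(Y)\subseteq\cC_X\cap\cC_Y$, both constructions yield the same $\upsilon$. Your uniqueness argument is in fact spelled out more carefully than the paper's, which leaves the step ``$\upsilon$ is uniquely determined by $U$'' implicit.
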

\begin{proof}
Put $U'=\langle U\rangle_{\cC_X}$ and note that this is the only ultrafilter on $\cC_X$ extending $U$. Using Corollary~\ref{Uextension} we uniquely extend it to an element $\upsilon$ of $\cU$. In particular, $\upsilon$ is uniquely determined by $U$, and clearly we have $Y\in\cX_{\upsilon}$.
 For every $Y^-\subsetneq Y$ the set $\cC_X(Y)\rest Y^-$ is a singleton contained in $U(\upsilon,Y^-)$, therefore witnessing $Y^-\notin\cX_\upsilon$. Thus $Y=X_\upsilon$ follows due to $\cX_\upsilon=\lfloor X_\upsilon\rfloor_{\cX}$.
\end{proof}

\begin{proposition}\label{FGcardComparison}
If $G$ is an infinite graph, then
\begin{enumerate}
\item $|\aF-\Omega|=|\crit(\cX)|\le |V|$,
\item $|\Upsilon|\ge |\Upsilon/{\asymp}|\cdot 2^{\A{c}}=|\crit(\cX)|\cdot 2^{\A{c}}$,
\item $|\aF-\Omega|\cdot 2^{\A{c}}\le |\Upsilon|$.
\end{enumerate}
\end{proposition}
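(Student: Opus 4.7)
The plan is to read off (i) and (ii) directly from results already established in this chapter, and then combine them for (iii).

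For (i), Proposition~\ref{FandY} supplies a bijection $\chi\colon\crit(\cX)\to\aF\setminus\Omega$, so $|\aF-\Omega|=|\crit(\cX)|$. For the inequality $|\crit(\cX)|\le|V|$, I would observe that if $V$ is finite then no $G-X$ has infinitely many components and hence $\crit(\cX)=\emptyset$, while if $V$ is infinite then $\crit(\cX)\subseteq[V]^{<\aleph_0}$ and $|[V]^{<\aleph_0}|=|V|$.

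For (ii), the equality $|\Upsilon/{\asymp}|=|\crit(\cX)|$ is exactly Lemma~\ref{UltrafilterCrit}. The real content is the inequality $|\Upsilon|\ge|\Upsilon/{\asymp}|\cdot 2^{\A{c}}$; for this, I would fix an arbitrary $X\in\crit(\cX)$ and exhibit $2^{\A{c}}$ ultrafilter tangles in the corresponding $\asymp$-class. Since $X\in\crit(X)$, the set $\cC_X(X)$ is infinite, so by the classical fact that an infinite discrete set of cardinality $\kappa\ge\aleph_0$ carries $2^{2^\kappa}\ge 2^{2^{\aleph_0}}=2^{\A{c}}$ non-principal ultrafilters, there are at least $2^{\A{c}}$ non-principal ultrafilters on $\cC_X(X)$. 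By Lemma~\ref{ExtendCrit}, each such non-principal ultrafilter $U$ extends to a unique $\upsilon\in\Upsilon$ with $X_\upsilon=X$ and $U\subseteq U^\circ(\upsilon)$; distinct $U$'s yield distinct $\upsilon$'s because $U$ is recoverable from $U^\circ(\upsilon)$ as $U=U^\circ(\upsilon)\cap 2^{\cC_X(X)}$. Hence the $\asymp$-class $\chi^{-1}(X)$ has cardinality at least $2^{\A{c}}$, and summing (i.e.\ taking the disjoint union) over all $X\in\crit(\cX)$ gives $|\Upsilon|\ge|\crit(\cX)|\cdot 2^{\A{c}}=|\Upsilon/{\asymp}|\cdot 2^{\A{c}}$.

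For (iii), it suffices to chain the two previous items: $|\aF-\Omega|\cdot 2^{\A{c}}=|\crit(\cX)|\cdot 2^{\A{c}}\le|\Upsilon|$ by (i) and (ii).

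No step looks like a genuine obstacle; the only point that deserves care is the cardinal arithmetic in the case distinction of (i) (handling finite $V$ separately) and the explicit invocation of the $2^{2^\kappa}$-bound on ultrafilters on an infinite set, which is the substantive input for (ii).
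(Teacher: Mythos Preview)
Your proof is correct and follows essentially the same route as the paper's: Proposition~\ref{FandY} for (i), Lemma~\ref{UltrafilterCrit} together with Lemma~\ref{ExtendCrit} and the $2^{2^{\aleph_0}}$-bound on non-principal ultrafilters for (ii), and chaining for (iii). Two tiny remarks: since the hypothesis already says $G$ is infinite, the finite-$V$ case in (i) is unnecessary; and your notation $\chi^{-1}(X)$ for the $\asymp$-class with $X_\upsilon=X$ is a slip (since $X$ lies in the domain of $\chi$, not its codomain), though the intended meaning is clear.
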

\begin{proof}
(i). By Proposition~\ref{FandY} the map $\A{v}\mapsto X_{\A{v}}$ is a bijection from $\aF-\Omega$ onto $\crit(\cX)$. Furthermore, $\crit(\cX)\subseteq\cX=[V]^{<\aleph_0}$ implies $|\crit(\cX)|\le |V|$.

(ii). By Lemma~\ref{UltrafilterCrit} we have $|\Upsilon/{\asymp}|=|\crit(\cX)|$.
By Lemma~\ref{New36} we know that $\cC_{X_\upsilon}(X_\upsilon)\in U^\circ(\upsilon)$ holds for all $\upsilon\in\Upsilon$. Since the set $\cC_{X_\upsilon}(X_\upsilon)$ is infinite, there exist at least $2^{\A{c}}$ many non-principal ultrafilters on it, and by Lemma~\ref{ExtendCrit} each of these extends to a unique element of $[\upsilon]_\asymp$. In particular, $[\upsilon]_\asymp$ has cardinality at least $2^{\A{c}}$. Since $\asymp$ is an equivalence relation, this implies $|\Upsilon|\ge |\Upsilon/{\asymp}|\cdot 2^{\A{c}}$.

(iii) follows from (i) and (ii) combined.
\end{proof}

\subsection{$\aF$ described by tangles}\label{FGandTangles}

In \cite{EndsAndTangles}, $\cU$ was use as a technical description of $\Theta$. Recently, we created $\aF$ inspired by $\cU$, and we showed that $\aF$ is homeomorphic to a natural quotient of $\Theta$, namely $\Theta/{\tasymp}$. So perhaps it is possible to describe $\aF$ using tangles of $G$?\\

Indeed, it turns out that considering a smaller separation system suffices:
Let $S'$\index{$S'$} be the set of all $\{\bigcup\cC\cup X,X\cup\bigcup\cC'\}\in S$ with $\cC\uplus\cC'=\cC_X$ for which there is no $Y\in\crit(X)$ such that both $\cC_X(Y)\cap\cC$ and $\cC_X(Y)\cap\cC'$ are infinite.
Write $\Theta'$ for the set of all $\aleph_0$-tangles with respect to $S'$.
For every $\tau\in\Theta'$ and $X\in\cX$ we let
\begin{align*}
\cF(\tau,X):=\big\{\cC\subseteq\cC_X\,\big|\,\big(V[\cC_X\setminus\cC]\cup X,X\cup V[\cC]\big)\in\tau\big\}.
\end{align*}
Like for elements of $\Theta$, it is easy to check that this is indeed an element of $\aF_X$.

\begin{lemma}\label{FinSymDiffClosedS}
Let $\{A,B\}\in S'$ and $\{C,D\}\in S$ be such that both $A\triangle C$ and $B\triangle D$ are finite. Then $\{C,D\}\in S'$.
\end{lemma}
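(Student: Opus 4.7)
The plan is to unfold the definitions, reduce everything to the common refinement $X'' = X \cup X'$, and then argue by pigeon‑hole on critical neighbourhoods at $X'$. Write $X = A \cap B$ and $X' = C \cap D$, and let $\{\cC,\cC'\}$ resp.\ $\{\cD,\cD'\}$ be the bipartitions of $\cC_X$ resp.\ $\cC_{X'}$ given by $\{A,B\}$ and $\{C,D\}$. Since $X\triangle X' \subseteq (A\triangle C)\cup(B\triangle D)$, both $X \triangle X'$ and $F:=(A\triangle C)\cup(B\triangle D)$ are finite. Fix an arbitrary $Y' \in \crit(X')$; I want to show that at most one of $\cC_{X'}(Y')\cap\cD$ and $\cC_{X'}(Y')\cap\cD'$ is infinite.

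Let $\cC^{00}$ be the set of $E\in\cC_{X'}(Y')$ that avoid both $X$ and $F$. Since $X\cup F$ is finite, $\cC^{00}$ is cofinite in $\cC_{X'}(Y')$ (and in particular infinite). The key preparatory observations, all routine, are: (a) each $E\in\cC^{00}$ is a component of $G - X''$ with $N_G(E) = Y'$, because any neighbour of $E$ outside $X'$ would land in $E$ itself, contradicting $E \cap X = \emptyset$; and (b) for such $E$, $E \subseteq A \Leftrightarrow E \subseteq C$, because $E$ avoids $A\triangle C$. Consequently, writing $C_E$ for the component of $G-X$ containing $E$, one has $E \in \cD \Leftrightarrow E \subseteq C \Leftrightarrow E\subseteq A \Leftrightarrow C_E \in \cC$, and analogously for primes.

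Now split on whether $Y' \subseteq X$. If $Y' \subseteq X$, then every $E \in \cC^{00}$ has $N_G(E) = Y' \subseteq X$ and $E \cap X = \emptyset$, so $E$ is itself a component of $G - X$ with neighbourhood exactly $Y'$; hence $\cC^{00} \subseteq \cC_X(Y')$, forcing $Y' \in \crit(X)$, and under the correspondence above $E \in \cD \Leftrightarrow E \in \cC$. Thus if both $\cC_{X'}(Y')\cap\cD$ and $\cC_{X'}(Y')\cap\cD'$ were infinite, cofiniteness of $\cC^{00}$ would make both $\cC_X(Y')\cap\cC$ and $\cC_X(Y')\cap\cC'$ infinite, contradicting $\{A,B\}\in S'$. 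If $Y' \not\subseteq X$, pick any $y \in Y' \setminus X$; since $y \in N_G(E)$ and $y \notin X$ for every $E \in \cC^{00}$, all such $E$ lie in the single component $C^* = C_E$ of $G - X$ containing $y$. Depending on whether $C^* \in \cC$ or $C^* \in \cC'$, either $\cC^{00} \subseteq \cD$ or $\cC^{00} \subseteq \cD'$, so one of $\cC_{X'}(Y')\cap\cD$ and $\cC_{X'}(Y')\cap\cD'$ is already finite — and in this case we do not even need $\{A,B\}\in S'$.

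The only non‑automatic step is the case $Y' \subseteq X$, and the main (mild) obstacle is verifying that passing to the cofinite set $\cC^{00}$ preserves the ``infinite on both sides'' phenomenon cleanly enough to transport it from $X'$ to $X$; once the two preparatory observations (a) and (b) are in place, the contradiction with $\{A,B\}\in S'$ is immediate.
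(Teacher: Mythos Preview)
Your proof is correct and follows essentially the same approach as the paper's: discard the finitely many components of $\cC_{X'}(Y')$ meeting $X\cup F$, then split on whether $Y'\subseteq X$, in the $Y'\not\subseteq X$ case observing that all remaining components lie in a single component of $G-X$, and in the $Y'\subseteq X$ case transporting criticality of $Y'$ to $X$ to contradict $\{A,B\}\in S'$. Your write-up is a bit more direct (arguing positively rather than by contradiction) and your handling of the $Y'\not\subseteq X$ case is cleaner, but the underlying argument is the same.
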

\begin{proof}
Assume for a contradiction that $\{C,D\}\notin S'$ holds, witnessed by some $Y\in\crit(C\cap D)$. Let $\{\cC,\cD\}$ be the bipartition of $\cC_{C\cap D}(Y)$ with $V[\cC]\subseteq C$ and $V[\cD]\subseteq D$. By choice of $Y$, both $\cC$ and $\cD$ are infinite. 
Next, put
\begin{align*}
\cC'&=\{C\in\cC\,|\,C\cap A\cap B=\emptyset\}\\
\cD'&=\{C\in\cD\,|\,D\cap A\cap B=\emptyset\}
\end{align*}
and note that both $\cC\setminus\cC'$ and $\cD\setminus\cD'$ are finite, since $A\cap B$ is finite. By choice of $\{C,D\}$ we know that all but finitely many element of $\cC'$ are included in $A$, and all but finitely elements of $\cD'$ are included in $B$. We write $\cC''$ and $\cD''$ for the sets of those elements, respectively.

If $Y\not\subseteq A\cap B$ holds, then there is some $K\in\cC_{A\cap B}$ with $\bigcup\cC''\cup \bigcup\cD''\subseteq V(K)$. Without loss of generality we may assume that $V(K)\subseteq A\setminus B$. Since both $\cC''$ and $\cD''$ are infinite, so is $A\triangle C$, the desired contradiction.

Otherwise $Y\subseteq A\cap B$ holds, so $\cC''$ and $\cD''$ together with $Y\in\crit(A\cap B)$ witness $\{A,B\}\notin S'$, a contradiction.
\end{proof}
\begin{lemma}\label{FinSymDiffParallel}
Let $\tau\in\Theta'$ and $(A,B)\in\tau$ be given.
If $(A',B')\in \vec{S}$ is such that both $A\triangle A'$ and $B\triangle B'$ are finite, then $(A',B')\in\tau$.
\end{lemma}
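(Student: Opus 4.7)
My plan is to prove the lemma by contradiction, producing a forbidden finite star of finite interior that any $\aleph_0$-tangle with respect to $S'$ must avoid. Lemma~\ref{FinSymDiffClosedS} applied to $\{A,B\} \in S'$ already gives $\{A',B'\} \in S'$, so $\tau$ does orient this separation; I assume $(B',A') \in \tau$ and aim for a contradiction.

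The key auxiliary is the oriented separation $\vec r := (A \cap A',\, B \cup B')$. A routine check shows this is a separation in $S$: the union is $V$ (since $V \setminus (A \cap A') \subseteq B \cup B'$), the separator lies in the finite set $(A \cap B) \cup (A' \cap B')$, and a short case analysis, using that neither $\{A,B\}$ nor $\{A',B'\}$ admits cross-edges between its sides, rules out such edges for $(A \cap A',\, B \cup B')$ as well. Since $A \triangle (A \cap A') \subseteq A \triangle A'$ and $B \triangle (B \cup B') \subseteq B \triangle B'$ are both finite, Lemma~\ref{FinSymDiffClosedS} promotes $\{A \cap A',\, B \cup B'\}$ into $S'$.

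Next I would establish $\vec r \in \tau$. One has $\vec r \le (A,B)$ because $A \cap A' \subseteq A$ and $B \cup B' \supseteq B$. If $\vec r = (A,B)$ then $\vec r \in \tau$ is immediate; otherwise the inequality is strict, so if $\cev r = (B \cup B',\, A \cap A') \in \tau$ were the orientation chosen, then $\cev r$ and $(A,B)$ would be two distinct elements of $\tau$ with $\cev{\cev r} = \vec r < (A,B)$, violating consistency of $\tau$. Hence $\vec r \in \tau$ in either case.

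Finally, the two-element set $\sigma := \{\vec r,\, (B',A')\} \subseteq \tau$ is a star in $\vec{S'}$: the inclusions $A \cap A' \subseteq A'$ and $B \cup B' \supseteq B'$, together with their mirror counterparts, yield $\vec r \le \cev{(B',A')} = (A',B')$ and $(B',A') \le \cev r$. Its interior $(B \cup B') \cap A' = (B \cap A') \cup (A' \cap B')$ is finite, since $A' \cap B'$ is finite while $B \cap A' = (A \cap B \cap A') \cup (B \cap (A' \setminus A))$ is a union of two finite sets ($A \cap B$ is finite; $A' \setminus A \subseteq A \triangle A'$ is finite). Thus $\sigma$ is a finite star of finite interior in $\vec{S'}$ contained in $\tau$, contradicting that $\tau$ is an $\aleph_0$-tangle with respect to $S'$. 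The main obstacles are the routine edge-freeness check for $(A \cap A',\, B \cup B')$ and the finiteness analysis of the star's interior; both reduce to careful bookkeeping with the given finite symmetric differences once the correct auxiliary separation has been identified.
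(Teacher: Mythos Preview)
Your proof is correct and follows essentially the same strategy as the paper's: invoke Lemma~\ref{FinSymDiffClosedS} to place the relevant auxiliary separations in $S'$, use consistency to pull one orientation into $\tau$, and then exhibit a forbidden two-element star of finite interior. The only cosmetic difference is that you work with the single corner $(A\cap A',\,B\cup B')$ and argue by contradiction, whereas the paper uses the pair $(A,\,B\cup B')$ and $(A\cup A',\,B')$ to deduce $(A',B')\in\tau$ directly.
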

\begin{proof}
We mimic the proof of \cite[Lemma 1.10]{EndsAndTangles}. 
By Lemma~\ref{FinSymDiffClosedS} we know that the three separations $\{A',B'\},\{A\cup A',B'\}$ and $\{A,B\cup B'\}$ are in $S'$.
First note that it suffices to show that $(A,B)\in\tau$ implies $(A\cup A',B')\in\tau$: then $(A',B')\in\tau$ follows from $(A',B')\le (A\cup A',B')\in\tau$ and the consistency of $\tau$.

As $(A,B\cup B')\le (A,B)\in\tau$ holds, we have $(A,B\cup B')\in\tau$ by consistency. Due to $\{(A,B\cup B'),(B',A\cup A')\}\in\Tau_2$ the only possibility for $\{A\cup A',B'\}$ is $(A\cup A',B')\in\tau$ as desired.
\end{proof}

The next lemma is an analogue of \cite[Lemma 2.2]{EndsAndTangles}, and with a bit of extra work we can mimic Diestel's proof.

\begin{lemma}\label{tanglesInduceF}
Let $\tau\in\Theta'$ and $X\subseteq X'\in\cX$ be given. Then
\begin{align*}
\af_{X',X}(\cF(\tau,X'))=\cF(\tau,X).
\end{align*}
\end{lemma}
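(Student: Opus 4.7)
The plan is a case analysis on the form of $\cF(\tau,X')\in\aF_{X'}$, reducing along a chain by induction on $|X'\setminus X|$ to the case $X'=X\cup\{x\}$ for a single vertex $x$. Since each element of $\aF_{X'}$ is either a principal ultrafilter $c_{X'}(C')$ or a filter $\cF_{X'}(Y)$ for some $Y\in\crit(X')$, and the bonding map $\af_{X',X}$ is defined by cases on exactly this distinction (with a further split on whether $Y\subseteq X$), there are three cases to handle: (i) $\cF(\tau,X')=c_{X'}(C')$, where we want $\cF(\tau,X)=c_X(\phi_{X',X}(C'))$; (ii) $\cF(\tau,X')=\cF_{X'}(Y)$ with $Y\subseteq X$, where we want $\cF(\tau,X)=\cF_X(Y)$; and (iii) $\cF(\tau,X')=\cF_{X'}(Y)$ with $x\in Y$, where we want $\cF(\tau,X)=c_X(C)$ for the unique $C\in\cC_X$ including $\bigcup\cC_{X'}(Y)$ (uniqueness via Corollary~\ref{GGwellDef}(ii)).

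All three cases follow a common template. First identify the target separation $s$ at level $X$ and verify $s\in S'$: in Case~(i) this is automatic because one side of the induced bipartition of $\cC_X$ is a singleton, while in Cases~(ii) and~(iii) one checks that every $Y^*\in\crit(X)$ meets the relevant side of $s$ either cofinitely or not at all. Next, exhibit a lifted bipartition of $\cC_{X'}$ whose associated level-$X'$ separation lies in $\tau$ by virtue of $\cF(\tau,X')$. The key structural input is a dichotomy for $Y^*\in\crit(X')$: either $Y^*\subseteq X$, in which case each $D\in\cC_{X'}(Y^*)$ is in fact already a component of $G-X$ pinned to a single $\cC_X$-component; or $x\in Y^*$, in which case every $D\in\cC_{X'}(Y^*)$ has a vertex adjacent to $x$ and is therefore forced into $V(C_x)$, the $\cC_X$-component containing $x$, because $N_G(x)\subseteq V(C_x)\cup X$. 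This dichotomy guarantees that the lifted bipartition lies in $S'$ and that $\cF(\tau,X')$ selects the expected side of it.

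The desired orientation of $s$ is then extracted from the level-$X'$ orientation via Lemma~\ref{FinSymDiffParallel}: $s$ and its lift differ only in the finite set $\{x\}$, so if $\tau$ were to orient $s$ the wrong way, a finite-symmetric-difference argument would flip the lift too and contradict the orientation coming from $\cF(\tau,X')$. Applicability of Lemma~\ref{FinSymDiffParallel} is justified by Lemma~\ref{FinSymDiffClosedS}, which ensures that the parallel separation is still in $S'$. This closes each of the three cases.

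The main obstacle will be the careful bookkeeping in Cases~(ii) and~(iii): $\cC_{X'}(Y)$ is only cofinite in (not equal to) $\cC_X(Y)$, because finitely many elements of $\cC_X(Y)$ may contain vertices of $X'\setminus X$ and therefore split when passing to $G-X'$. One must identify the surviving cofinite subset of $\cC_X(Y)$ sitting inside $\cC_{X'}(Y)$ via $\phi_{X',X}$ and verify that the lifted bipartition behaves correctly on the critical classes $\cC_{X'}(Y^*)$ for $Y^*\neq Y$. The dichotomy described above is the decisive ingredient and mirrors the design principle behind the definition of $S'$.
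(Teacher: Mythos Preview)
Your outline is correct and uses the same core mechanism as the paper: lift a bipartition from level $X$ to level $X'$, check it lies in $S'$, read off its orientation from $\cF(\tau,X')$, then descend via Lemma~\ref{FinSymDiffParallel} (with Lemma~\ref{FinSymDiffClosedS} to stay inside $S'$). The differences are organisational rather than conceptual.

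First, the induction on $|X'\setminus X|$ is unnecessary. Lemma~\ref{FinSymDiffParallel} already handles arbitrary finite symmetric differences, so the paper works with general $X\subseteq X'$ directly: one notes $(A,B\cup X')\le (A',B')$, deduces $(A,B\cup X')\in\tau$ by consistency, and then applies Lemma~\ref{FinSymDiffParallel} once. Your reduction to a single extra vertex buys nothing here.

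Second, the paper performs the case analysis on the \emph{image} $\cF:=\af_{X',X}(\cF(\tau,X'))$ rather than on $\cF(\tau,X')$. This collapses your Cases~(i) and~(iii) into a single case: whenever $\cF=c_X(C)$ one simply lifts $\{C\}$ to $\cC'=\phi_{X',X}^{-1}(\{C\})$ and checks $\cC'\in\cF(\tau,X')$, which holds uniformly regardless of whether $\cF(\tau,X')$ is principal or of the form $\cF_{X'}(Y)$ with $Y\not\subseteq X$. The second case ($\cF=\cF_X(Y)$) is handled by contradiction: one picks a cofinite $\cC\subseteq\cC_X(Y)$ with $\cC\notin\cF(\tau,X)$ and derives $\cC\in\cF(\tau,X)$.

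On the other hand, your explicit dichotomy for $Y^*\in\crit(X')$ (either $Y^*\subseteq X$, so $\cC_{X'}(Y^*)\subseteq\cC_X$ and the classes stay put, or $Y^*$ meets $X'\setminus X$, so all of $\cC_{X'}(Y^*)$ collapses into a single $\cC_X$-component) is exactly the verification that the lifted bipartition lies in $S'$. The paper's proof asserts this step rather tersely (e.g.\ ``$\cC'$ is a cofinite subset of $\cC_{X'}(Y)$, so $(A',B')\in\vec{S'}$''), and your spelling-out of the dichotomy is the clearer account of why it holds.
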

\begin{proof}
Set $\cF=\af_{X',X}(\cF(\tau,X'))$ and note that this is in $\aF_X$. We check two cases:

First suppose that $\cF=c_X(C)$ holds for some $C\in\cC_X$ and let $(A,B)\in\vec{S'}$ be such that $A\cap B=X$ and $B\setminus A=V(C)$, i.e. $(A,B)=s_{X\to C}$.
We wish to show $(A,B)\in\tau$, since then $\{C\}\in\cF(\tau,X)$ implies $\cF(\tau,X)=c_X(C)=\cF$ as desired. 
For this, set $\cC'=\phi_{X',X}^{-1}(\{C\})$ and let $(A',B')\in\vec{S'}$ be such that $A'\cap B'=X'$ and $B'\setminus A'=V[\cC']$. 
By choice of $\cC'$ we have $\{A',B'\}\in S'$. 
Now $\{C\}\in\cF$ implies $\cC'\in\cF(\tau,X')$ by definition of $\af_{X',X}$, so in particular we have $(A',B')\in\tau$. 
By Lemma~\ref{FinSymDiffClosedS}, $\{A,B\}\in S'$ implies $\{A,B\cup X'\}\in S'$.
Furthermore, $(A,B\cup X')\le (A',B')$ holds since we have $A'=A\cup X'$ and $B'=B\cup X'$ by choice of $\cC'$.
Hence $(A',B')\in\tau$ together with $(A,B\cup X')\le (A',B')$ implies $(A,B\cup X')\in\tau$ by the consistency of $\tau$. In particular, Lemma~\ref{FinSymDiffParallel} implies $(A,B)\in\tau$ as desired.

Second suppose that $\cF$ is of the form $\cF_X(Y)$ for some $Y\in\crit(X)$, and assume for a contradiction that $\cF\neq\cF(\tau,X)$. 
In particular, we find some cofinite subset $\cC$ of $\cC_X(Y)$ with $\cC\notin\cF(\tau,X)$. 
Let $(A,B)\in\vec{S'}$ be such that $A\cap B=X$ and $B\setminus A=V[\cC]$. 
We wish to show $(A,B)\in\tau$, since then $\cC\in\cF(\tau,X)$ yields a contradiction as desired.
For this, set $\cC'=\phi_{X',X}^{-1}(\cC)$ and let $(A',B')\in\vec{S}$ be such that $A'\cap B'=X'$ and $B'\setminus A'=V[\cC']$. 
Since $\cC'$ is a cofinite subset of $\cC_{X'}(Y)$, we have $(A',B')\in\vec{S'}$. In particular, since $\cF(\tau,X')=\cF_{X'}(Y)$ holds by definition of $\af_{X',X}$, we also have $(A',B')\in\tau$.
By Lemma~\ref{FinSymDiffClosedS}, $\{A,B\}\in S'$ implies $\{A,B\cup X'\}\in S'$.
As in the first case this yields $(A,B)\in\tau$ as desired.
\end{proof}

\begin{theorem}\label{FasTangles}
The $\aleph_0$-tangles of $G$ with respect to $S'$ are precisely the limits of the inverse system $\{\aF_X,\af_{X',X}\}$.
\end{theorem}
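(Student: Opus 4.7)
The plan is to exhibit mutually inverse maps $\Phi\colon\Theta'\to\aF$ and $\Psi\colon\aF\to\Theta'$, where $\Phi(\tau):=(\cF(\tau,X))_{X\in\cX}$ and $\Psi(\A{v})$ (for $\A{v}=(\cF_X)_{X\in\cX}$) is the orientation of $S'$ sending $(A,B)\in\vec S'$ to $\tau(\A{v})$ precisely when $\cC'\in\cF_{A\cap B}$, with $\cC'$ the set of components of $G-(A\cap B)$ whose vertex set lies in $B$. The compatibility with the bonding maps needed to make $\Phi$ land in $\aF$ is exactly Lemma~\ref{tanglesInduceF}, so the entire task splits into four pieces: (a) $\cF(\tau,X)\in\aF_X$, (b) $\Psi(\A{v})$ is a well-defined orientation of $S'$, (c) $\Psi(\A{v})$ is consistent and avoids $\Tau_{<\aleph_0}$, and (d) $\Phi$ and $\Psi$ are mutually inverse.

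For (a), fix $\tau\in\Theta'$ and $X\in\cX$. Consistency of $\tau$ on $\vec S'$ immediately rules out two distinct $C,C'\in\cC_X$ with $\{C\},\{C'\}\in\cF(\tau,X)$ and two distinct $Y,Y'\in\crit(X)$ with $\cC_X(Y),\cC_X(Y')\in\cF(\tau,X)$, because the two separations $s_{X\to C},s_{X\to C'}$ (resp.\ the two level-separations) are comparable under the consistency relation. So $\cF(\tau,X)$ is a candidate for $c_X(C_*)$ (if some $\{C_*\}$ is accepted) or $\cF_X(Y_*)$ (if some $\cC_X(Y_*)$ is). The remaining case to exclude is that \emph{neither} a singleton nor an $\cC_X(Y)$ is accepted: then, since $\crit(X)$ is finite, the separations $\{\vec r_Y:Y\in\crit(X)\}$ (where $\vec r_Y$ corresponds to the bipartition $\{\cC_X(Y),\cC_X-\cC_X(Y)\}$ oriented away from $\cC_X(Y)$) together with the separation $\vec r_+$ for $\{\cC_X^-,\bigcup_Y\cC_X(Y)\}$ oriented toward $\bigcup_Y\cC_X(Y)$ form a finite star in $\tau$, and a direct computation shows its interior reduces to $X$ (the vertex sets $V[\cC_X^-]$ and $V[\bigcup_Y\cC_X(Y)]$ being disjoint). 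This contradicts $\tau\in\Theta'$. The upward closure needed to identify $\cF(\tau,X)$ with $c_X(C_*)$ resp.\ $\cF_X(Y_*)$ is again supplied by consistency: if $\cC\supseteq\cC'\in\cF(\tau,X)$ and the corresponding separation lies in $S'$, then $\cev{s_{\cC'}}<\cev{s_{\cC}}$ would force $(B,A)<(C,D)$ in $\tau$.

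For (b), well-definedness follows since $\{A,B\}\in S'$ means no $Y\in\crit(X)$ splits $\cC_X(Y)$ into two infinite pieces; consequently, if $\cF_X=\cF_X(Y)$ exactly one of $\cC,\cC'$ contains a cofinite subset of $\cC_X(Y)$, and if $\cF_X=c_X(C)$ exactly one of $\cC,\cC'$ contains $C$. For (c), consistency reduces to the following contradiction: if $(A,B),(C,D)\in\tau$ with $B\subseteq C$ and $A\supseteq D$ distinct, pass to the common separator $X=(A\cap B)\cup(C\cap D)$; the lifted separations $\{A\cup X,B\cup X\}$ and $\{C\cup X,D\cup X\}$ lie in $S'$ (Lemma~\ref{FinSymDiffClosedS}) and both are witnessed by $\cF_X$ via the compatibility $\af_{X,A\cap B}(\cF_X)=\cF_{A\cap B}$ (and similarly for $C\cap D$); in the principal case this forces a component $E^*$ of $G-X$ with $V(E^*)\subseteq(B\cup X)\cap(D\cup X)=X$, which is impossible, and in the $\cF_X(Y)$ case both lifted big sides contain cofinitely many components of $\cC_X(Y)$, producing a common such component in $B\cap D$, contradicting $B\cap D\subseteq (A\cap B)\subseteq X$. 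Avoidance of $\Tau_{<\aleph_0}$: given $\sigma=\{(A_i,B_i)\}\subseteq\tau$ with finite interior, set $X=\bigcup_i(A_i\cap B_i)$ and replace each $(A_i,B_i)$ by its lift to separator $X$ (still in $S'$ and still accepted by $\cF_X$); then either (principal case) a single component of $G-X$ has vertex set in every $B_i\cup X$, forcing it into $\bigcap_i B_i$ and contradicting finiteness of the interior, or ($\cF_X(Y)$ case) all but finitely many components in $\cC_X(Y)$ lie in every $B_i\cup X$, again contradicting finite interior.

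Finally, (d) is a direct unfolding of the definitions using the definition of $\cF(\tau,X)$ and the fact that $\Phi(\tau)$ is determined coordinate-wise by which $\cC\subseteq\cC_X$ yield separations in $\tau$. The main obstacle throughout is (a): getting $\cF(\tau,X)$ into the small space $\aF_X$ forces us to use star-avoidance in an essential way, whereas consistency alone yields only "at most one accepted level/singleton"; the precise combinatorics of the star built out of all level-separations, whose interior must collapse to the finite set $X$, is where the proof earns its keep.
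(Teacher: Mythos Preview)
Your approach is genuinely different from the paper's. The paper does not construct $\Psi$ directly; instead, for surjectivity it pulls back through the already-established correspondences $\Theta\leftrightarrow\cU$ (Section~\ref{SummaryEndsAndTangles}) and $\cU/{\asymp}\simeq\aF$ (Proposition~\ref{UltrafiltersEconomic}): given $\A{v}\in\aF$ it finds an $\aleph_0$-tangle $\tau$ of $G$ with respect to the \emph{full} system $S$ and simply restricts it to $\vec{S'}$. That route is very short because all tangle axioms are inherited from $\tau$ for free. Your route, building the $S'$-tangle directly from $\A{v}$ and checking consistency and star-avoidance by hand, is self-contained and does not appeal to the earlier theory of ultrafilter tangles; in exchange it requires the case analyses in (a)--(c).

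Two points in your sketch need patching. First, in (a) you assert $\vec r_+\in\tau$ without argument; this is not automatic from ``no singleton and no $\cC_X(Y)$ is accepted''. The clean fix is to replace $\vec r_+$ by the separations $s_{C\to X}$ for $C\in\cC_X^-$ (finitely many, all in $\tau$ since no singleton is accepted, and all in $S'$); together with the $\vec r_Y$ these already form a finite star with interior $X$. Moreover, in the case ``some $\cC_X(Y_*)$ is accepted but no singleton is'', identifying $\cF(\tau,X)$ with $\cF_X(Y_*)$ needs more than upward closure via consistency: you must also show every \emph{cofinite} $\cD\subseteq\cC_X(Y_*)$ is accepted, which is a star-avoidance argument (take $\{s_{C\to X}:C\in\cC_X(Y_*)\setminus\cD\}$ together with the separation toward $\cC_X(Y_*)$ and the separation away from $\cD$; the interior collapses to $X$).

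Second, in (c) for avoidance, obtaining $V(C^*)\subseteq\bigcap_i B_i$ with $\bigcap_i B_i$ finite is not yet a contradiction, since $C^*$ may well be a finite component of $G-X$. Enlarge $X$ to include the finite interior $\bigcap_i B_i$ before lifting; then $V(C^*)\subseteq\bigcap_i B_i\subseteq X$ while $V(C^*)\cap X=\emptyset$, which is impossible. With these fixes your argument goes through.
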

\begin{proof}
Define $\Phi\colon \Theta'\to\aF$ by setting $\Psi(\tau)=(\cF(\tau,X)\,|\,X\in\cX)$ for every $\tau\in\Theta'$. By Lemma~\ref{tanglesInduceF} this is well-defined, and it is easy to see that $\Psi$ is injective. It remains to show that $\Psi$ is surjective. For this, let any $\A{v}=(\cF_X\,|\,X\in\cX)\in\aF$ be given.

If $\cF_X$ is principal for all $X$, then $\A{v}$ comes from an end $\omega$ of $G$. Furthermore, $\omega$ induces an $\aleph_0$-tangle $\tau_\omega$ of $G$ with respect to $S$, and $\tau_\omega$ induces the element $\tau':=\tau_\omega\cap \vec{S'}$ of $\Theta'$. Clearly, $\Psi$ sends $\tau'$ to $\A{v}$.

Otherwise $\cX_{\A{v}}$ is non-empty. By Proposition~\ref{UltrafiltersEconomic}, the map $\Psi\colon \cU/{\asymp}\to\aF$ is bijective, so there is some $\upsilon\in \Upsilon$ with $\Psi([\upsilon]_\asymp)=\A{v}$. Then $\upsilon$ corresponds to the $\aleph_0$-tangle $\tau_\upsilon$ of $G$ with respect to $S$, and $\Psi$ sends $\tau_\upsilon\cap \vec{S'}$ to $\A{v}$.
\end{proof}

\subsection{$\FG$ as inverse limit}\label{FGinvLimsubsec}

In section~\ref{tangleInvLim} we created an inverse system $\{G_\gamma,f_{\gamma',\gamma},\Gamma\}$ whose inverse limit describes the tangle compactification in that there exists a natural homeomorphism between $\iL=\invLim (G_\gamma\,|\,\gamma\in\Gamma)$ and $\TC$. In this chapter we define a subset $\Delta$ of $\Gamma$, and we show that $\invLim (G_\gamma\,|\,\gamma\in\Delta)$ describes $\FG$.\\

For every $X\in\cX$ let\index{$P_X^-$}
\begin{align*}
P_X^-:=\{\{C\}\,|\,C\in\cC_X^-\}=\big\{\{C\}\,|\,C\in\cC_X\text{ and }N(C)\notin \crit(X)\big\}
\end{align*}
and let $\Delta\subseteq\Gamma$ be the set of all $(X,P)\in\Gamma$ with $P$ of the form\index{$\Delta$}
\begin{align}\label{LForm}
P_X^-\uplus\biguplus_{Y\in \crit(X)}P_Y
\end{align}
where each $P_Y$ is a cofinite partition of $\cC_X(Y)$.
Letting $\Delta$ inherit the partial ordering from $(\Gamma,\le)$ yields a directed poset:

\begin{lemma}
$(\Delta,\le\cap\Delta^2)$ is a directed poset.
\end{lemma}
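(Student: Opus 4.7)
The plan is to establish directedness; reflexivity, antisymmetry and transitivity are inherited from $(\Gamma,\le)$ via Lemma~\ref{directed}. Given $(X,P), (X',P') \in \Delta$, I would set $X'' := X \cup X' \in \cX$ and, as in the proof of Lemma~\ref{directed}, form the pullbacks $\tilde P := \{\phi_{X'',X}^{-1}(\cC) : \cC \in P\}\setminus\{\emptyset\}$ and $\tilde P'$ of $P$ and $P'$ to $\cC_{X''}$, together with their coarsest common refinement $Q$. Exactly as in Lemma~\ref{directed}, $p(X'',Q)$ already refines both $p(X,P)$ and $p(X',P')$, so any refinement of $Q$ will give the desired upper bound in $\Gamma$; the real task is to exhibit such a refinement of the form~(\ref{LForm}).

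My candidate is
\begin{align*}
P'' := \bigl\{\{C'\} : C' \in \cC_{X''}^-\bigr\} \uplus \biguplus_{Y \in \crit(X'')} \bigl\{ D \cap \cC_{X''}(Y) : D \in Q\bigr\}\setminus\{\emptyset\}.
\end{align*}
By construction this is a refinement of $Q$ in which every class is either a singleton in $\cC_{X''}^-$ or is contained in some $\cC_{X''}(Y)$ with $Y \in \crit(X'')$. The key step is then to verify that for every $Y \in \crit(X'')$ the trace $Q|_{\cC_{X''}(Y)}$ is a cofinite partition of $\cC_{X''}(Y)$; once this is shown, $P''$ lies in $\cP_{X''}$ and has the form~(\ref{LForm}), so $(X'',P'') \in \Delta$, while $(X,P),(X',P') \le (X'',P'')$ follows from $X,X' \subseteq X''$ together with the refinement chain $P'' \ge Q \ge \tilde P, \tilde P'$.

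I would prove the cofiniteness claim by a two-case argument on $Y$. Case $Y \subseteq X$: any $C' \in \cC_{X''}(Y)$ has no neighbour in $X'' \setminus X$ (since $N_{X''}(C')=Y \subseteq X$), so $C'$ is already a component of $G-X$ with $N_X(C')=Y$; hence $\cC_{X''}(Y) \subseteq \cC_X(Y)$, $Y \in \crit(X)$, and Observation~\ref{OneTimeUse0} yields that $\cC_{X''}(Y)$ is cofinite in $\cC_X(Y)$. Since $\phi_{X'',X}$ is the identity on $\cC_{X''}(Y)$, the pullback $\tilde P|_{\cC_{X''}(Y)}$ is simply $P_Y$ restricted to the cofinite subset $\cC_{X''}(Y)$; restricting a cofinite partition to a cofinite subset keeps at most one class infinite, so this trace is cofinite. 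Case $Y \not\subseteq X$: choose $v \in Y \cap (X' \setminus X)$; every $C' \in \cC_{X''}(Y)$ has $v$ as a neighbour and so lies in the unique component $C_v$ of $G-X$ through $v$, whence $\tilde P|_{\cC_{X''}(Y)} = \{\cC_{X''}(Y)\}$ is trivially cofinite. The symmetric reasoning handles $\tilde P'$, and because the coarsest common refinement of two cofinite partitions of the same set is again cofinite, $Q|_{\cC_{X''}(Y)}$ is cofinite as required.

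The principal obstacle is this cofiniteness analysis: the prescribed form~(\ref{LForm}) is rigid, so $Q$ does not land in $\Delta$-form automatically, and one really has to use Observation~\ref{OneTimeUse0} to control what happens inside each $\cC_{X''}(Y)$. Everything outside this step is the same bookkeeping as in Lemma~\ref{directed}.
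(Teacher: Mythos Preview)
Your proof is correct and ultimately produces the same upper bound as the paper's, but the organisation differs. The paper lifts $P$ and $Q$ \emph{directly} into $\Delta$-form at $Z=X\cup X'$: for each $K\in\crit(X)$ it sets $P_K'=\{\cC\cap\cC_Z:\cC\in P_K\}\setminus\{\emptyset\}$, for $K\in\crit(Z)\setminus\crit(X)$ it sets $P_K'=\{\cC_Z(K)\}$, assembles $P'=P_Z^-\uplus\biguplus P_K'$ (similarly $Q'$), and then takes the blockwise coarsest common refinement $R_K$ of $P_K'$ and $Q_K'$. You instead reuse Lemma~\ref{directed} verbatim to get a generic upper bound $(X'',Q)$ in $\Gamma$, and only afterwards slice $Q$ into $\Delta$-form and check cofiniteness via the two-case analysis on $Y$. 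Since $\tilde P|_{\cC_{X''}(Y)}=P_Y'$ in both cases (and likewise for $\tilde P'$), your $P''$ coincides with the paper's $R$; the two routes simply swap the order of ``refine'' and ``project to $\Delta$-form''. Your version has the mild advantage of invoking Lemma~\ref{directed} as a black box, while the paper's avoids forming the intermediate non-$\Delta$ partition.
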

\begin{proof}
Let any two $(X,P),(Y,Q)\in\Delta$ be given. 
Our task is to find some $(Z,R)\in\Gamma$ with $(X,P)\le (Z,R)$ and $(Y,Q)\le (Z,R)$.
Put $Z=X\cup Y$ and suppose that $P$ is of the form
\begin{align*}
P_X^-\uplus\biguplus_{K\in \crit(X)}P_K
\end{align*}
with $P_K$ some cofinite partition of $\cC_X(K)$ for every $K\in \crit(X)$.
 For every $K\in \crit(X)\subseteq \crit(Z)$ set
\begin{align*}
P_K'=\{\cC\cap\cC_Z\,|\,\cC\in P_K\}\setminus\{\emptyset\}
\end{align*}
(this is a cofinite partition of $\cC_Z(K)$)
and for every $K\in \crit(Z)\setminus \crit(X)$ put $P_K'=\{\cC_Z(K)\}$. Then let
\begin{align*}
P':=P_Z^-\uplus\biguplus_{K\in \crit(Z)}P_K'
\end{align*}
and note that $(Z,P')$ is an element of $\Delta$.
Define $Q'$ similarly. Then both $P'$ and $Q'$ include $P_Z^-$.
For every $K\in \crit(Z)$ choose $R_K$ to be the coarsest partition of $\cC_Z(K)$ refining both $P_K'$ and $Q_K'$, and note that $R_K$ is again cofinite. Let
\begin{align*}
R:=P_Z^-\uplus\biguplus_{K\in \crit(Z)}R_K
\end{align*}
Then $(Z,R)\ge (X,P),(Y,Q)$ holds as desired.
\end{proof}

If we recall the inverse system $\{G_\gamma,f_{\gamma',\gamma},\Gamma\}$ from section~\ref{tangleInvLim}, then obviously the inverse limit $\invLim (G_\gamma\,|\,\gamma\in\Delta)$ should describe $\FG$.
But the elements of $\Delta$ are rather complicated, while a much easier attempt seems possible: For every $X\in\cX$ write\index{$\A{P}_X$}
\begin{align*}
\A{P}_X=P_X^-\uplus\biguplus_{Y\in \crit(X)}\{\cC_X(Y)\}
\end{align*}
and consider the subset\index{$\Delta'$}
\begin{align*}
\Delta':=\{(X,\A{P}_X)\,|\,X\in\cX\}
\end{align*}
of $\Delta$ which is also directed since $(X,\A{P}_X)\le (X',\A{P}_{X'})$ holds for all $X\subseteq X'\in\cX$. Furthermore, we can show that

\begin{lemma}\label{cofinalDelta}
$\Delta'$ is cofinal in $\Delta$.
\end{lemma}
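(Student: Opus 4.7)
The plan is to enlarge $X$ by adjoining one vertex from each of the finitely many ``exceptional'' components appearing in the finite blocks of the $P_Y$, and then show that the resulting element of $\Delta'$ dominates $(X,P)$. Write $P = P_X^- \uplus \biguplus_{Y \in \crit(X)} P_Y$. Since $X$ is finite, $\crit(X) \subseteq 2^X$ is finite; and since each $P_Y$ is a cofinite partition of the infinite set $\cC_X(Y)$, it contains exactly one infinite block while the remaining finitely many blocks are finite sets of components. So the union $\cG_Y$ of the finite blocks of $P_Y$ is a finite subset of $\cC_X(Y)$, and $\cG := \bigcup_{Y \in \crit(X)} \cG_Y$ is a finite set of components of $G-X$. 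For each $C \in \cG$ I pick an arbitrary vertex $v_C \in V(C)$, set $V^* := \{v_C : C \in \cG\}$, and put $X' := X \cup V^* \in \cX$. The claim to prove is $(X,P) \le (X',\A{P}_{X'})$.

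Two elementary facts will drive the verification. First, for every $C \in \cG$, every component of $C - v_C$ is adjacent in $G$ to $v_C$: since $C$ is connected and $N(C) \subseteq X$, no piece of $C - v_C$ could be linked to the rest of $V(C)$ except through $v_C$ itself. Second, every component $C' \in \cC_X \setminus \cG$ satisfies $V(C') \cap X' = V(C') \cap V^* = \emptyset$, so $C'$ remains a component of $G - X'$ with the same neighbourhood $N(C') \subseteq X \subseteq X'$.

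The refinement $p(X',\A{P}_{X'}) \le p(X,P)$ is then checked on each of the three kinds of block of $p(X',\A{P}_{X'})$. Singletons $\{x\}$ with $x \in X$ are already blocks of $p(X,P)$, while singletons $\{v_C\}$ with $v_C \in V^*$ satisfy $\{v_C\} \subseteq V(C) \subseteq V[\cC]$ for the (finite) block $\cC \in P_{N(C)}$ containing $C$. Blocks $V(C')$ with $C' \in \cC_{X'}^-$ lie inside $V(C)$ for the unique $C \in \cC_X$ with $C' \subseteq C$, and $V(C)$ lies in a block of $p(X,P)$. The remaining and main case concerns blocks of the form $V[\cC_{X'}(Y')]$ with $Y' \in \crit(X')$.

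I expect this last case to be the main obstacle, and I would split it on whether $Y'$ meets $V^*$. If $Y' \subseteq X$, then any $D \in \cC_{X'}(Y')$ has no neighbour in $V^*$ and so, by the second basic fact, is itself a component of $G - X$ outside $\cG$ with neighbourhood $Y'$; conversely every such component reappears in $\cC_{X'}(Y')$. Thus $\cC_{X'}(Y') = \cC_X(Y') \setminus \cG_{Y'}$, and by construction this is exactly the (necessarily unique and existing) infinite block of $P_{Y'}$, so $V[\cC_{X'}(Y')]$ is itself a block of $p(X,P)$. If instead some $v_C \in Y' \cap V^*$, then every $D \in \cC_{X'}(Y')$ is adjacent to $v_C$; but the $G$-neighbours of $v_C$ lie in $N(C) \cup V(C) \subseteq X' \cup V(C)$, so $D$ meets $V(C)$, and $N(C) \subseteq X'$ together with connectedness of $D$ in $G - X'$ forces $V(D) \subseteq V(C)$. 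Hence $V[\cC_{X'}(Y')] \subseteq V(C) \subseteq V[\cC]$ for the block $\cC \in P_{N(C)}$ containing $C$. With these cases settled, $(X',\A{P}_{X'}) \in \Delta'$ lies above $(X,P)$ in $\Delta$, establishing cofinality.
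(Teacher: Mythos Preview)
Your proof is correct and follows exactly the same construction as the paper: pick one vertex from each component lying in a finite block of some $P_Y$, adjoin these to $X$ to form $X'$, and claim $(X',\A{P}_{X'})\ge(X,P)$. The only difference is that the paper asserts the refinement $p(X',\A{P}_{X'})\le p(X,P)$ without justification, whereas you actually verify it block by block; your case analysis (singletons, $\cC_{X'}^-$-blocks, and the two subcases for $\cC_{X'}(Y')$ depending on whether $Y'$ meets $V^*$) is sound.
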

\begin{proof}
Let any $(X,P)\in\Delta$ be given where $P$ is of the form~(\ref{LForm}). For every element $C$ of
\begin{align*}
\bigcup\{\cC\subseteq\cC_X\,|\,\exists Y\in \crit(X):\cC\in P_Y\text{ is finite}\}=:\cD\subseteq\cC_X
\end{align*}
pick a vertex $u(C)\in V(C)$. Set $X'=X\uplus\{u(C)\,|\,C\in\cD\}$ which is finite. Then $(X',\A{P}_{X'})\ge (X,P)$ holds as desired.
\end{proof}
Hence if $\invLim (G_\gamma\,|\,\gamma\in\Delta)$ describes $\FG$, then so should $\invLim (G_\gamma\,|\,\gamma\in\Delta')$ by Lemma~\ref{BibleInvLimCofinal}. But given $X\in\cX$ and $C\in\cC_X\setminus\cC_X^-$ it is not straightforward to construct a $\gamma\in\Delta'$ together with an open set in $G_\gamma$ corresponding to $\cO_{\FG}(X,\{C\})$ (an approach similar to the idea of the proof of Lemma~\ref{cofinalDelta} works to derive $\gamma$ from $(X,\A{P}_X)$). Hence if we want to put our inverse limit description to use, the superior simplicity of $\Delta'$ comes at the cost of `quality of life'.
Since the whole purpose of this inverse limit description is to increase our `quality of life', we take $\Delta$ over $\Delta'$, keeping in mind that $\Delta'$ always is an option by Lemma~\ref{BibleInvLimCofinal}.

Thus we set\index{$\ECO$}
\begin{align*}
\ECO=\invLim(G_\gamma\,|\,\gamma\in\Delta).
\end{align*}
It remains to show $\ECO\simeq\FG$. Since the definition of the $\af_{X',X}$ deviates from that of the $f_{X',X}$ too much, we cannot simply claim that mimicking the proof of Theorem~\ref{G*} will do, but neither do we wish to redo all of the work. 
Since it is clear that a homeomorphism defined similarly to the one constructed in the proof of Theorem~\ref{G*} should do, we do not lose any insights when we use Lemma~\ref{InvLimSurjEmbOnto} over an explicit construction.

For this, we define a continuous surjection $\sigma_\gamma\colon \FG\to G_\gamma$ for every $\gamma=(X,P)\in\Delta$ as follows: We let $\sigma_\gamma$ map every vertex to the partition class of $p(X,P)$ containing it. If $j$ is an inner edge point of some edge $e$ of $G$, then if $e$ is an edge of $G_\gamma$ we let $\sigma_\gamma(j):=j$, and otherwise we let $\sigma_\gamma$ map $j$ to the non-singleton partition class of $p(X,P)$ containing the endvertices of $e$. 
Finally if $\A{v}=(\cF_X\,|\,X\in\cX)$ is an element of $\aF$ we let $\sigma_\gamma$ assign $V[\cC]$ to $\A{v}$ where $\cC$ is the unique element of $P\cap\cF_X$.

If $O$ is open in $H_\eta$ for some $\eta\in\Delta$, then we denote by $\cO_{\ECO}(O,\eta)$\index{$\cO_{\ECO}(O,\eta)$} the set $\ECO\cap\prod_{\gamma\in\Delta} O_\gamma$ with $O_\gamma=G_\gamma$ for all $\gamma\in\Gamma-\{\eta\}$ and $O_\eta=O$.

\begin{lemma}\label{ECOsigmaCOMP}
The maps $\sigma_\gamma$ are compatible.
\end{lemma}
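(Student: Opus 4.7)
Fix $\gamma=(X,P)\le\gamma'=(X',P')$ in $\Delta$ and a point $x\in\FG$; the goal is to verify $f_{\gamma',\gamma}(\sigma_{\gamma'}(x))=\sigma_\gamma(x)$, which splits into three cases according to whether $x$ is a vertex, an inner edge point, or an element of $\aF$. The first two cases are bookkeeping: for a vertex $u$, both sides compute the partition class of $p(X,P)$ that contains $u$, directly from the definition of $f_{\gamma',\gamma}$ on vertices and the fact that $p(X',P')$ refines $p(X,P)$; for an inner edge point $j$ of an edge $e=uv$, one distinguishes according to whether $e$ is an edge of $G_{\gamma'}$ (and of $G_\gamma$), in which case $f_{\gamma',\gamma}$ acts as the identity on $\mathring e$, versus the situations where $e$ has been absorbed into a dummy vertex at one or both levels; in every subcase the definitions match tautologically.

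The substantial case is $x=\A{v}=(\cF_Y\,|\,Y\in\cX)\in\aF$. First I would check that for every $(Y,Q)\in\Delta$ the intersection $Q\cap\cF_Y$ is a singleton, which is what makes $\sigma_{(Y,Q)}(\A{v})$ well-defined: if $\cF_Y=c_Y(C)$ is principal then $Q\cap\cF_Y$ is the class of $Q$ containing $C$; if $\cF_Y=\cF_Y(Z)$ for some $Z\in\crit(Y)$ then by the shape~(\ref{LForm}) of $Q$ and since $Q_Z$ is a cofinite partition of the infinite set $\cC_Y(Z)$, exactly one class of $Q_Z$ is infinite, and it is the unique element of $Q\cap\cF_Y$. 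Writing $\cC\in P\cap\cF_X$ and $\cC'\in P'\cap\cF_{X'}$ for these unique classes, the task reduces to showing that the partition class of $p(X,P)$ including $V[\cC']$ is exactly $V[\cC]$, i.e., $V[\cC']\subseteq V[\cC]$ (and hence equals $V[\cC]$ by definition of $f_{\gamma',\gamma}$ on dummy vertices).

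For this final step I would use the defining compatibility $\af_{X',X}(\cF_{X'})=\cF_X$ and split into subcases along the two-case definition of $\af_{X',X}$. If $\cF_{X'}=c_{X'}(C')$, then $\cF_X=c_X(\phi_{X',X}(C'))$, so $\cC'$ is the class of $P'$ containing $C'$ and $\cC$ is the class of $P$ containing $\phi_{X',X}(C')$; as $V(C')\subseteq V(\phi_{X',X}(C'))\subseteq V[\cC]$, the refinement $p(X',P')\ge p(X,P)$ forces the entire class $V[\cC']$ into $V[\cC]$. If instead $\cF_{X'}=\cF_{X'}(Z)$ for some $Z\in\crit(X')$, one handles separately the alternatives $Z\subseteq X$ (in which case $\cF_X=\cF_X(Z)$, and $\cC'$ is cofinite in $\cC_{X'}(Z)\subseteq\cC_X(Z)$, while $\cC$ is the unique infinite class of the cofinite partition $P_Z$ of $\cC_X(Z)$, whence $\cC'\setminus X$ lies almost entirely in $\cC$ and so $V[\cC']\subseteq V[\cC]$ by cofiniteness together with the refinement) and $Z\not\subseteq X$ (in which case $\cF_X=c_X(C)$ for the unique $C\in\cC_X$ with $V[\cC_{X'}(Z)]\subseteq V(C)$, and $V[\cC']\subseteq V[\cC_{X'}(Z)]\subseteq V(C)\subseteq V[\cC]$ for $\cC$ the class of $P$ containing $C$). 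The main obstacle is managing the bookkeeping in this last subcase, keeping track of how the cofinite partition $P_Z$ at level $X$ interacts with the refinement passing to level $X'$, but once the singleton $\cC\in P\cap\cF_X$ is identified via the explicit description of $\af_{X',X}$, the inclusion $V[\cC']\subseteq V[\cC]$ is immediate.
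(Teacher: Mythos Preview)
Your proposal is correct and follows essentially the same route as the paper: reduce to showing $V[\cC']\subseteq V[\cC]$ for the unique classes $\cC'\in P'\cap\cF_{X'}$ and $\cC\in P\cap\cF_X$, then split according to whether $\cF_{X'}$ is principal or of the form $\cF_{X'}(Z)$, and in the latter case according to whether $Z\subseteq X$. You spell out the vertex and inner-edge-point cases and the singleton check for $Q\cap\cF_Y$ more explicitly than the paper (which dismisses $G\cup\Omega$ as clear), but the substantive argument is the same; the phrase ``$\cC'\setminus X$'' is a minor notational slip, since $\cC'$ already consists of components of $G-X$ once $Z\subseteq X$.
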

\begin{proof}
Let $\gamma'>\gamma$ be elements of $\Delta$, and let $\xi\in\FG$ be given; we have to show
\begin{align}\label{EQcompatible}
(f_{\gamma',\gamma}\circ\sigma_{\gamma'})(\xi)=\sigma_\gamma(\xi).
\end{align}
This is clear for $\xi\in G\cup\Omega$, so we may assume that that $\xi=(\cF_Z\,|\,Z\in\cX)$ is an element of $\aF\setminus\Omega$.
Write $(X',Q)=\gamma'$, $(X,P)=\gamma$, and let $\cC'$ and $\cC$ be the unique elements of $Q\cap\cF_{X'}$ and $P\cap\cF_X$, respectively. 
By definition of $f_{\gamma',\gamma}$ we know that in order to verify (\ref{EQcompatible}) it suffices to show 
\begin{align}\label{EQcompsinc}
V[\cC']\subseteq V[\cC].
\end{align}
For this we check several cases:

First suppose that $\cF_{X'}=\cF_{X'}(Y)$ holds for some $Y\in\crit(X')$. Then $Q$ is of the form
\begin{align*}
Q_{X'}^-\uplus\biguplus_{K\in \crit(X')}Q_K
\end{align*}
where the $Q_K$ are cofinite partitions of $\cC_{X'}(K)$ and $\cC'$ is the unique infinite element of $Q_Y$. 
If $Y\not\subseteq X$ holds, then $\cF_X=\A{f}_{X',X}(\cF_{X'})$ is of the form $c_X(C)$ where $C$ is the unique component of $G-X$ including $\bigcup\cC_{X'}(Y)$.
Then $\cC'\subseteq\cC_{X'}(Y)$ and $\cC\in c_X(C)$ yield
\begin{align*}
V[\cC']\subseteq V[\cC_{X'}(Y)]\subseteq V(C)\subseteq V[\cC]
\end{align*}
as desired.
Otherwise $Y\subseteq X$ implies $\cF_X=\cF_X(Y)$. We may assume that $P$ is of the form~(\ref{LForm}). In particular, $\cC$ is the unique infinite element of $P_Y$.
Since $\cC'$ is cofinite in $\cC_{X'}(Y)$ and $\cC_{X'}(Y)$ is cofinite in $\cC_X(Y)$ (Observation~\ref{OneTimeUse0}) we know that $\cC'$ is cofinite in $\cC_X(Y)$.
Now some $\cD\in P_Y$ satisfies $V[\cD]\supseteq V[\cC']$ due to $\gamma<\gamma'$, and since $\cC'$ is cofinite in $\cC_X(Y)$, the only possibility for $\cD$ is $\cD=\cC$ as desired.

For the second case suppose that $\cF_{X'}=c_{X'}(C')$ holds for some component $C'$ of $G-X'$.
In particular $C'$ is in $\cC'$ due to $\cC'\in\cF_{X'}$. 
Let $C$ be the unique component of $G-X$ including $C'$. 
Then the definition of $\A{f}_{X',X}$ yields $\cF_X=c_X(C)$, and
 $\cC\in\cF_X$ implies $C\in\cC$. Let $\cD\in P$ be the partition class with $V[\cD]\supseteq V[\cC']$ (which exists due to $\gamma<\gamma'$). In particular, $\cD$ must contain $C$ since $\cC'$ does, so $\cD\in\cF_X$ follows. Assume for a contradiction that $\cD$ and $\cC$ are distinct: then $\cC$ and $\cD$ are disjoint partition classes of $P$, so $C\in\cD$ implies $C\notin\cC$ contradicting $\cC\in\cF_X=c_X(C)$. Hence $\cD=\cC$ yields (\ref{EQcompsinc}) as desired.
\end{proof}

\begin{lemma}\label{ECOsigmaCTS}
The maps $\sigma_\gamma$ are continuous.\qed
\end{lemma}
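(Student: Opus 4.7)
Fix $\gamma=(X,P)\in\Delta$. For every $\xi\in\FG$ and every basic open neighbourhood $W$ of $\sigma_\gamma(\xi)$ in $G_\gamma$ we must produce an open neighbourhood $U$ of $\xi$ in $\FG$ with $\sigma_\gamma[U]\subseteq W$. The key structural fact we will use repeatedly is that each non-singleton class $\cC\in P$ has $V[\cC]\subseteq V(G-X)$, so no edge of $G$ runs between distinct non-singleton classes of $p(X,P)$; consequently every non-loop edge at a dummy vertex $d=V[\cC]$ of $G_\gamma$ lies in $\mathring{E}(X,d)$, and the basic neighbourhoods $W=(\mathring{E}(X,d)\cup\{d\})\setminus\bigcup_{e\in F}(x_e,j_e]$ really do faithfully describe the local topology at $d$.

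The cases $\xi\in\mathring{E}$ and $\xi\in X$ are immediate: a sub-interval of an edge around an inner edge point, respectively a small star $\cO_G(v,\epsilon)$ at a vertex $v\in X$ (all of whose incident edges in $G$ are cross-edges of $p(X,P)$), produces the required $U$. The non-trivial vertex case $\xi=v\in V\setminus X$ lies in some class $\cC\in P$ with $\sigma_\gamma(v)=d=V[\cC]$: every edge at $v$ in $G$ either goes to $X$ (a cross-edge whose inner points stay in $\mathring{E}(X,d)$ and are fixed by $\sigma_\gamma$) or stays inside the component of $v$ in $G-X$ (its inner points collapse to $d$). Shrinking a star $\cO_G(v,\epsilon)$ to dodge the finitely many forbidden intervals $(x_e,j_e]$, all located near their $X$-endpoints, gives the required $U$.

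The case $\xi=\A{v}=(\cF_Z)_{Z\in\cX}\in\aF$ carries the main work. Let $\cC$ denote the unique element of $P\cap\cF_X$ and put $d=V[\cC]$. Set $Z:=X\cup\{y_e\,|\,e\in F\}$, enlarged by $X_{\A{v}}$ when $\A{v}\in\aF\setminus\Omega$; here $y_e\in V[\cC]$ is the non-$X$ endpoint of $e$. Take $U=\cO_{\FG}(Z,\cD)$ for a $\cD\in\aC_Z\cap\cF_Z$ chosen inside $\cC$: a singleton $\{D\}$ with $D\subseteq\bigcup\cC$ when $\cF_Z$ is principal, and $\cC\cap\cC_Z(X_{\A{v}})$ (cofinite in $\cC_Z(X_{\A{v}})$ by Observation~\ref{OneTimeUse0}) when $\cF_Z=\cF_Z(X_{\A{v}})$ is non-principal. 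Then every vertex of $\bigcup\cD$ maps to $d$; every edge in $\mathring{E}(Z,\bigcup\cD)$ either has its $Z$-endpoint in $X$ (so its inner points lie in $\mathring{E}(X,d)$ and are fixed by $\sigma_\gamma$) or in $Z\setminus X$ (so both endpoints sit in $d$ and its inner points collapse to $d$); and no edge of $F$ appears in $\mathring{E}(Z,\bigcup\cD)$, since $y_e\in Z$ blocks $e$ from reaching $\bigcup\cD$.

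The main obstacle is the $\aF$-part: every $\A{v}'=(\cF'_Z)_{Z\in\cX}\in\cO_{\aF}(Z,\cD)$ must also satisfy $\sigma_\gamma(\A{v}')=d$, equivalently $\cC\in P\cap\af_{Z,X}(\cF'_Z)$. This is precisely where the bonding maps $\af_{Z,X}$ have to be unpacked along their two-case definition. If $\cF'_Z=c_Z(C)$ is principal with $C\in\cD$, then $\af_{Z,X}(\cF'_Z)=c_X(\phi_{Z,X}(C))$ and $\phi_{Z,X}(C)\in\cC$ by the construction of $\cD$. Otherwise $\cD\subseteq\cC_Z(X_{\A{v}})$ together with the pairwise disjointness of the sets $\cC_Z(Y')$ for $Y'\in\crit(Z)$ forces $\cF'_Z=\cF_Z(X_{\A{v}})$, and then either $X_{\A{v}}\subseteq X$ yields $\af_{Z,X}(\cF'_Z)=\cF_X(X_{\A{v}})$, whose unique representative in $P$ is exactly the (cofinite) class $\cC$, or $X_{\A{v}}\not\subseteq X$ yields $\af_{Z,X}(\cF'_Z)=c_X(C_0)$ with $C_0\in\cC$. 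In every sub-case $\sigma_\gamma(\A{v}')=d\in W$, completing the verification.
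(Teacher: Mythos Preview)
The paper omits the proof entirely (the lemma carries only a \qed), treating the verification as routine case-checking; your argument is the natural elaboration of this and is essentially correct.

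One notational slip is worth flagging. In the non-principal sub-case with $X_{\A{v}}\not\subseteq X$, the set $\cC\cap\cC_Z(X_{\A{v}})$ is literally empty: every component in $\cC_Z(X_{\A{v}})$ has neighbourhood $X_{\A{v}}\not\subseteq X$ and hence cannot be a component of $G-X$, so it never lies in $\cC\subseteq\cC_X$. What you actually need---and what your final paragraph correctly exploits when you write ``$X_{\A{v}}\not\subseteq X$ yields $\af_{Z,X}(\cF'_Z)=c_X(C_0)$ with $C_0\in\cC$''---is $\cD:=\phi_{Z,X}^{-1}(\cC)\cap\cC_Z(X_{\A{v}})$; in this sub-case that set equals all of $\cC_Z(X_{\A{v}})$, which is trivially cofinite in itself and hence lies in both $\aC_Z$ and $\cF_Z(X_{\A{v}})$. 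With this correction the argument goes through cleanly.
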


\begin{theorem}\label{FGinvLim}
For every graph $G$ we have $\FG\simeq\ECO$.
\end{theorem}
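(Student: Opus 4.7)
My plan is to obtain a continuous surjection $\sigma \colon \FG \to \ECO$ via Lemma~\ref{InvLimSurjEmbOnto}, and then to verify directly that $\sigma$ is a bijection and an open map. The hypotheses of Lemma~\ref{InvLimSurjEmbOnto} are in place: $\FG$ is compact by Theorem~\ref{FGcompHDetc}; every $G_\gamma$ is T$_1$ by design (the finite sets $F$ in the dummy-vertex bases were included precisely for this); the $\sigma_\gamma$ are compatible by Lemma~\ref{ECOsigmaCOMP} and continuous by Lemma~\ref{ECOsigmaCTS}. To see that each $\sigma_\gamma$ is surjective, write $\gamma=(X,P)\in\Delta$: regular vertices and inner edge points of $G_\gamma$ are fixed by $\sigma_\gamma$, and each dummy vertex $V[\cC]$ with $\cC\in P$ is hit by any vertex of $\bigcup\cC$. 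The lemma then supplies the continuous surjection $\sigma\colon x\mapsto(\sigma_\gamma(x)\,|\,\gamma\in\Delta)$.

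For injectivity, given distinct $x,y\in\FG$ one exhibits $\gamma\in\Delta$ with $\sigma_\gamma(x)\neq\sigma_\gamma(y)$. If at least one of $x,y$ lies in $\CG$, a routine choice of $\eta=(X,\A{P}_X)$ with $X$ large enough separates them. The interesting case is $x,y\in\aF$ with projections $\cF_X\neq\cF_X'$ on some $X$: if $\cF_X=\cF_X(Y)$ and $\cF_X'=\cF_X(Y')$ with $Y\neq Y'\in\crit(X)$, then $\A{P}_X$ distinguishes them since $\cC_X(Y)\cap\cC_X(Y')=\emptyset$; otherwise $\A{P}_X$ already separates them unless both projections pick the same class $\cC_X(Y)$, in which case one refines $\A{P}_X$ by splitting that class into $\{C\}$ and $\cC_X(Y)\setminus\{C\}$ (still a cofinite partition, hence in $\Delta$) to separate them.

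The main obstacle is showing that $\sigma$ is open, since $\ECO$ is in general not Hausdorff (otherwise so would be the homeomorphic $\FG$, contradicting Corollary~\ref{FGHausdorffIff} whenever $G$ is not locally finite), so the compact-to-Hausdorff shortcut fails. I verify openness on the basis $\aB$ of $\FG$ by exhibiting for each $\cO_{\FG}(X,\cC)$ a witness $\eta\in\Delta$ and a basic open set $W$ in $G_\eta$ with $\sigma[\cO_{\FG}(X,\cC)]=\cO_{\ECO}(W,\eta)$. For $\cC=\{C\}$, choose $\eta=(X,P)$ where $P$ agrees with $\A{P}_X$ except that the class of $C$ is split off as $\{C\}$ (so that $\{C\}\in P$), and take $W=\mathring{E}(X,V(C))\cup\{V(C)\}$ (empty $F$). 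For $\cC$ cofinite in $\cC_X(Y)$, choose $\eta=(X,P)$ where $P_Y$ consists of $\cC$ together with a singleton class for each component in $\cC_X(Y)\setminus\cC$ while $P_K=\{\cC_X(K)\}$ for $K\in\crit(X)\setminus\{Y\}$, and take $W=\mathring{E}(X,V[\cC])\cup\{V[\cC]\}$. A case distinction on the elements of $\FG$ then shows $\sigma_\eta^{-1}[W]=\cO_{\FG}(X,\cC)$, using Proposition~\ref{FandY} to identify the $\aF$-points with $\sigma_\eta$-image $V[\cC]$ as exactly $\cO_{\aF}(X,\cC)$. Since $\sigma^{-1}[\cO_{\ECO}(W,\eta)]=\sigma_\eta^{-1}[W]$, surjectivity of $\sigma$ converts this preimage equation into the claimed image equation, completing the proof.
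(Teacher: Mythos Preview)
Your proposal is correct and follows essentially the same route as the paper: define $\sigma$ via the $\sigma_\gamma$, use Lemma~\ref{InvLimSurjEmbOnto} for continuity and surjectivity, check injectivity, and then verify openness by exhibiting for each basic open set $\cO_{\FG}(X,\cC)\in\aB$ an index $\eta\in\Delta$ and a basic open $W\subseteq G_\eta$ with $\sigma_\eta^{-1}[W]=\cO_{\FG}(X,\cC)$. The paper treats injectivity as ``clear'' where you spell it out, and it also handles the 1-complex basic open sets explicitly (which you implicitly subsume under routine), but the argument is otherwise identical.
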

\begin{proof}
Consider the map
\begin{align*}
\Phi\colon \FG&\to\invLim(G_\gamma\,|\,\gamma\in\Delta)\\
\xi&\mapsto(\sigma_\gamma(\xi)\,|\,\gamma\in\Delta)
\end{align*}
which clearly is injective.
By Theorem~\ref{FGcompHDetc} we know that $\FG$ is compact.
Since the $\sigma_\gamma\rest G$ are surjective, so are the $\sigma_\gamma$.
Hence by Lemmas~\ref{ECOsigmaCOMP}~and~\ref{ECOsigmaCTS} the $\sigma_\gamma$ form a compatible system of continuous surjections, and applying Lemma~\ref{InvLimSurjEmbOnto} yields that $\Phi$ is also a continuous surjection.
It remains to verify that $\Phi^{-1}$ is continuous. For this, let any point $x$ of $\FG$ be given together with some basic open neighbourhood $O$ of $x$.

If $x$ is in $G$ we choose some $X\in\cX$ such that $x$ is contained in the 1-complex of $G[X]$, and set $\eta=(X,\A{P}_X)$. Then $O$ is also a basic open neighbourhood of $x$ in $G_\eta$, so $\cO_{\ECO}(O,\eta)=\Phi^{-1}[O]$ holds.

Otherwise $x$ is an element of $\aF$ and we check two subcases. First, if $O$ is of the form $\cO_{\FG}(X,\{C\})$ for some $X\in\cX$ and $C\in\cC_X$, then we let $P$ be a partition of $\cC_X$ such that $\{C\}$ is a singleton partition class of $P$ and $(X,P)\in\Delta$.
Next, we consider the basic open neighbourhood $O':=\mathring{E}(X,C)\cup\{V(C)\}$ of the dummy vertex $V(C)$ in $G_\eta$. 
Clearly, $\cO_{\ECO}(O',\eta)=\Phi^{-1}[O]$ holds as desired. 
For the second subcase suppose that $O$ is of the form $\cO_{\FG}(X,\cC)$ for some $X\in\cX$ with $\cC$ a cofinite subset of $\cC_X(K)$ for some $K\in \crit(X)$. 
Then we let $P$ be a partition of $\cC_X$ such that $\cC$ is a partition class of $P$ and $(X,P)\in\Delta$.
Considering the basic open neighbourhood $O':=\mathring{E}(X,\bigcup\cC)\cup\{V[\cC]\}$ of the dummy vertex $V[\cC]$ in $G_\eta$ yields $\cO_{\ECO}(O',\eta)=\Phi^{-1}[O]$ as expected. Thus $\Phi^{-1}$ is continuous.
\end{proof}

\begin{corollary}
For every infinite graph $G$ we have 
\begin{align*}
\FG\simeq\ECO\simeq\invLim (G_\gamma\,|\,\gamma\in\Delta').
\end{align*}
\end{corollary}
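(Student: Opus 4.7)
The plan is to obtain both homeomorphisms by directly combining three results already proved in the excerpt, without any new construction. The first homeomorphism $\FG \simeq \ECO$ is nothing but the statement of Theorem~\ref{FGinvLim}, so I would simply cite it.

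For the second homeomorphism, $\ECO \simeq \invLim (G_\gamma\,|\,\gamma\in\Delta')$, I would invoke Lemma~\ref{BibleInvLimCofinal}, which says that passing to a cofinal subsystem of an inverse system of compact spaces does not change the inverse limit up to homeomorphism. To apply it I need two ingredients: first, that $\Delta'$ is cofinal in $\Delta$, which is exactly the content of Lemma~\ref{cofinalDelta}; and second, that each space $G_\gamma$ is compact, which was established in the setup of the inverse system $\{G_\gamma,f_{\gamma',\gamma},\Gamma\}$ in Chapter~\ref{tangleInvLim} (the spaces $G_\gamma$ are compact by construction of their basis, since the partition $p(X,P)$ is finite and $G/p(X,P)$ carries a suitable topology). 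Lemma~\ref{BibleInvLimCofinal} then yields the homeomorphism via restriction of coordinates.

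There is essentially no obstacle here; the only thing worth being careful about is to note that $\Delta$ inherits its partial order from $(\Gamma,\le)$, so that the statement of Lemma~\ref{BibleInvLimCofinal}, applied to the directed poset $\Delta$ and its cofinal subset $\Delta'$, is valid verbatim. Chaining the two homeomorphisms then gives the desired conclusion $\FG\simeq\ECO\simeq\invLim (G_\gamma\,|\,\gamma\in\Delta')$.
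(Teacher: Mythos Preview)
Your proposal is correct and matches the paper's own proof essentially verbatim: the paper simply writes ``Combine Theorem~\ref{FGinvLim} with Lemmas~\ref{BibleInvLimCofinal}~and~\ref{cofinalDelta}.'' Your additional remark that the compactness of the $G_\gamma$ is needed to invoke Lemma~\ref{BibleInvLimCofinal} is a useful clarification but does not constitute a different approach.
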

\begin{proof}
Combine Theorem~\ref{FGinvLim} with Lemmas~\ref{BibleInvLimCofinal}~and~\ref{cofinalDelta}.
\end{proof}

\subsection{Outlook}

Of course, finding meaningful graph theoretical reasons to consider $\cC$-systems or $\aF\simeq\cU/{\asymp}$ is of highest interest.
Since we have
\begin{align*}
\invLim (G_\gamma\,|\,\gamma\in\Delta')\simeq\invLim (G_\gamma\,|\,\gamma\in\Delta)\simeq\FG\simeq\TC/{\asymp}
\end{align*}
it might be possible to show that the $\aleph_0$-tangles of $G$ w.r.t $S'$ induce precisely the $\aleph_0$-tangles of $G$ w.r.t $S''$, where $S''$ is the set of all $\{A,B\}\in S$ respecting the finite partition $\A{P}_{A\cap B}$ of $\cC_{A\cap B}$.

Finally, it remains to compare $\aF$ and $\cU$ in the wild. Since $\cU$ is so much bigger than $\aF$, it might be easier to get some generalisation of thins sums to work in a modified version of $\TC$ than in a modified version of $\FG$. On the other hand, we might run into dependencies on certain models of ZFC.

\newpage
\section{Auxiliary edges and tree-packing}\label{AuxArcTP}

\begin{center}
\textit{In this chapter, $G$ is always assumed to be connected.}
\end{center}

\subsection{Introduction}

By now, the topological cycle space of locally finite graphs has been the subject of extensive studies, but several problems are known to occur in straightforward generalisations to arbitrary infinite graphs. For example consider the heavy edge sets in the following three graphs:

\begin{figure}[H]
\centering
\includegraphics[scale=0.85]{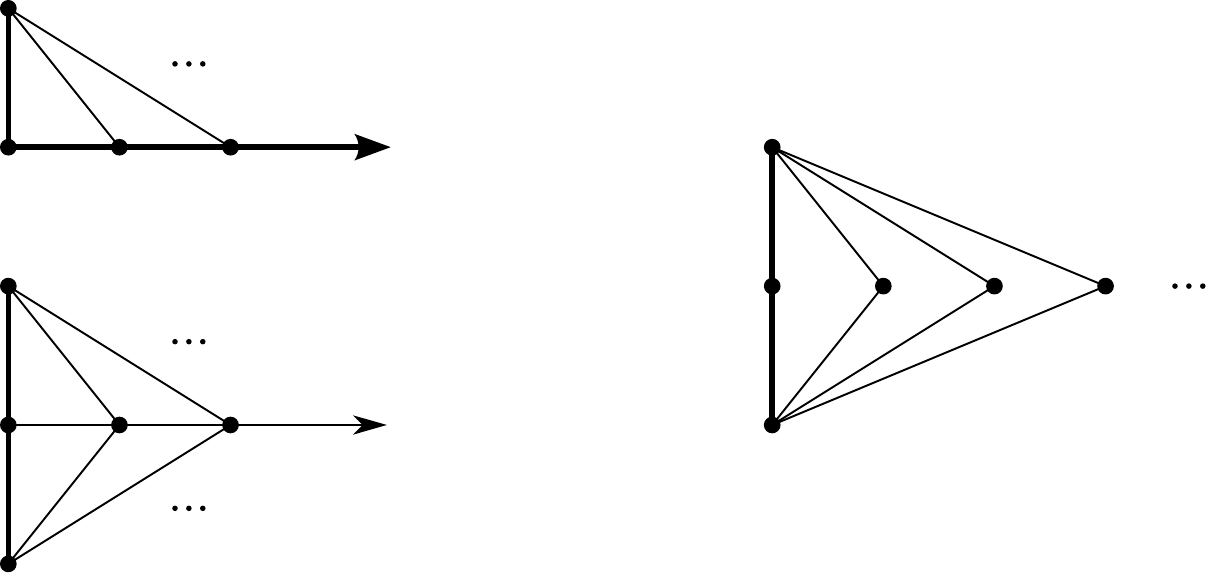}
\caption{Classical cycle space obstructions}
    \label{fig:ProbsAuxMot}
\end{figure}

All heavy edge sets can be obtained as thin sum of all the facial cycles of their respective graphs, and hence should be elements of their cycle spaces. 
For the top left graph, a solution is known: Identifying the end with its dominating vertex yields a compact Hausdorff quotient $\tilde{G}$ of $(G\cup\Omega,\textsc{VTop})$ whose topology is known as \textsc{ITop}. In this quotient, 
the ray actually converges to its dominating vertex, as pictured below:
\begin{figure}[H]
\centering
    \includegraphics[scale=0.85]{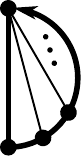}
\end{figure}
Studying this quotient proved rewarding, e.g. see \cite{duality} and \cite{DualTrees}.
For the graph $G$ on the bottom left of Fig.~\ref{fig:ProbsAuxMot}, however, $(\tilde{G},\textsc{ITop})$ is not defined since the relation we would use to yield $\tilde{G}$ no longer is transitive.
Obviously, we could fix this by taking the transitive closure of that relation, but this would result in vertex identification, which we want to avoid at all cost. 
Finally, consider the graph $G$ on the right in Fig.~\ref{fig:ProbsAuxMot}. This graph has no end, and hence \textsc{VTop} is Hausdorff, but not compact, and identifying the two vertices of infinite degree does not change that. 
A space solving all our problems by using vertex identification is already known: $\cE G$ is compact Hausdorff, and many of the theorems known for locally finite graphs admit easy generalisations to $\cE G$ for arbitrary infinite graphs (among these we find a working cycle space):

\begin{figure}[H]
\centering
\includegraphics[scale=0.85]{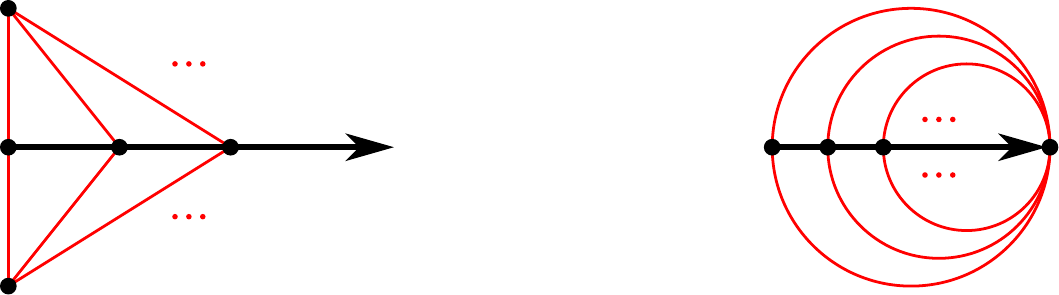}
\caption{A ray dominated by two vertices (left) and visualised in $\cE G$ (right). I expect the black edges and vertices to induce a TST for any sensible notion of a TST.}
\label{fig:OneEndTwoDom}
\end{figure}
\begin{figure}[H]
\centering
\includegraphics[scale=0.85]{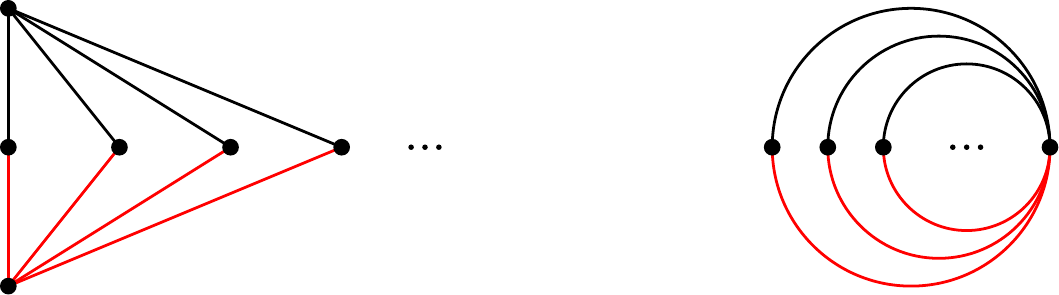}
\caption{A $K_{2,\aleph_0}$ (left) and visualised in $\cE G$ (right). I expect the black edges and vertices to induce a TST for any sensible notion of a TST.}
    \label{fig:K_2N_NoTST_v2}
\end{figure}
This is possible thanks to the inverse limit $\CL G\CL$ describing $\cE G$. But $\cE G$ has a huge downside: since it uses vertex identification, we lose the structure of the graph.
For an extreme example, consider the graph $G$ from Fig.~\ref{fig:badGAux}.
Clearly, this graph admits a very rich structure, e.g. its underlying binary tree plus the edge $\emptyset x$ yields an NST and in $(\tilde{G},\textsc{ITop})$ we find an edge-less Hamilton circle (see \cite[Proposition 3.4 and Corollary 3.5]{TST}).\footnote{This is one reason why the usage of \textsc{ITop} is commonly restricted to finitely separable graphs.}
But since no two of its vertices are finitely separable, $\cE G$ is just a hawaiian earring, and we may say that
$\cE G$ takes the sledgehammer approach by sacrificing the structure of our graphs in exchange for easy generalisations.
\begin{figure}[h]
\includegraphics[clip,page=11,trim=40 170 40 285,width=\textwidth]{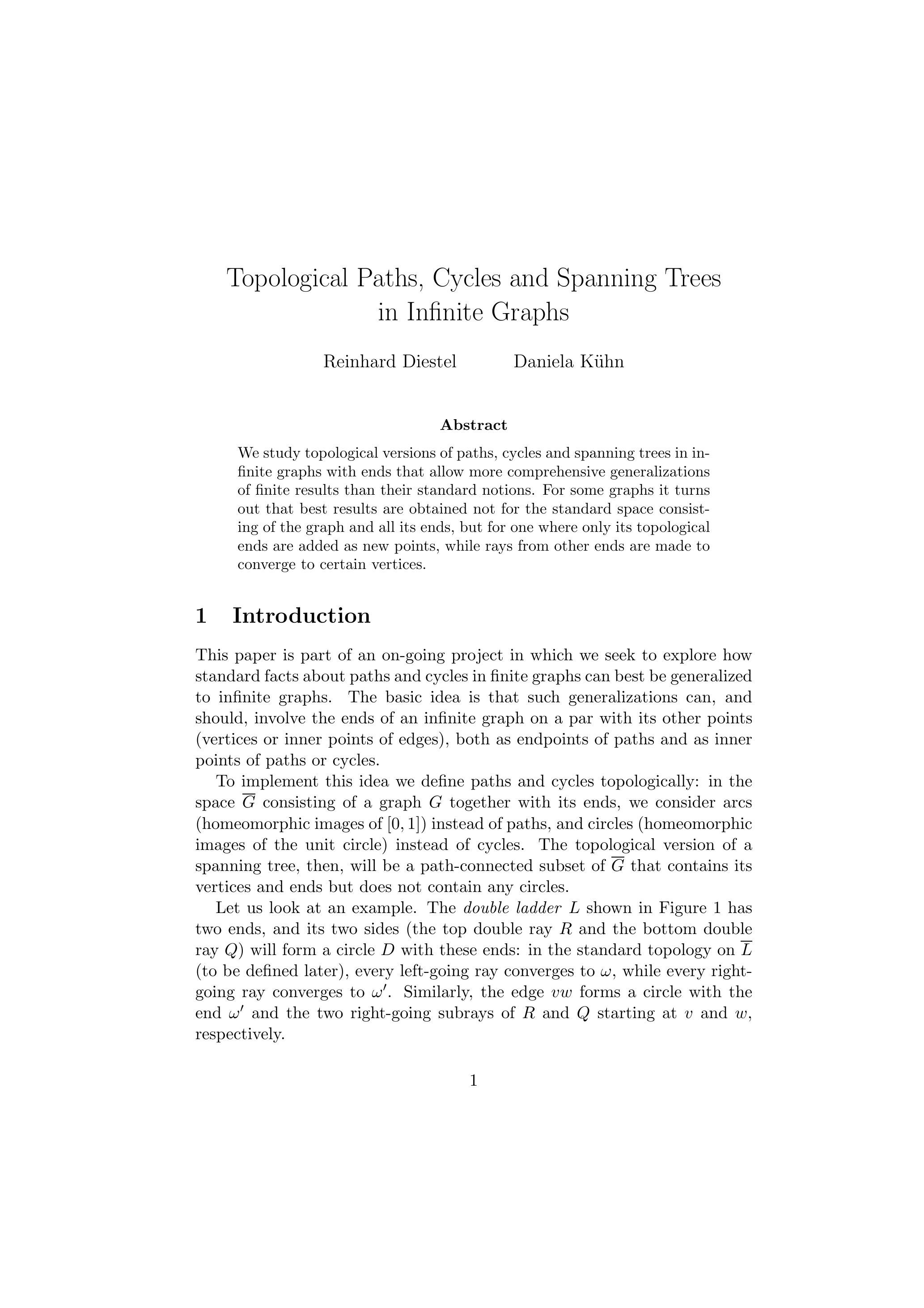}
\caption{A graph from \cite[Fig. 6]{TST} admitting an edge-less Hamilton circle in $(\tilde{G},\textsc{ITop})$.}
\label{fig:badGAux}
\end{figure}

Let us have a look on the topologies considered so far from another perspective.
Generally speaking, \textsc{ITop} approaches the generalisation problem `from bottom up' by using the straightforward generalisation \textsc{VTop} of the Freudenthal compactification---which is natural on locally finite graphs---on arbitrary graphs, then fixing problems by moving on to a Hausdorff quotient and restriction to a suitable class of graphs (which is still big enough to be of interest).

The tangle compactification on the other hand naturally extends the Freu-\\denthal compactification to arbitrary infinite graphs in that it uses $\aleph_0$-tangles to compactify the 1-complex of $G$. 
Yet it seems like the tangle compactification is not the `right setting' for arbitrary infinite graphs:
First of all, we recall from the introduction of this work that a $K_{2,\aleph_0}$ admits some set of edges which we expect to induce a TST of the tangle compactification, but which has no meaningful acirclic and topologically connected superset.
Second, Lemma~\ref{TCarcsAvoidUltras} tells us that arcs in the tangle compactification default to arcs in $\cV G$, i.e. the arcs avoid the ultrafilter tangles. But it would be great if at least leanly structured countable graphs would admit arc-connected circles and TSTs using ultrafilter tangles in order to overcome the difficulties discussed earlier.

Therefore, we have two choices. First, we may stick to our definitions of circles and TSTs in terms of arcs while modifying the tangle compactification in some way deemed natural.  
Second, we may adjust both the tangle compactification and our definitions of circles and TSTs. 
Since our definitions of circles and TSTs heavily rely on the unit interval while we do not impose any cardinal bounds on our graphs, the second choice is more appealing.

If we modify the tangle compactification, we should see to it that the resulting space $\LG$ is both compact and Hausdorff:
then $\LG$ is a continuum and the field of continuum theory provides us with a useful tool box.
But what could $\LG$ look like? Of course, it should solve our earlier problems from Fig.~\ref{fig:ProbsAuxMot}. On the other hand, it should admit a TST even for the graph $G$ from Fig.~\ref{fig:badGAux}. 
Here, the obstruction in $(\tilde{G},\textsc{ITop})$ is the existence of an edge-less Hamilton circle. This circle does not look very `circle-like' in that drawing, hence let us draw the graph again, but this time we start with a circle in the plane ($\S^1$, say) and embed the vertices of $G$ into it discretely, as sketched in Fig.~\ref{fig:badGnicelyDrawn}.
\begin{figure}[t]
    \centering
    \def\svgwidth{\columnwidth}
    \scalebox{.6}{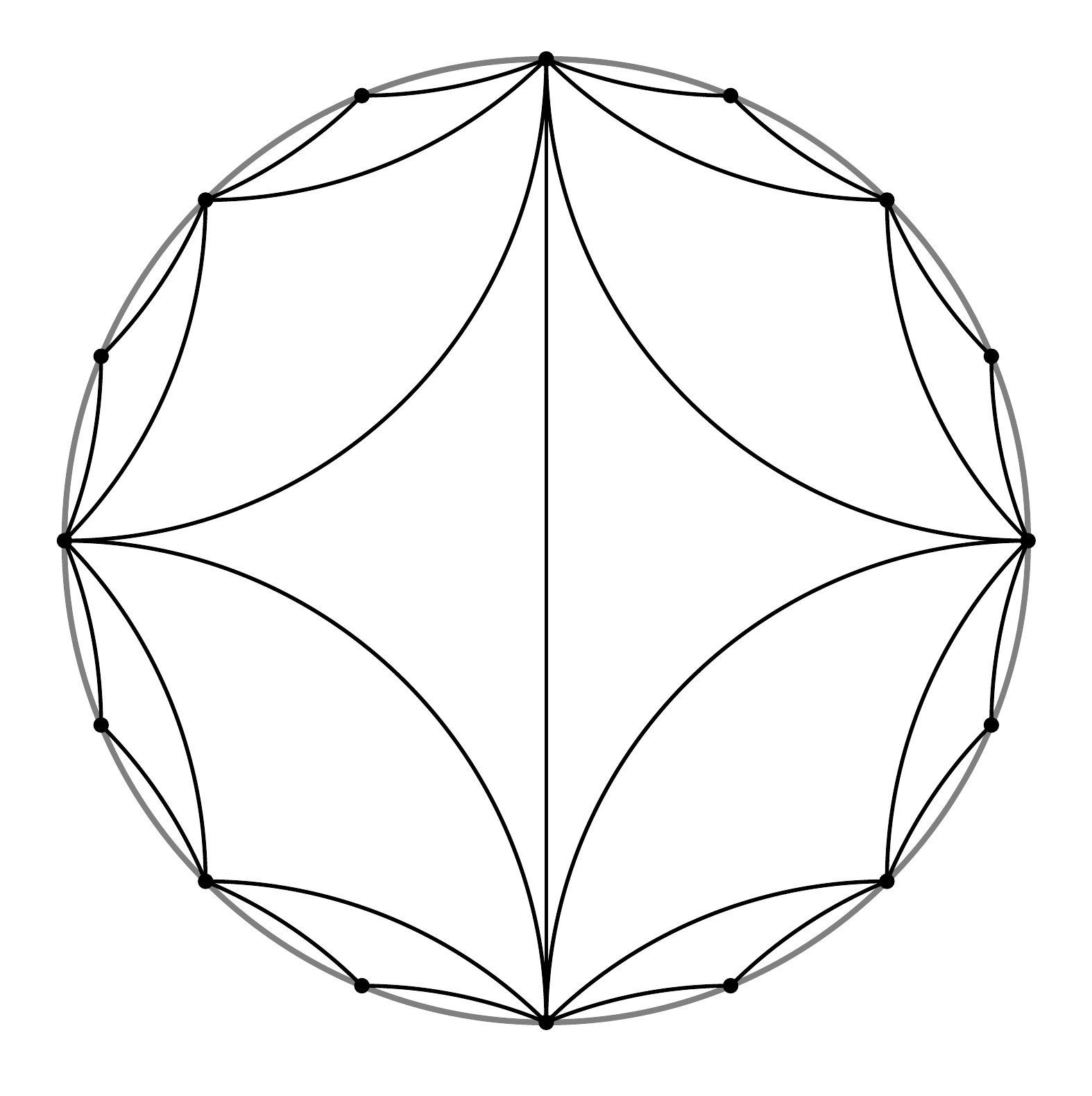}
    \caption{A sketch of the graph from Fig.~\ref{fig:badGAux} embedded into the plane with all vertices lying on $\S^1$.}
    \label{fig:badGnicelyDrawn}
\end{figure}
Now the idea of a circle consisting exactly of the vertices and ends of $G$ appears much less inconvenient, doesn't it? Also, the (finite) cycles induced by the boundaries of the inner faces seem to converge to $\S^1$ in some sense. Let us state this more clear: 

For every $n\in\N$ denote by $\cC_n$ the set of all (finite) cycles which are induced by face boundaries of the drawing above, and which only meet $V(G)$ in $\bigcup_{k\le n}2^k$.
Then for every $\epsilon>0$ there is some $N\in\N$ such that the (finite) cycle $C_n$ with edge set $\sum E[\cC_N]$ is included in $\{x\in\R^2\,|\,d_2(x,0)\in (1-\epsilon,1]\}$. Also note that $\sum E[\bigcup_{n\in\N}\cC_n]=\emptyset$ holds.

This gives us a first hint: if we introduce some sort of limit edges, one between every end of $G$ and each of its dominating vertices, and if we embed these into $\S^1$ appropriately (which is possible since we embedded $V(G)$ discretely), then the edge-less Hamilton circle is no longer edge-less. 
Furthermore, by the looks of the drawing, it should not be a problem to define basic open neighbourhoods of the inner edge points of these limit edges such that the sequence $(\sum E[\cC_n])_{n\in\N}$ `converges' to our Hamilton circle induced by the limit edges.
On top of that, the obvious NST of $G$ (the underlying $T_2$ plus the edge $x\emptyset$) now should induce a TST, and the Hamilton circle minus one of its limit edges should be a TST.

But what about a $K_{2,\aleph_0}$ or graphs admitting ultrafilter tangles in general? 
In case of $G\simeq K_{2,\aleph_0}$, if we remember why the expected TST did not work out, the problem informally amounted to the ultrafilter tangles not being `sufficiently connected' to the two vertices of infinite degree. 
Furthermore, we cannot meaningfully modify the topology of the tangle compactification of this graph into a Hausdorff one without losing compactness.
But if we join every ultrafilter tangle $\upsilon$ of $G$ to each vertex in $X_\upsilon$ via a limit edge, and if we generalise the topology of the tangle compactification in an \textsc{MTop}-like way onto the new space (treating inner limit edge points almost like tangles), then we obtain a continuum.

Since the space $\LG$ is not that easy to work with, 
we dedicate this chapter to a Hausdorff auxiliary space $\GH$ which in general is not compact, but whose structure combinatorially captures the basic structure of $\LG$. 
Informally, $\GH$ is obtained from $|G|$ with \textsc{MTop} by disjointly adding auxiliary (multi-)edges between every end of $G$ and each of its dominating vertices, and between any two distinct vertices of the same critical vertex set (one for each critical vertex set both are contained in). 
As basic open neighbourhoods of the inner edge points of auxiliary edges we take the same ones as for inner edge points of edges of $G$, and we carefully adjust the open neighbourhoods of other points to include also half-open partial auxiliary edges.

As we have seen two times before, if we construct a TST in $\cE G$, then its inner edge points do not induce a TST of $\TC$, e.g. in a $K_{2,\aleph_0}$ the sole non-singleton $\sim$-class of $\TC$ (which consists of all ultrafilter tangles and the two vertices of infinite degree; also recall $\TC/{\sim}=\Gq\simeq\cE G$) is totally disconnected in $\TC$.
But in $\aA(K_{2,\aleph_0})$ that $\sim$-class minus the ultrafilter tangles plus the auxiliary edge between its two vertices is arc-connected. 
As it turns out, between every two distinct points $x$ and $y$ of $V\cup\Omega$ there exists an auxiliary arc from $x$ to $y$ if and only if $x\sim y$, where an auxiliary arc simply is an arc in the closure of the set of all auxiliary edges (see Sections~\ref{subsecAuxArcConstr} and~\ref{subsecAuxArcProp}).
Interestingly, this holds without any cardinality bounds imposed on the graph considered. Hence, if we wish to prove a statement about the existence of certain structures in $\GH$, then the following two step procedure might be worth a try: First, prove it for $\cE G$ using $\CL G\CL$. Second, `lift' the obtained structures to $\GH$ by expanding the non-trivial $\sim$-classes to something auxiliary arc-connected.
For the rest of the section, we carry out this procedure to prove a generalisation of tree-packing (with the usual circle and TST definitions) for countable graphs, demonstrating the synergy of the two spaces $\cE G$ and $\GH$.

For finite graphs, the so-called `tree packing' is a fundamental theorem that was proved by Nash-Williams and Tutte independently in 1961:

\begin{theorem}[{\cite[Theorem 2.4.1]{Bible}}]\label{finiteTreePacking}
The following are equivalent for all finite multigraphs $G$ and $k\in\N$:
\begin{enumerate}
\item $G$ contains $k$ edge-disjoint spanning trees.
\item $G$ has at least $k(|P|-1)$ edges across any finite vertex partition $P$.
\end{enumerate}
\end{theorem}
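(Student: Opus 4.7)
My plan for the forward direction (i)$\to$(ii) will be straightforward contraction: given $k$ edge-disjoint spanning trees $T_1,\ldots,T_k$ and any finite vertex partition $P$ with $|P|=m$, each $T_i$ remains connected after contracting every class of $P$ to a single vertex, so $T_i$ contributes at least $m-1$ cross-edges of $P$ to $G$. Summing over $i$ and using edge-disjointness yields at least $k(m-1)$ cross-edges.

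For the harder direction (ii)$\to$(i), the cleanest approach will be via Edmonds' matroid union theorem applied to $k$ copies of the graphic matroid $M(G)$. The independent sets of the union matroid $M(G)^{\vee k}$ are exactly the disjoint unions $I_1 \uplus \cdots \uplus I_k$ of $k$ forests of $G$, so the existence of $k$ edge-disjoint spanning trees amounts to showing $r(M(G)^{\vee k}) \geq k(|V|-1)$. Edmonds' rank formula will give
\begin{align*}
r(M(G)^{\vee k}) \;=\; \min_{F \subseteq E} \Big( |E \setminus F| \,+\, k\cdot(|V| - c(F)) \Big),
\end{align*}
where $c(F)$ denotes the number of components of the spanning subgraph $(V,F)$. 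Hence it suffices to verify $|E \setminus F| \geq k(c(F)-1)$ for every $F \subseteq E$.

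Given such an $F$, I would let $P$ be the partition of $V$ into the components of $(V,F)$, so $|P|=c(F)$. Every $P$-cross-edge has endpoints in different components of $(V,F)$ and hence lies in $E \setminus F$, so (ii) will yield
\begin{align*}
|E \setminus F| \;\geq\; |\{\text{cross-edges of }P\}| \;\geq\; k(|P|-1) \;=\; k(c(F)-1),
\end{align*}
which is exactly the required bound. This finishes the direction (ii)$\to$(i) modulo Edmonds' theorem.

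The main obstacle, should one wish to avoid citing matroid union as a black box, will be to reprove it internally. I would proceed by the standard augmenting-sequence argument: start with $k$ edge-disjoint forests $F_1,\ldots,F_k$ of maximum total size, assume for contradiction that their union is not $k(|V|-1)$, and build an auxiliary digraph whose arcs encode possible edge swaps between the $F_i$. A shortest augmenting sequence to a `missing' edge would contradict maximality, while the absence of such a sequence would produce a set $F \subseteq E$ witnessing $|E \setminus F| < k(c(F)-1)$, and the partition of $V$ into the components of $(V,F)$ would then violate (ii). The careful bookkeeping of this augmenting-sequence argument is the only genuinely technical step in the whole proof.
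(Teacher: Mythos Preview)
The paper does not prove this statement at all: it is quoted verbatim as \cite[Theorem 2.4.1]{Bible} (the Nash--Williams/Tutte tree-packing theorem from Diestel's textbook) and used as a known black box. Your proposal is a correct proof --- the forward direction is the standard contraction argument, and the backward direction via Edmonds' matroid union formula is a well-known and clean route --- but there is no ``paper's own proof'' to compare against here.
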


For infinite graphs, however, the naive generalisation fails.
Indeed, let $G$ be a $K_{2,\aleph_0}$ whose vertices of infinite degree we denote by $x$ and $y$. Furthermore, we enumerate the other vertices of $G-x-y$ as $u_0,u_1,\hdots$. Now we show that $G$ satisfies (ii) for $k=2$. 
For this, let $P=\{p_0,\hdots,p_\ell\}$ be any finite vertex partition of $G$. 
If $x$ and $y$ are contained in different partition classes, with $x\in p_0$ say, then infinitely many edges leave $p_0$ since there are infinitely many disjoint paths from $x$ to $y$ in $G$, so (ii) holds. 
Otherwise $x$ and $y$ are contained in the same partition class $p_0$, say. If there are infinitely many of the $u_n$ not contained in $p_0$, then again infinitely many edges leave $p_0$, witnessing (ii). Hence we may assume that $p_0$ contains all but finitely many of the $u_n$. Then for every $i>0$ the partition class $p_i$ is a finite subset of $\{u_n\,|\,n\in\N\}$, and every edge leaving $p_i$ is incident with precisely one of $x$ and $y$. Thus exactly $2|p_i|$ many edges leave $p_i$. By choice of $p_0$ the partition $P$ has precisely 
\begin{align*}
\sum_{i=1}^\ell 2|p_i|\ge \sum_{i=1}^\ell 2=2\ell=2(|P|-1)
\end{align*}
many cross-edges, so again (ii) holds. This completes the verification of (ii) for $k=2$. But obviously, every spanning tree of $G$ has degree 2 at some $u_n$. In particular, every two spanning trees of $G$ share an edge, so (i) fails as claimed.

The TSTs of known topologies on $G$ (not relying on vertex-identification) face the same problem, but $\GH$ does not:
\begin{figure}[H]
\centering
\includegraphics[scale=1]{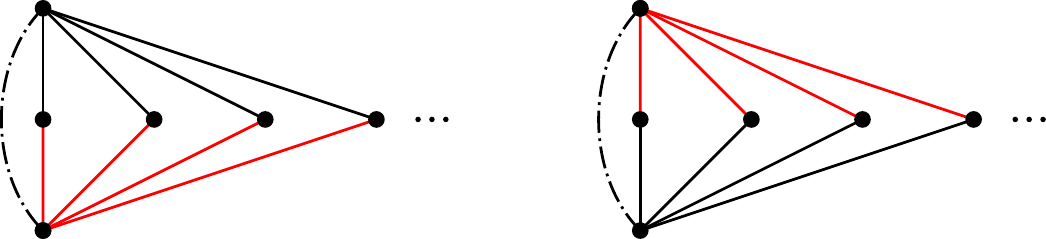}
\caption{The black edges (including the dashed auxiliary edges) form TSTs of $G\simeq K_{2,\aleph_0}$ which are edge-disjoint on $E(G)$.}
    \label{fig:K_2N_2ATSTs}
\end{figure}
Of course the two TSTs of $\GH$ depicted in Fig.~\ref{fig:K_2N_2ATSTs} share an edge, but this is not an edge of $G$, and clearly this is best possible for $\GH$.
Furthermore, the shared auxiliary edge indicates that its endvertices are in the same (infinitely edge-connected) ${\sim}$-class, so both TSTs share an artificial edge which represents infinite edge-connectivity.

\subsection{The space $\GH$}\label{AGsubsection}

In this section, we formally define the topological space $\GH$ which makes use of auxiliary edges.\\

For this, we let\index{$\AuxE$, $\AuxE(\Omega)$, $\AuxE(\cX)$}
\begin{align*}
\AuxE(\Omega)&:=\AuxE(\Omega(G)):=\{u\omega\,|\,\omega\in\Omega(G),u\in\Delta(\omega)\}\\
\AuxE(\cX)&:=\AuxE(\cX(G)):=\bigcup_{X\in\crit(\cX)} [X]^2\times\{X\} \\
\AuxE&:=\AuxE(G):=\AuxE(\Omega)\cup\AuxE(\cX)
\end{align*}
and we let $\GG$\index{$\GG$} be the multigraph on $V(G)\cup\Omega(G)$ with edge set $E(G)\cup\AuxE$ where every $(\{x,y\},X)\in\AuxE(\cX)$ has endvertices $x$ and $y$.
The elements of $\AuxE$ are referred to as \textit{auxiliary edges}.\index{auxiliary edge}
Unless stated otherwise, our standard notation such as $\cX$, $\cC_X$ and $\Omega$ still depends on $G$, not on $\GG$.
Next, we obtain a topological space $\GH$ with ground set the 1-complex of $\GG$ by declaring as (basic) open the following sets:\index{$\GH$}

For inner edge points of $\GG$ we take the usual basic open neighbourhoods.
For every vertex $u$ of $G$ and every $\epsilon\in (0,1]$ we declare as open the star $\cO_{\GG}(u,\epsilon)$ of half-open intervals.
Finally, for every $X\in\cX$, for each $\cC\subseteq\cC_X$ and for all $\epsilon\in (0,1]$ take\index{$\cO_{\GH}(X,\cC,\epsilon)$}
\begin{align*}
\cO_{\GH}(X,\cC,\epsilon):=\cA(X,\cC)\cup\mathring{E}_{\GG}\big(\medcup\cC,\cA(X,\cC)\big)\cup\bigcup_{\xi\in\cA(X,\cC)}\cO_{\GG}(\xi,\epsilon)
\end{align*}
where 
\begin{align*}
\cA(X,\cC)=\medcup\cC\cup\bigcup_{C\in\cC}\Omega(X,C).
\end{align*}
(Informally, if $\cC$ is of the form $\{C(X,\omega)\}$ for some end $\omega$ of $G$, then we may think of $\cO_{\GH}(X,\cC,\epsilon)$ as $\hat{C}_\epsilon(X,\omega)$ plus (possibly half-open partial) auxiliary edges.) 
Using Corollary~\ref{GGwellDef} it is easy to check that this really yields a topology.
Similar to $\hat{C}_\epsilon(X,\omega)$ and $\hat{C}(X,\omega)$, for every $X\in\cX$ and $\omega\in\Omega$ we write\index{$\aC(X,\omega)$, $\hat{\aC}_\epsilon(X,\omega)$}
\begin{align*}
\hat{\aC}_\epsilon(X,\omega)&:=\cO_{\GH}(X,\{C(X,\omega)\},\epsilon)\\
\hat{\aC}(X,\omega)&:=\hat{\aC}_1(X,\omega)
\end{align*}

\begin{obs}
$\GH$ is Hausdorff.
\end{obs}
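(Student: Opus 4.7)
The plan is to separate any two distinct points $x,y \in \GH$ by disjoint basic open neighbourhoods, dispatched by a routine case analysis on the types of $x$ and $y$. Points of $\GH$ fall into three flavours: inner edge points of topological edges of $\GG$ (these may be edges of $G$ or auxiliary edges), vertices of $G$, and ends of $G$. The basic neighbourhoods at our disposal are short open subintervals of an edge's interior for inner edge points, the stars $\cO_{\GG}(u,\epsilon)$ of half-open segments for vertices $u \in V$, and the sets $\cO_{\GH}(X,\cC,\epsilon)$ (in particular $\hat{\aC}_\epsilon(X,\omega)$) for ends.

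When one of $x,y$ is an inner edge point of some topological edge $e$ of $\GG$, I would take a short open subinterval $I\subseteq \mathring{e}$ about it, bounded away from both endvertices of $e$. Any basic neighbourhood of any other point of $\GH$ meets $e$ only in a half-open segment of length at most $\epsilon$ emanating from a single endvertex of $e$, so choosing $\epsilon$ smaller than $d_e(I,\partial e)$ provides a disjoint neighbourhood. If $x,y$ are both vertices of $G$, the stars $\cO_{\GG}(x,\tfrac12)$ and $\cO_{\GG}(y,\tfrac12)$ are automatically disjoint: on any edge of $\GG$ joining $x$ and $y$, their half-open portions cover the two disjoint open halves of the edge.

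The remaining two cases involve ends and need only slightly more care because of the auxiliary edges. For a vertex $x\in V$ and an end $\omega$, take $X=\{x\}$, so that $x\notin\cA(X,\{C(X,\omega)\})$ while $\omega\in\cA$. The only edges of $\GG$ incident simultaneously with $x$ and with a point of $\cA$ are (a) edges of $G$ from $x$ to $C(X,\omega)$, (b) auxiliary edges $x\omega'$ with $\omega'\in\Omega(X,\omega)$ and $x\in\Delta(\omega')$, and (c) auxiliary edges $(\{x,w\},Y)$ with $Y\in\crit(\cX)$ containing $x$ and $w\in C(X,\omega)$. Along each such edge, $\cO_{\GG}(x,\epsilon)$ occupies the $x$-half and $\hat{\aC}_\epsilon(X,\omega)$ occupies the $\cA$-half, so $\epsilon<\tfrac12$ separates them. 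For two distinct ends $\omega\ne\omega'$, pick $X\in\cX$ witnessing $C(X,\omega)\ne C(X,\omega')$; by Corollary~\ref{GGwellDef} every $\Delta(\omega'')$ and every $X_\upsilon$ meets at most one component of $G-X$, so $\cA(X,\{C(X,\omega)\})$ and $\cA(X,\{C(X,\omega')\})$ are disjoint subsets of $V\cup\Omega$, and the only edges of $\GG$ bridging them are $X$-edges to either of the two components. The same half-edge argument then gives $\hat{\aC}_\epsilon(X,\omega)\cap\hat{\aC}_\epsilon(X,\omega')=\emptyset$ for any $\epsilon<\tfrac12$.

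The whole argument is essentially bookkeeping. The only point of real substance is noticing that Corollary~\ref{GGwellDef} is exactly what prevents the pathological auxiliary edges that would otherwise glue the neighbourhoods of two ends, or of a vertex and a distant end, together.
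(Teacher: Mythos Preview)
Your argument is correct. The paper states this as an observation without proof, so there is nothing to compare against; your case analysis is exactly the routine verification the author presumably had in mind, and your invocation of Corollary~\ref{GGwellDef} to rule out auxiliary edges between distinct $\cA(X,\{C\})$'s is the one nontrivial ingredient. One small wording quibble: in the two-ends case you say ``the only edges of $\GG$ bridging them are $X$-edges to either of the two components,'' but in fact \emph{no} edge of $\GG$ joins the two sets $\cA(X,\{C(X,\omega)\})$ and $\cA(X,\{C(X,\omega')\})$ at all (graph edges cannot, and auxiliary edges are excluded by Corollary~\ref{GGwellDef} together with the fact that a vertex in $C(X,\omega)$ cannot dominate an end living in $C(X,\omega')$). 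So the half-edge argument is not even needed there---the neighbourhoods are disjoint simply because they touch disjoint sets of edges.
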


\begin{lemma}\label{finCutsG''}
The finite cuts of $G$ are finite cuts of $\GG$.
\end{lemma}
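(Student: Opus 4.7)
Let $F\subseteq E(G)$ be a finite cut of $G$, witnessed by a bipartition $\{V_1,V_2\}$ of $V(G)$ such that $F=E_G(V_1,V_2)$. My plan is to lift $\{V_1,V_2\}$ to a bipartition $\{W_1,W_2\}$ of $V(\GG)=V\cup\Omega$ witnessing that $F$ is a finite cut of $\GG$; the whole task then reduces to showing that no auxiliary edge of $\GG$ crosses $\{W_1,W_2\}$.

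The first step is the extension itself. Since $F$ is a finite edge set with sides $V_1,V_2$, for each end $\omega$ of $G$ every ray $R\in\omega$ has all but finitely many vertices on one fixed side of $\{V_1,V_2\}$ (and the same side for all $R\in\omega$, since any two rays in $\omega$ are joined by infinitely many disjoint paths). So I can unambiguously assign $\omega$ to $W_i$ if the tails of its rays lie in $V_i$, yielding a well-defined bipartition $\{W_1,W_2\}$ of $V\cup\Omega$ extending $\{V_1,V_2\}$.

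Next I verify that no edge of $\AuxE$ crosses $\{W_1,W_2\}$, so that $E_\GG(W_1,W_2)=E_G(V_1,V_2)=F$ remains finite. Two cases arise. For an end-auxiliary edge $u\omega\in\AuxE(\Omega)$ with $u\in\Delta(\omega)$: by definition of domination, for any ray $R\in\omega$ disjoint from $u$ there exist infinitely many $u$--$R$ paths meeting only in $u$, so the finite cut $F$ leaves infinitely many of them intact; hence some tail of $R$ lies in the same side as $u$, i.e.\ $u$ and $\omega$ are in the same $W_i$. For a critical-set-auxiliary edge $(\{x,y\},X)\in\AuxE(\cX)$ with $X\in\crit(\cX)$: by Lemma~\ref{UltrafilterCrit} infinitely many components of $G-X$ have neighbourhood exactly $X$, and each such component contains a $N(x)$--$N(y)$ path, giving rise to infinitely many internally disjoint (hence edge-disjoint) $x$--$y$ paths in $G$; the finite cut $F$ therefore cannot separate $x$ from $y$, so $x$ and $y$ lie in the same $W_i$.

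Combining these observations, $\{W_1,W_2\}$ is a bipartition of $V(\GG)$ whose set of cross-edges in $\GG$ is exactly $F$, which is finite. Thus $F$ is a finite cut of $\GG$. The only potential obstacle is the auxiliary-edge check, and both subcases reduce to the standard fact that a finite edge cut cannot separate two points joined by infinitely many internally disjoint paths (respectively rays).
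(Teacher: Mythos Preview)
Your proof is correct and follows essentially the same approach as the paper's: the paper's one-line proof invokes Theorem~\ref{InfIndPaths} (for the $\AuxE(\cX)$ edges) and the standard fact about dominating vertices (for the $\AuxE(\Omega)$ edges), and you simply unpack both of these directly rather than citing them. The only minor remark is that for the critical-set case you do not actually need Lemma~\ref{UltrafilterCrit}; that $\cC_X(X)$ is infinite is the definition of $X\in\crit(\cX)$.
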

\begin{proof}
This follows from Theorem~\ref{InfIndPaths} and the fact that no finite cut separates an end from any of its dominating vertices.
\end{proof}

\begin{lemma}\label{vxJAL}
Let $X\in\cX$ be given together with a bipartition $\{\cC,\cC'\}$ of $\cC_X$. Furthermore suppose that $K$ is a connected subset of $\GH$ meeting both $\cO_{\GG}(X,\cC,1)$ and $\cO_{\GG}(X,\cC',1)$. Then $K$ meets $X$.
\end{lemma}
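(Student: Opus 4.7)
Let me abbreviate $O_1:=\cO_{\GH}(X,\cC,1)$ and $O_2:=\cO_{\GH}(X,\cC',1)$. The plan is to exhibit $\GH\setminus X$ as a disjoint union of open subsets, namely $O_1$, $O_2$, and one open interval $\mathring{e}$ for each edge $e$ of $\GG$ with both endpoints in $X$. Once this is established, any connected subset of $\GH\setminus X$ must be contained in a single piece of this partition, so a connected $K$ meeting both $O_1$ and $O_2$ cannot avoid $X$.

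First I would verify the disjointness. The sets $\cA(X,\cC)$ and $\cA(X,\cC')$ are disjoint because $\cC\uplus\cC'=\cC_X$ and, by Corollary~\ref{GGwellDef}(i), the ends split according to which component of $G-X$ they live in. For inner edge points the key observation is that every edge of $\GG$ has both its endpoints in $X\cup V(C)$ for a single $C\in\cC_X$: for edges of $G$ this is immediate, and for the auxiliary edges in $\AuxE(\Omega)$ and $\AuxE(\cX)$ this is exactly parts (i) and (ii) of Corollary~\ref{GGwellDef}. Hence no edge straddles the bipartition $\{\cC,\cC'\}$, and the inner edge points appearing in $O_1$ (namely those from the $\mathring{E}_{\GG}$-term and the stars $\cO_{\GG}(\xi,1)$ with $\xi\in\cA(X,\cC)$) are disjoint from those appearing in $O_2$.

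Second I would check the covering. Any $p\in\GH\setminus X$ is either a vertex of $G\setminus X$, an end, or an inner edge point. In the first two cases $p$ lies in exactly one of $\cA(X,\cC)$, $\cA(X,\cC')$ and is therefore in $O_1\cup O_2$. In the third case write $p\in\mathring{e}$ for some edge $e$ of $\GG$. If at least one endpoint $\xi$ of $e$ is not in $X$, then by the observation above $\xi$ lies in $\cA(X,\cC)$ or $\cA(X,\cC')$; choosing $\epsilon=1$ is essential here, because the star $\cO_{\GG}(\xi,1)$ stretches along $[\xi,j_e)$ all the way to (but not including) the opposite endpoint, so $p\in\cO_{\GG}(\xi,1)\subseteq O_1\cup O_2$. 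Otherwise both endpoints of $e$ lie in $X$, and then $p\in\mathring{e}$ falls into the remaining piece of the partition, which is itself open in $\GH$ since inner edge points inherit their basic neighbourhoods from $(0,1)$.

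With the decomposition in hand, assume for a contradiction that $K\cap X=\emptyset$. Then $K\subseteq\GH\setminus X$, and the pieces $O_1$, $O_2$ and the various $\mathring{e}$ (with $e$ an edge of $\GG$ inside $X$) form a cover of $K$ by pairwise disjoint open sets. By connectedness $K$ must be contained in a single such piece, contradicting the hypothesis that $K$ meets both $O_1$ and $O_2$. Hence $K$ meets $X$. I do not anticipate any serious obstacle: the only mild subtlety is to note, via Corollary~\ref{GGwellDef}, that auxiliary edges respect the bipartition $\{\cC,\cC'\}$ just like ordinary edges do, which is what ensures the disjointness of the two big pieces.
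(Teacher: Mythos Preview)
Your proof is correct and follows the same idea as the paper's one-line argument, which simply asserts that otherwise $\{\cO_{\GH}(X,\cC,1),\cO_{\GH}(X,\cC',1)\}$ induces an open bipartition of $K$. Your version is in fact more careful: you explicitly account for the inner points of edges of $\GG$ with both endvertices in $X$, which lie in neither $O_1$ nor $O_2$, whereas the paper tacitly relies on the fact that such an $\mathring{e}$ is clopen in $\GH\setminus X$ and hence cannot contain a connected $K$ meeting both $O_1$ and $O_2$.
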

\begin{proof}
Otherwise $\{\cO_{\GH}(X,\cC,1)\,,\,\cO_{\GH}(X,\cC',1)\}$
induces an open bipartition of $K$ which is impossible.
\end{proof}

The following Lemma basically restates the first part of the Jumping Arc Lemma \cite[Lemma 8.5.3 (i)]{Bible} for $\GH$. Hence, with Lemma~\ref{finCutsG''} the proof is analogue:

\begin{lemma}[$\L$]\label{cutJAL}
Let $F$ be a finite cut of $G$ with sides $V_1$ and $V_2$. Then
\begin{align*}
\GH\setminus\mathring{E}(\GG)=\overline{V_1}^{\;\GH}\uplus\overline{V_2}^{\;\GH}
\end{align*}
and no connected subset of $\GH\setminus\mathring{F}$ meets both $\overline{V_1}^{\;\GH}$ and $\overline{V_2}^{\;\GH}$.\qed
\end{lemma}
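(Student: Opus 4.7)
The plan is to transfer the standard proof of the first part of the Jumping Arc Lemma from $|G|$ to $\GH$, using Lemma~\ref{finCutsG''} to control how the cut $F$ interacts with the auxiliary edges and the ends. First I would use Lemma~\ref{finCutsG''} to note that $F$ is still a finite cut of the multigraph $\GG$. Concretely, I would extend $\{V_1,V_2\}$ to a bipartition $\{V_1^{\ast},V_2^{\ast}\}$ of $V(\GG)=V\cup\Omega$, where each end $\omega\in\Omega$ is placed on the unique side on which its rays have tails after deleting $V[F]$. The key observation, obtained from Theorem~\ref{InfIndPaths} together with the fact that a finite cut cannot separate vertices joined by infinitely many independent paths, is that every auxiliary edge has both endvertices on the same side of $\{V_1^{\ast},V_2^{\ast}\}$: dominating vertices of an end $\omega$ land on the same side as $\omega$, and critical vertex sets lie on a single side. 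Hence $F$ is also the full set of cross-edges of $\GG$ for the partition $\{V_1^{\ast},V_2^{\ast}\}$.

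For the first assertion, I would verify $\overline{V_i}^{\;\GH}=V_i^{\ast}$ for $i=1,2$. The inclusion $V_i^{\ast}\subseteq\overline{V_i}^{\;\GH}$ is trivial on vertices; for an end $\omega\in V_i^{\ast}$, every basic open neighbourhood $\cO_{\GH}(X,\cC,\epsilon)$ contains the 1-complex of some component $C(X,\omega)\subseteq V_i$ and thus meets $V_i$. For the reverse inclusion I would show $\overline{V_i}^{\;\GH}\cap V_{3-i}^{\ast}=\emptyset$ by exhibiting separating neighbourhoods: for $v\in V_{3-i}$ take $\cO_{\GG}(v,\epsilon)$ with $\epsilon<1/2$, and for $\omega\in V_{3-i}^{\ast}\cap\Omega$ take $\hat{\aC}(V[F],\omega)$. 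The main thing to check here -- and the only substantive point, given the analogy with the locally finite case -- is that this neighbourhood does not reach $V_i$ along some auxiliary edge; but this is precisely where Step~1 pays off, since every auxiliary edge incident with a point of $\hat{\aC}(V[F],\omega)$ has its other endvertex on the $V_{3-i}$-side. Together this gives $V\cup\Omega=\overline{V_1}^{\;\GH}\uplus\overline{V_2}^{\;\GH}$, and since inner edge points have neighbourhoods lying entirely in the interior of their edge, the left-hand side is indeed $\GH\setminus\mathring{E}(\GG)$.

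For the second assertion I would construct two disjoint open sets $O_1,O_2\subseteq\GH$ with $O_1\cup O_2=\GH\setminus\mathring{F}$, by setting
\[
O_i=\overline{V_i}^{\;\GH}\cup\bigcup_{\substack{e=x_ey_e\in F\\x_e\in V_i}}[x_e,m(e))\cup\bigcup_{\substack{e\in E(\GG)\setminus F\\\text{endpoints in }V_i^{\ast}}}\mathring{e}.
\]
Using the fact from Step~1 that every non-$F$ edge of $\GG$ (including every auxiliary edge) has both endvertices on the same side, one checks directly that $O_1,O_2$ cover every inner edge point outside $\mathring{F}$, are disjoint, and are open in $\GH$ by inspecting the basic open neighbourhoods at vertices, ends, and inner edge points. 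Then if $K\subseteq\GH\setminus\mathring{F}$ is connected, $\{O_1\cap K,\,O_2\cap K\}$ is an open partition of $K$, so $K\subseteq O_i$ for some $i$, and in particular $K$ cannot meet both $\overline{V_1}^{\;\GH}$ and $\overline{V_2}^{\;\GH}$. The hard part of the whole argument is thus bookkeeping rather than insight: verifying that the auxiliary edges behave correctly with respect to the cut $F$, which reduces to applying Theorem~\ref{InfIndPaths} exactly as in Lemma~\ref{finCutsG''}.
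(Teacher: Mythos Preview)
Your proposal is correct and follows exactly the approach the paper indicates: the paper does not give a proof but simply remarks that, with Lemma~\ref{finCutsG''} in hand, the argument is analogous to the standard Jumping Arc Lemma for $|G|$. Your write-up supplies precisely those analogous details, with the key ingredient being that auxiliary edges never cross the finite cut $F$.
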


An arc $A\subseteq\GH$ is called an \textit{auxiliary arc} if $A\subseteq\overline{\AuxE}^{\;\GH}$.\index{auxiliary arc}

\begin{lemma}[$\L$]\label{SigmaArcEdgesDense}
$\bigcup \mathring{E}_{\GG}(A)$ is dense in $A$ for every arc $A$ in $\GH$.
\end{lemma}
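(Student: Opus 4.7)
The plan is to derive a contradiction by producing a non-degenerate subarc of $A$ lying entirely in the totally disconnected subspace $V(G)\cup\Omega$ of $\GH$.

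Suppose for a contradiction that the set of inner edge points of $\GG$ that meet $A$ is not dense in $A$. Then there exists a non-empty relatively open subset $U$ of $A$ disjoint from $\mathring{E}(\GG)$, so $U\subseteq V(G)\cup\Omega$. Since $A$ is homeomorphic to $\I$, the set $U$ contains a non-degenerate subarc $A'$, which is still contained in $V(G)\cup\Omega$.

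Next I claim that $V(G)\cup\Omega$ is totally disconnected as a subspace of $\GH$. Every vertex $v$ of $G$ is isolated: intersecting the basic open set $\cO_{\GG}(v,1)$ with $V(G)\cup\Omega$ yields exactly $\{v\}$, since the remaining points of $\cO_{\GG}(v,1)$ all lie in interiors of edges of $\GG$ incident with $v$. Hence $\{v\}$ is clopen in $V(G)\cup\Omega$. For two distinct ends $\omega\neq\omega'$, choose $X\in\cX$ with $C(X,\omega)\neq C(X,\omega')$; intersecting the basic open set $\cO_{\GH}(X,\{C(X,\omega)\},1)$ with $V(G)\cup\Omega$ gives $V(C(X,\omega))\cup\Omega(X,\omega)$, whose complement in $V(G)\cup\Omega$ is the open union $\bigcup_{x\in X}\{x\}\cup\bigl(\cO_{\GH}(X,\cC_X\setminus\{C(X,\omega)\},1)\cap(V(G)\cup\Omega)\bigr)$. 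Thus $V(C(X,\omega))\cup\Omega(X,\omega)$ is a clopen neighbourhood of $\omega$ not containing $\omega'$. Combining these observations, every point of $V(G)\cup\Omega$ has singleton quasi-component, so the subspace is totally disconnected.

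Finally, since $A'$ is connected and contained in $V(G)\cup\Omega$, it lies in a single connected component and hence is a single point, contradicting $A'$ being non-degenerate. The main (mild) obstacle is unpacking the three families of basic open sets of $\GH$ so that their intersections with $V(G)\cup\Omega$ really isolate the vertices of $G$ and yield clopen separations between ends; everything else is a clean compactness-free argument about arcs in a totally disconnected subspace.
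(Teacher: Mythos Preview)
Your proof is correct and follows essentially the same approach as the paper: both obtain a non-degenerate subarc avoiding $\mathring{E}(\GG)$ and derive a contradiction from the fact that components of $G-X$ separate points of $V\cup\Omega$. The only cosmetic difference is that the paper first argues the subarc must lie in $\Omega$ alone (ruling out vertices via their star neighbourhoods) and then invokes Lemma~\ref{vxJAL}, whereas you handle vertices and ends uniformly by showing $V(G)\cup\Omega$ is totally disconnected.
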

\begin{proof}
First choose a homeomorphism $\sigma\colon \I\to A$.
We claim that $\overline{\bigcup \mathring{E}_{\GG}(A)}=A$. Assume not for a contradiction. Then $\overline{\bigcup \mathring{E}_{\GG}(A)}\subsetneq A$ since $A$ is closed. 
Pick some $\xi\in A\setminus \overline{\bigcup \mathring{E}_{\GG}(A)}$ together with some basic open neighbourhood $O$ of $\xi$ in $\GH$ which avoids $\overline{\bigcup \mathring{E}_{\GG}(A)}$. 
It is impossible to find such a neighbourhood for vertices of $G$ or inner edge points of $\GG$, so $\xi$ must be an end of $G$. Let $\cI$ be some basic open subset of $\I$ which $\sigma$ maps to $O$. Then $\sigma[\cI]\subseteq\Omega(G)$ holds by the previous argument. Hence it suffices to show that there is no arc living entirely in $\Omega(G)$ to yield a contradiction:

Assume for a contradiction that there is an arc $A'\subseteq\Omega(G)$ starting in $\omega$ and ending in $\omega'$, say. Pick some $X\in\cX$ witnessing $\omega\neq\omega'$. Then $A'$ avoids $X$, contradicting Observation~\ref{vxJAL}.
\end{proof}

If $H=(V',E')$ is a subgraph of $\GG$, then we write $\overline{H}=\overline{V'}\cup\mathring{E}'$ for its closure in $\GH$. If a subspace $\Xi$ of $\GH$ is of the form $\overline{H}$ and every $\omega\in V'\setminus V(G)$ is incident with an auxiliary edge from $E'$, then we call $\Xi$ a \textit{standard subspace}\index{standard subspace} of $\GH$ and write $E(\Xi)=E'$.
By Lemma~\ref{SigmaArcEdgesDense} we have $\overline{H}=\overline{\mathring{E}'}$ if $\overline{H}$ is arc-connected.
An $\aA$\textit{TST}\index{\ATST{}} of $G$ is a uniquely arc-connected standard subspace of $\GH$ including $V(G)$.
If $\cA$ is an auxiliary arc-component of $\GH$ and $\Xi=\overline{\mathring{E}'}\subseteq\cA$ is a uniquely arc-connected standard subspace with $E'\subseteq\AuxE$, then $\Xi$ is called an $\aA$\textit{TST of} $\cA$.
If $\Tau$ is an \ATST{} of $G$ or of an auxiliary arc-component of $\GH$ and $f$ is any edge in $E(\Tau)$, then $\Tau-\mathring{f}$ has precisely two arc-components $\Tau_1$ and $\Tau_2$ and we write $D_f^{\Tau}$ for the \textit{fundamental cut}\index{fundamental cut} of $\Tau$ (with respect to $f$) which consists precisely of those edges of $\GG$ with one endvertex in $\Tau_1$ and the other in $\Tau_2$.
A \textit{circle}\index{circle} of $\GH$ is the image of a homeomorphic embedding of $\S^1$ into $\GH$. By Lemma~\ref{SigmaArcEdgesDense} every circle of $\GH$ is a standard subspace of $\GH$.
If $C$ is a circle of $\GH$, then $E(C)$ is a \textit{circuit}.

\begin{obs}
If $T$ is an NST of $G$ and $\Tau$ is the closure of $T$ in $\GH$, then $\le_T$ naturally extends to an ordering $\le_{\Tau}$ of $\Tau$. Furthermore, by Theorem~\ref{InfIndPaths}, all the sets $X_\upsilon$ and $\Delta(\omega)\cup\{\omega\}$ (where $\upsilon\in\Upsilon$ and $\omega\in\Omega$) form chains in $\le_{\Tau}$.
Hence if $f=ut$ is an edge of $T$ with $u<_Tt$, then all edges of $D_f^{\Tau}$ (in particular those of $D_f^{\Tau}\cap\AuxE(G)$) are incident with $\lceil u\rceil_T$.
\end{obs}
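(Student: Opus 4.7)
The plan is to define $\le_\Tau$ extending $\le_T$ in the natural way: vertices of $T$ retain their tree order, each inner point of an edge $uv\in E(T)$ with $u<_T v$ is inserted strictly between $u$ and $v$, and every end $\omega$ is placed strictly above each vertex of its normal ray $R_\omega^T$ while being incomparable with all other points of $\Tau$. Transitivity and antisymmetry are routine once one recalls that two distinct normal rays of $T$ become disjoint beyond a finite common initial segment, so no two ends are made comparable.

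For the chain claims I would combine Theorem~\ref{InfIndPaths} with the classical NST-separator argument: if $x,y\in V(T)$ are $\le_T$-incomparable, then $\lfloor x\rfloor_T\cap\lfloor y\rfloor_T$ is a finite vertex set separating $x$ from $y$ in $G$, so no infinite family of independent $x$--$y$ paths can exist. But Theorem~\ref{InfIndPaths} supplies precisely such a family whenever $\{x,y\}\subseteq X_\upsilon$ for some $\upsilon\in\Upsilon$ or whenever both $x,y$ dominate a common end. Hence $X_\upsilon$ and $\Delta(\omega)$ are $\le_T$-chains. To adjoin $\omega$ on top of $\Delta(\omega)$, I would show that every $u\in\Delta(\omega)$ actually sits on $R_\omega^T$: were $u$ incomparable in $\le_T$ with some $v\in R_\omega^T$, the finite set $\lfloor u\rfloor_T\cap\lfloor v\rfloor_T$ would separate $u$ from the tail $vR_\omega^T$, contradicting that $u$ dominates $\omega$. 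Thus $\Delta(\omega)\cup\{\omega\}$ lies on the chain $V(R_\omega^T)\cup\{\omega\}$ of $\le_\Tau$.

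For the fundamental cut, removing $\mathring{f}$ disconnects $\Tau$ into $\Tau^+=\overline{T_t}^{\,\GH}$ and $\Tau^-=\overline{T_u}^{\,\GH}$, where $T_u,T_t$ are the two subtrees of $T-f$; by the normality of $T$, $\Tau^+$ is precisely $\lceil t\rceil_T$ together with the inner points of the edges of $T[\lceil t\rceil_T]$ and the ends $\omega$ with $t\in R_\omega^T$. Now take any $e\in D_f^\Tau$. If both endvertices of $e$ are vertices of $G$, then $e\in E(G)\cup\AuxE(\cX)$, and either the NST property (for $E(G)$) or the first chain claim (for $\AuxE(\cX)$) gives $\le_T$-comparability of its endpoints; the endpoint lying in $\Tau^+$ must then be in $\lceil t\rceil_T\subseteq\lceil u\rceil_T$. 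If instead $e=v\omega\in\AuxE(\Omega)$, domination together with the second chain claim gives $v\in R_\omega^T$; the only way for $v$ and $\omega$ to land on opposite sides of $\mathring{f}$ is for $v<_T t$ (so $v\le_T u$) together with $t\in R_\omega^T$, which forces $u\in R_\omega^T$ and hence $u\le_\Tau\omega$, placing $\omega$ in the up-closure of $u$ inside $\Tau$. The main obstacle, and the one requiring the most care, is precisely this last $\AuxE(\Omega)$-case, and in particular the convention by which the end-endpoint $\omega$ of such an auxiliary edge is counted as incident with $\lceil u\rceil_T$ (namely via $u\in R_\omega^T$, i.e.\ via the closure of $\lceil u\rceil_T$ inside $\Tau$ under $\le_\Tau$).
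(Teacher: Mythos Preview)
The paper states this as an observation and gives no proof, so there is nothing to compare your argument against; your approach is the natural one and is essentially correct.

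Two small points. First, in the separator argument you want the common \emph{ancestors} $\lceil x\rceil_T\cap\lceil y\rceil_T$, not $\lfloor x\rfloor_T\cap\lfloor y\rfloor_T$ (which is empty for incomparable $x,y$); with the corrected notation your use of Theorem~\ref{InfIndPaths} goes through as written. Second, your final paragraph over-complicates the $\AuxE(\Omega)$ case. You have already shown that $v\le_T u$, i.e.\ $v\in\lceil u\rceil_T$; this alone makes the edge $v\omega$ incident with $\lceil u\rceil_T$, which is exactly what the statement asserts. There is no need for any ``convention'' about counting $\omega$ as incident with $\lceil u\rceil_T$ via closures---the vertex endpoint $v$ does the job, just as in the $E(G)$ and $\AuxE(\cX)$ cases. (Indeed, the very next lemma in the paper uses the observation in precisely this form: $D_f^\Tau=E_{\GG}(\lceil u\rceil_T,\lfloor t\rfloor_T\cup\Omega(\lceil u\rceil_T,\{\lfloor t\rfloor_T\}))$, with the $\lceil u\rceil_T$-end always a genuine vertex.)
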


\begin{lemma}[$\L$]\label{GH:NSTisTST}
Let $G$ be a any graph and let $T$ be an NST of $G$. Then the closure of $T$ in $\GH$ is a TST of $G$ with respect to $\GH$.
\end{lemma}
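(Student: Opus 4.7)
Set $\Tau := \overline{T}^{\GH}$. The plan is to proceed in three steps: identify $\Tau$ explicitly, prove arc-connectedness, and prove uniqueness of arcs.

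First, I would check that $\Tau = T \cup \Omega$ (where $T$ refers to its 1-complex). The inclusion of $T$ is immediate. Every end $\omega$ lies in $\Tau$ because any basic open neighborhood $\hat{\aC}_\epsilon(X,\omega)$ eventually contains a tail of the normal ray $R_\omega^T \subseteq T$. Conversely, inner points of auxiliary edges and of edges of $G-T$ have basic open neighborhoods that are intervals strictly inside those edges, so they miss $T$. In particular $V(G) \subseteq \Tau$.

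Second, arc-connectedness follows by exhibiting, for any two points $\xi_1, \xi_2 \in \Tau$, a canonical `tree arc' $P(\xi_1,\xi_2)$: the $T$-path between two vertices; the finite $T$-path from a vertex $u$ to the meet $m = u \wedge_T R_\omega^T$ concatenated with $mR_\omega^T \cup \{\omega\}$ for a vertex--end pair; and an analogous construction from the meet of two normal rays for two ends. Continuity at an end follows because every basic open neighborhood of $\omega$ contains a tail of $R_\omega^T$.

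The crux is unique arc-connectedness, which I would reduce to the following structural claim: for every $f \in E(T)$, if $T_1, T_2$ are the two components of $T - \mathring{f}$, then $\Tau - \mathring{f} = \overline{T_1}^{\GH} \uplus \overline{T_2}^{\GH}$ is a partition into two clopen subsets. The only nontrivial part is disjointness, i.e.\ that no end lies in both closures; I expect this to be the main obstacle. It should follow from the NST property that the down-closure $\lfloor t \rfloor_T$ separates $\lceil t \rceil_T$ from its complement in $G$, which forces a vertex-sequence in $T$ converging to $\omega$ to eventually lie in $\lceil v_{n+1} \rceil_T$ for $R_\omega^T = v_0 v_1 \ldots$, and hence to be entirely on the side of $f$ containing the tail of $R_\omega^T$.

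With this claim in hand, uniqueness is short. Given any arc $A$ from $\xi_1$ to $\xi_2$ in $\Tau$ and the canonical arc $P := P(\xi_1,\xi_2)$: for every $f \in E(P)$ the points $\xi_1, \xi_2$ lie on different sides of $\Tau - \mathring{f}$, so connectedness of $A$ forces $A$ to meet $\mathring{f}$; the interval structure of $\mathring{f}$ together with injectivity of a homeomorphism $\sigma \colon [0,1] \to A$ then forces all of $\mathring{f}$ into $A$ (any maximal parameter-subinterval mapping into $\mathring{f}$ either traverses $\bar f$ end-to-end or reaches a boundary of $[0,1]$, and the latter is ruled out since $\xi_1, \xi_2$ are not inner points). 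Hence $P \subseteq A$. Choosing $\sigma$ with $\sigma(0) = \xi_1, \sigma(1) = \xi_2$, the set $\sigma^{-1}(P)$ is connected (continuous image of the connected $P$ under $\sigma^{-1}$) and contains $0$ and $1$, so $\sigma^{-1}(P) = [0,1]$, giving $A = P$.
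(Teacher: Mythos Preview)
Your argument is correct and rests on the same structural fact the paper uses—that deleting a tree edge $f=ut$ (with $u<_T t$) disconnects $\Tau\setminus\mathring f$, because the finite down-closure $\lceil u\rceil_T$ separates the up-closure $\lfloor t\rfloor_T$ from the rest of $G$ by normality. The paper packages this as an acyclicity argument: assume a circle $C\subseteq\Tau$, pick an edge $f$ on it via the density Lemma~\ref{SigmaArcEdgesDense}, and exhibit an explicit open bipartition of the arc $C\setminus\mathring f$ using the basic sets $\cO_{\GH}(\lceil u\rceil_T,\{\lfloor t\rfloor_T\},1/2)$ and its complement. You instead prove uniqueness directly by showing every arc contains, and hence equals, the canonical tree arc; this is equivalent and equally clean, though it needs slightly more bookkeeping.

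Two small things to tidy: your description of the NST separation has $\lfloor\cdot\rfloor_T$ and $\lceil\cdot\rceil_T$ swapped relative to the paper's conventions (the \emph{finite} set $\lceil u\rceil_T$ is the separator, not $\lfloor t\rfloor_T$); and your uniqueness step tacitly assumes $\xi_1,\xi_2\notin\mathring E(\GG)$, which should be stated as the routine reduction it is before concluding that $\Tau$ is uniquely arc-connected.
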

\begin{proof}
Let $\Tau$ denote the closure of $T$ in $\GH$. 
Using normal rays it is straightforward to show that $\Tau$ is arc-connected, so assume for a contradiction that there is some circle $C\subseteq\Tau$.
By Lemma~\ref{SigmaArcEdgesDense} we may pick some edge $f=ut$ of $\GG$ which $C$ traverses. Without loss of generality we have $u<_Tt$ since $f$ is an edge of the graph $G$ by choice of $C$. Now consider the arc $A:=C\setminus\mathring{f}$. Since $T$ is an NST we know that $D_f^{\Tau}=E_{\GG}(X,D\cup\Omega(X,\{D\}))$ where $X:=\lceil u\rceil_T\in\cX$ and $D:=\lfloor t\rfloor_T$. But then
\begin{align*}
\Big\{\cO_{\GH}\big(X,\{D\},1/2\big)\enspace,\enspace\cO_{\GH}\big(X,\cC_X\setminus\{D\},1\big)\cup\bigcup_{x\in X}\cO_{\GG}\big(x,1/2\big)\Big\}
\end{align*}
induces an open bipartition of $A$ since $A\subseteq\Tau\setminus\mathring{f}\subseteq\GH\setminus(\mathring{D}_f^{\Tau}\cup\mathring{\AuxE}(G))$.
\end{proof}

\subsection{Construction of auxiliary arcs}\label{subsecAuxArcConstr}

The aim of this section is to construct an auxiliary arc between any two distinct vertices of $G$ which are not finitely separable. In order to do this, we will approximate a topological path in $\overline{\AuxE}$ between the two vertices via a countable linear ordering on some special set of vertices. Then we `fill in the gaps' of that linear ordering with inner edge points of auxiliary edges and ends of $G$, yielding a topological path between the two vertices in $\overline{\AuxE}$. Finally, since we cannot guarantee injectivity at the ends of $G$, we involve general topology to obtain the desired auxiliary arc.\\

Suppose that $x$ and $y$ are two distinct vertices of $G$ with $x\sim y$ and denote by $\cP$ the set of all $x$-$y$ paths in $G$. Every path $P\in\cP$ naturally induces a linear ordering $\le_P$ on its vertex set with $x<_Py$, and for each $X\in\cX$ we denote by $\le_P^X$ the linear ordering on $X\cap V(P)$ induced by $\le_P$. Furthermore, for every $X\in\cX$ we define a map $\psi_X$ with domain $\cP$ by letting
\begin{align*}
\psi_X(P):=(X\cap V(P),\le_P^X)
\end{align*}
for each $P\in\cP$. In addition, for every $X\subseteq X'\in\cX$ we set up a function $\varphi_{X',X}\colon \im(\psi_{X'})\to\im(\psi_X)$ by letting
\begin{align*}
\varphi_{X',X}((Y',\le')):=(X\cap Y',{\le'}\cap (X\cap Y')^2)
\end{align*}
for every $(Y',\le')\in\im(\psi_{X'})$. Hence $\{\im(\psi_X),\varphi_{X',X}\}$ is an inverse system and the diagram
\begin{center}
\begin{tikzpicture}[pil/.style={
           ->,
           thin,
           shorten <=2pt,
           shorten >=2pt,}]
\node (1) {$\cP$};
\node (2) [right=of 1]{$\im(\psi_{X'})$};
\node (3) [below=of 2]{$\im(\psi_X)$};
\path[-stealth]
(1) edge node [above] {$\psi_{X'}$} (2)
(1) edge node [below left] {$\psi_X$} (3)
(2) edge node [right] {$\varphi_{X',X}$} (3);
\end{tikzpicture}
\end{center}
is easily seen to commute.

Since $x\sim y$ holds, we inductively find some countably infinite subset $\cQ$ of $\cP$ consisting of edge-disjoint $x$-$y$ paths.
For every $X\in\cX$ we let $\aL_X$ be the set of all $\cL\in\im(\psi_X)$ with $\psi_X^{-1}(\{\cL\})\cap\cQ$ infinite, i.e. $\cL$ is in $\aL_X$ if and only if $\psi_X$ sends some infinitely many paths of $\cQ$ to $\cL$. 
Then it is easy to check that $\{\aL_X,\varphi_{X',X}\rest\aL_{X'}\}$ is an inverse system. Clearly, every $\im(\psi_X)$ is finite, and so is every $\im(\psi_X\rest\cQ)$. Hence all $\aL_X$ are non-empty by pigeon-hole principle, so the Generalized Infinity Lemma~(\ref{GIL}) yields some 
\begin{align}\label{bundle}
(\cL_X\,|\,X\in\cX)\in\invLim\A{L}_X.
\end{align}

\begin{theorem}\label{SigmaArcConstruction}
Let $G$ be any graph, and let $x$ and $y$ be two distinct vertices of $G$. Then $x\sim y$ if and only if there is some auxiliary arc from $x$ to $y$.
\end{theorem}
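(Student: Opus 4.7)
If $A\subseteq\overline{\AuxE}^{\;\GH}$ is an auxiliary arc from $x$ to $y$, then $A$ avoids $\mathring{E}(G)$, since auxiliary edges are internally disjoint from $E(G)$ and the only limit points of $\AuxE$ in $\GH$ are vertices and ends. So if some finite cut $F\subseteq E(G)$ with sides $V_1,V_2$ separated $x$ and $y$, the arc $A$ would lie in $\GH\setminus\mathring{F}$ while meeting both $\overline{V_1}^{\;\GH}$ and $\overline{V_2}^{\;\GH}$, contradicting Lemma~\ref{cutJAL}. Hence $x\sim y$.

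\textbf{Hard direction, aggregating the data.} Given $x\sim y$ and the compatible family $(\cL_X)_{X\in\cX}$ from~(\ref{bundle}), denote by $Y_X$ the first coordinate of $\cL_X$ and put $Y:=\bigcup_{X\in\cX}Y_X$. Since $Y_X\subseteq V(P)$ for some $P\in\cQ$, we have $Y\subseteq\bigcup_{P\in\cQ}V(P)$, which is countable. Compatibility of the family assembles the local linear orders into a single linear order $\le_Y$ on $Y$ with $x=\min Y$ and $y=\max Y$. Choose an order-preserving injection $\iota\colon Y\hookrightarrow\I$ with $\iota(x)=0$ and $\iota(y)=1$, and apply Lemma~\ref{UnitIntervalBlowup} to blow up each $\le_Y$-jump into a nondegenerate interval of a reparametrized unit interval.

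\textbf{Defining the path.} I then construct $\sigma\colon\I\to\overline{\AuxE}^{\;\GH}$ case-by-case. Images $\iota(v)$ are sent to $v\in Y$. For a $\le_Y$-consecutive pair $u<_Yv$, parametrize the associated blown-up interval along an auxiliary arc from $u$ to $v$ in $\overline{\AuxE}^{\;\GH}$; such an arc exists because the infinitely many $P\in\cQ$ satisfying $\psi_{\{u,v\}}(P)=\cL_{\{u,v\}}$ yield infinitely many edge-disjoint $u$-$v$ subpaths, and ``consecutive in $Y$'' forbids any vertex $w\neq u,v$ from lying on infinitely many of them (else $X=\{u,v,w\}$ would force $w\in Y$ with $u<_Yw<_Yv$), so a greedy extraction produces infinitely many independent $u$-$v$ paths; Theorem~\ref{InfIndPaths} then places $\{u,v\}$ in a common critical set (giving a single auxiliary edge $(\{u,v\},X)$) or assigns $u,v$ a common dominated end $\omega$ (giving the two-edge arc $u\omega\cup\omega v$). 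Every remaining $\lambda\in\I$ corresponds to a two-sided $\le_Y$-cut, and is sent to the end $\omega_\lambda$ of $G$ determined by the direction $X\mapsto C(X,\omega_\lambda)$, where $C(X,\omega_\lambda)$ is the component of $G-X$ containing every $Y$-vertex sufficiently close to $\lambda$. Well-definedness follows because, for $v_n\in Y$ bracketed by the $\cL_X$-consecutive pair $v^-,v^+$ corresponding to $\lambda$, any path $P\in\cQ$ with $\psi_{X\cup\{v_n\}}(P)=\cL_{X\cup\{v_n\}}$ visits $v^-$, $v_n$, $v^+$ in order along $P$ and hits $X$ only at $v^-,v^+$, placing $v_n$ in the component of $G-X$ determined by $v^-,v^+$ alone; compatibility with $\phi_{X',X}$ is automatic, and the Diestel-Kühn correspondence (used in the proof of Lemma~\ref{ctblNbhdBase}) promotes the resulting direction to an end.

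\textbf{Continuity and extraction.} Continuity of $\sigma$ at $Y$-images and at inner auxiliary-edge points is immediate. At a limit point $\lambda$, a basic neighbourhood $\hat{\aC}_\epsilon(X,\omega_\lambda)$ contains every sufficiently $\le_Y$-nearby $Y$-vertex (which sits in $C(X,\omega_\lambda)$), together with each auxiliary edge or two-edge auxiliary arc between consecutive such vertices (any intermediate end belongs to $\Omega(X,C(X,\omega_\lambda))$, since its dominating vertices lie in $C(X,\omega_\lambda)$), so $\sigma^{-1}(\hat{\aC}_\epsilon(X,\omega_\lambda))$ contains an open interval around $\lambda$. Once $\sigma$ is continuous, it is a topological path from $x$ to $y$ inside $\overline{\AuxE}^{\;\GH}$; since $\GH$ is Hausdorff, Lemma~\ref{PathHDArc} produces an arc from $x$ to $y$ inside $\sigma[\I]\subseteq\overline{\AuxE}^{\;\GH}$, the desired auxiliary arc. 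I expect the continuity verification at the limit points and the greedy independent-path extraction required for each consecutive pair to be the technically delicate steps of the proof.
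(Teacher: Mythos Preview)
Your overall strategy matches the paper's proof: same backward direction via Lemma~\ref{cutJAL}, same use of the bundle $(\cL_X)_{X\in\cX}$ to obtain a countable linearly ordered set (the paper writes $V^*$ for your $Y$), same plan of embedding into $\I$, defining a path $\sigma$ case-by-case, and extracting an arc via Lemma~\ref{PathHDArc}. Your handling of $\le_Y$-consecutive pairs via Theorem~\ref{InfIndPaths} and of genuine two-sided cuts via a direction is essentially the content of the paper's Claims~1 and~2.

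There is, however, a genuine gap. Your case split ``consecutive pair / two-sided cut'' misses the \emph{one-sided limits} of $(Y,\le_Y)$: vertices $u\in Y$ with no immediate successor (resp.\ predecessor) that are approached from the right (resp.\ left) by a sequence $v_n\in Y$. Such $u$ certainly occur --- for instance $Y$ can have order type $\omega+1$. At $\iota(u)$ your map sends $\iota(u)\mapsto u$ and $\iota(v_n)\mapsto v_n$, but distinct vertices of $G$ never lie in a common basic neighbourhood $\cO_{\GG}(u,\epsilon)$ in $\GH$, so your $\sigma$ is \emph{not} continuous at $\iota(u)$, contrary to your claim that ``continuity at $Y$-images is immediate''. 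Invoking Lemma~\ref{UnitIntervalBlowup} does not help here: that lemma inflates designated \emph{points} into intervals (it is used in the paper only in Lemma~\ref{GqArcExtension}, not in the present proof), and even if applied to the left endpoints of jumps it neither removes blanks nor creates room next to one-sided limit vertices.

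The paper's remedy is to force the embedding of $V^*$ into $\I$ to be \emph{discrete}: it first embeds $(V^*\times\{-1,0,1\},\preceq)$ order-preservingly into $\Q$ and then shrinks away all ``$\iota$-blanks''. After this, every $\iota(u)$ is isolated, and for a one-sided limit $u$ the adjacent interval is bounded by an accumulation point $\mu\in\cA$. Claim~2 (via the Star--Comb Lemma applied to a carefully built locally finite subgraph assembled from the bundle) produces an end $\omega$ as a subsequential limit of the $v_n$, and Claim~3 shows that $u\in\Delta(\omega)$. The auxiliary edge $u\omega\in\AuxE(\Omega)$ then parametrises $[\iota(u),\mu]$ (the paper's cases~(iia)/(iib)), restoring continuity. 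These ingredients --- the discrete-embedding trick and Claims~2 and~3 --- are exactly what your proposal is missing.
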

\begin{proof}
The backward direction is immediate from Lemma~\ref{cutJAL}.

For the forward direction suppose that $x\sim y$ holds and consider the family 
from~(\ref{bundle}) constructed above. Write $(L_X,\le_X)=\cL_X$ for every $X\in\cX$ and put $V^*=\bigcup_{X\in\cX}L_X$. By standard inverse limit arguments it is straightforward to check that ${\le}:=\bigcup_{X\in\cX}{\le}_X$ is a linear ordering on $V^*$ with least element $x$ and greatest element $y$. Since $V^*$ is a subset of $V[\cQ]$ and $\cQ$ is countable, we know that $V^*$ is countable.

If $V^*$ is finite, then $x\transcl y$ holds: Otherwise Corollary~\ref{ApproxOnV} yields some $Y\in\cX$ disjoint from $V^*$ such that $Y$ separates $x$ and $y$ in $G-E(V^*)$. 
Set $Z=V^*\cup Y$.
Then we have $L_Z\cap Y=\emptyset$ since $Y$ avoids $V^*$. 
Pick some $P\in\cQ$ edge-disjoint from the finite set $E(V^*)$ with $\psi_Z(P)=\cL_Z$.
Then $P$ is an $x$--$y$ path which avoids both $Y$ and $E(V^*)$, which is impossible since $Y$ separates $x$ and $y$ in $G-E(V^*)$. Hence $x\transcl y$ holds, so by Theorem~\ref{InfIndPaths} there exists a path from $x$ to $y$ in $\GG$ using only auxiliary edges. In particular, we find an auxiliary arc from $x$ to $y$. 

Therefore, we may assume that $V^*$ is infinite.
Now consider the set
\begin{align*}
M:=(V^*\times\{-1,0,1\})\setminus \{(x,-1),(y,1)\}
\end{align*}
and let $\preceq$ be the linear ordering on $M$ induced by the lexicographic ordering on $V^*\times\{-1,0,1\}$, where $V^*$ is linearly ordered by $\le$ and $\{-1,0,1\}$ inherits its ordering from $\Z$.
Since $M$ is countable we find some order preserving injection $\iota_M\colon M\hookrightarrow\I\cap\Q$ with $\iota_M((x,0))=0$ and $\iota_M((y,0))=1$. 
Note that $\iota_M[V^*\times\{0\}]$ is discrete\footnote{i.e. every point in $\iota_M[V^*\times\{0\}]$ is isolated} in $\I$. Next define $\iota_{V^*}\colon V^*\hookrightarrow\I\cap\Q$ by letting $\iota_{V^*}(u):=\iota_M((u,0))$ for every $u\in V^*$. Then the linear ordering on $V^*$ inherited from $\I$ via $\iota_{V^*}$ coincides with $\le$. Furthermore, the image of $\iota_{V^*}$ is discrete in $\I$, so for every $u\in V^*$ we may pick some $\delta_u>0$ such that $(\iota_{V^*}(u)\pm \delta_u)$ meets the image of $\iota_{V^*}$ precisely in $\iota_{V^*}(u)$.

An $h$-\textit{blank} (where $h$ is some function into $\R$) is a non-empty open interval $(a,b)\subseteq\I\setminus \im(h)$ with $\{a,b\}\subseteq\overline{\im (h)}\setminus\im(h)$. Clearly, any two $h$-blanks are disjoint. Since every $h$-blank contains a rational number, there are only countably many $h$-blanks, so we may consider some enumeration $\{B_n\,|\,n<\theta\}$ of all $\iota_{V^*}$-blanks where $\theta\le\aleph_0$. Also note that every $\iota_{V^*}$-blank avoids $(\iota_{V^*}(u)\pm\delta_u)$ for every $u\in V^*$.
In order to get rid of the $\iota_{V^*}$-blanks, define $\iota\colon V^*\hookrightarrow\Q$ as follows: let $\iota(x):=0$ and for every other $u\in V^*$ pick
\begin{align*}
\iota(u)\in\left(\iota_{V^*}(u)-\sum\Big\{b-a\,\Big|\,n<\theta,B_n=(a,b),b<\iota_{V^*}(u)\Big\}\pm\delta_{u}/2\right)\cap\Q.
\end{align*}
Then the image of $\iota$ is again discrete by choice of $\delta_u/2$. Furthermore $\iota$ is injective and the ordering on $V'$ inherited from $\I$ via $\iota$ coincides with $\le$. 
By construction, there are no $\iota$-blanks. 
For ease of notation we that $\iota(y)=1$ holds.
As before, for every $u\in V^*$ we pick some $\epsilon_u>0$ such that $(\iota(u)\pm \epsilon_u)$ meets the image of $\iota$ precisely in $\iota(u)$, e.g. set $\epsilon_u=\delta_u/2$.
Let $\cA:=\overline{\im(\iota)}\setminus \im(\iota)$ which is the set of all accumulation points of the discrete image of $\iota$ in $\I$. In order to obtain a topological $x$--$y$ path in $\overline{\AuxE}$ we need some more information about $\iota$ first:

\begin{claim_auxArc}\label{easyCase}
Let $u\in V^*$ be given and suppose that there is some $t\in V^*$ such that $\iota(u)<\iota(t)$ and $(\iota(u),\iota(t))\cap \im(\iota)=\emptyset$. Then there is some $\upsilon\in\cU$ with $u\between\upsilon\between t$.
\end{claim_auxArc} 
\begin{proof}[Proof of the Claim]\renewcommand{\qedsymbol}{$\diamond$}
Assume not for a contradiction. Then by Lemma~\ref{PathSysXtau} we find some $Y\in\cX$ with $u,t\notin Y$ such that $Y$ separates $u$ and $t$ in $G-ut$. 
Put $Z=\{u,t\}\cup Y$ and pick some $P\in\cQ$ with $ut\notin E(P)$ and $\psi_Z(P)=\cL_Z$. 
Then $uPt$ meets $Y$ in some vertex $r$. Hence $r\in L_Z$ implies $r\in V^*$, but then $P$ witnesses $u<_Zr<_Zt$ and thus $\iota(u)<\iota(r)<\iota(t)$, a contradiction.
\end{proof}

\begin{claim_auxArc}\label{SeriesOmega}
Let $\mu\in\cA$ be given together with a sequence $(t_n)_{n\in\N}$ in $V^*$ such that $\iota(t_n)\to \mu$ for $n\to\infty$. Then $(t_n)_{n\in\N}$ has a subsequence which converges to some $\omega\in\Omega(G)$ in $\GH$.
\end{claim_auxArc}
\begin{proof}[Proof of the Claim]\renewcommand{\qedsymbol}{$\diamond$}
Without loss of generality we may assume that our sequence satisfies $\mu<q(t_{n+1})<q(t_n)$ for all $n$.
Inductively we define $X_n\in\cX$ and $P_n\in\cQ$ by letting 
\begin{align*}
X_n:=\{t_{n+1}\}\cup\bigcup_{k<n}V(t_{k+1}P_kt_k)
\end{align*}
and picking some $P_n\in\cQ$ with $\psi_{X_n}(P_n)=\cL_{X_n}$, for every $n\in\N$. Then $Q_n:=t_{n+1}P_nt_n$ avoids all $t_k$ with $k<n$ since otherwise $P_n$ would witness $q(t_{n+1})<q(t_k)<q(t_n)$ for some $k<n$, which is impossible. 
Consider the connected infinite subgraph $H:=\bigcup_{n\in\N}Q_n$ of $G$. We claim that $H$ is even locally finite:

Assume not for a contradiction, witnessed by some vertex $z$ of $H$ of infinite degree. This vertex is none of the $t_n$ since each $Q_n$ avoids all $t_k$ with $k<n$. 
Let $\cI\subseteq\N$ be the infinite set of all $n\in\N$ with $z\in \mathring{Q_n}$, and let $i:=\min \cI$. Then $z$ is in $X_n$ for all $n>i$. Pick some $m\in \cI$ with $m>i$. 
Then $P_m$ contains $z$ while $z\in X_m$ and $\psi_{X_m}(P_m)=\cL_{X_m}$ hold, so $P_m$ witnesses $z\in L_{X_m}\subseteq V^*$. 
But then for every $n\in\cI$ with $n>m$ we know that $z\in \mathring{Q_n}$ implies $\iota(t_{n+1})<\iota(z)<\iota(t_n)$, which is impossible since $\cI$ is infinite.
Hence $H$ is locally finite.

Therefore, since $H$ is connected, infinite and locally finite, applying the Star-Comb Lemma (\cite[Lemma 8.2.2]{Bible}) yields a comb with all teeth in $\{t_n\,|\,n\in\N\}$. 
Let $R$ be its spine, and let $\omega$ be the end of $G$ containing $R$. Since every basic open neighbourhood of $\omega$ in $\GH$ is of the form $\hat{\aC}_\epsilon(X,\omega)$ for some $X\in\cX$, only finitely many teeth of the comb can lie outside of this neighbourhood.
Therefore, the teeth  of the comb form a subsequence of $(t_n)_{n\in\N}$ which converges to $\omega$ in $\GH$ as desired.
\end{proof}

\begin{claim_auxArc}\label{SeriesOmegaExt}
In the context of Claim~\ref{SeriesOmega}, suppose that there is some $u\in V^*$ with $\iota(u)=\lambda<\mu$ and $(\lambda,\mu]\cap \im(\iota)=\emptyset$.
Then $u$ dominates $\omega$.
\end{claim_auxArc}
\begin{proof}[Proof of the Claim]\renewcommand{\qedsymbol}{$\diamond$}
Assume not for a contradiction, witnessed by some $Z\in\cX$. 
Without loss of generality the whole sequence $(t_n)_{n\in\N}$ converges to $\omega$ in $\GH$.
Pick $N\in\N$ such that $t_n\in \hat{\aC}(Z,\omega)$ holds for all $n\ge N$. 
For every $n\ge N$ let $Y_n:=\{u,t_n\}\cup Z$ and pick some $W_n\in\cQ$ with $\psi_{Y_n}(W_n)=\cL_{Y_n}$. 
Then $uW_nt_n$ meets $Z$ in some $z_n$, so $z_n\in V^*$ and $\iota(u)<\iota(z_n)<\iota(t_n)$ follow. 
By pigeon-hole principle we find some infinite $\cJ\subseteq\N_{\ge N}$ with $z_n=z_m=:z$ for all $n,m\in \cJ$. Then $\iota(u)<\iota(z)<\iota(t_n)$ holds for all $n\in\cJ$. Since $\cJ$ is infinite, this implies $\iota(z)\in(\lambda,\mu]$, a contradiction. Therefore, $u$ dominates $\omega$.
\end{proof}

Define $\sigma\colon \I\to\GH$ as follows: 
On the image of $\iota$ we let $\sigma(\iota(u)):=u$ for every $u\in V^*$, so the diagram
\begin{center}
\begin{tikzpicture}[pil/.style={
           ->,
           thin,
           shorten <=2pt,
           shorten >=2pt,}]
\node (1) {$V^*$};
\node (2) [right=of 1]{$\I$};
\node (3) [below=of 2]{$\GH$};
\path[-stealth]
(1) edge node [above] {$\iota$} (2)
(1) edge node [below left] {$\id_{V^*}$} (3)
(2) edge node [right] {$\sigma$} (3);
\end{tikzpicture}
\end{center}
commutes.
Next we write the set $\I\setminus \im(\iota)$ as disjoint union $\biguplus_{j\in J}I_j$ of maximal non-empty intervals $I_j$ over some index set $J$. Let $a_j:=\inf I_j$ and $b_j:=\sup I_j$ for all $j\in J$. 
We distinguish several cases (see Fig.~\ref{fig:cases} for an illustration):
\begin{figure}[H]
    \centering
    \def\svgwidth{\columnwidth}
    \scalebox{.9}{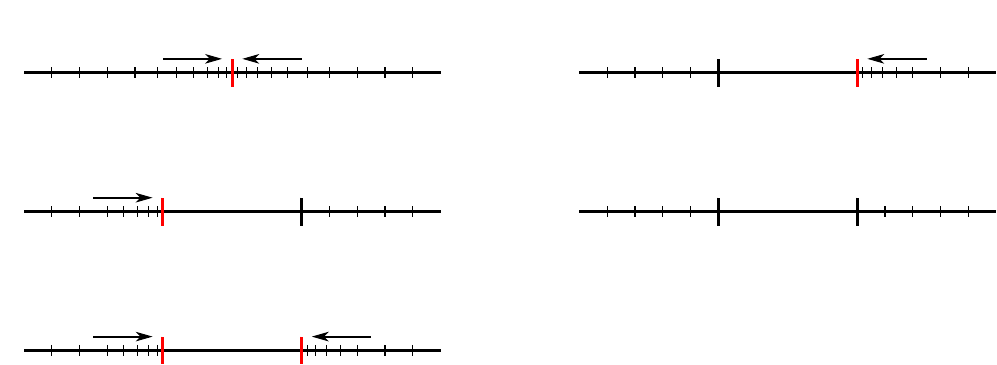}
    \caption{The cases; red indicates elements of $\cA$.}
    \label{fig:cases}
\end{figure}
\begin{enumerate}
\item[(i)] $a_j=b_j$, i.e. $I_j=\{a_j\}=\{b_j\}$
\item[(iia)] $a_j<b_j$ with $a_j\in \im(\iota)$ and $b_j\in \cA$, i.e. $I_j=(a_j,b_j]$
\item[(iib)] $a_j<b_j$ with $a_j\in\cA$ and $b_\alpha\in\im(\iota)$, i.e. $I_j=[a_j,b_j)$
\item[(iii)] $a_j<b_j$ with $a_j\in \im(\iota)$ and $b_\alpha\in\im(\iota)$, i.e. $I_j=(a_j,b_j)$
\end{enumerate}
There is no case (iv) covering `$a_j<b_j$ with $a_j\in\cA$ and $b_j\in\cA$, i.e. $I_j=[a_j,b_j]$' since there are no $\iota$-blanks. 
Now given $j\in J$, we define $\sigma\rest I_j$ as follows:

First we consider \textbf{case (i)}, i.e. $I_j$ is a singleton. Let $a_j$ take on the role of $\mu$ in Claim~\ref{SeriesOmega} and pick an arbitrary sequence $(t_n)_{n\in\N}$ in $V^*$ witnessing $a_j\in\cA$, so Claim~\ref{SeriesOmega} yields some $\omega\in\Omega(G)$. Then we set $\sigma(a_j)=\omega$.

Next we consider \textbf{case (iia)}, i.e. $a_j<b_j$ with $a_j\in \im(\iota)$ and $b_j\in \cA$. We pick an arbitrary sequence $(t_n)_{n\in\N}$ in $V^*$ witnessing $b_j\in\cA$, so Claim~\ref{SeriesOmega} yields some $\omega\in\Omega(G)$.
Letting $\iota^{-1}(a_j)$ take on the role of $u$ in Claim~\ref{SeriesOmegaExt} yields $u\in\Delta(\omega)$. 
Hence we let $\sigma\rest I_j$ be a homeomorphism onto the edge $u\omega$ with $\sigma(b_j)=\omega$.
We treat \textbf{Case (iib)} analogously using the symmetric analogue of Claim~\ref{SeriesOmegaExt}.

Finally we consider \textbf{case (iii)}, i.e. $a_j<b_j$ with $a_j\in \im(\iota)$ and $b_j\in \im(\iota)$.
Letting $\iota^{-1}(a_j)$ and $\iota^{-1}(b_j)$ take on the roles of $u$ and $t$ from  Claim~\ref{easyCase}, respectively, yields some $\upsilon\in\cU$ with $u\between\upsilon\between t$.
If $\upsilon$ is an end of $G$, then we write $\omega=\upsilon$ and note $\{u,t\}\subseteq\Delta(\omega)$. 
Hence we pick some $\mu\in (a_j+\epsilon_{u},b_j-\epsilon_{t})$ and let $\sigma\rest [a_j,\mu]$ and $\sigma\rest [\mu,b_j]$ be homeomorphisms onto the edges $u\omega$ and $\omega t$, respectively, such that $\sigma(\mu)=\omega$ holds.
Otherwise $\upsilon$ is an ultrafilter tangle and we let $\sigma\rest [a_j,b_j]$ be a homeomorphism onto some auxiliary edge between $u$ and $t$ such that $\sigma(a_j)=u$ and $\sigma(b_j)=t$ (such an auxiliary edge exists by Theorem~\ref{InfIndPaths}).
This completes the definition of $\sigma$.

Note that $V^*\subseteq\im(\sigma)$ holds by construction. Due to the Claims used for the definition of $\sigma$, proving that $\sigma$ is well defined amounts to verifying that $\sigma$ is continuous, and we will need the continuity of $\sigma$ in order to obtain an auxiliary arc from $\im(\sigma)$. Hence, we verify that $\sigma$ is continuous: 
\begin{claim_auxArc}
$\sigma\colon \I\to\GH$ is continuous.
\end{claim_auxArc}
\begin{proof}[Proof of the Claim]\renewcommand{\qedsymbol}{$\diamond$}
Given $\mu\in\I$ (without loss of generality $\mu\in\mathring{\I}$) and $O$ some basic open neighbourhood of $\sigma(\mu)$ in $\GH$, we have to find some basic open neighbourhood $(a,b)\subseteq \I$ of $\mu$ which $\sigma$ maps to $O$. 

If $\sigma(\mu)$ is a vertex $u$ of $G$, then we find a suitable neighbourhood of $\mu$ included in $(\mu\pm\epsilon_u)$. 
Else if $\sigma(\mu)$ is an inner edge point we are done, so finally suppose that $\sigma(\mu)=:\omega\in\Omega(G)$. 
Thus $O$ is of the form $\hat{\aC}_\epsilon(X,\omega)$ for some $X\in\cX$ and $\epsilon>0$. If $\mu$ is not in $\cA$, then $\sigma(\mu)$ was defined in case (iii) for some $j\in J$, and we find some suitable open neighbourhood of $\mu$ included in $(a_j,b_j)$. Hence we may assume that $\mu$ is in $\cA$. The rest of the proof is dedicated to this case.

Assume for a contradiction that for every $n\in\N$ there is some $\xi_n\in (\mu\pm 1/n)$ with $\sigma(\xi_n)\notin O$. 
First, we obtain a sequence $(\xi_n')_{n\in\N}$ from $(\xi_n)_{n\in\N}$ with $\sigma(\xi_n')\in V^*\setminus O$ for all $n$ and $\xi_n'\to\mu$, as follows: Let any $n\in\N$ be given.

If $\sigma(\xi_n)$ is a vertex of $G$, then we let $\xi'_n:=\xi_n$.

Else if $\sigma(\xi_n)$ is an inner edge point of some auxiliary edge $e_n$, then we pick $\xi'_n\in\im(\iota)$ such that $\sigma(\xi_n')$ is an endvertex of $e_n$ outside of $O$. For later use, we let $\ell_n$ denote the length of the closed interval $I$ containing $\xi_n$ for which $\sigma\rest I$ is a homeomorphism onto the auxiliary edge $e_n$.

Finally suppose that $\sigma(\xi_n)=\omega_n\in\Omega\setminus O$. We check two subcases:
For the first subcase suppose that $\xi_n$ is not in $\cA$, so $\sigma(\xi_n)$ was defined in case (iii) for some $j_n\in J$. 
Furthermore, $\omega_n\notin O$ implies that $\sigma[I_{j_n}]$ avoids $O$.
Let $u:=\sigma(a_{j_n})$ and $t:=\sigma(b_{j_n})$.
Then neither of $u$ and $t$ is in $O$, since otherwise $X$ would separate one of $u$ and $t$ from $\omega_n$, contradicting $\{u,t\}\subseteq\Delta(\omega_n)$. 
In particular, $\mu$ is not in $[a_{j_n},b_{j_n}]$. If $\mu<a_{j_n}$ we let $\xi_n':=a_{j_n}$, and $\xi_n':=b_{j_n}$ otherwise. 
For the second subcase suppose that $\xi_n$ is in $\cA$.
Then by definition of $\sigma$ there is some sequence $(t_m)_{m\in\N}$ in $V^*$ such that $\iota(t_m)\to \xi_n$ in $\I$ and $t_m\to \omega_n$ in $\GH$ for $m\to\infty$. Since $\GH$ is Hausdorff and $\omega_n\neq\omega$ holds due to $\omega_n\notin O$, we find some $M\in\N$ such that $t_M\notin O$ and $\iota(t_M)\in (\mu\pm 1/n)$ (which is possible due to $\xi_n\in (\mu\pm 1/n)$). Thus we let $\xi'_n:=\iota(t_M)$. This completes the subcase and the definition of the $\xi_n'$.

We still have to verify that $\xi_n'\to\mu$ holds. For this, let $\cI\subseteq\N$ be the set of all $n\in\N$ for which $\sigma(\xi_n)$ is an inner edge point of $\GG$.
Clearly, $\sigma(\mu)$ was defined in one of the cases (i), (iia) and (iib) for some $j\in J$.
If $\sigma(\mu)$ was defined in case (i), i.e. with $I_j=\{\mu\}$, then for every $\epsilon>0$ both intervals $(\mu-\epsilon,\mu)$ and $(\mu,\mu+\epsilon)$ meet $\im(\iota)$ and one easily checks that $\ell_n\to 0$ holds for $n\in\cI$ and $n\to\infty$. Therefore, $|\xi_n'-\xi_n|\to 0$ for $n\to\infty$ follows from the choice of the $\xi_n'$. Thus $\xi_n'\to\mu$ holds for $n\to\infty$.
Else if $\sigma(\mu)$ was defined in case (iia), i.e. with $a_j\in\im(\iota)$ and $b_j=\mu\in\cA$, then for every $\epsilon>0$ the interval $(\mu,\mu+\epsilon)$ meets $\im(\iota)$, but the interval $(a_j,\mu]$ does not.
By construction, $\sigma$ maps $(a_j,\mu)$ to the interior of the auxiliary-edge $\sigma(a_j)\omega$. 
Hence $\xi_n\to\mu$ together with $\sigma(\xi_n)\notin O$ implies that $\xi_n>\mu$ must hold for all but finitely many $n$, without loss of generality for all $n$. But then one easily checks as before that $|\xi'_n-\xi_n|\to 0$ holds, and thus $\xi_n'\to\mu$. Otherwise $\sigma(\mu)$ was defined in case (iib), which follows from (iia) via symmetry, completing the proof of $\xi_n'\to\mu$.

Therefore, $(\xi_n')_{n\in\N}$ is a sequence in $\I$ with $\sigma(\xi_n')\in V^*\setminus O$ for all $n$ and $\xi_n'\to \mu$ for $n\to\infty$.
Applying Claim~\ref{SeriesOmega} to $(\sigma(\xi'_n))_{n\in\N}$ and $\mu$ yields a convergent subsequence $(\sigma(\xi'_{n_k}))_{k\in\N}$ with limit $\omega'\in\Omega(G)$. Since $\GH$ is Hausdorff, the fact that no $\sigma(\xi'_{n_k})$ is in $O$ implies $\omega'\neq\omega$. 
Write $r_k=\sigma(\xi'_{n_k})$ for all $k\in\N$ and note that $\iota(r_k)\to\mu$ holds for $k\to\infty$ due to $\iota(r_k)=\iota(\sigma(\xi'_{n_k}))=\xi'_{n_k}$ and $\xi'_n\to\mu$.
Recall that Claim~\ref{SeriesOmega} was used in order to define $\sigma(\mu)=\omega$, so there is a sequence $(t_n)_{n\in\N}$ in $V^*$ with $\iota(t_n)\to\mu$ in $\I$ and $t_n\to \omega$ in $\GH$.\footnote{This is the convergent subsequence mentioned in the conclusion of Claim~\ref{SeriesOmega}, not the sequence of the same name mentioned in the premise of that claim.}
Pick some $Z\in\cX$ witnessing $\omega\neq\omega'$, and choose some $N\in\N$ such that $t_n\in \hat{\aC}(Z,\omega)$ and $r_n\in \hat{\aC}(Z,\omega')$ hold for all $n\ge N$. For every $n\ge N$ let $X_n:=\{t_n,r_n\}\cup Z$ and pick some $P_n\in\cQ$ with $\psi_{X_n}(P_n)=\cL_{X_n}$. Then $P_n$ meets $Z$ in some $z_n$ between $t_n$ and $r_n$ (i.e. $z_n\in t_nP_nr_n$ if $t_n<r_n$ and $z_n\in r_nP_nt_n$ otherwise) which yields $z_n\in L_{X_n}\subseteq V^*$, and furthermore
\begin{align*}
\min\{\iota(r_n),\iota(t_n)\}<\iota(z_n)<\max\{\iota(r_n),\iota(t_n)\}
\end{align*}
Since $Z$ is finite, by pigeon-hole principle we find some infinite $\cJ\subseteq\N_{\ge N}$ with $z_n=z_m=:z$ for all $n,m\in \cJ$. But then
\begin{align*}
\min\{\iota(r_n),\iota(t_n)\}<\iota(z)<\max\{\iota(r_n),\iota(t_n)\}
\end{align*}
holds for all $n\in \cJ$, yielding $\iota(z)=\mu$ since $\cJ$ is infinite and $\iota(r_n)\to\mu$ holds as well as $\iota(t_n)\to\mu$. Thus $\mu\in\im(\iota)$ contradicts $\mu\in\cA$ as desired. Therefore, $\sigma$ is continuous at $\mu$.
\end{proof}

Since $\GH$ is Hausdorff, so is $\im(\sigma)$, and in particular $\im(\sigma)$ is also path-connected. Hence $\im(\sigma)$ is arc-connected by Lemma~\ref{PathHDArc}, so we may let $A$ be an arc in $\im(\sigma)$ from $x$ to $y$. Since $A$ only traverses auxiliary edges, Lemma~\ref{SigmaArcEdgesDense} implies $A\subseteq\overline{\AuxE}$, so $A$ is an auxiliary arc from $x$ to $y$.
\end{proof}

We involve Lemma~\ref{PathHDArc} at the end of the proof since $\sigma$ may fail to be injective at ends. Indeed, consider the one-ended grid on $4\times\N$, and for each $k<4$ let $R_k$ denote the ray with vertex set $\{k\}\times\N$. Then let $G$ be obtained from this grid by replacing every edge of every $R_k$ and the egde $\{(1,0),(2,0)\}$ with a copy of a $K_{2,\aleph_0}$ (with the two vertices of infinite degree being identified with the endvertices of the original egde). 
\begin{figure}[H]
\centering
\includegraphics[scale=0.85,angle=90]{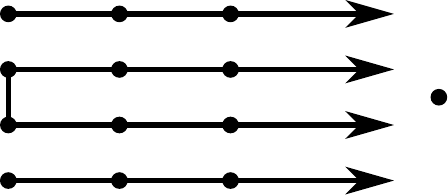}
\caption{The auxiliary edges of $G$.}
    \label{fig:whyUseHD}
\end{figure}
Then $\GH$ admits two topological $(0,0)$--$(3,0)$ paths in $\overline{\AuxE}$ (see Fig.~\ref{fig:whyUseHD}), and only one of them is an auxiliary arc, while the other can be obtained as $\im(\sigma)$ for certain choices of $\cQ$.

\subsection{Basic properties of auxiliary arcs}\label{subsecAuxArcProp}

\begin{lemma}\label{CtblAuxArcStarComb}
Let $G$ be a any graph, let $y$ be a vertex of $\GG$ and $(\xi_n)_{n\in\N}$ a sequence of vertices of $\GG$ with $\xi_n\to\omega$ in $\GH$ for some end $\omega$ of $G$. Suppose that for every $n$ we have an auxiliary arc $A_n$ from $y$ to $\xi_n$. Then one of the following holds:
\begin{enumerate}
\item There is some $k\in\N$ such that $A_k$ contains a vertex of $G$ which is the centre of an infinite star of auxiliary arcs included in the union of the $A_n$ with leaves in $\{\xi_n\,|\,n\in\N\}$. In particular, that vertex dominates $\omega$.
\item There is an auxiliary arc from $y$ to $\omega$ included in $\{\omega\}\cup\bigcup_{n\in\N}A_n$.
\end{enumerate}
In either case there exists an auxiliary arc $A$ from $y$ to $\omega$. Moreover, if all of the $A_n$ are tame, then so is $A$.
\end{lemma}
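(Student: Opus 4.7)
I would proceed by iterating a pigeonhole-and-thinning along the arcs $A_n$, read from $y$ outward. By Lemma~\ref{SigmaArcEdgesDense} each $A_n$ is determined by the linearly ordered sequence of auxiliary edges it traverses, which in turn decomposes into \emph{extended edges}: either a single $\AuxE(\cX)$-edge between two vertices of $G$, or a pair $u\omega_0,\omega_0 t$ of $\AuxE(\Omega)$-edges meeting at an end $\omega_0\in\Omega$. At each vertex $v\in V(G)$ the number of possible extended edges leaving $v$ is countable (bounded by $|\crit(\cX)|$ plus $|\{\omega\in\Omega : v\in\Delta(\omega)\}|$ times the degree of the relevant ends), so countable pigeonhole is available at every step.

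Starting at $y$, I would thin the sequence repeatedly so that more and more initial extended edges are shared. Two cases arise. In the \emph{branching case} the thinning stalls at some shared vertex $v$: infinitely many $A_n$ reach $v$, but after $v$ they use pairwise distinct next extended edges. A further greedy pass, discarding arcs whose tails $vA_n\xi_n$ meet previously kept tails beyond $v$, produces an infinite pairwise internally disjoint subfamily of auxiliary arcs from $v$ to distinct $\xi_n$'s, all contained in $\bigcup A_n$; this is the star in (i). That $v\in\Delta(\omega)$ then follows from Theorem~\ref{InfIndPaths}: the internally disjoint tails, together with any ray in $\omega$, produce an infinite $v$-fan to $\omega$, so $v$ dominates $\omega$. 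Concatenating a $y$--$v$ subarc of any particular $A_k$ with the auxiliary edge $v\omega$ then produces the promised auxiliary arc $A$ from $y$ to $\omega$.

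In the \emph{linear case}, the thinning proceeds forever, yielding a shared sequence $y=v_0,v_1,v_2,\ldots$ of vertices joined by shared extended edges. Concatenating these into a continuous map $\sigma\colon [0,1)\to\GH$ with image in $\bigcup A_n$, I would argue that $v_k\to\omega$ in $\GH$: given $\hat{\aC}_\epsilon(X,\omega)$, all but finitely many $\xi_n$ lie inside, and were infinitely many $v_k$ to lie outside, the pigeonholed tails of the $A_n$ would have to re-enter the neighbourhood, forcing an earlier branching by Lemma~\ref{vxJAL}. Extending $\sigma$ by $\sigma(1)=\omega$ yields a continuous map $[0,1]\to\GH$, and since $\GH$ is Hausdorff, Lemma~\ref{PathHDArc} produces the required auxiliary arc $A\subseteq\{\omega\}\cup\bigcup A_n$ from $y$ to $\omega$, which is conclusion (ii). Tameness is preserved in either case, since the vertex order along $A$ is either a suborder of the vertex order of some $A_k$ (star case) or order-isomorphic to $\N$ (linear case), neither of which realises the order type of the rationals. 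The main technical obstacle is coordinating the pigeonhole step at extended edges that transit through ends: one must additionally pigeonhole on the transit end $\omega_0$ and rule out a single $\omega_0\neq\omega$ serving as a shared transit infinitely often, which via Lemma~\ref{vxJAL} would force a preceding shared vertex to dominate $\omega_0$ and hence divert the arcs away from $\omega$, placing us back in the branching case after a finite number of steps.
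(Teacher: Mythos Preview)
Your approach has a genuine gap: the claimed decomposition of each $A_n$ into a discrete sequence of ``extended edges'' read off from $y$ is not valid. An auxiliary arc can contain an interior end $\omega_0$ that is \emph{not} the midpoint of a pair $u\omega_0,\omega_0 t$ of $\AuxE(\Omega)$-edges, but is instead a one- or two-sided limit of the arc's vertices in $V(G)$. Concretely, take a doubly infinite sequence $(u_i)_{i\in\Z}$ with every $\{u_i,u_{i+1}\}$ critical and with both one-sided tails lying in the same undominated end $\omega_0$; then there is an auxiliary arc from $u_{-1}$ to $u_0$ of the shape $u_{-1}u_{-2}\cdots\omega_0\cdots u_1u_0$ in which $\omega_0$ carries no auxiliary edge of the arc at all. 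Once your thinning reaches such an $\omega_0$ there is no next extended edge to pigeonhole on, and the iteration stalls. This already occurs for tame arcs (the example has vertex order type $\omega+1+\omega^*$), and for wild arcs it is decisive: the auxiliary edges along a wild arc can be ordered like the rationals, so ``the $k$-th extended edge from $y$'' is meaningless from the outset. A secondary issue is that the number of extended edges at a vertex need not be countable for uncountable $G$.

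The paper's proof sidesteps this by never decomposing the arcs. It runs a Star--Comb construction directly on the family: set $W_0=A_0$, and for $n\ge1$ let $A_n'$ be the terminal segment of $A_n$ past its last point $\varrho_n$ in the closed set $W_{n-1}$, then put $W_n=W_{n-1}\cup A_n'$. Recording the least $k(n)<n$ with $\varrho_n\in A_{k(n)}'$ gives a tree on $\N$. If some node $\ell$ has infinite degree, sequential compactness of $A_\ell'\simeq\I$ makes the $\varrho_{\ell_n}$ accumulate at some $\varrho\in A_\ell'$; the local star structure at vertices forces $\varrho\in V(G)$ with all $\varrho_{\ell_n}=\varrho$, and a separator argument via Lemma~\ref{vxJAL} shows $\varrho\in\Delta(\omega)$, yielding~(i). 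If the tree is locally finite, a K\"onig ray lets one concatenate the corresponding $A_{m_n}'$ and append $\omega$ to obtain~(ii); continuity at $\omega$ again comes from Lemma~\ref{vxJAL}. Tameness follows because a putative dense $V_\Q\subseteq V(G)\cap A$ would have to place two points in some single $A_{m_n}'$, making that $A_{m_n}$ wild. The point is that only closedness and sequential compactness of the $A_n$ are used---no discrete edge sequence is needed.
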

\begin{proof}
The proof basically mimics two proofs from the lecture course while taking care of new special cases. First, we prove the statement of the Lemma minus the `Moreover'-part.

Inductively we define a function $k:\N-\{0\}\to\N$ together with an ascending sequence $W_0\subseteq W_1\subseteq\cdots$ of closed subsets of $\GH$ 
as follows: Let $W_0:=A_0$ and suppose that we are at step $n>0$ of the construction. Pick some $\sigma_n\colon \I\to A_n$ witnessing that $A_n$ is an $y$--$\xi_n$ auxiliary arc, with $\sigma_n(0)=y$ and $\sigma_n(1)=\xi_n$, and let $\lambda_n\in\I$ be maximal with $\sigma_n(\lambda_n)\in W_{n-1}$. Such a $\lambda_n$ exists since $W_{n-1}$ is closed in $\GH$ and $A_n$ meets $y\in W_0\subseteq W_{n-1}$. Write $\varrho_n:=\sigma_n(\lambda_n)$ and $A_n':=\sigma_n[[\lambda_n,1]]$. Furthermore, let $k(n)<n$ be minimal with $\varrho_n\in A_{k(n)}'$ and let $W_n:=W_{n-1}\cup A_n'$. This completes the definition of the $W_n$.

If there is some $n$ with $\omega\in W_n$ we are done, so suppose $\omega\notin\bigcup_{n\in\N} W_n$ and consider the tree $T:=(\N,\{\{n,k(n)\}\,|\,n\ge 1\})$. 
We check two main cases:

For the first main case, suppose that $T$ is not locally finite, witnessed by some vertex $\ell$ of infinite degree.
Let $\ell_0<\ell_1<\cdots$ denote the $\omega$-sequence of all $\ell_n\in\N$ which $k$ sends to $\ell$. Since $A_\ell'$ is an arc, and hence sequentially compact, the sequence $(\varrho_{\ell_n})_{n\in\N}$ has a convergent subsequence (without loss of generality the whole sequence) with limit point $\varrho\in A_\ell'$. Clearly, $\varrho$ is not an inner edge point. 

First suppose that $\varrho$ is a vertex of $G$ and consider any basic open neighbourhood $O$ of $\varrho$ in $\GH$. 
Without loss of generality every $\varrho_{\ell_n}$ is contained in this neighbourhood.
Since $A'_\ell\cap O$ is the union of two half open partial edges at $\varrho$ and every $A'_{\ell_n}$ meets $A'_\ell$ precisely in $\varrho_{\ell_n}$, the only possibility for each $\varrho_{\ell_n}$ is $\varrho_{\ell_n}=\varrho$. 
Hence for every $n\neq m$ we have $A'_{\ell_n}\cap A'_{\ell_m}=\{\varrho\}$. 
Then $\varrho$ must dominate $\omega$: Assume not for a contradiction, so we find a witness $X\in\cX$ of $\varrho\notin\Delta(\omega)$. 
Pick $N\in\N$ such that $\varrho_{\ell_n}\in\hat{\aC}(X,\omega)$ holds for all $n\ge N$. 
Then all $A'_{\ell_n}$ with $n\ge N$ meet $X$ by Lemma~\ref{vxJAL}, so by pigeon-hole principle some two of them meet the same vertex of $X$, a contradiction. 
Therefore, $\varrho$ must dominate $\omega$, so (i) holds.
In particular, $\varrho\omega\cup A_\ell$ admits an auxiliary arc from $y$ to $\omega$.

Second suppose that $\varrho$ is an end $\omega'$ of $G$ and let $X\in\cX$ witness $\omega'\neq\omega$. Furthermore let $N\in\N$ be big enough that $\varrho_{\ell_n}\in\hat{\aC}(X,\omega)$ holds for all $n\ge N$.
Then every $A'_{\ell_n}$ with $n\ge N$ meets $X$ by Lemma~\ref{vxJAL}, 
so by pigeon-hole principle some two of them meet the same vertex of $X$, which is impossible. Hence $\varrho$ cannot be an end.
This completes the first main case.
 
For the second main case suppose that $T$ is locally finite.
Then by \cite[Proposition 8.2.1]{Bible} there is some ray $R=m_0m_1\hdots$ in $T$.
Clearly, we can see to it that $m_0=0$.
For every $n\ge 1$ let $I_n:=[1-\frac{1}{n}\,,\,1-\frac{1}{n+1})$.
Define $\sigma\colon \I\to\GH$ by letting $\sigma\rest I_n$ traverse $A_{m_n}'$ from $\varrho_{m_n}$ to $\varrho_{m_{n+1}}$, and set $\sigma(1)=\omega$. 
Then $\sigma$ is continuous: 
It suffices to show continuity at 1, so consider some basic open neighbourhood $\hat{\aC}_\epsilon(Z,\omega)$ of $\omega$ in $\GH$ and pick $N'\in\N$ such that $\xi_n\in \hat{\aC}_\epsilon(Z,\omega)$ holds for all $n\ge N'$. 
Since $Z$ is finite, Lemma~\ref{vxJAL} implies that only finitely many $A_{m_n}'$ with $m_n\ge N'$ have points outside of $\hat{\aC}_\epsilon(Z,\omega)$. 
Picking $K\ge N'$ bigger than these finitely many $m_n$ ensures that $\sigma$ maps $(1-1/K,1]$ to $\hat{\aC}_\epsilon(Z,\omega)$. 
Hence $\sigma$ defines a topological path from $y$ to $\omega$ in $\GH$. Since $\I$ is compact, $\GH$ is Hausdorff and $\sigma\colon \I\to\GH$ is a continuous injection, it follows from general topology that $A:=\im(\sigma)$ is an arc (in particular: an auxiliary arc). Thus (ii) holds and the second main case is complete.

For the `Moreover'-part, it remains to show that if all of the $A_n$ are tame, then so is $A$. Since this is clear for (i) we may assume that (ii) holds. Assume for a contradiction that $A$ is wild, witnessed by some infinite $V_{\Q}\subseteq V(G)\cap A$ on which $A$ induces the ordering of the rationals. If there is some $n\in\N$ such that $\im(\sigma\rest I_n)$ meets $V_{\Q}$ in two distinct vertices, then $A_{m_n}'\subseteq A_{m_n}$ is wild, which is impossible. Hence every $\im(\sigma\rest I_n)$ meets $V_{\Q}$ in at most one point. But then 1 is the only accumulation point of $\sigma^{-1}[V_{\Q}]$ in $\I$, a contradiction. Therefore, $A$ is tame as claimed.
\end{proof}

\begin{theorem}\label{AuxArcsChar}
Let $G$ be any graph, and let $x$ and $y$ be two distinct points of $V\cup\Omega$. Then $x\sim y$ if and only if there exists an auxiliary arc in from $x$ to $y$.
\end{theorem}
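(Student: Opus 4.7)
The backward implication is immediate from the Jumping-Arc Lemma~\ref{cutJAL}: an auxiliary arc $A$ from $x$ to $y$ is a connected subset of $\overline{\AuxE}^{\,\GH}$, and since $\AuxE\cap E(G)=\emptyset$ it avoids $\mathring{F}$ for every finite cut $F\subseteq E(G)$. Lemma~\ref{cutJAL} then forces $A$ to lie entirely on one side of $F$, so $x$ and $y$ cannot be on opposite sides of $F$; thus $x\sim y$.

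For the forward direction I would distinguish three cases. If $x$ and $y$ are both vertices the statement is exactly Theorem~\ref{SigmaArcConstruction}. If exactly one of $x,y$ is an end, say $y=\omega\in\Omega$, I first dispose of the case that $\omega$ has a dominating vertex: for any $u\in\Delta(\omega)$, a finite edge cut separating $u$ from $\omega$ would induce a finite vertex cut contradicting $u\in\Delta(\omega)$, so $u\sim\omega$ and hence $u\sim x$ by transitivity; Theorem~\ref{SigmaArcConstruction} then provides an auxiliary arc from $x$ to $u$, to which I prepend the auxiliary edge $u\omega\in\AuxE(\Omega)$ to obtain one from $x$ to $\omega$. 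The case where both $x,y$ are ends is reduced the same way whenever at least one of them is dominated.

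The remaining subcase is that $\omega$ is undominated. Here I would mimic the proof of Theorem~\ref{SigmaArcConstruction}, replacing the countable family $\cQ$ of edge-disjoint $x$-$y$ paths with a countable family of pairwise edge-disjoint $x$-rays in $\omega$, whose existence follows from $x\sim\omega$ by Halin's Theorem. Each such ray still induces a finite linear ordering on $X\cap V(R)$ for every $X\in\cX$, and the Generalised Infinity Lemma~\ref{GIL} applied to the resulting inverse system yields a compatible linearly ordered set $V^*$ starting at $x$. I would then embed $V^*$ order-preservingly into $\I$ with discrete image (killing blanks as in the proof of Theorem~\ref{SigmaArcConstruction}) and extend to a continuous map $\sigma\colon\I\to\GH$ with $\sigma(0)=x$ and $\sigma(1)=\omega$, filling tangle-like gaps on $V^*$ by auxiliary edges (Claim~\ref{easyCase} and Lemma~\ref{PathSysXtau}) and accumulation gaps by ends (analogues of Claims~\ref{SeriesOmega} and~\ref{SeriesOmegaExt}); Lemma~\ref{PathHDArc} then extracts a genuine auxiliary arc from $\im(\sigma)$.

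The main obstacle will be verifying continuity of $\sigma$ at the endpoint $1$ and ensuring that the end accumulated at $\sigma(1)$ is indeed $\omega$ rather than some other undominated end of $G$. For this I would rely on Lemma~\ref{ctblNbhdBase}, which (since $\Delta(\omega)=\emptyset$) supplies a countable neighbourhood basis $\{\hat{\aC}(X_n,\omega):n\in\N\}$ of $\omega$ in $\GH$, together with the fact that every ray in $\cQ$ lies in $\omega$, so that its tail eventually enters each $V(C(X_n,\omega))$; Hausdorffness of $\GH$ then pins the accumulating end down to $\omega$. A cleaner shortcut, if the direct construction becomes cumbersome, is to apply pigeonhole to the last $X_n$-vertex of the rays in $\cQ$ to produce for each $n$ a vertex $t_n\in V(C(X_n,\omega))$ with $t_n\sim x$, invoke Theorem~\ref{SigmaArcConstruction} to get auxiliary arcs from $x$ to each $t_n$, and glue them via Lemma~\ref{CtblAuxArcStarComb} into a single auxiliary arc from $x$ to $\omega$.
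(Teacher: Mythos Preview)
Your ``cleaner shortcut'' is exactly the route the paper takes for the undominated-end case: it invokes Lemma~\ref{ctblNbhdBase} for a countable basis $(X_n)$, argues directly (without any ray family) that for each $n$ no finite edge set separates $x$ from $X_n$ in $G-C(X_n,\omega)$---otherwise such a cut would put $x$ and all of $X_n\cup C(X_n,\omega)$, hence $\omega$, on opposite sides---then extracts $t_n\in X_n$ with $x\sim t_n$ by pigeonhole, and finally feeds the auxiliary arcs $x$--$t_n$ from Theorem~\ref{SigmaArcConstruction} into Lemma~\ref{CtblAuxArcStarComb}. Your first approach, rerunning the entire inverse-limit machinery of Theorem~\ref{SigmaArcConstruction} with edge-disjoint $x$-rays, is therefore an unnecessary detour; note also that your appeal to Halin's Theorem is imprecise, since Halin produces vertex-disjoint rays inside $\omega$, not edge-disjoint rays rooted at a prescribed vertex.

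There is one genuine gap. You reduce the two-ends case only ``whenever at least one of them is dominated'' and say nothing when both $x=\omega$ and $y=\omega'$ are undominated. The paper closes this via Lemma~\ref{Forms}: since $G$ is connected (a standing hypothesis of this chapter), the $\sim$-class $[\omega]_\sim$ must contain a vertex $u$. Applying the already-established vertex--end case to the pairs $(u,\omega)$ and $(u,\omega')$ yields two auxiliary arcs meeting in $u$, whose union contains an auxiliary arc from $\omega$ to $\omega'$.
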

\begin{proof}
The backward direction holds by Lemma~\ref{cutJAL}.
For the forward direction we check several cases:

If both $x$ and $y$ are vertices of $G$ we are done by Theorem~\ref{SigmaArcConstruction}, so suppose first that $x\in V$ and $y=\omega\in\Omega$. 
If there is some $t\in\Delta(\omega)$, then $x\sim\omega\sim t$ implies $x\sim t$.
Hence we find an auxiliary arc from $x$ to $t$ which we may extend to $\omega$ by adding the auxiliary edge $t\omega$.

Otherwise $\Delta(\omega)$ is empty, so by Lemma~\ref{ctblNbhdBase} we find a sequence $X_0,X_1,\hdots$ of non-empty elements of $\cX$ such that for all $n\in\N$ the component $C(X_n,\omega)$ includes both $X_{n+1}$ and $C(X_{n+1},\omega)$. In particular, the collection of all $\CTC(X_n,\omega)$ forms a countable neighbourhood basis of $\omega$ in $\TC$.
Since $x$ does not dominate $\omega$ we find some $X\in\cX$ with $x\notin X\cup C(X,\omega)$, and we may choose $N\in\N$ big enough that $\CTC(X_N,\omega)\subseteq\CTC(X,\omega)$.

Now for every $n\ge N$ we wish to find some vertex $t_n$ in $C(X_n,\omega)$ with $x\sim t_n$. Hence consider an arbitrary $n\ge N$ and suppose for a contradiction that there is some finite set of edges $F$ which separates $x$ from $X_n$ in $G-C(X_n,\omega)$. Then $X_n\cup C(X_n,\omega)$ is included in some component of $G-F$ which does not contain $x$, so $x\not\sim\omega$ is a contradiction. Therefore, no finite set of edges separates $x$ from $X_n$ in $G-C(X_n,\omega)$. Proceeding inductively we find some infinitely many edge-disjoint $x$--$X_n$ paths in $G-C(X_n,\omega)$. By pigeon-hole principle, some infinitely many of these paths agree on their endvertex in $X_n$ in the same vertex, which we choose to be $t_n$. In particular, $x\sim t_n$ holds.

Then for all $n\ge N$ by Theorem~\ref{SigmaArcConstruction} we find some auxiliary arc $A_n$ from $x$ to $t_n$. Furthermore, the choice of the basic open neighbourhoods $\CTC(X_n,\omega)$ implies $t_n\to\omega$ in $\TC$.
Applying Lemma~\ref{CtblAuxArcStarComb} yields some auxiliary arc $A$ from $x$ to $\omega$. 

Finally suppose that both $x$ and $y$ are ends of $G$, and write $x=\omega$ as well as $y=\omega'$. By Lemma~\ref{Forms} there is some vertex $u$ of $G$ in $[\omega]_\sim$. Then $\omega\sim u\sim\omega'$ together with the previous case yields two auxiliary arcs, one from $u$ to $\omega$ and one from $u$ to $\omega'$, whose union yields an auxiliary arc from $\omega$ to $\omega'$ as desired.
\end{proof}

\subsection{External $\mathfrak{A}$TSTs: Technical preliminaries}

An \ATST{} $\Tau$ of $G$ is \textit{external} if for every auxiliary arc-component $\cA$ of $\GH$ the space $\Tau\cap \cA$ is arc-connected (i.e. $\Tau\cap\cA$ is an \ATST{} of $\cA$).\index{external}
The idea behind this definition is that, if we want to obtain an \ATST{} from a TST of $\cE G$ by blowing up its non-trivial points in $\cE G\setminus\mathring{E}$ to \ATST{}s of their respective auxiliary arc-components of $\GH$, then we hope for the result to be an \ATST{}, and if it is then of course it is an external one.

\begin{corollary}\label{AuxArcCompsClosed}
Let $G$ be a graph such that every end of $G$ has a countable neighbourhood basis in $\GH$. Furthermore, let $X\in\cX$ and let $\cA$ be an auxiliary arc-component of $\GH-X-\mathring{E_{\GG}}(X,\GH)$. Then $\cA$ is closed.
\end{corollary}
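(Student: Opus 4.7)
The plan is to set $\widetilde H := \GH - X - \mathring{E_\GG}(X,\GH)$ and show that every $p \in \overline{\cA}^{\widetilde H} = \overline{\cA}^{\GH} \cap \widetilde H$ lies in $\cA$. The case where $\cA$ is a singleton is trivial since $\GH$ (hence $\widetilde H$) is Hausdorff, so one may assume $\cA$ is non-trivial. Then $\cA$ contains some non-degenerate auxiliary arc, so $\cA \subseteq \overline{\AuxE}^{\GH}$. I would split the analysis by the type of $p$: an inner edge point of $\GG$, a vertex of $G$ (necessarily outside $X$), or an end of $G$.

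For the first two cases, $p$ admits a basic neighbourhood base in $\GH$ consisting of sets contained in $\mathring{e}$ (inner edge point case) or in a star $\cO_\GG(p,\epsilon)$ (vertex case). Picking a witness $q \in \cA$ in such a neighbourhood, the point $q$ lies in $\mathring{e'}$ for some edge $e'$ meeting $p$. Since $q \in \cA \subseteq \overline{\AuxE}$, the edge $e'$ must be auxiliary (inner points of $G$-edges have neighbourhoods disjoint from $\AuxE$), and since $q \in \widetilde H$, the edge $e'$ cannot be incident with $X$. Hence the subsegment of $e'$ between $p$ and $q$ is an auxiliary arc contained in $\widetilde H$, placing $p$ in $\cA$.

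The end case is where the countable neighbourhood base hypothesis enters. Let $p = \omega$, take a decreasing countable neighbourhood base $(N_k)$ of $\omega$ in $\GH$, and pick $q_k \in \cA \cap N_k$, so $q_k \to \omega$ in $\widetilde H$. Replacing each $q_k$ lying in the interior of an auxiliary edge by an endpoint of that edge in $N_k$ (the form of $N_k$ guarantees the existence of such an endpoint; moreover if some $q_k$ sits on an edge whose other endpoint is $\omega$, the sub-edge from $q_k$ to $\omega$ is already an auxiliary arc in $\widetilde H$ and we are done), one may assume $q_k \in V(\GG) \setminus \{\omega\}$ with $q_k \in \cA$ and $q_k \to \omega$. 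Fix any $y \in \cA \cap V(\GG)$ and choose auxiliary arcs $A_k \subseteq \cA$ from $y$ to $q_k$, which exist since $\cA$ is an arc-component. Now apply Lemma~\ref{CtblAuxArcStarComb} to obtain an auxiliary arc $A$ from $y$ to $\omega$ contained in $\{\omega\} \cup \bigcup_k A_k$, possibly together with a single auxiliary edge $\varrho\omega$ where $\varrho \in \bigcup_k A_k$ dominates $\omega$. Since $\bigcup_k A_k \subseteq \cA \subseteq \widetilde H$, and since any such $\varrho$ lies in $\cA$ and therefore outside $X$, the edge $\varrho\omega$ is not incident with $X$; consequently $A \subseteq \widetilde H$. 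Thus $\omega$ is joined to $y \in \cA$ by an auxiliary arc in $\widetilde H$, so $\omega \in \cA$.

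The hardest step is the end case: extracting a sequence in $V(\GG) \cap \cA$ converging to $\omega$ (which is where the countable neighbourhood base is needed) and verifying that the arc produced by Lemma~\ref{CtblAuxArcStarComb} truly lives in the ambient $\widetilde H$ rather than merely in $\GH$. The remaining cases are essentially formal neighbourhood-base arguments against the definition of $\overline{\AuxE}$ and the structure of $\widetilde H$.
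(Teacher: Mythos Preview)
Your proof is correct and follows essentially the same approach as the paper---the key step in both is the end case, handled via Lemma~\ref{CtblAuxArcStarComb}---though you supply more detail where the paper simply asserts ``clearly, $x$ must be an end'' and leaves implicit both the passage to a sequence in $V(\GG)$ and the fact that the arc produced by the lemma stays inside $\widetilde H$. One small remark: since the corollary is later used for closedness in $\GH$ rather than just in $\widetilde H$, you should note that $\widetilde H$ is itself closed in $\GH$ (its complement $\bigcup_{x\in X}\cO_{\GG}(x,1)$ is open), so the two coincide.
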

\begin{proof}
Assume for a contradiction that there is some $x\in\overline{\cA}- \cA$ (with the closure taken in $\GH$). In order to yield a contradiction, we will find an auxiliary arc from $x$ to a point of $\cA$.
Clearly, $x$ must be an end of $G$, so we write $\omega=x$. Let $y$ be an arbitrary point of $\cA\cap V(\GG)$.
Using that $\omega$ has a countable neighbourhood basis in $\GH$ we find a sequence $(\xi_n)_{n\in\N}$ of points of $(\cA\cap V(\GG))-\{y\}$
such that $\xi_n\to\omega$ holds in $\GH$.
For every $n$ we pick an arc $A_n$ from $y$ to $\xi_n$ in $\cA$. In particular, the $A_n$ are auxiliary arcs. Hence Lemma~\ref{CtblAuxArcStarComb} yields an auxiliary arc from $y$ to $\omega$, a contradiction.
\end{proof}

For every $X\in\cX$ and $\cC\subseteq\cC_X$ we write\index{$\cO_{\TC}^\sim(X,\cC)$, $\cO_{\Gq}(X,\cC)$}
\begin{align*}
\cO_{\TC}^\sim(X,\cC)&=\cO_{\TC}(X,\cC)- \medcup(X/{\sim})\\
\cO_{\Gq}(X,\cC)&=\cO_{\TC}^\sim(X,\cC)/{\sim}
\end{align*}
and furthermore if $\omega$ is an end of $G$ we write\index{$\hat{C}_{\Gq}(X,\omega)$}
\begin{align*}
\hat{C}_{\Gq}(X,\omega)=\cO_{\Gq}(X,\{C(X,\omega)\}).
\end{align*}

\begin{lemma}\label{GqConvSetsClosed}
For every $X\in\cX$ and $\cC\subseteq\cC_X$ the set $\cO_{\Gq}(X,\cC)$ is an open subset of $\Gq$.
\end{lemma}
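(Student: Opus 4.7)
The plan is to use the universal property of the quotient topology: letting $q\colon \TC \to \Gq$ denote the quotient map, it suffices to show that $q^{-1}(\cO_{\Gq}(X,\cC)) = \cO_{\TC}^\sim(X,\cC)$ and that this preimage is open in $\TC$. The former reduces to verifying that $\cO_{\TC}^\sim(X,\cC)$ is $\sim$-saturated, and the latter is essentially a `closed minus open' argument using that singleton subsets of $\Gq$ are closed.

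First I would show saturation. Given $y\in \cO_{\TC}^\sim(X,\cC)$ and $y'$ with $y\sim y'$, I need $y'\in\cO_{\TC}^\sim(X,\cC)$. If $y$ is an inner edge point of $G$, then $[y]_\sim=\{y\}$ and the claim is trivial, so I may assume $y,y'\in V\cup\cU$. Since $[y]_\sim\cap X=\emptyset$ by the definition of $\cO_{\TC}^\sim(X,\cC)$, also $y'\notin\bigcup(X/{\sim})$. It remains to argue $y'\in\cO_{\TC}(X,\cC)$. Consider the bipartition $\{\cC,\cC_X\setminus\cC\}$ of $\cC_X$; every element of $V\cup\cU$ lies in exactly one of $X$, $\cO_{\TC}(X,\cC)\cap(V\cup\cU)$, or $\cO_{\TC}(X,\cC_X\setminus\cC)\cap(V\cup\cU)$ (for tangles, by the ultrafilter property of $U(\tau,X)$). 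Since $y'\notin X$, the only alternative to $y'\in\cO_{\TC}(X,\cC)$ is $y'\in\cO_{\TC}(X,\cC_X\setminus\cC)$, and this would contradict our hypothesis via Corollary~\ref{FinSepClassesMeetX} (applicable because $G$ is connected in this chapter), which would force $[y]_\sim$ to meet $X$.

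For openness in $\TC$, the set $\cO_{\TC}(X,\cC)$ is open by the very definition of the topology on $\TC$. By Corollary~\ref{HD}, $\Gq$ is Hausdorff, hence T\textsubscript{1}, so Lemma~\ref{eqClassesClosed} ensures that every $\sim$-equivalence class is a closed subset of $\TC$. Because $X$ is finite, $X/{\sim}$ is a finite collection of such classes and therefore $\bigcup(X/{\sim})$ is closed in $\TC$. Consequently $\cO_{\TC}^\sim(X,\cC)=\cO_{\TC}(X,\cC)\setminus\bigcup(X/{\sim})$ is open, and combining this with the saturation from the previous paragraph yields that $\cO_{\Gq}(X,\cC)$ is open in $\Gq$. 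The only non-routine step is the saturation argument, whose heart is really just an invocation of the ``finitely separable classes meet $X$'' principle from Corollary~\ref{FinSepClassesMeetX}; everything else is bookkeeping.
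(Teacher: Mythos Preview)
Your proof is correct and follows essentially the same approach as the paper: both show that $\bigcup(X/{\sim})$ is closed via Lemma~\ref{eqClassesClosed} and Corollary~\ref{HD}, hence $\cO_{\TC}^\sim(X,\cC)$ is open, and both invoke Corollary~\ref{FinSepClassesMeetX} for $\sim$-saturation. You spell out the saturation argument in more detail than the paper does, but the key ingredients are identical.
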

\begin{proof}
Since $X$ is finite and $\Gq$ is Hausdorff, by Lemma~\ref{eqClassesClosed} the set $\bigcup (X/{\sim})$ is a closed subset of $\TC$.
Thus $\cO_{\TC}^\sim(X,\cC)$ is an open subset of $\TC$. By Corollary~\ref{FinSepClassesMeetX} it is closed under $\sim$.
\end{proof}

\begin{corollary}\label{GqArcDoesNotAlt}
Let $A$ be a connected subset of $\Gq$. Furthermore, let $X\in\cX$ and $\cC\subseteq\cC_X$ be given such that $A$ meets both $\cO_{\Gq}(X,\cC)$ and $\cO_{\Gq}(X,\cC_X\setminus\cC)$. Then $A$ meets $X/{\sim}$.\qed
\end{corollary}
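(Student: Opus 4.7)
The plan is to argue by contradiction, exploiting the openness of the two sets from Lemma~\ref{GqConvSetsClosed} together with a third obvious open set. Concretely, I would suppose that $A$ avoids $X/{\sim}$ and then produce an open bipartition of $A$, contradicting its connectedness. Set $O_1:=\cO_{\Gq}(X,\cC)$ and $O_2:=\cO_{\Gq}(X,\cC_X\setminus\cC)$, which are open in $\Gq$ by Lemma~\ref{GqConvSetsClosed}.

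The first thing to verify is that $O_1$ and $O_2$ are disjoint. For this I would apply Corollary~\ref{FinSepClassesMeetX}: if some $\xi\in\Gq$ had representatives $x\in\cO_{\TC}^\sim(X,\cC)$ and $y\in\cO_{\TC}^\sim(X,\cC_X\setminus\cC)$, then $x\sim y$ would force $[x]_{\sim}$ to meet $X$, contradicting the very definition of $\cO_{\TC}^\sim(X,\cC)$, which removes $\bigcup(X/{\sim})$ from $\cO_{\TC}(X,\cC)$.

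Next I would identify the ``leftover'' of $\Gq$ not covered by $X/{\sim}\cup O_1\cup O_2$. Going through the cases (vertices in $X$ versus outside $X$; inner edge points of $E(X,X)$ versus $E(X,V\setminus X)$ versus $E(V\setminus X,V\setminus X)$; and tangles), one checks that this leftover is precisely $\mathring{E}(X,X)$, the set of inner edge points of edges with both endvertices in $X$. Since each such point forms a singleton $\sim$-class, the set $O_3:=\mathring{E}(X,X)$ is both open in $\TC$ and $\sim$-saturated, and hence open in $\Gq$. Moreover $O_3$ is disjoint from $O_1$ and $O_2$ because the inner-edge-point parts of $\cO_{\TC}(X,\cC)$ and $\cO_{\TC}(X,\cC_X\setminus\cC)$ consist only of inner points of edges between $X$ and $V\setminus X$.

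Putting these pieces together, if $A$ avoids $X/{\sim}$ then $A\subseteq O_1\cup O_2\cup O_3$; the set $O_1\cup O_3$ is open (as a union of two open sets), disjoint from the open set $O_2$, and both sets meet $A$ by hypothesis. This exhibits an open bipartition of $A$, contradicting connectedness, and so $A$ must meet $X/{\sim}$. The only mildly non-routine step is the verification that the three open sets truly exhaust $\Gq\setminus(X/{\sim})$, but this is a direct case check using the basic form of the neighbourhoods defining $\cO_{\TC}(X,\cdot)$.
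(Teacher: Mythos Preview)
Your argument is correct and matches the paper's intended reasoning (the paper gives no proof beyond the \qed, treating the corollary as immediate from Lemma~\ref{GqConvSetsClosed}). Your explicit handling of the leftover $\mathring{E}(X)$---inner points of edges with both ends in $X$---makes precise a detail the paper suppresses. One minor simplification: the disjointness of $O_1$ and $O_2$ follows directly from the disjointness of $\cO_{\TC}(X,\cC)$ and $\cO_{\TC}(X,\cC_X\setminus\cC)$ in $\TC$ (together with both $\cO_{\TC}^\sim$-sets being $\sim$-closed), so invoking Corollary~\ref{FinSepClassesMeetX} there is unnecessary.
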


\begin{lemma}\label{GqArcConvEnd}
Let $A$ be an arc in $\Gq$, witnessed by $\sigma\colon \I\bij A$. Suppose that $K:=\sigma(1)\notin\mathring{E}$ is an accumulation point of $A\setminus\mathring{E}$. Then there is some unique $\omega\in K\cap\Omega$ with $A$ converging to $\omega$ in that for every $X\in\cX$ there is some $\epsilon>0$ such that
\begin{align}\label{BigConv}
\sigma[(1-\epsilon,1)]\subseteq\hat{C}_{\Gq}(X,\omega).
\end{align}
\end{lemma}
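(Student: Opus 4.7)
The plan is to construct an end $\omega$ as a direction $X \mapsto C(X,\omega)$, using the arc's eventual behaviour near $K = \sigma(1)$ to pick, for each $X \in \cX$, a unique component of $G - X$; then to verify $\omega \in K$ by Hausdorffness of $\Gq$ and to derive uniqueness.

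\textbf{Step 1 (eventual localisation).} Fix $X \in \cX$. I claim there exist $\epsilon > 0$ and a unique $C \in \cC_X$ with $\sigma[(1-\epsilon, 1)] \subseteq \cO_{\Gq}(X, \{C\})$. The finiteness of $X/{\sim}$ together with the injectivity of $\sigma$ makes $\sigma^{-1}(X/{\sim})$ a finite subset of $\I$, hence bounded away from $1$ on $[0,1)$. Injectivity of $\sigma$ and connectedness of each $\mathring{e}$ in $\Gq$ also force $\sigma^{-1}(\mathring{e})$ to be at most one open interval (a short counting argument: any boundary point in $\mathring{\I}$ would map to an endpoint of $e$ in $\Gq$, and there are only two such endpoints); and if the supremum of $\sigma^{-1}(\mathring{e})$ were $1$ for some $e \in E(X)$, then $\sigma$ would map a final interval $(a,1)$ into $\mathring{e} \subseteq \mathring{E}$, contradicting that $K$ is an accumulation point of $A \setminus \mathring{E}$. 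As $E(X)$ is finite, $\sigma^{-1}(\mathring{E}(X))$ is bounded away from $1$ as well, so some $\epsilon > 0$ makes $\sigma[(1-\epsilon, 1)]$ avoid both $X/{\sim}$ and $\mathring{E}(X)$. Next I would argue that $\Gq \setminus (X/{\sim}) \setminus \mathring{E}(X) = \bigcup_{C \in \cC_X} \cO_{\Gq}(X, \{C\})$ as a disjoint union of open sets: Corollary~\ref{FinSepClassesMeetX} shows each $\cO_{\TC}^\sim(X, \{C\})$ is $\sim$-saturated, whence the pieces $\cO_{\Gq}(X, \{C\})$ are pairwise disjoint, and every $\upsilon \in \Upsilon$ with non-principal $U(\upsilon, X)$ satisfies $X_\upsilon \subseteq X$ and so lies in $\medcup(X/{\sim})$. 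Connectedness of $\sigma[(1-\epsilon, 1)]$ then places it inside a single piece $\cO_{\Gq}(X, \{C(X)\})$, which is therefore unique.

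\textbf{Step 2 (direction and end).} For $X \subseteq X'$ in $\cX$, Lemma~\ref{obviousLemma} together with $\medcup(X/{\sim}) \subseteq \medcup(X'/{\sim})$ yields
\[
\cO_{\Gq}(X', \{C(X')\}) \subseteq \cO_{\Gq}(X, \{\phi_{X', X}(C(X'))\}),
\]
so taking the arc's tail small enough to lie in both $\cO_{\Gq}(X, \{C(X)\})$ and $\cO_{\Gq}(X', \{C(X')\})$ forces $C(X) = \phi_{X', X}(C(X'))$ by the disjointness from Step 1. Hence $X \mapsto C(X)$ is a direction of $G$, and by the theorem of Diestel and Kühn mentioned in the introduction it is induced by a unique end $\omega$ with $C(X) = C(X, \omega)$. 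This is precisely the convergence~(\ref{BigConv}).

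\textbf{Step 3 ($\omega \in K$ and uniqueness).} Denote the quotient map by $q \colon \TC \to \Gq$. Assume for contradiction that $q(\omega) \neq K$. By Corollary~\ref{HD} the space $\Gq$ is Hausdorff, so I pick disjoint open $\cU \ni K$ and $\cW \ni q(\omega)$. Since $\omega \in q^{-1}(\cW)$ open in $\TC$, some basic neighbourhood $\cO_{\TC}(X, \{C(X, \omega)\})$ of $\omega$ lies in $q^{-1}(\cW)$, whence $\hat{C}_{\Gq}(X, \omega) \subseteq \cW$. But $\sigma(t) \to K \in \cU$ puts $\sigma(t)$ eventually in $\cU$, while Step~2 puts $\sigma(t)$ eventually in $\hat{C}_{\Gq}(X, \omega) \subseteq \cW$, contradicting $\cU \cap \cW = \emptyset$. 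Hence $\omega \in K \cap \Omega$. For uniqueness, any other $\omega' \in K \cap \Omega$ satisfying the convergence property has $C(X, \omega') = C(X, \omega)$ for every $X$, by the disjointness of the $\cO_{\Gq}(X, \{\cdot\})$; so $\omega = \omega'$.

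The main obstacle will be the careful edge-traversal analysis in Step~1, especially the case where $K \in X/{\sim}$ or $K$ is an endvertex (in $\Gq$) of an edge of $E(X)$. The accumulation hypothesis is exactly the ingredient that prevents the degenerate situation in which the arc's tail disappears inside an edge ending at $K$; without it, no direction can be extracted.
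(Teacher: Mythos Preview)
Your proof is correct and follows the same three-step strategy as the paper: eventual localisation of the arc's tail in a single $\cO_{\Gq}(X,\{C\})$, assembly of these components into a direction and hence an end, and verification that $\omega\in K$. Your Step~1 is actually more careful than the paper's---you explicitly dispose of $\mathring{E}(X)$ via the accumulation hypothesis, whereas the paper's decomposition $\Gq-(X/{\sim})=\biguplus_{C}\cO_{\Gq}(X,\{C\})$ silently ignores those points---and in Step~3 you argue via abstract Hausdorffness of $\Gq$ where the paper uses a separating finite cut, but both are minor variations on the same idea.
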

\begin{proof}
We start by choosing a candidate for $\omega$. For this, we claim that
\begin{claim_bigConv}
For every $X\in\cX$ there is some $\epsilon>0$ and some $C\in\cC_X$ such that
\begin{align*}
\sigma[(1-\epsilon,1)]\subseteq\cO_{\Gq}(X,\{C\})
\end{align*}
\end{claim_bigConv}
\begin{proof}\renewcommand{\qedsymbol}{$\diamond$}
Assume not for a contradiction, witnessed by some $X\in\cX$.
Choose $\epsilon>0$ small enough that $A':=\sigma[(1-\epsilon,1)]$ avoids the finite set $X/{\sim}$.
Then 
\begin{align*}
A'\subseteq\Gq -(X/{\sim})=\biguplus_{C\in\cC_X}\cO_{\Gq}(X,\{C\})
\end{align*}
holds by Lemmas~\ref{Forms} and~\ref{GqConvSetsClosed}.
By assumption, $A'$ meets at least two sets of the union on the right hand side, say for $D\neq D'\in\cC_X$. But then by Corollary~\ref{GqArcDoesNotAlt} we know that $A'$ meets $X/{\sim}$, a contradiction.
\end{proof}

Mapping every $X\in\cX$ to the unique $C\in\cC_X$ from the claim above yields a direction of $G$, and hence an end $\omega$ of $G$. 
In particular, for every $X\in\cX$ there is some $\epsilon>0$ such that $\omega$ satisfies (\ref{BigConv}). Moreover, $\omega$ is unique in $\Omega$ with this property.
It remains to show $\omega\in K$, so assume not for a contradiction and pick some finite cut $F$ of $G$ witnessing this. 
Since this $F$ induces an open neighbourhood of $K$ in $\Gq$ there is some $\delta>0$ such that $\sigma[(1-\delta,1)]$ is included in this neighbourhood.
For $X=V[F]$ this contradicts (\ref{BigConv}).
\end{proof}

\begin{lemma}[Arc Lifting]\label{GqArcExtension}
Let $G$ be a countable connected graph.
Suppose that for every non-singleton auxiliary arc-component $\cA$ of $\GH$ there is some \ATST{} $\Tau_{\cA}$ of $\cA$, and let $A$ be an arc in $\Gq$ witnessed by $\sigma\colon \I\bij A$. Furthermore let $x\in\sigma(0)\subseteq\TC$ and $y\in\sigma(1)\subseteq\TC$ be given. Then there exists an arc $A'$ in $\GH$ from $x$ to $y$ such that $(A'-\mathring{\AuxE})/{\sim}\,=A$ and for every non-singleton auxiliary arc-component $\cA$ of $\GH$ exactly one of the following holds:
\begin{enumerate}
\item $|A'\cap\cA|\le 1$\textnormal{;}
\item $A'\cap\cA$ is an arc in $\Tau_{\cA}$.
\end{enumerate}
\end{lemma}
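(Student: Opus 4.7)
The goal is to lift $A \subseteq \Gq$ to an arc $A'\subseteq\GH$ by ``blowing up'' each non-singleton $\sim$-class visited by $A$ into the unique arc between its entry and exit points provided by the given $\aA$TSTs. The key combinatorial input is that by Lemma~\ref{Forms} (applicable since $G$ is connected), every non-singleton $\sim$-class is of the form $[u]_\sim$ for some vertex $u$, and since $G$ is countable so is $V(G)$; hence there are only countably many non-singleton $\sim$-classes. In particular, the set
\begin{align*}
J:=\{t\in\I\,|\,\sigma(t)\text{ is a non-singleton }\sim\text{-class}\}
\end{align*}
is countable. For each non-singleton $\sim$-class $K$, Theorem~\ref{AuxArcsChar} implies that $K\cap(V\cup\Omega)$ is contained in a single auxiliary arc-component $\cA_K$, so the given $\Tau_{\cA_K}$ is available.

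\textbf{Step 1: entry and exit points.} For each $t\in J$ with $K=\sigma(t)$ I will select $x_t^-,x_t^+\in K\cap(V\cup\Omega)$, setting $x_0^-=x_0^+:=x$ if $0\in J$ and $x_1^-=x_1^+:=y$ if $1\in J$. For $0<t<1$, I analyse the left- and right-approaches $\sigma[(t-\epsilon,t)]$ and $\sigma[(t,t+\epsilon)]$ in $\Gq$. Either the approach is eventually through inner edge points, in which case pasting Lemma~\ref{ArcEdgesDense} and the incidence structure of $\GG$ pin down the unique vertex of $K$ at which $A'$ should enter/leave $K$; or the approach is through points outside $K$ that accumulate on $K$, in which case an analogue of Lemma~\ref{GqArcConvEnd} (together with Lemma~\ref{CtblAuxArcStarComb}) singles out a unique end of $K$ as entry/exit point. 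In the ultrafilter-tangle case, the approach is necessarily through edges by Lemma~\ref{TCarcsAvoidUltras}, so the entry/exit is a vertex.

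\textbf{Step 2: unique blown-up arcs.} Since $\Tau_{\cA_K}$ is an $\aA$TST of $\cA_K$ and hence uniquely arc-connected, there is a unique arc $\alpha_t$ in $\Tau_{\cA_K}$ from $x_t^-$ to $x_t^+$ (a single point when they coincide). Choose weights $\ell\colon J\to \R_{>0}$ with $\sum_{t\in J}\ell(t)=1$.

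\textbf{Step 3: reparameterisation and definition of $A'$.} Apply Lemma~\ref{UnitIntervalBlowup} to obtain $\phi\colon \I\to[0,2]$. Define $\sigma'\colon [0,2]\to \GH$ as follows: on each stretched interval $[\phi(t),\phi(t)+\ell(t)]$ with $t\in J$, let $\sigma'$ be a homeomorphism onto $\alpha_t$ mapping the endpoints to $x_t^-$ and $x_t^+$; on $\phi[\I\setminus J]$, let $\sigma'(\phi(s))$ be the canonical lift of $\sigma(s)\in\Gq$ to $\TC$ (an inner edge point if $\sigma(s)$ is such, or the unique point of $\sigma(s)$ if $\sigma(s)$ is a singleton $\sim$-class); remaining points $\lambda\in[0,2]\setminus\phi[\I]$ are handled via the $j_\lambda\in J$ given by Lemma~\ref{UnitIntervalBlowup}, which by construction fall into some $[\phi(j_\lambda),\phi(j_\lambda)+\ell(j_\lambda)]$ already covered.

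\textbf{Step 4: continuity and conclusion.} The main obstacle is continuity of $\sigma'$ at the boundaries of the blown-up intervals and at accumulation points of $J$. At a boundary $\phi(t)$ of a blown-up interval for $t\in J$, continuity reduces to showing that $\sigma'$ on $\phi[\I\setminus J]$ approaches $x_t^-$ (resp.\ $x_t^+$) from the left (resp.\ right); this is precisely what Step~1 was set up to ensure, using Theorem~\ref{AuxArcsChar} and Lemma~\ref{CtblAuxArcStarComb} to translate convergence in $\Gq$ into convergence in $\GH$ along $\Tau_{\cA_K}$. At accumulation points of $\phi[J]$, continuity follows from the continuity of $\sigma$ together with the fact that $\ell(t)\to 0$ along such accumulating $t$. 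Once $\sigma'$ is shown continuous, injectivity of $\sigma'$ reduces to injectivity of $\sigma$ plus the uniqueness of each $\alpha_t$ (which lies in the specific auxiliary arc-component $\cA_K$ visited only at $t$). Since $[0,2]$ is compact and $\GH$ is Hausdorff, $\sigma'$ is a homeomorphism onto its image $A'$. The required properties $(A'-\mathring{\AuxE})/{\sim}\,=A$ and the alternative $|A'\cap\cA|\le 1$ vs.\ $A'\cap\cA$ being an arc in $\Tau_\cA$ both follow directly from the construction: $A'$ intersects each $\cA_K$ (for $K\in\sigma[J]$) in exactly the arc $\alpha_t\subseteq\Tau_{\cA_K}$, and meets every other non-singleton $\cA$ in at most a single crossing point (only when $A$ traverses an edge or passes through an undominated end of $\cA$).
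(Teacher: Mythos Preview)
Your approach is essentially the paper's: identify the countably many non-singleton $\sim$-classes hit by $A$, pick entry/exit points via Lemma~\ref{GqArcConvEnd} (accumulation case) or via the incident edge (isolated case), blow each up to the unique arc in $\Tau_{\cA}$, reparametrise via Lemma~\ref{UnitIntervalBlowup}, and check continuity and injectivity on $[0,2]$.

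Two small corrections. First, Lemma~\ref{CtblAuxArcStarComb} is not needed anywhere; the entry/exit end is already delivered by Lemma~\ref{GqArcConvEnd}, and no auxiliary-arc construction to that end is required. Second, your continuity argument at accumulation points via ``$\ell(t)\to 0$'' is the wrong mechanism: shrinking parameter-lengths say nothing about the diameters of the image arcs $\alpha_t$ in $\GH$. The correct reason (and the one the paper uses implicitly) is structural: if $\sigma(s)\in\hat{C}_{\Gq}(X,\omega)$ then the $\sim$-class $\sigma(s)$ avoids $X$, hence so does its auxiliary arc-component $\cA_{\sigma(s)}$, and by Lemma~\ref{vxJAL} the whole $\cA_{\sigma(s)}$---in particular $\alpha_s$---lies in $\hat{\aC}(X,\omega)$. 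Thus a single $\epsilon>0$ from~(\ref{BigConv}) makes $\psi$ map the entire interval $(\phi(t-\epsilon),\phi(t)]$ into the given neighbourhood of $\omega$.
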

\begin{proof}
We do not use Theorem~\ref{Capel} here since this would require us to `mix' the spaces $\GH$ and $\Gq$.

Let $\hat{\cA}$ be the set of all non-singleton auxiliary arc-components of $\GH$ meeting $\bigcup (A\setminus\mathring{E})$. By Lemma~\ref{Forms} this set is countable since $V(G)$ is countable.
Let $\bar{\sigma}$ be the function with domain $\I$ which sends every $i\in\I$ to $\sigma(i)\setminus\Upsilon$.
Denote by $I$ the set of all $i\in\I$ which $\bar{\sigma}$ sends to a subset of some element of $\hat{\cA}$.
For every $\cA\in\hat{\cA}$ we let $i_{\cA}$ be the unique point of $\I$ with $\bar{\sigma}(i_{\cA})\subseteq \cA$. Then $I:=\{i_{\cA}\,|\,\cA\in\hat{\cA}\}$ is a countable set. 
Conversely, for each $i\in I$ we denote by $\cA_i$ the unique $\cA\in\hat{\cA}$ with $i_{\cA}=i$.
Write $I^-$ for the set of all $i\in I$ which are accumulation points of $I\cap [0,i)$, and $I^+$ for the set of all $i\in I$ which are accumulation points of $I\cap (i,1]$. For every $i\in I$ we define points $x_i$ and $y_i$ in $\cA_i$ as follows: If $i\in I^-$ then we let $x_i$ be the end of $G$ given by Lemma~\ref{GqArcConvEnd}. Else if $i=0$ we set $x_i=x$. Otherwise there is some edge $e$ of $G$ and some $\epsilon>0$ such that $\sigma[(i-\epsilon,i)]=\mathring{e}$, and we let $x_i$ be the endvertex of $e$ in $\cA_i$. Similarly, we define $y_i$ (with the case `$i=0$' replaced by `$i=1$').
Let $J:=\{i\in I\,|\,x_i\neq y_i\}$ and choose some $\ell\colon  J\to\R_{>0}$ with $\sum_{i\in J}\ell(i)=1$.
For every $i\in J$ we let $A_i$ be the unique arc in $\Tau_{\cA_i}$ from $x_i$ to $y_i$ and we let $\sigma_i\colon [0,\ell(i)]\to A_i$ be a parametrisation of $A_i$. Let
\begin{align*}
\phi\colon \I\to [0,2],\; i\mapsto i+\sum_{\substack{j\in J\\j<i}}\ell(j)
\end{align*}
and recall that for every
$\lambda\in [0,2]\setminus\phi[\I\setminus J]$ there is some $j_\lambda\in J$ with $j_\lambda<\lambda$ and 
\begin{align*}
\lambda\in [\phi(j_\lambda),\phi(j_\lambda)+\ell(j_\lambda)]
\end{align*}
by Lemma~\ref{UnitIntervalBlowup}.
Now we are ready to define a mapping $\psi\colon [0,2]\to\GH$ whose image we will take as $A'$. 
For this, let $\lambda\in [0,2]$ be given, and suppose for the first main case that there is some $i\in\I\setminus J$ with $\phi(i)=\lambda$.
If $\bar{\sigma}(i)$ is an inner edge point, we put $\psi(\lambda)=\sigma(i)$.
Else if $\bar{\sigma}$ sends $i$ to a singleton subset $\{\xi\}$ of $\TC\setminus\mathring{E}$, we set $\psi(\lambda)=\xi$ (note that we have $\xi\in V\cup\Omega$ by definition of $\bar{\sigma}$).
Otherwise $i$ is contained in $I\setminus J$ and we set $\psi(\lambda)=x_i=y_i$. This completes the first main case.

For the second main case suppose that $\lambda$ is not in $\phi[\I\setminus J]$. Thus
\begin{align*}
\lambda\in [\phi(j_\lambda),\phi(j_\lambda)+\ell(j_\lambda)]
\end{align*}
and we put 
\begin{align*}
\psi(\lambda)=\sigma_{j_\lambda}(\lambda-\phi(j_\lambda))\in A_{j_\lambda},
\end{align*}
completing the second main case and thus the definition of $\psi$. Clearly, $\psi$ is injective, and for every $j\in J$ the restriction
\begin{align*}
\psi\rest [\phi(j),\phi(j)+\ell(j)]
\end{align*}
parametrises $A_j$.

Put $A'=\im(\psi)$. In order to verify that $A'$ is the desired arc, it suffices to show that $\psi$ is a continuous injection since $[0,2]$ is compact and $\GH$ is Hausdorff.
For this, let an arbitrary $\lambda\in [0,2]$ be given. Clearly, we may assume that $\psi(\lambda)$ is an end of $G$, so we write $\omega=\psi(\lambda)$.
Consider any basic open neighbourhood $O=\hat{\aC}(X,\omega)$ of $\omega$ in $\GH$. We have to find an open neighbourhood of $\lambda$ in $[0,2]$ which $\psi$ maps to $O$. 

If we have $\omega\in\mathring{A_j}$ for some $j\in J$ we are done by the continuity of $\sigma_j$. 

Else if $\omega\in\{x_j,y_j\}$ for some $j\in J$, say $\omega=x_j$, then by choice of $x_j$ there is some $\epsilon>0$ such that (\ref{BigConv}) holds, and hence $\psi$ maps $(\phi(j-\epsilon),\phi(j)]=(\phi(j-\epsilon),\lambda]$ to $O$. 
Using the continuity of $\sigma_j$ yields some $\delta>0$ such that 
$\psi$ sends the open interval $(\lambda-\epsilon,\lambda+\delta)$ to $O$ as desired. 

Else if $\omega=x_i=y_i$ for some $i\in I\setminus J$ there is some $\epsilon>0$ such that (\ref{BigConv}) symmetrically holds for both sides, and hence $\psi$ sends $(\phi(j-\epsilon),\phi(j+\epsilon))$ to $O$.

Finally if $\{\omega\}$ is a singleton auxiliary arc-component of $\GH$ meeting $\bigcup (A\setminus\mathring{E})$, then let $i$ be the point of $\I$ which $\sigma$ maps to $\{\omega\}$ (Theorem~\ref{AuxArcsChar} and Lemma~\ref{Forms} yield $[\omega]_\sim=\{\omega\}$). Now we use the continuity of $\sigma$ to find some $\epsilon>0$ such that $\sigma$ sends $(i\pm\epsilon)$ to $\hat{C}_{\Gq}(X,\omega)$. Then $\psi$ sends $(\phi(i-\epsilon),\phi(i+\epsilon))$ to $O$ as desired.
\end{proof}

\begin{lemma}\label{InvLimOfArcCompsEnd}
Let $\cA$ be an auxiliary arc-component of $\GH$ and let $u_0,u_1,\hdots$ be an enumeration of $\cA\cap V(G)$. For every $n\in\N$ write $X_n=\{u_k\,|\,k<n\}$ and
\begin{align*}
\cA_n=\cA-X_n-\mathring{E_{\GG}}(X_n,\cA).
\end{align*}
Let some $N\in\N$ be given, and let $(x_n)_{n\ge N}$ be an $\omega$-sequence of arc-components $x_n$ of $\cA_n$ with $x_n\supseteq x_{n+1}$ for all $n\ge N$. Then there exists an end $\omega$ of $G$ such that $\bigcap_{n\ge N}x_n=\{\omega\}$.
\end{lemma}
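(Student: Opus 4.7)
The plan is to prove three claims in turn: that $\bigcap_{n \ge N} x_n$ consists entirely of ends of $G$, that it contains at most one end, and that it contains at least one. For the first claim, I would argue pointwise: every vertex $u_k \in \cA \cap V(G)$ is excised from $\cA_m$ once $m > k$; inner points of edges of $G$ never lie in $\cA \subseteq \overline{\AuxE}$ to begin with, since a basic $\GH$-neighbourhood of such a point sits inside $\mathring{e}$ and is disjoint from $\AuxE$; and each auxiliary edge has at least one vertex endpoint $u_k$ (by the definitions of $\AuxE(\Omega)$ and $\AuxE(\cX)$), so once that endpoint enters $X_m$ the interior gets deleted together with $\mathring{E_{\GG}}(X_m, \cA)$. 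Hence $\bigcap_{n \ge N} x_n \subseteq \Omega$.

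For uniqueness, suppose $\omega \ne \omega'$ both lie in the intersection, pick $X \in \cX$ with $C(X, \omega) \ne C(X, \omega')$, and choose $n$ large enough that $X \cap \cA \cap V(G) \subseteq X_n$. An auxiliary arc $A \subseteq x_n \subseteq \cA_n$ joining $\omega$ to $\omega'$ must meet $X$ by Lemma~\ref{vxJAL}, but $A \cap V(G) \subseteq \cA \cap V(G)$ forces the crossing vertex into $X \cap \cA \cap V(G) \subseteq X_n$, contradicting $A \subseteq \cA_n$.

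For existence I split into two cases. If some $x_{n_0}$ misses $V(G)$ entirely, then it also contains no inner point of an auxiliary edge (such an edge has a vertex endpoint $u_k$ with $k \ge n_0$, and arc-connectedness would pull that vertex into $x_{n_0}$); by Lemma~\ref{SigmaArcEdgesDense} an arc between two distinct ends requires inner edge points, so $x_{n_0}$ is a single end which by nesting sits in every $x_n$. Otherwise pick a vertex $v_m \in x_m$ for every $m \ge N$ and note $v_m = u_{k_m}$ with $k_m \ge m$. For every $X \in \cX$, once $X \cap \cA \cap V(G) \subseteq X_n$, the connected set $x_n$ avoids $X$, and by Lemma~\ref{vxJAL} applied to the bipartitions $\{\{C\}, \cC_X \setminus \{C\}\}$ the vertices of $x_n$ lie in a single component of $G-X$; nesting makes this component eventually constant in $n$, call it $C_X$. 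The assignment $X \mapsto C_X$ respects inclusion, so it is a direction of $G$ and corresponds to a unique end $\omega$ of $G$ via Diestel and K\"uhn~\cite{Ends}. Since a vertex $v$ lies in $\hat{\aC}_\epsilon(X, \omega)$ iff $v \in V(C(X, \omega))$, the sequence $(v_m)$ converges to $\omega$ in $\GH$. Fixing $n \ge N$ and choosing auxiliary arcs $A_m \subseteq x_n$ from $v_n$ to $v_m$ (for $m \ge n$), Lemma~\ref{CtblAuxArcStarComb} then produces an auxiliary arc from $v_n$ to $\omega$ lying inside $\{\omega\} \cup \bigcup_m A_m \subseteq x_n \cup \{\omega\} \subseteq \cA_n$, placing $\omega$ in the arc-component $x_n$.

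The main obstacle is bridging the combinatorial direction $X \mapsto C_X$ and an honest auxiliary arc reaching $\omega$ inside each $\cA_n$; Lemma~\ref{CtblAuxArcStarComb} supplies this bridge, but only once convergence $v_m \to \omega$ holds in $\GH$ (not merely in $\TC$). That convergence comes for free here, since for a vertex $v$ membership in $\hat{\aC}_\epsilon(X, \omega)$ reduces to $v \in V(C(X, \omega))$, exactly the direction we have already extracted.
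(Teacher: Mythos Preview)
Your argument is correct and takes a genuinely different route from the paper. The paper transfers the problem to the tangle compactification: it sets $y_n:=x_n\setminus\mathring{E}(\GG)$ and $z_n:=\overline{y_n}^{\,\TC}$, uses compactness of $\TC$ to obtain a point in $\bigcap z_n$, and then rules out vertices and ultrafilter tangles (via Lemmas~\ref{Xtau} and~\ref{New36} and a finite-cut argument) to conclude that this point is an end. You instead stay entirely inside $\GH$: you extract a direction $X\mapsto C_X$ from the vertices $v_m\in x_m$, convert it into an end $\omega$ via the Diestel--K\"uhn correspondence, and then invoke Lemma~\ref{CtblAuxArcStarComb} to thread an auxiliary arc from $v_n$ to $\omega$ inside $\cA_n$. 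This has the pleasant feature of producing $\omega\in x_n$ directly, whereas the paper's compactness argument literally yields only $\omega\in z_n$; one then still needs an extra word to get back from the $\TC$-closure to the $\GH$-arc-component.

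One small imprecision: you assert that Lemma~\ref{CtblAuxArcStarComb} returns an arc contained in $\{\omega\}\cup\bigcup_m A_m$, but that inclusion is only guaranteed in alternative~(ii) of that lemma. In alternative~(i) the arc may pass through the auxiliary edge $u\omega$ for the dominating centre vertex $u$. This causes no trouble---since $u\in A_k\subseteq x_n$ forces $u\notin X_n$, the edge $u\omega$ lies in $\cA_n$ and still connects $\omega$ into $x_n$---but you should say so explicitly.
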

\begin{proof}
The set $\{x_n\,|\,n\ge N\}$ trivially has the finite intersection property. For every $n\ge N$ set $y_n=x_n\setminus\mathring{E}(\GG)$ and let $z_n$ be the closure of $y_n$ in $\TC$.
Since the set $\{y_n\,|\,n\ge N\}$ also has the finite intersection property, so does $\{z_n\,|\,n\ge N\}$. Using that $\TC$ is compact hence yields some $\xi\in\zeta:=\bigcap_{n\ge N}z_n$.

First we show $\zeta\subseteq\Omega(G)$. Clearly, $\zeta$ avoids $G$, so suppose for a contradiction that there is some $\upsilon$ in $\Upsilon\cap\zeta$. 
If $X_\upsilon$ meets $\cA$, then $X_\upsilon\subseteq\cA$ and we may let $M\in\N$ be big enough that $X_\upsilon\subseteq X_M$. Then due to Lemma~\ref{vxJAL} there is some component $C$ of $G-X_M$ with $x_M\subseteq\cO_{\GH}(X_M,\{C\},1)$, and hence
$y_M\subseteq \cO_{\TC}(X_M,\{C\})$. 
Therefore, $\cO_{\TC}(X_M,\cC_{X_M}-\{C\})$ is an open neighbourhood of $\upsilon$ avoiding $y_M$, yielding $\upsilon\notin z_M$, a contradiction.
Else if $X_\upsilon$ avoids $\cA$, then by Theorem~\ref{AuxArcsChar} there is a finite cut $F$ of $G$ with sides $A$ and $B$ separating $X_\upsilon$ from $\cA$. 
Put $X=V[F]$ and let $\{\cC,\cC'\}$ be a bipartition of $\cC_X$ respecting the sides of $F$, i.e. with $V[\cC]\subseteq A$ and $V[\cC']\subseteq B$. Without loss of generality we have $X_\upsilon\subseteq A$. Then $\upsilon\in\cO_{\TC}(X,\cC)$, since otherwise $\cO_{\TC}(X,\cC')$ is basic open neighbourhood of $\upsilon$ which sends no edges to $X_\upsilon$, contradicting Lemma~\ref{Xtau}.
Hence $\cO_{\TC}(X,\cC)$ is an open neighbourhood of $\upsilon$ avoiding $\cA\setminus\mathring{E}(\GG)=y_0$, resulting in $\upsilon\notin z_0\supseteq\zeta$, a contradiction. Hence $\zeta\subseteq\Omega(G)$ holds as claimed.

Finally, we show that $\zeta$ is a singleton: Assume not for a contradiction, so we find two distinct ends $\omega$ and $\omega'$ of $G$ in $\zeta$. Let $X\in\cX$ be a witness of $\omega\neq\omega'$ and let $K$ be big enough that $X\cap\cA\subseteq X_K$. Then due to Lemma~\ref{vxJAL} the ends $\omega$ and $\omega'$ are contained in distinct auxiliary arc components of $\cA_K$, contradicting $\zeta\subseteq x_K$. Hence $\zeta$ is a singleton subset of $\Omega(G)$.
\end{proof}

\newpage
\subsection{$\mathfrak{A}$TSTs of auxiliary arc-components}

\begin{theorem}\label{ExternalSubTSTs}
If $G$ is a countable graph, then every auxiliary arc-component of $\GH$ has an \ATST{}.
\end{theorem}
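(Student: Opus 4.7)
The plan is to enumerate $V_\cA := \cA\cap V(G) = \{v_0, v_1, \dots\}$ (countable, since $V(G)$ is) and to build inductively a nested sequence $\Xi_0\subseteq\Xi_1\subseteq\cdots$ of uniquely arc-connected \emph{finite} standard subspaces of $\cA$, each with edge set in $\AuxE$ and with $\{v_0,\dots,v_n\}\subseteq\Xi_n$. The candidate \ATST{} will then be $\Xi:=\overline{\bigcup_n\Xi_n}$. Singleton auxiliary arc-components being trivial, I assume $|V_\cA|\ge 2$.

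For the inductive step, suppose $\Xi_n$ is built and $v_{n+1}\notin\Xi_n$. Since $v_{n+1}$ and $v_0$ both lie in $\cA$ they are not finitely separable, so Theorem~\ref{InfIndPaths} and the argument from (i)$\to$(ii) of Corollary~\ref{ApproxOnV} produce a finite alternating sequence
\begin{align*}
v_{n+1} = u_0 \between \upsilon_1 \between u_1 \between \cdots \between \upsilon_m \between u_m = v_0
\end{align*}
in which each $\upsilon_k\in\cU$ supplies an auxiliary edge between $u_{k-1}$ and $u_k$: the critical-set edge from $X_{\upsilon_k}$ if $\upsilon_k\in\Upsilon$, or the pair of dominating edges $u_{k-1}\upsilon_k,\upsilon_k u_k$ if $\upsilon_k\in\Omega$. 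Truncating this finite auxiliary walk at its first contact with $\Xi_n$ produces a finite auxiliary arc $W'$ meeting $\Xi_n$ in a single point $p$; I set $\Xi_{n+1}:=\Xi_n\cup W'$. Unique arc-connectedness is preserved, since any arc joining $\Xi_n\setminus W'$ to $W'\setminus\Xi_n$ is forced through $p$, and $\Xi_{n+1}$ remains a finite standard subspace with edges in $\AuxE$.

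The main obstacle will be to show that the limit $\Xi=\overline{\bigcup_n\Xi_n}$ is still uniquely arc-connected. Writing $H$ for the subgraph $\big(\bigcup_n V(\Xi_n),\bigcup_n E(\Xi_n)\big)$ of $\GG$, we have $\Xi=\overline{H}$, and by construction every end in $V(H)\setminus V(G)$ is incident with an edge of $E(H)$, so $\Xi$ is a standard subspace with $V_\cA\subseteq\Xi$. The danger is that $\Xi$ may adjoin limit ends $\omega\in\overline{V(H)}\setminus V(H)$: if two inductive branches accumulate at the same such $\omega$, then $\Xi$ contains two essentially distinct arcs to $\omega$. To rule this out, I plan to make the inductive choices with foresight, routing each new walk through vertices or ultrafilter tangles already present in $\Xi_n$ whenever possible, and then to invoke Lemma~\ref{CtblAuxArcStarComb}: any vertex-sequence of $H$ converging to an end $\omega$ in $\GH$ admits, after passing to a subsequence, either a vertex of $H$ dominating $\omega$ (which can be treated as the unique branch point towards $\omega$) or an auxiliary arc to $\omega$ already inside a single branch of $H$. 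Combined with Lemma~\ref{SigmaArcEdgesDense}, which reduces any putative pair of distinct arcs between points of $\Xi$ to differing at an inner auxiliary edge point and hence to enclosing a circle inside some finite $\Xi_n$---contradicting the tree structure at that stage---this will deliver unique arc-connectedness and so complete the proof.
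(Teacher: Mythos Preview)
Your inductive step relies on producing a \emph{finite} auxiliary walk from $v_{n+1}$ to $v_0$ via Corollary~\ref{ApproxOnV}. But that corollary characterises $\transcl$, not $\sim$: the alternating sequence $u_0\between\upsilon_1\between\cdots\between u_m$ exists precisely when $v_{n+1}\transcl v_0$. Membership in the same auxiliary arc-component gives only $v_{n+1}\sim v_0$ (Theorem~\ref{AuxArcsChar}), and by Lemma~\ref{3Rels} we have merely $\transcl\subseteq\sim$, with the inclusion strict in general. The graph of Fig.~\ref{fig:finHD} is an explicit witness: infinite-degree vertices on distinct heavy double rays lie in one $\sim$-class but in distinct $\transcl$-classes, so no finite chain of $\between$-relations---and hence no finite auxiliary walk---joins them. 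Any auxiliary arc between such vertices must pass through an undominated limit end and is genuinely infinite. Your construction therefore cannot attach $v_{n+1}$ to $\Xi_n$ by a finite walk, and each $\Xi_n$ can no longer be taken to be a finite standard subspace; the clean picture in which circles could only arise in the closure collapses.

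This is exactly why the paper does not attempt to grow a tree by finite walks. Instead it contracts the arc-components of $\cA_n=\cA-X_n-\mathring{E}_{\GG}(X_n,\cA)$ to single vertices, obtaining auxiliary multigraphs $H_n$ on finitely many designated vertices, and builds compatible spanning trees $T_n$ by expanding one contracted vertex per step. The candidate $\Tau=\overline{\bigcup_n\mathring{E}(T_n)}$ is then shown to be arc-connected not by an explicit arc construction but via Capel's theorem (Theorem~\ref{Capel}): for any two vertices $u_i,u_j$ the unique $u_i$--$u_j$ paths in the $T_n$ form an inverse sequence of arcs with monotone surjective bonding maps, whose inverse limit is again an arc. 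Acyclicity follows from the observation that each arc-component of $\cA_n$ receives at most one $T_n$-edge from each $u_k\in X_n$. Your closing paragraph identifies the right obstacle but the foresight-and-Lemma~\ref{CtblAuxArcStarComb} sketch does not address it; the inverse-limit machinery is what makes both connectedness and acyclicity tractable here.
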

\begin{proof}
The proof starts with the essential idea of a proof from the lecture course.

Let $\cA$ be any non-singleton auxiliary arc-component of $\GH$. If $\cA\cap V(G)$ is finite, then any spanning tree of $\cA\subseteq\GG$ will do, so we may assume that $\cA\cap V(G)$ is infinite. Pick an enumeration $u_0,u_1,\hdots$ of $\cA\cap V(G)$ and write $X_n=\{u_k\,|\,k<n\}$ for every $n\in\N$.
Furthermore, for every $n$ we write 
\begin{align*}
\cA_n=\cA-X_n-\mathring{E}_{\GG}(X_n,\cA),
\end{align*}
and we let the set $\cV_n$ consist of all singleton subsets of $X_n$ and all arc-components of $\cA_n$. Then we let $H_n$ be the multigraph on $\cV_n$ whose edges are the cross-edges of $\cV_n$ with respect to $\GG-E(G)$. For each $n$ we denote by $\cN_n$ the arc-component of $\cA_n$ containing $u_n$ (which is in $X_{n+1}\setminus X_n$ by definition).
Hence $\cV_n$ can be obtained from $\cV_{n-1}$ by discarding $\cN_{n-1}$ and adding $\{u_{n-1}\}$ as well as the arc-components of 
\begin{align*}
\cK_{n-1}:=\cN_{n-1}-u_{n-1}-\mathring{E}_{\GG}(u_{n-1},\cN_{n-1}).
\end{align*}

Next, we inductively construct spanning trees $T_n$ of the $H_n$, starting with the spanning tree $T_0=(\{\cA\},\emptyset)$ of $H_0=(\{\cA\},\emptyset)$.
For the induction step, informally we obtain $T_n$ from $T_{n-1}$ by expanding the vertex $\cN_{n-1}$ of $T_{n-1}$ to a star in $H_n$ with centre $\{u_{n-1}\}$ and leaves the arc-components of $\cK_{n-1}$. 
Formally, we proceed as follows: Let $T_n'$ be the subgraph of $H_n$ whose 1-complex coincides with $\overline{\mathring{E}(T_{n-1})}$ where the closure is taken in the 1-complex of $H_n$ (i.e. $T_n'$ is the subgraph of $H_n$ induced by the inner edge points of $T_n$). 
Then we let $T_n$ be the union of $T_n'$ and an arbitrary spanning star of $H_n[\cV_n\setminus \cV_{n-1}]$ (recall that $\cV_n\setminus \cV_{n-1}$ is the set consisting of $\{u_{n-1}\}$ and the arc-components of $\cK_{n-1}$). It is easy to see that $T_n$ is a spanning tree of $H_n$.
Note that, by construction,
for every $n$ we have that every $\{u\}\subseteq X_n$ sends to each arc-component of $\cA_n$ at most one edge of $T_n$. As a consequence, every arc-component of $\cA_n$ has finite degree in $T_n$.

Finally, we let
\begin{align*}
\Tau:=\overline{\bigcup_{n\in\N}\mathring{E}(T_n)}^{\;\GH}
\end{align*}
which will turn out to be our desired \ATST{} of $G$. The rest of the proof is dedicated to a formal verification.
By definition, $\Tau$ is a standard subspace of $\GH$.
First, we show that $\Tau$ is `spanning':

\begin{claim_subTSTs}\label{auxArcCompSingleton}
If $\cK$ is an auxiliary arc component of $\cA_n$ for some $n$ then either $\cK$ meets $V(G)$ or $\cK=\{\omega\}$ for some $\omega\in\Omega$.
\end{claim_subTSTs}
\begin{proof}[Proof of the Claim]\renewcommand{\qedsymbol}{$\diamond$}
If $\cK$ avoids $V(G)$ then $\cK$ also avoids $\mathring{\AuxE}$, so $\cK\subseteq\Omega$. Then $\cK$ must be a singleton: Otherwise there is some $\omega'\in \cK-\{\omega\}$. Pick a witness $X\in\cX$ of $\omega\neq\omega'$. Then $\cK$ meets $X$ by Lemma~\ref{vxJAL}, a contradiction.
\end{proof}

\begin{claim_subTSTs}\label{subTSTspanning}
$\cA\cap V(\GG)\subseteq\Tau$.
\end{claim_subTSTs}
\begin{proof}[Proof of the Claim]\renewcommand{\qedsymbol}{$\diamond$}
By construction we have $\cA\cap V(G)\subseteq\Tau$. Let any $\omega\in\cA\cap\Omega$ be given; we have to show $\omega\in\Tau$.

If for every $n\in\N$ the auxiliary arc-component $\cW_n$ of $\cA_n$ containing $\omega$ meets $\cA\cap V(G)$ we have $\omega\in\overline{\cA\cap V(G)}$: Indeed, consider any basic open neighbourhood $\hat{\aC}_\epsilon(X,\omega)$ of $\omega$ in $\GH$ and pick $N\in\N$ big enough such that $X\cap\cA\subseteq X_N$. 
Then $\cW_N\subseteq \hat{\aC}_\epsilon(X,\omega)$ by Lemma~\ref{vxJAL}. Due to our assumption we know that $\cW_N\subseteq \hat{\aC}_\epsilon(X,\omega)$ meets $\cA\cap V(G)$. Hence $\omega\in\overline{\cA\cap V(G)}$ holds. Since $\Tau$ is closed and $\cA\cap V(G)\subseteq\Tau$ we also have $\omega\in\overline{\cA\cap V(G)}\subseteq\Tau$.

Otherwise there is some $\cW_n$ which avoids $\cA\cap V(G)$ and hence $V(G)$. By Claim~\ref{auxArcCompSingleton} we have $\cW_n=\{\omega\}$, so $T_n$ includes the interior of an auxiliary-edge $e$ from $X_n$ to $\omega$ and $\mathring{e}\subseteq\Tau$ witnesses $\omega\in\Tau$.
\end{proof}

Thus $\Tau$ is `spanning'. Next, we show that $\Tau$ is arc-connected:

\begin{claim_subTSTs}\label{arcFromUiToUj}
For every $i<j$ there exists an arc from $u_i$ to $u_j$ in $\Tau$.
\end{claim_subTSTs}
\begin{proof}[Proof of the Claim]\renewcommand{\qedsymbol}{$\diamond$}
For every $n\ge j$ there exists a unique path $P_n\subseteq T_n$ from $u_i$ to $u_j$. Let $A_n$ be an arc traversing $P_n$ (i.e. formally $A_n$ is the 1-complex of the graph $P_n$). For every $n>j$ we define $f_n\colon A_n\to A_{n-1}$ as follows:
If $\xi\in A_n$ is a point of $P_n$ which is also a point of $P_{n-1}$ we put $f_n(\xi):=\xi$. Else if $\xi$ is not a point of $P_{n-1}$, then $\xi$ formally is one of the following:
\begin{enumerate}
\item the singleton $\{u_{n-1}\}\subseteq X_n\setminus X_{n-1}$;
\item an inner edge point of an auxiliary edge from $u_{n-1}$ to an arc-component of $\cK_{n-1}$;
\item an arc-component of $\cK_{n-1}$.
\end{enumerate}
In either case we let $f_n$ map $\xi$ to $\cN_{n-1}$, which formally is a point of $P_{n-1}$. Note that $f_n^{-1}(\cN_{n-1})$ is a connected subset of (the 1-complex of) $P_n$ and $f_n$ is a continuous surjection. The arcs $A_n$ together with the maps $f_n$ form an inverse sequence $\{A_n,f_n,\N_{\ge j}\}$ whose inverse limit $A:=\invLim A_n$ is an arc by Theorem~\ref{Capel}. We have to translate this arc into an auxiliary arc. 

For this, we define $\varphi\colon A\to\cA$ as follows:
Let any $x=(x_n\,|\,n\ge j)\in A$ be given. If there is some $N\ge j$ such that $x_N$ is an inner edge point, then we put $\varphi(x)=x_N$.
Else if there are some naturals $N\ge j$ and $\ell$ such that $x_N=\{u_\ell\}$, then we put $\varphi(x)=u_\ell$.
Both cases are well defined by definition of the $f_n$. 
For the final case suppose that $x_n$ is an auxiliary arc-component of $\cA_n$ for every $n\ge j$. 
Then $x_n\supseteq x_{n+1}$ holds for all $n\ge j$ by definition of the bonding maps and we set $\varphi(x)=\omega$ for the end $\omega$ of $G$ with $\bigcap_{n\ge j}x_n=\{\omega\}$ which exists by Lemma~\ref{InvLimOfArcCompsEnd}. This completes the definition of $\varphi$.

Let $\sigma\colon \I\to A$ witness that $A$ is an arc. In order to show that $A':=\im(\varphi)$ is an arc it suffices to show that $\psi:=\varphi\circ\sigma\colon \I\to A'$ is a continuous injection since $\I$ is compact and $A'\subseteq\GH$ is Hausdorff.
Clearly, $\psi$ is injective. Since $\sigma$ is continuous, it suffices to show that $\varphi$ is continuous. 
For this, let $x=(x_n\,|\,n\ge j)$ be any point of $A$ and let $O$ be any basic open neighbourhood of $\varphi(x)$ in $\GH$. 

If there exists some $N\ge j$ such that $x_N$ is not an arc-component of $\cA_N$, then $O$ easily translates into an open neighbourhood $W$ of $x_N$ in $A_N$. Letting $W_n:=A_n$ for all $n\ge j$ with $n\neq N$ and $W_N:=W$ results in the desired open neighbourhood $A\cap\prod_{n\ge j}W_n$ of $x$. 

Otherwise for every $n\ge j$ the point $x_n$ is an arc-component of $\cA_n$, and $\varphi(x)$ is an end $\omega$ of $G$ by construction. In particular, 
$O$ is of the form $\hat{\aC}_\epsilon(X,\omega)$ for some $X\in\cX$. 
Let $N\ge j$ be big enough that $X\cap \cA\subseteq X_N$. 
Then Lemma~\ref{vxJAL} implies $x_N\subseteq\hat{\aC}_\epsilon(X,\omega)$. 
Recall that $x_N$ is a vertex of the path $P_N$. 
Due to $N\ge j>i$ there exist two distinct auxiliary edges $e$ and $e'$ such that $\mathring{e}\cup\{x_N\}\cup\mathring{e}'$ is a homeomorphic copy of $(0,1)$ in $A_N$.
Clearly, this set includes an open neighbourhood $W$ of $x_N$ in $A_N$ such that $W-\{x_N\}\subseteq\hat{\aC}_\epsilon(X,\omega)$ holds. In particular, letting $W_n:=A_n$ for all $n\ge j$ with $n\neq N$ and $W_N:=W$ results in an open neighbourhood $A\cap\prod_{n\ge j}W_n$ of $x$ which $\varphi$ sends to $O$.
\end{proof}

\begin{claim_subTSTs}\label{simpleArg}
If $A$ is an arc in $\cA$ which meets two distinct auxiliary arc-components of $\cA_n$, then $A$ meets $X_n$.
\end{claim_subTSTs}
\begin{proof}[Proof of the Claim]\renewcommand{\qedsymbol}{$\diamond$}
If not, then $A$ also avoids $X_n\cup\mathring{E}_{\GG}(X_n,\cA)$. Hence there exists a unique arc-component of $\cA_n$ including $A$, a contradiction.
\end{proof}

\begin{claim_subTSTs}\label{auxArcMeetsEdgeAtXn}
For every $n$ and every arc-component $\cK$ of $\cA_n$: if $A$ is an arc in $\cA$ meeting $\cK$ and $\cA\setminus\cK$, then $A$ meets $\mathring{\AuxE}(X_n,\cK)$.
\end{claim_subTSTs}
\begin{proof}[Proof of the Claim]\renewcommand{\qedsymbol}{$\diamond$}
Let $\sigma\colon \I\to A$ be a parametrisation of $A$, without loss of generality with $\sigma(0)\in\cK$ and $\sigma(1)\in\cA-\cK-\mathring{\AuxE}(X_n,\cK)$. 
Since $\cK$ is a closed subset of $\GH$ by Lemma~\ref{AuxArcCompsClosed} there is a maximal $\lambda\in\I$ with $\sigma(\lambda)\in\cK$. 
By Claim~\ref{simpleArg} we know that $\sigma[[\lambda,1]]$ meets $X_n$, so since $X_n$ is finite there is some minimal $\mu>\lambda$ with $\sigma(\mu)\in X_n$. 
Again by Claim~\ref{simpleArg} and by choice of both $\lambda$ and $\mu$ we know that $\sigma[(\lambda,\mu)]$ avoids $\cA_n$. 
Hence $\sigma[(\lambda,\mu)]$ is a connected subset of $\mathring{\AuxE}(X_n,\cA)$. 
In particular, there is an auxiliary edge $e$ in $\AuxE(X_n,\cA)$ with $\mathring{e}\supseteq\sigma[(\lambda,\mu)]$, so $e=\sigma[[\lambda,\mu]]$ follows.
Write $e=xy$ with $x\in \cA$ and $y\in X_n$. Due to $\sigma(\lambda)\in\cK$, the only possibility for $x$ and $y$ is $x=\sigma(\lambda)$ and $y=\sigma(\mu)$. 
Therefore, we have $e\in \AuxE(X_n,\cK)$ as desired.
\end{proof}

\begin{claim_subTSTs}
For every end $\omega$ of $G$ in $\cA$ there exists an arc from $\omega$ to $u_0$ in $\Tau$.
\end{claim_subTSTs}
\begin{proof}[Proof of the Claim]\renewcommand{\qedsymbol}{$\diamond$}
We check two main cases.

For the first main case suppose that for every $n$ the auxiliary arc-component $\cW_n$ of $\cA_n$ containing $\omega$ meets $V(G)$ in some $t_n$. 
Then we use Claim~\ref{arcFromUiToUj} to find an auxiliary arc $A_n$ from $u_0$ to $t_n$ in $\Tau$ for every $n$. 
We have $t_n\to\omega$ in $\GH$: 
Consider any basic open neighbourhood $\hat{\aC}_\epsilon(X,\omega)$ of $\omega$ in $\GH$ and let $N\in\N$ be big enough that $X\cap \cA\subseteq X_N$. Then Lemma~\ref{vxJAL} yields $\cW_N\subseteq\hat{\aC}_\epsilon(X,\omega)$ which implies $t_n\in \hat{\aC}_\epsilon(X,\omega)$ for all $n\ge N$.
If Lemma~\ref{CtblAuxArcStarComb} yields an auxiliary arc from $u_0$ to $\omega$ included in $\{\omega\}\cup\bigcup_{n\in\N}A_n\subseteq\Tau$ then we are done.
Otherwise Lemma~\ref{CtblAuxArcStarComb} yields some $k\in\N$ and a vertex $u_\ell$ on $A_k$ which is the centre of an infinite star of auxiliary arcs included in $\bigcup_{n\in\N}A_n$ and with leaves in $\{t_n\,|\,n\in\N\}$. But then Claim~\ref{auxArcMeetsEdgeAtXn} applied to the infinitely many arcs from $u$ to the $t_n$ in that fan implies that $\cW_\ell$ is a vertex of $H_\ell$ with infinite degree in $T_\ell$, which is impossible (as argued at the end of the construction of the trees $T_n$).

For the second main case suppose that there is some $N$ such that $\cW_N$ avoids $V(G)$. 
Then $\cW_N=\{\omega\}$ holds by Claim~\ref{auxArcCompSingleton}, so $T_N$ includes the interior of some auxiliary edge from some $u\in X_N$ to $\omega$. 
If $u=u_0$ we are done. Otherwise, by Claim~\ref{arcFromUiToUj} there is an auxiliary arc from $u_0$ to $u$ in $\Tau$, which $u\omega$ extends to an auxiliary arc from $u_0$ to $\omega$ in $\Tau$.
\end{proof}

Thus, $\Tau$ is arc-connected.

\begin{claim_subTSTs}
$\Tau$ is acirclic.
\end{claim_subTSTs}
\begin{proof}[Proof of the Claim]\renewcommand{\qedsymbol}{$\diamond$}
Suppose for a contradiction that $\Tau$ contains a circle $C$. 
By Lemma~\ref{SigmaArcEdgesDense} we know that $C$ traverses an auxiliary-edge, so in particular $C$ contains a vertex of $G$.
Let $N\in\N$ be minimal with $u_N\in C$. 
Then $C-u_N$ avoids $X_{N+1}$. 
Let $e$ be an auxiliary edge at $u_N$ which $C$ traverses, and let $\cK$ be the auxiliary arc-component of $\cA_{N+1}$ containing the other endvertex of $e$ (which exists by choice of $N$).
Now consider the arc $A:=C-\mathring{e}\subseteq\Tau$. 
Since $A$ meets both $X_{N+1}$ and $\cK$, Claim~\ref{auxArcMeetsEdgeAtXn} together with the choice of $N$ yields that $A$ traverses some $e'\in\AuxE(u_N,\cK)-\{e\}$. 
But then we have $\mathring{e}\cup\mathring{e}'\subseteq T_{N+1}$, i.e. $T_{N+1}$ has two parallel edges, contradicting the fact that $T_{N+1}$ is a spanning tree of $H_{N+1}$.
\end{proof}
This completes the proof that $\Tau$ is an \ATST{} of $\cA$.
\end{proof}

\subsection{Tree-packing}

A \textit{circle} in $\cE G$ is a homeomorphic copy of the unit circle.
If $H=(V',E')$ is a subgraph of $G$,
then the closure of $(V'/{\sim}_{\cE})\cup\mathring{E}'$ in $\cE G$ where ${\sim}_{\cE}$ is the equivalence class used to obtain $\cE G$ from $\cE' G=G\cup\Omega'$ (cf. Section~\ref{subsec:topsOverview}) is said to be a \textit{standard subspace} of $\cE G$.
Clearly, every circle in $\cE G$ is a standard subspace.
A standard subspace of $\cE G$ is said to be \textit{spanning} if it includes $\cE G\setminus\mathring{E}$.
A \textit{topological spanning tree} (TST) of $\cE G$ is a uniquely arc-connected spanning standard subspace of $\cE G$. These definitions are equivalent to those of Miraftab \cite[Chapter 5]{Babak}.\footnote{Note that Miraftab writes $(\tilde{G},\textsc{Itop})$ for $(\cE G,\textsc{ETop})$.}

\begin{lemma}[{\cite[Lemma 22]{Babak}}]\label{ETopAconFinCut}
Suppose that $G$ is a countable connected graph.
A standard subspace $\Xi$ of $\cE G$ is arc-connected if and only if $\Xi$ contains an edge from every finite cut of $G$ of which it meets both sides (taken in $\cE G$).
\end{lemma}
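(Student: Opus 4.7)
For the forward direction, I would argue via the natural analogue of Lemma~\ref{easyJAL} (or, equivalently, Lemma~\ref{Tripartition}) transported to $\cE G$ by the homeomorphism $\cE G\simeq \Gq$ of Theorem~\ref{ETopInvLimAndGq}. Suppose $\Xi$ is arc-connected and let $F$ be a finite cut of $G$ with sides $V_1,V_2$ such that $\Xi$ meets both $\overline{V_1}^{\cE G}$ and $\overline{V_2}^{\cE G}$. Pick $x_i\in\Xi\cap\overline{V_i}^{\cE G}$ and an arc $A\subseteq\Xi$ from $x_1$ to $x_2$. The finite cut $F$ yields a clopen tripartition of $\cE G$ into the two closed sides and the finitely many inner edge points $\mathring{F}$; hence the connected set $A$ must meet $\mathring{F}$, forcing $\mathring{e}\subseteq\Xi$ for some $e\in F$. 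Since $\Xi$ is a standard subspace, this means $e\in E(\Xi)$.

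For the backward direction, I would work inside $\CL G\CL=\invLim(G\boldsymbol{.}F\mid F\in \cE)$, which is homeomorphic to $\cE G$ by Lemma~\ref{ETopInvLim}. Because $G$ is countable, $\cE$ is countable and we can fix a cofinal sequence $F_0\subseteq F_1\subseteq\cdots$; by Lemma~\ref{BibleInvLimCofinal} we may reduce to the inverse sequence $\{G\boldsymbol{.}{F_n},f_{n+1,n},\N\}$. For each $n$ let $\Xi_n\subseteq G\boldsymbol{.}{F_n}$ be the image of $\Xi$ under the projection. Each $\Xi_n$ is compact, being the continuous image of the compact set $\Xi$. Moreover, the cut hypothesis forces $\Xi_n$ to be connected: any clopen bipartition of $\Xi_n$ would single out a finite cut of $G\boldsymbol{.}{F_n}$, hence a finite cut of $G$ (with edge set contained in $F_n$), whose two sides $\Xi$ meets but none of whose edges $\Xi$ contains, contradicting the assumption. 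Being a compact connected subspace of the finite $1$-complex $G\boldsymbol{.}{F_n}$, each $\Xi_n$ is locally arc-connected, hence arc-connected.

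To produce an arc between two given points $x,y\in\Xi$, I plan to choose arcs $A_n\subseteq \Xi_n$ from $x_n:=\sigma_{F_n}(x)$ to $y_n:=\sigma_{F_n}(y)$ in a compatible way, so that each bonding map restricts to a monotone surjection $A_{n+1}\to A_n$; Capel's theorem (Theorem~\ref{Capel}) will then deliver an arc $A=\invLim A_n$ in $\invLim\Xi_n\subseteq\cE G$ from $x$ to $y$, which by construction lies in $\Xi$. Concretely, having chosen $A_n$ as a concatenation of finitely many edges and dummy vertices of $G\boldsymbol{.}{F_n}$, one lifts each dummy vertex $d$ of $A_n$ to the connected (hence arc-connected) subspace $\Xi_{n+1}\cap f_{n+1,n}^{-1}(d)$ of $G\boldsymbol{.}{F_{n+1}}$, joining the endpoints prescribed by the incident edges of $A_n$, and then re-concatenates. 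The main obstacle is precisely making these lifting choices coherently across all levels: at each $n$ one has to guarantee that the resulting $A_{n+1}$ still admits a compatible lift to $A_{n+2}$ and so on. I would address this with a König/compactness argument, formalised via the Generalised Infinity Lemma (Lemma~\ref{GIL}) applied to the inverse system whose level-$n$ fibre is the (finite, non-empty) set of choices of $A_n$ with the monotone-surjection compatibility property---the connectedness of every $\Xi_m$ for $m\ge n$ ensuring non-emptiness at each level.
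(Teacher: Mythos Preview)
The paper does not give its own proof of this lemma: it is quoted verbatim as \cite[Lemma~22]{Babak} and used as a black box in the proofs of Lemma~\ref{Lem858}, Lemma~\ref{ETopStandardTST}, and Theorem~\ref{auxTreePacking}. So there is nothing to compare against on the paper's side.

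Your proposal is a reasonable independent argument in the spirit of \cite[Lemma~8.5.5]{Bible}, and the forward direction is fine. For the backward direction, two points deserve tightening. First, you call $G\boldsymbol{.}F_n$ a ``finite $1$-complex'', but by definition it keeps \emph{all} edges of $G$ (the non-$F_n$ ones become loops), so only the vertex set is finite; what you actually need is that $G\boldsymbol{.}F_n$ is compact, Hausdorff and locally path-connected (Observation~\ref{GpHD} and the basis~(\ref{GpBOS})), which still gives that the continuum $\Xi_n$ is arc-connected. Second, and more importantly, your appeal to Lemma~\ref{GIL} requires the level sets of admissible arcs $A_n$ to be \emph{finite}. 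This is not obvious as stated, because $\Xi_n$ may contain infinitely many loops. The fix is to restrict to arcs that traverse no loops: any $x_n$--$y_n$ arc can be rerouted through the dummy vertex instead of a loop, and such loop-free arcs use only edges from the finite set $F_n\cap E(\Xi)$, hence there are only finitely many of them. With that restriction the bonding maps are still monotone surjections (a loop collapses to its dummy vertex, preserving connectivity of preimages), and Theorem~\ref{Capel} applies. You should make this finiteness step explicit.
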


\begin{lemma}\label{Lem858}
Suppose that $G$ is a countable connected graph.
If for every finite partition of $V(G)$, into $\ell$ sets say, $G$ has at least $k(\ell-1)$ cross-edges, then $\cE G$ has $k$ edge-disjoint arc-connected spanning standard subspaces.
\end{lemma}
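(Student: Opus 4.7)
The plan is to mimic Diestel's proof of the topological tree-packing theorem for locally finite graphs, but with the Freudenthal compactification replaced everywhere by $\cE G$. The key advantage is Lemma~\ref{ETopAconFinCut}, which characterises arc-connectedness of standard subspaces of $\cE G$ purely in terms of finite cuts; this is exactly the property that lets a compactness argument at the finite level be translated into a topological statement in $\cE G$ without needing to track anything about ends or edge ends directly.

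First I would apply the finite tree-packing theorem to suitable finite contractions. For every finite partition $P$ of $V(G)$ the quotient multigraph $G/P$ is finite, and every $\ell$-partition of $V(G/P)$ is the image of a coarsening $P'$ of $P$ with $|P'|=\ell$; the cross-edges of $P'$ in $G$ are precisely the cross-edges of this partition in $G/P$, so by hypothesis there are at least $k(\ell-1)$ of them. Theorem~\ref{finiteTreePacking} then yields $k$ edge-disjoint spanning trees of $G/P$. Let $\Pi_P$ be the finite non-empty set of all such $k$-tuples. Ordering the finite partitions of $V(G)$ by refinement and sending a $k$-tuple over a finer partition to its image in the coarser contraction makes $\{\Pi_P\}$ into an inverse system of finite non-empty sets; since $G$ is countable, we may alternatively exhaust $V(G)$ by a chain $V_0\subseteq V_1\subseteq\cdots$ and restrict attention to the partitions $P_n$ whose classes are the singletons in $V_n$ together with the vertex sets of the components of $G-V_n$. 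Either way, Lemma~\ref{GIL} supplies pairwise edge-disjoint edge sets $F_1,\hdots,F_k\subseteq E(G)$ such that for every finite partition $P$ and every $i\in [k]$ the image of $F_i$ in $G/P$ contains the edge set of a spanning tree of $G/P$.

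For each $i\in [k]$ let $\Xi_i$ be the standard subspace of $\cE G$ determined by $F_i$; the $\Xi_i$ are edge-disjoint on $E(G)$ by construction, so it remains to verify spanningness and arc-connectedness. Spanningness on $V(G)$ is immediate by considering the partitions $\{\{v\},V(G)\setminus\{v\}\}$, which force $F_i$ to contain an edge at every vertex; spanningness on $\cE G\setminus G$ then follows because $\Xi_i$ is closed and, as we shall see, meets every finite cut of $G$. For arc-connectedness, by Lemma~\ref{ETopAconFinCut} it suffices to show that $\Xi_i$ contains an edge from every finite cut of $G$ of which it meets both sides. But given any finite cut with sides $V_1,V_2$, the partition $P=\{V_1,V_2\}$ turns $G/P$ into a two-vertex multigraph whose entire edge set is the cut, so any spanning tree of $G/P$ uses one of its edges; hence $F_i$ meets every finite cut of $G$.

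The main obstacle is really just bookkeeping in the inverse-limit step: one has to choose the bonding maps in $\{\Pi_P\}$ so that contracting a $k$-tuple of spanning trees of $G/P'$ along a coarsening $P'\to P$ does yield spanning trees of $G/P$ (the contracted image of a spanning tree is a connected spanning multigraph, from which one must select a spanning tree, introducing a choice that has to be made compatibly). This is handled either by restricting to the cofinal countable chain $(P_n)_{n\in\N}$ mentioned above and invoking the classical König Infinity Lemma, or by slightly enlarging each $\Pi_P$ to the set of $k$-tuples of edge-disjoint \emph{connected} spanning subgraphs of $G/P$, which is already closed under contraction. Everything else—passing the cut condition down to contractions, verifying edge-disjointness in the limit, and reading off arc-connectedness from Lemma~\ref{ETopAconFinCut}—is routine.
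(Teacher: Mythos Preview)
Your overall strategy---apply finite tree-packing to finite quotients, pass to an inverse limit, and invoke Lemma~\ref{ETopAconFinCut}---matches the paper's. But your choice of index set breaks the compactness step. You assert that $\Pi_P$ is finite; it need not be. The quotient $G/P$ has finite vertex set, but whenever two partition classes are joined by infinitely many edges of $G$ (which is the typical situation here, since we are not assuming local finiteness), $G/P$ has infinitely many parallel edges, hence infinitely many spanning trees, hence $\Pi_P$ is infinite. Neither Lemma~\ref{GIL} nor König's Lemma then applies, and your proposed fixes do not help: restricting to a countable chain still leaves each $\Pi_{P_n}$ infinite, and enlarging to connected spanning subgraphs only makes the sets larger. (Separately, your concrete partitions $P_n$ built from singletons in $V_n$ and components of $G-V_n$ need not even be finite partitions, since $G-V_n$ can have infinitely many components.)

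The paper avoids this by indexing over finite \emph{edge} sets: it uses the multigraphs $G\boldsymbol{.}F$ from the inverse system $\{G\boldsymbol{.}F,f_{F',F},\cE\}$ describing $\CL G\CL\simeq\cE G$. Since $G$ is connected, $G-F$ has at most $|F|+1$ components, so $G\boldsymbol{.}F$ has finitely many vertices; and every edge of $G$ outside $F$ becomes a loop, so the non-loop edges of $G\boldsymbol{.}F$ lie in $F$ and are finite. Thus each $G\boldsymbol{.}F$ has only finitely many spanning trees, the sets of $k$-tuples are genuinely finite, and Lemma~\ref{GIL} applies. A finite cut of $G$ with edge set $F$ is visible in $G\boldsymbol{.}F$, so the verification via Lemma~\ref{ETopAconFinCut} goes through exactly as in your final paragraph.
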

\begin{proof}
This easily follows from mimicking the proof of \cite[Lemma 8.5.8]{Bible} where we replace 
\begin{enumerate}
\item the $G_n$ by the $G\boldsymbol{.}F$ from the inverse system $\{G\boldsymbol{.}F,f_{F',F},\cE\}$,
\item \cite[Lemma 8.5.5]{Bible} by Lemma~\ref{ETopAconFinCut},
\item \cite[Lemma 8.1.2]{Bible} by Lemma~\ref{GIL},
\end{enumerate}
and finally use Lemma~\ref{ETopInvLim} for $\invLim (G\boldsymbol{.}F\,|\,F\in\cE)=\CL G\CL\simeq\cE G$.
\end{proof}

The following Lemma can be proved analogously to \cite[Lemma 8.5.9]{Bible} (with \cite[Lemma 8.5.5]{Bible} replaced by Lemma~\ref{ETopAconFinCut} in the proof):
\begin{lemma}[$\L$]\label{ETopStandardTST}
Suppose that $G$ is a countable connected graph. Then every connected spanning standard subspace of $\cE G$ includes a TST of $\cE G$.\qed
\end{lemma}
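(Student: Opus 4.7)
I plan to mimic the classical proof of \cite[Lemma 8.5.9]{Bible} verbatim, substituting Lemma~\ref{ETopAconFinCut} for \cite[Lemma 8.5.5]{Bible} at each invocation. Since $G$ is countable, so is $E(\Xi)$; fix an enumeration $e_0,e_1,\ldots$ of $E(\Xi)$. Inductively define a descending chain $\Xi=\Xi_0\supseteq\Xi_1\supseteq\cdots$ of arc-connected spanning standard subspaces of $\cE G$ by setting $\Xi_{n+1}:=\Xi_n\setminus\mathring{e}_n$ if this remains arc-connected (spanning-ness is automatic since vertices and ends are never touched), and $\Xi_{n+1}:=\Xi_n$ otherwise. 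Let $E^*$ be the set of edges surviving every stage and let $\Xi^*$ be the standard subspace of $\cE G$ corresponding to the subgraph $(V(G),E^*)$. The task is then to verify that $\Xi^*$ is a TST, i.e.\ arc-connected, spanning, and acyclic (hence uniquely arc-connected, since $\cE G$ is Hausdorff and two arcs between the same endpoints would contain a circle).

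Spanning-ness is immediate. For arc-connectedness I apply Lemma~\ref{ETopAconFinCut}: given any finite cut $F$ of $G$ of which $\Xi^*$ meets both sides, I exhibit an edge of $F$ in $E^*$. Since $\Xi$ was arc-connected and meets both sides of $F$, the set $F\cap E(\Xi)$ is finite and non-empty; let $n$ be the largest index with $e_n\in F$. If some earlier-indexed edge of $F$ was kept by the algorithm, it already lies in $E^*$ and we are done; otherwise, at stage $n$ the subspace $\Xi_n$ meets $F$ only in $\{e_n\}$, so $\Xi_n\setminus\mathring{e}_n$ contains no edge of $F$ at all, while still meeting both sides of $F$ (all vertices and ends remain). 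Lemma~\ref{ETopAconFinCut} then forbids the arc-connectedness of $\Xi_n\setminus\mathring{e}_n$, so the algorithm does not delete $e_n$, i.e.\ $e_n\in E^*$.

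For acyclicity, suppose for contradiction that $\Xi^*$ contains a circle $C$, and pick an edge $e_n\in E^*$ traversed by $C$. The arc $A:=C\setminus\mathring{e}_n\subseteq\Xi^*\subseteq\Xi_n$ joins the two endpoints of $e_n$ inside $\Xi_n\setminus\mathring{e}_n$. Applying Lemma~\ref{ETopAconFinCut} again, I want $\Xi_n\setminus\mathring{e}_n$ to be arc-connected: for every finite cut $F'$ of $G$ of which this subspace meets both sides, either $e_n\notin F'$ (and the edge of $F'$ witnessing arc-connectedness of $\Xi_n$ remains untouched), or $e_n\in F'$ (in which case $F'$ separates the endpoints of $e_n$ in $G$, and so the arc $A$, being disjoint from $\mathring{e}_n$, must traverse some other edge of $F'$ by the Jumping Arc Lemma for finite cuts; compare Lemma~\ref{easyJAL} and its $\cE G$-analogue). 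Either way $\Xi_n\setminus\mathring{e}_n$ contains an edge of $F'$, so Lemma~\ref{ETopAconFinCut} yields arc-connectedness, contradicting the algorithm's decision to retain $e_n$.

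The main obstacle I anticipate is the careful topological bookkeeping that this edge-deletion picture conceals. In particular, one must verify that removing a single open interior $\mathring{e}$ from a standard subspace of $\cE G$ genuinely yields another standard subspace, and that the ``limit'' subspace $\Xi^*$ associated with $E^*$ really equals $\bigcap_n\Xi_n$ as a subset of $\cE G$; both are routine once one unpacks the closure operation in the definition of a standard subspace, but the identifications under ${\sim}_\cE$ demand some care, especially in pathological cases where an edge $e_n$ has its two $G$-endpoints identified in $\cE G$ and thus represents a one-edge circle on its own. Secondarily, for the Jumping Arc Lemma step in the acyclicity argument one needs a clean statement of the first part of the Jumping Arc Lemma inside $\cE G$; this can be extracted either directly from the basis of \textsc{ETop} or via the quotient map $\TC\to\Gq\simeq\cE G$ together with Lemma~\ref{easyJAL}.
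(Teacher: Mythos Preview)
Your proposal is correct and is precisely the approach the paper indicates: the paper gives no proof at all beyond the remark that one should mimic \cite[Lemma 8.5.9]{Bible} with Lemma~\ref{ETopAconFinCut} substituted for \cite[Lemma 8.5.5]{Bible}, which is exactly the greedy edge-deletion argument you spell out. The only cosmetic point is that the hypothesis says ``connected'' rather than ``arc-connected'', but Lemma~\ref{ETopAconFinCut} immediately upgrades connectedness to arc-connectedness for standard subspaces, so your starting assumption that $\Xi_0$ is arc-connected is justified.
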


\newpage
\begin{theorem}\label{auxTreePacking}
Let $G$ be a countable connected graph.\footnote{Multigraph should not be a problem.} Then the following are equivalent for all $k\in\N$:
\begin{enumerate}
\item $G$ has $k$ external \ATST{}s which are edge-disjoint on $E(G)$.
\item $G$ has at least $k(|P|-1)$ edges across any finite vertex partition $P$.
\end{enumerate}
\end{theorem}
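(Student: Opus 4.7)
The plan is to prove the two implications separately, with $(ii)\to(i)$ synthesising most of the chapter's machinery.

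For $(i)\to(ii)$, fix external \ATST{}s $\Tau_1,\dots,\Tau_k$ pairwise edge-disjoint on $E(G)$ and any finite vertex partition $P=\{V_1,\dots,V_\ell\}$. I claim that for each $i$ the set $E(\Tau_i)\cap E(G)$ contains at least $\ell-1$ cross-edges of $P$; summing and using edge-disjointness on $E(G)$ will yield the required $k(\ell-1)$. Suppose some proper bipartition of $P$ is crossed by no edge of $E(\Tau_i)\cap E(G)$, and let $F$ be the set of $G$-cross-edges of that bipartition. Then $F$ is a finite cut of $G$, and by Lemma~\ref{finCutsG''} also a finite cut of $\GG$ (no auxiliary edge crosses $F$, since dominating vertices and vertices of critical sets cannot be finitely separated from their ends or ultrafilter tangles). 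Because $\Tau_i$ is arc-connected and contains vertices on both sides of $F$, Lemma~\ref{cutJAL} forces $\Tau_i$ to traverse some edge of $F$, contradicting our assumption.

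For $(ii)\to(i)$, apply Lemma~\ref{Lem858} to obtain $k$ arc-connected spanning standard subspaces $\Xi_1,\dots,\Xi_k$ of $\cE G$ pairwise edge-disjoint on $E(G)$, and then Lemma~\ref{ETopStandardTST} to shrink each $\Xi_i$ to a TST $T_i$ of $\cE G$. Via the homeomorphism $\cE G\simeq\Gq$ from Theorem~\ref{ETopInvLimAndGq}, view the $T_i$ as TSTs of $\Gq$ with $E(G)$-edge sets $E_1,\dots,E_k$, still pairwise disjoint. By Theorem~\ref{ExternalSubTSTs}, every non-singleton auxiliary arc-component $\cA$ of $\GH$ admits an \ATST{} $\Theta_\cA$; fix one such $\Theta_\cA$ for each $\cA$, using the same choice across all $i$ so that the $\Tau_i$ can share auxiliary edges. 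Define
\[
\Tau_i\;:=\;\overline{\bigcup_{e\in E_i}\mathring{e}\;\cup\;\bigcup_\cA\Theta_\cA}^{\;\GH}.
\]
The $\Tau_i$ remain pairwise edge-disjoint on $E(G)$, since each $\Theta_\cA$ uses only auxiliary edges and the $E_i$ are disjoint. They are external with $\Tau_i\cap\cA=\Theta_\cA$ for every non-singleton $\cA$: different auxiliary arc-components are disjoint and closed in $\GH$ by Corollary~\ref{AuxArcCompsClosed} (applicable because $G$ is countable, so every end has a countable neighbourhood basis in $\GH$), so limits from outside $\cA$ cannot enter $\cA$. Arc-connectedness follows from the Arc Lifting Lemma~\ref{GqArcExtension}: for any $u,v\in V(\Tau_i)\cup(\Omega\cap\Tau_i)$ take the unique arc in $T_i$ from $[u]_\sim$ to $[v]_\sim$ and lift it to $\GH$ relative to $\{\Theta_\cA\}$; the lift uses only $G$-edges from $E_i$ and auxiliary pieces taken from the $\Theta_\cA$, so it lies in $\Tau_i$.

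The main obstacle will be acirclicity. Suppose $C\subseteq\Tau_i$ is a circle. By Lemma~\ref{SigmaArcEdgesDense} $C$ traverses some edge of $\GG$. If $C$ uses only auxiliary edges, then by Theorem~\ref{AuxArcsChar} it lies inside a single auxiliary arc-component $\cA$, so $C\subseteq\Theta_\cA$ contradicts unique arc-connectedness of $\Theta_\cA$. Otherwise $C$ traverses some $e=xy\in E_i$; note $x\not\sim y$, for otherwise $\mathring e$ together with the common class $[x]_\sim=[y]_\sim$ would form a loop in $T_i$, violating its unique arc-connectedness. Consider the natural map $q\colon\GH\to\Gq$ that is the identity on $\mathring E(G)$, sends each point of $V\cup\Omega$ to its $\sim$-class, and collapses every auxiliary edge to the common $\sim$-class of its endpoints; $q$ is continuous (a routine check against the basis of Proposition~\ref{TCquotientBasis}), and $q(\Tau_i)\subseteq T_i$ because all $G$-edges of $\Tau_i$ lie in $T_i$ while every auxiliary arc-component collapses to a point of $T_i$ (which is spanning). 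Hence $q(C-\mathring e)$ is a connected subset of $T_i\setminus\mathring e$ containing both $[x]_\sim$ and $[y]_\sim$; but in $T_i$ the edge $e$ already provides an arc from $[x]_\sim$ to $[y]_\sim$ through $\mathring e$, so by uniqueness this is the only such arc, and removing $\mathring e$ separates $[x]_\sim$ from $[y]_\sim$ in $T_i$ — the desired contradiction. Therefore $\Tau_i$ is uniquely arc-connected, and $\Tau_1,\dots,\Tau_k$ are the external \ATST{}s sought in (i).
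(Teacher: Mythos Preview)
Your proof follows essentially the same architecture as the paper's: pull back edge-disjoint TSTs of $\cE G\simeq\Gq$ (Lemmas~\ref{Lem858} and~\ref{ETopStandardTST}), glue in an \ATST{} of each auxiliary arc-component (Theorem~\ref{ExternalSubTSTs}), and use the Arc Lifting Lemma~\ref{GqArcExtension} for arc-connectedness. Two points need repair. First, in $(i)\Rightarrow(ii)$ you assert that the bipartition cut $F$ is finite; this is false in general, and without finiteness neither Lemma~\ref{finCutsG''} nor Lemma~\ref{cutJAL} applies. The fix is simply to observe that if $P$ has infinitely many cross-edges then (ii) is trivial, so one may assume the total number of cross-edges of $P$ is finite, which makes every such $F$ finite. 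Second, your reference for the basis of $\Gq$ is wrong: Proposition~\ref{TCquotientBasis} describes $\TC/{\asymp}$, not $\TC/{\sim}=\Gq$; for the continuity of your map $q$ you should invoke Lemma~\ref{Tripartition} instead. The paper sidesteps the construction of $q$ altogether in the acirclicity step: it picks $e$ in some finite cut $F$, notes that the arc $C-\mathring e$ must (by Lemma~\ref{cutJAL}) traverse another $G$-edge of every finite cut containing $e$, and then concludes via Lemma~\ref{ETopAconFinCut} that $\tilde\Tau_i-\mathring e$ is still arc-connected, contradicting unique arc-connectedness of the TST $\tilde\Tau_i$.
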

\begin{proof}
(i)$\to$(ii). Using Lemma~\ref{cutJAL} this holds by the same argumentation as in the proof of \cite[Theorem 8.5.7]{Bible}.

(ii)$\to$(i). 
For this proof we treat $\cE G\simeq\Gq$ as $\cE G=\Gq$ (recall Theorem~\ref{ETopInvLimAndGq}).
Applying Lemmas~\ref{Lem858} and~\ref{ETopStandardTST} yields
$k$ edge-disjoint TSTs $\tilde{\Tau}_1,\hdots,\tilde{\Tau}_k$ of $\cE G$.
Let $\hat{\cA}$ be the set of all non-singleton auxiliary arc-components of $\GH$. For every $\cA\in\hat{\cA}$ we apply Theorem~\ref{ExternalSubTSTs} to find an \ATST{} $\Tau_{\cA}$ of $\cA$, and for each $i\in [k]$ we set
\begin{align*}
\hat{\Tau}_i=\overline{\mathring{E}\big(\tilde{\Tau}_i\big)}^{\;\GH}\cup\bigcup_{\cA\in\hat{\cA}}\Tau_{\cA}
\end{align*}
Then every $\hat{\Tau}_i$ is acirclic: Assume not for a contradiction, witnessed by some circle $C\subseteq\hat{\Tau}_i$. 

If $E(C)$ avoids all finite cuts of $G$, then $C$ being connected together with Lemma~\ref{cutJAL} yields $C\subseteq\Tau_{\cA}$ for some $\cA\in\hat{\cA}$, which is impossible.

Otherwise $E(C)$ meets some finite cut $F$ of $G$ in some edge $e$. Then $A:=C-\mathring{e}$ is an arc in $\hat{\Tau}_i-\mathring{e}$. 
By Lemma~\ref{cutJAL} we know that $(A\setminus\mathring{\AuxE})/{\sim}$ satisfies the premise of Lemma~\ref{ETopAconFinCut}, so $\tilde{\Tau}_i-\mathring{e}$ is still connected which is impossible. Thus $\Tau_i$ is acirclic.

Furthermore, every $\Tau_i$ is arc-connected: For this, let any two distinct points $x$ and $y$ of $\GH$ be given (without loss of generality none of $x$ and $y$ is in $\mathring{E}(\GG)$). If $x\sim y$, then by Theorem~\ref{AuxArcsChar} there is some $\cA\in\hat{\cA}$ containing $x$ and $y$, so we find an arc from $x$ to $y$ in $\Tau_{\cA}$ and hence in $\hat{\Tau}_i$. Otherwise $x\not\sim y$ holds, and we let $A$ be an arc in $\tilde{\Tau}_i$ from $[x]_\sim$ to $[y]_\sim$.
Then Lemma~\ref{GqArcExtension} lifts $A$ to an arc from $x$ to $y$ in $\hat{\Tau}_i$.
\end{proof}

\begin{corollary}
Every countable connected graph has an \ATST{}.\qed
\end{corollary}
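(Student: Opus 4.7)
The plan is to derive this directly from Theorem~\ref{auxTreePacking} by taking $k=1$. What needs verifying is that the right-hand condition (ii) of that theorem is automatic for every connected graph when $k=1$: for every finite vertex partition $P$ of $G$, the graph $G$ has at least $|P|-1$ cross-edges. This is a standard consequence of connectedness; contracting each partition class to a single vertex yields a connected (multi)graph on $|P|$ vertices, and any connected graph on $|P|$ vertices has at least $|P|-1$ edges. The cross-edges of $P$ in $G$ are in bijection with the edges of this contracted graph (up to loops, which we ignore), so there are at least $|P|-1$ of them.

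Concretely, I would write: Let $G$ be a countable connected graph and let $P=\{p_1,\dots,p_\ell\}$ be any finite partition of $V(G)$. Since $G$ is connected, for every $i\in\{2,\dots,\ell\}$ there exists an edge of $G$ joining $\bigcup_{j<i}p_j$ to some $p_{i'}$ with $i'\ge i$ (otherwise $\bigcup_{j<i}p_j$ would be a proper non-empty union of components, contradicting connectedness after reindexing). Selecting such an edge for each $i$ produces $\ell-1=|P|-1$ distinct cross-edges of $P$. Hence condition (ii) of Theorem~\ref{auxTreePacking} holds with $k=1$, and therefore condition (i) with $k=1$ yields a single external \ATST{} of $G$, which in particular is an \ATST{} of $G$.

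There is essentially no obstacle here, since the hard work has been done in Theorem~\ref{auxTreePacking}; the only point to check is the connectedness-implies-$(|P|-1)$-cross-edges observation, which is elementary. If one wished to avoid even this short verification, one could instead build the ATST by hand, mimicking the proof of Theorem~\ref{ExternalSubTSTs} applied to $\GH$ itself rather than to a single auxiliary arc-component, but invoking Theorem~\ref{auxTreePacking} is cleaner and keeps the proof to a few lines.
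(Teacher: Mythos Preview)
Your proposal is correct and matches the paper's approach exactly: the corollary is stated with a bare \qed, i.e.\ it is meant to follow immediately from Theorem~\ref{auxTreePacking} with $k=1$, using that a connected graph has at least $|P|-1$ cross-edges across any finite vertex partition~$P$. One small remark: in your second, inductive selection argument the claim that the chosen edges are automatically distinct is not quite justified as written (the same edge $p_ap_b$ with $a<b$ could in principle be selected at every step $i$ with $a<i\le b$); your first argument via contracting the classes to a connected multigraph on $|P|$ vertices is the clean one and already suffices.
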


\subsection{Outlook: limit thin sums}

While we managed to construct auxiliary arcs without imposing any cardinality bounds on $G$, we proved tree-packing only for countable connected $G$. On the one hand, this is needed for the Arc Lifting Lemma~(\ref{GqArcExtension}) to work. But on the other hand we used it to obtain \ATST{}s of auxiliary arc-components of $\GH$, and we would like to know whether these exist for arbitrary $G$, too.
Also, we would like to know whether there is a connection to dendrites.

There exist various obstructions to naive generalisations of thins sums of circles. For example, if $G$ is just a dominated ray embedded into the plane, and if our thin family consists of all the (inner) facial cycles of $G$, then their sum should yield the ray plus the sole auxiliary edge since the ray itself is an NST and hence induces an \ATST{}. Hence we could modify the definition of a thin sum in that we also add certain auxiliary edges. For example, if $G$ has no ultrafilter tangles, and if $(C_i\,|\,i\in I)$ is a thin family of circuits, then a candidate for such a thin sum could be
\begin{align*}
\sum_{i\in I}C_i+\bigg\{u\omega\in\AuxE(\Omega)\,\bigg\vert\,\omega\in\overline{\big(\bigcup_{i\in I} C_i\big)\cap E_{\GG}(u)}\;\Big\backslash\;\overline{\big(\sum_{i\in I}C_i\big)\cap E_{\GG}(u)}\bigg\}.
\end{align*}
On the other hand, investigation of $\LG$ from the next section should be prioritised.

\newpage
\part{Partial results}
\section{A tangle Hausdorff compactification}\label{LGsection}

The introduction of Chapter~\ref{AuxArcTP} motivated the auxiliary space $\GH$ after motivating the Hausdorff compactification $\LG$. 
Hence in this chapter, we immediately begin with the formal construction of $\LG$.

\subsection{Construction}\label{LGconstruction}

Recall section~\ref{tangleInvLim}.\index{$\LG$}
For every $X\in\cX$ and $\cC\subseteq\cC_X$ put
\begin{align*}
\LP(X,\cC)=\cO_{\TC}(X,\cC)\setminus\mathring{E},
\end{align*}
and for each $P\in\cP_X$ put
\begin{align*}
\hat{P}=\{\Lambda(X,\cC)\,|\,\cC\in P\}.
\end{align*}
Furthermore, for every $(X,P)\in\Gamma$ we denote by $\hat{p}(X,P)$ the finite partition of $V\cup\cU$ induced by $\hat{P}$ and the singleton subsets of $X$.
Let $\hat{G}$ be the graph on $V\cup\cU$ with edge set
\begin{align*}
E(G)\cup\{u\upsilon\,|\,\upsilon\in\Upsilon, u\in X_\upsilon\}\cup\{u\omega\,|\,\omega\in\Omega,u\in\Delta(\omega)\}.
\end{align*}
The edges in $E(\hat{G})\setminus E(G)$ are the \textit{limit edges}.
For every $\gamma=(X,P)\in\Gamma$ we let $\hat{G}/\hat{p}(X,P)$ be the multigraph on $\hat{p}(X,P)$ whose edges are precisely the cross-edges of $\hat{p}(X,P)$ with respect to $\hat{G}$. 
Vertices of $\hat{G}/\hat{p}(X,P)$ that are singleton subsets $\{x\}$ of $X$ we consider to be vertices of $G$ and refer to them as $x$; the other vertices of $\hat{G}/\hat{p}(X,P)$ are its \textit{dummy vertices}.
Now we let $\hat{G}_\gamma$ be the topological space obtained from the ground set of the 1-complex of $\hat{G}/\hat{p}(X,P)$ by endowing it with the topology generated by the following basis:

For every $x\in X$ and $\epsilon\in (0,1]$ we choose $\cO_{\hat{G}}(x,\epsilon)$.
For every inner edge point of an edge that is an edge of $G$ we choose the usual open neighbourhoods. For every dummy vertex $d=\Lambda(X,\cC)$ of $\hat{G}/\hat{p}(X,P)$ and every $\epsilon>0$ we choose as open the set
\begin{align*}
\{d\}\cup\bigcup\big\{t[0,\epsilon)x\,\big|\,x\in X\text{ and }xt\in E_{\hat{G}}(X,d)\big\}.
\end{align*}
If $i$ is an inner edge point of an edge $e\in E(\hat{G})\setminus E(G)$ with $e=x\upsilon$, i.e. there is some $\cC\in P$ and some $\upsilon\in\cO_{\cU}(X,\cC)$ with $e=x\upsilon$, then for
every $0\le \epsilon<\delta\le 1$ with $i\in x(\epsilon,\delta)\upsilon$ and every finite $F\subseteq E_{\hat{G}}(X,d)-\{e\}$ we declare as open the set
\begin{align*}
\bigcup\{x(\epsilon,\delta)\xi\,|\,x\in X\text{ and }x\xi\in E_{\hat{G}}(X,d)\setminus F\},
\end{align*}
also see Fig.~\ref{fig:LGbos}.
\begin{figure}[H]
    \centering
    \def\svgwidth{\columnwidth}
    \scalebox{.8}{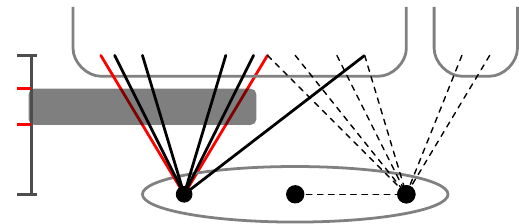}
    \caption{A basic open neighbourhood of an inner limit edge point.}
    \label{fig:LGbos}
\end{figure}
\begin{lemma}\label{LGgammesCptHD}
$\hat{G}_\gamma$ is a compact Hausdorff topological space for every $\gamma\in\Gamma$.
\end{lemma}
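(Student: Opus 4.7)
The plan is to verify Hausdorffness by case analysis on the types of the two distinct points, and compactness by a direct finite-subcover argument that exploits the finiteness of the vertex set of $\hat{G}/\hat{p}(X,P)$.

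For Hausdorffness, first note that $\hat{p}(X,P)$ is a finite partition and, because every edge of $\hat{G}$ incident to a tangle or end sitting inside a dummy vertex $d = \Lambda(X, \cC)$ has its other endpoint in $X \cup V[\cC]$ (using Lemmas~\ref{Xtau} and~\ref{UpsilonLivesInComp} for ultrafilter tangles and the analogous observation that dominating vertices of an end $\omega \in \Omega(X,\cC)$ lie in $X \cup V[\cC]$), no two distinct dummy vertices are ever joined by a cross-edge. Consequently any two distinct vertices of $\hat{G}/\hat{p}(X,P)$ are separated by sufficiently small basic open neighborhoods, as are pairs consisting of a vertex and an inner edge point, and pairs of inner edge points lying on distinct $G$-edges or on the same edge. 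The only delicate case is two distinct inner limit edge points $i \in x\upsilon$ and $i' \in x\upsilon'$ at the same height in the same bundle between $x \in X$ and a dummy vertex $d$; here the finite exclusion set $F$ in the definition of a basic limit-edge neighborhood provides the separation via $F = \{x\upsilon'\}$ for $i$ and $F = \{x\upsilon\}$ for $i'$.

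For compactness, let $\cO$ be any open cover of $\hat{G}_\gamma$. First cover each of the finitely many vertices $v$ of $\hat{G}/\hat{p}(X,P)$ by a basic open neighborhood $U_v \in \cO$ determined by some $\epsilon_v > 0$. For each $G$-edge $e$ with endpoints $v, v'$ in $\hat{G}/\hat{p}(X,P)$, the remainder $e \setminus (U_v \cup U_{v'})$ is a closed subinterval of $e$, compact in the usual topology and hence covered by finitely many elements of $\cO$. The key case is a bundle of parallel limit edges from $x \in X$ to a dummy vertex $d$: the uncovered portion is the union over limit edges $x\upsilon$ in the bundle of the compact middle segments at heights in $[\epsilon_x, 1-\epsilon_d]$. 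For each height $\lambda$ in this interval pick an arbitrary representative inner limit edge point $i_\lambda$ in the bundle at height $\lambda$ and a basic open neighborhood $U_\lambda \in \cO$ containing it; by definition $U_\lambda$ covers an open slab at heights $(\epsilon_\lambda, \delta_\lambda)$ of every limit edge in the bundle except those in some finite exceptional set $F_\lambda$. By compactness of $[\epsilon_x, 1-\epsilon_d]$, finitely many heights $\lambda_1, \dots, \lambda_n$ yield slabs whose union covers every limit edge of the bundle outside the finite set $F := F_{\lambda_1} \cup \dots \cup F_{\lambda_n}$; each excluded edge $e \in F$ inherits the usual topology on its interior as a subspace of $\hat{G}_\gamma$, so its compact middle segment is covered by finitely many elements of $\cO$.

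The main obstacle is the slab argument for limit-edge bundles: one must verify that every basic neighborhood of an inner limit edge point excludes only finitely many parallel edges, so that the union $F$ of the exceptional sets over the finitely many chosen slabs is again finite, and that the subspace topology on the interior of an excluded edge coincides with the usual one (so that standard compactness of closed intervals closes the argument). Summing the finitely many contributions across the finitely many vertices, $G$-edges, and bundles of parallel limit edges between pairs $(x,d)$ then yields the desired finite subcover of $\hat{G}_\gamma$.
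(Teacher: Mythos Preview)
The paper states this lemma without proof (it sits in the chapter the author explicitly describes as sketches), so there is nothing to compare your argument against directly. Your compactness argument is essentially sound; one point worth making explicit is that whenever a pair $(x,d)$ is joined by infinitely many cross-edges, at least one of them is a limit edge: if infinitely many $G$-edges run from $x$ into $\bigcup\cC$, Lemma~\ref{neverFinished} yields some $\xi\in\cO_{\cU}(X,\cC)$ with $x\between\xi$, whence $x\xi$ is a limit edge by Lemma~\ref{Xtau} or the definition of domination. So slab neighbourhoods are indeed available whenever your argument needs them.

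The Hausdorff argument, however, fails in precisely the case you flag as delicate. For $i\in x\upsilon$ and $i'\in x\upsilon'$ at the same height $\lambda$ with $\upsilon,\upsilon'$ in the same dummy $d$, your two neighbourhoods with exclusion sets $F=\{x\upsilon'\}$ and $F'=\{x\upsilon\}$ are \emph{not} disjoint: both still contain the height-$\lambda$ slice of every edge of $E_{\hat G}(X,d)$ other than $x\upsilon$ and $x\upsilon'$. Worse, this cannot be repaired. Any basic neighbourhood of $i$ is a slab omitting only a \emph{finite} set $F\not\ni x\upsilon$, and likewise for $i'$; whenever $E_{\hat G}(X,d)$ is infinite (as it is, for instance, for the graph of Fig.~\ref{fig:tangleExample} with $X$ the singleton of the centre vertex and $P=\{\cC_X\}$, where $x$ dominates every end $\omega_n$), any two such slabs meet on the height-$\lambda$ slice of every edge outside the finite set $F\cup F'$. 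Hence with the topology exactly as written $\hat G_\gamma$ is not Hausdorff in general, and the lemma appears to require an adjustment of the basic open sets for inner limit-edge points before it can hold.
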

For every $\gamma=(X,P)\le (X',P')=\gamma'\in\Gamma$ we define a bonding map $\hat{f}_{\gamma',\gamma}\colon \hat{G}_{\gamma'}\to\hat{G}_\gamma$ which sends the vertices of $\hat{G}_{\gamma'}$ to the vertices of $\hat{G}_\gamma$ including them; which is the identity on the edges of $\hat{G}_{\gamma'}$  that are also edges of $\hat{G}_\gamma$; and which sends any other edge of $\hat{G}_{\gamma'}$ to the dummy vertex of $\hat{G}_\gamma$ that contains both its endvertices in $\hat{G}_{\gamma'}$.
\begin{lemma}
The $\hat{f}_{\gamma',\gamma}$ are continuous.
\end{lemma}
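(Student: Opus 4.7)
The plan is to verify continuity pointwise by an exhaustive case analysis on the type of the point $x \in \hat{G}_{\gamma'}$ at which continuity is being tested. Write $\gamma = (X,P)$ and $\gamma' = (X',P')$. Given any basic open neighbourhood $O$ of $\hat{f}_{\gamma',\gamma}(x)$ in $\hat{G}_\gamma$, the task in each case is to produce a basic open neighbourhood $W$ of $x$ in $\hat{G}_{\gamma'}$ with $\hat{f}_{\gamma',\gamma}[W] \subseteq O$. I would split into five cases: (1) $x$ is a vertex in $X \subseteq X'$; (2) $x$ is a vertex in $X' \setminus X$; (3) $x$ is a dummy vertex of $\hat{G}/\hat{p}(X',P')$; (4) $x$ is an inner edge point of a real edge $e \in E(G)$; (5) $x$ is an inner edge point of a limit edge $e = u\upsilon$ with $u \in X'$ and $\upsilon \in \cU$.

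Cases (1) and (4) are essentially routine. For (1), if $x \in X$ then $\hat{f}_{\gamma',\gamma}(x)=x$, the basic neighbourhoods of $x$ in both spaces are of star-type $\cO_{\hat{G}}(x,\epsilon)$, and any edge at $x$ in $\hat{G}_\gamma$ that survives from $\hat{G}_{\gamma'}$ is mapped identically on small initial segments, while edges that get collapsed lie in neighbourhoods of the form $\cO_{\hat{G}}(x,\epsilon')$ since they emanate from $x \in X$. Case (4), where $e$ is a cross-edge in both $\hat{p}(X',P')$ and $\hat{p}(X,P)$, is immediate because $\hat{f}_{\gamma',\gamma}$ is then the identity on $\mathring{e}$; if $e$ is absorbed into a dummy vertex $d$ of $\hat{G}_\gamma$, then by definition $e$ has both endvertices in the branch set of $d$, so $e$ cannot be in $\hat{G}_{\gamma'}$ either (contradiction), and this subcase does not arise.

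Cases (2) and (3) are handled together using the map's definition: both a vertex of $X' \setminus X$ and a dummy vertex $d'$ of $\hat{G}_{\gamma'}$ get sent to the dummy vertex $d = \Lambda(X,\cC)$ of $\hat{G}_\gamma$ whose partition class in $P$ contains the corresponding branch set. Given a basic neighbourhood $O = \{d\} \cup \bigcup\{t[0,\epsilon)y \mid y \in X,\, yt \in E_{\hat{G}}(X,d)\}$ of $d$, the preimage under $\hat{f}_{\gamma',\gamma}$ contains the analogous half-open-star neighbourhood of $x$ (or of $d'$) in $\hat{G}_{\gamma'}$, because every edge in $E_{\hat{G}}(X', d')$ is also an edge in $E_{\hat{G}}(X,d)$ (as $X \subseteq X'$ and the branch set of $d'$ is contained in that of $d$), so small initial segments from $X'$-vertices refine into $O$ after possibly collapsing into $d$.

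The main obstacle—and the step I would treat most carefully—is case (5), the inner limit edge points. Here the basic open neighbourhoods of $x$ are the unusual \emph{bundles} $\bigcup\{u'(\epsilon',\delta')\xi \mid u' \in X', u'\xi \in E_{\hat{G}}(X',d') \setminus F\}$ of parallel partial edges at the dummy $d'$, whereas the target neighbourhoods in $\hat{G}_\gamma$ are of a different shape depending on whether $\hat{f}_{\gamma',\gamma}$ keeps $e$ as an edge or collapses it into a dummy $d$. The key observation is that if $e$ is collapsed, then necessarily $u \notin X$, so $u$ lies in the branch set of $d$, and the bundle neighbourhood of $x$ maps under $\hat{f}_{\gamma',\gamma}$ into the dummy-vertex neighbourhood of $d$—the bundle coordinates from $X$ map via identity onto half-open edges at $X$, while bundle coordinates from $X' \setminus X$ collapse into $d$ itself, which is in $O$. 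If instead $e$ survives as an edge of $\hat{G}_\gamma$, then $u \in X$ and $\upsilon \notin d$ as a class of $P$; the bundle at $x$ restricted to appropriate $(\epsilon',\delta')$ maps into the analogous basic open neighbourhood of $x$ in $\hat{G}_\gamma$, by carefully choosing the finite exceptional set $F$ to exclude those $\hat{G}_{\gamma'}$-edges whose images in $\hat{G}_\gamma$ leave the prescribed bundle. Putting these bundle-to-bundle and bundle-to-dummy-star comparisons together completes the verification.
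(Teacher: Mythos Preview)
The paper gives no proof; Chapter~\ref{LGsection} is expressly a sketch, and both this lemma and Lemma~\ref{LGgammesCptHD} are stated without argument. So there is nothing in the paper to compare against, and what follows addresses correctness only.

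Cases~(1)--(3) are essentially right once one names the structural fact you use tacitly: any $\hat G$-edge from a vertex $y'\in X'\setminus X$ to some $\xi$ in a dummy class of $\hat p(X,P)$ has $y'$ in that \emph{same} class (for real edges this is connectedness in $G-X$; for limit edges use Lemma~\ref{UpsilonLivesInComp} or the definition of domination). Your assertion ``every edge in $E_{\hat G}(X',d')$ is also an edge in $E_{\hat G}(X,d)$'' is literally false for precisely those edges, but your clause ``after possibly collapsing into $d$'' salvages the argument. In case~(4) the parenthetical is backwards: since $\hat p(X',P')$ \emph{refines} $\hat p(X,P)$, a real cross-edge of the finer partition need not be one of the coarser, so the collapsed sub-case does occur; it is handled in one line exactly as in~(2)--(3).

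The genuine problem is case~(5). You read the bundle neighbourhood with the $X'$-endpoint ranging (as the clause ``$x\in X$'' in the paper's set-builder suggests), and under that reading your claim that a finite $F'$ absorbs all offending edges is unsupported and in fact false. Take $G$ to be $K_{2,\aleph_0}$ on $\{u,w\}\cup\{v_1,v_2,\ldots\}$ together with the extra edge $uw$, set $X=\{w\}$, $X'=\{u,w\}$ with trivial partitions, and let $x$ be the midpoint of a limit edge $u\upsilon$. One checks directly that no basic open neighbourhood of $x$ in $\hat G_{\gamma'}$ is mapped by $\hat f_{\gamma',\gamma}$ into a dummy-neighbourhood of $d=\hat f(x)$ with small $\epsilon$: the star $\cO_{\hat G}(u,\epsilon')$ must have $\epsilon'>\tfrac12$ to contain $x$ and then carries a long segment of the surviving edge $uw$; the dummy neighbourhood of $d'$ and every bundle each carry cofinitely many segments of the surviving edges $wv_i$ at height about $\tfrac12$ from $w$. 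So under this reading $\hat f_{\gamma',\gamma}$ is discontinuous at $x$. If instead the bundle is meant to range only over edges with the \emph{fixed} $X$-endpoint $u$ of the distinguished limit edge (making the clause ``$x\in X$'' redundant), your argument does go through---but you should flag this ambiguity in the sketched definition rather than treat the finiteness as automatic.
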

By Lemmas~\ref{LGgammesCptHD}~and~\ref{InvLimCptHD} the inverse limit
\begin{align*}
\LG=\invLim (\hat{G}_\gamma\,|\,\gamma\in\Gamma)
\end{align*}
is compact Hausdorff.

\subsection{Outlook}

Call a continuum $\Tau$ \textit{TST-like} if for every two distinct points $a$ and $b$ of $\Tau$ 
the set $\aS_{\Tau}(a,b)$ endowed with the subspace topology is a subcontinuum of $\Tau$. 
Recall that, by Theorem~\ref{cutPointOrdLin} the space $\aS_{\Tau}(a,b)$ is linearly ordered by its separation ordering, and by Theorem~\ref{sepOrderSubSame} the subspace topology on $\aS_{\Tau}(a,b)$ coincides with its order topology.
If $H=(V',E')$ is a subgraph of $\hat{G}$ where each $\upsilon\in V'\setminus V(G)$ is incident with a limit edge in $E'$, then we write $\overline{H}=\overline{V'\cup\mathring{E}'}$ for its closure in $\LG$ and call this closure a \textit{standard subspace}.
If $G$ is any graph and $\Xi$ is a standard subspace of $\LG$ with $V(G)\subseteq\Xi$, then we say that $\Xi$ is \textit{spanning}.
A \textit{TST} of $\LG$ is a spanning standard subspace of $\LG$ that is also a TST-like subcontinuum of $\LG$. A \textit{circle} $C$ of $\LG$ is a subcontinuum of $\LG$ such that $C-\{a,b\}$ is disconnected for every two distinct points $a$ and $b$ of $C$.\footnote{We should not need the additional requirement that $C-\{a\}$ be connected: an arc minus its two endpoints is still connected, and hence not a circle. Also see the claim of \cite[Theorem 28.14]{Willard}.}

\begin{conjecture}
If $G$ is any graph, then $\LG$ admits a TST.
\end{conjecture}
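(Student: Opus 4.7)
The plan is to construct a TST of $\LG$ as the inverse limit of compatibly chosen spanning trees in the finite quotient multigraphs underlying the spaces $\hat{G}_\gamma$ from Section~\ref{LGconstruction}. For each $\gamma=(X,P)\in\Gamma$, the quotient $\hat{G}/\hat{p}(X,P)$ has finite vertex set $\hat{p}(X,P)$, so one can select one representative edge for each adjacency needed to form a spanning tree $T_\gamma$ of $\hat{G}/\hat{p}(X,P)$. Viewed as a topological subspace of $\hat{G}_\gamma$, such $T_\gamma$ is a finite graph-theoretic tree, hence trivially a TST-like subcontinuum of $\hat{G}_\gamma$.

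To make the family $(T_\gamma)_{\gamma\in\Gamma}$ compatible with the bonding maps, I would extend inductively from smaller to larger $\gamma$: given $T_\gamma$, select $T_{\gamma'}$ so that (a) $\hat{f}_{\gamma',\gamma}[T_{\gamma'}]=T_\gamma$, and (b) the preimage under $\hat{f}_{\gamma',\gamma}$ of each vertex of $\hat{G}/\hat{p}(X,P)$ meets $T_{\gamma'}$ in a subtree of $T_{\gamma'}$. Condition (b) is the key monotonicity requirement ensuring $\hat{f}_{\gamma',\gamma}\rest T_{\gamma'}$ has connected fibers, which is what will make the limit tree-like. Existence of a globally compatible family can then be extracted via the Generalized Infinity Lemma~\ref{GIL} applied to the inverse system of finite partial compatible selections. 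Setting $\Tau:=\invLim(T_\gamma\mid\gamma\in\Gamma)\subseteq\LG$, Lemma~\ref{InvLimCptHD} makes $\Tau$ compact Hausdorff; connectedness is inherited since each $T_\gamma$ is connected compact Hausdorff and inverse limits of such systems are connected. Spanning is automatic because each $v\in V(G)$ appears as a singleton class of $\hat{p}(X,P)$ for every $\gamma=(X,P)$ with $v\in X$, and standard-subspaceness is built into the construction since $T_\gamma$ always consists of full edges.

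The main obstacle is verifying that $\Tau$ is TST-like. Each $T_\gamma$ is trivially TST-like as a finite tree, but this property is not automatically preserved under inverse limits without a monotonicity hypothesis. The plan is to use condition (b) together with a tree-analogue of Capel's Theorem~\ref{Capel} (which says that inverse limits of arcs under monotone surjections are arcs) to show that, for any distinct $a=(a_\gamma),b=(b_\gamma)\in\Tau$, one has the identification
\[
\aS_\Tau(a,b) \;=\; \invLim\bigl(\aS_{T_\gamma}(a_\gamma,b_\gamma)\bigr),
\]
where each right-hand factor is a subarc of the finite tree $T_\gamma$. A Capel-style argument would then give that this inverse limit is an arc, hence a subcontinuum of $\Tau$. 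The crux is proving the identification: that a point $p\in\Tau$ separates $a$ from $b$ in $\Tau$ precisely when $p_\gamma$ separates $a_\gamma$ from $b_\gamma$ in $T_\gamma$ for a cofinal set of $\gamma$. The forward direction should follow from continuity of projections; the reverse requires the monotonicity of the restricted bonding maps to convert local separations in each $T_\gamma$ into a global separation in $\Tau$. A further delicate point is the handling of the infinitely many parallel limit edges at ultrafilter tangles and dominated ends, whose topology in $\hat{G}_\gamma$ is the subtle part of the construction of $\LG$: the choice of representative edges for $T_\gamma$ must be made in a way that respects both this topology and the compatibility across $\gamma$, without accidentally creating convergence pathologies that would obstruct the TST-like property at the limit points.
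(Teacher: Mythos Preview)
First, note that the paper does not prove this statement either: it is stated as a conjecture, and what follows it is labelled \emph{Idea}, not \emph{Proof}. That idea takes a route entirely different from yours. It is top-down and purely continuum-theoretic: start from the continuum $\LG$, transfinitely intersect subcontinua using Theorem~\ref{directContinua} to reach a spanning subcontinuum $\Xi'\supseteq\LG\setminus\mathring{E}(G)$ in which deleting any $\mathring{e}$ with $e\in E(G)$ disconnects, and then pass via Theorem~\ref{irredSubcontinuumExists} to a subcontinuum $\Xi\subseteq\Xi'$ irreducible about $V(G)$. Whether the resulting $\Xi$ is TST-like, or even a standard subspace, is explicitly left open there as well.

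Your bottom-up inverse-limit approach is a natural alternative, and your isolation of monotonicity of the restricted bonding maps as the mechanism behind TST-likeness is well aimed. Two steps, however, do not go through as written. First, Lemma~\ref{GIL} needs an inverse system of \emph{finite} non-empty sets, but the multigraphs $\hat G/\hat p(X,P)$ have finite vertex set and \emph{infinite} edge set: between a vertex $x\in X$ and a dummy vertex there are in general infinitely many parallel $G$-edges and infinitely many parallel limit edges. Hence the collection of spanning trees at a given $\gamma$ is infinite, and your extraction of a globally compatible family via GIL is not justified; you would need either a different compactness argument or a canonical way to cut the edge choices down to finitely many per $\gamma$ that is stable under all bonding maps. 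Second, Theorem~\ref{Capel} is stated only for inverse \emph{sequences} indexed by $\N$. For countable $G$ one can pass to a cofinal $\omega$-chain in $\Gamma$ via Lemma~\ref{BibleInvLimCofinal}, but for arbitrary $G$ no such chain need exist, and you would require a directed-system analogue of Capel which the paper does not supply. The paper's continuum-theoretic route sidesteps both obstacles by never selecting edges at finite stages, at the price of having no handle on the TST-like property; your route offers a plausible mechanism for TST-likeness via monotone fibres but must confront the infinite edge multiplicities head-on.
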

\begin{proof}[Idea]
By transfinite usage of Theorem~\ref{directContinua} we can find a subcontinuum $\Xi'$ of $\LG$ such that
\begin{enumerate}
\item $\LG\setminus\mathring{E}(G)\subseteq\Xi'$.
\item For every edge $e$ of $G$ the space $\Xi'-\mathring{e}$ is disconnected.
\end{enumerate}
Finally, use Theorem~\ref{irredSubcontinuumExists} to find a subcontinuum $\Xi$ of $\Xi'$ which is irreducible about $V(G)$. It remains to show that $\Xi$ is a TST-like standard subspace of $\LG$. (If $\Xi$ is not standard, a transfinite approach should work instead of Theorem~\ref{irredSubcontinuumExists}, but here we have to remove open neighbourhoods of inner edge points of limit edges with $\epsilon=0$ and $\delta=1$.)
\end{proof}

\newpage
\section{ETop and the maximal Hausdorff quotient}\label{sec:GqNatural}

\subsection{Introduction}

Since for connected graphs $G$ the quotient $\Gq=\TC/{\sim}$ is a Hausdorff quotient of $\TC$ with $\Gq\simeq\cE G$ by Theorem~\ref{ETopInvLimAndGq}, one might ask whether it is the maximal Hausdorff quotient of $\TC$. 
In this chapter, we study for which graphs $G$ the space $\Gq$ is the maximal Hausdorff quotient of the tangle compactification. 
In order to keep things to the point, we introduce some notation first:
Recall that the relation $\between$ is defined on $\TC\setminus\mathring{E}$ by letting $x\between y$ whenever there exist no two disjoint open neighbourhoods of $x$ and $y$ in $\TC$, and $\transcl$ denotes the transitive closure of $\between$. 
Since $\transcl$ is an equivalence relation, we write $\Gqq$ for the quotient space $\TC/{\transcl}$.\index{$\Gqq$}
The equivalence relation $\tsim$\index{$\tsim$} 
on $\TC$ satisfying $\TC/{\tsim}=\HTC$ is given by Theorem~\ref{minimalHDrelation}. Due to Corollary~\ref{HD}, the minimality of $\tsim$ and the definition of $\transcl$ we have
\begin{lemma}\label{3Rels}
$\between\subseteq\transcl\subseteq\tsim\subseteq\sim$ holds for every graph $G$.\qed
\end{lemma}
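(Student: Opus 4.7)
The three inclusions are more or less immediate once one unpacks the definitions, so the plan is simply to verify each link with the appropriate general-topology fact from Section~\ref{subsec:GeneralTopology}. The heart of the argument is the middle inclusion, where we need to know that $\between$ itself already lies in every equivalence relation whose quotient is Hausdorff.

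First, the inclusion $\between\subseteq\transcl$ is nothing but the definition of $\transcl$ as the transitive closure of $\between$.

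Next, for $\transcl\subseteq\tsim$, I plan to show the stronger statement that $\between\subseteq\tsim$; transitivity of $\tsim$ (it is an equivalence relation by Theorem~\ref{minimalHDrelation}) then gives $\transcl\subseteq\tsim$. So suppose $x\between y$ but, for a contradiction, $(x,y)\notin\tsim$. Then $[x]_{\tsim}\neq [y]_{\tsim}$ in $\HTC=\TC/{\tsim}$, which is Hausdorff by Theorem~\ref{minimalHDrelation}. Picking disjoint open neighbourhoods of these two classes in $\HTC$ and pulling them back through the (continuous) quotient map yields disjoint open neighbourhoods of $x$ and $y$ in $\TC$, contradicting $x\between y$.

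Finally, for $\tsim\subseteq\sim$, the key input is Corollary~\ref{HD}, which tells us that $\Gq=\TC/{\sim}$ is Hausdorff. Hence $\sim$ is one of the equivalence relations over which the intersection defining $R_{\TC}=\tsim$ in Theorem~\ref{minimalHDrelation} is taken, so $\tsim\subseteq\sim$ by minimality. I expect no real obstacle here; the only thing to double-check is that Corollary~\ref{HD} applies in the generality claimed by the lemma statement (``for every graph $G$''), which is fine because $\Gq$ is Hausdorff without any connectedness assumption on $G$ — the proof of Corollary~\ref{HD} via Lemma~\ref{Tripartition} goes through verbatim for arbitrary $G$.
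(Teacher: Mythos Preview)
Your proposal is correct and follows essentially the same approach as the paper: the paper's own ``proof'' is just the sentence preceding the lemma (``Due to Corollary~\ref{HD}, the minimality of $\tsim$ and the definition of $\transcl$''), and you have simply spelled out those three ingredients in detail. Your check that Corollary~\ref{HD} does not require connectedness is also accurate, since that corollary is stated before the standing connectedness assumption in Section~\ref{ETopAsQuotient} is introduced.
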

Now we are ready to start: Of course, the first question that comes to mind, is whether there even exist graphs for which $\Gq$ is not the maximal Hausdorff quotient, i.e. for which $\tsim$ is a proper subset of $\sim$. Surprisingly, a long known example graph positively answers this question:

\begin{figure}[h]
\includegraphics[clip,page=11,trim=40 170 40 285,width=\textwidth]{TST.pdf}
\caption{A graph with $\transcl=\tsim\subsetneq\sim$ from \cite[Fig. 6]{TST}}
\label{fig:badG}
\end{figure}

The graph $G$ from \cite[Fig. 6]{TST}, also pictured in Fig.~\ref{fig:badG} here, originally served as an example of a non-finitely separable graph on which the relation used to obtain \textsc{ITop} as a quotient of \textsc{VTop} is an equivalence relation (i.e. for which \textsc{ITop} is well-defined even though $G$ is not finitely separable) and which has no topological spanning tree with respect to \textsc{ITop}. Even more strikingly, in \textsc{ITop} this graph admits an edgeless Hamilton circle (which witnesses the absence of topological spanning trees, see \cite[Proposition 3.4 and Corollary 3.5]{TST} for details).
In this graph, $\sim$ identifies every two points of $V\cup\Omega$, whereas  \textsc{ITop} identified every vertex with the two ends it dominates. Unsurprisingly, $\transcl$ identifies the same points as \textsc{ITop} did:

Clearly, it suffices to show that this graph has no ultrafilter tangles. For this, consider any $X\in\cX$. Then $\cC_X$ is finite: Indeed, let $X'$ be the union of $\{x\}$ and the first $n$ levels of the underlying binary tree of $G$, where $n\in\N$ is big enough that $X$ is included in $X'$. Now $\cC_{X'}$ is easily seen to be finite, and so $\cC_X$ must be finite. Hence every $\cC_X$ is finite, so $G$ has no ultrafilter tangles as claimed.

By arguments similar to an exercise from the lectures, it follows that $\Gqq$ is Hausdorff (we shifted the proof to the end of the introduction):
\begin{lemma}[$\L$]\label{badGisHD}
There is a connected graph with $\transcl=\tsim\subsetneq\sim$.
\end{lemma}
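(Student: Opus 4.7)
The plan is to take as $G$ the graph from Fig.~\ref{fig:badG} and verify the three parts separately: $\transcl \subsetneq \sim$, Hausdorffness of $\Gqq$ (which gives $\transcl = \tsim$ by Lemma~\ref{3Rels} and minimality), and hence $\tsim \subsetneq \sim$.

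First I would record the two easy ingredients. Because the underlying structure of $G$ is (essentially) the binary tree together with some controlled extra edges, any $X \in \cX$ can be enlarged to a finite initial segment of the tree outside which only finitely many ``branches'' remain; hence every $\cC_X$ is finite, so by Lemma~\ref{UltrafilterCrit} the graph has no ultrafilter tangles and $\Theta = \Omega$. The same extra edges give infinitely many edge-disjoint paths between any two vertices of $G$, so no two vertices of $G$ are finitely separable; by Lemma~\ref{Forms} this means all of $V \cup \Omega$ forms a single $\sim$-class. To separate $\transcl$ from $\sim$, I would pick vertices $u$, $t$ lying in ``faraway'' parts of the drawing of $G$. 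Since there are no ultrafilter tangles, Lemma~\ref{PathSysXtau} forces every $\upsilon$ with $u \between \upsilon \between t$ to be an end dominated by both $u$ and $t$; the set of dominators of any end in $G$ is confined to the ``ray of ancestors'' of that end, so two suitably distant vertices share no dominated end and, inductively, no finite $\between$-chain. Thus $u \ntranscl t$ while $u \sim t$.

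The main task is Hausdorffness of $\Gqq$. I would give an explicit description of the $\transcl$-classes: for each vertex $v$ of $G$ let $\Omega(v) := \Delta^{-1}(v)$ denote the set of ends dominated by $v$, and let the class $[v]$ consist of $v$, of every $\omega \in \Omega(v)$, and of every other vertex $v'$ sharing a dominated end with $v$. Using the tree structure one verifies that each such class is finite, that these classes partition $V \cup \Omega$, and that each class is closed in $\TC$ (compare Lemma~\ref{ECOcompact}). Given two distinct classes $[\xi]$ and $[\xi']$, I would pick $X \in \cX$ finite and large enough to separate the vertex supports of $[\xi]$ and $[\xi']$ in $G$, then combine the basic neighbourhoods $\hat{C}_\epsilon(X,\omega)$ and small stars $\cO_G(u,\epsilon)$ to cover $[\xi]$ and $[\xi']$ by disjoint open sets $O, O'$ in $\TC$. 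The decisive step is to verify that $O$ and $O'$ are already saturated under $\transcl$: any end inside $\hat{C}_\epsilon(X,\omega)$ is dominated only by vertices lying on the $\omega$-side of $X$, and any vertex close to $X$ on one side dominates no end reaching the other side, so no $\between$-chain starting in $O$ can escape into $O'$.

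The hard part will be this last saturation argument, which requires tracking, layer by layer, how $\between$-classes built from a vertex and its dominated ends propagate across the tree and confirming that the propagation stops after a single step for this specific graph. Once this is established, the resulting disjoint saturated neighbourhoods descend to disjoint open sets in $\Gqq$ separating $[\xi]$ from $[\xi']$, so $\Gqq$ is Hausdorff; combined with $\transcl \subseteq \tsim \subseteq \sim$ from Lemma~\ref{3Rels} and the strict inequality $\transcl \subsetneq \sim$ already shown, we conclude $\transcl = \tsim \subsetneq \sim$ as required.
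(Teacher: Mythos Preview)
Your description of the $\transcl$-classes as $\{[u01^\omega],u,[u10^\omega]\}$ and your argument for $\transcl\subsetneq\sim$ are correct. The gap is in the Hausdorffness argument for $\Gqq$: the open sets you propose are not $\transcl$-saturated, and the supporting claim about domination is false. Concretely, take $X=\{\emptyset,0,1\}$ and cover the class of the vertex $0$ by $O=\cO_G(0,\epsilon)\cup\CTC(X,[001^\omega])\cup\CTC(X,[010^\omega])$. The component $C(X,[010^\omega])=\lfloor 01\rfloor_T$ contains the end $[01^\omega]$, whose unique dominator is $\emptyset\in X$---not a vertex on the $\omega$-side---so $O$ contains $[01^\omega]$ but not its $\transcl$-partner $\emptyset$. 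Enlarging $X$ merely shifts the leak one level deeper: the new boundary vertex again dominates an end inside the new $\hat C$, and saturating forces you to absorb that vertex, then its other dominated end, and so on without termination. Hence the propagation does \emph{not} ``stop after a single step'', and no neighbourhood assembled from finitely many $\hat C$'s and stars over one fixed $X$ is $\transcl$-closed in this graph.

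The paper's proof is built around exactly this cascade. Its separating set $W_x$ is an \emph{infinite} union following the two rays $x0^\omega$ and $x1^\omega$: at each $n\ge 1$ it adjoins $\cO_G(x0^n,\tfrac12)\cup\CTC(\lceil x0^n\rceil_T,[x0^n1^\omega])$ and the symmetric piece for $x1^n$, so that every successive boundary vertex is absorbed together with its \emph{other} dominated end. This suffices when $x$ and $y$ are $\le_T$-incomparable. When $x<_Ty$ the two cascades would collide, and the paper instead threads both $W_x$ and $W_y$ along the normal ray corresponding to a chosen irrational $\xi\in(\langle y0^\omega\rangle,\langle y01^\omega\rangle)\setminus\Q$, so that the two open sets interlock along that ray without meeting. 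Your single-$X$ scheme has no mechanism for either construction.
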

Furthermore, the edgeless Hamilton circle from \textsc{ITop} also is one in $\Gqq$, so $\Gqq$ has no TST either.
All in all, this graph shows a sufficiently rich structure of finite vertex separators branching in a binary tree like way (the separators of the form $\lceil u\rceil_T$ with $u$ a vertex of the NST $T$ of $G$ given by the underlying binary tree plus the edge $x\emptyset$) can force $\tsim\subsetneq\sim$ even though the graph does not have a single finite cut.

So $\sim$ and $\tsim$ in general do not coincide. As a $K_{2,\aleph_0}$ shows, $\between$ in general is not transitive, but maybe $\transcl$ coincides with $\tsim$ for every graph? An easy example shows that this is not the case:
\begin{figure}[H]
    \centering
    \def\svgwidth{\columnwidth}
    \scalebox{.75}{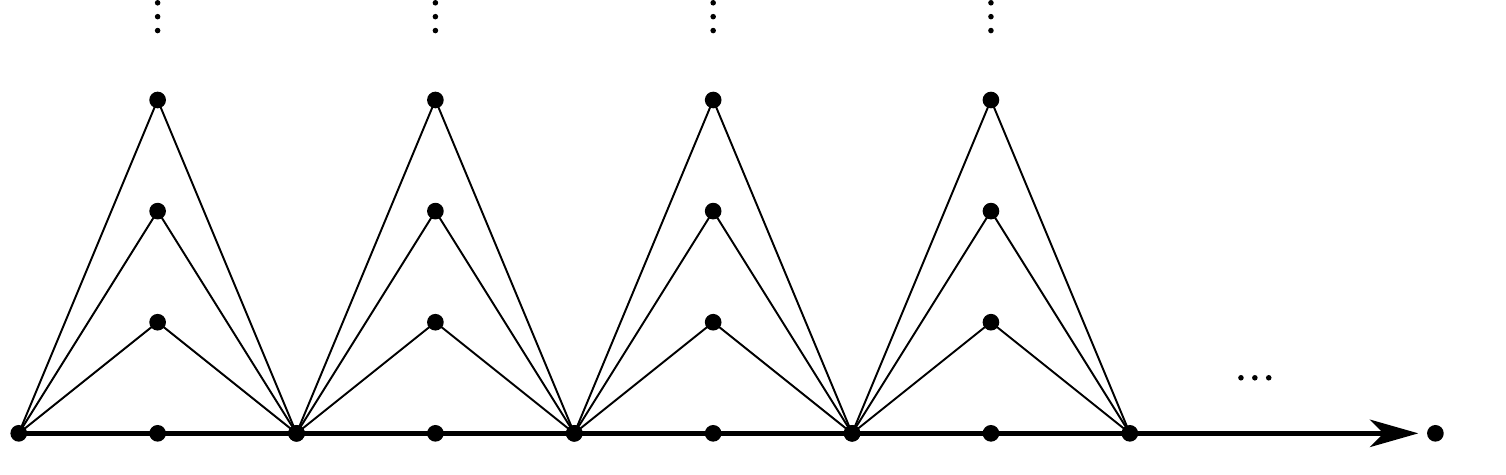}
    \caption{A graph with $\transcl\subsetneq\tsim=\sim$.}
    \label{fig:NonHD}
\end{figure}
If $\omega$ is an undominated end of an arbitrary graph, then clearly $[\omega]_{\utranscl}=\{\omega\}$ holds. 
The graph $G$ pictured above shows that $[\omega]_{\utranscl}\subsetneq [\omega]_\sim$ is possible:
Indeed, $[\omega]_\sim=\{u_n\,|\,n\in\N\}\cup\{\omega\}$ holds while $\sim$ and $\transcl$ agree on the vertex set of $G$. 
Thus $\Gq$ turns this graph into hawaiian earrings and $\Gqq$ does the same except for $\omega$ which does not get identified with any other point.
Taking a closer look reveals that $[u_0]_{\utranscl}=\{u_n\,|\,n\in\N\}$ is not closed in $\Gqq$, witnessed by $[\omega]_{\utranscl}=\{\omega\}$, so $\Gqq$ is not even T${}_1$ by Lemma~\ref{eqClassesClosed}. 
Meanwhile, $\tsim$ and $\sim$ coincide.

So if the $\transcl$-classes in general are not closed, then perhaps taking their closures might suffice? 
If (by abuse of notation) we define $\overline{\transcl}$\index{$\overline{\transcl}$} to be the relation on $\TC$ with $x\,\overline{\transcl}\,y$ whenever there is some $\transcl$-class whose closure in $\TC$ contains both $x$ and $y$, then the following example shows that neither of $\overline{\transcl}$ and its transitive closure do suffice:
\begin{figure}[H]
	\centering
	\def\svgwidth{\columnwidth}
	\scalebox{1}{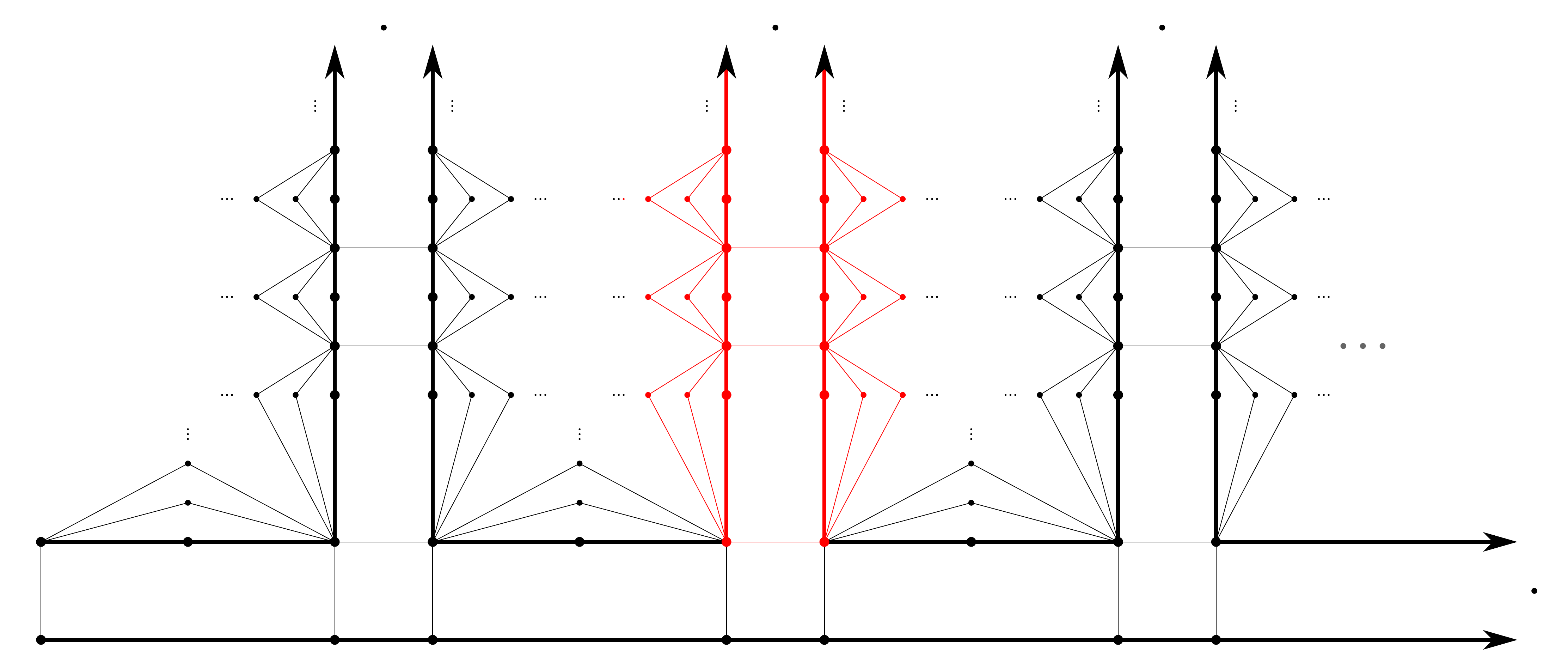}
	\caption{A graph with $\text{trcl}\big(\,\overline{\transcl}\,\big)\subsetneq\tsim=\sim$.}
	\label{fig:finHD}
\end{figure}
Indeed, if $G$ is the graph pictured in Fig.~\ref{fig:finHD}, 
then the $\overline{\transcl}$ is not an equivalence relation: 
For every double ray of heavy edges, its vertices of infinite degree (in $G$) form a $\transcl$-class, and these are the only non-trivial $\transcl$-classes, so every end $\omega_n$ with $n>0$ lies in the closure of two distinct $\transcl$-classes.
Thus, the only non-trivial equivalence class of the transitive closure of $\overline{\transcl}$ 
consist of all vertices of $G$ of infinite degree together with all ends $\omega_n$ ($n\in\N$).
Since the end $\omega$ is excluded from this class, the quotient $\TC/\text{trcl}\big(\,\overline{\transcl}\,\big)$ fails to be T${}_1$.

As described in the preliminaries of Theorem~\ref{minimalHDconstruction}, it is possible to obtain $\tsim$ from $\transcl$ via a transfinite construction.
The graph from Fig.~\ref{fig:finHD} can also be used to show that there exists no ordinal $\alpha<\omega$ such that $h^\alpha(\TC)=\HTC$ holds for every graph $G$ (where $h^\alpha(\TC)$ is defined in Section~\ref{subsec:GeneralTopology}).
Indeed, assume for a contradiction that there is such an $\alpha<\omega$, and let $G$ be the graph pictured in Fig.~\ref{fig:finHD}. Put $G_0=G$.
For every $n\in\N$ there is a subgraph $H_n$ corresponding to $\omega_n$ just like the red subgraph of $G$ corresponds to $\omega_1$.
To obtain the graph $G_1$ from $G$ we replace the red subgraph $H_1$ of $G$ with a copy of $G$ where the vertices $x$ and $y$ of the copy take over the roles of $u_1$ and $t_1$ of the original $G$, respectively, and we do the same for every other $n$.
Next we obtain $G_2$ from $G_1$ by replacing the $H_n$ of each copy of $G$ that was added in the previous construction step in the same way.
Proceeding inductively, we arrive at a graph $G_{\alpha+1}$ for which $h^\alpha(\vartheta (G_{\alpha+1}))$ is not Hausdorff, contradicting our assumption.

Now that all straightforward attempts starting `from below' with $\transcl$ failed and the graph from Fig.~\ref{fig:finHD} showed that a sufficiently rich structure can result in $\tsim\subsetneq\sim$, we will try a different approach.
Readers who attended a talk by Diestel about the discovery of topological infinite graph theory already noticed that every example graph from this introduction is based on a graph which occurred in his talks. In these talk, the examples led to the definition of a circle and the usage of arcs instead of (graph-)paths. 
So, perhaps some sort of special arcs might describe the $\tsim$-classes? 
\begin{figure}[H]
\centering
\includegraphics[scale=0.85]{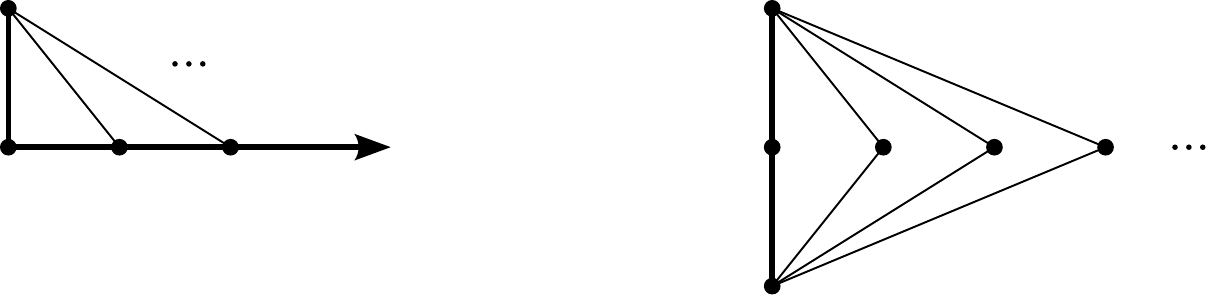}
\caption{A dominated ray and a $K_{2,\aleph_0}$.}
    \label{fig:Probs}
\end{figure}
For the example graphs from Fig.~\ref{fig:Probs} this does not work: If $G$ is the left graph, then every arc (in $\TC$) from the sole end of $G$ to its dominating vertex must visit some vertex of finite degree. Similarly, every arc between the two vertices of infinite degree of the right graph must visit some vertex of finite degree. Furthermore, arc-constructions in graphs that are not locally finite face various problems. But, what if we enrich $\TC$ with some auxiliary structure reflecting our intuition? Let us have a second look at the example graphs discussed so far, but this time we draw in auxiliary arcs (using grey):
\begin{figure}[H]
	\centering
	\def\svgwidth{\columnwidth}
	\scalebox{.75}{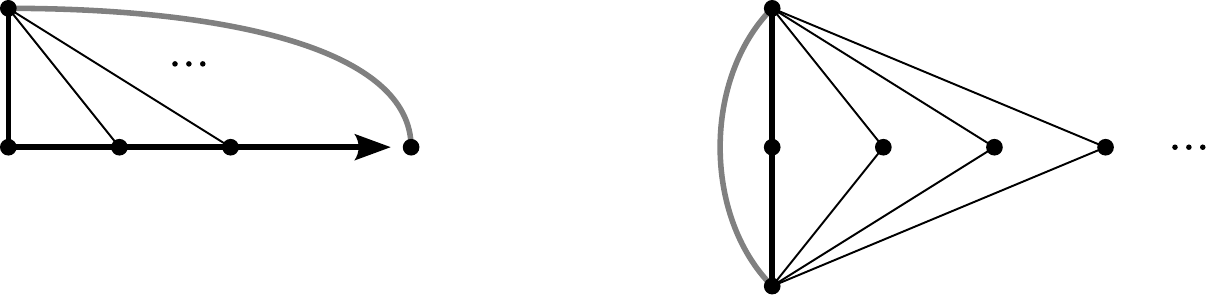}
	\caption{A dominated ray and a $K_{2,\aleph_0}$ with auxiliary arcs.}
\end{figure}
\begin{figure}[H]
	\centering
	\def\svgwidth{\columnwidth}
	\scalebox{.75}{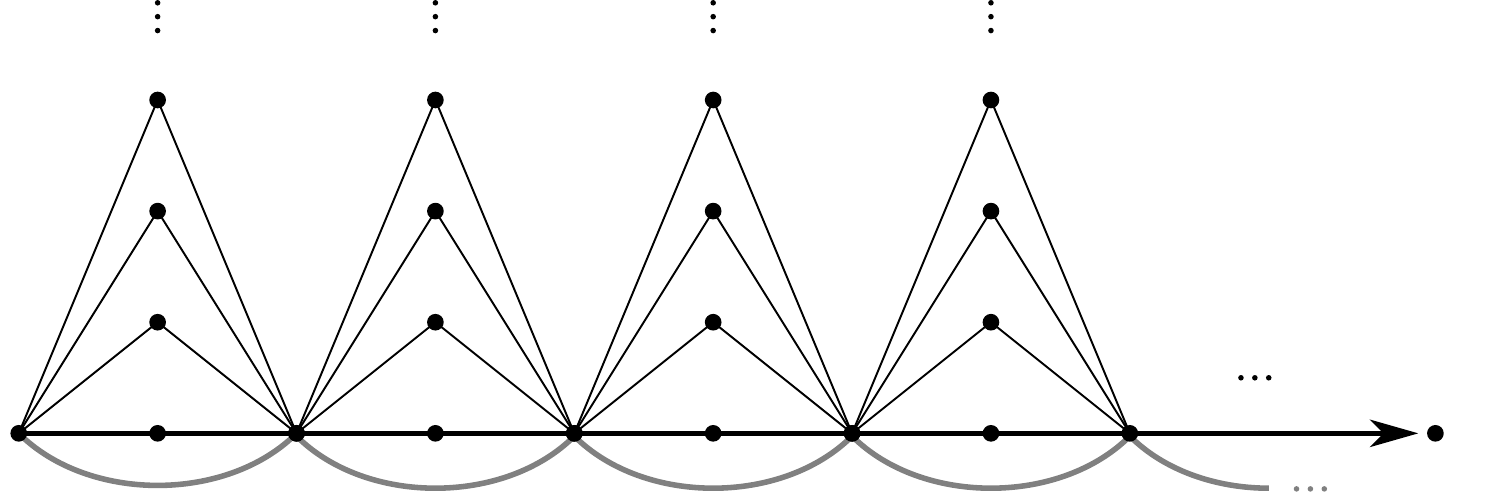}
	\caption{The graph from Fig.~\ref{fig:NonHD} with auxiliary arcs.}
\end{figure}
\begin{figure}[H]
	\centering
	\def\svgwidth{\columnwidth}
	\scalebox{1}{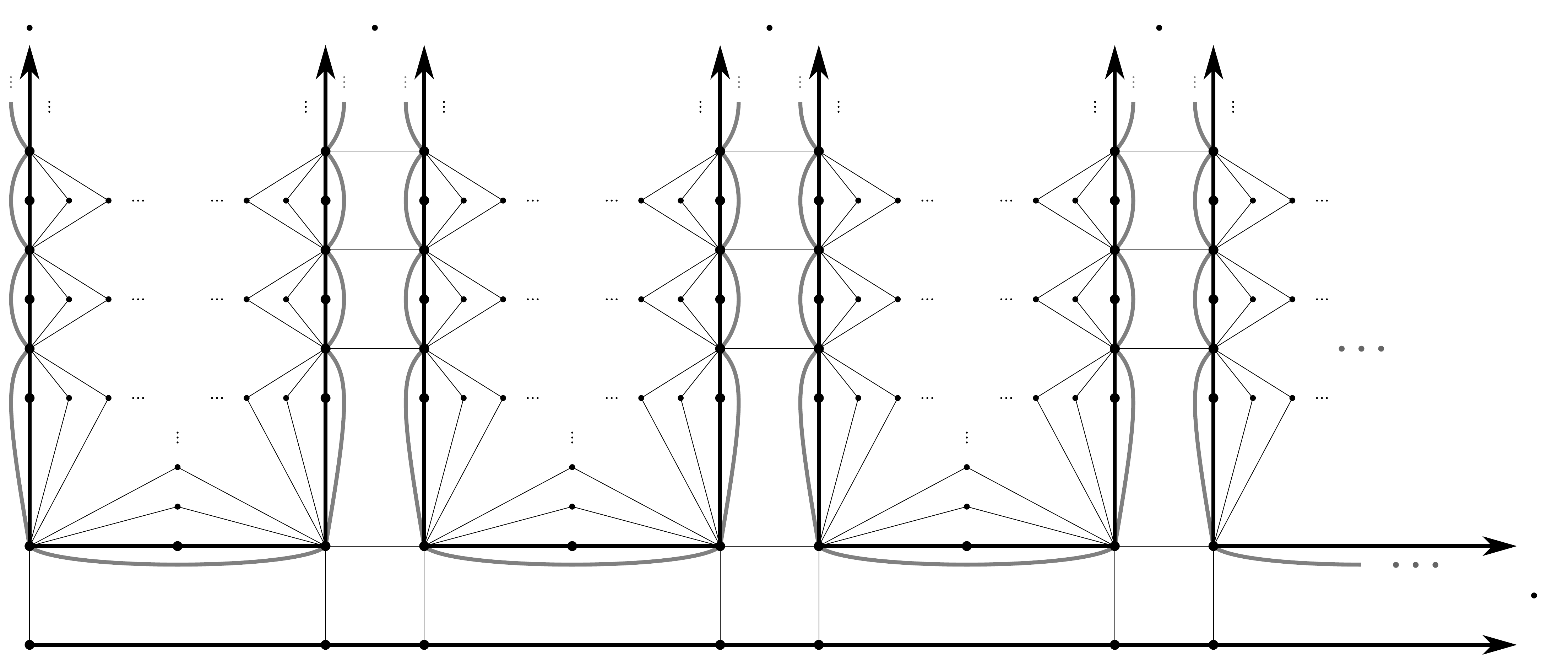}
	\caption{The graph from Fig.~\ref{fig:finHD} with auxiliary arcs.}
\end{figure}
For the simple examples things look fine, and after choosing a more convenient drawing, the edgeless Hamilton circle from Fig.~\ref{fig:badG} can be visualised by an auxiliary Hamilton circle:
\begin{figure}[H]
    \centering
    \def\svgwidth{\columnwidth}
    \scalebox{.6}{\input{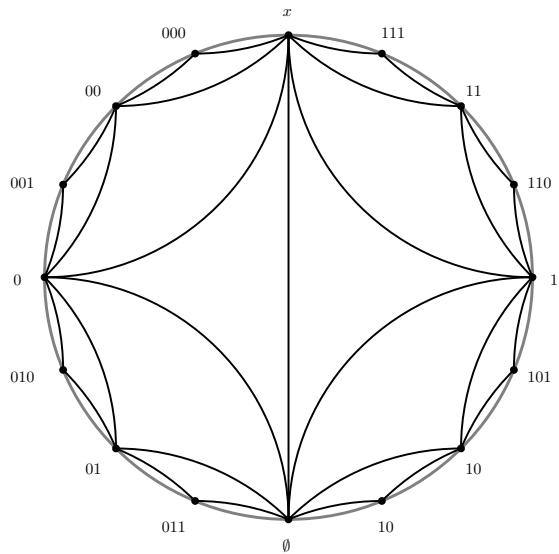}}
    \caption{The graph from Fig.~\ref{fig:badG} with auxiliary Hamilton circle.}
    \label{fig:badGnicelyDrawnETop}
\end{figure}
We have already seen that for the graph $G$ from Figures~\ref{fig:badG} and \ref{fig:badGnicelyDrawnETop} the quotient $\Gq$ is not the maximal Hausdorff quotient of $\TC$. In fact, we shall see later that auxiliary arc components describe precisely the $\sim$-classes. But not all is lost: The auxiliary Hamilton circle failing to describe the $\tsim$-classes is wild, while the auxiliary arcs from our other examples are all tame.\footnote{Recall that an arc is called \textit{wild} if it induces on some subset of its vertices the ordering of the rationals, and \textit{tame} otherwise.}
So perhaps we can use this in order to characterise the graphs with $\cE G=\HTC$?

I did not succeed in finding a combinatorial characterisation of the graphs $G$ satisfying $\cE G=\HTC$.
However, as our main contribution we at least present sufficient combinatorial conditions.
In Section~\ref{GqOutlook} we will discuss the main difficulty prohibiting me from giving a combinatorial characterisation. Basically, the difficulty lies in describing a limit property (here: wild and tame), which excludes naive inverse limit approaches right from the start.

Before we continue, we give a proof of a fact we used earlier:
\begin{proof}[Proof of Lemma~\ref{badGisHD}]
Let $G'$ be the graph from Fig.~\ref{fig:badG}, and consider $G:=G'-x$ for simplicity. Furthermore, let $T$ be the underlying binary tree of $G$, and note that it is an NST of $G$ (with root $\emptyset$). It suffices to verify that $\Gqq$ is Hausdorff. Therefore, let $x\neq y\in\TC$ be given. We have to find disjoint open neighbourhoods of $x$ and $y$ in $\TC$ which are $\transcl$-closed. 
Since the only non-trivial $\transcl$-classes are of the form $\{[u01^\omega]_\Omega,u,[u10^\omega]_\Omega\}$ with $u$ a vertex of $G$, without loss of generality we may assume that $x$ and $y$ are vertices of $G$. 
Let $\langle\,\cdot\,\rangle\colon \bigcup_{n\le\omega}2^n\to\I$ map each 0-1 sequence (finite or infinite) to its naturally corresponding value in $\I$, and let $\Phi\colon \Omega(G)\to\I$ map every end $\omega$ to the image of the binary sequence induced by $R_\omega^T$ under the map $\langle\,\cdot\,\rangle$.
We check two cases:

First suppose that $x$ and $y$ are incomparable with respect to $\le_T$. Then we define an open neighbourhood $W_x$ of $x$ in $\TC$ by taking the union of $\cO_G(x,1/2)$ with the following choices of sets: for every $n\ge 1$ choose
\begin{align*}
&\cO_G\big(x0^n,\,1/2\big)\cup\hat{C}_{\TC}\left(\lceil x0^n\rceil_T,[x0^n1^\omega]_{\Omega(G)}\right)\\
\cup\,&\cO_G\big(x1^n,\,1/2\big)\cup\hat{C}_{\TC}\left(\lceil x1^n\rceil_T,[x1^n0^\omega]_{\Omega(G)}\right).
\end{align*}
Similarly, we define $W_y$. These clearly are disjoint and $\transcl$-closed.

Now suppose that $x$ and $y$ are comparable with respect to $\le_T$ with $x<_Ty$, say. Without loss of generality suppose that the first digit of the 0-1 sequence $y$ after $x$ is 1 (the other case follows by symmetry).
Next, we pick some $\xi\in (\langle y0^\omega\rangle,\langle y01^\omega\rangle)\setminus\Q$ and let $R=u_1u_2\hdots$ be some normal ray in $T$ whose end in $G$ corresponds to $\xi$. In particular there are some $N(x)<N(y)\in\N$ such that $x=u_{N(x)}$ and $y=u_{N(y)}$. 
For every $n> N(x)$ we define $W_x^n$ as follows: If $u_{n+1}=u_n1$ we let
\begin{align*}
W_x^n:=\cO_G(u_n,1/2)\cup\hat{C}_{\TC}\left(\lceil u_n\rceil_T,[u_n0^\omega]_{\Omega(G)}\right)
\end{align*}
and $W_x^n:=\emptyset$ otherwise. Similarly, for every $n> N(y)$ we define $W_y^n$ as follows: If $u_{n+1}=u_n0$ we let
\begin{align*}
W_y^n:=\cO_G(u_n,1/2)\cup\hat{C}_{\TC}\left(\lceil u_n\rceil_T,[u_n1^\omega]_{\Omega(G)}\right)
\end{align*}
and $W_y^n:=\emptyset$ otherwise. Furthermore, for every $n>0$ we let $M_x^n$ denote
\begin{align*}
&\cO_G\big(x0^n,\,1/2\big)\cup\hat{C}_{\TC}\left(\lceil x0^n\rceil_T,[x0^n1^\omega]_{\Omega(G)}\right)
\end{align*}
and similarly for every $n>0$ we let $M_y^n$ denote
\begin{align*}
&\cO_G\big(y1^n,\,1/2\big)\cup\hat{C}_{\TC}\left(\lceil y1^n\rceil_T,[y1^n0^\omega]_{\Omega(G)}\right)
\end{align*}
Finally let 
\begin{align*}
W_z:=\cO_G(z,1/2)\cup\bigcup_{n> N(z)}W_z^n\cup\bigcup_{n>0} M_z^n
\end{align*}
for $z\in\{x,y\}$. Clearly, $W_x$ and $W_y$ are open and disjoint.
We show that $W_x$ is $\transcl$-closed (then $W_y$ will be $\transcl$-closed by an analogue argument). 

For this, we first show that if $\omega$ is an end in $W_x$ dominated by a vertex $u$ then $u$ is also in $W_x$.
If $\omega$ is in some $M_x^n$, then clearly $u\in W_x$ holds.
Otherwise $\omega$ is an end in some $W_x^n$. If $u\in W_x^n$ then we are done, so suppose not. 
Due to $\omega\in W_x^n$ the normal ray $R'=t_0t_1\hdots$ of $\omega$ starts with $u_n0$ and $u$ is in $\{u_0,\hdots, u_n\}$ since $T$ is normal. Pick $k\le n$ with $u=u_k$. By $u\notin W_x^n$ we have $k<n$. The only two possibilities for $R'$ are $u01^\omega$ and $u10^\omega$, but the first case is impossible due to $k<n$ and choice of $W_x^n$, so we have $R'=u10^\omega$. Since the sequence $u_n$ is an initial segment of $R'$ this yields $u=u_k\in W_x^k$ as desired. Thus we have $\Delta[W_x\cap \Omega(G)]\subseteq W_x$ as claimed.

Second, we show that if $u$ is a vertex in $W_x$ dominating an end $\omega$ then $\omega$ is also in $W_x$. Since $u$ and $R_\omega^T$ represent the same rational and by the construction of $W_x$, it suffices to show that the ends in $W_x$ correspond to $(\langle x0^\omega\rangle,\xi)$, i.e. that
\begin{align*}
\Phi[W_x\cap\Omega(G)]=(\langle x0^\omega\rangle,\xi).
\end{align*}
Since ``$\subseteq$'' is clear we show ``$\supseteq$''.
Given any $\lambda\in (\langle x0^\omega\rangle,\xi)$ pick $\omega\in\Omega(G)$ with $\Phi(\omega)=\lambda$ and let $N$ be maximal with $u_N\in R_\omega^T$. 
Then $\lambda<\xi$ implies $u_{N+1}=u_N1$.
If $N=N(x)$ then $\omega\in M_x^n$ for some $n\ge 1$ due to $\lambda>\langle x0^\omega\rangle$. Otherwise $N>N(x)$ and $\omega\in W_x^N$ holds. 

Since the only non-trivial $\transcl$-classes are of the form $\{[u01^\omega]_\Omega,u,[u10^\omega]_\Omega\}$ it follows from the two claims above that $W_x$ is $\transcl$-closed.
\end{proof}

\subsection{A first sufficient combinatorial condition}

Call a graph $G$ \textit{simply-branching} if $\sim$ and $\transcl$ agree on $V^2$.
This is a combinatorial definition since Corollary~\ref{ApproxOnV} combinatorially describes $\transcl$ on $V^2$.\index{simply-branching}

\begin{example}
The graph pictured in Fig.~\ref{fig:NonHD} is simply-branching, while the graphs from Figs.~\ref{fig:finHD}~and~\ref{fig:badG} are not.
\end{example}

\begin{proposition}\label{firstSufCombCond}
Let $G$ be a simply-branching graph with finitely many components. Then $\Gq=\HTC$.
\end{proposition}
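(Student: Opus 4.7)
The inclusion $\tsim\subseteq\sim$ is immediate: $\Gq=\TC/{\sim}$ is Hausdorff by Corollary~\ref{HD}, and $\tsim$ is by Theorem~\ref{minimalHDrelation} the intersection of all equivalence relations with Hausdorff quotient. The real work is the converse $\sim\subseteq\tsim$, and my plan is to fix a non-trivial $\sim$-class---which by Lemma~\ref{Forms} (applicable since $G$ has finitely many components) has the form $[u]_\sim$ for some vertex $u$---and to show $z\tsim u$ for every $z\in[u]_\sim$.

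I would dispatch the three easy cases by reducing to the vertex case, where simply-branching gives $z\transcl u$ and Lemma~\ref{3Rels} then yields $z\tsim u$. If $z=\upsilon\in\Upsilon$, finiteness of $\cC_\emptyset$ forces $X_\upsilon\ne\emptyset$ (otherwise $U^\circ(\upsilon)$ would be a non-principal ultrafilter on a finite set), and any $v\in X_\upsilon$ satisfies $v\between\upsilon$ by Lemma~\ref{Xtau} and $v\sim u$, reducing to the vertex case. If $z=\omega$ is a dominated end, any $v\in\Delta(\omega)$ satisfies $v\between\omega$ and $v\sim u$, again reducing to the vertex case.

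The substantive case is $z=\omega\in\Omega$ an undominated end in $[u]_\sim$; here my idea is to produce a sequence of vertices $v_k\to\omega$ in $\TC$ with $v_k\sim u$, via a detour through the auxiliary space $\GH$. Theorem~\ref{AuxArcsChar} supplies an auxiliary arc $A$ from $u$ to $\omega$ in $\GH$, parametrised by some homeomorphism $\sigma\colon\I\to A$ with $\sigma(1)=\omega$. Since $\omega$ is undominated, no edge of $\AuxE(\Omega)$ is incident with $\omega$; combining this with Lemma~\ref{SigmaArcEdgesDense} (inner edge points of $\GG$ are dense in $A$), and noting that any $\GG$-edge whose interior meets $A$ must contribute a vertex endpoint to $A$ (an arc cannot traverse the interior of an edge without reaching one of its endpoints), I conclude that $A$ contains vertices arbitrarily close to $\omega$ in $\GH$. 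Using the countable neighbourhood base for $\omega$ given by Lemma~\ref{ctblNbhdBase}, I extract a sequence $v_1,v_2,\ldots$ of such vertices with $v_k\to\omega$ in $\GH$. Because the subspace topologies induced on $V\cup\Omega$ by $\GH$ and by $\TC$ coincide---both produce the base $C(X,\omega)\cup\Omega(X,\omega)$ at $\omega$---the same sequence satisfies $v_k\to\omega$ in $\TC$. Each $v_k$ lies on the sub-arc $\sigma\bigl([\sigma^{-1}(v_k),1]\bigr)$ of $A$, which is itself an auxiliary arc from $v_k$ to $\omega$, so Theorem~\ref{AuxArcsChar} gives $v_k\sim\omega\sim u$, and the vertex case already handled yields $v_k\tsim u$.

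To finish, I let $\pi\colon\TC\to\HTC$ be the quotient map: by continuity $\pi(v_k)\to\pi(\omega)$, while trivially the constant sequence $\pi(v_k)=\pi(u)$ converges to $\pi(u)$, and uniqueness of sequential limits in the Hausdorff space $\HTC$ forces $\pi(\omega)=\pi(u)$, i.e.\ $\omega\tsim u$. The main obstacle will be the topological extraction of the vertex sequence $v_k\to\omega$ along the auxiliary arc; it rests on the combinatorial fact that undominated ends carry no auxiliary edges in $\GH$, which forces $A$ to approach $\omega$ through vertex endpoints of $\GG$-edges arbitrarily close to $\omega$.
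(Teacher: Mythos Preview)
Your proof is correct and the organisation around a fixed vertex $u\in[z]_\sim$ via Lemma~\ref{Forms} is clean. The one case that matters---$z=\omega$ an undominated end---is where your route diverges from the paper's, and there it is considerably heavier. You invoke Theorem~\ref{AuxArcsChar} to produce an auxiliary arc in $\GH$ and then extract vertices along it converging to $\omega$. The paper never leaves $\TC$: once Lemma~\ref{ctblNbhdBase} supplies the separators $X_n$, Corollary~\ref{FinSepClassesMeetX} directly hands over vertices $t_n\in X_n\cap[u]_\sim$ (since $u$ and $\omega$ lie on opposite sides of $X_n$ and $u\sim\omega$). Simply-branching then gives $t_n\in[u]_{\tsim}$, whence $\omega\in\overline{[u]_{\tsim}}$, and Lemma~\ref{eqClassesClosed} finishes. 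This uses only Chapter~\ref{closerLook} material and avoids the entire auxiliary-arc apparatus of Chapter~\ref{AuxArcTP}; what your route buys is nothing extra here, though it does illustrate that the auxiliary space encodes the same reachability.

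One point in your argument is under-justified: the three facts you cite (no auxiliary edge at $\omega$, density of inner edge points, each traversed edge contributes a vertex endpoint to $A$) do not by themselves place that vertex endpoint near $\omega$---an edge whose interior meets $A$ close to $\omega$ could run from a far vertex to a nearby end $\omega'\neq\omega$. The conclusion is still true, and the quickest fix is Lemma~\ref{vxJAL}: since $A$ is connected and meets both $\hat{\aC}(X_n,\omega)$ and its complement, it meets $X_n$, giving $v_n\in A\cap X_n$ with $v_n\to\omega$ directly.
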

\begin{proof} 
We show $\sim=\tsim$.
Recall that $\Gq$ is Hausdorff by Corollary~\ref{HD}, so $\tsim\subseteq\sim$ holds by minimality of $\tsim$.
Assume for a contradiction that $\tsim\subsetneq\sim$ holds, and pick any two distinct points $p,q\in\TC$ witnessing this, i.e. with $p\sim q$ and $p\not\tsim q$. Since $\transcl$ and $\sim$ agree on $V^2$, either $\{p,q\}$ meets $V$ and $\cU$, or both $p$ and $q$ are in $\cU$. We thus check two main cases:

For the first main case suppose $p\in V$ and $q\in\cU$, say. Write $u=p$ and $\upsilon=q$. We check two subcases:

First suppose $\upsilon\in\Upsilon$ and pick any $t\in X_\upsilon$ which exists since $G$ has only finitely many components. Thus $t\tsim\upsilon$ holds by Lemma~\ref{Xtau}, and furthermore $u\sim t$ implies $u\transcl t$. In particular $u\tsim t$, so $u\tsim\upsilon$ is a contradiction.

For the second subcase suppose $\upsilon\in\Omega$ and write $\omega=\upsilon$. If $\omega$ is dominated by any vertex $x$ then clearly $u\sim x$ as well as $x\tsim \omega$, yielding $u\tsim\omega$, a contradiction. So we may assume that $\omega$ is undominated. By Lemma~\ref{eqClassesClosed} it suffices to show that $\omega$ lies in the closure of $[u]_{\tsim}$ in $\TC$ to yield $\omega\in [u]_{\tsim}$. 
Applying Lemma~\ref{ctblNbhdBase} to the undominated end $\omega$ of $G$ yields a sequence $X_0,X_1,\hdots$ of non-empty elements of $\cX$ such that for all $n\in\N$ the component $C(X_n,\omega)$ includes both $X_{n+1}$ and $C(X_{n+1},\omega)$. Furthermore, the collection $\{\hat{C}_{\TC}(X_n,\omega)\,|\,n\in\N\}$ is a countable neighbourhood basis of $\omega$ in $\TC$. Since $\omega$ is undominated, we may assume that $u$ is not in $X_0\cup C(X_0,\omega)$. By Lemma~\ref{FinSepClassesMeetX} we know that every $X_n$ meets $[u]_\sim$ in some vertex $t_n$, so in particular every $X_n$ meets $[u]_{\tsim}$ since $t_n\sim u$ implies $t_n\tsim u$ by assumption.
Thus every $\hat{C}_{\TC}(X_n,\omega)$ meets $[u]_{\tsim}$ as desired, resulting in $\omega\in [u]_{\tsim}$, a contradiction.
This completes the second subcase and the first main case.

Now suppose both $p$ and $q$ are in $\cU$ and write $\upsilon_1=p,\upsilon_2=q$. We will check two subcases:

First suppose $\upsilon_1\in\Upsilon$ and $\upsilon_2\in\Omega$, say. As before, pick $t_1\in X_{\upsilon_1}$ and note $t_1\tsim\upsilon_1$ as well as $t_1\sim \upsilon_2$. Hence $t_1\not\tsim\upsilon_2$, since otherwise $\upsilon_1\tsim\upsilon_2$ is a contradiction. But then we derive a contradiction for $t_1$ and $\upsilon_2$ from the second subcase of the first main case.

For the second subcase suppose both $\upsilon_i$ are in $\Omega$. Since $\upsilon_1\sim\upsilon_2$ there is some vertex $u\in [\upsilon_1]_\sim$ by Lemma~\ref{Forms}. Then $\upsilon_1\tsim u\tsim\upsilon_2$ by the second subcase of the first main case, a contradiction.
\end{proof}




\subsection{Auxiliary arcs and the maximal Hausdorff quotient}

An arc is called \textit{wild}\index{wild} if it induces on some subset of $A\cap V(G)$ the ordering of the rationals, and \textit{tame}\index{tame} otherwise. 
Let $f_{\aA}$ be a function with domain $\TC\setminus\mathring{E}$ that is the identity on $V\cup\Omega$ and which assigns some element of $X_\upsilon$ to $\upsilon$ for every $\upsilon\in\Upsilon$.
We define the equivalence relation $\tcsim$ on $\TC\setminus\mathring{E}$ by letting $x\tcsim y$ whenever there is some tame auxiliary arc from $f_{\aA}(x)$ to $f_{\aA}(y)$ or $f_{\aA}(x)=f_{\aA}(y)$.\footnote{One special case which led to this definition is the following: If $\upsilon\in\Upsilon$ is such that $X_\upsilon=\{t\}$ then our definition must ensure $t\tcsim\upsilon$.}\index{$f_{\aA}$}\index{$\tcsim$}
By Theorem~\ref{InfIndPaths} the equivalence relation $\tcsim$ is well defined.

Next, we introduce a definition which helps us in that it describes `good' open subsets of $\GH$.
Suppose that $R$ is an equivalence relation on $\GH\setminus\mathring{E}(\GG)$.
An open subset $O$ of $\GH$ which is closed under $R$ is called $R$-\textit{standard} if it satisfies the following three conditions for every edge $e$ of $\GG$:\index{$R$-standard}
\begin{enumerate}
\item[(i)] If $O$ contains both endvertices of $e$, then also $\mathring{e}\subseteq O$.
\item[(ii)] If $O$ contains exactly one endvertex of $e$, say $x$, then $O\cap e=[x,m(e))$.
\item[(iii)] If $O$ avoids both endvertices of $e$, then $O$ also avoids $\mathring{e}$.
\end{enumerate}

Every open subset $O$ of $\TC$ which is $\tsim$-closed induces an ${\tsim}\cap (V\cup\Omega)^2$-standard subset $\check{O}$ of $\GH$, 
where $\check{O}$ is the union of the following choice of sets:\index{$\check{O}$}

For every $u\in O\cap V(G)$ choose $\cO_{\GG}(u,1/2)$.

For every $\omega\in O\cap\Omega(G)$ let $\hat{\aC}_\epsilon(X,\omega)$ be a basic open neighbourhood included in $O$, and choose $\hat{\aC}_{1/2}(X,\omega)$.

For every edge $e$ of $G$ we check three cases:
If both endvertices of $e$ are in $O$, then we choose $\mathring{e}$. Else if precisely one endvertex of $e$ is in $O$, say $x$, then we choose $[x,m(e))$. Otherwise no endvertex of $e$ is in $O$ and we choose the empty set.

For every auxiliary edge $e$ with one endvertex in $O$ we know that both endvertices must be in $O$, and hence we choose $\mathring{e}$.

This completes the definition of $\check{O}$.

\begin{obs}\label{standardHD}
If $O$ is an open subset of $\TC$ which is $\tsim$-closed, then
\begin{align*}
O\setminus\mathring{E}(G)=\check{O}\setminus\mathring{E}(\GG).
\end{align*}
If $O_1$ and $O_2$ are two disjoint open subsets of $\TC$ which are $\tsim$-closed, then $\check{O}_1$ and $\check{O}_2$ are disjoint.
\end{obs}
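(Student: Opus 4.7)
The proof is a straightforward bookkeeping argument, divided into the two parts of the statement.

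For the first equality, I would simply unpack the construction of $\check{O}$. The families of sets chosen to form $\check{O}$ contribute non-edge points only in the first two cases: $\cO_{\GG}(u,1/2)$ adds the vertex $u$ when $u\in O\cap V(G)$, and $\hat{\aC}_{1/2}(X,\omega)$ adds the end $\omega$ when $\omega\in O\cap\Omega$. Since $\GH$ contains no ultrafilter tangles, the non-edge points of $\check{O}$ in $\GH$ are exactly $(O\cap V(G))\cup(O\cap\Omega)$, which agrees with $O\setminus\mathring{E}(G)$ on the common subspace $V\cup\Omega$ of $\TC$ and $\GH$.

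For the disjointness claim I would treat the three possible types of points of $\GH$ in turn. Points in $V\cup\Omega$ are handled by the previous part: $\check{O}_i\cap(V\cup\Omega)=O_i\cap(V\cup\Omega)$, and disjointness of $O_1$ and $O_2$ transfers immediately. For an inner point of an edge $e=xy$ of $G$, the construction gives $\check{O}_i\cap e$ as either all of $\mathring{e}$ (both endvertices in $O_i$), as a half-open segment $[x,m(e))$ or $[y,m(e))$ (exactly one endvertex in $O_i$), or as the empty set. Since $O_1$ and $O_2$ are disjoint, they cannot both contain both endvertices of $e$; and the half-open segments $[x,m(e))$ and $[y,m(e))$ are mutually disjoint because each excludes the midpoint $m(e)$.

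The genuine content lies in the auxiliary-edge case, which is the only place where $\tsim$-closedness is actually used. For $e\in\AuxE$, the construction puts $\mathring{e}$ into $\check{O}_i$ as soon as one endvertex of $e$ lies in $O_i$. By definition of $\AuxE$, the endvertices of $e$ are either an end $\omega$ together with a dominating vertex $u\in\Delta(\omega)$, in which case $u\between\omega$, or two vertices $x,t$ of some critical set $X_\upsilon$, in which case $x\between\upsilon\between t$ by Lemma~\ref{Xtau}. Appealing to Lemma~\ref{3Rels}, $\between\subseteq\tsim$, so in both situations the endvertices of $e$ belong to a common $\tsim$-class. Since $O_1$ and $O_2$ are disjoint and each is closed under $\tsim$, that $\tsim$-class meets at most one of them, so $\mathring{e}$ is contributed to at most one $\check{O}_i$. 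Combining the three cases yields $\check{O}_1\cap\check{O}_2=\emptyset$.

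I do not anticipate any real obstacle: the main observation is that auxiliary edges were introduced precisely to link points that are forced to be $\tsim$-equivalent, so $\tsim$-closed sets cannot split the endvertices of an auxiliary edge across them.
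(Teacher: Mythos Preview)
Your approach is correct in spirit and matches what the paper intends (the paper states this as an observation without proof), but there is one small oversight in your bookkeeping for the first part.

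You write that $\hat{\aC}_{1/2}(X,\omega)$ ``adds the end $\omega$'' as its only non-edge point. That is not so: by definition $\hat{\aC}_{1/2}(X,\omega)=\cO_{\GH}(X,\{C(X,\omega)\},1/2)$ contains the entire component $C(X,\omega)$ together with all ends in $\Omega(X,\omega)$, not just $\omega$ itself. So this piece of $\check{O}$ contributes many vertices and ends beyond $\omega$. The statement is nevertheless true, because the $X$ in the construction was chosen so that a basic open neighbourhood with parameter $X$ lies inside $O$; hence $C(X,\omega)\cup\Omega(X,\omega)\subseteq O$, and every extra non-edge point picked up this way already belongs to $O$. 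Once you add this sentence, your argument for the first equality goes through, and the same remark (that any vertex or end appearing in some $\hat{\aC}_{1/2}(X,\omega)$ automatically lies in $O$) is what makes your case analysis for $\check{O}_i\cap e$ on $G$-edges airtight in the second part.

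Your treatment of the auxiliary-edge case is exactly the point of the observation and is argued correctly: the endvertices of any auxiliary edge are $\between$-related (directly for $u\omega$ with $u\in\Delta(\omega)$, and via some $\upsilon$ for two vertices of a common critical set by Lemma~\ref{Xtau}), hence $\tsim$-related by Lemma~\ref{3Rels}, so a $\tsim$-closed set containing one endvertex contains both.
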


\begin{lemma}\label{BetweenTwoVerticesOnArc}
Let $x$ and $y$ be two vertices of $G$ with $x\not\tsim y$ and let $A$ be an auxiliary arc from $x$ to $y$. Then there is a vertex $u\in V(G)\cap A$ distinct from $x$ and $y$ with $x\not\tsim u\not\tsim y$.
\end{lemma}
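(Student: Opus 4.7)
My plan is to exploit the Hausdorffness of $\HTC$ to separate $x$ and $y$ by disjoint open sets in $\GH$, and then derive a contradiction from the connectedness of $A$ together with the density of auxiliary-edge interiors. First I would use the fact that $\HTC$ is compact Hausdorff (hence normal) to pick disjoint open neighbourhoods of $[x]_{\tsim}$ and $[y]_{\tsim}$ in $\HTC$ with disjoint closures, and pull these back along the quotient map $\pi\colon\TC\to\HTC$ to obtain disjoint open $\tsim$-closed sets $U_x\ni x$ and $U_y\ni y$ in $\TC$ satisfying $\overline{U_x}\cap\overline{U_y}=\emptyset$. Applying the $\check{O}$ construction (Observation~\ref{standardHD}) then yields disjoint open sets $\check{U}_x,\check{U}_y\subseteq\GH$ meeting $A$ in $x$ and $y$ respectively.

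Now suppose for contradiction that every vertex $u\in V(G)\cap A\setminus\{x,y\}$ satisfies $u\tsim x$ or $u\tsim y$, so that every vertex of $A$ lies in $U_x\cup U_y$. I would show that the ``bad'' set $B:=A\setminus(\check{U}_x\cup\check{U}_y)$ then contains no vertex, and no inner auxiliary-edge point either: any auxiliary edge $e$ whose interior meets $A$ has both of its endvertices on $A$, and these are $\tsim$-equivalent (using $u\between\omega$ for $e\in\AuxE(\Omega)$, and Theorem~\ref{InfIndPaths} together with Lemma~\ref{Xtau} for $e\in\AuxE(\cX)$); so if one endvertex lies in $U_x$ then so does the other by $\tsim$-closedness, and the auxiliary-edge clause in the definition of $\check{O}$ places $\mathring{e}$ entirely inside $\check{U}_x$ (symmetrically for $U_y$). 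Hence $B\subseteq\Omega$, and $B\neq\emptyset$ by the connectedness of $A$, since $\check{U}_x\cap A$ and $\check{U}_y\cap A$ are non-empty disjoint open subsets of $A$.

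To locate the contradiction I parametrise $A$ by $\sigma\colon\I\bij A$ with $\sigma(0)=x$ and $\sigma(1)=y$, and set $\mu:=\inf\{t\in\I\,:\,\sigma(t)\in\check{U}_y\}$. Openness of $\check{U}_y$ forces $\sigma(\mu)\notin\check{U}_y$, and combining openness of $\check{U}_x$ with the infimum property gives $\sigma(\mu)\notin\check{U}_x$ as well, so $\sigma(\mu)$ is a bad end. Picking $t_n\downarrow\mu$ with $\sigma(t_n)\in\check{U}_y$ and extracting (for inner-edge-point values of $\sigma(t_n)$) the appropriate endvertex of the enclosing auxiliary edge furnishes a sequence in $V\cup\Omega$ lying in $U_y$ and converging to $\sigma(\mu)$ in $\GH$; since the convergence criterion for vertices or ends tending to an end coincides in $\GH$ and $\TC$, this places $\sigma(\mu)\in\overline{U_y}^{\;\TC}$. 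A symmetric argument---using Lemma~\ref{SigmaArcEdgesDense} to supply inner auxiliary-edge points of $A$ in every interval $(\mu-\epsilon,\mu)$, which by the previous paragraph must lie in $\check{U}_x$---yields $\sigma(\mu)\in\overline{U_x}^{\;\TC}$, contradicting $\overline{U_x}\cap\overline{U_y}=\emptyset$.

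The main technical obstacle I anticipate is the passage from an inner auxiliary-edge point $\sigma(t_n)\in\check{U}_y$ to a convergent approximating sequence in $V\cup\Omega$ inside $U_y$: one has to verify carefully that whenever $\sigma(t_n)\in\mathring{e}$ tends to the end $\sigma(\mu)$ in $\GH$, the ``closer'' endvertex of $e$ relative to $\sigma(\mu)$ can be chosen so that the resulting vertex or end sequence lies eventually in every basic neighbourhood $\hat{\aC}_\epsilon(X,\sigma(\mu))$ of $\sigma(\mu)$; this hinges on dissecting the definition of $\hat{\aC}_\epsilon(X,\omega)$ into its $\cA(X,\{C(X,\omega)\})$- and $\cO_{\GG}(\xi,\epsilon)$-parts and tracking which endvertex of $e$ is being witnessed in each basic neighbourhood.
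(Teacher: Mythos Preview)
Your argument is correct, but it takes a genuinely different and considerably longer route than the paper's proof.

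The paper proceeds \emph{directly}: it applies Observation~\ref{standardHD} once to obtain disjoint $R$-standard neighbourhoods $O(x),O(y)$ in $\GH$ (with $R={\tsim}\cap(V\cup\Omega)^2$), then uses connectedness of $A$ to pick any $\xi\in A\setminus(O(x)\cup O(y))$. If $\xi$ is a vertex we are done; if $\xi$ is an inner auxiliary-edge point, $R$-standardness forces the whole edge to avoid $O(x)\cup O(y)$, so its vertex-endpoint works; if $\xi=\omega$ is an end, the paper applies Observation~\ref{standardHD} a second time to the three pairs in $\{x,y,\omega\}$, intersects, and then uses Lemma~\ref{SigmaArcEdgesDense} once to locate an auxiliary edge of $A$ with an inner point in the resulting neighbourhood of $\omega$; $R$-standardness again places both endvertices of that edge in this neighbourhood, and the vertex-endpoint is the desired $u$. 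No contradiction, no closures, no boundary-point analysis.

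Your approach instead exploits normality of the compact Hausdorff space $\HTC$ to arrange $\overline{U_x}\cap\overline{U_y}=\emptyset$ in $\TC$, argues by contradiction that $B:=A\setminus(\check U_x\cup\check U_y)\subseteq\Omega$, and then runs a boundary-point argument to trap a bad end $\sigma(\mu)$ in both closures. The technical worry you flag---replacing inner auxiliary-edge points by nearby endvertices so as to manufacture a sequence in $V\cup\Omega$ converging to $\sigma(\mu)$ in $\TC$---does resolve cleanly: if $(a_n,b_n)$ is the component of $\sigma^{-1}(\mathring e_n)$ containing $t_n$, then $\mu<a_n<t_n$ (since $\sigma(\mu)\in B$ forces $\mu\notin(a_n,b_n)$ and $a_n\neq\mu$), so $\sigma(a_n)\to\sigma(\mu)$ in $\GH$; and for points of $V\cup\Omega$ the basic neighbourhoods $\hat{\aC}_\epsilon(X,\omega)$ and $\hat C_{\TC}(X,\omega)$ trace the same set, so the convergence transfers to $\TC$. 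The symmetric argument on $(\mu-\epsilon,\mu)$ works because $\sigma(s)\notin\check U_y$ for $s<\mu$, whence any inner auxiliary-edge point there must lie in $\check U_x$.

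What each approach buys: the paper's proof is short and uses only the $R$-standard machinery already set up; yours is more ``point-set topological'' and makes essential use of normality, which is not needed for the paper's argument. Your method would adapt more readily to settings where one cannot refine neighbourhoods pairwise, but here the direct case analysis is both shorter and avoids the endpoint-extraction bookkeeping you rightly identify as the delicate step.
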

\begin{proof}
Put $R={\tsim}\cap (V\cup\Omega)^2$. By Observation~\ref{standardHD} we find two disjoint $R$-standard neighbourhoods $O(x)$ of $x$ and $O(y)$ of $y$ in $\GH$.
Since $A$ is connected there is some point $\xi\in A\setminus (O(x)\cup O(y))$. In particular, $x\not\tsim \xi\not\tsim y$ holds.

If $\xi$ is a vertex of $G$ we are done. 

Else if $\xi$ is an inner edge point of some edge $e$ of $\GG$, then $e$ must be an auxiliary edge with endvertices $a$ and $b$, say. In particular, one of $a$ and $b$ is a vertex of $G$, say $a$. Since $O(x)$ and $O(y)$ are $R$-standard, $\xi\notin O(x)\cup O(y)$ implies that all of $e$ avoids $O(x)\cup O(y)$, so we are done with $u=a$.

Finally assume that $\xi$ is an end of $G$ and write $\omega=\xi$. 
Next, let $H$ be the complete graph on $\{x,y,\omega\}$. For every edge $e=ab$ of $H$ we use Observation~\ref{standardHD} to yield two disjoint $R$-standard neighbourhoods $O_e(a)$ of $a$ and $O_e(b)$ of $b$ in $\GH$. 
Then we let $O(a)$ be the intersection over all sets $O_e(a)$ where $e$ is an edge of $H$ at $a$. 
By Lemma~\ref{SigmaArcEdgesDense} we find some auxiliary edge $e'$ of $A$ together with some $j\in\mathring{e}'\cap O(\omega)$. Then $j$ is in both $O_{x\omega}(\omega)$ and $O_{y\omega}(\omega)$ by choice of $O(\omega)$. Since both of these are $R$-standard, both contain the endvertices of $e'$, and so does $O(\omega)$. Then one of the endvertices of $e'$ is a vertex of $G$ in $A\cap O(\omega)$, and this vertex can take on the role of $u$ by choice of $O(\omega)$.
\end{proof}

\begin{corollary}\label{IterateWild}
Let $x$ and $y$ be two distinct vertices of a graph $G$ with $x\not\tsim y$. Then every auxiliary arc from $x$ to $y$ is wild.
\end{corollary}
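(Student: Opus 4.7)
The plan is to iterate Lemma~\ref{BetweenTwoVerticesOnArc} along $A$ in a dyadic manner, producing a countable densely ordered set of vertices on $A$ that witnesses wildness.

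First, I would fix a parametrisation $\sigma\colon\I\bij A$ with $\sigma(0)=x$ and $\sigma(1)=y$ and use it to order any subset of $V(G)\cap A$. The key preliminary observation is that for any two points $p,q\in A$ the sub-arc $\sigma[[\sigma^{-1}(p),\sigma^{-1}(q)]]$ is again an auxiliary arc, since $\overline{\AuxE}^{\;\GH}$ is closed and the sub-arc is contained in $A\subseteq\overline{\AuxE}^{\;\GH}$.

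Next, I would inductively construct finite sets $V_0\subseteq V_1\subseteq\cdots$ of vertices of $G$ lying on $A$ such that for every $n$ the set $V_n$ is linearly ordered by $\sigma$ as $x=v_0^n<v_1^n<\cdots<v_{k_n}^n=y$ with $v_i^n\not\tsim v_{i+1}^n$ for all consecutive pairs. Start with $V_0=\{x,y\}$, which is admissible by hypothesis. Given $V_n$, for every consecutive pair $(v_i^n,v_{i+1}^n)$ apply Lemma~\ref{BetweenTwoVerticesOnArc} to the auxiliary sub-arc of $A$ between them, yielding a vertex $w_i^n$ on that sub-arc distinct from both endvertices and satisfying $v_i^n\not\tsim w_i^n\not\tsim v_{i+1}^n$. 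Let $V_{n+1}:=V_n\cup\{w_i^n\,|\,i<k_n\}$; by construction consecutive pairs in $V_{n+1}$ are still $\not\tsim$-related.

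Now let $V_\infty:=\bigcup_{n\in\N}V_n$. By the inductive construction, $V_\infty$ is a countable set of vertices on $A$, and the ordering induced by $\sigma$ is dense without endpoints on $V_\infty\setminus\{x,y\}$: given any two elements $a<b$ of $V_\infty$, both lie in some common $V_n$ and are separated by consecutive pairs of $V_n$, each of which receives a new intermediate vertex in $V_{n+1}$. Since $V_\infty\setminus\{x,y\}$ is countably infinite and a dense linear order without endpoints, Cantor's characterisation theorem provides an order-isomorphism with $(\Q,\le)$. Hence $A$ induces the ordering of the rationals on a subset of $A\cap V(G)$, which is the definition of a wild arc, so $A$ is wild. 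The main step is already done by Lemma~\ref{BetweenTwoVerticesOnArc}; the only care needed here is ensuring at each stage that sub-arcs of auxiliary arcs remain auxiliary arcs and that the $\not\tsim$ condition passes to all consecutive pairs of $V_{n+1}$, both of which are immediate.
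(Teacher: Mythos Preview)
Your proof is correct and follows exactly the same approach as the paper: the paper's proof simply says ``Iterating this lemma, starting with $a=x$ and $b=y$, yields a subset of $A\cap V(G)$ on which $A$ induces the ordering of the rationals,'' and your argument is a careful unpacking of precisely this iteration. The only addition on your side is the explicit dyadic bookkeeping via the sets $V_n$ and the invocation of Cantor's theorem at the end, both of which are implicit in the paper's one-line sketch.
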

\begin{proof}
By Lemma~\ref{BetweenTwoVerticesOnArc} between every two distinct vertices $a$ and $b$ of $G$ on $A$ with $a\not\tsim b$ there is a vertex $u$ of $G$ on $A$ with $a\not\tsim u\not\tsim b$. Iterating this lemma, starting with $a=x$ and $b=y$, yields a subset of $A\cap V(G)$ on which $A$ induces the ordering of the rationals.
\end{proof}

\begin{lemma}\label{tcsimSubtsim}
For every graph $G$ we have $\tcsim\subseteq\tsim$ on $V(G)^2$.
\end{lemma}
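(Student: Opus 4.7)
The plan is to prove the contrapositive: for distinct vertices $x, y \in V(G)$, if $x \not\tsim y$, then $x \not\tcsim y$. Since $f_{\aA}$ is the identity on $V(G)$, the definition of $\tcsim$ specialises on $V(G)^2$ to: $x \tcsim y$ iff $x = y$ or there exists a tame auxiliary arc from $x$ to $y$. So what we need to show is that whenever $x, y$ are distinct vertices with $x \not\tsim y$, no tame auxiliary arc connects them.

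This is essentially a one-line deduction from Corollary~\ref{IterateWild}, which states precisely that every auxiliary arc between two distinct vertices $x, y$ with $x \not\tsim y$ must be wild. Hence no tame auxiliary arc between them can exist, so $x \not\tcsim y$, as required. I expect no real obstacle here: the combinatorial heavy lifting has already been done in Lemma~\ref{BetweenTwoVerticesOnArc} and its corollary, which together guarantee the required iterated insertion of intermediate vertices producing a rational-like suborder on any auxiliary arc joining two $\tsim$-inequivalent vertices.

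The only minor care needed is to handle the trivial case $x = y$, where $x \tcsim y$ holds by the $f_{\aA}(x) = f_{\aA}(y)$ clause, and indeed $x \tsim y$ trivially, so the inclusion is immediate. Thus the full argument is: let $x, y \in V(G)$ with $x \tcsim y$; if $x = y$, then $x \tsim y$; otherwise by definition of $\tcsim$ there is a tame auxiliary arc from $x = f_{\aA}(x)$ to $y = f_{\aA}(y)$, and by the contrapositive of Corollary~\ref{IterateWild} this forces $x \tsim y$.
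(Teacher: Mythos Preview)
Your proposal is correct and follows essentially the same approach as the paper: both arguments reduce immediately to Corollary~\ref{IterateWild}, which says that any auxiliary arc between distinct vertices $x,y$ with $x\not\tsim y$ is wild, whence no tame auxiliary arc between them exists. The paper phrases this as a proof by contradiction while you phrase it as the contrapositive and are slightly more careful about the trivial case $x=y$, but the substance is identical.
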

\begin{proof}
Assume not for a contradiction, witnessed by two distinct vertices $x$ and $y$, i.e. with $x\tcsim y$ and $x\not\tsim y$. Pick a tame auxiliary arc $A$ from $x$ to $y$ witnessing $x\tcsim y$. Then the existence of $A$ contradicts Corollary~\ref{IterateWild}.
\end{proof}

\begin{lemma}\label{tsimSubtcsimSpecial}
If $G$ is a graph such that every auxiliary arc is tame, then $\tsim\subseteq\tcsim$ holds on $V(G)^2$.
\end{lemma}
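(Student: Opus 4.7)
The plan is to chain together three facts already established in the excerpt. Let $x,y\in V(G)$ with $x\tsim y$; we want to produce a tame auxiliary arc from $x$ to $y$ (or conclude $x=y$), which by definition of $\tcsim$ and the fact that $f_{\aA}$ fixes vertices will give $x\tcsim y$.

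First, if $x=y$ there is nothing to show, so assume $x\neq y$. By Lemma~\ref{3Rels} we have $\tsim\subseteq\sim$, so from $x\tsim y$ we get $x\sim y$. Second, apply Theorem~\ref{AuxArcsChar} to the pair $x,y\in V\subseteq V\cup\Omega$: since $x\sim y$, there exists an auxiliary arc $A$ from $x$ to $y$ in $\GH$. Third, invoke the global hypothesis of the lemma---every auxiliary arc of $\GH$ is tame---so $A$ is tame. Since $f_{\aA}(x)=x$ and $f_{\aA}(y)=y$, the existence of this tame auxiliary arc yields $x\tcsim y$ directly from the definition of $\tcsim$.

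In effect the proof is a one-line composition: $\tsim\subseteq\sim$ by Lemma~\ref{3Rels}, the forward direction of Theorem~\ref{AuxArcsChar} produces an auxiliary arc between $\sim$-related vertices, and the tameness hypothesis upgrades this arc to a witness of $\tcsim$. There is no real obstacle here; the content of the lemma is that it is a direct corollary of the characterisation of auxiliary arcs together with the containment $\tsim\subseteq\sim$, restricted to the hypothesis that wildness cannot occur. The only minor point to record is the trivial case $x=y$, which is handled by the second clause in the definition of $\tcsim$ ($f_{\aA}(x)=f_{\aA}(y)$).
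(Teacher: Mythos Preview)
Your proof is correct and follows essentially the same approach as the paper's: use $\tsim\subseteq\sim$ from Lemma~\ref{3Rels}, invoke the characterisation of auxiliary arcs to produce an arc between the two vertices, and conclude via the tameness hypothesis. The only cosmetic differences are that the paper cites Theorem~\ref{SigmaArcConstruction} (the vertex case) rather than the more general Theorem~\ref{AuxArcsChar}, and does not spell out the trivial case $x=y$ or the fact that $f_{\aA}$ fixes vertices.
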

\begin{proof}
Suppose that two distinct vertices $x$ and $y$ of $G$ are given with $x\tsim y$. Then $x\sim y$ holds by Lemma~\ref{3Rels}. Using Theorem~\ref{SigmaArcConstruction} we find an auxiliary arc from $x$ to $y$, which is tame by assumption, so we have $x\tcsim y$.
\end{proof}

\begin{lemma}\label{auxArcsTamThen3RelsEqual}
Let $G$ be a graph with finitely many components such that every auxiliary arc is tame.
Then $\tsim=\tcsim=\sim$ holds.
\end{lemma}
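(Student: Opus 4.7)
The plan is to first establish $\tsim = \tcsim = \sim$ on $V(G)^2$ (the ``base case'') and then bootstrap to all of $V\cup\cU$ via the same scheme that drives the proof of Proposition~\ref{firstSufCombCond}. For the base case, I would combine Lemma~\ref{tcsimSubtsim} with Lemma~\ref{tsimSubtcsimSpecial} (it is the latter that invokes the hypothesis that every auxiliary arc is tame) to get $\tsim = \tcsim$ on pairs of vertices. Given two distinct vertices $x,y$ with $x\sim y$, Theorem~\ref{SigmaArcConstruction} produces an auxiliary arc from $x$ to $y$, which is tame by hypothesis, so $x\tcsim y$ directly from the definition of $\tcsim$. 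Combined with $\tsim\subseteq\sim$ from Lemma~\ref{3Rels}, this yields $\sim = \tsim = \tcsim$ on $V(G)^2$.

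Next I would handle ultrafilter tangles. Since $G$ has only finitely many components, no $\upsilon\in\Upsilon$ can have $X_\upsilon=\emptyset$, so we may pick some $t\in X_\upsilon$; Lemma~\ref{Xtau} then gives $t\between\upsilon$, and in particular $t\tsim\upsilon$ (hence also $t\sim\upsilon$). For any $x\in V\cup\cU$ with $x\sim\upsilon$, transitivity of $\sim$ yields $x\sim t$, and I aim to reduce $x\tsim\upsilon$ to $x\tsim t\tsim\upsilon$: if $x\in V$ this is immediate from the base case, and otherwise we may iterate the construction once more by feeding $x$ into the forthcoming end/tangle cases.

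The subtlest step is the end case, which mirrors subcase 1b of the proof of Proposition~\ref{firstSufCombCond}: given $x\sim\omega$ with $\omega\in\Omega$, the goal is $\omega\in[x]_\tsim$. If $\omega$ is dominated by a vertex, we reduce to the base case as above. If $\omega$ is undominated, I would invoke Lemma~\ref{ctblNbhdBase} to obtain a countable neighbourhood basis $\{\hat{C}_{\TC}(X_n,\omega)\mid n\in\N\}$ of $\omega$ in $\TC$, and Lemma~\ref{FinSepClassesMeetX} (which needs $G$ to have finitely many components) to see that each $X_n$ meets $[x]_\sim$ in some vertex $t_n$. The base case then places each $t_n$ in $[x]_\tsim$, so every basic open neighbourhood of $\omega$ in $\TC$ meets $[x]_\tsim$, whence $\omega\in\overline{[x]_\tsim}$. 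Since the $\tsim$-classes are closed in $\TC$ by Lemma~\ref{eqClassesClosed} (applied to the Hausdorff quotient $\HTC$), this yields $\omega\in[x]_\tsim$. Pairs in $\cU^2$ reduce to the preceding cases via Lemma~\ref{Forms}, which guarantees that a non-singleton $\sim$-class meeting $\cU$ must also contain a vertex (again using that $G$ has finitely many components).

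I expect the main technical obstacle to lie not in the bootstrapping step, which is essentially identical to the argument used in Proposition~\ref{firstSufCombCond} once equality holds on $V(G)^2$, but in justifying that the hypothesis ``every auxiliary arc is tame'' really delivers the base case cleanly; this is the content of Lemma~\ref{tsimSubtcsimSpecial} combined with Theorem~\ref{SigmaArcConstruction}, and if one of these failed to cooperate with the global hypothesis on $\tcsim$ (for example, if the auxiliary arc produced by Theorem~\ref{SigmaArcConstruction} were not itself witnessed by $f_{\aA}$ on its endpoints), the entire reduction would collapse. Because $f_{\aA}$ is the identity on vertices, however, this hazard is avoided, and the plan goes through.
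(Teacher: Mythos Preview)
Your bootstrapping argument is correct and does establish $\sim=\tsim$ on all of $V\cup\cU$: once the base case $\sim=\tsim$ on $V(G)^2$ is in hand, the case analysis of Proposition~\ref{firstSufCombCond} goes through verbatim, since that proof only ever uses agreement of $\sim$ and $\tsim$ on pairs of vertices (the hypothesis ``simply-branching'' enters only via $\transcl\subseteq\tsim$). This is a genuinely different route from the paper. The paper proves the cycle $\tsim\subseteq\sim\subseteq\tcsim\subseteq\tsim$ directly on all of $\TC\setminus\mathring{E}$; in particular, for the vertex--end step of $\tcsim\subseteq\tsim$ it does \emph{not} use the countable-neighbourhood-basis/closure argument, but instead takes the given tame auxiliary arc $A$ from $u$ to $\omega$, passes to the $\tsim$-standard open set $\check{O}(\omega)$ in $\GH$, and uses Lemma~\ref{SigmaArcEdgesDense} to locate an auxiliary edge of $A$ inside $\check{O}(\omega)$, one of whose endvertices is a vertex $t$ of $G$ with $u\not\tsim t$ but $u\tcsim t$ via a sub-arc of $A$---contradicting the base case. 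Your approach avoids the $\check{O}$ machinery and Observation~\ref{standardHD} entirely, which is a simplification; the paper's approach, on the other hand, never appeals to Lemma~\ref{ctblNbhdBase} or Lemma~\ref{FinSepClassesMeetX} and stays within the auxiliary-arc framework throughout.

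One small gap: your bootstrapping only addresses $\sim=\tsim$, not the equality with $\tcsim$ on points of $\cU$. This is easily repaired once $\sim=\tsim$ is known. For $\tcsim\subseteq\sim$: an auxiliary arc from $f_{\aA}(x)$ to $f_{\aA}(y)$ witnesses $f_{\aA}(x)\sim f_{\aA}(y)$ by Lemma~\ref{cutJAL}, and $x\sim f_{\aA}(x)$ since for $x\in\Upsilon$ one has $f_{\aA}(x)\between x$ by Lemma~\ref{Xtau}. For $\sim\subseteq\tcsim$: given $x\sim y$, the same reasoning yields $f_{\aA}(x)\sim f_{\aA}(y)$ with both in $V\cup\Omega$, so Theorem~\ref{AuxArcsChar} produces an auxiliary arc between them, tame by hypothesis.
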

\begin{proof}
By Lemmas~\ref{tcsimSubtsim}~\&~\ref{tsimSubtcsimSpecial} together with Theorem~\ref{SigmaArcConstruction} we know that
\begin{enumerate}
\item[$(\ast)$] \textit{The three equivalence relations $\tsim$, $\tcsim$ and $\sim$ agree on $V(G)^2$.}
\end{enumerate}
We show ``$\tsim\subseteq\sim\subseteq\tcsim\subseteq\tsim$''.

``$\tsim\subseteq\sim$''. This follows from Corollary~\ref{HD} and minimality of $\tsim$.

``$\sim\subseteq\tcsim$''. Let $p,q\in\TC\setminus\mathring{E}$ be given with $p\sim q$. We check several cases:

If both $p$ and $q$ are in $V(G)\cup\Omega(G)$, then we are done by Theorem~\ref{AuxArcsChar} combined with the assumption that all auxiliary arcs are tame.

Next, suppose that $p$ is a vertex of $G$ and $q$ is an ultrafilter tangle, and write $u=p$ and $\upsilon=q$. Then we pick some $t\in X_\upsilon$. Recall that $\upsilon\between t$ holds by Lemma~\ref{Xtau}, so ``$\tsim\subseteq\sim$'' implies $\upsilon\sim t$. In particular, $u\sim\upsilon\sim t$ holds, which implies $u\sim t$, and hence $u\tcsim t$ by $(\ast)$. By definition of $\tcsim$, this yields $u\tcsim\upsilon$.

Now suppose that $p$ is an ultrafilter tangle and $q$ is an end of $G$, and write $\upsilon=p$ and $\omega=q$. Pick $t\in X_\upsilon$ and note that $t\sim\upsilon$ holds as before. Furthermore, $\upsilon\sim\omega$ implies $t\sim \omega$. Hence $t\tcsim\omega$ follows by a previous case. Together with $\upsilon\tcsim t$ this yields $\upsilon\tcsim\omega$.

Finally suppose both $p$ and $q$ are ends of $G$ and write $\omega=p$ and $\omega'=q$. By Lemma~\ref{Forms} there is some vertex $u$ of $G$ in $[\omega]_\sim$. Then $\omega\sim u\sim\omega'$ implies $\omega\tcsim u\tcsim\omega'$ by a previous case, and therefore $\omega\tcsim\omega'$.

``$\tcsim\subseteq\tsim$''. Let $p,q\in\TC\setminus\mathring{E}$ be given with $p\tcsim q$ and let $A$ be some tame auxiliary arc from $f_{\aA}(p)$ to $f_{\aA}(q)$. By $(\ast)$, not both $p$ and $q$ are vertices of $G$.
We check several cases:

First, suppose that $p$ is a vertex of $G$ and $q$ is an ultrafilter tangle, and write $u=p$ and $\upsilon=q$. 
Furthermore, write $t=f_{\aA}(\upsilon)$ and recall that $t$ is a vertex in $X_\upsilon$. In particular, we have $u\tcsim t$,
which implies $u\tsim t$ by $(\ast)$. Furthermore, $t\tsim\upsilon$ holds by Lemma~\ref{Xtau}. Together with $u\tsim t$, this yields $u\tsim\upsilon$.

Next, suppose that $p$ is a vertex of $G$ and $q$ is an end of $G$, and write $u=p$ and $\omega=q$.
Assume for a contradiction that $u\not\tsim \omega$ holds, witnessed by some disjoint open neighbourhoods $O(u)$ of $u$ and $O(\omega)$ of $\omega$ in $\TC$, respectively, which are both $\tsim$-closed.
By Lemma~\ref{SigmaArcEdgesDense} there is some auxiliary edge $e$ whose interior meets $\check{O}(\omega)$ and which $A$ traverses.
In particular, since $\check{O}(\omega)$ is $\tsim\cap (G\cup\Omega)^2$-standard, we know that the whole edge $e$ is included in $\check{O}(\omega)$. Let $t$ be an endvertex of $e$ which is also a vertex of $G$.
Then $u\not\tsim t$ holds, but also $u\tcsim t$ witnessed by a tame auxiliary arc $A'\subseteq A$. This contradicts $(\ast)$, so $u\tsim\omega$ must hold as desired.

Now suppose that $p$ is an ultrafilter tangle of $G$ and $q$ is an end of $G$, and write $\upsilon=p$ and $\omega=q$. Pick $t\in X_\upsilon$ and note $t\tcsim\upsilon$ as well as $t\tcsim \omega$ due to $t\tcsim\upsilon\tcsim\omega$. 
Hence $t\tsim\omega$ holds by the previous case. Together with $\upsilon\tsim t$ this yields $\upsilon\tsim\omega$.

Finally, suppose that both $p$ and $q$ are ends of $G$, and write $\omega=p$ and $\omega'=q$.
Pick some $X\in\cX$ witnessing $\omega\neq\omega'$.
Then $A$ meets $X$ in some vertex $u$ of $G$ due to Observation~\ref{vxJAL}.
Then $\omega\tcsim u\tcsim\omega'$ implies $\omega\tsim u\tsim\omega'$ by an earlier case, and hence $\omega\tsim\omega'$ follows.
\end{proof}

\subsection{A second sufficient combinatorial condition}

\newtheorem{claim_T2}{Claim}
\begin{lemma}\label{noT2allArcsTame}
If $G$ is a connected graph such that for every $x\in\Gq$ the graph $G$ has a normal spanning tree $T(x)$ whose subtree $\bigcup_{\omega\in x\cap\Omega}R_\omega^{T(x)}$ contains no subdivision of the (infinite) binary tree, then all auxiliary arcs (in $\GH$) are tame.
\end{lemma}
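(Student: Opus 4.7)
We argue by contrapositive: assume there is a wild auxiliary arc $A\subseteq\GH$, and construct, for every NST $T$ of $G$, a subdivision of $T_2$ inside $R:=\bigcup_{\omega\in x\cap\Omega}R_\omega^T$, where $x\in\Gq$ is the unique $\sim$-class whose intersection with $V\cup\Omega$ contains $A\cap (V\cup\Omega)$ (unique by Theorem~\ref{AuxArcsChar}). Applying this to $T=T(x)$ then contradicts the hypothesis.

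Fix a parametrisation $\sigma\colon\I\to A$ and, by wildness, a countable $Q=\{v_q\,|\,q\in\Q\}\subseteq A\cap V(G)$ on which $\iota:=\sigma^{-1}\rest Q$ is order-preserving into $\I$. The first technical step is to prove the \emph{accumulation lemma}: every accumulation point $\lambda\in\I$ of $\iota(Q)$ with $\lambda\notin\iota(Q)$ satisfies $\sigma(\lambda)\in\Omega\cap x$. The two alternatives are ruled out using basic open neighbourhoods in $\GH$: a neighbourhood of an inner edge point is contained in one $\mathring e$, and a neighbourhood of a vertex $v$ is contained in $\cO_{\GG}(v,1/2)$, neither of which can host infinitely many distinct $v_q$. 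Because $\iota(Q)\subseteq\I$ has the order type of $\Q$ and is hence order-dense-in-itself, its closure in $\I$ has no isolated points, so by a Cantor-Bendixson argument it is perfect and therefore uncountable. Combined with the accumulation lemma this furnishes uncountably many pairwise distinct ends in $x\cap\Omega$.

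Next, I would recursively construct a binary family $\{\omega_s:s\in 2^{<\omega}\}\subseteq x\cap\Omega$ of pairwise distinct ends indexed along $T_2$. For each $s\in 2^{<\omega}$ one maintains a non-empty open rational interval $J_s\subseteq\Q$, chooses a splitting rational $m_s\in J_s$, and sets $J_{s0}=J_s\cap(-\infty,m_s)$ and $J_{s1}=J_s\cap(m_s,+\infty)$; each sub-interval again inherits the order type of $\Q$, so the accumulation lemma applied to $\iota(J_{si}\cap\Q)$ provides an uncountable supply of candidate ends, from which one may always avoid the previously chosen finite set. In the NST $T$, each sibling pair $(\omega_{s0},\omega_{s1})$ has normal rays $R_{\omega_{s0}}^T,R_{\omega_{s1}}^T\subseteq R$ that coincide on a common initial $T$-segment and split at a unique branching vertex $b_s\in R$. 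Passing to a suitable binary sub-family of $\{\omega_s\}$ (re-choosing ends in a nested fashion if necessary), I would arrange that for every $s$ the branching vertices $b_{s0}$ and $b_{s1}$ are proper $T$-descendants of $b_s$ lying in the two $T$-subtrees issuing from $b_s$ along $R_{\omega_{s0}}^T$ and $R_{\omega_{s1}}^T$ respectively. The collection $\{b_s\}_{s\in 2^{<\omega}}$, joined by the appropriate $T$-paths between consecutive branching vertices, then realises a subdivision of $T_2$ inside $R$.

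The \emph{main obstacle} is this last combinatorial step: using the recursive binary structure on the arc (specifically, the separating vertices $v_{m_s}\in Q$, together with the fact that $\lceil v_{m_s}\rceil_T$ is a finite $G$-separator by the normal spanning tree property) to inductively arrange that the branching vertices $\{b_s\}$ are genuinely nested in $T$ according to $2^{<\omega}$, rather than collapsing to a degenerate configuration (e.g. all rays fanning out from a single common branching vertex in $R$). Concretely, one has to track how $A$ threads through open $\GH$-neighbourhoods of $\lceil v_{m_s}\rceil_T$ and how the accumulation ends inside $J_{s0}$ versus $J_{s1}$ are distributed among the components of $G-\lceil v_{m_s}\rceil_T$, and iterate this analysis through all levels of $2^{<\omega}$. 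This is the only step whose full combinatorial-topological details I have not worked out here.
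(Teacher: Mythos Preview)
Your setup and the accumulation lemma are essentially the same as in the paper, but from that point on the two arguments diverge. You try to build a $T_2$ subdivision in $R$ directly, and you correctly identify the obstacle: nothing in your recursion forces the branching vertices $b_{s0},b_{s1}$ to lie strictly below $b_s$ in $T$, and the separator $\lceil v_{m_s}\rceil_T$ need not separate the ends accumulating in $J_{s0}$ from those accumulating in $J_{s1}$ in $G$ (the arc may wind back and forth through the same $T$-component). So your proof as written has a real gap at exactly the point you flag.

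The paper sidesteps this entirely by never constructing a $T_2$ subdivision. Instead it invokes the equivalence (Proposition~8.6.1 in Diestel's book) that a rooted tree contains no subdivided $T_2$ if and only if it is \emph{recursively prunable}, which equips the vertices of $R$ with ordinal labels. For each end $\omega\in x\cap\Omega$ one lets $\alpha(\omega)$ be the minimal label along $R_\omega$; a short argument shows that from any vertex receiving label $\alpha(\omega)$ the tail of $R_\omega$ is a chain in $T_{\alpha(\omega)}$. The paper then observes (your accumulation lemma, plus the density of $\I_\Q$) that the set $\cA$ of accumulation points is perfect, and uses this to find, inside any basic neighbourhood $\hat{\aC}(\lceil t_{n-1}\rceil_T,\omega_{n-1})$, a new end $\omega_n\neq\omega_{n-1}$ in $\sigma[\cA]$. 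Since the normal rays of $\omega_n$ and $\omega_{n-1}$ agree past $t_{n-1}$ but eventually split, the vertex $t_n$ on $R_{\omega_n}$ with label $\alpha(\omega_n)$ must satisfy $\alpha(\omega_n)<\alpha(\omega_{n-1})$. Iterating yields a strictly decreasing sequence of ordinals, a contradiction.

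So the key idea you are missing is to trade the ``no $T_2$ subdivision'' hypothesis for an ordinal ranking and then run a descent argument; this replaces your hard nesting step by a one-line comparison of labels.
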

\begin{proof}
Let $A$ be any auxiliary arc, and assume for a contradiction that it is wild, witnessed by some infinite set $V_{\Q}\subseteq A$ of vertices of $G$ on which it induces the ordering of the rationals.
By Lemma~\ref{cutJAL} we know that there is some unique $x\in\Gq$ with $A\setminus\mathring{E}(\GG)\subseteq x$. For every $\omega\in x\cap\Omega$ write $R_\omega=R_\omega^{T(x)}$ and put $T=\bigcup_{\omega\in x\cap \Omega}R_\omega$, taking as root for $T$ the root of $T(x)$.
By \cite[Proposition 8.6.1]{Bible} 
the rooted tree $T$ is recursively prunable, so we may assume that every vertex of $T$ receives an ordinal label by some fixed recursive pruning of $T$. 
For every $\omega\in x\cap\Omega$ let $\alpha(\omega)$ be the minimal ordinal label of the vertices of $R_\omega$.
\begin{claim_T2}\label{AlphaOmegaUp}
If $t\in V(R_\omega)$ receives label $\alpha(\omega)$, then $tR_\omega=\lfloor t\rfloor_{T_{\alpha(\omega)}}$ holds.
\end{claim_T2}
\begin{proof}\renewcommand{\qedsymbol}{$\diamond$}
Every vertex $u$ of $tR_\omega$ receives label $\alpha(\omega)$ by minimality of $\alpha(\omega)$, so $tR_\omega\subseteq\lfloor t\rfloor_{T_{\alpha(\omega)}}$ holds. 
Since $tR_\omega\subsetneq\lfloor t\rfloor_{T_{\alpha(\omega)}}$ would contradict the fact that $\lfloor t\rfloor_{T_{\alpha(\omega)}}$ is a chain in $T_{\alpha(\omega)}$, equality must hold as claimed.
\end{proof}

Now choose some homeomorphism $\sigma\colon \I\to A$ and 
write $\I_{\Q}=\sigma^{-1}[V_{\Q}]$. Next, pick two points $\lambda^-<\lambda^+$ in $\I_{\Q}$
and let $\cA$ denote the set of all accumulation points of $\I_{\Q}$ in the open interval $(\lambda^-,\lambda^+)$. Since $\I_{\Q}$ is discrete in $\I$ we know that $\cA$ avoids $\I_{\Q}$ and $\sigma$ maps $\cA$ into the end space of $G$.
\begin{claim_T2}\label{AaccOfA}
Every point of $\cA$ is an accumulation point of $\cA$.
\end{claim_T2}
\begin{proof}\renewcommand{\qedsymbol}{$\diamond$}
Given any $\lambda\in\cA$ pick a sequence $(\lambda_n)_{n\in\N}$ in $\I_{\Q}$ with $\lambda_n\to\lambda$, and without loss of generality suppose that $\lambda_n<\lambda_{n+1}<\lambda$ holds for all $n\in\N$. 
Given an arbitrary $\epsilon>0$ pick $N\in\N$ big enough that $|\lambda-\lambda_n|<\epsilon$ and $\lambda_n\in (\lambda^-,\lambda^+)$ hold for all $n\ge N$. Using that $A$ induces the ordering of the rationals on $V_{\Q}$ (and hence on $\I_{\Q}$) it is easy to find some $\mu\in (\lambda_N,\lambda_{N+1})\cap\cA$. By choice of $N$ we have $|\lambda-\mu|<\epsilon$.
\end{proof}

In order to yield a contradiction, we inductively construct a sequence $(\omega_n)_{n\in\N}$ (where the $\omega_n$ are ends, not ordinals) in $\cA$ together with a sequence $(t_n)_{n\in\N}$ of vertices of $T$ such that for all $n\in\N$ we have
\begin{enumerate}
\item $t_n$ is a vertex of $R_{\omega_n}$,
\item $t_n$ receives label $\alpha(\omega_n)$,
\item $\alpha(\omega_{n+1})<\alpha(\omega_n)$.
\end{enumerate}
This suffices, since then by (iii) we have a strictly decreasing sequence $(\alpha(\omega_n))_{n\in\N}$ of ordinals, which is impossible.

We start the construction by picking an arbitrary $\omega_0\in\sigma[\cA]$ together with some vertex $t_0$ of $R_{\omega_0}$ receiving label $\alpha(\omega_0)$. Now suppose that we are at step $n>0$ of the construction, and $t_k$ and $\omega_k$ got constructed for $k<n$. Using Claim~\ref{AaccOfA} we find some $\omega_n$ in $\sigma[\cA]\cap\hat{\aC}(\lceil t_{n-1}\rceil_T,\omega_{n-1})$ other than $\omega_{n-1}$.
Let $u$ be the maximal vertex of $R_{\omega_n}\cap R_{\omega_{n-1}}$ with respect to the natural ordering $\le_T$ on $T$ induced by its root. 
Since $T(x)$ is an NST and $\lceil u\rceil_T=\lceil u\rceil_{T(x)}$ holds, we know that $\lceil u\rceil_T$ witnesses $\omega_n\neq\omega_{n-1}$. 
By (i) we have $t_{n-1}\in R_{\omega_{n-1}}$, so $u>_Tt_{n-1}$ holds since otherwise $u\le_Tt_{n-1}$ contradicts $C(\lceil t_{n-1}\rceil_T,\omega_n)=C(\lceil t_{n-1}\rceil_T,\omega_{n-1})$. 
By Claim~\ref{AlphaOmegaUp}, we find some vertex $t_n$ of $R_{\omega_n}\setminus R_{\omega_{n-1}}$ with $t_n>_Tu$ such that $t_n$ receives label $\alpha(\omega_n)$. 
In particular, $u>_Tt_{n-1}$ implies $t_n>_Tt_{n-1}$. Also recall that $t_{n-1}$ receives label $\alpha(\omega_{n-1})$ by (ii).
Together with $t_n\notin R_{\omega_{n-1}}$ and Claim~\ref{AlphaOmegaUp} applied to $t_{n-1}$ and $\omega_{n-1}$, the only possibility for $\alpha(\omega_n)$ is $\alpha(\omega_n)<\alpha(\omega_{n-1})$. Thus (i)--(iii) hold for $n$.
\end{proof}

\begin{proposition}\label{sufficientAllAuxArcsTame}
If $G$ is a connected graph such that for every $x\in\Gq$ the graph $G$ has an NST $T(x)$ whose subtree $\bigcup_{\omega\in x\cap\Omega}R_\omega^{T(x)}$ contains no subdivision of the (infinite) binary tree,
then $\cE G$ is the maximal Hausdorff quotient of $\TC$.
\end{proposition}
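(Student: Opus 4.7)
The proposition follows by chaining together the two preceding lemmas with Theorem~\ref{ETopInvLimAndGq}. The plan is as follows.

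First, I would observe that the hypothesis on $G$ is exactly the hypothesis of Lemma~\ref{noT2allArcsTame}. Applying that lemma immediately gives that every auxiliary arc in $\GH$ is tame. Since $G$ is assumed connected, in particular it has only finitely many components, so we may then apply Lemma~\ref{auxArcsTamThen3RelsEqual} to conclude that the three equivalence relations $\tsim$, $\tcsim$ and $\sim$ on $\TC$ coincide.

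In particular, $\tsim\,=\,\sim$, so by definition of $\HTC$ and $\Gq$ we have
\[
\HTC=\TC/{\tsim}=\TC/{\sim}=\Gq.
\]
Finally, Theorem~\ref{ETopInvLimAndGq} identifies $\Gq$ with $\cE G$, so $\cE G$ is the maximal Hausdorff quotient of $\TC$, as required.

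There is really no obstacle here, since all the combinatorial and topological work has been done in Lemmas~\ref{noT2allArcsTame} and~\ref{auxArcsTamThen3RelsEqual}; the proposition is just the composition of these facts with the identification $\Gq\simeq\cE G$.
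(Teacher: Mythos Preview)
Your proposal is correct and follows essentially the same route as the paper's proof: apply Lemma~\ref{noT2allArcsTame} to get tameness of all auxiliary arcs, then Lemma~\ref{auxArcsTamThen3RelsEqual} to conclude $\tsim=\sim$, and finish via the identification $\Gq\simeq\cE G$ from Theorem~\ref{ETopInvLimAndGq}. Your extra remark that connectedness gives finitely many components (needed for Lemma~\ref{auxArcsTamThen3RelsEqual}) is a nice bit of explicitness that the paper leaves implicit.
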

\begin{proof}
By Lemma~\ref{noT2allArcsTame} all auxiliary arcs (in $\GH$) are tame, so $\tsim$ and $\sim$ coincide by Lemma~\ref{auxArcsTamThen3RelsEqual}.
Furthermore, $\Gq\simeq\cE G$ holds by Theorem~\ref{ETopInvLimAndGq}.
\end{proof}

Proposition~\ref{sufficientAllAuxArcsTame} is not best possible:
\begin{lemma}
There exists a connected graph $G$ all whose auxiliary arcs are tame and which has an NST, but which does not satisfy the premise of Theorem~\ref{sufficientAllAuxArcsTame}.
\end{lemma}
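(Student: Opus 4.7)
My plan is to construct the counterexample as a "fattening" of the infinite binary tree. Let $G$ be the graph obtained from $T_2$ (with vertex set $2^{<\omega}$) by replacing each $T_2$-edge $\{v,v'\}$ with a copy of $K_{2,\aleph_0}$ in which $v$ and $v'$ are the two vertices of infinite degree: concretely, delete the edge $\{v,v'\}$ and add countably many fresh degree-$2$ vertices, each joined to both $v$ and $v'$. The graph $G$ is countable and connected, so by the classical result that every countable connected graph admits an NST, $G$ has an NST. It is easy to check that for every $b\in 2^{\omega}$ the $T_2$-ray along $b$ defines an end $\omega_b$ of $G$, and that this yields a bijection $\Omega(G)\leftrightarrow 2^{\omega}$. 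Moreover no end is dominated: if $v$ is any finite vertex of $G$, then choosing a $T_2$-vertex $b_0\cdots b_n$ far enough down $R_{\omega_b}^{T_2}$, removing that single vertex from $V(G-v)$ separates $v$ from the tail of $R_{\omega_b}^{T_2}$. Hence $\AuxE(\Omega)=\emptyset$. By transitivity of $\sim$ and the fact that $T_2$-adjacent vertices lie in the same $\sim$-class (the $K_{2,\aleph_0}$-bridges yield infinitely many independent paths), the set $x:=V(T_2)\cup\Omega(G)$ is contained in a single $\sim$-class.

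I would next identify the critical vertex sets. For each $T_2$-edge $\{v,v'\}$ the countably many degree-$2$ bridges form infinitely many components of $G-\{v,v'\}$ each of neighbourhood exactly $\{v,v'\}$, so $\{v,v'\}\in\crit(\cX)$; conversely a short case check shows that no other finite vertex set is critical (any $X$ of size $\ge 3$ has only finitely many $G-X$-components of neighbourhood $X$, because a degree-$2$ bridge contributes to the neighbourhood of at most two $T_2$-vertices in $X$). Therefore $\AuxE$ consists of exactly one auxiliary edge per $T_2$-edge, and the subspace $\overline{\AuxE}\subseteq\GH$ is canonically homeomorphic to the Freudenthal compactification $|T_2|$: at a $T_2$-vertex $v$ the basic neighbourhoods $\cO_{\GG}(v,\epsilon)\cap\overline{\AuxE}$ are stars of half-open partial auxiliary edges (matching $\cO_{T_2}(v,\epsilon)$), and at an end $\omega_b$ the basic neighbourhoods $\hat{\aC}_\epsilon(X,\omega_b)\cap\overline{\AuxE}$ reduce to the standard $\hat{C}_\epsilon(X,\omega_b)$ in $|T_2|$ after replacing degree-$2$ vertices of $X$ by their two $T_2$-endpoints. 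Since $T_2$ is locally finite, every arc in $|T_2|$ consists of an ``up-branch'' to some LCA followed by a ``down-branch'' possibly terminating in an end; in particular $A\cap V(T_2)$ carries the order type of a sub-interval of $\Z$, so no arc in $|T_2|$ is wild. Using $V(G)\cap\overline{\AuxE}=V(T_2)$, this transfers verbatim: every auxiliary arc in $\GH$ is tame.

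Finally, I would show that the premise of Theorem~\ref{sufficientAllAuxArcsTame} fails for $x$. Fix any NST $T$ of $G$, and consider the subtree $T':=\bigcup_{\omega\in\Omega}R_\omega^T$ of $T$. For $s\in 2^{<\omega}$ write $\Omega_s:=\{\omega_b:b\text{ extends }s\}$, and let $v_s\in V(T)$ be the maximal vertex that lies in $R_{\omega_b}^T$ for every $\omega_b\in\Omega_s$. Such a $v_s$ exists as the maximum of the chain $\bigcap_{\omega_b\in\Omega_s} R_{\omega_b}^T$, and it is finite (else all rays in $\Omega_s$ would share a $T$-tail, forcing $|\Omega_s|\le 1$). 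From $\Omega_s=\Omega_{s0}\uplus\Omega_{s1}$ the maximality of $v_s$ forces $v_{s0}$ and $v_{s1}$ to lie in distinct $T$-subtrees hanging off $v_s$: if both were in the same subtree, then the branching vertex of two rays from $\Omega_{s0}$ and $\Omega_{s1}$ would lie strictly below $v_s$, contradicting maximality. Mapping $s\mapsto v_s$ and the $T_2$-edge $(s,si)$ to the $T$-path from $v_s$ to $v_{si}$ then yields a subdivision of $T_2$ inside $T'$, as required. The main obstacle here is the homeomorphism $\overline{\AuxE}\simeq |T_2|$, which requires carefully matching basic open neighbourhoods of vertices and ends between the two topologies; everything else follows from standard tree combinatorics and Theorem~\ref{AuxArcsChar}.
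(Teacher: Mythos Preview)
Your example works, but it differs from the paper's and your final step has a gap.

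The paper uses a much simpler graph: take $T_2$ with root $r$ and add an edge from $r$ to every other vertex. Then $r$ is the only vertex of infinite degree, no finite vertex set is critical, and $r$ dominates every end; hence $\AuxE$ is just the star $\{r\omega:\omega\in\Omega\}$, and every auxiliary arc traverses the closure of at most two auxiliary edges and is tame immediately. Your construction instead manufactures one auxiliary edge per $T_2$-edge (via the critical $2$-sets) and then establishes tameness through a homeomorphism $\overline{\AuxE}\simeq|T_2|$, which is correct but considerably heavier. In both examples all ends lie in a single $\sim$-class $x$, so the failure of the premise comes down to the same fact: a countable tree with uncountably many ends contains a subdivided $T_2$. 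The paper leaves this entirely implicit; you attempt a direct construction.

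That construction has a gap. You set $v_s$ to be the highest vertex lying on every normal ray indexed by $\Omega_s$ and claim that $v_{s0}$ and $v_{s1}$ lie in distinct components of $T-v_s$ above $v_s$. But the bipartition $\Omega_s=\Omega_{s0}\uplus\Omega_{s1}$ is dictated by the $T_2$-structure of $G$, not by the arbitrary NST $T$: nothing prevents the rays indexed by $\Omega_{s0}$ from already diverging at $v_s$, in which case $v_{s0}=v_s$ and the map $s\mapsto v_s$ is not injective. The standard fix is to build the branching adaptively from $T$ rather than from the $\Omega_s$: given an uncountable set of ends all passing through a vertex $w$, follow the unique child carrying uncountably many of them until two children each carry uncountably many (this must occur, since otherwise all but countably many ends would share a single ray), then split there and recurse on both sides.
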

\begin{proof}
Let $G$ be obtained from the infinite binary tree with root $r$ by adding an edge from $r$ to every other vertex. Then $\Upsilon$ is empty by the same argument we used on the graph from Fig.~\ref{fig:badG}, and $\sim$ only identifies all ends of $G$ with $r$ (since $r$ is the only vertex of infinite degree). Hence $\sim=\transcl$ implies $\Gq=\HTC$ by Lemma~\ref{3Rels}. Now Lemma~\ref{SigmaArcEdgesDense} together with the fact that the auxiliary edges form a star in $\GG$ (with center $r$ and the ends of $G$ as leaves) yields that every auxiliary arc traverses the closure of either one or two auxiliary edges, and hence must be tame.
\end{proof}

\subsection{Outlook}\label{GqOutlook}

Of course, we would like to know whether $\tsim$ and $\tcsim$ coincide for all $G$. Furthermore, if $x$ and $y$ are two vertices of $G$ and every auxiliary arc between them is wild, then the question arises whether there exists structures of the graph witnessing that. 
One difficulty are the auxiliary edges between vertices and ends which allow the auxiliary arcs to `jump' in an NST.

Let $\langle\,\cdot\,\rangle\colon \bigcup_{n<\omega}2^n\to\I$ map each sequence to its naturally corresponding value in $\I$.
\begin{conjecture}
For every graph $G$ and every two distinct vertices $x$ and $y$ of $G$ the following are equivalent:
\begin{enumerate}
\item $x\sim y$ and every auxiliary arc from $x$ to $y$ is wild.
\item There exists a countable subset $W$ of $\vec{S}(G-xy)$ such that the following hold:
\begin{enumerate}
\item[\normalfont(a)] Every $(A,B)\in W$ satisfies $x\in A\setminus B$ and $y\in B\setminus A$.
\item[\normalfont(b)] The partial ordering $\le$ of $\vec{S}(G-xy)$ induces the ordering of the rationals on $W$.
\item[\normalfont(c)] There exists a bijection $\varphi\colon T_2\to W$ satisfying $\varphi(a)\le\varphi(b)$ if and only if $\langle a\rangle\le\langle b\rangle$.
Let $\hat{\varphi}\colon T_2\to\cX$ map each vertex of the $T_2$ to the vertex separator of $\varphi(a)$. For every $a\in V(T_2)$ let $n(a)\in\N$ be the level of the $T_2$ containing $a$, and let $X_a$ be the set of all vertices of the first $n(a)$ levels of the $T_2$ minus $a$.
Then for every $a\in V(T_2)$ and every auxiliary arc $A$ from $x$ to $y$, the set $\hat{\varphi}(a)\setminus\bigcup_{b\in X_a}\hat{\varphi}(b)$ meets $A$.
\end{enumerate}
\item $x\not\tsim y$.
\end{enumerate}
\end{conjecture}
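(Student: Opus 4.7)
The plan is to close the cycle (iii)$\Rightarrow$(i)$\Rightarrow$(ii)$\Rightarrow$(iii). I would first remark on the trivial case $x\not\sim y$: there (i) fails by its explicit clause while (iii) vacuously holds via Lemma~\ref{3Rels}, so the conjecture's intended reading of (iii) is presumably ``$x\sim y$ and $x\not\tsim y$.'' Henceforth I assume $x\sim y$, so Theorem~\ref{AuxArcsChar} furnishes auxiliary arcs from $x$ to $y$.

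The implications (iii)$\Rightarrow$(i) and (ii)$\Rightarrow$(iii) are relatively direct. For (iii)$\Rightarrow$(i), given any auxiliary arc $A$ from $x$ to $y$, iterating Lemma~\ref{BetweenTwoVerticesOnArc} as in the proof of Corollary~\ref{IterateWild} yields a subset of $A\cap V(G)$ on which $A$ induces the rational order, so $A$ is wild. For (ii)$\Rightarrow$(iii), every separation $(A,B)\in W$ has finite separator $A\cap B\in\cX$ and induces a basic open bipartition $\cO_{\TC}(A\cap B,\cC)\uplus\cO_{\TC}(A\cap B,\cC')$ of $\cU$ with $x,y$ on opposite sides of the corresponding clopen decomposition of $\TC\setminus\mathring{E}_G(A\cap B)$. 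Assigning to each $p\in\TC$ its binary-tree-indexed labelling in $2^{V(T_2)}$ obtained from these bipartitions yields a continuous map from $\TC$ into a compact Hausdorff subspace of the Cantor set; by condition~(a) the images of $x$ and $y$ are the constant ``left'' and ``right'' labelings respectively, so they differ. Theorem~\ref{minimalHDrelation} then forces $x\not\tsim y$.

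The main obstacle is (i)$\Rightarrow$(ii). I would recursively construct $\varphi\colon T_2\to\vec{S}(G-xy)$ on the levels of $T_2$: start with any finite-order separation with $x\in A\setminus B$ and $y\in B\setminus A$ as $\varphi(\emptyset)$. At each $a\in V(T_2)$ with separation $\varphi(a)$ already chosen, I would extract finite-order separations $\varphi(a0)<\varphi(a)<\varphi(a1)$ refining $\varphi(a)$ in opposite directions along $\le$, whose separators each carry a fresh vertex lying on every auxiliary arc from $x$ to $y$ outside $\bigcup_{b\in X_{ai}}\hat{\varphi}(b)$. The existence of such refinements should follow by contradiction from the wildness assumption: if at some node $a$ no suitable $\varphi(a0)$ existed, one could stitch together a tame auxiliary arc by bypassing the finitely many pre-existing separators in a bounded fashion, contradicting (i) via the construction technique of Theorem~\ref{SigmaArcConstruction}. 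The genuine difficulty lies in fulfilling condition~(c)'s \emph{uniform} hitting requirement: the single finite set $\hat{\varphi}(a)$ must hit \emph{every} auxiliary arc (an uncountable family) outside the previous separators. I anticipate this will require a pigeonhole argument on the finite separator combined with a careful equivalence-classing of auxiliary arcs by their interaction with the first $n(a)$ levels, so that only finitely many ``types'' of arcs need simultaneous accommodation at each stage. Coordinating this combinatorial bookkeeping with the order-isomorphism to the dyadic rationals in condition~(c) is where the argument becomes delicate, and is presumably why the author leaves the result as a conjecture.
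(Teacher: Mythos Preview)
This statement is a \emph{conjecture} in the paper, not a theorem: the paper offers no proof. It appears in the outlook section~\ref{GqOutlook}, where the author explicitly flags the difficulty of describing wildness combinatorially and leaves the equivalence open. So there is nothing to compare your argument against; your proposal is an attempt to resolve an open problem rather than a reconstruction of an existing proof.

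That said, a few remarks on your sketch. Your observation about the case $x\not\sim y$ is well taken: as stated, (iii) holds and (i) fails there, so the conjecture tacitly assumes $x\sim y$ (or (iii) should read ``$x\sim y$ and $x\not\tsim y$''). Your (iii)$\Rightarrow$(i) is exactly Corollary~\ref{IterateWild}, so that direction is already in the paper. Your (ii)$\Rightarrow$(iii) idea---factor through a continuous map to a Hausdorff space---is the right strategy by the universal property of $\HTC$, but the execution is not clean: a finite-order separation does \emph{not} give a clopen bipartition of $\TC$ (which is connected), only of $\cU$; the vertices $x,y$ and the separator $A\cap B$ sit outside $\cU$, so your ``binary labelling'' map is not obviously continuous on all of $\TC$. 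You would need to argue more carefully, perhaps by working in $\TC\setminus\mathring{E}$ (which is Hausdorff but where $\tsim$ is not automatically trivial) or by showing directly that the nested family $W$ lets you build disjoint $\tsim$-saturated open sets around $x$ and $y$. Also note that your sketch uses only (a) and the order structure of $W$; condition~(c) with its hitting requirement plays no role, which suggests either that (c) is redundant for this implication or that something is missing.

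For (i)$\Rightarrow$(ii) you correctly identify the core difficulty: producing a \emph{single} finite separator at each node that meets \emph{every} auxiliary arc in the required fresh set. Your pigeonhole-on-arc-types heuristic is not obviously workable, since the space of auxiliary arcs between $x$ and $y$ can be vast and there is no evident finite classification controlling where they cross a given level. This is precisely the obstruction the author alludes to, and your proposal does not overcome it.
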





\newpage
\section*{Acknowledgement}

I am grateful to my family for all the years of support, and I am grateful to Prof. Dr. Reinhard Diestel for his inspiring lecture courses which led me here.
During the development of this work I received invaluable assistance from Dr. Max Pitz: He motivated me to investigate two questions of Prof. Dr. Reinhard Diestel which resulted in Chapter~\ref{ECOcompactification}, and he pointed out to me an abstract minimality proof.
When I thought about auxiliary edges, unsure about their usefulness, he showed me parallels to certain Stone-Čech compactifications. 
This motivated me to fight my way through the proof of Theorem~\ref{SigmaArcConstruction}.
And while I was still thinking about auxiliary arcs and the maximal Hausdorff quotient, he already asked about circles and topological spanning trees in the auxiliary space.\\ Thank you so much!

\newpage
\printindex

\clearpage
\phantomsection
\bibliography{MA_BIB}
\bibliographystyle{MYamsplain}

\end{document}